\documentclass{article}
\usepackage{epsfig,amssymb,amsmath,diagrams,amsthm,hyperref}

\usepackage{multind}
\makeindex{notations}
\makeindex{keywords}

\makeindex{notations'} 
\makeindex{keywords'} 

\numberwithin{equation}{section}

\newtheorem{theorem}{Theorem}[section]
\newtheorem{conjecture}[theorem]{Conjecture}
\newtheorem{example}[theorem]{Example}
\newtheorem{corollary}[theorem]{Corollary}

\newtheorem{ktf}[theorem]{KTF}
\newtheorem{lemma}[theorem]{Lemma}
\newtheorem{proposition}[theorem]{Proposition}

\newcommand{\color}[6]{}

\newcommand{\1}{\mathbf{1}}
\newcommand{\A}{\mathbf{A}}

\newcommand{\bs}{\backslash}
\renewcommand{\c}{\mathfrak{c}}
\newcommand{\comment}[1]{}
\newcommand{\C}{\mathbf{C}}
\newcommand{\CK}{\operatorname{CK}}
\newcommand{\dct}{the dominated convergence theorem}
\newcommand{\ddt}{\left.\frac{d}{dt}\right|_{\scriptscriptstyle t=0}}

\newcommand{\ds}{\displaystyle}
\newcommand{\e}{\varepsilon}

\newcommand{\eqdef}{\stackrel{\text{\tiny def}}{=}}

\newcommand{\fin}{\operatorname{fin}}
\newcommand{\ff}{f^{\mathtt{n}}}

\newcommand{\g}{\gamma}

\newcommand{\lie}{\mathfrak{g}}
\newcommand{\GL}{\operatorname{GL}}


\newcommand{\Ind}{\operatorname{Ind}}

\renewcommand{\Im}{\operatorname{Im}}
\newcommand{\IL}[1]{\ensuremath{\int\limits_{\scriptscriptstyle #1}}}
\newcommand{\IIL}[1]{\ensuremath{\iint\limits_{\scriptscriptstyle #1}}}

\renewcommand{\k}{\mathtt{k}}

\newcommand{\lcm}{\operatorname{lcm}}
\renewcommand{\L}{L^2_0}

\newcommand{\mat}[4]{\begin{pmatrix} {#1} & {#2} \\ {#3} & {#4}
  \end{pmatrix}}

\newcommand{\meas}{\operatorname{meas}}
\renewcommand{\mod}{\text{ mod }}
\newcommand{\n}{\mathtt{n}}

\newcommand{\ol}{\overline}
\newcommand{\olG}{\overline{G}}

\newcommand{\Q}{\mathbf{Q}}

\newcommand{\R}{\mathbf{R}}
\renewcommand{\Re}{\operatorname{Re}}
\newcommand{\sg}[1]{\left<{#1}\right>}
\newcommand{\sgn}{\operatorname{sgn}}

\newcommand{\sss}{\scriptscriptstyle}
\newcommand{\scri}{\scriptstyle}
\newcommand{\SL}{\operatorname{SL}}
\newcommand{\SO}{\operatorname{SO}}

\renewcommand{\subset}{\subseteq}
\newcommand{\Supp}{\operatorname{Supp}}
\newcommand{\tc}{\widetilde\chi}

\newcommand{\tr}{\operatorname{tr}}
\newcommand{\tT}{\tau_{\sss T}}

\newcommand{\ve}{\varepsilon}
\newcommand{\w}{\omega}

\newcommand{\Z}{\mathbf{Z}}
\newcommand{\Zhat}{\widehat{\Z}}

\newcommand{\Af}{\mathbf{A}_{\fin}}

\newcommand{\ord}{\operatorname{ord}}

\newcommand{\cont}{\operatorname{cont}}
\newcommand{\cusp}{\operatorname{cusp}}
\newcommand{\disc}{\operatorname{disc}}
\newcommand{\res}{\operatorname{res}}

\newcommand{\vpa}{\phi_{p,i,N_p}^{\chi_{1p},\chi_{2p}}}

\newcommand{\smat}[4]{\bigl(\begin{smallmatrix}{#1}&{#2}\\{#3}&{#4}\end{smallmatrix}\bigr )}
\begin{document}
\title{Kuznetsov's trace formula and the Hecke eigenvalues of Maass forms}
\author{A. Knightly and C. Li}
\maketitle
\small\normalsize

\begin{abstract}
We give an adelic treatment of the Kuznetsov trace formula as a
  relative trace formula on $\GL(2)$ over $\Q$.  The result is
  a variant which incorporates a Hecke eigenvalue
  in addition to two Fourier coefficients on the spectral side.
  We include a proof of a Weil bound for the generalized
  twisted Kloosterman sums which arise on the geometric side.
  As an application, we show that the Hecke eigenvalues of Maass forms
  at a fixed prime,
  when weighted as in the Kuznetsov formula, become equidistributed relative
  to the Sato-Tate measure in the limit as the level goes to infinity.
\end{abstract}
\tableofcontents
\pagebreak

\section{Introduction}

\subsection{Some history}
A {\em Fourier trace formula} for $\GL(2)$ is an identity between a
 product of two Fourier coefficients, averaged
  over a family of automorphic forms on $\GL(2)$, and 
  a series involving Kloosterman sums and the Bessel $J$-function.
    The first example, arising from Petersson's computation of the Fourier
  coefficients of Poincar\'e series in 1932 \cite P and his introduction
  of the inner product in 1939 \cite{P2}, has the form
\[\hskip -.1cm\frac{\Gamma(k-1)}{(4\pi\sqrt{mn})^{k-1}}
  \sum_{f\in \mathcal{F}_k(N)}\frac{a_m(f)\ol{a_n(f)}}{\|f\|^2}
  = \delta_{m,n}+2\pi i^k\sum_{c\in N\Z^+}\frac{S(m,n;c)}c
  J_{k-1}(\frac{4\pi\sqrt{mn}}c),\]
where $\mathcal{F}_k(N)$ is an orthogonal basis for the space 
  of cusp forms $S_k(\Gamma_0(N))$, and\index{keywords}{Kloosterman sum!classical}
\[S(m,n;c)=\sum_{x\ol x\equiv 1\mod c}e^{2\pi i({mx+n\ol x})/c}\]
 is a Kloosterman sum.  
Because of the existence of the Weil bound\index{keywords}{Kloosterman sum!Weil bound}
\begin{equation}\label{weil}
|S(m,n;c)|\le \tau(c)(a,b,c)^{1/2}c^{1/2}
\end{equation}
where $\tau$ is the divisor function, and the bound
\[J_{k-1}(x)\ll \min(x^{k-1},x^{-1/2})\]
  for the Bessel function,
  the Petersson formula is useful for approximating expressions involving 
  Fourier coefficients of cusp forms.
For example, Selberg used it in 1964 (\cite{Sel2})
  to obtain the nontrivial bound 
\begin{equation}\label{SelEst}a_n(f)=O(n^{(k-1)/2+1/4+\e})\end{equation}
  in the direction of the Ramanujan-Petersson conjecture 
  $a_n(f)=O(n^{(k-1)/2+\e})$ subsequently proven by Deligne.

In his paper, Selberg mentioned
  the problem of extending his method to the case of Maass forms.
  This was begun in the late 1970's independently by
  Bruggeman and Kuznetsov (\cite{Brug}, \cite{Ku}).
  The left-hand side of the above Petersson formula is now
  replaced by a sum of the form
\begin{equation}\label{N1cusp}
\sum_{u_j\in\mathcal{F}} \frac{a_m(u_j)\ol{a_n(u_j)}}{\|u_j\|^2}\frac{h(t_j)}{\cosh(\pi t_j)},
\end{equation}
  where $m,n>0$, $\mathcal{F}$ is an (orthogonal) basis of 
  Maass cusp forms of weight $k=0$
  and level $N=1$, $t_j$ is the spectral parameter defined by 
  $\Delta u_j=(\tfrac14+t_j^2)u_j$ for the Laplacian $\Delta$,
  and $h(t)$ is an even holomorphic function with sufficient decay.
  There is a companion term
  coming from the weight $0$ part of the continuous spectrum,
  describable in terms of the Eisenstein series
\[E(s,z)= \frac12\sum_{c,d\in\Z\atop{(c,d)=1}}
  \frac{y^{1/2+s}} {|cz+d|^{1+2s}}
  \qquad (\Re(s)>\tfrac12, y>0, z=x+iy).\]
  More accurately, it involves the analytic continuation to $s$ 
  on the imaginary line.  This analytic continuation is provided by
  the Fourier expansion 
\begin{align}\label{EN1}
E(s,z)= y^{1/2+s}&+y^{1/2-s}\frac{\sqrt{\pi}\,\Gamma(s)\zeta(2s)}{\Gamma(1/2+s)\zeta(1+2s)}
  \\
\notag &+\frac{2y^{1/2}\pi^{1/2+s}}{\Gamma(1/2+s)\zeta(1+2s)}\sum_{m\neq 0}
  \sigma_{2s}(m)|m|^sK_s(2\pi|m|y)e^{2\pi i mx}.
\end{align}
  Here $\sigma_{2s}(m)=\sum_{0<d|m}d^{2s}$ is the
  divisor sum, and $K_s$ is the $K$-Bessel function.
  The continuous contribution to the Kuznetsov/Bruggeman formula is the following
  integral of the product of two Fourier coefficients of $E(it,z)$ 
  against the function $h(t)$:
\begin{equation}\label{N1cont}
\frac1\pi\int_{-\infty}^\infty\frac{(m/n)^{it}\sigma_{2it}(m)\ol{\sigma_{2it}(n)}}
  {|\zeta(1+2it)|^2} h(t)dt.
\end{equation}
The Fourier trace formula is then the equality between the sum of \eqref{N1cusp} and \eqref{N1cont}
  on the so-called spectral side, with the geometric side given by
\begin{equation}\label{N1geom}
  \frac{\delta_{m,n}}{\pi^2}\int_{-\infty}^\infty h(t) \tanh(\pi t)\,t\,dt
+\frac{2i}{\pi}\sum_{c\in \Z^+}\frac{S(m,n;c)}{c}\int_{-\infty}^\infty J_{2it}
  (\frac{4\pi\sqrt{mn}}c)\frac{h(t)\,t}{\cosh(\pi t)}dt.
\end{equation}
  Using this together with the Weil bound \eqref{Weil}, Kuznetsov proved
  a mean-square estimate for the Fourier coefficients $a_n(u_j)$ (\cite{Ku}, Theorem 6),
  which immediately implies the bound
\[a_n(u_j)\ll_{j,\e} n^{1/4+\e}\]
  in the direction of the (still open) Ramanujan conjecture $a_n(u_j)=O(n^\e)$.
  (See also \cite{Brug}, \S4.)
  This extended Selberg's result \eqref{SelEst} to the case of Maass forms.

  Kuznetsov also ``inverted" the formula to give a variant in which
  a general test function appears on the geometric side
  in place of the Bessel integral.
  (Motohashi has given an interesting conceptual explanation of this,
  showing that the procedure is reversible, \cite{Mo2}.)
  This allows for important 
  applications to bounding sums of Kloosterman sums.
  Namely, Kuznetsov proved that the estimate
\begin{equation}\label{linnik}
\sum_{c\le X}\frac{S(m,n;c)}c\ll_{m,n,\e} X^{\theta+\e}
\end{equation}
  holds with $\theta=\tfrac 16$ (\cite{Ku}, Theorem 3).
  The Weil bound alone yields only $\theta=\tfrac12$,
  showing that Kuznetsov's method detects considerable 
  cancellation among the Kloosterman sums due to the oscillations in their arguments.
Linnik had conjectured in 1962 that \eqref{linnik} holds with $\theta=0$, and 
  Selberg 
  remarked that this would imply the Ramanujan-Petersson conjecture
  for holomorphic cusp forms of level $1$, (\cite{Sel2}; see also \S4 of \cite{Mu}).
  By studying the Dirichlet series
  \[Z(s,m,n)=\sum_{c}\frac{S(m,n;c)}{c^{2s}},\]
  Selberg also codified a relationship between
  sums of Kloosterman sums and the smallest eigenvalue $\lambda_1$
  of the Laplacian,
  leading him to conjecture that $\lambda_1\ge \tfrac 14$ for congruence subgroups.
  He obtained the inequality $\lambda_1\ge \tfrac 3{16}$ using the Weil bound
  \eqref{Weil}.  This inequality is also
  a consequence of the generalized Kuznetsov formula given in 1982 by
   Deshouillers and Iwaniec (\cite{DI}).
  
Fourier trace formulas have since become a staple tool in analytic number theory.
  We mention here a sampling of notable results in which they have played a role.
  Deshouillers and Iwaniec used the Kuznetsov formula to deduce
  bounds for very general weighted averages of Kloostermans sums, showing
  in particular that Linnik's conjecture holds on average (\cite{DI}, \S1.4).
  They list some interesting consequences in \S1.5 of their paper.  For example,
  there are infinitely many primes $p$ for which $p+1$ has a prime factor greater
  than $p^{21/32}$.  They also give applications to the Brun-Titchmarsh theorem
  and to mean-value theorems for primes in arithmetic progressions (see also
  \cite{Iw1}, \S12-13).

  Suppose $f(x)\in\Z[x]$ is a quadratic polynomial with negative discriminant.  
If $p$ is prime and $\nu$ is a root of $f$ in $\Z/p\Z$, then
  the fractional part $\{\tfrac\nu p\}\in [0,1)$ is independent of the choice
  of representative for $\nu$ in $\Z$.  Duke, Friedlander, and Iwaniec proved that
  for $(p,\nu)$ ranging over all such pairs, the set of these fractional parts
   is uniformly distributed in $[0,1]$, i.e.
   for any $0\le \alpha<\beta\le 1$,
\[\frac{\#\{(p,\nu)|\,p\le x, f(\nu)\equiv 0\mod p, \alpha\le \{\tfrac\nu p\}<\beta\}}
  {\#\{p\le x|\, p \text{  prime}\}} \sim (\beta-\alpha)\]
  as $x\to\infty$ (\cite{DFI}).  Their 
  proof uses the Kuznetsov formula to bound a certain related 
  Poincar\'e series via its spectral
  expansion.  See also Chapter 21 of \cite{IK}.

Applications of Fourier trace formulas to the theory of $L$-functions abound.
  Using the results of \cite{DI}, Conrey showed in 1989 that more than 40\%
  of the zeros of the Riemann zeta function are on the critical line 
  (\cite{Con}).\footnote{Conrey, Iwaniec and Soundararajan have recently
  proven that more than 56\% of the zeros of the family of 
  Dirichlet $L$-functions lie on the critical line, \cite{CIS}.}
  Motohashi's book \cite{Mo} discusses other applications to $\zeta(s)$,
  including the asymptotic formula for its fourth moment.
  In his thesis, Venkatesh used a Fourier trace formula
  to carry out the first case of Langlands' {\em Beyond Endoscopy} program
  for $\GL(2)$ (\cite{L}, \cite{V1}, \cite{V2}).
  This provided a new proof of the
  result of Labesse and Langlands characterizing as dihedral those forms for which the
  symmetric square $L$-function has a pole, as well as giving an
  asymptotic bound for the dimension of holomorphic cusp forms of weight $1$, extending
  results of Duke.
  Fourier trace formulas have also been used by many authors 
  in establishing 
  subconvexity bounds for $\GL(1)$, 
  $\GL(2)$ and Rankin-Selberg $L$-functions; see \cite{MV} and its
  references, although this definitive paper
  does not actually use trace formulas. 
  Subconvexity bounds have important arithmetic applications, notably to Hilbert's
  eleventh problem of determining the integers that are integrally represented 
  by a given quadratic form over a number field
  (\cite{IS1}, \cite{BH}).
  Other applications of Fourier trace formulas include 
  nonvanishing of $L$-functions at the central point 
  (\cite{Du}, \cite{IS}, \cite{KMV}) and the 
  density of low-lying zeros of automorphic $L$-functions 
  (starting with \cite{ILS}).

\subsection{Overview of the contents}

Zagier is apparently the first one to observe that Kuznetsov's formula
  can be obtained by integrating each variable of an automorphic kernel 
  function over the unipotent subgroup.
  His proof is detailed by Joyner in \S1 of \cite{Joy}.
  See also the description by Iwaniec on p. 258 of \cite{Iw1}, and
  the article \cite{LiX} by X.\,Li, who also
  extended the formula to the setting of Maass forms for $\SL_n(\Z)$, \cite{Gld}.
  Related investigations have been carried out by others, notably
  in the context of base change by Jacquet and Ye (cf. \cite{Ja}
  and its references).

Our primary purpose is to give a detailed account of this method over the 
  adeles of $\Q$, for Maass cusp forms of arbitrary level $N$ and nebentypus $\w'$.
  We obtain a variant of the Kuznetsov trace formula by using the
  kernel function attached to a Hecke operator $T_\n$.
  The final formula is given in Theorem \ref{main} on page \pageref{main},
  and it differs from the usual
  version by the inclusion of eigenvalues of $T_\n$ on the spectral side.
  The cuspidal term thus has the form
\begin{equation}\label{Ncusp}  \sum_{u_j\in \mathcal{F}(N)}
 \frac{\lambda_\n(u_j)\, a_{m_1}(u_j) \ol{a_{m_2}(u_j)}}
 {\|u_j\|^2} \frac{h(t_j)}{\cosh(\pi t_j)}.
\end{equation}
  This is a complement to the article \cite{KL1}, which dealt with Petersson's formula
  from the same viewpoint.  As we pointed out there, the above
  variant can alternatively be derived from the
  classical version (see Section \ref{classder} below).
It is also possible to invert the final formula to get a version with the
  test function appearing on the geometric side rather than the spectral side, although we will
  not pursue this.  See Theorem 2 of \cite{BKV} or \cite{A}, p. 135.

The incorporation of Hecke eigenvalues in \eqref{Ncusp} allows us to prove a result 
  about their distribution (Theorem \ref{dist}).  
  To state a special case, assume for simplicity that the nebentypus is trivial,
  and that the basis $\mathcal{F}(N)$ is chosen so that $a_1(u_j)=1$ for all $j$.
  Then for any prime $p\nmid N$, we prove that
  the multiset of Hecke eigenvalues $\lambda_p(u_j)$,
  when weighted by
  \[w_j=\frac{1}{\|u_j\|^2}\frac{h(t_j)}{\cosh(\pi t_j)},\]
 becomes
  equidistributed relative to the Sato-Tate measure in the limit as $N\to \infty$.
  This means that for any continuous function $f$ on $\R$,
\[\lim_{N\to\infty}\frac{\sum_{u_j\in\mathcal{F}(N)}f(\lambda_p(u_j))w_j}
  {\sum_{u_j\in \mathcal{F}(N)}w_j}=\frac1\pi\int_{-2}^2 f(x)
  \sqrt{1-\tfrac{x^2}4}dx.\]
This can be viewed as evidence for the Ramanujan conjecture, which asserts that
  $\lambda_p(u_j)\in [-2,2]$ for all $j$.
  The above result holds independently of both $p$ and the choice of $h$ from a large
  family of suitable functions.
We discuss some of the history of this problem and its relation to the
  Sato-Tate conjecture in Section \ref{Equi}.
  
The material in the first six sections can be used
  as a basis for any number of investigations of Maass forms 
  with the $\GL(2)$ trace formula.
Sections \ref{2}-\ref4 are chiefly expository.
  We begin with the goal of explaining the connection
  between the Laplace eigenvalue of a Maass form and the principal series
  representation of $\GL_2(\R)$ determined by it.  
  We then give a detailed account
  of the passage between a Maass form on the upper half-plane and its
  adelic counterpart, which is a cuspidal funcion on $\GL_2(\A)$.
  We also describe the adelic Hecke operators of weight $\k=0$ and level $N$
  corresponding to the classical ones $T_\n$.

Although similar in spirit with the derivation of Petersson's 
  formula in \cite{KL1},
  the analytic difficulties in the present case are considerably more subtle.
  Whereas in the holomorphic case the relevant Hecke operator is of finite rank,
  in the weight zero case it is not even Hilbert-Schmidt.  The
  setting for the adelic trace formula is the Hilbert space
\[L^2(\w)=\left\{\begin{array}l \phi:G(\A)\rightarrow \C\\
  \phi(z\g g)=\w(z)\phi(g) \quad(z\in Z(\A), \g\in G(\Q)),\\
  \int_{Z(\A)G(\Q)\bs G(\A)}|\phi|^2 <\infty,\end{array}\right.\]
where $G=\GL_2$, $Z$ is the center, and 
  $\w$ is a finite order Hecke character.  Relative to the right
  regular representation $R$ of $G(\A)$ on $L^2(\w)$,
  there is a spectral decomposition $L^2(\w)=L^2_{\disc}(\w)\oplus L^2_{\cont}(\w)$.
  The classical cusp forms correspond to certain elements in the discrete part, while
  the continuous part is essentially a direct integral of certain
  principal series 
  representations $H(it)$ of $G(\A)$.  We begin Section \ref6 by describing this
  in detail, following Gelbart and Jacquet \cite{GJ}.  
  For a function $f\in L^1(\ol{\w})$ attached to a classical
  Hecke operator, we then investigate the kernel
\begin{equation}\label{Kgeom}K(x,y)=\sum_{\g\in Z(\Q)\bs G(\Q)}f(x^{-1}\g y)
\end{equation}
  of the operator $R(f)$.  We assume that $f_\infty$ is bi-invariant under
  $\SO(2)$, compactly supported in $\olG(\R)^+$, and sufficiently differentiable.
  Then letting 
  $\phi$ range through an orthonormal basis
  for the subspace of vectors in $H(0)$ of weight $0$ and level $N$,
  the main result of the section is a proof that the spectral expansion
\begin{align*}
K(x,y) =\hskip .2cm&  \delta_{\w,1}\frac3\pi\int_{\olG(\A)}f(g)dg+
  \sum_{\varphi\in \mathcal{F}(N)}
  \frac{R(f)\varphi(x)\ol{\varphi(y)}}{\|\varphi\|^2}\\
 &+\frac1{4\pi}\sum_\phi\int_{-\infty}^\infty E(\pi_{it}(f)\phi_{it},x)
  \ol{E(\phi_{it},y)}dt,
\end{align*}
is absolutely convergent and valid for all $x,y$.
These are, respectively, the residual, cuspidal, and continuous components of the kernel.

In Section \ref 5, we discuss the Eisenstein series.  
  We give an explicit description of the finite set of Eisenstein series $E(\phi_s,g)$
  that contribute to the above expression for $K(x,y)$.  Their Fourier coefficients involve
  generalized divisor sums and Dirichlet $L$-values on the right
  edge of the critical strip, directly generalizing \eqref{EN1}.
  We derive bounds for these Fourier coefficients, which are useful
  for both the convergence and applications of the Kuznetsov formula.
  For this purpose we require
  lower bounds for Dirichlet $L$-functions on the right edge of
  the critical strip, reviewed in Section 2.
  (We note that more generally, in establishing absolute convergence of the 
  spectral side of Jacquet's $\GL(n)$ relative trace formula,
  Lapid makes use of 
  lower bounds for Rankin-Selberg $L$-functions on the right edge of 
  the critical strip due to Brumley, \cite{Lap}, \cite{Br}.)

In Section \ref{ftf} we integrate each variable of $K(x,y)$ against a character over 
  the unipotent group $N(\Q)\bs N(\A)$.  Using the geometric form \eqref{Kgeom}
  of the kernel, we obtain
  the geometric side of the Kuznetsov formula as a sum
  of orbital integrals whose finite parts evaluate to generalized 
  twisted Kloosterman sums, defined by
\[S_{\w'}(m_2,m_1;\n;c)=\sum_{dd'\equiv \n\mod c}\ol{\w'(d)}e^{2\pi i(dm_2+d'm_1)/c}
  \qquad(\text{for }N|c),\]
where $\w'$ is the Dirichlet character of modulus $N$ attached to $\w$.
  These sums also arise in the generalized Petersson formula of \cite{KL1}.
  After an extra averaging at the archimedean place, we obtain the $J$-Bessel
  integrals as in \eqref{N1geom}.
  Using the spectral form of the kernel we obtain the
  spectral side of the Kuznetsov formula, giving the main result, Theorem \ref{main}.
  The function $h(t)$ of \eqref{Ncusp} is the Selberg transform of
  the archimedean test function $f_\infty$.

The hypothesis that $f_\infty$ be smooth and compactly supported
  amounts to requiring that $h(iz)$ be an even Paley-Wiener function.
  This is very restrictive, ruling out well-behaved functions like the
  Gaussian $h(t)=e^{-t^2}$.  In Section \ref{Val}, we carefully study the various
  transforms involved under more relaxed hypotheses, and show that the Kuznetsov formula
  remains valid.  We start with a function $f$ on $G(\A)$ which is
  $C^m$ for $m$ sufficiently large, and has polynomial decay rather than compact
  support.  We then express $f$ as a limit of compactly supported $C^m$ 
  functions (for which we have already established the Kuznetsov formula),
  and then show that the Kuznetsov formula is preserved in the limit.
  A key step is proving that $R(f)$ is a Hilbert-Schmidt operator
  on the cuspidal subspace 
  (cf. Corollary \ref{genHS}).

In Section \ref{Klsec}, we prove the Weil bound 
\begin{equation}\label{Sb}
|S_{\chi}(a,b;\n;c)|\le \tau(\n)\tau(c)(a\n,b\n,c)^{1/2}c^{1/2}\c_\chi^{1/2},
\end{equation}
  where $\c_\chi$ is the conductor of $\chi$, and $\tau$ is the divisor function.
  Various identities relate the generalized sum to classical
  twisted Kloosterman sums $S_\chi(a,b;c)=S_\chi(a,b;1;c)$.  Therefore we reduce to
  proving a Weil bound for the latter sums.  The latter is well-known, but seems to 
  be a gap in the literature.  Furthermore, it is sometimes erroneously 
  asserted that
$|S_\chi(a,b;c)|\le \tau(c)(a,b,c)^{1/2}c^{1/2}$.
  We give a counterexample on p.\,\pageref{p3}. 
  For these reasons, we have included all of the details of the proof of \eqref{Sb}.  

  For simplicity, in this paper we only treat forms of
  weight $\k=0$ over $\Q$, and we deal only with positive integer
  Fourier coefficients for the cusp at infinity.
  There are many expositions of the Kuznetsov formula in the classical
  language which extend beyond this scope and give other applications.
  See especially \cite{DI}, \cite{CPS}, \cite{Mo3} and \cite{BM}.
The latter incorporates
  general weights and cusps over a totally real field.
  We also recommend the text of Baker \cite{Ba}.

\subsection{Acknowledgements}
We would like to thank Jon Rogawski and Eddie Herman for
  suggesting several changes which have greatly improved the exposition.
  In particular, Herman drew our attention to the thesis \cite{A} of Andersson,
  and suggested including the content of Section \ref{classder}.
We also thank Farrell Brumley, George Knightly, and Yuk-Kam Lau for
  helpful discussions.
  The first author was supported in part by the University of
  Maine Summer Faculty Research Fund and by NSF grant DMS 0902145.

\pagebreak
\section{Preliminaries}\label{2}
\subsection{Notation and Haar measure}\label{notation}

Notation and normalization of measures is the same as in \cite{KL}, where
   full details are given.
Let $G=\GL_2$,\index{notations}{G @$G$}\index{notations'}{GL@$\GL_2$} let $M=\{\smat*{0}{0}*\}\subset G$\index{notations}{M@$M$, diagonal subgroup}
  be the diagonal subgroup, and let $N=\{\smat1*01\}\subset G$\index{notations}{N@$N$, unipotent subgroup}
  be the upper triangular unipotent subgroup.
  The Borel subgroup\index{keywords'}{Borel subgroup} of upper triangular
  matrices
  is denoted $B=MN=NM$\index{notations}{B@$B=\{\smat ab0d\}$ Borel subgroup}.
  We write $\olG$\index{notations}{G bar@$\olG=G/Z$} for $G/Z$, where $Z$\index{notations}{Z@$Z$, the center of $G$} is the center of $G$,
  and generally for a subset $S\subset G$, $\ol S$\index{notations'}{Sbar@$\ol S$} denotes the image of $S$ in $\olG$.
  Let
\begin{equation}\label{Kinf}
K_\infty=\{k_\theta:=\smat{\cos\theta}{\sin\theta}{-\sin\theta}{\cos\theta}|\,\theta\in\R\}
 \index{notations}{Kinfty@$K_\infty$} \index{notations}{Ktheta@$k_\theta=\smat{\cos\theta}{\sin\theta}{-\sin\theta}{\cos\theta}$}
\end{equation}
denote the compact subgroup $\SO(2)$ of $G(\R)$.

Let $\Z^+$\index{notations}{Zplus@$\Z^+$, the set of positive integers} denote the set of positive integers and let $\R^+$ \index{notations}{Rplus@$\R^+$, the set of positive real numbers} denote the
  group of positive reals.
If $p$ is prime, we let $\Q_p$ and $\Z_p$ denote the $p$-adic numbers and $p$-adic
  integers, respectively.  For any rational integer $x>0$, we often use the notation
\[x_p=\ord_p(x),\] \index{notations}{11@$x_p=\ord_p(x), N_p=\ord_p(N)$, etc.}
so that $x=\prod_p p^{x_p}$, $N=\prod_p p^{N_p}$, etc.

  Let $\A,\Af$\index{notations}{Adele@$\A$ adeles}\index{notations}{Adelefinite@$\Af$ finite adeles} be the adeles and finite adeles of $\Q$.  Then
   $\Zhat=\prod_p\Z_p$\index{notations}{Zhat@$\Zhat=\prod \Z_p$} is an open compact subgroup of $\Af$.
  For an element $d\in\Q^*$ \index{notations}{12@$d_N$, idele $(x_p)$, $x_p=d$ for $p"|N$, $x_p=1$ otherwise}, we let
\begin{equation}\label{dN}
d_N\in\A^*
\end{equation}
be the idele which agrees with $d$ at places $p|N$ and is $1$ at all other places.

  Let $K_p=G(\Z_p)$\index{notations}{Kp@$K_p$} and $K_{\fin}=G(\Zhat)$\index{notations}{Kfin@$K_{\fin}$} denote the standard maximal compact subgroups of $G(\Q_p)$ and $G(\Af)$ respectively.
  By the Iwasawa decomposition\index{keywords'}{Iwasawa decomposition},
\[G(\A)=M(\A)N(\A)K,\]
where
\[K=K_\infty\times K_{\fin}.\]\index{notations}{Kp@$K$}
For an integer $N\ge 1$, define the following nested congruence subgroups
  of $K_{\fin}$:\index{notations}{K0N@$K_0(N)$}\index{notations}{K1N@$K_1(N)$}\index{notations}{KN@$K(N)$}
\[K_0(N)=\{\smat abcd\in K_{\fin}|\,c\equiv 0\mod N\Zhat\},\]
\[K_1(N)=\{\smat abcd\in K_0(N)|\,d\equiv 1\mod N\Zhat\},\]
\[K(N)=\{k\in K_{\fin}|\, k\equiv \smat1001\mod N\Zhat\}.\]
Each of these is open and compact in $G(\Af)$.  By the strong approximation theorem,
we have\index{keywords}{strong approximation}
\begin{equation}\label{sa}
G(\A)=G(\Q)(G(\R)^+\times K_1(N)),
\end{equation}
where as usual $G(\Q)$ is embedded diagonally in $G(\A)$, and $G(\R)^+$ is the subgroup
  of $\GL_2(\R)$ consisting of matrices with positive determinant.
We will also use the local subgroups\index{notations}{K0Np@$K_0(N)_p$}
  $K_0(N)_p=\{\smat abcd\in K_p|\, c\in N\Z_p\}$,
  and similarly for $K_1(N)_p$.\index{notations}{K1Np@$K_1(N)_p$}

We take $\Gamma_0(N)$, $\Gamma_1(N)$ and 
$\Gamma(N)$\index{notations}{Gamma0N@$\Gamma_0(N)$}\index{notations}{Gamma1N@$\Gamma_1(N)$} \index{notations}{GammaN@$\Gamma(N)$}
  to be the intersections of
  the above congruence subgroups with $\SL_2(\Z)$ as usual.
  We set
\begin{equation}\label{psiN} \index{notations}{psiN@$\psi(N)$}
\psi(N)=[K_{\fin}:K_0(N)]=[\SL_2(\Z):\Gamma_0(N)]
  =N\prod_{\overset{p|N}{\sss p\text{ prime}}}(1+\frac1p),
\end{equation}
and locally $\psi(N)=\prod_p\psi_p(N)$, where\index{notations}{psipN@$\psi_p(N)$}
\[
\psi_p(N)=[K_p:K_0(N)_p]=p^{N_p-1}(p+1).
\]

Haar measure will be normalized as follows.  See \S7 of \cite{KL} for more detail.
On $\R$ we take Lebesgue measure $dx$, and on $\R^*$ we take $\frac{dy}{|y|}$.
On $\Q_p$ we normalize by $\meas(\Z_p)=1$, and on $\Q_p^*$ we take $\meas(\Z_p^*)=1$.
  These choices determine measures on $\A$ and $\A^*\cong Z(\A)$, with $\meas(\Q\bs \A)=1$.
  We normalize $dk$ on $K_\infty$ by $\meas(K_\infty)=1$,
  and use the above measures on $\R$ and $\R^*$ to define measures on $N(\R)\cong \R$
   and $M(\R)\cong \R^*\times\R^*$.
  These choices determine a measure on $G(\R)$ by the Iwasawa decomposition:
  writing $g=mnk$, we take $dg = dm\, dn\, dk$.
  We normalize Haar measure on $G(\Q_p)$ so that $\meas(K_p)=1$, and on
  $G(\Af)$ by taking $\meas(K_{\fin})=1$.  We then
  adopt the product measure on $G(\A)=G(\R)\times G(\Af)$.
Having fixed measures on $G(\A)$ and $Z(\A)\cong\A^*$ as above, we give $\olG(\A)
  =G(\A)/Z(\A)$ the associated quotient measure.  It has
  the property that $\meas(\olG(\Q)\bs \olG(\A))=\pi/3$.
  In the quotient measure on $\olG(\Q_p)$, we have $\meas(\ol{K_p})=1$.
  We also take $\meas(\ol{K_\infty})=1$, which is {\em not} the quotient measure
  on $K_\infty/\{\pm 1\}$.

For any real number $x$, we denote
\[e(x) = e^{2\pi i x}.\] \index{notations}{e@$e(x)=e^{2\pi i x}$}
We let $\theta:\A\longrightarrow\C^*$ denote the standard character of $\A$.  It is
  defined by
\begin{equation}\label{theta}
\theta_p(x)=\begin{cases}e(-x)=e^{-2\pi ix}&\text{if }p=\infty\\
e(r_p(x))=e^{2\pi i r_p(x)}&\text{if }p<\infty,
\end{cases} \index{notations}{thetap@$\theta_p$}
\end{equation}
where $r_p(x)\in\Q$\index{notations'}{rp@$r_p(x)$} is the $p$-principal part of $x$, a number with $p$-power denominator
  characterized (up to $\Z$) by $x\in r_p(x)+\Z_p$.  Then $\theta$ is trivial\index{notations}{theta@$\theta$, character of $\Q\bs\A$}
  on $\Q$, and $\theta_{\fin}=\prod_{p<\infty}\theta_p$\index{notations}{thetafin@$\theta_{\fin}$} is trivial precisely on $\Zhat$.
  For $m\in\Q$, we define the character $\theta_m$ by
\[\theta_m(x) = \theta(-mx)=\ol{\theta(mx)}.\index{notations}{thetam@$\theta_m=\theta(-m\,\cdot\,)$}
\]
  It is well-known that every character of $\Q\bs\A$ arises in this way, i.e.
  $\Q\cong \widehat{\Q\bs\A}$ by the the map $m\mapsto\theta_m$.

If $V$ is a space of functions on a group $G$, then unless otherwise 
  specified, we denote the
  right regular action of $G$ on $V$ by $R$.  Thus for $\phi\in V$
  and $g,x\in G$,
\[R(g)\phi(x)\index{notations}{R@$R$, right regular representation}=\index{keywords}{right regular action}\phi(xg).\]

\subsection{Characters and Dirichlet $L$-functions}

For a positive integer $N$, a {\bf Dirichlet character}\index{keywords}{Dirichlet character}
  modulo $N$ is a homomorphism
\begin{equation}\label{dc}
\tc:(\Z/N\Z)^*\longrightarrow\C^*,
\end{equation}
extended to a function on $\Z$ by taking $\tc(n)=0$ if $\gcd(n,N)>1$.
  The simplest example is when \eqref{dc} is the trivial homomorphism.
  In this case we say that $\tc$ is the
  {\bf principal character}\index{keywords}{Dirichlet character!principal} modulo $N$.

If $d|N$ and $\chi'$ is a Dirichlet character modulo $d$, then it defines
  a Dirichlet character $\tc$ modulo $N$ by the composition
\begin{equation}\label{olddc}
\tc:(\Z/N\Z)^*\longrightarrow (\Z/d\Z)^*\longrightarrow\C^*,
\end{equation}
where the last arrow is $\chi'$.  We say that $\tc$ is the character of modulus
  $N$ {\bf induced}\index{keywords}{Induced character}\index{keywords}{Dirichlet character!induced}
   from $\chi'$.  Conversely, if $\tc$ is a Dirichlet character modulo $N$
  that factors through the projection to $(\Z/d\Z)^*$ for some positive $d|N$ as above,
  then we say $d$ is an {\bf induced modulus} for $\tc$.  The 
  {\bf conductor}\index{keywords}{conductor!of Dirichlet character} of $\tc$
  is the smallest induced modulus $\c_{\tc}$ for $\tc$.  Equivalently, $\c_{\tc}$
  is the smallest positive divisor of $N$ for which
  $\tc(a)=1$ whenever $\gcd(a,N)=1$ and $a\equiv 1\mod \c_{\tc}$.
  If $\c_{\tc}=N$, then
  $\tc$ is {\bf primitive}.\index{keywords}{Dirichlet character!primitive}

Write $\A^*=\Q^*(\R^+\times \Zhat^*)$.
  A {\bf Hecke character}\index{keywords}{Hecke character}\index{notations}{chi@$\chi$!Hecke character}
   is a continuous homomorphism $\chi: \A^*\longrightarrow\C^*$,
  trivial on $\Q^*$.  The restriction of $\chi$ to $\R^+$ is
  of the form $x\mapsto x^s$ for a unique complex number $s$.
  Therefore the Hecke character
\[\chi_0(a)=\chi(a)|a|^{-s} \index{notations'}{chi0@$\chi_0$}\]
  is trivial on $\Q^*\R^+$, so it has finite order (cf. Lemma 12.1 of \cite{KL};
  beware that in the bijection discussed after that lemma, {\em Dirichlet characters}
  should read {\em primitive Dirichlet characters}).
  Thus an arbitrary Hecke character is uniquely of the form $\chi_0\otimes|\cdot|^s$,
  where $\chi_0$ has finite order.
  The local components $\chi_p:\Q_p^*\longrightarrow \C^*$ 
 ($p\le \infty$)\index{notations}{chip@$\chi_p$, local component of a Hecke character}
  are given by
\[\chi_p(a)=
   \chi(1,\ldots,1,\stackrel{\sss {}^{p^{\text{th}}}}{a},1,1,\ldots),\]
so that $\chi=\prod_p\chi_p$.

 For a finite order Hecke character $\chi$, we let
\[\c_\chi\in\Z^+\]
\index{notations}{cchi@$\c_\chi$ conductor of $\chi$}denote 
  the conductor of $\chi$.\index{keywords}{conductor!of Hecke character}\index{keywords}{Hecke character!conductor}
  This is the smallest positive integer which has the property that
  $\chi(a)=1$ for all $a\in (1+\c_\chi\Zhat)\cap \Zhat^*$.
  For any $N\in \c_{\chi}\Z^+$ we can attach to $\chi$ a Dirichlet character
  $\chi'=\chi'_N$ of modulus $N$ and conductor $\c_\chi$, via
  \index{notations}{chi'@$\chi'=\chi'_N$ Dirichlet character mod $N$ attached to $\chi$} \index{notations'}{chi'N@$\chi'_N$}
\begin{equation}\label{chi'}
\chi:\Q^*(\R^+\times\Zhat^*)\longrightarrow \Zhat^*\longrightarrow
  (\Z/N\Z)^* \longrightarrow\C^*,
\end{equation}
  where the last arrow defines $\chi'$.  The case $N=\c_\chi$ defines
  a bijection between the set of finite order Hecke characters of conductor $N$
  and the set of primitive Dirichlet characters modulo $N$.
  For any integer $d$ prime to $N$, we have
\begin{equation}\label{x'}
\chi'(d)=\prod_{p|N} \chi_p(d)=\chi(d_N) \qquad(d,N)=1,
\end{equation}
with $d_N$ as in \eqref{dN}.

\begin{lemma}[Dirichlet vs. Hecke $L$-functions]
In the above situation, \index{keywords}{Dirichlet $L$-function} \index{keywords}{Hecke $L$-function}
\begin{equation}\label{L}
L(s,\ol{\chi'})=L_{N}(s,\chi),
\index{notations}{L schi@$L(s,\ol{\chi'})$} \index{notations}{L Nschi@$L_{N}(s,\chi)$}
\end{equation}
where the partial $L$-function on the right is defined by the Euler product
\begin{equation}\label{partialL}
L_{N}(s,\chi)=\prod_{p\nmid N} (1-\chi_p(p)p^{-s})^{-1}.
\end{equation}
\end{lemma}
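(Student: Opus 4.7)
The plan is to verify the identity by comparing the Euler factors at every prime. The left-hand side $L(s,\overline{\chi'})$ is the classical Dirichlet $L$-function, which admits the Euler product $\prod_p (1-\overline{\chi'(p)}p^{-s})^{-1}$; since $\chi'$ is defined modulo $N$ and vanishes on integers not coprime to $N$, only primes $p\nmid N$ contribute. The right-hand side is already given in the form $\prod_{p\nmid N}(1-\chi_p(p)p^{-s})^{-1}$. Thus the entire lemma reduces to the local identity
\[
\chi_p(p)=\overline{\chi'(p)}\qquad(p\nmid N).
\]

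To establish this, I would exploit the fact that $\chi$ is trivial on $\Q^*\subset\A^*$ diagonally embedded. Writing the idele $(p,p,p,\ldots)$ as a product of its local components and applying $\chi=\prod_v\chi_v$ gives
\[
\chi_\infty(p)\prod_{q<\infty}\chi_q(p)=1.
\]
Next I would dispose of the archimedean factor: because $\chi$ is of finite order and $\chi_\infty$ restricted to $\R^+$ is a continuous finite-order character of a connected group, $\chi_\infty(p)=1$ for the positive real $p$. Among the remaining finite places, I use that $N$ is divisible by the conductor $\c_\chi$, so $\chi_q$ is unramified for every $q\nmid N$; concretely, $\chi_q$ is trivial on $\Z_q^*$. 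Since $p\in\Z_q^*$ whenever $q\neq p$ and $q\nmid N$, the product collapses to
\[
\chi_p(p)\prod_{q\mid N}\chi_q(p)=1.
\]

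Finally, by formula \eqref{x'} from the excerpt, $\prod_{q\mid N}\chi_q(p)=\chi'(p)$. Since $\chi'$ takes values on the unit circle (a root of unity, as $\chi$ has finite order), its inverse equals its complex conjugate, and the preceding display gives $\chi_p(p)=\chi'(p)^{-1}=\overline{\chi'(p)}$, which is the desired local identity. Substituting this into the two Euler products yields the equality of $L$-functions.

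The main subtlety—though modest—is being careful about which places contribute: one must simultaneously use (i) that $\chi_\infty$ is trivial on the positive real axis for a finite-order Hecke character, and (ii) that $\chi_q$ is unramified for every finite $q\nmid N$ (which requires $\c_\chi\mid N$, a hypothesis built into the definition of $\chi'=\chi'_N$). Once these two unramifiedness facts are in hand, the computation is essentially the global triviality of $\chi$ on the rational prime $p$.
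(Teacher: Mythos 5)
Your proof is correct and follows essentially the same route as the paper's: both reduce the identity of Euler products to the local relation $\chi_p(p)\chi'(p)=1$ for $p\nmid N$, obtained from the triviality of $\chi$ on the principal idele $(p,p,\ldots)$. The only difference is that the paper quotes this relation from \cite{KL} (12.7), while you prove it directly by splitting off the archimedean factor (trivial on $\R^+$ since $\chi$ has finite order) and the unramified factors at $q\nmid N$, $q\neq p$, then invoking \eqref{x'}; that extra detail is correct and makes the argument self-contained.
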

\noindent{\em Remark:} If $N=\c_\chi$, i.e. $\chi'$ is primitive,
   then $L_N(s,\chi)=L(s,\chi)$ by definition.

\begin{proof}
 It is easy to show that $1=\chi(p)=\chi_p(p)\chi'(p)$ for any $p\nmid N$
   (\cite{KL}, (12.7)).  Therefore
\[L(s,\ol{\chi'})=\sum_{n>0}\ol{\chi'(n)}n^{-s}
  =\prod_{p\nmid N}(1-\ol{\chi'(p)}p^{-s})^{-1}=L_N(s,\chi).\qedhere\]
\end{proof}

We will need lower bounds for Dirichlet $L$-functions on the right edge of the
critical strip, since such $L$-values arise in the denominators of the
  Fourier coefficients of Eisenstein series.

\begin{theorem} \label{Ram} Let $\chi$ be a non-principal Dirichlet character modulo $N$.
Write $s=\sigma + it$. There exists a constant $c > 0$ for which the following statements hold.
\begin{enumerate}
\item If $\chi$ is non-real, then for \hskip .2cm
  $\ds1-\frac{c}{(\log (N(\lfloor|t|\rfloor+2)))^{9}}<\sigma\le 2$,
 \[ L(s,\chi)^{-1} \ll \Bigl(\log \bigl(N(\lfloor|t|\rfloor+2)\bigr)\Bigr)^7\]
for an absolute implied constant.
\item If $\chi$ is real, then in the region \hskip .2cm
 $\ds1-\frac{c}{(\log (N(\lfloor|t|\rfloor+2)))^{9}}<\sigma\le 2$, $|t|\ge 1$,
 \[ L(s,\chi)^{-1} \ll \Bigl(\log \bigl(N(\lfloor|t|\rfloor+2)\bigr)\Bigr)^7\]
for an absolute implied constant.
\item If $\chi$ is real and $\e>0$ is given such that $N^\e \ge \log N$, then
  in the region\\
  $1 - \frac{c}{N^{9\e}}<\sigma\le 2$,\, $\frac 1{10N^{\e}} \leq |t| \leq 1$,
  we have
\[ L(s,\chi)^{-1} \ll N^{7\e}\]
for an absolute implied constant.
\item If $\chi$ is real and $\e>0$ is given, then when $N$ is sufficiently large
  (depending on $\e$), for $|s-1| \leq \frac 1{N^{\e/2}}$ we have
\[ L(s,\chi)^{-1} \ll_\e N^{\e/2}\]
for an ineffective implied constant depending on $\e$.
\end{enumerate}
\end{theorem}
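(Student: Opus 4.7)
The plan is to derive all four parts from the classical de la Vall\'ee Poussin zero-free region for Dirichlet $L$-functions, combined with standard analytic tools (Borel-Carath\'eodory, Hadamard three-circles) that convert such zero-free regions into explicit bounds on $|L(s,\chi)|^{-1}$.

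For parts (1) and (2), I would invoke the classical zero-free region: there exists an absolute $c_0 > 0$ such that $L(s,\chi)\neq 0$ in
\[\sigma > 1 - \frac{c_0}{\log(N(|t|+2))},\]
with the sole possible exception, when $\chi$ is real, of a single real Siegel zero $\beta_1 < 1$. For non-real $\chi$ no exception arises (part 1), and for real $\chi$ with $|t|\ge 1$ (part 2) the point $s$ is safely bounded away from the real axis where the Siegel zero could lie. In both cases the Borel-Carath\'eodory lemma applied to $\log L(s,\chi)$ on a shrinking family of disks, together with a Hadamard three-circles argument (see e.g.\ Davenport, \emph{Multiplicative Number Theory}, Ch.\,14, or Titchmarsh \S3.11), converts the zero-free region into an upper bound for $|\log L(s,\chi)|$ in a slightly narrower region, and hence a lower bound for $|L(s,\chi)|$. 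Penetrating further into the zero-free region by shrinking the radius from $c_0/\log$ to $c/\log^9$ costs a power of $\log$ in the resulting estimate; an elementary parameter optimization yields the exponent $7$ asserted.

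For part (3), $\chi$ is real and $|t|$ is small but not tiny. Since any exceptional zero lies on the real axis, the hypothesis $|t|\ge 1/(10N^{\e})$ keeps $s$ separated from it in the imaginary direction, allowing the Borel-Carath\'eodory disks (of radius $\sim |t|/2 \sim 1/N^\e$) to entirely avoid $\beta_1$. The appropriate zero-free region in this regime is $\sigma > 1 - c/N^{9\e}$, which is just a rescaling of the classical one under the hypothesis $N^\e \ge \log N$ (which converts $\log N$ into $N^\e$ throughout). The same argument as in parts (1)-(2) then yields $|L(s,\chi)|^{-1} \ll N^{7\e}$.

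Part (4) is the main obstacle, since $s$ lies in the immediate neighborhood of $s=1$ and the Siegel zero becomes unavoidable. Here I would invoke Siegel's theorem with exponent $\e/4$, obtaining $L(1,\chi) \ge C_{\e/4}\,N^{-\e/4}$ for an ineffective constant $C_{\e/4} > 0$. Combining this with a uniform derivative bound $L'(s,\chi) \ll (\log N)^2$ in the disk $|s-1|\le N^{-\e/2}$ (e.g.\ via the approximate functional equation, or from Cauchy's formula applied on a slightly larger disk where the convexity bound $L(s,\chi) \ll (N|t|+N)^{1/2}$ holds), the mean value theorem gives
\[|L(s,\chi) - L(1,\chi)| \ll N^{-\e/2}(\log N)^2 = o(N^{-\e/4})\]
for $N$ sufficiently large depending on $\e$, so $|L(s,\chi)| \ge \tfrac12 L(1,\chi)\gg_\e N^{-\e/4} \ge N^{-\e/2}$ once $N$ is large enough. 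The ineffective dependence on $\e$ is inherited from Siegel's theorem and cannot be removed by this method; this is the essential difficulty that distinguishes part (4) from the preceding three cases.
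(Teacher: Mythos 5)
The paper's ``proof'' of this theorem is a one-line citation to equations (3), (4), and (5) on page 218 of Ramachandra's book, together with a remark that the fourth part rests on Siegel's theorem. Your proposal reconstructs from scratch the standard arguments that underlie such references: the de la Vall\'ee Poussin zero-free region, the Borel--Carath\'eodory and Hadamard three-circles machinery to convert the zero-free region into a lower bound for $|L(s,\chi)|$, and Siegel's theorem plus a derivative estimate for part (4). This is exactly the classical route (Titchmarsh \S3.11 for $\zeta$, Davenport Ch.\,14 for Dirichlet $L$), and the plan is sound; the paper buys brevity by deferring, while you buy self-containedness.

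Two points where the sketch leans on assertion rather than derivation. First, in parts (1)--(3) the statement ``an elementary parameter optimization yields the exponent 7 asserted'' is not actually carried out; the exponents $7$ and $9$ do emerge from the standard three-circles bookkeeping (as they do for $\zeta$), but someone checking the proof would want to see the radii and bounds that produce them, and small changes in the setup shift these exponents. Second, in part (3) you need the Borel--Carath\'eodory disk to simultaneously (a) stay clear of the real axis (so as to avoid the possible exceptional zero) and (b) lie in the region where the non-exceptional zero-free estimate applies; you assert both but do not exhibit a single radius satisfying both constraints under the hypothesis $N^\e \ge \log N$. Both points are easy to supply and do not affect the correctness of the outline. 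Part (4) is done carefully and is correct: Siegel with exponent $\e/4$, the derivative bound $L'(s,\chi)\ll(\log N)^2$ near $s=1$, and the mean value theorem give $|L(s,\chi)|\ge\tfrac12 L(1,\chi)\gg_\e N^{-\e/4}\ge N^{-\e/2}$ once $N$ is large, with the ineffectivity inherited from Siegel exactly as the paper notes.
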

\begin{proof} See equations (3), (4) and (5) on page 218 of Ramachandra's book
  \cite{Ra}.
  The fourth case requires Siegel's Theorem, which is why the
  constant in that case is not effective.
\end{proof}

\begin{corollary} \label{Lcor}
Fix $\e>0$.  For all Dirichlet characters $\chi$ of modulus $N$,
\begin{equation}\label{Lit}
   L(1+it,\chi)^{-1} \ll_\e N^{\e} (\log(|t|+3))^{7}
\end{equation}
  for an ineffective implied constant depending only on $\e$.
\end{corollary}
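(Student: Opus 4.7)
The plan is to restrict Theorem \ref{Ram} to the line $\sigma=1$ and combine its four cases, dealing with the principal character separately. I would split into four regimes: (i) the principal character $\chi_{0}$; (ii) non-principal non-real $\chi$, any $t$; (iii) non-principal real $\chi$ with $|t|\ge 1$; (iv) non-principal real $\chi$ with $|t|\le 1$.

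For regimes (ii) and (iii), Cases 1 and 2 of Theorem \ref{Ram} apply directly at $\sigma=1$ and yield
\[
 L(1+it,\chi)^{-1}\ll \bigl(\log\bigl(N(\lfloor|t|\rfloor+2)\bigr)\bigr)^{7}.
\]
I would then apply the elementary inequality $(a+b)^{7}\le 2^{6}(a^{7}+b^{7})$ with $a=\log N$ and $b=\log(|t|+3)$, combined with $(\log N)^{7}\ll_{\e}N^{\e}$ and the trivial $(\log(|t|+3))^{7}\ge(\log 3)^{7}$, to absorb both summands into $N^{\e}(\log(|t|+3))^{7}$, giving the Corollary in these regimes.

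Regime (iv) requires combining Cases 3 and 4. Taking the Case 3 parameter to be $\e/7$ yields the bound $\ll N^{\e}$ on $[\tfrac{1}{10N^{\e/7}},1]$ (assuming $N$ is large enough that $N^{\e/7}\ge \log N$, with the finitely many small $N$ absorbed into the implied constant). Taking the Case 4 parameter to be $2\e/7$ yields the bound $\ll_{\e}N^{\e/7}$ on $[0,\tfrac{1}{N^{\e/7}}]$. Since $\tfrac{1}{10N^{\e/7}}\le\tfrac{1}{N^{\e/7}}$, the two intervals overlap and their union is $[0,1]$; the worse bound $\ll_{\e}N^{\e}$ absorbs into $N^{\e}(\log(|t|+3))^{7}$ because $(\log(|t|+3))^{7}$ is bounded below on $|t|\le 1$. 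The ineffective constant inherited from Case 4 (via Siegel's theorem) is what forces the ineffectivity in the Corollary.

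For regime (i), I would use $L(s,\chi_{0})=\zeta(s)\prod_{p\mid N}(1-p^{-s})$, the classical nonvanishing bound $\zeta(1+it)^{-1}\ll\log(|t|+3)$, and the estimate $\prod_{p\mid N}|1-p^{-(1+it)}|^{-1}\le N/\phi(N)\ll_{\e}N^{\e}$ to conclude. The main obstacle I anticipate is the parameter bookkeeping in regime (iv): choosing $\e_{3},\e_{4}$ so that Cases 3 and 4 together cover $[0,1]$ while each delivers a bound of size $\ll_{\e}N^{\e}$ at $\sigma=1$. Once this matching is in place, the rest is essentially substitution into Theorem \ref{Ram}.
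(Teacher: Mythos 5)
Your proposal is correct and follows essentially the same route as the paper: splitting into the same four regimes, applying the four cases of Theorem \ref{Ram} at $\sigma=1$, covering $|t|\le 1$ by overlapping the regions of Cases 3 and 4 with suitably chosen $\e$-parameters, handling the finitely many small $N$ by boundedness, and treating the principal character via the Euler product and the classical bound on $\zeta(1+it)^{-1}$. The only differences are cosmetic (a $(a+b)^7\le 2^6(a^7+b^7)$ inequality where the paper factors as $\log(|t|+3)(\log N+1)$, a slightly different pair of $\e$-parameters in Cases 3--4, and $N/\phi(N)$ in place of $2^{\omega(N)}$ for the Euler product), and the argument is sound.
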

\begin{proof}
Note that since $\log 3>1$,
\[\log(N(\lfloor |t|\rfloor+2))\le \log N+\log(|t|+3)
  =\log(|t|+3)
  \left(\frac{\log N}{\log(|t|+3)}+1\right)\]
\[\le \log(|t|+3)(\log N +1)\ll_\e
  \log(|t|+3)N^{\e/7}.\]
Therefore by parts 1 and 2 of the theorem, \eqref{Lit} holds if
  $\chi$ is non-real, or if $\chi$ is a non-principal real character and $|t|\ge 1$.

%
  Suppose $\chi$ is real and non-principal, and let $\e' = \e/7$.
We need to establish \eqref{Lit} for $|t| \leq 1$.
  Because $\frac 1{10N^{\e'}} \leq \frac 1{N^{\e'/2}}$,  we see that
  either $\frac 1{10N^{\e'}} \leq |t| \leq 1$ or $|t| \leq \frac 1{N^{\e'/2}}$ must hold.
   Therefore as long as $N$ is sufficiently large ($N \ge C(\e)$),
  \[L(1+it,\chi)^{-1}   \ll N^{7\e'} \ll  N^{\e} (\log(3+|t|))^{7},\]
as needed.
We still have to treat the case $N < C(\e)$, $|t| \leq 1$.
   We know that $L(1+it,\chi)^{-1}$ is continuous in $t$,
  and hence bounded on $|t| \leq 1$.
  There are only finitely many characters $\chi$ with modulus $< C(\e)$, so their
  $L$-functions can be bounded uniformly on $|t|\le 1$.
   Thus $L(1+it,\chi) \ll 1$ on $|t|\le 1$ when $N<C(\e)$.

Lastly, suppose $\chi$ is the principal character modulo $N$.
 Recall the well-known  estimate $\zeta(1+it)^{-1} \ll (\log(3+|t|))^{7}$
(\cite{In}, Theorem 10, p.28).  Then
\begin{equation}\label{zeta}
\hskip-2cm L(1+it, \chi)^{-1} =  \zeta(1+it)^{-1}\prod_{p|N} (1-p^{-(1+it)})^{-1}
\end{equation}
\[ \ll (\log(3+|t|))^7
  \prod_{p|N} (1-\frac 1p)^{-1}
 \ll (\log(3+|t|))^7 \prod_{p|N} 2\]
   \[\ll_\e (\log(3+|t|))^7 N^{\e}.\qedhere\]
\end{proof}

\pagebreak
\section{Bi-$K_\infty$-invariant functions on $\GL_2(\R)$}\label{3}

Our objective is to study the cusp forms of weight $0$,
  realized as certain right $K_\infty$-invariant $L^2$-functions on $G(\R)\times G(\Af)$.
  In order to isolate the $K_\infty$-invariant subspace of $L^2$, we will use
  an operator $R(f_\infty\times f_{\fin})$, where $f_\infty$ is a bi-$K_\infty$-invariant
  function on $G(\R)$.  In this section we review the properties of
  such functions which will be useful in what follows.

\subsection{Several guises}

Let $m$ be a fixed nonnegative integer or $\infty$.  Define
   $C^m_c(G^+//K)$\index{notations}{C cinftyGK@$C^m_c(G^+//K)$} to be the space of 
  $m$-times continuously differentiable functions $f$ on
\[G(\R)^+=\{g\in G(\R)|\, \det(g)>0\},\]
 whose support is compact modulo $Z(\R)$, and which satisfy
\begin{equation}\label{Kinv}
f(zk g k')=f(g)
\end{equation}
for all $z\in Z(\R)$ and $k,k'\in K_\infty$.     
In later sections, we will view these as functions on $G(\R)$ by setting
  $f(g)=0$ if $\det(g)<0$.  When $m=0$, we sometimes 
  denote the space by $C_c(G^+//K)$.\index{notations}{C cinftyGK@$C_c(G^+//K)$}

In terms of the Cartan decomposition\index{keywords}{Cartan decomposition}\index{notations'}{y@$y$ in the Cartan decomposition}
\begin{equation}\label{Cartan}
G(\R)^+=Z(\R)K_\infty\left\{\mat{y^{1/2}}{}{}{y^{-1/2}}\right\}K_\infty,
\end{equation}
  an element $f\in C_c^m(G^+//K)$ depends only on the parameter $y$.
 As a function of $y$, it is invariant under $y\mapsto y^{-1}$,
 since $f(\smat{}1{-1}{}g\smat{}{-1}1{}) = f(g)$.
Thus we have the following isomorphism
\[C^m_c(G^+//K)\longrightarrow C_c^m(\R^+)^w,\]
 where $C_c^m(\R^+)^w$\label{Cw}\index{notations}{C cinftyR+w@$C_c^m(\R^+)^w$}
   is the space of smooth compactly supported functions on $\R^+$ (the set of positive
  real numbers)
  that are invariant under $y\mapsto y^{-1}$.  The value of such a 
  function depends only on the unordered pair $\{y,y^{-1}\}$.
  The set of such pairs is in 1-1 correspondence with the real interval
  $[0,\infty)$ via $\{y,y^{-1}\}\leftrightarrow y+y^{-1}-2$.

\begin{proposition}\label{uprop} Suppose $m\ge 0$ and $0\le 3m'\le m+1$.  Then for $y\in \R^+$, 
  the substitution
\begin{equation}\label{u}
 u =y+y^{-1} -2\index{notations'}{u@$u=y+y^{-1} -2$}
\end{equation}
defines a $\C$-linear injection
 $C_c^m(\R^+)^w\longrightarrow C_c^{m'}([0,\infty))$  whose image
  contains $C_c^m([0,\infty))$.  In particular, this map is an isomorphism in the
  two cases $m=m'=0$ and $m=m'=\infty$.
\end{proposition}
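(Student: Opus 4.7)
The plan is to pass to the logarithmic coordinate $t = \log y$, under which a function $F \in C_c^m(\R^+)^w$ becomes an \emph{even}, compactly supported $C^m$ function $G(t) := F(e^t)$ on $\R$ (since $y \leftrightarrow y^{-1}$ translates to $t \leftrightarrow -t$), and the substitution $u = y + y^{-1} - 2$ becomes $u = 2\cosh(t) - 2$. The proposition then amounts to the statement that the unique $\tilde F$ satisfying $G(t) = \tilde F(2\cosh(t)-2)$ lies in $C_c^{m'}([0,\infty))$ whenever $3m' \le m+1$, and that every $H \in C_c^m([0,\infty))$ arises from some $F$ in this way.

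I would first dispose of the easy ingredients. The map $y \mapsto u$ restricts to a real-analytic homeomorphism $[1,\infty) \to [0,\infty)$, so $\tilde F(u) := F(y)$ for the unique $y \ge 1$ with $y + y^{-1} = u+2$ is well-defined, and equals $F(y^{-1})$ by $w$-invariance; linearity, injectivity, and preservation of compact support are immediate, and on $(0,\infty)$ the inverse $u \mapsto y(u)$ is real-analytic, so $\tilde F$ is automatically $C^m$ there. Conversely, given $H \in C_c^m([0,\infty))$, the function $F(y) := H(y+y^{-1}-2)$ is manifestly $w$-invariant, compactly supported (as $y + y^{-1} \to \infty$ when $y \to 0^+$ or $\infty$), and $C^m$ on $\R^+$; this proves the image contains $C_c^m([0,\infty))$ and settles the $m = m' = 0$ isomorphism.

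The substantive content is the regularity of $\tilde F$ at $u = 0$. I would factor the substitution through $s := t^2$, observing that
\[
u(s) \,=\, 2\cosh(\sqrt s) - 2 \,=\, s + \tfrac{s^2}{12} + \tfrac{s^3}{360} + \cdots
\]
is smooth on $[0,\infty)$ with $u(0) = 0$ and $u'(0) = 1$, and strictly increasing, so its inverse $s(u)$ lies in $C^\infty([0,\infty))$. Composition with a smooth diffeomorphism preserves $C^{m'}$-regularity, so the question reduces to showing that $K(s) := G(\sqrt s)$ lies in $C^{m'}([0,\infty))$ whenever $G$ is even and $C^m$ with $3m' \le m+1$. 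I would prove this by induction on $k \le m'$, using the identity $\frac{d}{ds} = \frac{1}{2\sqrt s}\,\frac{d}{dt}$ to express $K^{(k)}(s)$ as a finite linear combination of quotients $t^{j - 2k}\,G^{(j)}(t)$ with $1 \le j \le k$ and $t = \sqrt s$, and then applying Taylor's theorem to $G$ to show each such quotient extends continuously to $t = 0$; the required cancellation comes from evenness of $G$, which forces all odd Taylor coefficients of $G$ at $0$ to vanish.

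The main obstacle is the uniform Taylor bookkeeping in this inductive step: for each $k \le m'$ and each relevant $j$, the quotient $G^{(j)}(t)/t^{2k-j}$ must converge uniformly as $t \to 0^+$, which requires $G$ to have enough derivatives \emph{beyond} the $2k$ needed for pointwise cancellation in order to bound the Taylor remainder uniformly. A careful count of this loss yields the sufficient condition $m \ge 3k - 1$ at the top derivative $k = m'$, which is exactly $3m' \le m+1$. (Whitney's theorem for even functions gives the sharper bound $m \ge 2m'$ and also secures the $m = m' = \infty$ case directly, but the weaker elementary estimate above is all that is needed in the sequel.)
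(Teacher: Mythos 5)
Your overall strategy — logarithmic coordinates $t=\log y$ to turn $w$-invariance into evenness, the further factorization through $s=t^2$ so that $u=2\cosh(\sqrt s)-2$ becomes a smooth diffeomorphism of $[0,\infty)$, and an induction on the order of differentiation — is essentially the paper's proof with one extra (valid and clean) change of variables. The count $m\ge 3k-1$ at the top level also matches. However, there is a genuine error in the inductive step as you describe it.

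You claim that when $K^{(k)}(s)$ is written as a linear combination of $t^{j-2k}G^{(j)}(t)$ ($1\le j\le k$, $t=\sqrt s$), ``each such quotient extends continuously to $t=0$'' by evenness of $G$. This is false: for $j<2k-1$ the individual quotients diverge. For instance, with $G(t)=t^2$ one finds $K''(s)=\tfrac14\bigl(t^{-2}G''(t)-t^{-3}G'(t)\bigr)=\tfrac14\bigl(2t^{-2}-2t^{-2}\bigr)$; each summand has a pole of order $2$, and only their difference is finite. Evenness of $G$ does not make the individual terms continuous — it only guarantees that the single combined fraction $q_k(t)/t^{2k-1}$, where $q_k(t)=\sum_j c_{k,j}\,t^{j-1}G^{(j)}(t)$, has an \emph{odd} numerator, hence vanishing to order $\ge 1$. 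But you need $q_k$ to vanish to order $2k-1$, and that requires tracking the actual cancellations between the $j$-terms, which in turn requires a recursion of the type the paper sets up (its $p_{n+1}=\sinh(x)\,p_n'-(2n-1)\cosh(x)\,p_n$, shown by induction to vanish to order $2n+1$ at $0$). Your ``careful Taylor bookkeeping'' would have to be exactly this cross-term cancellation, not a termwise continuity argument, so the sketch as written cannot be completed without replacing that step.

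Two smaller points: (i) the reduction through $s=t^2$ is a nice simplification — it gives a genuinely smooth substitution on a half-line, replacing the paper's direct work with $u=2\cosh x-2$ — and is worth keeping once the induction is repaired; (ii) invoking Whitney's theorem for the $m=m'=\infty$ case is a legitimate alternative to the paper's unified inductive argument and does give the sharper $m\ge 2m'$ bound, though of course the finite-$m$ statement still needs the repaired induction.
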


\begin{proof}
We first consider the case of smooth functions.
  Let $a(y)\in C^\infty_c(\R^+)^w$, and let $A(u)=a(y)$
  be the associated function of $u\in[0,\infty)$.
  It is easy to see that $A$ is $C^\infty$ on $(0,\infty)$, however
  the smoothness at the endpoint $u=0$ is not obvious because
  $\tfrac{dy}{du} = (1-\frac1{y^2})^{-1}$ blows up at $y=1$.
It is helpful to write $y=e^x$, and define
$h(x)=a(e^x)=A(u)$.  Then:
\begin{itemize}
\item $h(-x)=h(x)$ is even
\item $h\in C^\infty_c(\R)$
\item $u=e^x+e^{-x}-2$, so $\frac{du}{dx}=e^x-e^{-x}=2\sinh(x)$.
\end{itemize}
For $u>0$, write
\[A^{(n)}(u) = \frac{p_n(x)}{2^n(\sinh x)^{2n-1}}.\]
When $n=1$ this holds with $p_1(x)=h'(x)$, and by differentiating, we find
 in general that
\begin{equation}\label{Pn}
p_{n+1}(x) = \sinh(x)p_n'(x)-(2n-1)\cosh(x)p_n(x).
\end{equation}
We claim that $p_{n+1}(x)$ vanishes at least to order $2n+1$ at $0$.
  We prove the claim by induction, the base case being
  $p_1(0)=h'(0)=0$, $h'$ being odd since $h$ is even.
  It is clear that $p_{n+1}(x)$ vanishes to
  at least order $2n-1$ at $0$, since this is true of both terms
  on the right-hand side of \eqref{Pn} by the inductive hypothesis.
  It remains to show that
\[p_{n+1}^{(2n-1)}(0)=p_{n+1}^{(2n)}(0)=0.\]
  By differentiating \eqref{Pn}, we see that for $0\le j\le 2n-1$,
\[p_{n+1}^{(j)}(x) = \sinh(x)p_n^{(j+1)}(x) -(2n-1-j)\cosh(x)p_n^{(j)}(x)
  + L_j(x),\]
where $L_0(x)=0$ and $L_j(x)$, $j>0$, is a linear combination of derivatives of $p_n$
  of order $<j$. In particular, $L_j(0)=0$ for all $j\le 2n-1$.
  Taking $j=2n-1$ gives
\[p_{n+1}^{(2n-1)}(0)=\sinh(0)p_n^{(2n)}(0) + L_{2n-1}(0) =0.\]
  Furthermore, it follows inductively from \eqref{Pn} that
  $p_{n+1}(x)$ is an odd function, so that the even order derivative
  $p_{n+1}^{(2n)}(0)$ vanishes.
  This proves the claim.

Now we can prove by induction that $A^{(n)}(u)$ is defined and continuous at $u=0$.
  This is clear when $n=0$.  Assuming it holds for some given $n\ge 0$, we note that
  by L'Hospital's rule,
\[A^{(n+1)}(0)=\lim_{u\to0^+}\frac{A^{(n)}(u)-A^{(n)}(0)}u=\lim_{u\to 0^+}A^{(n+1)}(u)
=\lim_{x\to 0}\frac{p_{n+1}(x)}{2^{n+1}(\sinh x)^{2n+1}},\]
provided the limit exists.
%
%
The denominator vanishes exactly to order $2n+1$ at $0$, and
 we just showed that $p_{n+1}(x)$ vanishes at least to order $2n+1$ at $0$.
  Therefore we may apply L'Hospital's rule $2n+1$ times, at which
  point we get a nonvanishing denominator at $0$,
  giving a finite limit as needed.
%
%

Now suppose $a(y)$ (hence $h(x)$) is only assumed to be $m$-times continuously 
  differentiable.
In order for the above argument to run with $m'=n+1$, we need
  $p_{n+1}(x)$ (hence $h^{(n+1)}(x)$) to be $(2n+1)$-times continuously differentiable,
  i.e. $m\ge 3n+2=3m'-1$.
  It is clear that the resulting map $C^m_c(\R^+)^w\rightarrow C_c^{m'}([0,\infty))$
  is injective.  On the other hand, given $A\in C_c^m([0,\infty))$ and defining
  $a(y)=A(y+y^{-1}-2)=A(u)$, it follows from the fact that $\tfrac{du}{dy}=(1-y^{-2})$ is smooth
  on $\R^+$ that $a\in C_c^m(\R^+)^w$, so $A$ is in the image of the map.
\end{proof}

 We define, for $f\in C_c^m(G^+//K)$,
\begin{equation}\label{V}
V(u)=V(y+y^{-1}-2)=f(\mat{y^{1/2}}{}{}{y^{-1/2}}).\index{notations}{V@$V$}
\end{equation} 
By the preceding discussion, we have the following.

\begin{proposition}\label{fV}  Suppose $0\le 3m'\le m+1$.  Then the 
  assignment $f\mapsto V$ defines an injection
  $C^m_c(G^+//K)\rightarrow C^{m'}_c([0,\infty))$ whose image contains
  $C^m_c([0,\infty))$.  In particular, it is an isomorphism if $m=m'=0$ or 
  $m=m'=\infty$.
\end{proposition}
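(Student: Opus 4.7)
The plan is to obtain Proposition \ref{fV} as a straightforward composition of two maps that have essentially already been established. First, I would invoke the Cartan decomposition \eqref{Cartan}, which says every $g \in G(\R)^+$ can be written as $zk_1\,\mathrm{diag}(y^{1/2},y^{-1/2})\,k_2$ with $y\in \R^+$. By the bi-$K_\infty$-invariance and central invariance built into \eqref{Kinv}, together with the observation (already made in the excerpt) that $f(\smat{}1{-1}{}g\smat{}{-1}1{})=f(g)$ forces $y\mapsto y^{-1}$ symmetry, this gives a well-defined $\C$-linear isomorphism $C_c^m(G^+//K) \xrightarrow{\sim} C_c^m(\R^+)^w$ sending $f$ to the function $a(y)=f(\mathrm{diag}(y^{1/2},y^{-1/2}))$. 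Smoothness of order $m$ and compact support transfer cleanly across this identification because the Cartan parameter $y$ is a smooth function of $g$ on the open set where it is defined, and the support condition modulo $Z(\R)$ on $f$ becomes compactness of the set of $y$'s that occur.

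Next I would apply Proposition \ref{uprop} directly to $a(y)$: under the hypothesis $0\le 3m'\le m+1$, the substitution $u=y+y^{-1}-2$ yields an injection $C_c^m(\R^+)^w \hookrightarrow C_c^{m'}([0,\infty))$ whose image contains $C_c^m([0,\infty))$. Composing with the Cartan isomorphism and unwinding definitions gives exactly $V(u)=a(y)=f(\mathrm{diag}(y^{1/2},y^{-1/2}))$ as in \eqref{V}, so the composite $f\mapsto V$ is the map asserted by the proposition. Injectivity is inherited from both constituent maps, and the containment $C_c^m([0,\infty))\subset \mathrm{image}$ is inherited from Proposition \ref{uprop}: given $V\in C_c^m([0,\infty))$, one defines $a(y)=V(y+y^{-1}-2)\in C_c^m(\R^+)^w$ as in the second half of the proof of \ref{uprop}, then extends $a$ to a bi-$K_\infty$-invariant function $f$ on $G(\R)^+$ via the Cartan decomposition.

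The two endpoint cases $m=m'=0$ and $m=m'=\infty$ both satisfy $3m'\le m+1$ (trivially in the first case, vacuously in the second), and in each of these cases Proposition \ref{uprop} already gives an isomorphism $C_c^m(\R^+)^w \cong C_c^{m'}([0,\infty))$, so the composite is also an isomorphism, as claimed.

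I do not foresee any genuine obstacle here: essentially all the analytic work has already been done inside Proposition \ref{uprop} (the L'Hospital calculation controlling derivatives of $A(u)$ at $u=0$), and the only remaining content is the translation between the group-theoretic and $\R^+$ formulations via the Cartan decomposition. The one small point to verify carefully is that compactness of support modulo $Z(\R)$ on $G(\R)^+$ corresponds exactly to compactness of the associated $u$-support in $[0,\infty)$; this is immediate since $u\to\infty$ iff $y\to 0^+$ or $y\to\infty$, which are precisely the directions in which the Cartan parameter leaves compact subsets of $\olG(\R)^+$.
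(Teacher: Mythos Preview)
Your proposal is correct and matches the paper's approach exactly: the paper's proof is literally the single sentence ``By the preceding discussion, we have the following,'' where the preceding discussion is precisely the Cartan-decomposition isomorphism $C_c^m(G^+//K)\cong C_c^m(\R^+)^w$ composed with Proposition~\ref{uprop}. You have simply spelled out what the paper leaves implicit.
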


For $g=\smat abcd\in G(\R)^+$, we can recover the parameter
  $y+y^{-1}$ as follows.  Writing
  $g=(\sqrt{\det g})k_{\theta'}\smat{y^{1/2}}{}{}{y^{-1/2}}k_{\theta}$,
  we see that
  ${}^tg\,g=(\det g)k_{\theta}^{-1}\smat{y}{}{}{y^{-1}}k_\theta$,
  where $^tg$ denotes the transpose.  Therefore
\begin{equation}\label{tt}
y+y^{-1} = \frac{\tr({}^tg\,g)}{\det g}=\frac{a^2+b^2+c^2+d^2}{ad-bc}.
\end{equation}
Thus we can recover $f$ from $V$ via:
\begin{equation}\label{Vf}
f(\mat abcd)=V(\frac{a^2+b^2+c^2+d^2}{ad-bc}-2).
\end{equation}

We can also identify $f$ with a function of two variables on the
  complex upper half-plane $\mathbf{H}$\index{notations}{H@$\mathbf{H}$, complex upper half-plane}.
 Recall the correspondence
\[G(\R)^+/Z(\R)K_\infty\longleftrightarrow\mathbf{H}\]
  induced by $\smat abcd\mapsto \smat abcd(i)=\frac{ai+b}{ci+d}$. \index{notations}{17@$\smat abcd z=\frac{az+b}{cz+d}$}
  By this, the following function is well-defined:
\begin{equation}\label{kzz}
k(z_1,z_2) = f(g_1^{-1}g_2) \qquad (z_1,z_2\in\mathbf{H}),\index{notations}{kz1z2@$k(z_1,z_2)$}
\end{equation} 
  where $g_j(i)=z_j$ for $j=1,2$.
Clearly
\[k(\g z_1,\g z_2)=k(z_1,z_2)\]
  for all $\g\in G(\R)^+$, and in particular for any real scalar $c>0$,
\begin{equation}\label{kcz}
k(cz_1,cz_2) = k(z_1,z_2).
\end{equation}

\begin{proposition}\label{kV}
With notation as above, for $g_1,g_2\in G(\R)^+$ we have
\[k(z_1,z_2) = V\left(\frac{|z_1-z_2|^2}{y_1y_2}\right)=f(g_1^{-1}g_2).\]
\end{proposition}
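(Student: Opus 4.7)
The plan is to exploit the fact that both sides of the claimed identity are invariant under the diagonal action $(z_1,z_2)\mapsto(\gamma z_1,\gamma z_2)$ of $G(\R)^+$ on $\mathbf{H}\times\mathbf{H}$, so that it suffices to verify the identity at one convenient representative of each orbit. The left-hand side $f(g_1^{-1}g_2)$ is invariant by construction, since replacing $(g_1,g_2)$ by $(\gamma g_1,\gamma g_2)$ leaves $g_1^{-1}g_2$ unchanged. Invariance of the right-hand side $|z_1-z_2|^2/(y_1 y_2)$ is standard: writing $\gamma=\smat abcd$ with $\det\gamma>0$, one has $\gamma z-\gamma w=\frac{(\det\gamma)(z-w)}{(cz+d)(cw+d)}$ and $\Im(\gamma z)=\frac{(\det\gamma)y}{|cz+d|^2}$, so the factors of $|cz_j+d|^2$ and $\det\gamma$ cancel.

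Given the invariance, apply $g_1^{-1}$ diagonally to reduce to the case $z_1=i$, $g_1=I$. I then need to show $f(g_2)=V(|z_2-i|^2/y_2)$ whenever $g_2(i)=z_2=x_2+iy_2$. Taking $g_2=\smat{y_2^{1/2}}{x_2 y_2^{-1/2}}{0}{y_2^{-1/2}}$, which has $\det g_2=1$ and satisfies $g_2(i)=z_2$, formula \eqref{Vf} yields
\[
f(g_2)=V\!\left(y_2+\tfrac{x_2^2+1}{y_2}-2\right)=V\!\left(\tfrac{(y_2-1)^2+x_2^2}{y_2}\right)=V\!\left(\tfrac{|z_2-i|^2}{y_2}\right),
\]
which is exactly the desired expression when $y_1=1$. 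Any other admissible choice of $g_2$ with $g_2(i)=z_2$ differs from this one on the right by an element of $Z(\R)K_\infty$, under which $f$ is invariant by \eqref{Kinv}, so the identity holds independently of the chosen representatives.

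The argument is essentially bookkeeping; the one point worth isolating is the diagonal $G(\R)^+$-invariance of $|z_1-z_2|^2/(y_1 y_2)$, which is just the familiar $\PSL_2(\R)$-invariance of the Poincar\'e metric on $\mathbf{H}$. I do not expect any serious obstacle. If one preferred to avoid the reduction step, the same conclusion follows in a single stroke by computing $g_1^{-1}g_2$ directly for the upper triangular choices $g_j=\smat{y_j^{1/2}}{x_j y_j^{-1/2}}{0}{y_j^{-1/2}}$ and plugging into \eqref{Vf}; the identity $\frac{\tr({}^t h\,h)}{\det h}-2=\frac{|z_1-z_2|^2}{y_1 y_2}$ for $h=g_1^{-1}g_2$ then drops out by direct calculation.
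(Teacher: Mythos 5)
Your proof is correct. The diagonal-invariance reduction to $(i,z_2)$ is legitimate, but it costs as much algebra as it saves: the paper simply takes the upper-triangular representatives $g_j=\smat{y_j}{x_j}{0}{1}$, computes $g_1^{-1}g_2=\smat{y_2/y_1}{(x_2-x_1)/y_1}{0}{1}$, and applies \eqref{tt} to get $\frac{a^2+b^2+c^2+d^2}{ad-bc}-2=\frac{(x_2-x_1)^2+(y_2-y_1)^2}{y_1y_2}$ in one line — which is precisely the ``single stroke'' you describe at the end. Since $k$ was already noted to be well-defined after \eqref{kzz}, the invariance bookkeeping is not needed, and both routes reduce to the same two-line computation; there is no substantive difference in approach.
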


\begin{proof}
Writing $g_1^{-1}g_2=\smat{y_1}{x_1}{0}{1}^{-1}
  \smat{y_2}{x_2}{0}{1}
  =\smat{{\frac{y_2}{y_1}}}{\frac{(x_2-x_1)}
  {{y_1}}}{0}{1}$, by \eqref{tt} we have
\[u=\frac{\scri a^2+b^2+c^2+d^2}{\scri ad-bc}-2
  =\frac{y_2}{y_1}+\frac{(x_2-x_1)^2}{y_1y_2}
  +\frac{y_1}{y_2}-\frac{2y_1y_2}{y_1y_2}
=\frac{(x_2-x_1)^2+(y_2-y_1)^2}{y_1y_2}.\qedhere\]
\end{proof}

\subsection{The Harish-Chandra transform}

Given $f \in C_c^m(G^+//K)$, its Harish-Chandra transform\index{keywords}{Harish-Chandra transform}
  is the function of $y\in \R^+$ defined by\index{notations'}{Hf@$\mathcal{H} f$}
\[ (\mathcal{H} f) (y) = y^{-1/2} \int_\R
   f(\mat{1}{x}{}{1}\mat{y^{1/2}}{}{}{y^{-1/2}} ) dx. \]
The absolute convergence follows easily from the compactness
   of the support of $f$.
  It is clear that $\mathcal{H}f$ is also compactly supported.

If we identify $f$ with $V$, and let $u=y+y^{-1}-2$ as before,
  the transform is traditionally denoted
\[Q(u)=\int_{\R} V(u +x^2)dx,\]
following Selberg.  To see the equivalence, by \eqref{tt} we have
\[ (\mathcal{H} f)(y) = y^{-1/2} \int_{\R} f
   (\mat{y^{1/2}}{xy^{-1/2}}{0}{y^{-1/2}})dx=\int_{\R}f(\mat{y^{1/2}}x{}{y^{-1/2}})dx \]
\begin{equation}\label{HCV}
=\int_\R V(y+y^{-1}+x^2-2)dx=\int_\R V(u+x^2)dx.
\end{equation}
From this, we see that $\mathcal{H}f$ belongs to the space
  $C^m_c(\R^+)^w$.

\begin{proposition}\label{HC}
Suppose $m,m'\ge0$ with $3m'\le m+1$. 
  Then the Harish-Chandra transform defines a commutative diagram
\begin{diagram}[tight,height=.65cm,width=2.0cm,balance]
 C_c^m(G^+//K)&\rTo^{f\mapsto \mathcal{H}f} &C_c^m(\R^+)^w
\\
\dTo&&\dTo\\
C^{m'}_c([0,\infty))&\rTo^{V\mapsto Q}&C^{m'}_c([0,\infty)),
\end{diagram}
where all arrows are injective.  The image of the bottom map $V\mapsto Q$
  contains $C^m([0,\infty))$.
When $m=m'=\infty$, all arrows are isomorphisms.
%
Generally, if $m'>0$ then for $u = y+y^{-1}-2$ and
\[  \mathcal{H}f (y)= Q(u) = \int_\R V(u + x^2) dx,\]
the inverse transform is given by
\begin{equation}\label{Hinv}
 V(u) = -\frac{1}{\pi} \int_\R Q'(u + w^2)dw. 
\end{equation}
\end{proposition}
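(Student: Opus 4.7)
My plan is to reduce everything to the fundamental identity $Q(u) = \int_\R V(u+x^2)\,dx$ established in \eqref{HCV} and then to prove the inversion formula \eqref{Hinv} via a polar coordinates computation, from which both injectivity and the surjectivity-type claim for the bottom map will follow. Commutativity of the diagram is immediate from \eqref{HCV}, which says $(\mathcal{H}f)(y) = Q(y + y^{-1} - 2)$. That $\mathcal{H}f \in C^m_c(\R^+)^w$ and that $Q \in C^{m'}_c([0,\infty))$ whenever $V \in C^{m'}_c([0,\infty))$ follow from differentiation under the integral sign, legal to the stated order thanks to compact support, together with the observation that $u$ is symmetric under $y \leftrightarrow y^{-1}$.

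The heart of the argument is the inversion formula. I would write
\[
\int_\R Q'(u + w^2)\,dw \;=\; \int_\R\!\int_\R V'(u + x^2 + w^2)\,dx\,dw
\]
and pass to polar coordinates in the $(x,w)$-plane: with $x^2 + w^2 = r^2$ and $dx\,dw = r\,dr\,d\theta$, the double integral collapses to $2\pi \int_0^\infty V'(u + r^2)\,r\,dr$, and the substitution $s = r^2$ gives $\pi \int_0^\infty V'(u+s)\,ds = -\pi V(u)$ by compact support of $V$. This proves \eqref{Hinv} whenever $V$ is $C^1$ with compact support, so the bottom arrow is injective for $m' \ge 1$; the case $m' = 0$ can be handled by approximating $V \in C^0_c$ uniformly by smooth compactly supported functions and passing to the limit, after which injectivity of the top arrow and of the vertical arrows follows from commutativity and Proposition \ref{fV}.

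For the image claim, given $Q_0 \in C^m_c([0,\infty))$ with $m \ge 1$, I would define $V(u) = -\tfrac{1}{\pi}\int_\R Q_0'(u+w^2)\,dw$ directly. Compact support of $Q_0'$ in some $[0,R]$ forces $V$ to vanish outside $[0,R]$, and differentiating $k$ times under the integral yields $V^{(k)}(u) = -\tfrac{1}{\pi}\int_\R Q_0^{(k+1)}(u+w^2)\,dw$ for $k \le m-1$, with no boundary terms since all derivatives of $Q_0$ vanish at the edge of its support. The hypothesis $3m' \le m+1$ forces $m' \le m-1$ whenever $m' \ge 1$, so $V \in C^{m-1}_c \subset C^{m'}_c$, and applying \eqref{Hinv} to this $V$ recovers $Q_0$ as its image. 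Consequently, in the $m = m' = \infty$ case all arrows become isomorphisms. The main obstacle I anticipate lies not in the polar coordinates step itself but in the regularity bookkeeping near $u = 0$ --- verifying that differentiation under the integral produces derivatives of $V$ continuous up to the endpoint, not merely on $(0,\infty)$. This should follow from the smoothness of $w \mapsto u + w^2$ and continuity of $Q_0^{(k+1)}$ on all of $[0,\infty)$, so that the usual dominated convergence justification is uniform in $u \ge 0$, and no blowup of the kind addressed in Proposition \ref{uprop} appears here.
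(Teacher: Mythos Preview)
Your proof is correct, and the polar coordinates computation for the inversion formula \eqref{Hinv} is identical to the paper's. The one notable difference is in how you establish that the image of $V\mapsto Q$ contains $C^m_c([0,\infty))$: the paper's proof argues this by commutativity of the diagram together with the already-known fact (Proposition~\ref{uprop}) that the right vertical arrow has image containing $C^m_c$, whereas you construct $V$ directly from $Q_0$ via the inversion formula and check its regularity. Your route is in fact the one the paper spells out in the Remark immediately following the proposition (and later in Propositions~\ref{Vd} and~\ref{VQ}), so both approaches appear in the paper. Your direct construction has the advantage of being self-contained and making the regularity count explicit; the paper's commutativity argument is shorter but relies on unpacking what was proved earlier about the vertical maps.

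Two small points. First, the phrase ``applying \eqref{Hinv} to this $V$ recovers $Q_0$ as its image'' is imprecise: \eqref{Hinv} goes from $Q$ to $V$, not the other way. What you need here is to rerun the same polar coordinates computation with $Q_0$ in place of $V$, which gives $\int_\R V(u+x^2)\,dx = -\tfrac{1}{\pi}\iint Q_0'(u+x^2+w^2)\,dx\,dw = Q_0(u)$; this is formally identical but logically a separate step. Second, your approximation argument for injectivity when $m'=0$ is not in the paper (which simply invokes \eqref{Hinv} for injectivity without singling out the $m'=0$ case); your sketch is plausible but would need a bit more care, since injectivity does not automatically pass to closures---one clean way is to mollify $V$ by translation in $u$, observe that the transform commutes with such mollification, and then invoke the $C^1$ case.
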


\noindent{\em Remarks:}
For the smooth case, see also \cite{La1}, \S V.3, Theorem 3, p. 71.
 For more detail about the inverse transformation,
  see Propositions \ref{Vd} and \ref{VQ} below.  For example, we will show that 
the image of the bottom map contains $C^{m'+1}([0,\infty))$.  In fact, 
  given $Q$ in this space, if we {\em define} $V$ by \eqref{Hinv}, 
  then $V\in C^{m'}([0,\infty))$ and $Q(u)=\int_{\R}V(u+x^2)dx$.

\begin{proof}
The commutativity of the diagram follows from \eqref{HCV}.
  The vertical maps are injective by Propositions \ref{uprop} and \ref{fV} above.
  As described there, the image of the right-hand vertical map
   $\mathcal Hf\mapsto Q$ contains
  $C^{m}([0,\infty))$, so by commutativity of the diagram, 
  the same holds for the image of the bottom map $V\mapsto Q$.
  It follows that when $m'=\infty$, all arrows are surjective.
  The injectivity of the horizontal arrows is a consequence of the inversion 
  formula \eqref{Hinv}, so it just remains to prove the latter.
We can differentiate $Q(u)$ under the integral sign because
  $V\in C^{m'}_c([0,\infty))$ for $m'\ge1$ (cf. Proposition \ref{lebdiff}).  Thus
\[ Q'(u) = \int_\R V'(u + x^2)dx. \]
Hence
\[ -\frac1\pi\int_\R Q'(u +  w^2) dw
 = -\frac1\pi\int_\R \int_\R V'(u + w^2 + x^2) dx dw \]
\[ = -\frac1\pi\int_0^{2\pi} \int_0^\infty V'(u + r^2) r dr d\theta
 =  -2\int_0^\infty V'(u + t) \frac{dt}{2}
  = V(u). \qedhere\]
\end{proof}

\subsection{The Mellin transform}

For $\Phi \in C^{m}_c(\R^+)$, its Mellin transform\index{keywords}{Mellin transform}
  is the function of $\C$ defined 
by\index{notations}{Mphis@$\mathcal{M},\mathcal{M}_s$ (Mellin transform)}
\begin{equation}\label{mellin}
 (\mathcal{M}\Phi)(s) = \int_0^\infty  \Phi(y) y^s \frac{dy}{y}. 
\end{equation}
This is a Fourier transform on the multiplicative group $\R^+$.
We also denote the above by $\mathcal{M}_s\Phi$.
It is easily shown to be an entire function of $s$.
When $m\ge 2$, we have
\begin{equation}\label{mi}\index{keywords'}{Mellin inversion formula}
\Phi(y)=\frac1{2\pi i}\int_{\Re(s)=\sigma}(\mathcal M\Phi)(s)y^{-s}ds
\end{equation}
for any $\sigma\in\R$.
This is the Mellin inversion formula, which 
  we will prove under somewhat more general hypotheses in Propositions
  \ref{indepsigma} and \ref{Minv}.

We say that an entire function $\eta:\C\longrightarrow\C$ is {\bf Paley-Wiener of order $m$}
  if there exists a real number $C\ge 1$ depending only on $\eta$
   such that 
\begin{equation}\label{PWm}
 |\eta(\sigma + it)| \ll_{m,\eta} \frac{C^{|\sigma|}}{(1 + |t|)^m}. 
\end{equation}
  We let $PW^m(\C)$\index{notations}{P WmC@$PW^m(\C)$}
 denote the space of such functions.
If the above holds for all $m>0$ with the same $C$, then $\eta$ belongs to
 the {\bf Paley-Wiener space} 
  $PW^\infty(\C)=PW(\C)$.\index{keywords}{Paley-Wiener function}\index{notations}{P WC@$PW(\C)$, Paley-Wiener space}

\begin{proposition}
Suppose $m\ge 0$.  Then the Mellin transform defines an injection
\[ \mathcal{M} : C_c^m(\R^+) \longrightarrow PW^m(\C)\]
whose image contains $PW^{m+2}(\C)$.
  On $PW^{m+2}(\C)$, the inverse map is given by \eqref{mi}.
In particular, if $m=\infty$ the transform is an isomorphism.
\end{proposition}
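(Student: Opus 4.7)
The plan is to reduce the Mellin transform to the Fourier transform via the substitution $y = e^x$. Given $\Phi \in C_c^m(\R^+)$ supported in $[A,B]$ with $0 < A < B$, the function $\phi(x) = \Phi(e^x)$ lies in $C_c^m(\R)$ with support in $[\log A, \log B]$, and
\[(\mathcal{M}\Phi)(s) = \int_\R \phi(x) e^{sx} dx,\]
so the restriction to $\Re(s) = \sigma$ is essentially the Fourier transform of $\phi(x) e^{\sigma x}$. First I would verify $\mathcal{M}\Phi \in PW^m(\C)$: the trivial estimate gives $|(\mathcal{M}\Phi)(s)| \ll C^{|\sigma|}$ for $C = \max(B, 1/A, 1)$, and integrating by parts $m$ times using $\frac{d}{dx}(e^{sx}/s) = e^{sx}$ (boundary terms vanish by the compact support of $\phi$) yields
\[(\mathcal{M}\Phi)(s) = \frac{(-1)^m}{s^m}\int_\R \phi^{(m)}(x) e^{sx} dx,\]
whence $|(\mathcal{M}\Phi)(s)| \ll C^{|\sigma|}/|s|^m$. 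Combined with the trivial bound for $|s| \le 1$, this is exactly \eqref{PWm}.

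For the converse, given $\eta \in PW^{m+2}(\C)$ with constant $C \ge 1$, I would define $\Phi$ by \eqref{mi} and show the integral is independent of $\sigma$ by applying Cauchy's theorem to a rectangle with vertices $\sigma_j \pm iT$; the two horizontal segments tend to $0$ as $T \to \infty$ thanks to the $T^{-(m+2)}$ decay. Compact support of $\Phi$ then follows by sending $\sigma \to +\infty$ when $y > C$ and $\sigma \to -\infty$ when $y < 1/C$, since the integral is bounded by a constant times $(C/y)^\sigma$ respectively $(Cy)^{-\sigma}$. Differentiating under the integral sign $n \le m$ times brings down the factor $s(s+1)\cdots(s+n-1)y^{-s-n}$, and the resulting integrand is dominated in absolute value by $C^{|\sigma|} y^{-\sigma-n}(1+|t|)^{-2}$, which is integrable in $t$; hence $\Phi \in C_c^m(\R^+)$. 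The identity $\mathcal{M}\Phi = \eta$ (which also furnishes injectivity of $\mathcal{M}$) then follows from Fourier inversion applied to $\phi(x) e^{\sigma x}$.

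The smooth case $m = \infty$ is immediate: if $\eta \in PW(\C)$ then $\eta \in PW^{m+2}(\C)$ for every $m$ with the same constant $C$, so the single function $\Phi$ produced by \eqref{mi} has support in $[1/C, C]$ and lies in $C^m$ for every $m$, hence in $C_c^\infty(\R^+)$. The main technical obstacle I anticipate is the bookkeeping in the $\sigma \to \pm\infty$ arguments needed to pin down the support of $\Phi$: the exponential growth $C^{|\sigma|}$ from Paley-Wiener competes directly with the $y^{-\sigma}$ factor, so one has to exploit the strict inequality $y > C$ (or $y < 1/C$) and produce an integrable majorant uniform in $\sigma$ to justify the vanishing.
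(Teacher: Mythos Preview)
Your proposal is correct and follows essentially the same approach as the paper: integration by parts for the forward bound, the contour-shifting/$\sigma\to\pm\infty$ argument for compact support of the inverse transform, and differentiation under the integral sign for regularity. The only cosmetic difference is that you pass explicitly to the Fourier picture via $y=e^x$ (yielding the factor $s^{-m}$), whereas the paper integrates by parts directly in the Mellin integral (yielding $\bigl(s(s+1)\cdots(s+m-1)\bigr)^{-1}$); your handling of both limits $\sigma\to\pm\infty$ is in fact slightly more careful than the paper's sketch.
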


\begin{proof}
(See also \cite{La1}, \S V.3, Theorem 4, p. 76.)
 First we show that the image of the Mellin transform lies in $PW^m(\C)$.
  When $m=0$ this is obvious.  Given $\Phi\in C_c^m(\R^+)$ for $m>0$,
  we may apply integration by parts to \eqref{mellin} to get
\[\mathcal{M}\Phi(s)=\left.\Phi(y)\frac{y^s}s\right|_0^\infty-
  \frac1s\int_0^\infty\Phi'(y)y^{s+1}\frac{dy}y.\]
Since $\Phi$ is compactly supported in $\R^+$, the first term on the right vanishes.
  Continuing, we find 
\[\mathcal{M}\Phi(s)= \frac{(-1)^m}{s(s+1)\cdots(s+m-1)}
  \int_0^\infty\Phi^{(m)}(y)y^{s+m}\frac{dy}y.\]
Using this, it is straightforward to see that $\mathcal{M}\Phi$ satisfies \eqref{PWm}.
The injectivity of the map is immediate from the inversion formula \eqref{mi}.





For $\eta \in PW^{2}(\C)$, we can define $\Phi(y)=\frac1{2\pi i}\int_{\Re s=\sigma}\eta(s)
  y^{-s}ds$ as in \eqref{mi}, the convergence being absolute for any $\sigma$ by \eqref{PWm}
  with $m=2$.  To see that $\Phi$ is compactly supported, consider
\[ \Phi(y) = \frac{1}{2\pi i} \int_{\Re s = \sigma} \eta(s) y^{-s}ds
  \ll  \int_{-\infty}^\infty \frac{C^{\sigma} |y|^{-\sigma}}{(1+|t|)^2} dt \ll (C|y|^{-1})^\sigma. \]
If $|y| > |C|$, then the right-hand side approaches $0$ as 
  $\sigma \rightarrow \infty$, so $\Phi(y)=0$.
Thus $\Supp \phi \subset [-C, C]$.

Lastly, if $\eta \in PW^{m+2}(\C)$ and $\Phi$ is defined as above,
  then $\eta=\mathcal{M}\Phi$ (cf. Proposition \ref{Minv}),
  and it is not hard to show that $\Phi\in C^m_c(\R^+)$.
  The idea is that after differentiating under the integral sign $m$ times, we still have
  an integrand with sufficient (quadratic) decay in $t=\Im s$ for convergence.
  See Proposition \ref{Sd} below for details.
\end{proof}


\subsection{The Selberg transform}

If we restrict the Mellin transform to the space $C_c^{m}(\R^+)^w$ defined
  on p. \pageref{Cw},
  it gives an injection to the space
  $PW^m(\C)^{\operatorname{even}}$\index{notations}{P WCeven@$PW(\C)^{\operatorname{even}}$}
  of {\em even} functions that are Paley-Wiener of order $m$.\index{keywords'}{even Paley-Wiener functions}
  The composition of the Harish-Chandra and Mellin transforms is called the
  {\bf spherical transform},\index{keywords}{spherical transform}
  which we denote by\index{notations'}{Sfs@$(\mathcal{S}f)(s)$}
\[(\mathcal{S}f)(s) = \mathcal{M}_{s}(\mathcal{H}f).\]
Because $\mathcal{M}_s$ and $\mathcal{H}$ are injective,
  we immediately see the following.

\begin{proposition}\label{ST}
For $m\ge 0$,  the spherical transform
\begin{diagram}[tight,height=.65cm,width=2.1cm,balance]
 \mathcal{S}:C_c^m(G^+//K)&\rTo^{f\mapsto \mathcal{S}f}
   &PW^m(\C)^{\operatorname{even}}
\end{diagram}
is injective, and its image contains $PW^{m+2}(\C)^{\text{even}}$.
In particular, when $m=\infty$ it is an isomorphism.
\end{proposition}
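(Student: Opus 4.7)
The plan is to factor $\mathcal{S} = \mathcal{M} \circ \mathcal{H}$ and chain the two preceding propositions. First, for any $f \in C_c^m(G^+//K)$, Proposition \ref{HC} places $\mathcal{H}f$ in $C_c^m(\R^+)^w$, and then the Mellin proposition places $\mathcal{M}(\mathcal{H}f)$ in $PW^m(\C)$. To see that the image lies in the even subspace, I would substitute $y \mapsto y^{-1}$ in the defining integral: using the $w$-invariance $\mathcal{H}f(y^{-1}) = \mathcal{H}f(y)$, this gives
\[(\mathcal{S}f)(-s) = \int_0^\infty \mathcal{H}f(y)\, y^{-s}\,\frac{dy}{y} = \int_0^\infty \mathcal{H}f(y)\, y^s\,\frac{dy}{y} = (\mathcal{S}f)(s).\]
Injectivity of $\mathcal{S}$ is then immediate, since both $\mathcal{H}$ and $\mathcal{M}$ are injective on the spaces in question.

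For the image-containment clause, I would take $\eta \in PW^{m+2}(\C)^{\text{even}}$ and invert each transform in turn. The image statement of the Mellin proposition (the image of $\mathcal{M}$ contains $PW^{m+2}(\C)$) produces $\Phi \in C_c^m(\R^+)$ with $\mathcal{M}\Phi = \eta$. To promote $\Phi$ into $C_c^m(\R^+)^w$, I would observe that the Mellin transform of $y \mapsto \Phi(y^{-1})$ equals $\eta(-s)$ by the same change of variable as above; since $\eta$ is even, this equals $\mathcal{M}\Phi(s)$, and the injectivity of $\mathcal{M}$ forces $\Phi(y^{-1}) = \Phi(y)$. Finally, invoking Proposition \ref{HC}---specifically the explicit Harish-Chandra inversion formula \eqref{Hinv} and the image-containment assertions traced through the commutative diagram involving the $u$-substitution and $V \mapsto Q$---I would lift $\Phi$ to an $f \in C_c^m(G^+//K)$ with $\mathcal{H}f = \Phi$. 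Composing yields $\mathcal{S}f = \mathcal{M}(\mathcal{H}f) = \mathcal{M}\Phi = \eta$ as required.

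The main obstacle is the regularity bookkeeping in the last step. The chain of inversions passes through both the singular change of variable $u = y + y^{-1} - 2$ (relating $\Phi$ to $Q$ via Proposition \ref{uprop}) and the Selberg integral equation $Q(u) = \int_\R V(u+x^2)dx$ (relating $V$ to $Q$); each transition loses derivatives. One must verify that the two extra orders of differentiability encoded in the hypothesis $\eta \in PW^{m+2}$ rather than $PW^m$ precisely compensate for this loss, so that the resulting $f$ lies in the full target space $C_c^m(G^+//K)$ rather than some strictly smaller $C_c^{m'}$. For $m = \infty$ this is automatic since all intermediate arrows in Propositions \ref{uprop}, \ref{fV}, and \ref{HC} become isomorphisms; for finite $m$, the verification relies on the explicit image clauses of those propositions together with the inversion formula \eqref{Hinv}.
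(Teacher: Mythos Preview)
Your proposal is correct and follows the same route as the paper, namely factoring $\mathcal{S}=\mathcal{M}\circ\mathcal{H}$ and invoking the injectivity and image statements of the two preceding propositions. In fact the paper's argument is a single sentence (``Because $\mathcal{M}_s$ and $\mathcal{H}$ are injective, we immediately see the following'') and does not write out the evenness check or the image-containment step at all, so your treatment is already more explicit than the original; the regularity bookkeeping you flag as the main obstacle is likewise left implicit in the paper.
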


The {\bf Selberg transform} of $f\in C_c^m(G^+//K)$\index{keywords}{Selberg transform}
  is a variant of the above, defined by
\begin{equation}\label{Selberg}\index{notations}{ht@$h(t)$, Selberg transform}
h(t)=(\mathcal{S}f)(it) = \mathcal{M}_{it}\mathcal{H}f.
\end{equation}
 It is given explicitly by
\begin{equation}\label{Sel2}
h(t) = \int_{0}^\infty\int_{-\infty}^\infty f(\smat 1x01
  \smat{y^{1/2}}{}{}{y^{-1/2}})y^{\frac12+it}\frac{dx\,dy}{y^2}
=\iint_{\mathbf{H}}k(i,z)y^{\frac12+it}dz,
\end{equation}
where $dz=\frac{dx\,dy}{y^2}$\index{notations}{dz@$dz=\frac{dx\,dy}{y^2}$ measure on $\mathbf{H}$}
   is the $G(\R)^+$-invariant measure on $\mathbf{H}$.
Note that $s\mapsto h(-is)$ is Paley-Wiener of order $m$.

\begin{proposition}
Suppose $m>2$ and $h(-is)=(\mathcal{S}f)(s)\in PW^m(\C)^{\mathrm{even}}$.
  Then the inverse of the Selberg transform is given by\index{notations}{V@$V$}
\[V(u) = \frac1{4\pi}\int_{-\infty}^\infty P_{-\frac12+it}
  (1+\tfrac u2)h(t)\tanh(\pi t)\,t\,dt,\]
for the Legendre function\index{keywords}{Legendre function} 
  $P_s(z)=P_s^0(z)$.\index{notations}{P@$P_s(z)$ (Legendre function)}
   In particular, we have
\begin{equation}\label{V0} \index{notations'}{V0@$V(0)$}
f(1)=V(0) = \frac1{4\pi}\int_{-\infty}^\infty h(t) \tanh(\pi t)\,t\, dt.
\end{equation}
\end{proposition}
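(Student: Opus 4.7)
The strategy is to chain the two inversion formulas already established—for the Mellin and Harish--Chandra transforms—and then recognize the resulting inner integral as a classical representation of a Legendre function. Since $h(-is)\in PW^m(\C)^{\mathrm{even}}$ with $m>2$, we have $\mathcal{H}f\in C_c^m(\R^+)^w$, so the Mellin inversion formula \eqref{mi} applies and recovers
\[Q(u)=(\mathcal{H}f)(y)=\frac{1}{2\pi}\int_{-\infty}^\infty h(t)\,y^{-it}\,dt,\]
where $y\ge 1$ is the distinguished root of $y+y^{-1}-2=u$ (the evenness of $h$ makes this choice immaterial). Applying the inverse Harish--Chandra transform \eqref{Hinv} then gives
\[V(u)=-\frac{1}{\pi}\int_{-\infty}^\infty Q'(u+w^2)\,dw.\]

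Next I substitute the first formula into the second. The Paley--Wiener hypothesis $m>2$ gives $h(t)=O((1+|t|)^{-m})$ with $m>2$, which is more than enough to justify differentiating under the integral in $u$ and then applying Fubini to interchange the $t$- and $w$-integrals. This yields
\[V(u)=\frac{1}{2\pi^2}\int_{-\infty}^\infty h(t)\,F(u,t)\,dt,\]
where $F(u,t)$ is an explicit inner integral in $w$ involving $\partial_u\bigl(y(u+w^2)\bigr)^{-it}$.

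The crux is to identify $F(u,t)$ with $\pi\,t\tanh(\pi t)\,P_{-\tfrac12+it}(1+\tfrac u2)$. After the substitution $u+w^2=2\cosh\xi-2$, so that $y(u+w^2)=e^\xi$ and $w\,dw=\sinh\xi\,d\xi$, the integral $F(u,t)$ takes the form
\[\int_{\xi_0}^\infty\frac{(\text{trigonometric factor in }t\xi)}{\sqrt{\cosh\xi-\cosh\xi_0}}\,d\xi,\qquad\cosh\xi_0=1+\tfrac u2,\]
which is essentially the Mehler--Dirichlet integral representation of the Legendre function $P_{-\tfrac12+it}$. Equivalently, one may simply invoke the Mehler--Fock inversion theorem: our entire calculation is the spherical inversion theorem for $\SL_2(\R)/\SO(2)$, and the factor $\tfrac{1}{4\pi}\,t\tanh(\pi t)\,dt$ that appears is the Plancherel measure on the spherical principal series. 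The main obstacle is precisely the bookkeeping in this last step—tracking constants carefully so that the factor $t\tanh(\pi t)$ emerges with the correct normalization $\tfrac{1}{4\pi}$.

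Finally, for \eqref{V0}, note that $V(0)=f(1)$ directly from \eqref{V} with $y=1$, and the Legendre function satisfies $P_s(1)=1$ for every $s$. Specializing the general inversion formula at $u=0$ therefore produces
\[f(1)=V(0)=\frac{1}{4\pi}\int_{-\infty}^\infty h(t)\tanh(\pi t)\,t\,dt,\]
as claimed.
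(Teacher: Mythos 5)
Your argument is correct and follows essentially the same path as the paper: Mellin inversion to recover $Q$, the inverse Harish--Chandra formula \eqref{Hinv}, differentiation under the integral and Fubini, then recognition of the inner $w$-integral as a Mehler--Dirichlet representation of $P_{-1/2+it}$ with the $\tanh(\pi t)$ coming from that identity (the paper cites GR 8.715.2 and 8.736.7, following Zagier, for this step). Your side remark identifying this as the $\SL_2(\R)/\SO(2)$ Plancherel formula is a helpful conceptual gloss but not a different proof.
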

\begin{proof} (See also (2.24) of \cite{Za} or ($1.64'$) of \cite{Iw}.)
Beginning with the fact that $\mathcal{M}_s(\mathcal{H}f)=h(-is)$, 
  we apply Mellin inversion \eqref{mi} to get, for $y>0$,
\[ (\mathcal{H}f)(y) = \frac 1{2\pi i}\int_{\Re s = 0} h(-is) y^{-s}ds
 = \frac 1{2\pi} \int_{\R} h(r) y^{-ir} dr  = \frac 1{2\pi} \int_{\R} h(r) y^{ir} dr,\]
since $h$ is even.
Write $y=e^{v}$ and $u=y+y^{-1}-2 = e^{v}+e^{-v}-2$, and define 
\[g(v)=Q(u)= (\mathcal{H}f)(y).\]
Then $\ds g(v) = \frac 1{2\pi} \int_{\R} h(r) e^{irv} dr$, and differentiating
  (cf. Proposition \ref{lebdiff}),
\[ g'(v) = \frac {i}{2\pi} \int_{\R} r h(r) e^{irv} dr=-\frac1{2\pi}
  \int_{\R}\sin(rv)r\,h(r)dr\]
since $h$ is even.
We have used the fact that $m>2$, so in particular the above is absolutely convergent.
Now we invert the Harish-Chandra transform via \eqref{Hinv} to get
\[ V(w) = -\frac{1}{\pi} \int_\R Q'(w+x^2) dx  = -\frac{2}{\pi}\int_0^{\infty} Q'(w+x^2) dx   \]
\[ = -\frac{1}{\pi} \int_{w}^{\infty}  \frac{Q'(u) du}{\sqrt{u-w}} 
 = -\frac{1}{\pi} \int_{\cosh^{-1} (1+\frac w2)}^{\infty}
    \frac{g'(v) dv}{\sqrt{e^v+e^{-v}-2-w}} \]
\[ = \frac{1}{2\pi^2} \int_{-\infty}^\infty \int_{\cosh^{-1} (1+\frac w2)}^{\infty}
    \frac{\sin(rv) }{\sqrt{e^{v}+e^{-v}-2-w}}dv\, rh(r)dr.\]
The interchange of the integrals is justified by the absolute convergence of the 
 integral, which follows easily by the fact that $m > 2$.
%
  As observed by Zagier (\cite{Za} (2.24)), using the identities 8.715.2 and 8.736.7 of
   \cite{GR}, it is straightforward to show that the above is
\[ = \frac{1}{4\pi} \int_\R P_{-\frac 12+ir}(1+\frac w2) \tanh(\pi r) r\, h(r) dr. \qedhere\]
\end{proof}

It is sometimes desirable to extend all of these transforms to
  functions with sufficient decay rather than just the case of
  compact support.  We will discuss this in detail in Section \ref{Val}, but we mention
  here that the following conditions are equivalent:
  \begin{itemize}
\item $V(u)=O(u^{-\frac{1+A}2})$ as $u\to\infty$
\item $Q(u)= O(u^{-A/2})$ as $u\to\infty$
\item $h(t)$ is holomorphic in the horizontal strip $|\Im(t)|<A/2$.
\end{itemize}
See \cite{Za}, p. 320.

%

\subsection{The principal series of $G(\R)$}\index{keywords}{principal series!of $\GL_2(\R)$}

Here we recall the construction of the principal series of $G(\R)$ and
  prove some well-known simple properties.
  Detailed background is given, e.g., in \S 11 of \cite{KL}.
For $\e_1, \e_2\in \{0,1\}$ and $s_1,s_2\in\C$, define a character
  $\chi=\chi(\e_1,\e_2,s_1,s_2)$ of $B(\R)$ \index{notations}{chi@$\chi$!character on $B(\R)$}
  by \index{notations}{chi@$\chi$!$=\chi(\e_1,\e_2,s_1,s_2)$}
\[\chi(\mat{a}{b}{0}{d})= \sgn(a)^{\ve_1}|a|^{s_1}\sgn(d)^{\ve_2}|d|^{s_2}.\]
  Every character of $B(\R)$ has this form. \index{notations}{pichi@$\pi_\chi$}
  We let $\pi_\chi=\pi(\e_1,\e_2,s_1,s_2)$ \index{notations}{pichi@$\pi_\chi=\pi(\e_1,\e_2,s_1,s_2)$}
  denote the representation of
  $G(\R)$ unitarily induced from $\chi$.  The underlying representation space $V_\chi$
  consists of measurable functions on $G(\R)$ satisfying
\[\phi(\mat ab0d g)=\chi(\mat ab0d)\left|\frac ad\right|^{1/2}\phi(g),\]
  with inner product given by
\[\sg{\phi_1,\phi_2}=\int_{K_\infty}\phi_1(k)\ol{\phi_2(k)}dk.\]
The action of $G(\R)$ is given by right translation
\[\pi_\chi(g)\phi(x)=\phi(xg).\]
The representation $\pi_\chi$ is unitary when $\chi$ is unitary,
  i.e. when $s_1,s_2\in i\R$.
  See \S 11.3 of \cite{KL} for details.

  We say that a vector has {\bf weight $\k$} if it transforms by
 the scalar $e^{i\k\theta}$ under the action of $k_\theta\in K_\infty$.
  A natural basis for $V_\chi$ is $\{ \phi_\k |\, \k\in \e_1+\e_2+2\Z\}$,
  where $\phi_\k$ is characterized by
  \[\phi_\k(k_\theta)=e^{i\k\theta}.\index{notations'}{phik@$\phi_\k$}
\]
  This function spans the one-dimensional space of weight $\k$ vectors
  in $V_\chi$.

We define the {\bf spectral parameter} of $\pi_\chi$ by\index{keywords}{spectral parameter!of $\pi_\chi$}
\begin{equation}\label{t}\index{notations}{t@$t$ (spectral parameter)}
t=-\frac{i}2(s_1-s_2).
\end{equation}
  The representation $\pi_\chi$ is reducible if and only if $t\neq 0$ and
  $2it+\e_1+\e_2$ is an odd integer.
  Furthermore, the Casimir element $\Delta$ in the center 
  of\index{notations}{Delta@$\Delta$!Casimir element}
  the universal enveloping algebra $U(\lie_\C)$, whose
  right regular action on $C^\infty(G(\R)^+)$ is given
in the coordinates $z\smat1x01\smat{y^{1/2}}{}{}{y^{-1/2}}k_\theta$ by
\begin{equation}\label{Cas} 
\index{notations}{Delta@$\Delta$!Casimir element}\index{keywords}{Casimir element}
R(\Delta)\phi=-y^2\left(\frac{\partial^2\phi}{\partial x^2}
+\frac{\partial^2\phi}{\partial y^2}\right)
+y\frac{\partial^2\phi}{\partial x\partial \theta},
\end{equation}
  acts on the $K_\infty$-finite vectors of $V_\chi$ by the scalar
\begin{equation}\label{nu}
\pi_\chi(\Delta) =\frac{1}4+t^2.
\end{equation}
(See e.g. \cite{KL}, pp. 169, 185.)

The only irreducible finite dimensional unitary representations of $G(\R)$
  are the unitary characters.  For the infinite dimensional ones, we have the following.

\begin{proposition}\label{LK}
Let $\pi$ be an irreducible infinite dimensional unitary representation of $G(\R)$.
  Then $\pi$ contains a nonzero vector of weight $0$ (resp. weight $1$) if and only if
  $\pi\cong\pi(\e_1,\e_2,s_1,s_2)$ is an irreducible principal series
  representation with $\e_1+\e_2$ even (resp. odd), and either:

$\cdot$ $s_1, s_2\in i\R$ (unitary principal series) or



$\cdot$ $t\in i\R$, $0<|t|<\frac12$, and $s_1+s_2\in i\R$ (complementary series),

\noindent for $t$ as in \eqref{t}. The vector is unique up to scalar multiples.
\end{proposition}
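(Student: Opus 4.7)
The proof splits into sufficiency and necessity. For sufficiency, given $\pi \cong \pi_\chi$ satisfying the stated conditions, I would simply observe that the basis vector $\phi_k \in V_\chi$ (for $k = 0$ or $1$) exists precisely when $k \in \varepsilon_1+\varepsilon_2+2\Z$, i.e.\ when $\varepsilon_1 + \varepsilon_2$ has the same parity as $k$. This vector has weight $k$ by construction, and spans the one-dimensional weight $k$ subspace of $V_\chi$, which already establishes uniqueness up to scalar multiples. The unitarity of the unitary and complementary principal series in the stated parameter ranges is a standard fact from the general theory of induced representations on $G(\R)$, treated in detail in \S11 of \cite{KL}; one checks directly that the given inner product is $G(\R)$-invariant in the unitary case, and that the intertwining operator provides an invariant Hermitian form in the complementary case.

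For necessity, I would invoke the Bargmann-style classification of irreducible unitary representations of $G(\R)=\GL_2(\R)$: every infinite-dimensional such representation is isomorphic to either (a) a unitary or complementary principal series $\pi(\varepsilon_1, \varepsilon_2, s_1, s_2)$ in the parameter ranges appearing in the statement, or (b) a discrete series $D_m^\pm$ of lowest weight $|m| \ge 2$. A quick $K_\infty$-type analysis of each family then settles the problem: the discrete series $D_m^+$ has weights $m, m+2, \ldots$, while $D_m^-$ has weights $-m, -m-2, \ldots$, so when $|m| \ge 2$ neither contains a weight $0$ or weight $1$ vector. Hence $\pi$ must be a principal series, and the parity of $\varepsilon_1 + \varepsilon_2$ is forced by the presence of $\phi_k$.

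The main obstacle is the classification itself, which is a substantial piece of representation theory that the paper inherits from \S11 of \cite{KL}. A secondary subtlety concerns what in the $\SL_2(\R)$ theory are called limits of discrete series (weight one): in the $\GL_2(\R)$ framework of this paper these correspond to irreducible principal series with $t = 0$ and $\varepsilon_1 + \varepsilon_2 = 1$, since the reducibility criterion stated just before the proposition requires $t \ne 0$. Thus no separate case needs to be carved out in the analysis of weight $1$ vectors. Once the classification and the $K_\infty$-type structure of each family are in place, the proposition follows by matching up parities and invoking the one-dimensionality of the weight $k$ subspace spanned by $\phi_k$.
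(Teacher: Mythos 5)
Your proof is correct, but it takes a genuinely different route from the paper's. You invoke the full Bargmann-style classification of the infinite-dimensional irreducible unitary dual of $\GL_2(\R)$ (unitary principal series, complementary series, discrete series), and then eliminate the discrete series by examining their $K_\infty$-types. The paper instead uses only the subrepresentation theorem: every irreducible unitary $\pi$ embeds into some principal series $\pi(\e_1,\e_2,s_1,s_2)$, and the key observation (cited to p.~164 of \cite{KL}) is that any \emph{proper} subrepresentation of a principal series containing a weight-$0$ or weight-$1$ vector is necessarily finite dimensional. Since $\pi$ is assumed infinite dimensional, it must equal the whole principal series, which is therefore irreducible; unitarity then forces the two parameter ranges. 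The paper's argument needs strictly less input — one structural fact about the $K_\infty$-decomposition of reducible principal series in place of the full unitary dual classification — but your version is arguably more transparent if the classification is already on the table, and your remark on limits of discrete series (that in the $\GL_2(\R)$ normalization they appear as \emph{irreducible} odd-parity principal series at $t=0$, because the paper's reducibility criterion requires $t\neq0$) correctly accounts for the one potential gap in a discrete-series-based case check.
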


\begin{proof}
Any irreducible unitary representation $\pi$ is infinitesimally equivalent
  with a subrepresentation of a principal series representation of $G(\R)$.
  A proper subrepresentation containing
  a vector of weight $0$ or $1$ is necessarily finite dimensional
  (see e.g. \cite{KL}, p. 164).  Therefore $\pi\cong \pi(\e_1,\e_2,s_1,s_2)$
  is itself a principal series representation.  Since $\pi$ is unitary,
  one of the two given scenarios must hold.
\end{proof}

Generally, if $\phi$ is any continuous function on $G(\R)$, we extend the
 right regular action of $G(\R)$ to an action of
   $f\in C_c^m(G^+//K)$ by defining \index{notations}{Rf@$R(f)$}
  \[R(f)\phi(g') = \int_{\olG(\R)}f(g)R(g)\phi(g')dg=\int_{\olG(\R)}f(g)\phi(g'g)dg.\]
  If $\phi$ is right $Z(\R)K_\infty$-invariant, it can be viewed as a function
 on $\mathbf{H}$, and after replacing $g$ by $g'^{-1} g$ in the above, we find
\begin{equation}\label{Rfk}
R(f)\phi(g') = \int_{\mathbf{H}}k(z',z)\phi(z)\frac{dx\,dy}{y^2},
\end{equation}
where $k(z',z)$ is the function attached to $f$ in \eqref{kzz}.

As shown by Selberg \cite{S}, if $\phi(z)$ is an eigenfunction of the Laplacian
  with eigenvalue $\frac14+t^2$, then in the sense of \eqref{Rfk},
  $R(f)\phi = h(t)\phi$ for the Selberg transform $h$ of $f$
  (see also Theorem 1.16 of \cite{Iw}).
  We prove this here in the special case of interest to us.  We use the setting of
  weight $\k$ functions as an example of how the results of this section immediately
  generalize from $\k=0$.

\begin{proposition}\label{finf}
  Let $\pi=\pi_\chi$ be as above.  Let $f_\infty$
  be a continuous function whose support lies in $G(\R)^+$
   and is compact modulo $Z(\R)$,
  satisfying
\[f_\infty(z k_{\theta_1}^{-1} gk_{\theta_2})
  =\chi(z)^{-1}e^{-i\k(\theta_2-\theta_1)}f_\infty(g)
  \qquad(z\in Z(\R), \,k_{\theta_j}\in K_\infty).\]
  Then the operator $\pi(f_\infty)$ preserves the one-dimensional subspace $V_\k$ of
  weight $\k$ vectors in $V_\chi$, and vanishes on its orthogonal complement.
  If this subspace is nonzero
  (i.e. $\k\equiv \e_1+\e_2\mod 2$), then
\begin{equation}\label{lambdainf}
\pi(f_\infty)\phi_\k=h(t)\phi_\k,
\end{equation}
where $t$ is the spectral parameter \eqref{t} of $\pi$, and
  $h$ is the Selberg transform of $f_\infty$, defined in \eqref{Selberg}.
\end{proposition}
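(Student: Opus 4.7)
My plan is to exploit the two-sided $K_\infty$-transformation law of $f_\infty$ to force $\pi(f_\infty)$ to land in the one-dimensional weight $\k$ subspace, then identify the resulting eigenvalue by a direct integral computation.

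The first step is to establish the operator identities
\[
\pi(k_\alpha)\,\pi(f_\infty) \;=\; e^{i\k\alpha}\,\pi(f_\infty) \;=\; \pi(f_\infty)\,\pi(k_\alpha).
\]
Both follow from changes of variable $g\mapsto k_\alpha^{\pm1}g$ (respectively $g\,k_\alpha^{\mp1}$) in $\pi(f_\infty)=\int_{\olG(\R)} f_\infty(g)\pi(g)\,dg$ together with the hypothesis, which (taking $\theta_1=-\beta$, $\theta_2=\gamma$) reads $f_\infty(k_\beta g k_\gamma)=e^{-i\k(\beta+\gamma)}f_\infty(g)$. The left identity places the image of $\pi(f_\infty)$ inside the $e^{i\k\alpha}$-eigenspace of every $\pi(k_\alpha)$, which is precisely the weight $\k$ subspace $V_\k$. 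The right identity, applied to any vector $\phi$ of weight $\k'$, gives $(e^{i\k'\alpha}-e^{i\k\alpha})\,\pi(f_\infty)\phi = 0$ for all $\alpha$, and hence $\pi(f_\infty)\phi=0$ whenever $\k'\neq\k$. Since the $K_\infty$-finite vectors are dense in $V_\chi$ and $\pi(f_\infty)$ is bounded (as $f_\infty$ is integrable modulo center), this extends to the assertion that $\pi(f_\infty)$ preserves $V_\k$ and annihilates its orthogonal complement. When $V_\k\neq 0$, one-dimensionality (Proposition \ref{LK} setup) forces $\pi(f_\infty)\phi_\k=\lambda\,\phi_\k$ for some $\lambda\in\C$.

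To identify $\lambda$ with $h(t)$, I would evaluate at the identity, using $\phi_\k(1)=1$:
\[
\lambda \;=\; [\pi(f_\infty)\phi_\k](1) \;=\; \int_{\olG(\R)} f_\infty(g)\,\phi_\k(g)\,dg.
\]
Since $f_\infty$ is supported in $\olG(\R)^+$, parameterize via Iwasawa as $g=\smat{1}{x}{0}{1}\smat{y^{1/2}}{}{}{y^{-1/2}}k_\theta$ with $y>0$. From the defining transformation law of $\phi_\k\in V_\chi$ together with $(s_1-s_2)/2=it$ from \eqref{t}, one gets
\[
\phi_\k\!\left(\smat{1}{x}{0}{1}\smat{y^{1/2}}{}{}{y^{-1/2}}k_\theta\right) \;=\; y^{1/2+it}\,e^{i\k\theta},
\]
while the hypothesis gives $f_\infty(nak_\theta)=e^{-i\k\theta}f_\infty(na)$. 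The factors $e^{\pm i\k\theta}$ cancel, so the $\theta$-integral decouples and contributes $\meas(\ol{K_\infty})=1$; what remains is precisely the double integral defining $h(t)$ in \eqref{Sel2}.

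The main obstacle is purely bookkeeping: one must line up the quotient Haar measure on $\olG(\R)^+$ with the Iwasawa coordinates $(x,y,\theta)$, recalling from Section~\ref{notation} that the adopted $\meas(\ol{K_\infty})=1$ is \emph{not} the quotient of the standard measure on $K_\infty/\{\pm1\}$, and keep the signs straight in the transformation properties of $f_\infty$ versus those of $\phi_\k$ under $k_\theta$. Once the normalizations match, the proof is essentially algebraic — the cancellation of $e^{\pm i\k\theta}$ between $f_\infty$ and $\phi_\k$ is exactly what makes the weight drop out and produces an eigenvalue given by a formula (the Selberg transform) manifestly independent of $\k$.
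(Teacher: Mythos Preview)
Your proof is correct and follows essentially the same path as the paper's: the eigenvalue computation---evaluate $\pi(f_\infty)\phi_\k$ at the identity, use the Iwasawa decomposition, observe that the opposite $K_\infty$-weights of $f_\infty$ and $\phi_\k$ cancel, and recognize the remaining integral as $\mathcal{M}_{it}\mathcal{H}f_\infty=h(t)$---is exactly what the paper does. The only difference is in the preliminary step: you establish the operator identities $\pi(k_\alpha)\pi(f_\infty)=e^{i\k\alpha}\pi(f_\infty)=\pi(f_\infty)\pi(k_\alpha)$ directly and then appeal to density of $K_\infty$-finite vectors, whereas the paper packages this step as the general Lemma~\ref{basic} (with its remark), which uses the adjoint $\pi(f^*)$ to kill $(V_\k)^\perp$; both arguments are standard and equivalent here.
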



\begin{proof}
We will prove the first claim in a more general context in Lemma \ref{basic}
  below, so we grant it for now.
Hence if $\phi=\phi_\k\in V_\chi$,  $\phi$ is an eigenvector of
  $\pi(f_\infty)$ since $\dim V_\k=1$.
   The eigenvalue $\lambda$ is given by
\[\lambda=\pi(f_\infty)\phi(1)=\int_{\olG(\R)} f_\infty(g)\phi(g)dg
=\int_{\SL_2(\R)}f_\infty(g)\phi(g)dg,\]
  by our normalization of Haar measure (cf. (7.27) on page 95 of \cite{KL}).
  Here we have used the fact that $f_\infty$ is supported on $G(\R)^+$.
  Now since $f_\infty$ and $\phi$ have opposite weights on the right, the
  integrand is right $K_\infty$-invariant.  Therefore we have
\[\lambda=\int_0^\infty\int_{-\infty}^\infty
  f_\infty(\smat1x01\smat{y^{1/2}}{}{}{y^{-1/2}})
  \phi(\smat1x01\smat{y^{1/2}}{}{}{y^{-1/2}})\frac{dx\,dy}{y^2}\]
\[=\int_0^\infty\int_{-\infty}^\infty
  f_\infty(\smat1x01\smat{y^{1/2}}{}{}{y^{-1/2}})
   y^{(s_1-s_2)/2} y^{1/2}y^{-2} dx\,dy\]
\[=\int_0^\infty\left[y^{-1/2}\int_{-\infty}^\infty
  f_\infty(\smat1x01\smat{y^{1/2}}{}{}{y^{-1/2}})\,
   dx\right]y^{it}\frac{dy}y=\mathcal{M}_{it}\mathcal{H}(f_\infty)=h(t),\]
  as required.
\end{proof}


\begin{lemma}\label{basic}
  Let $G$ be a locally compact group, let $K\subset G$
  be a closed subgroup, and let $\pi$ be a unitary representation of $G$
  on a Hilbert space $V$ with central character $\chi$.
  Then for any bi-$K$-invariant function $f\in L^1(G,\chi^{-1})$
  (i.e. integrable mod center, with central character $\chi^{-1}$),
  the operator $\pi(f)$ on $V$ given by \index{notations}{pif@$\pi(f)$}
\[\pi(f)v=\int_{\olG}f(g)\pi(g)v\,dg\]
  has its image in the $K$-fixed subspace $V^K$, and
  annihilates the orthogonal complement of this subspace.
\end{lemma}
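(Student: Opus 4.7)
The plan is to prove the two assertions separately: (a) $\pi(f)v \in V^K$ for every $v \in V$, and (b) $\pi(f)w = 0$ whenever $w$ is orthogonal to $V^K$. Both parts rest on change-of-variables manipulations applied to the defining integral $\pi(f)v = \int_{\olG} f(g)\pi(g)v\,dg$, which makes sense on $\olG = G/Z$ because the central character $\chi^{-1}$ of $f$ cancels against the central character $\chi$ of $\pi$, so the integrand descends from $G$ to $\olG$.

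For (a), I would fix $k \in K$ and, using strong continuity of $\pi$ together with boundedness of $\pi(k)$ to pull the operator inside, compute
\[
\pi(k)\pi(f)v \;=\; \int_{\olG} f(g)\pi(kg)v\,dg \;=\; \int_{\olG} f(k^{-1}g)\pi(g)v\,dg \;=\; \int_{\olG} f(g)\pi(g)v\,dg \;=\; \pi(f)v,
\]
where the second equality is the substitution $g \mapsto k^{-1}g$, valid by left invariance of the Haar measure on $\olG$, and the third is left $K$-invariance of $f$. Since $k \in K$ was arbitrary, $\pi(f)v \in V^K$.

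For (b), I would introduce the involution $f^\vee(g) = \overline{f(g^{-1})}$ (including the modular function if $G$ is not unimodular; for the reductive groups used elsewhere in the paper this factor is trivial). A direct calculation using unitarity of $\pi$ and the change of variables $g \mapsto g^{-1}$ yields the adjoint identity $\pi(f)^* = \pi(f^\vee)$. Because $f$ is bi-$K$-invariant and has central character $\chi^{-1}$, so does $f^\vee$. Applying part (a) to $f^\vee$, we find $\pi(f^\vee)v \in V^K$ for every $v$. Then for any $w \in (V^K)^\perp$ and any $v \in V$,
\[
\langle \pi(f)w,\,v\rangle \;=\; \langle w,\,\pi(f^\vee)v\rangle \;=\; 0,
\]
and since $v$ was arbitrary, $\pi(f)w = 0$.

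The only real obstacle I foresee is bookkeeping around the identity $\pi(f)^* = \pi(f^\vee)$ in the stated generality: one must verify that $f^\vee$ still lies in $L^1(G,\chi^{-1})$ and that the inversion substitution is correctly accounted for (trivially if $G$ is unimodular, otherwise via the modular function, in which case the proper definition is $f^\vee(g) = \Delta(g^{-1})\overline{f(g^{-1})}$). Once this is in place, the two parts fit together immediately with no further analytic subtlety.
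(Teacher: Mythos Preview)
Your proof is correct and follows essentially the same approach as the paper: left $K$-invariance of $f$ gives $\pi(f)v\in V^K$, and the adjoint identity $\pi(f)^*=\pi(f^*)$ (with $f^*(g)=\overline{f(g^{-1})}$, which is again bi-$K$-invariant) handles the orthogonal complement. The only cosmetic differences are that the paper works throughout with the weak characterization $\langle\pi(f)v,w\rangle=\int_{\olG}f(g)\langle\pi(g)v,w\rangle\,dg$ rather than manipulating the vector-valued integral directly, and in part (b) the paper checks $\langle\pi(f)w,v\rangle=0$ only for $v\in V^K$ and then invokes $\pi(f)w\in V^K\cap(V^K)^\perp$, whereas your observation that $\pi(f^\vee)v\in V^K$ for \emph{all} $v$ gives the conclusion in one step.
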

\noindent{\em Remark:} If the bi-$K$-invariance of $f$ is replaced by the
  property
\[f(k^{-1}gk')=\tau(k)\tau(k')^{-1}f(g)\]
  for a character $\tau$ of $K$, then the above holds with
  $V_\tau=\{v\in V|\,\pi(k)v=\tau(k)v\}$
  in place of $V^K$, as is easily seen by adjusting the proof below.

\begin{proof}
See page 140 of \cite{KL} for a detailed discussion of $\pi(f)$.
In particular, the vector $\pi(f)v$ is characterized by the property that
\[\sg{\pi(f)v,w}=\int_{\olG}f(g)\sg{\pi(g)v,w}dg\]
 for all $w\in V$.
Since $\pi$ is unitary, for any $k\in K$ we have
\[\sg{\pi(k)\pi(f)v,w}=\sg{\pi(f)v,\pi(k^{-1})w}=
  \int_{\olG} f(g)\sg{\pi(g)v,\pi(k^{-1})w}dg\]
\[  =\int_{\olG}f(g)\sg{\pi(kg)v,w}dg=\int_{\olG} f(k^{-1}g)\sg{\pi(g)v,w}dg
   = \sg{\pi(f)v,w}\]
by the left $K$-invariance of $f$.  Thus $\pi(k)\pi(f)v=\pi(f)v\in V^K$
  as claimed.

  The adjoint of $\pi(f)$ is the operator $\pi(f^*)$,
  where \index{notations}{13@$f^*(g)=\ol{f(g^{-1})}$}
\[f^*(g)=\ol{f(g^{-1})}\in L^1(G,\chi^{-1}).\]
  The right $K$-invariance of $f$ means that $f^*$ is left
  $K$-invariant, so the operator $\pi(f)^*=\pi(f^*)$ also has its image
  in $V^K$.  If $w\in (V^K)^\perp$, then for any $v\in V^K$ we have
\[\sg{\pi(f)w,v}=\sg{w,\pi(f^*)v}=0.\]
  Hence $\pi(f)w\in V^K \cap (V^K)^\perp =\{0\}$, as needed.
\end{proof}

%

\pagebreak
\section{Maass cusp forms}\label{4}\index{keywords}{Maass cusp forms}

Here we review some well-known properties of Maass cusp forms, and
  spell out their connection with the representation theory of the adele group
  $\GL_2(\A)$.

\subsection{Cusp forms of weight $0$}

More detail on the material below can be found e.g. in Iwaniec \cite{Iw}.
Fix a level $N\in\Z^+$, and let $\w'$ be a Dirichlet character whose conductor
  divides $N$ and which satisfies\index{notations}{w'@$\w'$, nebentypus}
\begin{equation}\label{w'}
\w'(-1)=1.
\end{equation}
We view $\w'$ as a character of $\Gamma_0(N)$ via $\w'(\smat abcd)=\w'(d)$.
  Note that $\w'(\g)=1$ if $\g\in \Gamma_1(N)$.
  Let $L^2(N,\w')$ denote the space of measurable functions \index{notations}{L20Nw@$L^2(N,\w')$}
  $u:\mathbf H\longrightarrow\C$ (modulo functions that are $0$ a.e.) such that
\begin{equation}\label{hg}
u(\g z)=\ol{\w'(\g)}u(z)
\end{equation}
 for all $\g\in\Gamma_0(N)$, and whose Petersson 
 norm\index{notations}{14@$\Vert u\Vert$, Petersson norm}\index{Petersson norm}
\begin{equation}\label{Petnorm}
\|u\|^2=\frac1{\psi(N)}\int_{\Gamma_0(N)\bs\mathbf H}|u(x+iy)|^2\frac{dx\,dy}{y^2}
\end{equation}
  is finite.
  Taking $\g=\smat{-1}{}{}{-1}$ in \eqref{hg} gives $\w'(-1)=1$ if $u(z)\neq 0$,
  which is why we imposed \eqref{w'}.

  Let $\delta\in G(\Q)^+$, and write
\[\delta^{-1}\Gamma_1(N)\delta\cap N(\Q)=\{\smat{1}{tM_\delta}{0}{1}|\, t\in \Z\},\]
  where $M_\delta>0$ (see Lemma 3.7 of \cite{KL}).
  If $u$ is any continuous function satisfying \eqref{hg}, we 
 set\index{notations}{udeltaz@$u_\delta(z) = u(\delta(z))$}
\[u_\delta(z) = u(\delta(z)).\]
   Then $u_\delta(z+M_\delta)=u_\delta(z)$, so for all $y>0$, it has a
   Fourier expansion about the cusp $q=\delta(\infty)$ of the 
 form\index{notations}{amdelta@$a_{m,\delta}(u,y)$, $m$-th Fourier coefficient of $u_\delta(z)$}\index{keywords}{Fourier expansion}
\[u_\delta(z)=\sum_{m=-\infty}^\infty a_{m,\delta}(u,y)\,e(nx/M_\delta).\]
  We drop $\delta$ and just write $a_m(u,y)$ when $q=\infty$.
  An element $u\in L^2(N,\w')$ is {\bf cuspidal} if its constant 
  terms\index{keywords}{constant term} vanish:\index{keywords}{cuspidal function}
\begin{equation}\label{a0}
a_{0,\delta}(u,y)=\frac1{M_\delta}\int_{0}^{M_\delta}u(\delta(x+iy) )dx=0
\end{equation}
  for all $\delta\in G(\Q)^+$ and a.e. $y>0$.
  The subspace of cuspidal functions is denoted $L^2_0(N,\w')$.\index{notations}{L20Nw'@$L^2_0(N,\w')$}

  The hyperbolic Laplacian is defined as an operator on $C^\infty(\mathbf{H})$ 
  by\index{keywords}{Laplacian}
\begin{equation}\label{Lap}\index{notations}{Delta@$\Delta$!Laplacian}
\Delta= -y^2\left(\frac{\partial^2}{\partial x^2}
  +\frac{\partial^2}{\partial y^2}\right).
\end{equation}
  This operator commutes with the action of $G(\R)^+$:
\[(\Delta u)(g z) = \Delta(u(gz)).\]
  By this invariance, $\Delta$ descends to an operator on
   $C^\infty_c(\Gamma_1(N)\bs \mathbf{H})$,
   which is dense in $L^2(N,\w')$.
  One can show that relative to the Petersson inner product,
  this operator is symmetric and positive:
\begin{equation}\label{symm}\sg{\Delta \phi,\psi}=\sg{\phi,\Delta \psi},\end{equation}
\begin{equation}\label{nonneg}
\sg{\Delta \phi,\phi}=\tfrac1{[\Gamma(1):\Gamma_1(N)]}
  \int_{\Gamma_1(N)\bs\mathbf{H}}\|\nabla \phi(x+iy)\|^2dx\,dy\ge 0
\end{equation}
(\cite{La1}, \S XIV.4).
The operator $\Delta$ extends to an elliptic operator on the
  distribution\index{keywords}{distribution} 
  space $\mathcal{D}'(\Gamma_1(N)\bs\mathbf{H})$
  of continuous linear functionals
  on $C_c^\infty(\Gamma_1(N)\bs\mathbf{H})$.  See \cite{F}, p. 284.
  Identifying $\phi\in L^2(N,\w')$ with the functional $f\mapsto\sg{f,\phi}$ realizes
  $L^2(N,\w')$ as a subspace of $\mathcal{D}'(\Gamma_1(N)\bs\mathbf{H})$,
  although this subspace is not stable under the extended operator $\Delta$.
%

  A {\bf Maass cusp form} of level $N$ and nebentypus $\w'$ is
  an eigenfunction $u$ of $\Delta$ in the subspace $L^2_0(N,\w')$ (\cite{Ma}).
  By the elliptic regularity theorem, such an eigenfunction is necessarily smooth,
  i.e. $u\in C^\infty(\mathbf{H})$ (cf. \cite{F2} p. 214, or \cite{La1} p. 407).
  We write $\Delta u =(\frac14+t^2) u$ for the Laplace 
  eigenvalue\index{notations}{Delta@$\Delta$!z@eigenvalue $\tfrac14+t^2$} and
  call $t$ the {\bf spectral parameter} of 
$u$.\index{keywords}{spectral parameter!of $u$}\index{notations}{t@$t$ (spectral parameter)}
  It is also customary to use $s(1-s)$ for the eigenvalue, where the
  relationship is given by $s=\frac12+it$.  We will not use this notation,
  preferring instead to use $s=it$.

\begin{theorem}\label{Delta}
The cuspidal subspace $L^2_0(N,\w')$ has an orthogonal basis consisting
  of Maass cusp forms.  Each cuspidal eigenspace of $\Delta$ is finite
   dimensional, and
the eigenvalues are positive real numbers
  $\lambda_1\le \lambda_2\le \cdots$ with no finite limit point.
\end{theorem}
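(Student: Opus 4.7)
The plan is to apply the spectral theorem for compact self-adjoint operators to an integral operator on $L^2_0(N,\w')$ that commutes with $\Delta$, and then extract the $\Delta$-spectral data. First I would choose $f_\infty \in C_c^\infty(G^+//K)$ of the form $f_\infty = f_0 \ast f_0^*$ with $f_0^*(g) = \overline{f_0(g^{-1})}$, so that $f_\infty = f_\infty^*$ and, by Proposition \ref{ST} and standard convolution properties, its Selberg transform equals $h(t) = |h_0(t)|^2$, which is real, nonnegative, and not identically zero. By Proposition \ref{finf}, on any Maass cusp form $u$ with spectral parameter $t$, the operator $R(f_\infty)$ acts as the scalar $h(t)$. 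Because $f_\infty$ is bi-$K_\infty$-invariant and $\Delta$ lies in the center of $U(\lie_\C)$ (cf. \eqref{Cas}), the operators $R(f_\infty)$ and $\Delta$ commute.

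Next I would show that $R(f_\infty)$, viewed on $L^2_0(N,\w')$, is self-adjoint and compact. Self-adjointness is immediate from $f_\infty = f_\infty^*$ together with Lemma \ref{basic}. Compactness is the critical point: although the full kernel $K(x,y) = \sum_{\gamma \in Z(\Q)\bs G(\Q)} f_\infty(x^{-1}\gamma y)$ on $L^2(\w)$ is not Hilbert-Schmidt (the continuous spectrum obstructs it), after projecting onto the cuspidal subspace one obtains a Hilbert-Schmidt (hence compact) operator. This is the weight-zero case of Corollary \ref{genHS} proved later in the paper; classically it is done by subtracting from $K$ the Borel-coset contributions responsible for the constant terms at each cusp, so the remaining kernel vanishes on cusp forms and is integrable on a fundamental domain.

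With self-adjointness and compactness in hand, the Hilbert-space spectral theorem gives an orthogonal decomposition
\[
L^2_0(N,\w') = \bigoplus_{\mu} V_\mu,
\]
where each $V_\mu$ is a finite-dimensional eigenspace of $R(f_\infty)$ with real eigenvalue $\mu$, and the $\mu$'s have no accumulation point other than $0$. Since $\Delta$ is symmetric (by \eqref{symm}) and preserves each $V_\mu$, it is diagonalizable with real eigenvalues on each $V_\mu$, yielding an orthogonal basis of simultaneous eigenvectors, i.e.\ Maass cusp forms. Varying $f_0$ so that $h$ separates any prescribed pair of distinct values $\tfrac14+t^2$ shows that each $\Delta$-eigenspace is finite-dimensional. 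Positivity of the eigenvalues $\lambda_j = \tfrac14 + t_j^2$ follows from \eqref{nonneg}: eigenvalue $0$ would force $\grad u = 0$, hence $u$ constant, but the only constant in $L^2_0$ is zero. Finally, if the $\lambda_j$ had a finite accumulation point $\lambda^*$, one could choose $f_0$ so that $h$ is bounded below by a positive constant on an open set containing all corresponding spectral parameters $t_j$, producing an infinite orthonormal sequence of eigenvectors of $R(f_\infty)$ with eigenvalues bounded away from $0$, contradicting compactness.

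The main obstacle is the compactness of $R(f_\infty)$ on the cuspidal subspace. The operator is not compact on all of $L^2(\w)$ because of the continuous spectrum, so one must modify the kernel by removing the Eisenstein-type tails at each cusp and verify that the modification vanishes on cusp forms while leaving behind a Hilbert-Schmidt kernel. Once that is done, the remaining steps---separating $\Delta$-eigenspaces inside the finite-dimensional $V_\mu$ by varying $h$, positivity, and absence of finite accumulation---are routine spectral-theoretic bookkeeping.
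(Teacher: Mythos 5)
Your approach is a genuinely different route from the paper's: the paper deduces the existence of the basis from the Gelfand--Piatetski-Shapiro complete-reducibility theorem for $L^2_0(\olG(\Q)\bs\olG(\A),\w)$ (cited via Proposition \ref{diag}) and obtains discreteness and finite dimensionality from the Hilbert--Schmidt argument in Lemma \ref{disc}, whereas you try to extract everything from the spectral theorem applied to a single compact self-adjoint operator $R(f_\infty)$ with $f_\infty=f_0*f_0^*$. That strategy is reasonable in principle and would be more self-contained, but there is a gap at the central step. The spectral theorem only guarantees that the eigenspaces $V_\mu$ with $\mu\ne 0$ are finite dimensional; your assertion that ``each $V_\mu$ is a finite-dimensional eigenspace'' is false for $\mu=0$, and the kernel $V_0 = \ker R(f_\infty)\big|_{L^2_0(N,\w')}$ may be infinite dimensional. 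Diagonalizing $\Delta$ on the $V_\mu$ with $\mu \ne 0$ therefore only produces a basis of Maass cusp forms for the orthogonal complement of $V_0$. Since the spectral parameters $t_j$ are unknowns to be determined, and since you have no a priori description of $L^2_0(N,\w')$, you cannot tune $f_0$ in advance so that $h=|h_0|^2$ is nonzero on the whole cuspidal spectrum (an even Paley--Wiener $h_0$ is forced to vanish somewhere), nor argue directly that $V_0$ contains no cuspidal vectors failing to be Maass eigenforms.

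To close the gap you must exhaust the kernel. One standard fix: take a Dirac sequence $f_0^{(n)}\in C_c^\infty(G^+//K_\infty)$ approximating the identity, so that $R(f_0^{(n)}*f_0^{(n)*})\psi \to \psi$ in $L^2$ for every $\psi$ and hence $\bigcap_n\ker R(f_0^{(n)}*f_0^{(n)*})=\{0\}$. Since $C_c^\infty(G^+//K_\infty)$ is a commutative $*$-algebra ($(G(\R)^+,K_\infty)$ is a Gelfand pair), the compact self-adjoint operators $R(f_0^{(n)}*f_0^{(n)*})$ commute and preserve one another's eigenspaces; a countable induction decomposes $L^2_0(N,\w')$ into finite-dimensional joint eigenspaces, on each of which $\Delta$ is symmetric and hence diagonalizable, giving the desired orthogonal basis of Maass cusp forms. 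With this repair your remaining steps -- finite dimensionality of each $\Delta$-eigenspace via a suitable $h$ with $h(t)\ne 0$, no finite accumulation of $\lambda_j$ from compactness, and positivity from \eqref{nonneg} (your $\nabla u=0\Rightarrow u$ constant is a valid variant of the paper's maximum-principle argument) -- go through.
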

\noindent{\em Remarks:} (1) A famous conjecture of Selberg asserts that
  $\lambda_1\ge \tfrac14$,\index{keywords}{Selberg's conjecture}
  or equivalently, that all of the spectral parameters $t$ are real, \cite{Sa1}.
  (It is not hard to show that the set of $t\notin \R$
  is finite; see Corollary \ref{disccor} on page \pageref{disccor}.)
  Selberg proved that $\lambda_1\ge \tfrac3{16}$.
  See \S6.2 of \cite{DI}, where this is proven as
  a consequence of the Kuznetsov formula.  The best bound to date is
  $\lambda_1\ge \tfrac14-(\tfrac7{64})^2\approx 0.238037...$, due to
  Kim and Sarnak \cite{KS}.\vskip .2cm

\noindent (2) In the case of level $N=1$, Cartier conjectured that
  the eigenvalues occur with multiplicity one (\cite{Car}).  Until very recently,
  it was widely believed that the eigenvalues of $\Delta$ on the
   {\em newforms} of level $N$ should occur with multiplicity one.
  However, Str\"omberg has discovered counterexamples on $\Gamma_0(9)$
  which, despite coming from newforms, nevertheless arise out of the spectrum
  of a congruence subgroup of lower level (\cite{St}).
  Some of his examples were found independently by Farmer and Lemurell.

\begin{proof}
 (Sketch.  See also \cite{Iw}, \S 4.3 and \cite{IK}, \S 15.5.)
The existence of the basis is a consequence of
  the complete reducibility of $L^2_0(\olG(\Q)\bs\olG(\A),\w)$
 (see Proposition \ref{diag} below).
  The discreteness of the set of eigenvalues and the finite dimensionality
  of the eigenspaces both follow from \eqref{discrete} on page \pageref{discrete}.
  The fact that there are infinitely many linearly independent cusp forms
  can be seen from Weyl's Law\index{notations}{Weyl's Law} 
  (see \eqref{Weyls}).
  By \eqref{nonneg} the eigenvalues of $\Delta$ are nonnegative.  If $\Delta u = 0$
  for $u\in L^2_0(N,\w')$, then $u$ is a harmonic function on $\Gamma_1(N)\bs \mathbf{H}$.
  By the maximum principle (\cite{F2}, p. 72), the supremum of $|u(z)|$ occurs on the
  boundary, i.e.  at a cusp, where $u$ vanishes.  Hence $u=0$.  This shows that the $\lambda_j$
  are strictly positive.
\end{proof}

If $u$ is a Maass cusp form with $\Delta$-eigenvalue $\frac14+t^2$,
  its Fourier expansion at $\infty$ has the well-known form \index{keywords}{Fourier expansion}
\begin{equation}\label{fourier} \index{notations}{am@$a_m(u)$, $m$-th Fourier coefficient of $u(z)$}
u(x+iy)=\sum_{m\in \Z-\{0\}}a_m(u)\, y^{1/2} K_{it}(2\pi |m|y)e^{2\pi i mx}
\end{equation}
for constants $a_m(u)$ called the {\bf Fourier coefficients} of $u$
   (see e.g. \cite{Bu}, \S 1.9).  The $K$-Bessel function can be
  defined by\index{notations}{Ks@$K_s(z)$, Bessel function}
\index{keywords}{Bessel function!$K$-}
\begin{equation} \label{KBessel}
K_s(z)=\frac12\int_0^\infty e^{-z(w+w^{-1})/2}\,w^s\,\frac{dw}w,
\end{equation}
for $s\in\C$ and $\Re(z)>0$.

\comment{
\subsection{Bounds for Fourier coefficients of Maass forms}


Here we give some standard bounds, following arguments given in \cite{Iw}.
  These will be used to show convergence of the trace formula.

\begin{lemma}
Let $\Gamma\subset\SL_2(\Z)$ be any subgroup containing $\pm 1$ and $\smat 1101$.
Then the set
\begin{equation}\label{DN}
D_\Gamma=\{z=x+iy\in\mathbf{H}|\, 0\le x< 1,\,\Im(z)\ge \Im(\g z) \text{ for all }
   \g\in\Gamma\}
\end{equation}
is a fundamental domain for $\Gamma\bs\mathbf{H}$, called the
 {\em standard polygon}.
\end{lemma}

\begin{proof}
Fix $z=x+iy\in\mathbf{H}$.  For $\g=\smat abcd\in\Gamma$, we have
  $\Im(\g z)=\frac{y}{|cz+d|^2}$.  The set of numbers $cz+d$ is a subset
  of a lattice in $\C$, and hence contains an element of smallest norm, corresponding
  to some $\g_0\in\Gamma$.  Letting
  $z_0=\g_0 z$, we see that $\Im(\g z_0)\le \Im(z_0)$ for all $\g\in\Gamma$.
  Modifying $\g_0$ on the left by an element of $\Gamma_\infty$ if necessary,
  we can arrange $0\le \Re(z_0)< 1$.  This shows that every $z$ has a representative
  $z_0\in D_\Gamma$.

For the uniqueness, we suppose that $z$ and $\g z$ both belong to $D_\Gamma$,
  and we need to show that $z=\g z$.
  Note that $\Im(z)=\Im(\g z)$ implies
\begin{equation}\label{im1}
1=|cz+d|^2=c^2|z|^2+2cd\Re(z) + d^2.
\end{equation}
If $d\neq 0$, then clearly $d^2=1$ and $c=0$ since all terms are nonnegative.
  Because $c=0$, $\g$ acts by a horizontal integer translation, which
  must actually be trivial since $D_\Gamma$ has width 1.  Hence $z=\g z$
  in this case.
  Suppose $d=0$.  Then $c=\pm 1$ and by \eqref{im1} $|z|=1$.
  Scaling $\g$ by $-1$ if necessary, we can assume $\g=\smat{a}{-1}10$, so
  \[\g z= a-\frac1z=a-\ol z=(a-x)+iy.\]
  Because an analog of \eqref{im1} holds for $\g z$, we also have
  $|\g z|=1$.  Thus $z$ and $\g z$ both lie on the part of the unit
  circle in the first quadrant.  The only two possibilities are
  $a=0$ and $z=i=\g z$, or $a=1$ and $z=\frac12+i\frac{\sqrt 3}2=\g z$.
\end{proof}
\begin{lemma}
Let $\delta>0$.  Then for any $z\in\mathbf{H}$, the set
\[\{\g\in \Gamma_\infty\bs\Gamma_0(N)|\, \Im(\g z)>\delta\}\]
  has $\ds < 2+\frac{6}{N\delta}$ elements.  Here $\Gamma_\infty=\sg{\smat 1101}$.
\end{lemma}
\noindent{\em Remark:} This is similar to
  \cite{Iw}, Lemma 2.10, which we follow.

\begin{proof}
By the fact that
\[\mat 1t01\mat abcd=\mat{a+tc}{b+td}cd,\]
a coset in $\Gamma_\infty\bs\Gamma_0(N)$ is uniquely determined by the
  bottom row $(c\,\,\,d)$ of any representative.  Indeed, if $\smat abcd,
  \smat {a'}{b'}cd\in\Gamma$, then
\[\smat{a'}{b'}cd\smat{d}{-b}{-c}{a}=\smat 1*01\in\Gamma_\infty.\]

To prove the lemma, it suffices to consider $z\in D_\Gamma$ as in \eqref{DN}.
  Thus we have
 \begin{equation}\label{cz} |cz+d|\ge 1\end{equation}
for every $\smat abcd\in\Gamma$.
Now suppose that for some $\g=\smat abcd$,
\begin{equation}\label{ineq}
\Im(\g z)=\frac{y}{|cz+d|^2}=\frac{y}{c^2y^2+(cx+d)^2}>\delta.
\end{equation}
Then by \eqref{cz} we see that $y>\delta$.  Let us suppose for now that $c>0$.
  From the above we also have $\frac{y}{c^2y^2}>\delta$, which means
\begin{equation}\label{cbound}c<1/\sqrt{y\delta},\end{equation}
and similarly
\[|cx+d|<\sqrt{y/\delta}.\]
This last inequality is equivalent to
\[-cx-\sqrt{y/\delta}<d<-cx+\sqrt{y/\delta},\]
which means that given $c$ there are at most $1+2\sqrt{y/\delta}$ possibilities
  for the integer $d$.
  Note that since $y>\delta$ we have $1+2\sqrt{y/\delta}<3\sqrt{y/\delta}$.

Because $c$ must be a multiple of $N$, we see by \eqref{cbound} that there are at most
  $\frac{1}{N\sqrt{y\delta}}$ positive choices for $c$.
 Hence the number of pairs $(c\,\,\,d)$ with $c>0$ satisfying \eqref{ineq}
 is at most $\frac{1}{N\sqrt{y\delta}}(3\sqrt{y/\delta})=\frac{3}{N\delta}$.
 The number with $c<0$ is the same (multiply $\g$ by $\smat{-1}{}{}{-1}$).
  If $c=0$, then $d=\pm 1$ are the only possibilities.  Hence in all there are
  at most $2+\frac{6}{N\delta}$ such elements.
\end{proof}

We use the above to give bounds for Fourier coefficients and Petersson norms.

\begin{proposition}\label{hbound}
Suppose $u$ is a Maass cusp form with spectral parameter $t$.
   Then for any $r\neq 0$,
\[|a_r(u)|^2\ll \psi(N)\|u\|^2(|t|+\frac{|r|}N)\,e^{\pi|t|}.\]
The implied constant is absolute.
\end{proposition}
\noindent{\em Remark:} This follows Theorem 3.2 of \cite{Iw}, whose proof we
  reproduce here.

\begin{proof}
Let $\delta>0$.  By the above lemma,
\[\psi(N)\|u\|^2(2+\frac6{N\delta})=(2+\frac6{N\delta})
  \iint_{\Gamma\bs\mathbf{H}}|u(z)|^2\frac{dx\,dy}{y^2}
  \ge \int_\delta^\infty\int_0^1|u(z)|^2\frac{dx\,dy}{y^2}.\]
(The lemma says that the domain on the right is covered by
  fewer than $2+6/(N\delta)$ fundamental domains.)
  By Parseval's identity,
\[|u(z)|^2=\sum_{m\neq 0}|a_m|^2 y |K_{it}(2\pi|m|y)|^2\ge
   |a_r|^2y|K_{it}(2\pi|r|y)|^2.\]
Substituting this in the above, we have
\begin{equation}\label{pet}
\|u\|^2(2+\frac{6}{N\delta})\ge
   \frac{|a_r|^2}{\psi(N)}\int_\delta^\infty |K_{it}(2\pi|r|y)|^2d^*y
=\frac{|a_r|^2}{\psi(N)}\int_{2\pi|r|\delta}^\infty |K_{it}(y)|^2d^*y.
\end{equation}
According to \cite{Iw} p. 56,
\[\int_{|t|/2}^\infty |K_{it}(y)|^2d^*y \gg |t|^{-1}e^{-\pi|t|}\]
(this follows, e.g. from Lemma 4.1 of \cite{Pa}).
   Therefore if we choose $\delta$ so that
  $2\pi|r|\delta=|t|/2$, \eqref{pet} becomes
\[\|u\|^2(2+\frac6N\frac{4\pi|r|}{|t|})\gg \frac{|a_r|^2}{\psi(N)}
   |t|^{-1}e^{-\pi|t|},\]
giving
\[|a_r|^2\ll \psi(N)\|u\|^2(|\nu|+\frac{|r|}{N})e^{\pi|t|}.\qedhere\]
\end{proof}
}

\subsection{Hecke operators} \index{keywords}{Hecke operator}

  For $u\in L^2(N,\w')$ and an integer $\n>0$, the Hecke operator $T_\n$ is
  given by \index{notations}{T n@$T_\n$, Hecke operator}
\[T_\n u(z) = \n^{-1/2} \sum_{ad=\n\atop d>0}\sum_{r=0}^{d-1}\ol{\w'(a)}u(\frac{az+r}d).\]
One shows in the usual way that $T_\n u\in L^2(N,\w')$.

We also define $T_{-1}u(x+iy)=u(-x+iy)$.  A Maass cusp form is {\em even}
  (resp. {\em odd}) if $T_{-1}u=u$ (resp. $T_{-1}u=-u$).
  If $u$ is even, then in \eqref{fourier}
  we have $a_{-n}=a_n$, while if $u$ is odd, $a_n=-a_{-n}$.

\begin{proposition} The Hecke operators for $\gcd(\n,N)=1$ are normal operators
  on $L^2(N,\w')$.  They commute with each other and with $\Delta$.
  Hence the family of operators $\{\Delta,T_{-1},T_\n|\, \gcd(\n,N)=1\}$
  is simultaneously diagonalizable on $L^2_0(N,\w')$.
\end{proposition}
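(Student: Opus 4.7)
The plan is to verify the three assertions of the proposition in turn (commutation of $T_\n$ with $\Delta$, mutual commutation of the $T_\n$, and normality of each $T_\n$), and then deduce the simultaneous diagonalization from the finite-dimensionality of the $\Delta$-eigenspaces furnished by Theorem \ref{Delta}.

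First, the commutation $[T_\n,\Delta]=0$ is essentially automatic. Each summand of $T_\n u$ is $\n^{-1/2}\ol{\w'(a)}\,u(\g z)$ for $\g=\smat{a}{r}{0}{d}\in G(\R)^+$, and $\Delta$ is $G(\R)^+$-invariant, i.e.\ $\Delta(u\circ\g)=(\Delta u)\circ\g$, as noted after \eqref{Lap}. The same argument handles $T_{-1}$, since $\smat{-1}{0}{0}{1}$ preserves \eqref{Lap}. One passes from $C^\infty$ to the natural domain of $\Delta$ in $L^2$ in the usual way. Commutation of $T_{-1}$ with $T_\n$ is an elementary reindexing, using that $\smat{-1}{0}{0}{1}\smat{a}{r}{0}{d}=\smat{a}{-r}{0}{d}\smat{-1}{0}{0}{1}$ together with the summation range $0\le r<d$.

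Second, for $T_\n T_m=T_m T_\n$ with $\gcd(\n m,N)=1$, I would establish the standard Hecke multiplication formula
\[
T_\n T_m=\sum_{d\mid \gcd(\n,m)}d\,\w'(d)\,T_{\n m/d^2},
\]
which is manifestly symmetric in $\n$ and $m$. This is the usual double-coset count: rewrite each $T_k$ as a sum over a set of right coset representatives of $\Gamma_0(N)\backslash \Gamma_0(N)\smat{k}{0}{0}{1}\Gamma_0(N)$ (weighted by $\ol{\w'}$ on the $(1,1)$ entry), compose, and collect terms by the GCD of the entries, using $\gcd(\n m,N)=1$ to invert $a\bmod N$.

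Third, I would show normality by computing the adjoint and finding $T_\n^*=c_\n T_\n$ with $|c_\n|=1$; any such identity gives $T_\n T_\n^*=c_\n T_\n^2=T_\n^* T_\n$. Concretely, for $\gcd(\n,N)=1$, the involution $\g\mapsto \n\g^{-1}$ (adjugate divided by nothing) preserves the double coset $\Gamma_0(N)\smat{\n}{0}{0}{1}\Gamma_0(N)$, and unfolding $\langle T_\n u,v\rangle$ via \eqref{Petnorm} with the substitution $z\mapsto \g z$ on each piece yields an identity of the form $\langle T_\n u,v\rangle=\w'(\n)\langle u,T_\n v\rangle$. Hence $T_\n^*=\ol{\w'(\n)}\,T_\n$, which has the required form since $|\w'(\n)|=1$. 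For $T_{-1}$, the change of variables $x\mapsto -x$ makes it self-adjoint (indeed unitary) on $L^2(N,\w')$ once one checks that $x\mapsto -x$ preserves $\Gamma_0(N)\bs\mathbf{H}$ modulo conjugation by $\smat{-1}{0}{0}{1}$, which normalizes $\Gamma_0(N)$; the assumption $\w'(-1)=1$ from \eqref{w'} ensures compatibility with the automorphy \eqref{hg}.

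Finally, simultaneous diagonalization follows formally. Theorem \ref{Delta} decomposes $L^2_0(N,\w')=\bigoplus_\lambda V_\lambda$ as an orthogonal direct sum of finite-dimensional $\Delta$-eigenspaces. Since $T_\n$ and $T_{-1}$ commute with $\Delta$, they preserve each $V_\lambda$, and on that finite-dimensional Hilbert space they form a commuting family of normal operators, hence are simultaneously diagonalizable by the finite-dimensional spectral theorem. Concatenating orthonormal eigenbases over $\lambda$ yields the required basis of $L^2_0(N,\w')$. The main technical point is the adjoint calculation; the rest is bookkeeping.
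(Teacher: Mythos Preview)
Your proposal is correct and follows essentially the same approach as the paper, which merely sketches the argument by referring to \S3.9 of \cite{KL} for the adjoint computation and then invokes the finite-dimensionality of the $\Delta$-eigenspaces from Theorem~\ref{Delta} exactly as you do. Two minor normalization slips that do not affect your conclusions: with the paper's $\n^{-1/2}$ scaling of $T_\n$ and the factor $\ol{\w'(a)}$ in its definition, the Hecke multiplication formula reads $T_\n T_m=\sum_{d\mid(\n,m)}\ol{\w'(d)}\,T_{\n m/d^2}$ (no extra factor of $d$), and the adjoint identity comes out as $T_\n^{*}=\w'(\n)\,T_\n$ rather than $\ol{\w'(\n)}\,T_\n$.
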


\begin{proof} We can compute the adjoint of $T_\n$ as in \S 3.9 of \cite{KL}.
  The proof of diagonalizability in the holomorphic case relies crucially on the finite
  dimensionality of $S_\k(N,\w')$.  In order to get the diagonalizability of $T_\n$
  on $L^2_0(N,\w')$ we can use the fact that each $T_\n$ preserves the $\Delta$-eigenspace
  $L^2_0(N,\w',\frac14+t^2)$, which is finite-dimensional. These subspaces exhaust the
  cuspidal spectrum by Theorem \ref{Delta}.
  See also Proposition \ref{diag} below.
\end{proof}

\noindent
  A {\bf Maass eigenform} is a cusp form $u$ which is a simultaneous eigenvector
  of the operators $T_\n$ for $\n\ge 1$, $(\n,N)=1$. \index{keywords}{Maass eigenform}
  We write $T_\n u= \lambda_\n(u) u$ for the Hecke 
eigenvalue.\index{notations}{lambdan@$\lambda_\n$, Hecke eigenvalue of $T_\n$}\index{notations}{am@$a_m(u)$, $m$-th Fourier coefficient of $u(z)$}
In this situation,
\[a_\n(u) = a_1(u)\lambda_\n(u)\]
whenever $\gcd(N,\n)=1$.  This is a consequence of the fact that
  for any cusp form $u$,
 \begin{equation}\label{amTn}
a_m(T_\n u)= \sum_{\ell|\gcd(\n,m)}\ol{\w'(\ell)}\,a_{\frac{\n m}{\ell^2}}(u),
\end{equation}
which is proven in the same way as for holomorphic cusp forms.


We now define a function which serves as the adelic counterpart to $T_\n$
  (see Lemma \ref{TnA} below).
Fix integers $N,\n\in \Z^+$ with $\gcd(\n,N)=1$, and let $\w$ be a Hecke
  character of conductor dividing $N$.  Define
  $\ff: G(\Af)\rightarrow\C$ as follows.  Let\index{notations}{M@$M_1(\n,N)$}
\[M_1(\n,N)=\{g=\smat abcd\in M_2(\Zhat)|\, \det g\in \n\Zhat^*\text{ and }
  c,(d-1)\in N\Zhat\}.\]
Let\index{notations}{M@$M_1(\n,N)_p$} $M_1(\n,N)_p$ be the local component of this
  set in $G(\Q_p)$.
    Note that if $p\nmid\n$, then $M_1(\n,N)_p=K_1(N)_p=K_1(N)\cap K_p$.
  The function $\ff$ is supported on $Z(\Af)M_1(\n,N)$ and given by
\begin{equation}\label{ff}\index{notations}{fn@$\ff$}
\ff(zm)=\frac{\ol{\w(z)}}{\meas(\ol{K_1(N)})}=\frac{\psi(N)}{\w(z)}.
\end{equation}
  It is clear that $\ff$ is well-defined
  and bi-$K_1(N)$-invariant.
  For any finite prime $p$, define a local function 
  $\ff_p$\index{notations}{fnp@$\ff_p$}
  on $G(\Q_p)$, supported on $Z(\Q_p)M_1(\n,N)_p$, by
\begin{equation}\label{ffp}
\ff_p(zm)=\frac{\ol{\w_p(z)}}{\meas(\ol{K_1(N)}_p)}.
\end{equation}
  Then $\ff(g)=\prod_p\ff_p(g_p)$.


We now recall the definition of the {\bf unramified principal series} 
  of\index{keywords}{principal series!of $\GL_2(\Q_p)$} $G(\Q_p)$.
Suppose $p\nmid N$, so $\w_p$ is an unramified
   unitary character of $\Q_p^*$.   For $\nu\in\C$, let
 \begin{equation}\label{chip}\index{notations}{chi@$\chi$!character on $B(\Q_p)$}
\chi(\smat ab0d)= \chi_1(a)\chi_2(d) \left|\frac ad\right|_p^{\nu}
\end{equation}
  be an unramified quasicharacter of the Borel subgroup $B(\Q_p)$.
  Here we take $\chi_1$ and $\chi_2$ to be finite order unramified
  characters of $\Q_p^*$ with
\[\chi_1(z)\chi_2(z)=\w_p(z).\]
  Let $V_\chi$ be the space of functions $\phi: G(\Q_p)\longrightarrow\C$ with
  the following properties:
\begin{enumerate}
\item[(i)]
   For all $\smat ab0d\in B(\Q_p)$ and all $g\in G(\Q_p)$,
\[\phi(\mat ab0d g)= \chi_1(a)\chi_2(d)\left|\frac ad\right|_p^{\nu+1/2}\phi(g).\]
\item[(ii)] There exists an open compact subgroup $J\subset G(\Q_p)$ such that
  $\phi(gk)=\phi(g)$ for all $k\in J$ and all $g\in G(\Q_p)$.
\end{enumerate}\index{notations}{pichi@$\pi_\chi$}
We let $\pi_\chi$ denote the representation of $G(\Q_p)$ on $V_\chi$ by right translation.
  It is unitary when $\chi$ is unitary, i.e. when $\nu\in i\R$.
  The space $V_\chi$ has a one-dimensional subspace of $K_p$-fixed vectors,
  spanned by the function
\begin{equation}\label{phi0}
\phi_0(\smat ab0d k)= \chi_1(a)\chi_2(d)\left|\frac ad\right|_p^{\nu+1/2}
\quad (k\in K_p).
\end{equation}

\begin{proposition}\label{unram}
The representation $(\pi_\chi,V_\chi)$ defined above is irreducible.
Every irreducible admissible unramified
  representation of $G(\Q_p)$ with central character $\w_p$ is either one-dimensional
  or of the form $\pi_\chi$ for some $\chi$ as above.
If $\pi_\chi$ is unitary then either:

$\cdot$  $\nu\in i\R$ (unitary principal series),\index{keywords}{principal series!of $\GL_2(\Q_p)$}
  or

$\cdot$ $0<|\Re \nu|<\tfrac12$ (complementary series).
\end{proposition}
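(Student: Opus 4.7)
My plan is to derive all three assertions from the standard classification of irreducible admissible representations of $G(\Q_p)$. For irreducibility, I would first use the Iwasawa decomposition $G(\Q_p) = B(\Q_p)K_p$ together with transformation rule (i) to observe that any $\phi \in V_\chi$ is determined by its restriction to $K_p$, and since $\chi_1,\chi_2$ are unramified that restriction is left $(B(\Q_p)\cap K_p)$-invariant; hence $V_\chi^{K_p}$ is one-dimensional, spanned by $\phi_0$. If $W \subset V_\chi$ were a proper nonzero invariant subspace, the $K_p$-semisimplicity of $V_\chi$ would force $W^{K_p} \neq 0$, hence $W \ni \phi_0$. I would then use the Bruhat decomposition to compute $\pi_\chi(g)\phi_0$ for $g$ in the big cell and conclude that $\phi_0$ generates $V_\chi$ cyclically under the non-degeneracy assumption $\chi_1\chi_2^{-1}|\cdot|^{2\nu} \neq |\cdot|^{\pm 1}$ (i.e., generic $\nu$), giving a contradiction.

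For the second claim, I would invoke the classical four-fold classification of irreducible admissible representations of $G(\Q_p)$: characters of the determinant, principal series, Steinberg/special, and supercuspidal. Supercuspidals have matrix coefficients of compact support modulo center and hence admit no $K_p$-fixed vector, while the Steinberg representation embeds in a reducible principal series whose spherical subquotient is one-dimensional, so Steinberg itself has no $K_p$-fixed vector either. Consequently, any infinite-dimensional irreducible admissible unramified representation with central character $\w_p$ must be an unramified principal series of the form \eqref{chip}. For unitarity, I would split into two cases: if $\chi$ is itself unitary then $\nu \in i\R$ (since $\chi_1,\chi_2$ are unitary as finite-order characters) and the natural pairing $\int_{K_p}\phi_1(k)\ol{\phi_2(k)}\,dk$ makes $V_\chi$ unitary; otherwise the intertwining isomorphism $\pi_\chi \cong \pi_{\chi^w}$ (with $\chi^w$ the Weyl conjugate inducing character) combined with the standard pairing produces an alternative invariant Hermitian form, and an explicit computation of this intertwining operator on $\phi_0$ shows the form is positive definite exactly in the complementary range $0 < |\Re\nu| < 1/2$.

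The main obstacle is the unitarity analysis, particularly verifying the precise endpoint $|\Re\nu| < 1/2$ for the complementary series. This hinges on computing the intertwining operator as a ratio of local zeta factors and then checking positivity on appropriate test vectors; the reducibility boundary at $|\Re\nu| = 1/2$ and the failure of positivity for $|\Re\nu| > 1/2$ must both be established explicitly. Since these calculations are standard in the literature (see e.g.\ the authors' \cite{KL}, or Bump's book), I would cite rather than reproduce them, keeping the treatment here short while flagging the non-degeneracy hypothesis needed in part (1).
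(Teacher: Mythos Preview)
The paper's own proof is a one-line citation to Bump: ``Refer, e.g., to Theorems 4.5.1, 4.6.4, and 4.6.7 of \cite{Bu}.'' Since you also conclude by deferring to Bump and \cite{KL}, your proposal agrees with the paper in substance; it simply adds a sketch that the paper omits.

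That said, your irreducibility sketch has a genuine gap. You argue that if $W \subset V_\chi$ is a proper nonzero $G(\Q_p)$-invariant subspace, then ``the $K_p$-semisimplicity of $V_\chi$ would force $W^{K_p} \neq 0$.'' This inference is invalid: $K_p$-semisimplicity only gives a decomposition of $V_\chi$ into $K_p$-isotypic pieces, not a guarantee that every $G(\Q_p)$-stable subspace meets the trivial $K_p$-type. Indeed, at the reducibility point $\chi_1\chi_2^{-1}|\cdot|_p^{2\nu} = |\cdot|_p$, the Steinberg representation sits as a proper subrepresentation of $V_\chi$ with no $K_p$-fixed vector whatsoever---precisely the situation your argument claims cannot occur. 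Showing that $\phi_0$ is cyclic (your Bruhat step) handles one direction, but you also need that every nonzero quotient of $V_\chi$ retains a spherical vector, and this requires further input (Jacquet modules and Frobenius reciprocity, or the full intertwining-operator analysis). You correctly flag the non-degeneracy hypothesis $\chi_1\chi_2^{-1}|\cdot|_p^{2\nu} \neq |\cdot|_p^{\pm 1}$, which the paper's statement actually suppresses; but that hypothesis enters the argument through the intertwining operator, not through the $K_p$-semisimplicity step where you invoke it. Since you ultimately cite the literature anyway, none of this is fatal for present purposes, but the sketch as written would not stand on its own.
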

\begin{proof} Refer, e.g., to Theorems 4.5.1,
  4.6.4, and 4.6.7 of \cite{Bu}.
\end{proof}

  The local component $\ff_p$ of $\ff$
 acts on the unramified vector $\phi_0$ in the following way.

\begin{proposition}\label{ind} Assume $p\nmid N$, and let
  $\n_p=\ord_p(\n)\ge 0$.  With $\ff_p$ as above,
  the function $\phi_0$ of \eqref{phi0}
  is an eigenvector of the local Hecke operator
  $\pi_\chi(\ff_p)$ with eigenvalue
 \[{p^{\n_p/2}}\lambda_{p^{\n_p}}(\chi_1,\chi_2,\nu),\]
 where \index{notations}{lambdanc@$\lambda_{\n}(\chi_1,\chi_2,s)$}
\begin{equation}\label{lambdap}
\lambda_{p^{\n_p}}(\chi_1,\chi_2,\nu)=\sum_{j=0}^{\n_p}\Bigl(\frac{p^{\n_p}}{p^{2j}}\Bigr)^\nu
  {\chi_1(p)^j\chi_2(p)^{\n_p-j}}.
\end{equation}
\end{proposition}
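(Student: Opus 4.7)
My plan is to reduce the computation of the eigenvalue to a finite sum over single $K_p$-cosets. First, since $p \nmid N$, the local congruence subgroup $K_1(N)_p$ equals $K_p$, so $\ff_p$ is bi-$K_p$-invariant modulo its central character $\ol{\w_p}$. Lemma \ref{basic} (applied with the adjustment for nontrivial central character discussed in its remark) then ensures that $\pi_\chi(\ff_p)$ maps $V_\chi$ into the subspace $V_\chi^{K_p}$ and annihilates its orthogonal complement. By Proposition \ref{unram}, $V_\chi^{K_p}$ is one-dimensional, spanned by $\phi_0$, so automatically $\pi_\chi(\ff_p)\phi_0 = \lambda\phi_0$ for some scalar $\lambda \in \C$, which must be the claimed eigenvalue.

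To pin down $\lambda$, I would evaluate both sides at the identity and use $\phi_0(1) = 1$ to get
\[ \lambda \,=\, \int_{\olG(\Q_p)} \ff_p(h)\,\phi_0(h)\,d\ol h. \]
The support of $\ff_p$ (modulo the center) is $\ol{M_1(\n,N)_p}$; by the elementary divisor theorem over $\Z_p$ (equivalently, the classical right coset decomposition of the Hecke double coset at an unramified prime), it admits the decomposition
\[ \ol{M_1(\n,N)_p} \,=\, \bigsqcup_{j=0}^{\n_p}\bigsqcup_{r \in \Z_p/p^j\Z_p} \ol{\smat{p^j}{r}{0}{p^{\n_p-j}}}\,\ol{K_p}, \]
a disjoint union of $\sum_{j=0}^{\n_p} p^j$ cosets, each of $\olG(\Q_p)$-measure $1$.

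On each coset, $\ff_p$ is constant with value $1$ (since $\meas(\ol{K_1(N)_p}) = 1$ when $p \nmid N$), while the transformation rule \eqref{phi0} together with the right $K_p$-invariance of $\phi_0$ gives
\[ \phi_0\bigl(\smat{p^j}{r}{0}{p^{\n_p-j}}\,k\bigr) \,=\, \chi_1(p)^j\,\chi_2(p)^{\n_p-j}\,p^{(\n_p - 2j)(\nu + 1/2)}. \]
Summing $p^j$ identical contributions for each $j$ and using that $p^j\cdot p^{(\n_p - 2j)/2} = p^{\n_p/2}$, the integral collapses to
\[ \lambda \,=\, p^{\n_p/2}\sum_{j=0}^{\n_p} p^{(\n_p - 2j)\nu}\,\chi_1(p)^j\,\chi_2(p)^{\n_p-j} \,=\, p^{\n_p/2}\,\lambda_{p^{\n_p}}(\chi_1,\chi_2,\nu), \]
as claimed.

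The step that is easiest to get wrong is the coset decomposition, specifically the parameterization: the index $j$ must sit on the upper-left diagonal entry (not the lower-right), and $r$ must range over $\Z_p/p^j\Z_p$ rather than $\Z_p/p^{\n_p - j}\Z_p$. These choices are forced by the explicit check that two upper-triangular representatives $\smat{p^a}{b}{0}{p^{\n_p-a}}$ and $\smat{p^{a'}}{b'}{0}{p^{\n_p-a'}}$ lie in the same right $K_p$-coset iff $a = a'$ and $b \equiv b' \pmod{p^a}$. Only with this convention do the $\nu$-exponent and diagonal characters align so that the sum telescopes to the clean factorization $p^{\n_p/2}\lambda_{p^{\n_p}}(\chi_1,\chi_2,\nu)$ rather than a twisted mirror-image thereof.
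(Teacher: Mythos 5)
Your proof is correct and follows essentially the same route as the paper: Lemma \ref{basic} together with the one-dimensionality of $V_\chi^{K_p}$ reduces the claim to evaluating $\pi_\chi(\ff_p)\phi_0(1)$, and the coset decomposition \eqref{Mp} then gives the integral as a finite sum that rearranges to $p^{\n_p/2}\lambda_{p^{\n_p}}(\chi_1,\chi_2,\nu)$. One small imprecision: the remark following Lemma \ref{basic} concerns functions that transform by a nontrivial character of $K$ on each side, not the central character, and is not needed here — since $p\nmid N$ the function $\ff_p$ is genuinely bi-$K_p$-invariant (with the central character $\ol{\w_p}$ already built into the hypotheses of the lemma itself), so Lemma \ref{basic} applies directly.
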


\begin{proof} The fact that $\phi_0$ is an eigenvector is due to Proposition
 \ref{basic}, together with the fact that the space of $K_p$-fixed vectors
  is one-dimensional.
  Thus the eigenvalue is equal to $\pi_\chi(\ff_p)\phi_0(1)$, which
  can be computed using the decomposition
\begin{equation}\label{Mp}
M_1(\n,N)_p
  =\bigcup\limits_{j=0}^{\n_p}
  \bigcup\limits_{a\in \Z/p^j\Z}\mat{p^j}{a}{}{p^{\n_p-j}}K_p
 \end{equation}
  (\cite{KL}, Lemma 13.4) as follows:
\[ \int_{\olG(\Q_p)}\ff_p(g)\phi_0(g)dg=\sum_{j=0}^{\n_p}p^j
  \left|\frac{p^j}{p^{\n_p-j}}\right|_p^{\nu+\frac12}\chi_1(p^j)\chi_2(p^{\n_p-j})
  =p^{\n_p/2}\lambda_{p^{\n_p}}(\chi_1,\chi_2,\nu).\qedhere\]
\end{proof}

\subsection{Adelic Maass forms}

Let $\w$ be the Hecke character attached to $\w'$ as in \eqref{chi'}. \index{notations}{w@$\w$, central character attached to $\w'$}
  Using \eqref{x'} and \eqref{w'}, we have
\[
 \w_\infty(-1) =\w_\infty(-1)\w'(-1) =\w_\infty(-1)\prod_{p|N}\w_p(-1) =\w(-1)=1.
\]
Since $\w_\infty$ is trivial on $\R^+$, this implies that for all $x\in \R^*$,
\begin{equation}\label{winf}
\w_\infty(x)=1.
\end{equation}

Let $L^2(\w)=L^2(\olG(\Q)\bs\olG(\A),\w)$ be the space of \index{notations}{L2@$L^2(\w)$}
  measurable $\C$-valued functions $\psi$ on $G(\A)$ (modulo functions that are
  $0$ a.e.)
  satisfying $\psi(z\g g)=\w(z)\psi(g)$ for all $\g\in G(\Q)$ and
  $z\in Z(\A)\cong \A^*$, and which are square integrable over $\olG(\Q)\bs\olG(\A)$.
  A function $\psi\in L^2(\w)$ is {\bf cuspidal} if its constant 
  term\index{keywords}{constant term} $\psi_N$ vanishes for a.e. 
  $g\in G(\A)$:\index{notations}{phiN@$\phi_N(g)= \int_{N(\Q)\bs N(\A)}\phi(ng)dn$}
\[\psi_N(g)=\int_{N(\Q)\bs N(\A)}\psi(ng)dn=0.\]
Let $\L(\w)\subset L^2(\w)$\index{notations}{L20@$\L(\w)$}
 denote the subspace of cuspidal functions.
 We let $R$\index{notations}{R@$R$, right regular representation} denote 
 the right regular representation of $G(\A)$ on $L^2(\w)$, and
  let $R_0$\index{notations}{R0@$R_0$} denote its restriction to $L^2_0(\w)$,
  which is easily seen to be an invariant subspace.

  Let $L^1(\ol{\w})$\index{notations}{L1@$L^1(\ol{\w})$}
  denote the space of measurable functions $f:G(\A)\longrightarrow\C$
  satisfying $f(zg)=\ol{\w(z)}f(g)$ for all $z\in Z(\A)$ and $g\in G(\A)$, and
  which are absolutely integrable over $\olG(\A)$.  Such a function defines
  an operator $R(f)$ on $L^2(\w)$ via
\[R(f)\index{notations}{Rf@$R(f)$}\phi(x)=\int_{\olG(\A)}f(y)\phi(xy)dy,\]
the integral converging absolutely.  Recall in fact that $\|R(f)\phi\|_{L^2}
  \le \|f\|_{L^1}\|\phi\|_{L^2}$ (see e.g. \cite{KL}, p. 140).
  The restriction of $R(f)$ to $L^2_0(\w)$ is denoted $R_0(f)$.
  For $f,h\in L^1(\ol{\w})$, the convolution\index{keywords}{convolution} 
\[f*h(x)=\index{notations}{14@$a*b(x)=\int_{\olG} a(y)b(y^{-1}x)dy$}\int
  _{\olG(\A)}f(y)h(y^{-1}x)dy\]
  also belongs to $L^1(\ol{\w})$, and by a straightforward computation we have
\[R(f*h)=R(f)R(h).\]

To each $u\in L^2(N,\w')$ we associate a function $\varphi_u$ on $G(\A)$
  using strong approximation \eqref{sa} by setting
\begin{equation}\label{phiu}\index{notations}{phi@$\varphi_u$, adelic Maass form}
\varphi_u(\g(g_\infty \times k))=u(g_\infty(i))
\end{equation}
  for $\g\in G(\Q)$, $g_\infty\in G(\R)^+$, and $k\in K_1(N)$.
  Using the modularity of $u$, it is easy to check that $\varphi_u$
  is well-defined.

\begin{proposition}\label{utophi}
  The map $u\mapsto \varphi_u$ defines surjective linear isometries
\[L^2(N,\w')\longrightarrow L^2(\w)^{K_\infty\times K_1(N)}\]
and
\[L^2_0(N,\w')\longrightarrow \L(\w)^{K_\infty\times K_1(N)},\]
  where the spaces on the right denote
 those functions satisfying
  $\varphi(gk)=\varphi(g)$ for all $k\in K_\infty\times K_1(N)$.
\end{proposition}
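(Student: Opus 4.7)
The plan is to verify four things for $u \mapsto \varphi_u$---well-definedness, the transformation properties landing $\varphi_u$ in $L^2(\w)^{K_\infty \times K_1(N)}$, the isometry $\|\varphi_u\| = \|u\|$, and surjectivity---and then to separately match the cuspidal conditions. I expect the last step to be the main point requiring care, since the single adelic vanishing condition on $\phi_N$ must unfold into Fourier vanishing at every inequivalent cusp of $\Gamma_0(N)\bs\mathbf{H}$.

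\textbf{Well-definedness and transformation laws.} By strong approximation \eqref{sa}, two decompositions $g = \gamma(g_\infty \times k) = \gamma'(g'_\infty \times k')$ differ by an element of $G(\Q) \cap (G(\R)^+ \times K_1(N)) = \Gamma_1(N)$, on which $\w'$ is trivial since $\w'(\smat abcd) = \w'(d)$ and $d\equiv 1\bmod N$ there. Hence $u(g'_\infty(i)) = u(g_\infty(i))$ by \eqref{hg}, so $\varphi_u$ is well-defined. Left $G(\Q)$-invariance is immediate. For the central character, write $z \in Z(\A) = \A^*$ as $z = \alpha(z_\infty \times z'_{\fin})$ with $\alpha \in \Q^*$, $z_\infty \in \R^+$, $z'_{\fin} \in \Zhat^*$ (using $\A^* = \Q^*(\R^+\times\Zhat^*)$), and choose $\sigma \in \Gamma_0(N)$ with $\sigma_{22} \equiv (z'_{\fin,p})^{-1}\bmod p^{N_p}$ for each $p\mid N$ (possible by CRT). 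Then $zg = (\alpha\gamma\sigma^{-1})\bigl((\sigma_\infty z_\infty g_\infty) \times (\sigma_{\fin} z'_{\fin} k)\bigr)$ is a valid strong-approximation decomposition, and \eqref{hg}, \eqref{x'}, \eqref{winf} combine to give $\varphi_u(zg) = \ol{\w'(\sigma_{22})}u(g_\infty(i)) = \w(z'_{\fin})u(g_\infty(i)) = \w(z)\varphi_u(g)$. Right $K_\infty\times K_1(N)$-invariance is clear: $g_\infty(i)$ is $K_\infty$-invariant, and a right $K_1(N)$-translation is absorbed into $k$.

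\textbf{Isometry.} The key observation is that passing to $\olG$ enlarges the overlap: $\olG(\Q) \cap (\olG(\R)^+ \times \ol{K_1(N)}) = \ol{\Gamma_0(N)}$, since any $\gamma \in \Gamma_0(N)$ can be scaled by the idele $(\gamma_{22})_N^{-1} \in \Zhat^*$ to land in $K_1(N)$. Taking a fundamental domain $F \subset \olG(\R)^+$ for $\ol{\Gamma_0(N)}$, the product $F \times \ol{K_1(N)}$ is a fundamental domain for $\olG(\Q)\bs\olG(\A)$, with total measure $\psi(N)(\pi/3)\cdot(1/\psi(N)) = \pi/3$, consistent with the normalization fixed in Section \ref{notation}. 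Since $|\varphi_u(g_\infty \times k)|^2 = |u(g_\infty(i))|^2$ is independent of $k$, and $\meas(\ol{K_\infty}) = 1$,
\[\|\varphi_u\|^2 = \meas(\ol{K_1(N)})\int_{\Gamma_0(N)\bs\mathbf{H}}|u(z)|^2\,\frac{dx\,dy}{y^2} = \frac1{\psi(N)}\int_{\Gamma_0(N)\bs\mathbf{H}}|u(z)|^2\,\frac{dx\,dy}{y^2} = \|u\|^2.\]

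\textbf{Surjectivity and cuspidal correspondence.} Given $\varphi \in L^2(\w)^{K_\infty \times K_1(N)}$, set $u(g_\infty(i)) := \varphi(g_\infty \times 1)$ for $g_\infty \in G(\R)^+$; the right $K_\infty$-invariance of $\varphi$ makes this well-defined on $\mathbf{H}$, and reversing the first paragraph's calculation yields $u(\gamma z) = \ol{\w'(\gamma)}u(z)$ for $\gamma \in \Gamma_0(N)$, so $u \in L^2(N,\w')$ and $\varphi = \varphi_u$. For cuspidality, a direct computation at $g = g_\infty \times 1$ using that $\smat{1}{x_{\fin}}{0}{1} \in K_1(N)$ for $x_{\fin}\in\Zhat$ (so the finite integration collapses) gives
\[\varphi_{u,N}(g_\infty \times 1) = \int_0^1 u(g_\infty(i) + t)\,dt = a_{0,\infty}(u,\Im g_\infty(i)).\]
For an arbitrary $g \in G(\A)$, writing $g = \delta(g_\infty \times k)$ via strong approximation with $\delta \in G(\Q)^+$ and invoking left $G(\Q)$-invariance of $\varphi_u$ reduces the analogous computation to the constant term of $u_\delta$ at $\infty$, recovering $a_{0,\delta}(u,\cdot)$ in the notation of \eqref{a0}. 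Hence the vanishing of $\varphi_{u,N}$ a.e. on $G(\A)$ is equivalent to the vanishing of every $a_{0,\delta}(u,y)$, and $\varphi_u \in \L(\w)$ iff $u \in L^2_0(N,\w')$.
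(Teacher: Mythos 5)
Your proof is correct and follows essentially the same route as the paper's. The well-definedness and central-character checks are both done via strong approximation with a suitably chosen element of $\Gamma_0(N)$ (the paper pins down the $(1,1)$-entry, you the $(2,2)$-entry---equivalent since $ad\equiv 1\bmod N$). Your isometry argument is the same in substance: you observe that, after passing to $\olG$, the overlap $\olG(\Q)\cap(\olG(\R)^+\times\ol{K_1(N)})$ enlarges from $\Gamma_1(N)$ to $\ol{\Gamma_0(N)}$, so $F\times\ol{K_1(N)}$ is a fundamental domain with $\meas(\ol{K_1(N)})=\meas(\ol{K_0(N)})=1/\psi(N)$; the paper reaches the same integral by citing Proposition 7.43 of \cite{KL} and integrating over $D_NK_\infty\times K_0(N)$. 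The surjectivity step is identical. The cuspidal correspondence is the one place where both the paper (which refers to \cite{KL}, pp.~200--201) and you work at the level of a sketch; your phrase ``invoking left $G(\Q)$-invariance of $\varphi_u$ reduces the analogous computation to the constant term of $u_\delta$'' compresses the real work, since $\delta^{-1}n\delta\notin N(\A)$ for general $\delta\in G(\Q)^+$ and $n_{\fin}\delta_{\fin}k$ is typically not in $K_1(N)$, so one must re-apply strong approximation inside the integral with a $\gamma$ depending on $n_{\fin}$. That unraveling is exactly what is deferred to \cite{KL} in the paper, so this is not a gap relative to what the paper itself proves, but you should be aware that the one-line reduction is not as immediate as it reads.
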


\begin{proof}
First we check that $\varphi_u(zg)=\w(z)\varphi_u(g)$ for all $z\in Z(\A)$.
  By strong approximation, we can assume that
  $g=g_\infty\times k\in G(\R)^+\times K_1(N)$.
  Write $z=z_\Q(z_\infty\times z_{\fin})$ for $z_\infty\in \R^+$ and
  $z_{\fin}\in \Zhat^*$.
  We have $z_{\fin}\equiv a\mod N\Zhat$ for some integer $a$ relatively prime to $N$.
  Then $\w(z)=\w'(a)$ as in \eqref{chi'}.
  Choose $b,c,d$ such that $\g=\smat abcd\in \Gamma_0(N)$.
  Then $\g z_{\fin}\in K_1(N)$, and
\[\varphi_u(zg) = \varphi_u(\g zg)=\varphi_u(\g g_\infty\times \g z_{\fin}k)=u(\g g_\infty(i))\]
  \[= \ol{\w'(d)}u(g_\infty(i))=\w'(a)\varphi_u(g)=\w(z)\varphi_u(g)\]
  as needed.

For the square integrability,
let $D_N$ be a fundamental domain in $\mathbf{H}$ for $\Gamma_0(N)\bs\mathbf{H}$.
  We identify $D_N$ with a subset of $G(\R)^+$ via $x+iy\leftrightarrow \smat yx01$.
  Then by Proposition 7.43 of \cite{KL},
\[\int_{\olG(\Q)\bs\olG(\A)}|\varphi_u(g)|^2dg = \int_{D_NK_\infty\times K_0(N)}
  |\varphi_u(g)|^2dg\]
\[ = \meas(K_0(N))\iint_{D_N}|u(x+iy)|^2\frac{dx\,dy}{y^2}=\|u\|^2.\]
This proves that the map $u\mapsto \varphi_u$ is an isometry of
 $L^2(N,\w')$
  into $L^2(\w)^{K_\infty\times K_1(N)}$, since it is clear from
  the definition \eqref{phiu} that $\varphi_u$ is invariant under
  $K_\infty\times K_1(N)$.
 For the surjectivity, we note that the inverse map is given by
\[u(z):= \varphi(g_\infty),\]
where $g_\infty\in G(\R)^+$ is any element satisfying $g_\infty(i)=z$.
   The function $u$ is well-defined since $\varphi$ is $K_\infty$-invariant.
The fact that $u(z)$ satisfies \eqref{hg} can be seen as follows.  For $\g=\smat abcd\in \Gamma_0(N)$,
  we can write $\g^{-1}=\smat {a_N}{}{}{a_N}k$ for $k\in K_1(N)$ (and $a_N$ as in \eqref{dN}).
    Thus
\[u(\g z)=\varphi(\g_\infty g_\infty)=\varphi(g_\infty\times \g^{-1}_{\fin})
=\varphi(g_\infty\times \smat{a_N}{}{}{a_N}k)\]
\[=\w(a_N)\varphi(g_\infty)=\w'(a)u(z)=\ol{\w'(d)}u(z).\]

  Lastly, for any $g\in G(\R)^+\times K_1(N)$, there exists
  $\delta\in G(\Q)^+$ determined by $g_{\fin}$ such that
\[{(\varphi_u)}_{\!\!{}_N}(g)=a_{0,\delta}(u,y)\]
 for $y=\Im g_\infty(i)$.
  This is proven just as in the holomorphic case, making the obvious adjustments.
  See \cite{KL}, pp. 200-201.
Therefore $u$ is cuspidal if and only if $\varphi_u$ is cuspidal.
\end{proof}

Next, we describe some properties of the correspondence $u\mapsto\varphi_u$.

\begin{lemma}\label{TnA}  The correspondence is equivariant for both $\Delta$ and
  the Hecke operators, in the following sense:  For all $u\in L^2(N,\w')$,
\begin{equation}\label{TnAeq}
R(\ff)\varphi_u= \int_{\olG(\Af)}\ff(g)R(g)\varphi_u\,dg= 
\sqrt{\n}\, \varphi_{T_\n u}
\end{equation}
 for $\ff$ defined in \eqref{ff},
  and if $u$ is smooth,
\begin{equation}\label{DnA}
R(\Delta)\varphi_u=\varphi_{\Delta u}.
\end{equation}
\end{lemma}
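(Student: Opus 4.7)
My approach is to treat the two equivariances separately. For \eqref{TnAeq} I would unfold $R(\ff)\varphi_u$ using a right $K_1(N)$-coset decomposition of the support of $\ff$ and match the result against the classical Hecke sum. For \eqref{DnA} I would apply the explicit Casimir formula \eqref{Cas} and exploit right $K_\infty$-invariance of $\varphi_u$.

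For the Hecke equivariance, since $\gcd(\n,N)=1$ we have $M_1(\n,N)_p = K_1(N)_p$ at primes $p \mid N$, while at $p \mid \n$ the local decomposition \eqref{Mp} applies; the total count of global right $K_1(N)$-cosets in $M_1(\n,N)$ is thus $\sigma_1(\n)$. A convenient system of representatives, chosen so as to lie in $M_1(\n,N)$ at every finite place, is
\[
\tilde\alpha_{a,d,r} \;=\; \smat{d}{-r}{}{a}\cdot\smat{a_N^{-1}}{}{}{a_N^{-1}} \in G(\Af),\qquad ad=\n,\; d>0,\; r\in\Z/d\Z,
\]
where the central idele adjustment by $a_N^{-1}$ (trivial outside $N$) converts the bottom-right entry to $1$ at primes dividing $N$, enforcing the $K_1(N)$-condition. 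Because $\ff \equiv \psi(N) = 1/\meas(\ol{K_1(N)})$ on $M_1(\n,N)$ and $\varphi_u$ is right $K_1(N)$-invariant, the integral collapses to $\sum_{a,d,r}\varphi_u(g\,\tilde\alpha_{a,d,r})$. Reducing to $g=(g_\infty,1)$ by strong approximation and applying left-invariance of $\varphi_u$ under $\gamma=\smat{d}{-r}{}{a}\in G(\Q)$, each summand becomes $\varphi_u(\gamma^{-1} g_\infty,\, a_N^{-1} I)$. The central character together with \eqref{x'} contributes the factor $\w(a_N^{-1})=\ol{\w'(a)}$, while the archimedean part reduces to $u(\gamma^{-1}(g_\infty(i))) = u(\tfrac{az+r}{d})$ with $z=g_\infty(i)$. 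Summing matches $\sqrt{\n}\,T_\n u(z)=\sqrt{\n}\,\varphi_{T_\n u}(g)$, the $\sqrt{\n}$ absorbing the $\n^{-1/2}$ in the definition of $T_\n$.

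For the Laplacian equivariance, in the Iwasawa coordinates of \eqref{Cas} the operator $R(\Delta)$ has two pieces. Since $\varphi_u$ is right $K_\infty$-invariant, it is independent of $\theta$, so the mixed derivative $y\,\partial_x\partial_\theta$ annihilates it. What remains is $-y^2(\partial_x^2+\partial_y^2)\varphi_u$, which is exactly the hyperbolic Laplacian \eqref{Lap} acting on $u$ under the identification $\varphi_u(g_\infty\times 1)=u(g_\infty(i))$. Hence $R(\Delta)\varphi_u(g) = (\Delta u)(g_\infty(i)) = \varphi_{\Delta u}(g)$.

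The main obstacle is the coset decomposition in part (1): the rational Hecke matrices $\smat{a}{r}{}{d}$ do not themselves lie in $M_1(\n,N)$ at primes $p\mid N$ because $d\equiv 1\mod N$ typically fails, so the central idele adjustment is essential, and it is precisely the tracking of that adjustment through the central character which produces the nebentypus $\ol{\w'(a)}$. Once the representatives are chosen with the roles of $a$ and $d$ swapped (and $r$ negated) relative to the classical Hecke matrices, the remaining verification is a routine calculation.
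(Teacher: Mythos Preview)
Your argument is correct and follows essentially the same route as the paper. For \eqref{DnA} your reasoning is identical to the paper's: right $K_\infty$-invariance kills the $\partial_\theta$ term in \eqref{Cas}, and what remains is exactly the hyperbolic Laplacian \eqref{Lap}. For \eqref{TnAeq} the paper does not give details, referring instead to \cite{KL}, Proposition~13.6; your explicit coset computation---using representatives $\smat{d}{-r}{}{a}$ times a central $a_N^{-1}$-correction, then pulling across $G(\Q)$ to produce the classical Hecke matrices $\smat{a}{r}{}{d}$ at infinity and the nebentypus factor $\ol{\w'(a)}$ via the central character---is precisely that argument, carried out in the weight-zero setting.
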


\begin{proof}
In both cases it suffices by strong approximation \eqref{sa} to show that the two functions
  agree on elements of the form $\smat yx01\in G(\R^+)$ when $u$ is smooth.
  In \eqref{DnA}, the symbol $\Delta$ is used in two different ways.
  On the right, $\Delta$ is the Laplace operator \eqref{Lap}, and on the left
  it is the Casimir element whose effect on $C^\infty(G(\R))$ is given by \eqref{Cas}.
  But because $\phi_u$ is $K_\infty$-invariant, $\frac{\partial}{\partial\theta}\phi_u=0$,
  so we can drop the second term of \eqref{Cas} to conclude
\begin{align}\label{LapCas}
\notag   R(\Delta) \varphi_u(\smat{y}x01)
&=-y^2\left(\frac{\partial^2}{\partial x^2}
  +\frac{\partial^2}{\partial y^2}\right)
 \varphi_u(\smat{y}x01)\\
&=\Delta u(x+iy) = \varphi_{\Delta u}(\smat yx01),
\end{align}
as needed.

 The proof of \eqref{TnAeq} is the same as that of the
  version for holomorphic cusp forms given in \cite{KL}, Proposition 13.6.
\end{proof}

By a theorem of Gelfand and Piatetski-Shapiro, the right regular representation
  $R_0$ of $G(\A)$ on $\L(\w)$ decomposes
  into a direct sum of irreducible unitary representations $\pi$.  Each
  such cuspidal representation $\pi$ is a restricted tensor product of
  local representations: 
$\pi= \pi_\infty\otimes \pi_{\fin}=\pi_\infty\otimes\bigotimes_p\pi_p$
  (cf. \cite{Bu}, \S 3.4).

\begin{proposition}\label{specprop}
  We have the following decomposition:
\begin{equation}\label{spec}
\L(\w)^{K_\infty\times K_1(N)}=\bigoplus_{\pi} \C v_\infty\otimes \pi_{\fin}^{K_1(N)},
\end{equation}
where $\pi$ runs through the irreducible cuspidal representations
  with infinity type of the form $\pi_\infty=\pi(\e,\e,s,-s)$, where either $s\in i\R$ or
  $-\frac12<s<\frac12$,
 and $v_\infty$ is a nonzero vector of weight $0$ (unique up to multiples).
Equivalently, $\pi$ runs through the constituents of $L^2_0(\w)$
  which contain a nonzero $K_\infty\times K_1(N)$-fixed vector.
\end{proposition}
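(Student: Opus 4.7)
The plan is to start from the Gelfand--Piatetski-Shapiro theorem cited in the paragraph just preceding the proposition, which gives an orthogonal decomposition $\L(\w) = \bigoplus_\pi \pi$ into irreducible admissible unitary subrepresentations, each realized as $\pi = \pi_\infty \otimes \pi_{\fin}$. Since $K_\infty \times K_1(N)$ factors across the archimedean and finite places, passing to fixed vectors commutes with the restricted tensor product, giving
\[
\L(\w)^{K_\infty \times K_1(N)} = \bigoplus_\pi \pi_\infty^{K_\infty} \otimes \pi_{\fin}^{K_1(N)}.
\]
Only the $\pi$ for which both factors are nonzero contribute, so the task reduces to classifying those infinity types $\pi_\infty$ which admit a $K_\infty$-fixed (equivalently, weight $0$) vector, and verifying that this space is one-dimensional.

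Next I observe that each cuspidal $\pi_\infty$ must be infinite dimensional: a one-dimensional constituent would factor through $\det$, and a straightforward constant-term computation then shows it cannot be cuspidal. This allows me to invoke Proposition \ref{LK}, which forces $\pi_\infty$ to be an irreducible principal series $\pi(\e_1, \e_2, s_1, s_2)$ with $\e_1 + \e_2$ even and parameters in either the unitary principal or complementary series range, and guarantees that the weight-$0$ subspace is one-dimensional. The remaining constraint comes from the central character: $\pi(\e_1,\e_2,s_1,s_2)$ acts on $Z(\R) \cong \R^*$ by $z \mapsto \sgn(z)^{\e_1+\e_2} |z|^{s_1+s_2}$, and this must agree with $\w_\infty$, which is trivial by \eqref{winf}. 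Triviality on $\R^+$ forces $s_1 + s_2 = 0$, while triviality on $\{\pm 1\}$ together with the parity condition forces $\e_1 = \e_2 =: \e$. Writing $s = s_1$ yields $\pi_\infty \cong \pi(\e,\e,s,-s)$ with $s \in i\R$ (unitary principal series) or $-\tfrac12 < s < \tfrac12$ (complementary series, together with the $s = 0$ boundary case), exactly matching the proposition's description.

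Letting $v_\infty$ denote a nonzero weight-$0$ vector in such a $\pi_\infty$ (determined up to scalar), we obtain $\pi_\infty^{K_\infty} = \C v_\infty$, and substituting into the decomposition above recovers \eqref{spec}. The step I expect to require the most care is justifying the factorization of invariants across the restricted tensor product; this is standard but worth stating explicitly. A secondary but unavoidable point is recording that cuspidal constituents are infinite dimensional, which is what allows Proposition \ref{LK} to be applied and rules out the one-dimensional unitary representations (characters) of $G(\R)$ from appearing as $\pi_\infty$. Once these two pieces are in place the argument is essentially bookkeeping on parameters driven by \eqref{winf}.
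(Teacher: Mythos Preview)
Your proof is correct and follows essentially the same route as the paper's: decompose $\L(\w)$ into irreducibles, take $K_\infty \times K_1(N)$-invariants componentwise, apply Proposition~\ref{LK}, and use the triviality of $\w_\infty$ from \eqref{winf} to force $s_1+s_2=0$ and $\e_1=\e_2$.

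The one real difference is how you rule out finite-dimensional $\pi_\infty$. The paper argues via the Casimir element: on any irreducible finite-dimensional representation of $G(\R)$, $\Delta$ acts by a scalar $\le 0$, whereas $R_0(\Delta)$ is positive definite on cusp forms (cf.\ \eqref{nonneg}), so the Casimir eigenvalue of $\pi_\infty$ is strictly positive and $\pi_\infty$ must be infinite dimensional. Your ``constant-term computation'' is a different idea and, as written, is a little compressed: knowing that $\pi_\infty$ factors through $\det$ does not by itself make the \emph{global} constant term nonzero. What actually works here is a Whittaker-type argument---a one-dimensional $\pi_\infty$ is $N(\R)$-fixed and hence admits no nontrivial $\theta_{m,\infty}$-Whittaker functional, while cuspidality guarantees some nonzero Fourier coefficient of any nonzero $\varphi\in\pi$, a contradiction. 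Both approaches are valid; the paper's Casimir argument is more self-contained given what has already been set up in \S\ref{4}.
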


\noindent{\em Remarks:} (1) Selberg's conjecture\index{keywords}{Selberg's conjecture}
  asserts that the
  complementary series of Proposition \ref{LK} do not actually show up here,
  i.e. that $s\in i\R$. 
   Likewise, according to the Ramanujan conjecture,\index{keywords}{Ramanujan conjecture}
 the unramified
  local factors of $\pi_{\fin}$ are unitary principal series
  rather than complementary series (cf. Proposition \ref{unram}).\vskip .2cm

\noindent (2) Caution about notation:  
  In \S11 of \cite{KL}, when discussing $\pi(\e_1,\e_2,s_1,s_2)$
  we used the notation $s=s_1-s_2$.
  In the present document, we take $s=2(s_1-s_2)$.

\begin{proof}  For any irreducible cuspidal representation $\pi$, the orthogonal
  projection map $L^2_0(\w) \rightarrow \pi$ commutes with the right regular action $R(g)$.
  As an easy consequence, we have
\[L^2_0(\w)^{K_\infty\times K_1(N)} = \bigoplus_\pi \pi^{K_\infty\times K_1(N)}.\]
  The Casimir element $\Delta$ acts on the smooth vectors of an
  irreducible finite dimensional representation
  of $G(\R)$ by a scalar which is $\le 0$ (cf. Theorem 11.15 and Proposition 11.22
  of \cite{KL}).  We conclude from the fact that $R_0(\Delta)$ is positive definite
  that $\pi_\infty$ is infinite dimensional.
  The proposition now follows immediately from Proposition \ref{LK}.
  Note that $\w_\infty$ is the trivial character,
  so $s_1+s_2=0$ in the notation of that proposition,
  and here we have set $s_1=-s_2=s$.
\end{proof}

  An {\bf adelic Hecke operator} of weight $0$ is a function on $G(\A)$
   of the form
\begin{equation}\label{Hop}
f=f_\infty\times \ff\in L^1(G(\A),\ol{\w})
\end{equation}
 with $f_\infty\in C_c(G^+//K_\infty)$ and $\ff$ as in \eqref{ff}.
We now show that for such $f$, the operator $R_0(f)$ on $L^2_0(\w)$
  is diagonalizable.
By Lemma \ref{basic}, it suffices to consider its restriction to
  $L^2_0(\w)^{K_\infty\times K_1(N)}$:

\begin{proposition}\label{diag}
For each cuspidal $\pi=\pi(\e,\e,s,-s)\otimes\pi_{\fin}$
  contributing to \eqref{spec}, choose an orthogonal basis
  $\mathcal{F}_\pi$ for the finite dimensional subspace $\C v_\infty\otimes \pi_{\fin}^{K_1(N)}$.
  Let $\mathcal{F}_\A=\bigcup_\pi\mathcal{F}_\pi$ be the resulting orthogonal basis
  for $\L(\w)^{K_\infty\times K_1(N)}$.\index{notations}{FA@$\mathcal{F}_\A$}
  Then each $\varphi\in \mathcal{F}_\A$
  is an eigenfunction of $R_0(f)$ with eigenvalue
  of the form
\begin{equation}\label{Rfeigen}
h(t)\sqrt{\n}\,\lambda_\n(\varphi),
\end{equation}
 where $t=-is$ is the spectral parameter of $\pi_\infty$,\index{notations}{ht@$h(t)$, Selberg transform}
  $h$ is the Selberg transform of $f_\infty$,
and $\lambda_\n(\varphi)=\prod_{p|\n}\lambda_{p^{\n_p}}$ is 
  determined\index{notations}{lambdanv@$\lambda_\n(\varphi)$}
  from $\pi_p$ for $p|\n$ by \eqref{lambdap}.
  Furthermore, if $u\in L^2_0(N,\w')$ is the
  function on $\mathbf{H}$ corresponding to $\varphi$,
  then $u$ is a Maass eigenform with
  $\Delta$-eigenvalue $\frac14+t^2$, and
 Hecke eigenvalue $\lambda_\n(u)=\lambda_\n(\varphi)$.\index{notations}{lambdanu@$\lambda_\n(u)$}
\end{proposition}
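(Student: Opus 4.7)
The plan is to exploit the tensor product structure of both $f = f_\infty \times \ff$ and of each cuspidal $\pi$, reducing the eigenvalue computation to the local calculations already carried out in Propositions \ref{finf} and \ref{ind}. Every element of $\mathcal{F}_\pi \subset \C v_\infty \otimes \pi_{\fin}^{K_1(N)}$ is a pure tensor $\varphi = v_\infty \otimes w_{\fin}$, and under the restricted tensor decomposition $\pi_{\fin} = \bigotimes_p \pi_p$ one has $w_{\fin} = \bigotimes_p w_p$, with $w_p$ the spherical vector at each $p \nmid N$. Since $f$ and $\ff$ factor as products of local functions, Fubini gives $R(f)\varphi = \pi_\infty(f_\infty)v_\infty \otimes \bigotimes_p \pi_p(\ff_p) w_p$.

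I would then evaluate each local factor. At infinity, $f_\infty$ is bi-$K_\infty$-invariant with trivial central character on $\R^*$ (using \eqref{winf}) and $v_\infty$ has weight $\k = 0$, so Proposition \ref{finf} gives $\pi_\infty(f_\infty) v_\infty = h(t) v_\infty$ with $t$ the spectral parameter of $\pi_\infty$, which matches \eqref{t} under the parametrization $\pi_\infty = \pi(\e, \e, s, -s)$. At a finite prime $p \nmid \n$ we have $M_1(\n,N)_p = K_1(N)_p$, so $\ff_p$ is supported on $Z(\Q_p) K_1(N)_p$; integrating against the $K_1(N)_p$-fixed $w_p$ and using that the central character of $\pi_p$ is $\w_p$, the normalizing factor $1/\meas(\ol{K_1(N)}_p)$ in \eqref{ffp} cancels to give $\pi_p(\ff_p) w_p = w_p$. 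At each $p \mid \n$, necessarily $p \nmid N$, so $\pi_p = \pi_{\chi_p}$ is an unramified principal series (Proposition \ref{unram}) and $w_p$ is forced to be the spherical vector $\phi_0$; Proposition \ref{ind} then yields $\pi_p(\ff_p) w_p = p^{\n_p/2} \lambda_{p^{\n_p}}(\chi_{1,p}, \chi_{2,p}, \nu_p) w_p$. Multiplying the local eigenvalues across $p \mid \n$ produces $\sqrt{\n}\, \lambda_\n(\varphi)$, giving the global eigenvalue $h(t)\sqrt{\n}\,\lambda_\n(\varphi)$ as claimed in \eqref{Rfeigen}.

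For the classical statement about $u$, the isometry of Proposition \ref{utophi} identifies $\varphi$ with some $u \in L^2_0(N,\w')$. The equivariance in Lemma \ref{TnA} gives $R(\ff)\varphi_u = \sqrt{\n}\, \varphi_{T_\n u}$; comparing this with $R(\ff)\varphi = \sqrt{\n}\,\lambda_\n(\varphi)\varphi$ proven above forces $\varphi_{T_\n u} = \lambda_\n(\varphi)\varphi_u$, whence $T_\n u = \lambda_\n(\varphi) u$ by the isometry. This establishes that $u$ is a Hecke eigenform with $\lambda_\n(u) = \lambda_\n(\varphi)$. Likewise, \eqref{DnA} combined with the scalar action \eqref{nu} of the Casimir on the smooth vectors of $\pi_\infty$ gives $\Delta u = (\tfrac14 + t^2) u$.

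The proof is essentially an assembly of previously established local pieces, so no individual step requires new ideas. The one delicate point I anticipate is verifying the measure normalizations at primes $p \mid N$ carefully enough to confirm that $\pi_p(\ff_p)$ acts as the identity on $K_1(N)_p$-fixed vectors; once the Haar conventions of Section \ref{notation} are tracked through the definition \eqref{ffp}, the factorization $R(f) = \pi_\infty(f_\infty) \otimes \bigotimes_p \pi_p(\ff_p)$ and its application to pure tensors in $\pi$ becomes routine.
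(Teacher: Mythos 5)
Your proof is correct and follows essentially the same route as the paper's: factorize $f$ and $\varphi$ locally, apply Propositions \ref{finf} and \ref{ind} at $\infty$ and at $p\mid\n$, show $\pi_p(\ff_p)$ acts as the identity on $\pi_p^{K_1(N)_p}$ for $p\nmid\n$, and then transfer back to $u$ via Lemma \ref{TnA} and \eqref{DnA}. One small imprecision worth noting: for $p\mid N$ the local fixed space $\pi_p^{K_1(N)_p}$ may be multidimensional, so the finite component $w_{\fin}$ need not itself be a pure tensor $\bigotimes_p w_p$; the argument still goes through because each $\pi_p(\ff_p)$ acts by a scalar on the whole local fixed subspace, so the tensor product operator $\bigotimes_p\pi_p(\ff_p)$ acts by the product scalar on all of $\pi_{\fin}^{K_1(N)}$ regardless of purity.
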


\begin{proof}
Let $\pi$ be one of the given cuspidal representations.
When $p|\n$, $\pi_p^{K_1(N)_p}=\pi_p^{K_p}$ is nonzero, so $\pi_p$ is an
  unramified unitary principal series representation.  Write $\pi_p^{K_p}=\C v_p$.
  By Proposition \ref{ind}, we have
\[\pi_p(\ff_p)v_p = p^{\n_p/2}\lambda_{p^{\n_p}}v_p.\]
At the archimedean place, Proposition \ref{finf} gives
\[\pi_\infty(f_\infty)v_\infty=h(t)v_\infty.\]

 Consider any $v\in \C v_\infty\otimes \pi_{\fin}^{K_1(N)}$.  For any object
  defined as a product of local objects, let us for the moment
  use a prime $'$ to denote the product over just the finite primes $p\nmid \n$.  Then
  \[v=v_\infty\otimes v'\otimes \bigotimes_{p|\n}v_p\]
  for $v_p$ as above and
\[v'\in (\pi')^{K_1(N)'}= \bigotimes_{p\nmid \n}\pi_p^{K_1(N)_p}.\]
By the definition \eqref{ff} of $\ff$, we see that
\[\pi'({\ff}')v' = {\meas(\ol{K_1(N)'})}^{-1}\int_{\ol{K_1(N)'}}
  \pi'(k)v'dk = v'.\]
Letting $\varphi\in L^2_0(\w)$ denote the function corresponding to $v$,
  it follows (e.g. by Proposition 13.17 of \cite{KL}) that
\[R(f)\varphi=\pi_\infty(f_\infty)v_\infty \otimes \pi'({\ff}')v'\otimes
   \bigotimes_{p|\n} \pi_p(\ff_p)v_p=\sqrt{\n}\lambda_{\n}(\varphi)h(t)\varphi.\]

Now let $u$ be the element of $L^2_0(N,\w')$ attached to $v$.
  We need to show that $\Delta u=(\frac14+t^2)u$.
  For any $X\in \lie_\R$, we have
\[\pi(X)v\eqdef \ddt \pi(\exp(tX)\times 1_{\fin})v
  =\ddt\pi_\infty(\exp(tX))v_\infty\otimes v_{\fin}.\]
Therefore
\[\pi(\Delta)v=\pi_\infty(\Delta)v_\infty\otimes v_{\fin}
  =(\tfrac14+t^2)v\]
 by \eqref{nu}.  Equivalently,
   $R(\Delta)\varphi=(\frac14+t^2)\varphi$, so by \eqref{DnA},
  $\Delta u=(\frac14+t^2)u$.
Lastly, by Lemma \ref{TnA} we also have $\lambda_\n(u)=\lambda_\n(\varphi)$.
\end{proof}

With $\mathcal{F}_\A$ as in the above proposition, we let
\begin{equation}\label{Fbasis} \index{notations}{F@$\mathcal{F}$}
\mathcal{F}\subset L^2_0(N,\w')
\end{equation}
  be the corresponding orthogonal basis.  It consists of
  Maass eigenforms as shown above.  Using Proposition \ref{diag}, we can
  arrange further for each $u\in\mathcal{F}$ to be
  an eigenvector of $T_{-1}$.


\pagebreak
\section{Eisenstein series}\label{5}

The continuous part of $L^2(\w)$
  is explicitly describable in terms of Eisenstein series
  (see Sec. \ref{Spec}).
  Because we are interested in automorphic
  forms of weight $\k=0$ and level $N$, we will concentrate on
  $K_\infty\times K_1(N)$-invariant Eisenstein series.

\subsection{Induced representations of $G(\A)$}\label{Ind}

We begin by constructing certain principal series representations of $G(\A)$.  
  These\index{keywords}{principal series!of $\GL_2(\R)$}
  are representations induced from characters of the Borel subgroup
  $B(\A)=M(\A)N(\A)$.
  Any character of $B(\A)$ is trivial on the commutator subgroup $N(\A)$, and
  hence is really defined on the diagonal group $M(\A)\cong \A^*\times \A^*$.
  We are only interested in $G(\Q)$-invariant functions, so we want a
  character of $B(\Q)\bs B(\A)$, which by the above is nothing more than a pair
  of Hecke characters, say $\chi_1\otimes|\cdot|^{s_1}$ and $\chi_2\otimes|\cdot|^{s_2}$, where
  $\chi_1,\chi_2$ have finite order.  Furthermore, we need the product of these two
  characters to equal our fixed central character $\w$, which has finite order.
  This means in particular that $s_2=-s_1$.

Thus for finite order Hecke characters $\chi_1$ and $\chi_2$
  with $\chi_1\chi_2=\w$, and $s\in\C$, we consider the character of $B(\A)$ defined by
\[\mat ab0d\mapsto \chi_1(a)\chi_2(d)\left|\frac ad\right|^s.\] 
 We let $(\pi_s, H(\chi_1,\chi_2,s))$ \index{notations}{pis@$\pi_s$} denote the representation of $G(\A)$ unitarily induced from
  this character. \index{notations}{Hchis@$H(\chi_1,\chi_2,s)$}
This Hilbert space has a dense subspace spanned by the continuous
  functions $\phi:G(\A)\longrightarrow\C$ satisfying
\begin{equation}\label{ind2}
\phi(\mat ab0d g)=\chi_1(a)\chi_2(d)\left|\frac ad\right|^{s+1/2}\phi(g).
\end{equation}
  The inner product is defined by
\[\sg{\phi,\psi}=\int_{K}\phi(k)\ol{\psi(k)}dk.\]
  This is nondegenerate since, by the decomposition $G=BK$,
   any $\phi\in H(\chi_1,\chi_2,s)$ is determined by its restriction to $K$.
  The right regular representation $\pi_s=\pi_s(\chi_1,\chi_2)$
  of $G(\A)$ on $H(\chi_1,\chi_2,s)$ is unitary
  if $s\in i\R$ (for the idea, see e.g. \cite{KL} Proposition 11.8).
As explained in \S4B-4C of \cite{GJ}, we have
\begin{equation}\label{Hisom}
H(\chi_1,\chi_2,s)\cong H(\chi_2,\chi_1,-s)\end{equation}
  as representations of $G(\A)$.

  Restriction to $K$ identifies $H(\chi_1,\chi_2,s)$ with the subspace
  of $L^2(K)$ consisting of functions satisfying
\[ f(\smat ab0d k)=\chi_1(a)\chi_2(d)f(k)\qquad (\smat ab0d\in B(\A)\cap K).\]
In this way, the spaces $H(\chi_1,\chi_2,s)$ form a trivial vector bundle over the above
  subspace of $L^2(K)$.
Given $\phi\in H(\chi_1,\chi_2,0)$ and $s\in\C$, we define 
  $\phi_s\in H(\chi_1,\chi_2,s)$ by
\[\phi_s(\mat ab0d k)=\chi_1(a)\chi_2(d)\left|\frac ad\right|^{s+1/2}
  \phi(k).\]
  Equivalently, if  $H:G(\A)\rightarrow \R^+$ is the 
  {\bf height function}\index{keywords}{height function}
  defined by
\begin{equation}\label{H}\index{notations}{H@$H$, height function}
H(g)=H(\mat a{}{}d \mat{1}{x}{}{1} k)=\log\left|\frac ad\right|,
\end{equation}
  then \index{notations}{phis@$\phi_s(g) = e^{s H(g)}\phi(g)$}
\[\phi_s(g) = e^{s H(g)}\phi(g).\]
  The map from $H(\chi_1,\chi_2,0)$ to $H(\chi_1,\chi_2,s)$ taking
  $\phi\mapsto \phi_s$ is an isomorphism of Hilbert spaces.
  We set \index{notations}{Hchi0@$H(\chi_1,\chi_2)=H(\chi_1,\chi_2,0)$}
\[H(\chi_1,\chi_2)\eqdef H(\chi_1,\chi_2,0).\]


\begin{lemma}\label{phiinf}
 Suppose $\phi\in H(\chi_1,\chi_2,s)$ is a right
  $K_\infty\times K_1(N)$-invariant function, i.e. $\phi\in H(\chi_1,\chi_2,s)
  ^{K_\infty\times K_1(N)}$.
Define $\phi_\infty:G(\R)^+\longrightarrow \C$ by
\begin{equation}\label{phiinfeq}
\phi_\infty(g_\infty)=\Im(z)^{s+1/2}\qquad(z=g_\infty(i)\in\mathbf{H}),
\end{equation}
and for $p\nmid N$, define $\phi_p:G(\Q_p)\longrightarrow\C$ by
\[\phi_p(\mat ab0dk)=\chi_{1p}(a)\chi_{2p}(d)\left|\frac ad\right|_p^{s+1/2}.\]
Also set $\phi'=\prod_{p\nmid N}\phi_p$, and
define $\phi_N:\prod_{p|N}G(\Q_p)\longrightarrow\C$ by
\[\phi_N(g_N)=\phi(1_\infty\times g_N\times 1').\]
 (When $N=1$, the above is just the constant $\phi(1)$.) 
  Then $\phi$ is factorizable as
\[\phi(g_\infty\times g_N\times g')=\phi_\infty(g_\infty)\phi_N(g_N)\phi'(g').\]
\end{lemma}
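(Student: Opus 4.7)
The strategy is to reduce the factorization to local computations via the Iwasawa decomposition and the left transformation law \eqref{ind2}. I would peel off the archimedean factor first, then the finite factors at primes $p \nmid N$.

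For the archimedean step, decompose $g_\infty = \smat{a}{b}{0}{d} k_\theta$ via Iwasawa, where $ad > 0$ since $g_\infty \in G(\R)^+$, so that $z = g_\infty(i) = b/d + i(a/d)$ and $\Im(z) = a/d$. Right $K_\infty$-invariance followed by \eqref{ind2} gives
\[
\phi(g_\infty \times g_f) = \chi_{1,\infty}(a)\chi_{2,\infty}(d)\,|a/d|^{s+1/2}\phi(1_\infty \times g_f).
\]
Because each $\chi_{i,\infty}$ has finite order, it is trivial on $\R^+$; and when $a,d < 0$ one has $\chi_{1,\infty}(-1)\chi_{2,\infty}(-1) = \w_\infty(-1) = 1$ by \eqref{winf}. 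In either case the character factor equals $1$, and $|a/d| = a/d = \Im(z)$, whence $\phi(g_\infty \times g_f) = \phi_\infty(g_\infty)\phi(1_\infty \times g_f)$.

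For the finite step I first observe that $\chi_{1p}$ and $\chi_{2p}$ must be unramified at each $p \nmid N$, which is what makes $\phi_p$ well-defined. Indeed, $\smat{\Z_p^*}{0}{0}{1} \subset K_1(N)_p = K_p$ for such $p$, so right $K_1(N)$-invariance combined with \eqref{ind2} forces $\chi_{1p}|_{\Z_p^*} = 1$, and symmetrically for $\chi_{2p}$. Given a restricted-product element $g' = (g'_p)_{p \nmid N}$, apply Iwasawa componentwise as $g'_p = b'_p k'_p$ with $b'_p = \smat{a_p}{b_p}{0}{d_p}$, noting that $b'_p = 1$ and $k'_p\in K_p$ for almost all $p$. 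The element $(k'_p)_{p \nmid N}$, extended by $1$ at primes dividing $N$, lies in $\prod_p K_1(N)_p = K_1(N)$, so it is absorbed by right invariance; a local application of \eqref{ind2} at each $p \nmid N$ then yields
\[
\phi(1_\infty \times g_N \times g') = \Big(\prod_{p \nmid N}\chi_{1p}(a_p)\chi_{2p}(d_p)|a_p/d_p|_p^{s+1/2}\Big)\,\phi(1_\infty \times g_N \times 1') = \phi'(g')\,\phi_N(g_N).
\]

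Combining the two reductions produces $\phi(g_\infty \times g_N \times g') = \phi_\infty(g_\infty)\phi_N(g_N)\phi'(g')$, as desired. The only real subtlety is ensuring that $\phi_p$ is independent of the chosen Iwasawa decomposition, which reduces to the unramifiedness of $\chi_{ip}$ for $p \nmid N$ established above; beyond that, the proof is a sequence of routine applications of Iwasawa and the transformation law of the induced representation, with no serious analytic obstacle.
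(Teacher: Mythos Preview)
Your proof is correct and follows essentially the same approach as the paper: peel off the archimedean place via Iwasawa and $K_\infty$-invariance, then the primes away from $N$ via Iwasawa and right $K_p$-invariance, applying the transformation law \eqref{ind2} at each step. The paper's archimedean Iwasawa decomposition is written in the form $z\smat{1}{x}{0}{1}\smat{y^{1/2}}{}{}{y^{-1/2}}k_\infty$ rather than your $\smat{a}{b}{0}{d}k_\theta$, but this is cosmetic; your explicit verification that $\chi_{1p},\chi_{2p}$ are unramified at $p\nmid N$ (hence that $\phi_p$ is well-defined) is a detail the paper leaves implicit.
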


\begin{proof}
Write
\[g_\infty=\mat u00u \mat1x01\mat {y^{1/2}}{}{}{y^{-1/2}} k_\infty.\]
  Then since $\phi$ is $K_\infty$-invariant,
\[\phi(g_\infty\times g_{\fin}) =
\w_\infty(u)\chi_{1\infty}(y^{1/2})\chi_{2\infty}(y^{-1/2})
  y^{s+\frac12}\phi(1_\infty\times g_{\fin})\]
\[  =y^{s+\frac12}\phi(1_\infty\times g_{\fin})
=\phi_\infty(g_\infty)\phi(1_\infty\times g_{\fin})\]
since $\w_\infty$ is trivial by \eqref{winf},
   and $\chi_{1\infty},\chi_{2\infty}$ are trivial on $\R^+$.
Now according to the Iwasawa decomposition, write $g'=b'k'$ for 
  $k'\in K'=\prod_{p\nmid N}K_p$ and $b\in \prod_{p\nmid N}B(\Q_p)$.  Then
  by the same argument, using the fact that $\phi$ is right invariant under
  $K'$, we have
\[\phi(1_\infty\times g_N\times g')= \phi'(g')\phi(1_\infty\times g_N\times 1')
  =\phi'(g')\phi_N(g_N).\]
  The lemma follows.
\end{proof}

\begin{proposition}\label{phieigen} If $f=f_\infty\times \ff$ with
  $f_\infty\in C_c(G^+//K_\infty)$, the operator $\pi_s(f)$
  acts by the scalar\index{notations}{lambdanc@$\lambda_{\n}(\chi_1,\chi_2,s)$}
\[h(t)\sqrt{\n}\,\lambda_{\n}(\chi_1,\chi_2,it)\]
  on $H(\chi_1,\chi_2,s)^{K_\infty\times K_1(N)}$, and vanishes on the
  orthogonal complement of this finite dimensional subspace.
Here, $t=-is$, $h$ is the Selberg transform of $f_\infty$, and
\begin{equation}\label{lam}
  \lambda_\n(\chi_1,\chi_2,s)= \prod_{p|\n}
   \lambda_{p^{\n_p}}(\chi_{1p},\chi_{2p},s)=\n^s \sum_{d|\n}
  \frac{\ol{\chi_1(d_N) \chi_2((\tfrac \n d)_N)}}{d^{2s}},\end{equation}
for $\lambda_{p^{\n_p}}(\chi_{1p},\chi_{2p},s)$ as in Proposition \ref{ind}.
\end{proposition}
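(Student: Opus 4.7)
The overall strategy is to apply Lemma \ref{basic} to pass to the $K_\infty\times K_1(N)$-fixed subspace, then factor both the operator and the space into local pieces and compute each factor using results already established. To begin, since $f_\infty$ is bi-$K_\infty$-invariant and each $\ff_p$ is bi-$K_1(N)_p$-invariant, Lemma \ref{basic} (applied at $\infty$ and separately at each finite prime) gives that $\pi_s(f)$ sends $H(\chi_1,\chi_2,s)$ into the $K_\infty\times K_1(N)$-fixed subspace and annihilates its orthogonal complement. By Lemma \ref{phiinf} combined with the restricted tensor product decomposition of $\pi_s$, the fixed subspace is
\[\C\phi_\infty\otimes\bigotimes_{p\nmid N}\C\phi_p^0\otimes\bigotimes_{p\mid N}\pi_p^{K_1(N)_p},\]
which is finite-dimensional by admissibility of each ramified local factor; moreover $\pi_s(f)$ acts on pure tensors as $\pi_\infty(f_\infty)\otimes\bigotimes_p\pi_p(\ff_p)$.

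Next I would compute each local factor. At infinity, Proposition \ref{finf} applied with $\k=0$ yields $\pi_\infty(f_\infty)\phi_\infty = h(t)\phi_\infty$, where $h$ is the Selberg transform of $f_\infty$ and $t=-is$ is the spectral parameter. At primes $p\mid\n$, which satisfy $p\nmid N$ by hypothesis, Proposition \ref{ind} gives $\pi_p(\ff_p)\phi_p^0 = p^{\n_p/2}\lambda_{p^{\n_p}}(\chi_{1p},\chi_{2p},s)\phi_p^0$. At primes $p\nmid\n N$, $\ff_p$ is supported on $Z(\Q_p)K_p$, and a direct computation using $\pi_p(z)\phi_p^0 = \w_p(z)\phi_p^0$ together with $\meas(\ol{K_p})=1$ shows $\pi_p(\ff_p)\phi_p^0 = \phi_p^0$. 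At primes $p\mid N$, the condition $\n_p=0$ forces $M_1(\n,N)_p = K_1(N)_p$ from the definition preceding \eqref{ff}, so $\ff_p$ is supported on $Z(\Q_p)K_1(N)_p$; the analogous computation, using the right $K_1(N)_p$-invariance of any $\phi_N\in\pi_p^{K_1(N)_p}$, gives $\pi_p(\ff_p)\phi_N = \phi_N$. Assembling the local scalars and using $\prod_{p\mid\n}p^{\n_p/2}=\sqrt{\n}$, I obtain
\[h(t)\sqrt{\n}\prod_{p\mid\n}\lambda_{p^{\n_p}}(\chi_{1p},\chi_{2p},s),\]
which matches the stated $h(t)\sqrt{\n}\,\lambda_\n(\chi_1,\chi_2,it)$ since $s=it$.

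Finally, to obtain the explicit formula \eqref{lam}, I would expand the product of local sums from \eqref{lambdap}: each tuple $(j_p)_{p\mid\n}$ with $0\le j_p\le\n_p$ corresponds bijectively to a divisor $d=\prod_{p\mid\n}p^{j_p}$ of $\n$, and the exponential factors assemble cleanly to $\n^s/d^{2s}$. The main obstacle I anticipate is matching the local character product $\prod_{p\mid\n}\chi_{1p}(p)^{j_p}\chi_{2p}(p)^{\n_p-j_p}$ with the global quantity $\ol{\chi_1(d_N)\chi_2((\n/d)_N)}$. For this I would use $\chi_1(d)=1$ (since $d\in\Q^*$) together with the fact that $\chi_{1p}$ is unramified at $p\nmid N$: decomposing the idele $d$ as $d_N$ times its complementary idele (which equals $d$ at places $p\nmid N$ and $1$ elsewhere) gives $\chi_1(d_N) = \prod_{p\nmid N}\chi_{1p}(p)^{-\ord_p(d)} = \prod_{p\mid\n}\chi_{1p}(p)^{-j_p}$, and similarly for $\chi_2$ with $\n/d$ in place of $d$. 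Taking complex conjugates, which is legitimate since the $\chi_i$ have finite order, yields exactly \eqref{lam}.
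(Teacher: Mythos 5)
Your proof is correct and follows essentially the same approach as the paper's: invoke Lemma \ref{basic} for the vanishing and image claims, factor a $K_\infty\times K_1(N)$-invariant vector via Lemma \ref{phiinf}, compute each local factor using Proposition \ref{finf} at infinity and Proposition \ref{ind} at primes dividing $\n$, and match the product of local Satake-type sums with the divisor-sum formula by translating local character values into the global quantities $\ol{\chi_i(d_N)}$. The only cosmetic difference is that you treat the primes $p\nmid\n N$ as a separate case, while the paper absorbs them into Proposition \ref{ind} by allowing $\n_p=0$; and your derivation of $\ol{\chi_1(d_N)}=\prod_{p\mid\n}\chi_{1p}(p)^{j_p}$ decomposes the idele $d$ into $d_N$ and its complement, whereas the paper starts from $1=\chi(d)=\prod_{p\mid\n}\chi_p(d)\prod_{p\mid N}\chi_p(d)$ --- both are correct and equivalent.
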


\begin{proof}
  The dimension of $H(\chi_1,\chi_2,s)^{K_\infty \times K_1(N)}$
  is computed in Section \ref{Orth} below.
By Lemma \ref{basic}, $\pi_s(f)$ vanishes on its orthogonal complement.
  The second equality in \eqref{lam} comes from the fact that
  for any finite order Hecke character $\chi$ of conductor dividing $N$,
  and any positive integer $d|\n$,
\[1=\chi(d)=\prod_{p|\n}\chi_p(d)\prod_{p|N}\chi_p(d),\]
so 
\[\prod_{p|\n}\chi_p(p^{d_p})=\prod_{p|\n}\chi_p(d)=\prod_{p|N}\ol{\chi_p(d)}
  =\ol{\chi(d_N)}.\]

  For $\phi\in H(\chi_1,\chi_2,s)^{K_\infty\times K_1(N)}$,
  write
\[\phi=\phi_\infty\otimes \phi_N\otimes \bigotimes_{p\nmid N}\phi_p\]
  as in the above lemma.  Note that $\phi_\infty$ is a weight $0$ vector
  in an induced representation $\pi_\infty=\pi(\e_1,\e_2,s,-s)$ with spectral parameter
  $t=-is$.  Likewise if $p\nmid N$, then $\phi_p$ is the $K_p$-fixed vector
  in the unramified representation $\pi_p$ of $G(\Q_p)$ induced from the character
  $(\chi_{1p},\chi_{2p})$ of $B(\Q_p)$ (as in \eqref{chip}, taking $\nu=s$).  Setting
  $f_N=\prod_{p|N}f_p$, we have
\[\pi_s(f)\phi=\pi_\infty(f_\infty)\phi_\infty\otimes R(f_N)\phi_N\otimes
   \bigotimes_{p\nmid N}\pi_p(\ff_p)\phi_p.\]
  Here
  \[\pi_\infty(f_\infty)\phi_\infty=h(t)\phi_\infty\]
  by Proposition \ref{finf}, and if $p\nmid N$,
\[\pi_p(\ff_p)\phi_p = p^{\n_p/2}\lambda_{p^{\n_p}}(\chi_{1p},\chi_{2p},s)\phi_p\]
  by Proposition \ref{ind}, while by \eqref{ffp}
\[R(f_N)\phi_N(x)= \frac1{\meas(\ol{K_1(N)})}\int_{\prod_{p|N}\ol{K_1(N)}_p}\phi_N(xk)dk
  =\phi_N(x).\]
  Therefore $\pi_s(f)\phi=h(t)\sqrt{\n}\,\lambda_{\n}(\chi_1,\chi_2,s)\phi$ as claimed.
\end{proof}

\subsection{Definition of Eisenstein series} \index{keywords}{Eisenstein series}

The elements of $H(\chi_1,\chi_2,s)$ are $B(\Q)$-invariant by construction.
   We use them to define
  $G(\Q)$-invariant functions (automorphic forms) on $G(\A)$ by averaging: \index{notations}{E@$E(\phi,s,g)= \sum_{\sss \g\in B(\Q)\bs G(\Q)}\phi_s(\g g)$}\index{notations}{E@$E(\phi_s,g)= E(\phi,s,g)$}
\[E(\phi,s,g)=E(\phi_s,g)= \sum_{\g\in B(\Q)\bs G(\Q)}\phi_s(\g g)
  \qquad (\phi\in H(\chi_1,\chi_2)).\]
This sum converges absolutely when $\Re(s)>1/2$ (see Proposition \ref{convergence} below).
   For now we will assume that $s$ belongs to this domain.
  However, the Eisenstein series $E$ has a meromorphic continuation to
  the complex plane.  We will prove this well-known result below in the case where $\phi$ is
  $K_\infty\times K_1(N)$-invariant by writing down the Fourier expansion of $E$,
  which is seen to be meromorphic on $\C$
  (cf. Theorem \ref{merom}).

  For a fixed level $N$,
  we will only be interested in the case where $\phi$ is a nonzero (right)
  $K_\infty\times K_1(N)$-invariant vector.
  Such $\phi$ exists if and only if the product 
  $\c_{\chi_1}\c_{\chi_2}$ of the conductors divides $N$
  (see Corollary \ref{nonzero} below).
  In this case, $E(\phi,s,g)$ is left $G(\Q)$-invariant and right $K_1(N)$-invariant.
  By strong approximation we have $G(\A)=G(\Q)(G(\R)^+\times K_1(N))$.
  Therefore it suffices to investigate $E(\phi,s,g_\infty\times 1_{\fin})$, where
  $g_\infty\in G(\R)^+=Z(\R)B(\R)K_\infty$.  Furthermore, because
\begin{enumerate}
\item $E(\phi,s,g_\infty)$ is right $K_\infty$-invariant
\item the central character $\w=\chi_1\chi_2$ is trivial on $Z(\R)$
  (see \eqref{winf}),
\end{enumerate}
  the value of $E(\phi,s,g_\infty)$ depends only on $z=g_\infty(i)\in \mathbf{H}$.
  So for $z=x+iy\in\mathbf{H}$, we define \index{notations}{E@$E_\phi(s,x+iy)=E(\phi,s, \smat yx01\times 1_{\fin})$}
\[E_\phi(s,z) = E(\phi,s, g_\infty\times 1_{\fin}),\]
  for any $g_\infty \in Z(\R)\smat1x01\smat{y^{1/2}}{}{}{y^{-1/2}}K_\infty$.
  Thus
\[E_\phi(s,z)=\sum_{\g\in B(\Q)\bs G(\Q)}\phi_s(\g g_\infty)
=\sum_{\g\in B(\Q)\bs G(\Q)}e^{s H(\g_\infty g_\infty\times \g_{\fin})}
 \phi(\g_\infty g_\infty\times \g_{\fin}),\]
where $H$ is the height function defined in \eqref{H}.

\begin{lemma}\label{reps} A set of representatives for $B(\Q)\bs G(\Q)$ is given by
  $\pm N(\Z)\bs \SL_2(\Z)$.  The latter set is in one-to-one correspondence
  with ordered pairs $(c,d)$ of relatively prime integers
 with $c> 0$, together with $(0,1)$, via
  $\pm N(\Z)\smat abcd\leftrightarrow (c,d)$.
\end{lemma}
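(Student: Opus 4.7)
The plan is to prove the two assertions in sequence: first that the inclusion $\SL_2(\Z) \hookrightarrow G(\Q)$ induces a bijection $\pm N(\Z)\bs \SL_2(\Z) \to B(\Q)\bs G(\Q)$, and then that the bottom-row map identifies $\pm N(\Z)\bs \SL_2(\Z)$ with the indicated set of coprime pairs.

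For surjectivity of $\SL_2(\Z) \to B(\Q)\bs G(\Q)$, I will exploit that left multiplication by $B(\Q) = M(\Q)N(\Q)$ acts on $g = \smat abcd$ by independently rescaling the two rows (via $M(\Q)$) and adding rational multiples of the bottom row to the top (via $N(\Q)$). Given $g$, I would first pick $\delta \in \Q^*$ so that $(\delta c, \delta d)$ is either $(0,1)$ or a primitive integer pair with $\delta c > 0$, using that $(c,d) \neq (0,0)$. Next I would apply Bezout to produce $a_0, b_0 \in \Z$ with $a_0(\delta d) - b_0(\delta c) = 1$, yielding $\gamma_0 = \smat{a_0}{b_0}{\delta c}{\delta d} \in \SL_2(\Z)$, and verify by direct matrix multiplication that the lower-left entry of $g\gamma_0^{-1}$ vanishes and the lower-right is $1/\delta$, so $g\gamma_0^{-1} \in B(\Q)$. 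For the stabilizer, $B(\Q) \cap \SL_2(\Z)$ consists of upper triangular integer matrices of determinant one, i.e., exactly the matrices $\smat{\pm 1}{n}{0}{\pm 1}$ with $n \in \Z$, which is $\pm N(\Z)$.

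For the second bijection, I would observe that left multiplication by $\smat{1}{n}{0}{1}$ preserves the bottom row while left multiplication by $-I$ negates it, so the bottom-row map factors through $\pm N(\Z)\bs \SL_2(\Z)$ with image in coprime integer pairs modulo $\pm 1$. Each $\pm$-orbit contains a unique pair with $c > 0$, except $\{(0,\pm 1)\}$, for which I choose $(0,1)$. Surjectivity is once again Bezout, and injectivity follows from the computation that two $\gamma_1, \gamma_2 \in \SL_2(\Z)$ sharing a bottom row $(c,d)$ give $\gamma_2\gamma_1^{-1} \in \SL_2(\Z)$ with bottom row $(0,1)$ and determinant one, hence in $N(\Z)$.

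Nothing in this argument is a serious obstacle — it is a sequence of routine manipulations with the Euclidean algorithm and the decomposition $B = MN$. The one point needing a little care is the simultaneous normalization in the surjectivity step (picking a single $\delta$ that correctly handles primitivity, sign, and determinant at once), and noting that the two units $\pm 1$ of $\Z$ are precisely what force $\pm N(\Z)$ rather than $N(\Z)$ on the left.
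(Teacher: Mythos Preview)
Your proposal is correct and takes essentially the same approach as the paper: both establish $G(\Q)=B(\Q)\SL_2(\Z)$ by clearing denominators in the bottom row and applying Bezout, identify the stabilizer $B(\Q)\cap\SL_2(\Z)=\pm N(\Z)$, and then parametrize $\pm N(\Z)\bs\SL_2(\Z)$ by bottom rows. The paper phrases the second bijection via the orbit--stabilizer theorem for the right action of $\SL_2(\Z)$ on row vectors, while you compute $\gamma_2\gamma_1^{-1}$ directly, but the content is the same.
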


\begin{proof}
The first assertion follows from the decomposition
\begin{equation}\label{BG}G(\Q)=B(\Q)\SL_2(\Z)\end{equation}
 since $B\bs B\Gamma\cong (B\cap\Gamma)\bs \Gamma$.
  The decomposition \eqref{BG} can easily be proven directly as follows.
Let $g = \smat abcd \in G(\Q)$.
Write $(c\,\,\,\,d)=t(c'\,\,\,\, d')$, where $t \in \Q$, $c', d' \in \Z$
   and $\gcd(c',d') = 1$.
There exist integers $x$ and $y$ such that $c' x - d' y=1$.
Then $\smat {d'}{x}{-c'}{-y}\in\SL_2(\Z)$
and $\smat abcd \smat {d'}{x}{-c'}{-y} \in B(\Q)$.

For the representatives, view $\Z\times\Z$ as a set of row vectors, and
  consider the right action of $\SL_2(\Z)$ on this set.
   The stabilizer of $(0\,\,\,1)$ is $N(\Z)$.  Therefore $N(\Z)\bs \SL_2(\Z)$
  is in one-to-one correspondence with the orbit of $(0\,\,\,1)$. It is easy to
  see that this orbit is
  the set of ordered pairs of relatively prime integers:
\[(0\,\,\,1)\mat abcd = (c\,\,\,d).\]
  Considering instead the set $\pm\bs (\Z\times \Z)$, the stabilizer of $\pm(0\,\,\,1)$
  is $\pm N(\Z)$ and we can take $c\ge 0$, obtaining the given set of pairs $(c,d)$.
\end{proof}

By the above and Lemma \ref{phiinf}, we have
\[E_\phi(s,z)
 =\sum_{\g\in \pm N(\Z)\bs\SL_2(\Z)}e^{s H(\g_\infty g_\infty\times 1_{\fin})}\phi_\infty(\g g_\infty)
  \phi_{\fin}(\g).\]
This holds since $\g_{\fin}\in \SL_2(\Z)\subset K_{\fin}$ and
 the height function is $K_{\fin}$-invariant.  Now using
\[e^{H(\g_\infty g_\infty\times 1_{\fin})}=
  |\Im(\g z)|=\frac{y}{|cz+d|^2} \qquad(\g=\smat abcd),\]
 together with \eqref{phiinfeq} (with $s=0$), we have
\begin{equation}\label{Eisen}
E_\phi(s,z) = y^{1/2+s}\phi_{\fin}(0,1)+
  \sum_{c> 0}\sum_{d\in\Z\atop{\gcd(c,d)=1}}
   \frac{y^{1/2+s}}{|cz+d|^{1+2s}}\phi_{\fin}(c,d).
\end{equation}
  Here we have written $\phi_{\fin}(c,d)$ to denote $\phi_{\fin}(\g)$.
  This notation is apt because $\phi_{\fin}$ is left $B(\Q)$-invariant, so that
 for $\g=\smat abcd\in\SL_2(\Z)$, $\phi_{\fin}(\g)$ depends only on
  $(c,d)$ by Lemma \ref{reps}.

\subsection{The finite part of $\phi$}

We eventually need to compute the Fourier coefficients of $E_\phi(s,z)$ for
  $\phi$ in an orthonormal basis for $H(\chi_1,\chi_2)^{K_\infty\times K_1(N)}$.
  Since we can take $\phi_\infty$ to be the function determined in
  Lemma \ref{phiinf}, in order to find such a basis we just need
  to write down the possibilities for $\phi_{\fin}$.

\begin{lemma}\label{K1N} $K_{\fin}=\SL_2(\Z)K_1(N)$.
\end{lemma}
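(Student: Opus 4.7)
The plan is to prove the nontrivial inclusion $K_{\fin} \subseteq \SL_2(\Z) K_1(N)$ by a reduction-mod-$N$ argument. The inclusion $\SL_2(\Z) K_1(N) \subseteq K_{\fin}$ is immediate since both factors lie in $K_{\fin} = G(\Zhat)$.

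First, given $k \in K_{\fin}$, I would reduce modulo $N\Zhat$ to get $\bar k \in \GL_2(\Z/N\Z)$. The definition of $K_1(N)$ makes it precisely the preimage under reduction of the subgroup
\[ U_1(N) = \{\smat abcd \in \GL_2(\Z/N\Z) \mid c \equiv 0, \, d \equiv 1 \pmod N\}. \]
So the problem reduces to proving the factorization $\GL_2(\Z/N\Z) = \SL_2(\Z/N\Z) \cdot U_1(N)$, combined with a lifting argument.

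For the factorization, given $\bar k \in \GL_2(\Z/N\Z)$ with $\det(\bar k) = \delta \in (\Z/N\Z)^*$, I would set $u = \smat{\delta^{-1}}{0}{0}{1} \in U_1(N)$ and observe that $\bar k \cdot u^{-1} = \bar k \smat{\delta}{0}{0}{1}$ has determinant $1$, hence lies in $\SL_2(\Z/N\Z)$. Thus $\bar k = \bar\g \cdot u$ with $\bar\g \in \SL_2(\Z/N\Z)$.

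Next, I would invoke the classical fact that the reduction map $\SL_2(\Z) \to \SL_2(\Z/N\Z)$ is surjective (provable elementarily by row operations, or as a special case of strong approximation for $\SL_2$). This lets me lift $\bar\g$ to some $\g \in \SL_2(\Z) \subset K_{\fin}$. Then $\g^{-1} k$ reduces mod $N\Zhat$ to $u \in U_1(N)$, so $\g^{-1} k \in K_1(N)$, yielding $k = \g \cdot (\g^{-1} k) \in \SL_2(\Z) K_1(N)$, as required.

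There is no real obstacle here: the two key ingredients are the right-multiplication trick to normalize the determinant modulo $N$, and the surjectivity of $\SL_2(\Z) \to \SL_2(\Z/N\Z)$. The main thing to be careful about is distinguishing $K_{\fin} = \GL_2(\Zhat)$ (whose elements can have any determinant in $\Zhat^*$) from $\SL_2(\Zhat)$, which is why we cannot just apply strong approximation to $\SL_2$ directly and instead must separate off the determinant via an element of $U_1(N)$.
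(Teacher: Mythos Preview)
Your approach is correct and essentially identical to the paper's: both arguments strip off the determinant by right-multiplying by a diagonal element of $K_1(N)$, then invoke strong approximation for $\SL_2$ (your surjectivity of $\SL_2(\Z)\to\SL_2(\Z/N\Z)$ is exactly the density of $\SL_2(\Z)$ in $\SL_2(\Zhat)$ that the paper uses). One trivial slip: with $u=\smat{\delta^{-1}}{0}{0}{1}$ you get $\det(\bar k\,u^{-1})=\delta\cdot\delta=\delta^2$, not $1$; take $u=\smat{\delta}{0}{0}{1}$ instead.
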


\begin{proof}
Let $S=\SL_2(\Zhat)\cap K_1(N)$ denote the set of determinant $1$
  elements of $K_1(N)$.
  Note that $S$ is an open subgroup of $\SL_2(\Zhat)$.  Hence
 \[\SL_2(\Zhat)=\SL_2(\Z)\cdot S\]
since $\SL_2(\Z)$ is dense in $\SL_2(\Zhat)$ (see e.g. Proposition 6.6 of \cite{KL}).
  From this we obtain the following decomposition:
\[K_{\fin}=\SL_2(\Zhat)\mat{\Zhat^*}{}{}{1}=\SL_2(\Z)\left[S\mat{\Zhat^*}{}{}1\right].\]
  The lemma follows since the expression in the brackets is exactly $K_1(N)$.
\end{proof}

\begin{lemma}\label{phifin}
For a nonzero vector $\phi\in H(\chi_1,\chi_2)^{K_\infty\times K_1(N)}$,
  with $\phi_\infty$ given as in Lemma \ref{phiinf},
  the finite part $\phi_{\fin}$ is determined by its restriction to $\SL_2(\Z)$.
  For $\g=\smat abcd\in\SL_2(\Z)$, $\phi_{\fin}(\g)$ depends only on $(c,d) \mod N$.
\end{lemma}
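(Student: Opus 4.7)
The plan is to successively reduce $\phi_{\fin}$ via three invariances: the left $B(\Af)$-equivariance coming from \eqref{ind2}, the right $K_1(N)$-invariance, and the decomposition $K_{\fin}=\SL_2(\Z)K_1(N)$ from Lemma \ref{K1N}.

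First I would show that $\phi_{\fin}$ is determined by its restriction to $\SL_2(\Z)$. By the adelic Iwasawa decomposition, $G(\Af) = B(\Af)K_{\fin}$, so \eqref{ind2} at $s=0$ expresses $\phi_{\fin}$ on all of $G(\Af)$ in terms of its values on $K_{\fin}$. Then Lemma \ref{K1N} together with right $K_1(N)$-invariance reduces these values further to those on $\SL_2(\Z)$.

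Next I would show that $\phi_{\fin}|_{\SL_2(\Z)}$ descends to a function $\bar\phi$ on $\SL_2(\Z/N\Z)$ which is left $U(\Z/N\Z)$-invariant, where $U$ denotes the upper unipotent subgroup. For the descent: if $\g_1\equiv\g_2\pmod N$, then $\g_2^{-1}\g_1 \in \Gamma(N)\subset K_1(N)$, so right $K_1(N)$-invariance gives $\phi_{\fin}(\g_1)=\phi_{\fin}(\g_2)$. For the $U$-invariance: $\phi$ is left $N(\A)$-invariant (the special case $a=d=1$ of \eqref{ind2}), and every $\bar u\in U(\Z/N\Z)$ lifts to some $u\in N(\Z)\subset N(\Af)$, yielding $\phi_{\fin}(u\g)=\phi_{\fin}(\g)$.

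The final step is to identify the left $U(\Z/N\Z)$-orbits on $\SL_2(\Z/N\Z)$ with the fibers of the bottom-row map $\smat abcd\mapsto (c,d)$. That $U$ preserves bottom rows is immediate from $\smat1t01\smat abcd = \smat{a+tc}{b+td}{c}{d}$. For the converse, if $\g_1,\g_2\in\SL_2(\Z/N\Z)$ share bottom row $(c,d)$, the top-row difference $(\alpha,\beta)$ satisfies $\alpha d\equiv\beta c\pmod N$; working locally at each $p|N$ and exploiting that one of $c,d$ must be a $p$-adic unit (by primitivity of $(c,d)$ as the bottom row of an $\SL_2$-element), one finds $t\in\Z/N\Z$ with $(\alpha,\beta)\equiv t(c,d)\pmod N$, so $\g_2=\smat1t01\g_1$. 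I expect this local-to-global identification of orbits to be the only step requiring genuine care; the rest is direct bookkeeping with the ambient invariances.
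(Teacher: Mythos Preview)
Your proof is correct and follows essentially the same route as the paper: reduce to $K_{\fin}$ via Iwasawa (the paper uses strong approximation for $B(\A)$, which amounts to the same thing), then to $\SL_2(\Z)$ via Lemma~\ref{K1N} and right $K_1(N)$-invariance, and descend modulo $N$ via right $K(N)$-invariance. Your final orbit step is more labored than necessary: if $\g_1,\g_2\in\SL_2(\Z/N\Z)$ share bottom row, the direct matrix computation $\g_2\g_1^{-1}=\smat1*01$ settles it with no prime-by-prime analysis---this is precisely the content of Lemma~\ref{reps}, which the paper has already established and implicitly invokes here.
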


\begin{proof}
By strong approximation\index{keywords}{strong approximation!for $B(\A)$}
   for $B(\A)$ (\cite{KL}, Prop. 6.5),
\[G(\A)=B(\A)K = B(\Q)(B(\R)^+K_\infty\times K_{\fin}).\]
  Hence by $B(\Q)$-invariance, $\phi$ is determined by its restriction to
  $G(\R)^+\times K_{\fin}$.  We assume that $\phi_\infty$ is the function given in
  Lemma \ref{phiinf}.
  By $K_1(N)$-invariance and Lemma \ref{K1N}, $\phi_{\fin}$ is determined by its values on $\SL_2(\Z)$.
  Lastly, because $K(N)\subset K_1(N)$, $\phi_{\fin}$ determines a function on
  $K_{\fin}/K(N)\cong G(\Z/N\Z)$.   Therefore $\phi_{\fin}(\g)$ depends only on the
  entries modulo $N$.
\end{proof}

\begin{proposition}\label{convergence}
For any $\phi\in H(\chi_1,\chi_2)^{K_\infty\times K_1(N)}$ and all
  $z\in \mathbf{H}$, the Eisenstein series
$E_\phi(s,z)$ is absolutely convergent on $\Re(s)>1/2$.  For any $\delta>0$ and
  compact set $C\subset\mathbf{H}$, the convergence is uniform on the set
  $\Re(s)\ge \frac12+\delta$ and $z\in C$.
\end{proposition}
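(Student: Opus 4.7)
The plan is to work directly from the explicit series \eqref{Eisen}, which already has each coset representative packaged as a coprime pair $(c,d)$. The essential point is that the ``finite part'' $\phi_{\fin}(c,d)$ is uniformly bounded in $(c,d)$, so absolute convergence reduces to the standard estimate for the real-analytic Eisenstein series.

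First, by Lemma \ref{phifin}, the value $\phi_{\fin}(\g)$ for $\g=\smat abcd\in\SL_2(\Z)$ depends only on the pair $(c,d)\bmod N$. Since $(\Z/N\Z)^2$ is finite, the function $(c,d)\mapsto\phi_{\fin}(c,d)$ takes only finitely many values, and hence
\[M:=\sup_{(c,d)\in\Z^2}|\phi_{\fin}(c,d)|<\infty.\]
Together with \eqref{Eisen}, writing $\sigma=\Re(s)$, this gives
\[|E_\phi(s,z)|\le M\, y^{1/2+\sigma}\Bigl(1+\sum_{c>0}\sum_{\gcd(c,d)=1}\frac{1}{|cz+d|^{1+2\sigma}}\Bigr)\le M\, y^{1/2+\sigma}\sum_{(c,d)\neq(0,0)}\frac{1}{|cz+d|^{1+2\sigma}},\]
so it suffices to establish absolute and locally uniform convergence of the latter unrestricted lattice sum for $\sigma>\tfrac12$.

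Next, I would use the standard comparison between $|cz+d|^2$ and $c^2+d^2$. Writing $z=x+iy\in\mathbf{H}$, the quadratic form $Q_z(c,d)=|cz+d|^2=(cx+d)^2+c^2y^2$ is positive definite on $\R^2$, and its smallest eigenvalue $\lambda(z)$ is continuous and strictly positive in $z\in\mathbf{H}$. On any compact set $C\subset\mathbf{H}$ there is therefore a constant $\lambda_C>0$ with $|cz+d|^2\ge\lambda_C(c^2+d^2)$ for all $(c,d)\in\R^2$ and all $z\in C$. Consequently,
\[\sum_{(c,d)\neq(0,0)}\frac{1}{|cz+d|^{1+2\sigma}}\le \lambda_C^{-(1+2\sigma)/2}\sum_{(c,d)\neq(0,0)}\frac{1}{(c^2+d^2)^{(1+2\sigma)/2}},\]
and the last sum converges exactly when $1+2\sigma>2$, i.e. $\sigma>\tfrac12$, by the usual integral comparison with $\int\!\!\int_{r\ge 1} r^{-(1+2\sigma)}r\,dr\,d\theta$.

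Finally, on the region $\sigma\ge\tfrac12+\delta$ and $z\in C$, the prefactor $y^{1/2+\sigma}$ is bounded by a constant depending only on $C$ and $\delta$ (since $y$ is bounded above on $C$ and $\sigma$ is bounded above on any strip where we want uniform convergence, or else we may bound $\sigma$ from above by fixing a strip $\tfrac12+\delta\le\sigma\le A$ and then letting $A\to\infty$ by a second application of the same argument). The bound obtained is independent of $s$ in the given half-plane and of $z\in C$, which gives the claimed uniform absolute convergence. There is no real obstacle here beyond bookkeeping; the one point worth isolating is the uniform boundedness of $\phi_{\fin}$, which is exactly what Lemma \ref{phifin} supplies.
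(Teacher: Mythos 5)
Your approach is the same as the paper's: use Lemma \ref{phifin} to conclude that $\phi_{\fin}$ is bounded, and then majorize $E_\phi(s,z)$ by the classical real-analytic Eisenstein series $\sum_{(c,d)\neq(0,0)} y^{\sigma+1/2}/|cz+d|^{2\sigma+1}$. The paper states the last step as ``easily seen to converge''; you have filled in the standard lattice-point comparison, which is fine.

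There is, however, one flaw in the final paragraph. Uniform convergence on each strip $\frac12+\delta\le\sigma\le A$ does not by itself yield uniform convergence on the unbounded half-plane $\sigma\ge\frac12+\delta$, so the parenthetical remark about ``letting $A\to\infty$ by a second application of the same argument'' is not a valid reduction; as stated, your bound $y^{1/2+\sigma}$ is genuinely unbounded when $C$ contains points with $y>1$. The correct way to handle the half-plane is to absorb the prefactor into each term: write
\[
\frac{y^{1/2+\sigma}}{|cz+d|^{1+2\sigma}}=\Bigl(\frac{y}{|cz+d|^{2}}\Bigr)^{1/2+\sigma}.
\]
For $z$ in the compact set $C$ and $c^2+d^2$ larger than some $R_0^2$ depending only on $C$ (via your constant $\lambda_C$ and $\sup_{C}y$), one has $y/|cz+d|^2<1$, so for such $(c,d)$ the term is monotonically decreasing in $\sigma$. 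Hence the tail $\sum_{c^2+d^2>R^2}$, for $R\ge R_0$, is dominated by its value at $\sigma=\frac12+\delta$, and this tends to $0$ as $R\to\infty$ by your lattice-sum estimate, uniformly in $(s,z)$ on the full set $\Re(s)\ge\frac12+\delta$, $z\in C$. With this correction your proof is complete and matches the paper's method.
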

\noindent{\em Remark:} A similar proof applies to the case of arbitrary $\phi\in H(\chi_1,\chi_2)$
  (cf. \cite{Bu}, Proposition 3.7.2).

\begin{proof}
  It follows from Lemma \ref{phifin} that $\phi_{\fin}$ is a bounded function.
  So up to a constant multiple, $E_\phi(s,z)$ is majorized by the classical series
\[E(\Re(s),z)=\sum_{(c,d)\neq (0,0)} \frac{y^{\Re(s)+1/2}}{|cz+d|^{2\Re(s)+1}},\]
  which is easily seen to converge when $\Re(s)>1/2$.
\end{proof}

%

As indicated in the proof of Lemma \ref{phifin}, $\phi_{\fin}$ can be viewed as a function
  on $G(\Z/N\Z)$. \index{notations}{D@$D(\chi_1,\chi_2,N)$}
Let $D(\chi_1,\chi_2,N)$ denote the space of all functions $\phi$ on $G(\Z/N\Z)$ satisfying
\[  \phi(\mat{a}{b}{}{d} k \mat{a'}{b'}{}{1}) = \chi_{1}(a) \chi_{2}(d)  \phi(k)\]
for all $k\in G(\Z/N\Z)$ and $a,d,a'\in (\Z/N\Z)^*$ and $b,b'\in\Z/N\Z$,
or equivalently,
\[  \phi(\mat{a}{b}{}{d} k \mat{a'}{b'}{}{d'}) = \chi_{1}(a) \chi_{2}(d) \w(d') \phi(k).\]
Here $d'\in (\Z/N\Z)^*$, and we view $\chi_1$ and $\chi_2$ as characters of $(\Z/N\Z)^*$ as in \eqref{chi'},
i.e. $\chi_j(a)=\chi_j(a_N)$.
We make $D(\chi_1,\chi_2,N)$ into a finite dimensional Hilbert space by
  defining
\begin{equation}\label{Dip}
 \sg{\phi_1, \phi_2} = |G(\Z/N\Z)|^{-1} \sum_{k \in G(\Z/N\Z)}
   \phi_1(k) \ol{\phi_2(k)}.
\end{equation}

Notice that if $\phi,\psi\in H(\chi_1,\chi_2)^{K_\infty\times K_1(N)}$, then
  with notation as in Lemma \ref{phiinf}, it is easy to see that
\[
\sg{\phi,\psi}= 
  \int_{K_\infty}\phi_\infty(k)\ol{\psi_\infty(k)}dk \int_{K_N}
  \phi_N(k)\ol{\psi_N(k)}dk\int_{K'}\phi'(k)\ol{\psi'(k)}dk\]
\begin{equation}\label{sgN} =\int_{K_N}\phi_N(k)\ol{\psi_N(k)}dk,
\end{equation}
where $K_N=\prod_{p|N}K_p$.
  Letting $K_N(N)=\{k\in K_N|\, k\equiv 1\mod N\}$,
  the $K_1(N)$-invariance then gives
\begin{align}\label{ipN}
\sg{\phi,\psi}&=[K_N:K_N(N)]^{-1}\sum_{k\in K_N/K_N(N)}\phi_N(k)\ol{\psi_N(k)}\\
\notag  &=|G(\Z/N\Z)|^{-1}\sum_{k\in G(\Z/N\Z)}\phi_N(k)\ol{\psi_N(k)}.
\end{align}
(When $N=1$, this is just $\phi(1)\ol{\psi(1)}$.)
  In view of \eqref{Dip}, this proves the following.

\begin{lemma}
The identification of $\phi_{\fin}$ with a function on $G(\Z/N\Z)$ induces
an isometry of $H(\chi_1,\chi_2)^{K_\infty\times K_1(N)}$ with $D(\chi_1,\chi_2,N)$.
\end{lemma}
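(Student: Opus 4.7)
The plan is to split the verification into three pieces: (i) the isometry of inner products, (ii) that the map $\phi\mapsto\phi_N|_{G(\Z/N\Z)}$ lands in $D(\chi_1,\chi_2,N)$, and (iii) that this map is a bijection. Part (i) is essentially already in hand from the discussion immediately preceding the lemma: equations \eqref{sgN} and \eqref{ipN} express $\sg{\phi,\psi}$ as $|G(\Z/N\Z)|^{-1}\sum_{k\in G(\Z/N\Z)}\phi_N(k)\ol{\psi_N(k)}$, which is precisely the inner product \eqref{Dip} on $D(\chi_1,\chi_2,N)$. So I would only need to remark on this and pass to (ii) and (iii).

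For (ii), starting from $\phi\in H(\chi_1,\chi_2)^{K_\infty\times K_1(N)}$, I would trace three properties of $\phi_N|_{K_N}$, where $K_N=\prod_{p|N}K_p$: (a) the left $B(\A)$-equivariance of $\phi$, combined with $|a/d|_p=1$ for $a,d\in\Z_p^*$, gives $\phi_N(\smat ab0d\, k)=\chi_{1p}(a)\chi_{2p}(d)\phi_N(k)$ for each $p\mid N$ and each $\smat ab0d\in B(\Q_p)\cap K_p$; (b) right $K_1(N)$-invariance gives $\phi_N(k\,\smat{a'}{b'}{0}{1})=\phi_N(k)$ in the appropriate local range; (c) right invariance under $\prod_{p|N}K(N)_p\subset\prod_{p|N}K_1(N)_p$ lets $\phi_N|_{K_N}$ descend to $K_N/\prod_{p|N}K(N)_p\cong G(\Z/N\Z)$. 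Using the character identification \eqref{x'}, properties (a) and (b) translate directly into the transformation law defining $D(\chi_1,\chi_2,N)$.

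For (iii), injectivity is a direct consequence of Lemma \ref{phiinf}: the triple $(\phi_\infty,\phi_N,\phi')$ determines $\phi$, while $\phi_\infty$ is pinned down by \eqref{phiinfeq} and $\phi'=\prod_{p\nmid N}\phi_p$ by the standard unramified spherical vectors. For surjectivity, given $\tilde\phi\in D(\chi_1,\chi_2,N)$, I would pull it back to a function on $K_N$ along the projection $K_N\twoheadrightarrow G(\Z/N\Z)$, extend to $\prod_{p|N}G(\Q_p)$ via the Iwasawa decomposition by setting $\phi_N(b_Nk_N)=\chi_1(\alpha)\chi_2(\delta)|\alpha/\delta|_N^{1/2}\tilde\phi(k_N)$ for $b_N=\smat{\alpha}{\beta}{0}{\delta}$ and $k_N\in K_N$, and finally assemble
\[\phi(g_\infty\times g_N\times g')=\phi_\infty(g_\infty)\phi_N(g_N)\phi'(g')\]
as in Lemma \ref{phiinf}. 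The left $B$-equivariance built into the defining transformation law of $D(\chi_1,\chi_2,N)$ guarantees that the Iwasawa extension is well-defined on overlaps, and a direct verification shows $\phi\in H(\chi_1,\chi_2)^{K_\infty\times K_1(N)}$ with the prescribed restriction.

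The main (and rather modest) obstacle I anticipate is keeping straight the several character identifications in play: $\chi_{jp}$ restricted to $\Z_p^*$ versus $\chi_j'$ restricted to $(\Z/p^{N_p}\Z)^*$, and their product $\prod_{p|N}\chi_{jp}$ versus the Dirichlet character $\chi_j'$ on $(\Z/N\Z)^*$. Once this bookkeeping is in order, each step of the argument amounts to reading off a definition, so no real analytical input beyond Lemma \ref{phiinf} and \eqref{x'} is required.
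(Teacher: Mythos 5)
Your proposal is correct and follows essentially the same route as the paper, which deduces the lemma directly from the inner-product computations in \eqref{sgN}--\eqref{ipN} together with \eqref{Dip}. You simply make explicit the well-definedness and bijectivity verifications that the paper leaves implicit in the surrounding discussion (Lemma \ref{phifin} and Lemma \ref{phiinf}), and your consistency check on the Iwasawa extension for surjectivity is carried out correctly.
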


The space $D(\chi_1,\chi_2,N)$ can be analyzed locally since
\[ G(\Z/N\Z) \cong \prod_{p|N} G(\Z_p/N\Z_p).\]
We let $D_p(\chi_1,\chi_2,N)$ denote the space of functions on $G(\Z_p/N\Z_p)$
  satisfying\footnote{Here and henceforth, for $a\in\Q_p^*$ we evaluate $\chi_1(a)$
  by embedding $a$ as an idele which is 1 outside $p$.
  This is equivalent to $\chi_{1p}(a)$
  but we sometimes wish to avoid the extra subscript when the context is
  completely local.}
\begin{equation}\label{Dp2}
   \phi(\mat{a}{b}{}{d} k \mat{a'}{b'}{}{d'})
  = \chi_{1}(a) \chi_{2}(d) \chi_{1}(d') \chi_{2}(d') \phi(k)
\end{equation}
for $k\in G(\Z_p/N\Z_p)$, $a,d,a',d'\in(\Z_p/N\Z_p)^*$, and
 $b,b'\in \Z_p/N\Z_p$.
This is a Hilbert space with inner product given by the local analog of \eqref{Dip}:
\begin{equation}\label{Diplocal}
 \sg{\phi_1, \phi_2} = [K_p:K_p(p^{N_p})]^{-1} \sum_{k \in G(\Z_p/N\Z_p)}
   \phi_1(k) \ol{\phi_2(k)},
\end{equation}
  and we have isometries
\begin{equation}\label{Dlocal} \index{notations}{D2@$D_p(\chi_1,\chi_2,N)$}
H(\chi_1,\chi_2)^{K_\infty\times K_1(N)}\cong
  D(\chi_1,\chi_2,N) \cong \bigotimes_{p|N}D_p(\chi_1,\chi_2,N).
\end{equation}
When $N=1$, the empty tensor product on the right is to be
  interpreted as $\C$.

\subsection{An orthogonal basis for $H(\chi_1,\chi_2)^{K_\infty\times K_1(N)}$}\label{Orth}

Most of the material in this section is drawn from pages 305-306 of
   Casselman's article \cite{Ca}.

In order to construct an orthogonal basis for
  $H(\chi_1,\chi_2)^{K_\infty\times K_1(N)}$,
  we see from \eqref{Dlocal} that it suffices to do so for $D_p(\chi_1,\chi_2,N)$.
Define
\[B(\Z_p/N\Z_p) = \{ \smat{a}{b}{0}{d} \,|\, a, d \in (\Z_p/N\Z_p)^*, b \in \Z_p / N\Z_p\}.\]

\begin{proposition}
For a prime $p|N$, we have the following disjoint union:
   \[ G(\Z_p/N\Z_p) = \bigcup_{i=0}^{N_p} B(\Z_p/N\Z_p) \mat{1}{0}{p^i}{1}
    B(\Z_p/N\Z_p). \]
\end{proposition}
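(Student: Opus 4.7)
The plan is to recognize both sides as subsets of $G(R)$ where $R=\Z_p/N\Z_p=\Z_p/p^{N_p}\Z_p$ is a local ring with maximal ideal $(p)$, and then show every $g\in G(R)$ lies in exactly one of the double cosets $B(R)w_iB(R)$ with $w_i=\smat{1}{0}{p^i}{1}$, $0\le i\le N_p$.

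For disjointness I would use the invariant $\iota(g)=(c)\subseteq R$ attached to $g=\smat abcd$. A direct computation shows that for $b_1=\smat{\alpha}{\beta}{0}{\delta}$ and $b_2=\smat{\alpha'}{\beta'}{0}{\delta'}$ in $B(R)$, the $(2,1)$-entry of $b_1gb_2$ equals $\delta\alpha' c$, so $(c)$ is unchanged because $\delta,\alpha'\in R^*$. Since $\iota(w_i)=(p^i)$ and the ideals $(1),(p),(p^2),\ldots,(p^{N_p-1}),(0)$ are pairwise distinct, the $N_p+1$ double cosets are disjoint.

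For existence, given $g\in G(R)$, unimodularity of the row $(c,d)$ forces $(c,d)=R$. Write $(c)=(p^i)$ for some $0\le i\le N_p$ (with the convention $(0)=(p^{N_p})$). I would carry out the reduction to $w_i$ in two stages. First, I reduce the bottom row to $(p^i,1)$: if $i=N_p$ then $c=0$ and $g\in B=B\,w_{N_p}\,B$; if $1\le i<N_p$ then unimodularity forces $d\in R^*$, so left multiplying by $\smat{1}{0}{0}{d^{-1}}$ gives bottom row $(c/d,1)$ and then right multiplying by $\smat{(c/d)^{-1}p^i}{0}{0}{1}$ gives $(p^i,1)$; and if $i=0$ with $d\in R^*$ the same procedure works. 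In the remaining sub-case $i=0$ with $d\in pR$ (so $c\in R^*$), left-multiplying by $\smat{1}{0}{0}{c^{-1}}$ gives bottom row $(1,d/c)$ with $d/c\in pR$, after which right-multiplying by $\smat{1}{1-d/c}{0}{1}$ gives bottom row $(1,1)=(p^0,1)$. Second, I reduce the top row to $(1,0)$: at this point the matrix is $\smat{a''}{b''}{p^i}{1}$ with $a''-b''p^i\in R^*$, and left-multiplying by $\smat{\alpha}{\beta}{0}{1}$ preserves the bottom row while sending the top row to $(\alpha a''+\beta p^i,\alpha b''+\beta)$; choosing $\beta=-\alpha b''$ and $\alpha=(a''-b''p^i)^{-1}$ yields $w_i$.

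The main obstacle I anticipate is the case $i=0$ with $d\in pR$, where the direct normalization by $d^{-1}$ is unavailable; one must instead exploit the fact that $c$ is then a unit and use a nontrivial $\beta'$ in the right $B(R)$-factor to cook up a $1$ in the $(2,2)$-position. Every other step reduces to a routine local computation in $R$ made possible by unimodularity of the bottom row.
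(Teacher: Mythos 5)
Your proof is correct and follows essentially the same strategy as the paper's: the invariant of a double coset is the ideal generated by the lower-left entry (equivalently $i = \min(\ord_p(c), N_p)$), and existence is proved by explicit reduction to $\smat{1}{0}{p^i}{1}$. The paper simply states the disjointness as elementary and writes the decomposition as closed-form products in the three cases $c=0$, $0<i<N_p$, $i=0$ (equations \eqref{decomp1}--\eqref{decomp3}); you give the equivalent reduction algorithmically and, usefully, spell out the disjointness calculation (the $(2,1)$-entry of $b_1 g b_2$ equals $\delta\alpha' c$) rather than leaving it implicit. The only cosmetic quibble is the notation $(c/d)^{-1}p^i$ in the second reduction step, since $c/d$ is not a unit for $i\ge 1$; what you mean — right-multiplication by $\smat{u^{-1}}{0}{0}{1}$ where $c/d = up^i$ with $u\in R^*$ — is sound, but it would be cleaner to name the unit explicitly.
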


\begin{proof}
    Let $g = \smat abcd\in G(\Z_p/N\Z_p)$. Setting $i=\min(\ord_p(c),N_p)$, it is elementary
  to show that $g$ belongs only to the double coset of $\smat{1}0{p^i}1$.
%
  For future reference, we give the decomposition explicitly.
  There are three cases.  If $c=0$, then $p^{N_p}\equiv 0\mod N\Z_p$
  and
\begin{equation}\label{decomp1}
\mat ab0d\in B(\Z_p/N\Z_p)= B(\Z_p/N\Z_p) \mat{1}{0}{p^{N_p}}{1}
    B(\Z_p/N\Z_p). \end{equation}
  Second, suppose that $0<i<N_p$.  Then $a$ is a unit mod $p$ and
\begin{equation}\label{decomp2}
\mat abcd = \mat{\frac{p^ia}c}{}{}1\mat10{p^i}1\mat{\frac{c}{p^i}}{\frac{bc}{ap^i}}{}
  {\frac{ad-bc}a} \qquad (i=\ord_p(c)).
\end{equation}
If $i=0$, then $c$ is a unit, and we have
\begin{equation}\label{decomp3}
\mat abcd = \mat{1}{\frac ac-1}{}1\mat1011
\mat{c}{d-\frac{ad-bc}c}{}{\frac{ad-bc}c}.
\end{equation}
%
%
%
%
\end{proof}

By equation \eqref{Dp2} and the above proposition, we see that
  a function $\phi\in D_p(\chi_{1}, \chi_{2}, N)$ is determined by
  its values on the matrices $\smat{1}{0}{p^i}{1}$, for $i=0, \ldots, N_p$.
  Therefore if $D_p(\chi_1,\chi_2,N)$ is nonzero, it is spanned by
  functions $\vpa$ satisfying
\begin{equation}\label{vpadef} \index{notations}{phii@$\phi_i=\phi_{p,i,N_p}^{\chi_{1p}, \chi_{2p}}$}
 \phi_{p,i,N_p}^{\chi_{1p}, \chi_{2p}} (\mat{1}{0}{p^j}{1}) = \delta_{ij}.
\end{equation}
   Often we denote the above by $\phi_{i}$ when the other parameters are clear from the context.
   Because the decomposition of $g$ into the form $\smat ab0d\smat10{p^i}1\smat{a'}{b'}{0}{d'}$
   is not unique, for some values of $i$
   it may not be possible to start with \eqref{vpadef} and extend
  to $G(\Z_p/N\Z_p)$ via \eqref{Dp2}.
   We give here the conditions on $i$ under which such a function exists:

\begin{proposition}\label{phii} \index{notations'}{phii@$\phi_i=\phi_{p,i,N_p}^{\chi_{1p}, \chi_{2p}}$}
The function $\phi_i=\vpa$ is well-defined on $G(\Z_p/N\Z_p)$ if and only if
    \begin{equation} \label{chi}
    \ord_p (\c_{\chi_{2}}) \leq i \leq N_p - \ord_p(\c_{\chi_{1}}),
    \end{equation}
where $N_p=\ord_p(N)$.
\end{proposition}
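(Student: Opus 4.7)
The plan is to reduce the well-definedness of $\phi_i$ to a character-triviality condition on the stabilizer in $B(\Z_p/N\Z_p) \times B(\Z_p/N\Z_p)$ of the representative $g_i := \smat{1}{0}{p^i}{1}$, acting by $(b_1,b_2)\cdot g = b_1 g b_2$. Since \eqref{vpadef} prescribes $\phi_i \equiv 0$ on all the other double cosets, the only potential obstruction to extending $\phi_i$ via \eqref{Dp2} is consistency at $g_i$: for every pair $(b_1,b_2)$ with $b_1 g_i b_2 = g_i$, writing $b_j = \smat{a_j}{*}{0}{d_j}$, we need the character
\[
\kappa(b_1,b_2) \;=\; \chi_1(a_1)\chi_2(d_1)\chi_1(d_2)\chi_2(d_2)
\]
to equal $1$. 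Conversely, once this holds we may unambiguously set $\phi_i(b_1 g_i b_2) = \kappa(b_1,b_2)$.

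First I would parametrize the stabilizer. Writing $b_1 = \smat{a}{b}{0}{d}$, the equation $b_1 g_i b_2 = g_i$ is equivalent to $b_2 = g_i^{-1} b_1^{-1} g_i$ being upper triangular in $\Z_p/N\Z_p$. A direct matrix computation shows that the lower-left entry of $g_i^{-1}b_1^{-1}g_i$ equals $p^i(a - d + p^i b)/(ad)$, so the stabilizer is cut out by
\[
a \;\equiv\; d - p^i b \pmod{p^{N_p - i}},
\]
and under this constraint $b_2$ has diagonal entries $a_2 = (d - p^i b)/(ad)$ and $d_2 = (a + p^i b)/(ad)$.

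Next I would plug these into $\kappa$. Using $\chi_1(a)\chi_1(d_2) = \chi_1((a+p^i b)/d)$ and $\chi_2(d)\chi_2(d_2) = \chi_2((a+p^ib)/a)$, the character collapses to
\[
\kappa(b_1,b_2) \;=\; \chi_1\!\left(\tfrac{a + p^i b}{d}\right)\chi_2\!\left(1 + \tfrac{p^i b}{a}\right).
\]
The stabilizer relation forces $(a+p^i b)/d \in 1 + p^{N_p - i}\Z_p$, while $1 + p^i b/a$ ranges through all of $1 + p^i\Z_p$ as $b$ varies (with $a,d$ held fixed at units congruent mod $p^{N_p-i}$). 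Consequently $\kappa \equiv 1$ on the stabilizer if and only if $\chi_{1p}$ is trivial on $1+p^{N_p-i}\Z_p$ and $\chi_{2p}$ is trivial on $1+p^i\Z_p$, which by definition of the conductor translates to $\ord_p(\c_{\chi_1}) \le N_p - i$ and $\ord_p(\c_{\chi_2}) \le i$ respectively. For the necessity of each inequality I would exhibit explicit stabilizer elements isolating one factor: taking $b = 0$ with $a \equiv 1 \pmod{p^{N_p-i}}$, $d = 1$ forces the $\chi_1$ condition, while taking $d = 1$, $a = 1 - p^ib$, and $b$ arbitrary isolates $\chi_2(1 + p^i b/a)^{-1}$ and forces the $\chi_2$ condition.

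The main obstacle is the matrix computation of $g_i^{-1}b_1^{-1}g_i$ together with the careful bookkeeping modulo $p^{N_p}$ needed to identify the stabilizer and verify that the character $\kappa$ cleanly splits into a $\chi_1$-factor living in $1+p^{N_p-i}\Z_p$ and a $\chi_2$-factor living in $1+p^i\Z_p$. Once this split is achieved, the translation into the conductor inequalities is a direct consequence of the definition of $\c_{\chi_j}$.
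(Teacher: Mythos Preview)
Your proposal is correct and follows essentially the same approach as the paper: both reduce well-definedness to a consistency condition on pairs $(b_1,b_2)\in B\times B$ fixing $g_i=\smat{1}{0}{p^i}{1}$, derive the key congruence $a\equiv d-bp^i\pmod{p^{N_p-i}}$ from the matrix equation, and then isolate the $\chi_1$- and $\chi_2$-factors using the same explicit test elements (up to relabeling). Your packaging via the stabilizer character $\kappa$ is a slightly more systematic phrasing of what the paper does by directly comparing the two values $\chi_1(a)\chi_2(d)$ and $\chi_1(t)\chi_2(t)$ with $t=d-bp^i$, but the computations and the specializations used for necessity are the same.
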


    \begin{proof}
  First, we suppose that \eqref{chi} holds, and we check that $\phi_i$ is well-defined.
  It suffices to show that if
\begin{equation}\label{mats}
\mat ab0d\mat10{p^i}1=\mat10{p^i}1\mat rs0t,
\end{equation}
  then the two values produced by $\phi_i$ using \eqref{Dp2} coincide:
\begin{equation}\label{toshow}
\chi_1(a)\chi_2(d)=\chi_1(t)\chi_2(t).
\end{equation}
The equality \eqref{mats} gives
\[\mat rs0t=\mat{a+bp^i}{b}{dp^i-ap^i-bp^{2i}}{d-bp^i}.\]
From the lower left corner, we see that $(d-bp^i)p^i\equiv ap^i\mod p^{N_p}$,
  so
\begin{equation}\label{abd}
d-bp^i\equiv a\mod p^{N_p-i}.
\end{equation}
  Because $\ord_p(\c_{\chi_1})\le N_p-i$, this implies
\[\chi_1(a)=\chi_1(d-bp^i)=\chi_1(t).\]
Similarly, because $\ord_p(\c_{\chi_2})\le i$, we have
\[\chi_2(d)=\chi_2(d-bp^i)=\chi_2(t).\]
  This proves \eqref{toshow}, so $\phi_i$ is well-defined.

Conversely, assume $\phi_i$ is well-defined.
  Thus we suppose that whenever \eqref{abd} holds, we have the equality
\[\chi_1(a)\chi_2(d) = \chi_1(d-bp^i)\chi_2(d-bp^i).\]
Using this we must deduce \eqref{chi}.
  Set $a=1$, $b=0$, and $d=1+up^{N_p-i}$ for any $u\in\Z_p$.
  Then \eqref{abd} holds, so by our hypothesis we get
\[\chi_2(1+up^{N_p-i}) = \chi_1(1+up^{N_p-i})\chi_2(1+up^{N_p-i}).\]
This implies $\chi_1(1+up^{N_p-i})=1$, so $\ord_p(\c_{\chi_1})\le N_p-i$ as needed.
Now set $a=1$ and $d=1+bp^i$ for any $b\in\Z_p$.  Then \eqref{abd} holds, so we have
\[\chi_2(1+bp^i) = \chi_1(1)\chi_2(1)=1.\]
Thus $\ord_p(\c_{\chi_2})\le i$ as needed.
\end{proof}


\begin{corollary}\label{localnonzero}
Given that $(\chi_1\chi_2)_p=\w_p$, the space $D_p(\chi_1,\chi_2,N)$
  is nonzero if and only if
  \[\ord_p(\c_{\chi_1})+\ord_p(\c_{\chi_2})\le N_p,\]
i.e. if and only if $N\Z_p\subset \c_{\chi_1}\c_{\chi_2}\Z_p$.
  If nonzero, its dimension is equal to
 $1+\ord_p(\frac{N}{\c_{\chi_1}\c_{\chi_2}})$, with an orthogonal basis given by
\[\mathcal{B}_p=\mathcal{B}_p(\chi_1,\chi_2) \index{notations}{Bcp@$\mathcal{B}_p$}
  =\{\phi_{i}|\,\ord_p(\c_{\chi_2})\le i\le {N_p-\ord_p(\c_{\chi_1})}\}.\]
\end{corollary}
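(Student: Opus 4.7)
The plan is to combine the double-coset decomposition from the preceding proposition with the criterion for well-definedness given by Proposition \ref{phii}. The decomposition writes $G(\Z_p/N\Z_p)$ as the disjoint union of $B(\Z_p/N\Z_p)\smat{1}{0}{p^i}{1}B(\Z_p/N\Z_p)$ for $i=0,1,\ldots,N_p$, so by the transformation rule \eqref{Dp2} any $\phi\in D_p(\chi_1,\chi_2,N)$ is completely determined by the values $\phi(\smat{1}{0}{p^i}{1})$. Consequently the evaluation-at-representatives map embeds $D_p(\chi_1,\chi_2,N)$ into $\C^{N_p+1}$, and the $\phi_i$ satisfying \eqref{vpadef} would form a natural basis — provided each is actually a well-defined element of $D_p(\chi_1,\chi_2,N)$.

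By Proposition \ref{phii}, the function $\phi_i$ is well-defined on $G(\Z_p/N\Z_p)$ if and only if $\ord_p(\c_{\chi_2})\le i\le N_p-\ord_p(\c_{\chi_1})$. Hence $D_p(\chi_1,\chi_2,N)\ne 0$ exactly when this interval of integers is nonempty, i.e. when $\ord_p(\c_{\chi_1})+\ord_p(\c_{\chi_2})\le N_p$, which is the stated divisibility $N\Z_p\subset \c_{\chi_1}\c_{\chi_2}\Z_p$. In that case the number of admissible indices is
\[
  N_p-\ord_p(\c_{\chi_1})-\ord_p(\c_{\chi_2})+1
  = 1+\ord_p\!\left(\tfrac{N}{\c_{\chi_1}\c_{\chi_2}}\right),
\]
which gives the claimed dimension.

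For orthogonality, observe that by construction each $\phi_i$ is supported on the single double coset $B(\Z_p/N\Z_p)\smat{1}{0}{p^i}{1}B(\Z_p/N\Z_p)$: the transformation law \eqref{Dp2} forces its value on any other coset representative $\smat{1}{0}{p^j}{1}$ (and hence on the whole coset) to be $\phi_i(\smat{1}{0}{p^j}{1})=\delta_{ij}=0$. Since the double cosets are disjoint, the supports of $\phi_i$ and $\phi_j$ are disjoint for $i\ne j$, and the inner product \eqref{Diplocal} immediately yields $\sg{\phi_i,\phi_j}=0$. Thus $\mathcal{B}_p$ is an orthogonal basis of $D_p(\chi_1,\chi_2,N)$. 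The entire argument is essentially bookkeeping on top of Proposition \ref{phii}; there is no substantive obstacle, the only subtle step already having been dispensed with in that proposition's proof.
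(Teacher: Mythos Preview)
Your proof is correct and follows essentially the same approach as the paper: both rely on Proposition \ref{phii} for the dimension count and nonvanishing criterion, and both obtain orthogonality from the fact that the $\phi_i$ have disjoint supports (being concentrated on distinct double cosets). You have simply spelled out in more detail what the paper leaves implicit.
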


\begin{proof} The only point remaining is the orthogonality of $\{\phi_i\}$,
  which follows immediately from the definition of the inner product
  \eqref{Diplocal} since these functions have disjoint support.
\end{proof}

Tensoring the local spaces together, we have:

\begin{corollary} \label{nonzero}
Given that $\chi_1\chi_2=\w$, the space $H(\chi_1,\chi_2)^{K_\infty\times K_1(N)}$
  is nonzero if and only if $\c_{\chi_1}\c_{\chi_2}|N$.
 If nonzero,  its dimension is $\tau(\frac N{\c_{\chi_1}\c_{\chi_2}})$\index{notations}{tau@$\tau$, divisor function}
  for the divisor function $\tau$, with an orthogonal basis given by
\[\mathcal{B}=\mathcal{B}(\chi_1,\chi_2)\index{notations}{Bc@$\mathcal{B}$} \index{notations}{phiip@$\phi_{(i_p)}=\prod_{p"|N}\phi_{i_p}$}
  =\{\phi_{(i_p)}=\prod_{p|N}\phi_{i_p}|\, \phi_{i_p}\in \mathcal{B}_p\}.\]
Here we implicitly use the natural identification \eqref{Dlocal}.
The norm of $\phi_{(i_p)}\in\mathcal{B}$ is given by\index{notations}{ip@$(i_p), i_p$}
\begin{equation}\label{norm}
\|\phi_{(i_p)}\|^2 =
  \prod_{p|N\atop i_p=0}\frac p{(p+1)}
  \prod_{p|N\atop 0<i_p<N_p}\frac{p-1}{p^{i_p}(p+1)}
 \prod_{p|N\atop i_p=N_p}\frac1{p^{N_p-1}(p+1)}.
\end{equation}
\end{corollary}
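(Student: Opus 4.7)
The plan is to deduce everything from Corollary \ref{localnonzero} together with the local-global isometry \eqref{Dlocal}. Since $H(\chi_1,\chi_2)^{K_\infty\times K_1(N)} \cong \bigotimes_{p|N} D_p(\chi_1,\chi_2,N)$, the global space is nonzero iff each local factor is, which by Corollary \ref{localnonzero} translates to $\ord_p(\c_{\chi_1})+\ord_p(\c_{\chi_2})\le N_p$ for all $p|N$, i.e. $\c_{\chi_1}\c_{\chi_2}|N$. The dimension then multiplies out to $\prod_{p|N}(1+\ord_p(N/(\c_{\chi_1}\c_{\chi_2}))) = \tau(N/(\c_{\chi_1}\c_{\chi_2}))$, and the basis $\mathcal{B}$ is obtained by tensoring the local bases $\mathcal{B}_p$.

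For orthogonality, I would use that the inner product \eqref{Dip} factors as a product of local inner products \eqref{Diplocal} on pure tensors, so $\sg{\phi_{(i_p)},\phi_{(j_p)}} = \prod_{p|N}\sg{\phi_{i_p},\phi_{j_p}}$. Since the local basis $\mathcal{B}_p$ is orthogonal by Corollary \ref{localnonzero} (its elements have disjoint support), distinct global basis elements are orthogonal, and $\|\phi_{(i_p)}\|^2 = \prod_{p|N}\|\phi_{i_p}\|^2$. So the problem reduces to computing each local norm.

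The main step is this local norm computation. Because $\chi_1,\chi_2$ are unitary, \eqref{Dp2} gives $|\phi_{i_p}(g)|^2=1$ on the support $B(\Z_p/N\Z_p)\,w_i\,B(\Z_p/N\Z_p)$, where $w_i=\smat10{p^i}1$. By \eqref{Diplocal} this reduces to
\[
\|\phi_{i_p}\|^2 = \frac{|B(\Z_p/N\Z_p)\,w_i\,B(\Z_p/N\Z_p)|}{|G(\Z_p/N\Z_p)|}.
\]
I would compute the numerator via $|BwB|=|B|\cdot[B:w^{-1}Bw\cap B]$, analyzing the intersection from the relation $\smat{a}{b}{0}{d}w_i=w_i\smat{r}{s}{0}{t}$, which forces $d\equiv a+bp^i\pmod{p^{N_p-i}}$. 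The three cases $i=0$, $0<i<N_p$, $i=N_p$ behave differently because at $i=0$ the constraint pins $d=a+b$ and imposes the nontrivial condition that $a+b$ be a unit, at intermediate $i$ there are $p^i$ free unit choices for $d$, and at $i=N_p$ the intersection is all of $B$. Combined with $|B|=p^{3N_p-2}(p-1)^2$ and $|G(\Z_p/N\Z_p)|=p^{4N_p-3}(p-1)^2(p+1)$, one gets respectively $p/(p+1)$, $(p-1)/(p^i(p+1))$, and $1/(p^{N_p-1}(p+1))$, which is exactly \eqref{norm}.

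The main obstacle is simply keeping the cases straight in the intersection calculation: the condition $d\equiv a+bp^i\pmod{p^{N_p-i}}$ degenerates at both endpoints $i=0$ (empty modulus, forcing the unit constraint on $a+b$) and $i=N_p$ (trivial constraint), and one must treat them separately from the generic case. Once that bookkeeping is done, the formula falls out by direct division.
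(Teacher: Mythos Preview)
Your reduction to the local problem (nonzero condition, dimension, basis, orthogonality) matches the paper exactly. The norm computation, however, takes a different route. The paper uses \eqref{sgN} to write $\|\phi_{(i_p)}\|^2 = \prod_{p|N}\int_{K_p}|\phi_{i_p}(k)|^2\,dk$, observes that $|\phi_{i_p}|=1$ on its support, and then identifies that support in $K_p$ as the set $\{\smat abcd\in K_p:\min(\ord_p(c),N_p)=i_p\}$, which is $K_0(p^{i_p})_p\setminus K_0(p^{i_p+1})_p$ for $i_p<N_p$ and $K_0(N)_p$ for $i_p=N_p$. The three cases of \eqref{norm} then drop out immediately from the known index formula $\psi_p(p^j)=p^{j-1}(p+1)$ (with $\psi_p(1)=1$), with no stabilizer analysis needed.

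Your approach via $|B w_i B|/|G(\Z_p/N\Z_p)|$ and the orbit--stabilizer computation is correct and gives the same answer; it is just more hands-on. One small terminological slip: at $i=0$ the modulus in $d\equiv a+bp^i\pmod{p^{N_p-i}}$ is $p^{N_p}$, i.e.\ the \emph{full} modulus (forcing $d=a+b$ exactly), not an ``empty'' one; the empty/trivial constraint occurs at $i=N_p$. Your conclusions in each case are right regardless. The paper's method is shorter because it reuses the standard indices $\psi_p$, while yours is self-contained finite group theory.
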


\begin{proof}
The claim about the dimension follows from the fact that by \eqref{chi}
  the number of tuples $(i_p)$ is
\[\prod_{p|N} (N_p-\ord_p(\c_{\chi_1})-\ord_p(\c_{\chi_2})+1)
  = \tau(\frac N{\c_{\chi_1}\c_{\chi_2}}).\]
For the norm, by \eqref{sgN} we have
\[\|\phi_{(i_p)}\|^2  = \prod_{p|N}\int_{K_p}|\phi_{i_p}(k)|^2dk
=\prod_{p|N}\meas\left\{\smat abcd\in K_p|\, \min(\ord_p(c),N_p)=i_p\right\}.\]
When $i_p=N_p$, the corresponding set is just $K_0(N)_p$, which
  has measure
\[\frac1{\psi_p(N)}=\frac1{p^{N_p-1}(p+1)}\]
(cf. \cite{KL}, pp. 206-207).  When $0\le i_p<N_p$, the corresponding
  set is equal to $K_0(p^{i_p})_p - K_0(p^{i_p+1})_p$, which has measure
$\frac1{\psi_p(p^{i_p})}-\frac1{\psi_p(p^{i_p+1})}$.  This works out to
  $\frac{p-1}{p^{i_p}(p+1)}$ if $0<i_p<N_p$ and, using
 $\psi_p(1)=1$, $\frac{p}{p+1}$ if $i_p=0$.
\end{proof}

\subsection{Evaluation of the basis elements}

Given a basis element $\phi=\phi_{(i_p)}$ of $H(\chi_1,\chi_2)^{K_\infty\times K_1(N)}$,
  we will need to compute the Fourier expansion of the associated
  Eisenstein series.  From our expression \eqref{Eisen} for $E_\phi(s,z)$, we see that
  we need to be able to evaluate $\phi_{\fin}(\smat abcd)$ for
  $\smat abcd\in \SL_2(\Z)$.

\begin{proposition}\label{philocal} \index{notations}{phii@$\phi_i=\phi_{p,i,N_p}^{\chi_{1p}, \chi_{2p}}$}
   Let $p|N$.  For a local element $\phi_i\in \mathcal{B}_p$,
  and $k=\smat abcd\in K_p$, $\phi_i(k)=0$ unless $i=\min(\ord_p(c),N_p)$.
  If this condition is met, then
\[\phi_i(\mat abcd) = \left\{\begin{array}{ll}\chi_2(d)&i=N_p,\\\\ 
 \chi_1(ad-bc)\ol{\chi_1(\frac c{p^i})}\,\chi_2(d)&0<i<N_p,\\\\
  \chi_1(ad-bc)\ol{\chi_1(c)}&i=0.\end{array}\right.\]
\end{proposition}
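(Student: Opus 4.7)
The plan is to exploit the double-coset decomposition from the preceding proposition. Its proof provided the explicit decompositions \eqref{decomp1}--\eqref{decomp3}: writing $j = \min(\ord_p(c), N_p)$, every $k = \smat abcd \in K_p$ lies in the double coset $B(\Z_p/N\Z_p)\smat{1}{0}{p^j}{1}B(\Z_p/N\Z_p)$, and we are given an explicit factorization $k = \smat{\alpha}{\beta}{0}{\delta}\smat{1}{0}{p^j}{1}\smat{\alpha'}{\beta'}{0}{\delta'}$ in each of the three cases $j=N_p$, $0<j<N_p$, and $j=0$.

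First, I would handle the vanishing. By \eqref{vpadef}, $\phi_i(\smat{1}{0}{p^j}{1}) = \delta_{ij}$, so applying the transformation law \eqref{Dp2} to the factorization above gives $\phi_i(k) = 0$ whenever $j \ne i$. This establishes the first assertion.

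Next, suppose $j = i$ and apply \eqref{Dp2} to the corresponding factorization, then simplify using $\chi_1\chi_2 = \w_p$ and the well-definedness constraint \eqref{chi}, namely $\ord_p(\c_{\chi_2}) \le i \le N_p - \ord_p(\c_{\chi_1})$. Specifically:
\begin{itemize}
\item For $i = N_p$, the constraint forces $\chi_1$ unramified at $p$, so $\chi_1(\alpha) = 1$ on units. Using \eqref{decomp1} (with trivial outer right factor) yields $\phi_{N_p}(\smat ab0d) = \chi_1(a)\chi_2(d) = \chi_2(d)$, and for general $k \in K_p$ with $c \equiv 0 \bmod p^{N_p}$, the formula follows.
\item For $i = 0$, the constraint forces $\chi_2$ unramified, so $\chi_2$ is trivial on $\Z_p^*$ and $\w(u) = \chi_1(u)$ for units $u$. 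Applying \eqref{Dp2} to \eqref{decomp3} gives $\phi_0(\smat abcd) = \chi_1(\tfrac{ad-bc}{c})\chi_2(\tfrac{ad-bc}{c}) = \w(\tfrac{ad-bc}{c}) = \chi_1(ad-bc)\ol{\chi_1(c)}$.
\item For $0 < i < N_p$ (the case I expect to be the hardest to see through cleanly): apply \eqref{Dp2} to \eqref{decomp2}. The result is $\chi_1(\tfrac{p^i a}{c})\chi_1(\tfrac{ad-bc}{a})\chi_2(\tfrac{ad-bc}{a})$. Factor out $\chi_1(a)$-terms, getting $\ol{\chi_1(c/p^i)}\,\chi_1(ad-bc)\,\chi_2(\tfrac{ad-bc}{a})$. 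The key point is that since $p^i \mid c$, we have $(ad-bc)/a \equiv d \pmod{p^i}$, and the constraint $\ord_p(\c_{\chi_2}) \le i$ means $\chi_2$ factors through $(\Z_p/p^i\Z_p)^*$, so $\chi_2(\tfrac{ad-bc}{a}) = \chi_2(d)$, giving the claimed formula.
\end{itemize}

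The main obstacle is the bookkeeping in the middle case: one must keep track of which quantities are $p$-adic units (so that $\chi_1$ and $\chi_2$ evaluate meaningfully on them), and one must invoke the conductor bounds from \eqref{chi} at precisely the right moment to reduce $\chi_2((ad-bc)/a)$ to $\chi_2(d)$. Once that step is carried out, all three cases collapse to the statement of the proposition.
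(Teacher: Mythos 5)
Your proposal is correct and follows essentially the same route as the paper's proof: you use the explicit factorizations \eqref{decomp1}--\eqref{decomp3} together with the transformation law \eqref{Dp2}, deduce vanishing from $\phi_i(\smat10{p^j}1)=\delta_{ij}$, and in each case invoke the conductor constraints from \eqref{chi}. In the $0<i<N_p$ case the paper writes $\chi_2(\tfrac{ad-bc}a)=\chi_2(d-\tfrac{bc}a)=\chi_2(d)$ because $\tfrac{bc}a\in c\Z_p=p^i\Z_p\subset\c_{\chi_2}\Z_p$, which is identical to your observation that $(ad-bc)/a\equiv d\pmod{p^i}$ and $\chi_2$ factors through $(\Z_p/p^i\Z_p)^*$; the only cosmetic difference is that the paper explicitly notes that $a$ is a unit in the $i<N_p$ cases, which you use implicitly when cancelling $\chi_1(a)\ol{\chi_1(a)}$.
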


\begin{proof}
This follows from the definition \eqref{Dp2} of $D_p(\chi_1,\chi_2,N)$
  and the decompositions \eqref{decomp1}-\eqref{decomp3}.    If $i=N_p$, then by \eqref{chi}
  $\ord_p(\c_{\chi_1})\le N_p-i=0$, so $\chi_1$ is unramified at $p$.  Since $N_p>0$, we
  see that $a$ must be a unit, and by \eqref{Dp2},
\[\phi_i(\mat abcd)=\chi_1(a)\chi_2(d)=\chi_2(d).\]
  If $0<i<N_p$, then $a$ is a unit and by \eqref{decomp2} we have
\[\phi_i(\mat abcd)=\chi_1(\frac{p^ia}{c})\chi_1(\frac{ad-bc}a)\chi_2(\frac{ad-bc}a)\]
  \[=\ol{\chi_1(\tfrac c{p^i})}\chi_1(ad-bc)\chi_2(d-\tfrac{bc}a)
  =\ol{\chi_1(\tfrac c{p^i})}\chi_1(ad-bc)\chi_2(d)\]
  since $\frac{bc}a\in c\Z_p=p^i\Z_p\subset \c_{\chi_2}\Z_p$.
  When $i=0$, we have $\ord_p(\c_{\chi_2})= 0$, so $\chi_2$ is unramified at $p$.
  Then \eqref{decomp3} gives
\[\phi_i(\mat abcd)=
\w(\frac{ad-bc}c)=\chi_1(ad-bc)\ol{\chi_1(c)}.\qedhere\]
\end{proof}

Multiplying these local results together, we have, for $\smat abcd\in\SL_2(\Zhat)$,
\begin{equation}\label{phi1}
\phi_{(i_p)}(\smat abcd_{\fin}) = \prod_{p|N,\atop i_p=0}\ol{\chi_{1p}(c)}
  \prod_{p|N,\atop 0<i_p<N_p}\ol{\chi_{1p}(\tfrac c{p^{i_p}})}\chi_{2p}(d)
  \prod_{p|N,\atop i_p=N_p}\chi_{2p}(d)
\end{equation}
under the assumption that $\min(\ord_p(c),N_p)=i_p$ for all $p$ (otherwise the value is $0$).
We can express this as a product of two Dirichlet characters as follows.
Let\index{notations}{N1@$\ds N_1=\prod_{p"|N,\atop i_p<N_p}p^{N_p}$}
\[N_1\eqdef\prod_{p|N,\atop i_p<N_p}p^{N_p}.\]
  Note that $\c_{\chi_1}|N_1$.
  Attach to $\chi_1$ a Dirichlet character modulo $N_1$ by \index{notations}{chi1'@$\chi'_1$, Dirichlet character modulo $N_1$}
\[\chi_1'(x)\eqdef\prod_{p|N_1}\chi_{1p}(x)\qquad (x,N_1)=1\]
  as in \eqref{chi'}-\eqref{x'} with $N_1$ in place of $N$.
  We extend $\chi_1'$ to $\Z$ in the usual way by taking it to be $0$
  if $(x,N_1)>1$.  For convenience later,
   we also set $\chi_1'(x)=0$ if $x$ is not an integer.
  Let \index{notations}{M@$M=\prod_{p"|N}p^{i_p}$}
\[M\eqdef \prod_{p|N}p^{i_p}.\]
  Then assuming $\phi_{(i_p)}(\smat abcd)\neq 0$, 
  we have $M|c$ since $i_p\le \ord_p(c)$ for all $p$, and
\[\chi_1'(\frac cM)=\prod_{p|N_1}\chi_{1p}(\frac{c}{p^{i_p}\frac{M}{p^{i_p}}})
  =\prod_{p|N_1}\chi_{1p}(\tfrac c{p^{i_p}})\,\ol{\chi_{1p}(\tfrac M{p^{i_p}})}.\]
Therefore defining the constant\index{notations}{C i@$C_{(i_p)}=\prod_{p"|N_1}\ol{\chi_{1p}(\tfrac M{p^{i_p}})}$}
\[C_{(i_p)}=\prod_{p|N_1}\ol{\chi_{1p}(\tfrac M{p^{i_p}})},\]
  we have
\[\prod_{p|N_1}\ol{\chi_{1p}(\tfrac c{p^{i_p}})}=C_{(i_p)}\ol{\chi_1'(\tfrac cM)}.\]
Similarly, we set \index{notations}{N2@$\ds N_2=\prod_{p"|N,\atop i_p>0}p^{N_p}$}
\[N_2\eqdef\prod_{p|N,\atop i_p>0}p^{N_p}\]
  (the lexical ambiguity between the above
 definition and $N_2=\ord_2(N)$ should not cause confusion).
Observing that $\c_{\chi_2}|\,M|\,N_2$, we define a Dirichlet character
  modulo $N_2$ by \index{notations}{chi2'@$\chi'_2$, Dirichlet character modulo $M$}
\[\chi_2'(x)\eqdef\prod_{p|N_2}\chi_{2p}(x)\qquad (x,N_2)=1,\]
extending to all of $\Z$ by $\chi_2'(x)=0$ if $(x,N_2)>1$.
Note that because $M$ and $N_2$ have the same set of prime divisors
  and $\c_{\chi_2}|M$, we have
\begin{equation}\label{modM}
\chi_2'(d+Mx)=\chi_2'(d)
\end{equation}
for all $x\in\Z$.
With the above notation, \eqref{phi1} becomes\index{notations}{phiip@$\phi_{(i_p)}=\prod_{p"|N}\phi_{i_p}$}
\begin{equation}\label{phi}
\phi_{(i_p)}(\mat abcd_{\fin})=C_{(i_p)}\,\ol{\chi_1'(\tfrac c M)}\,\chi_2'(d).
\end{equation}

In the preceding discussion, \eqref{phi} was established under the assumption
  that $\min(\ord_p(c),N_p)=i_p$ for all $p$.  However, it actually holds
  in general:

\begin{proposition}
Equation \eqref{phi} is valid for all $\smat abcd\in \SL_2(\Z)$.
\end{proposition}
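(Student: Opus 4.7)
The plan is to complete the proof by a short case analysis verifying that both sides of \eqref{phi} vanish whenever the hypothesis $\min(\ord_p(c), N_p) = i_p$ fails at some prime $p \mid N$. The equality has already been verified when this hypothesis holds at every prime dividing $N$ (that is precisely the content of the preceding derivation, starting from Proposition \ref{philocal} and equation \eqref{phi1}), so it remains to show that in the complementary case the right-hand side is zero as well. This is essentially a bookkeeping argument using the two conventions for $\chi_1'$: extension by zero from $(\Z/N_1\Z)^*$ to $\Z$, and further extension by zero to non-integer rationals.

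First I would fix $\smat abcd \in \SL_2(\Z)$ and suppose there exists a prime $p \mid N$ with $\min(\ord_p(c), N_p) \neq i_p$. By Proposition \ref{philocal} the LHS of \eqref{phi} is already zero, so I need to exhibit a factor of the RHS that vanishes. There are two subcases to consider, depending on whether $\ord_p(c)$ is too small or too large compared with $i_p$.

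In the first subcase, $\ord_p(c) < i_p$. Then $p^{i_p} \nmid c$, hence $M = \prod_{q \mid N} q^{i_q}$ does not divide $c$, so $c/M$ is not an integer. By the convention stated just before the definition of $M$, $\chi_1'(c/M) = 0$, and hence the RHS vanishes. In the second subcase, $\ord_p(c) > i_p$ while $\min(\ord_p(c), N_p) > i_p$ forces $i_p < N_p$; this means $p \mid N_1$ (by the definition of $N_1$) and simultaneously $\ord_p(c/M) = \ord_p(c) - i_p > 0$, so $p \mid (c/M)$. Thus $\gcd(c/M, N_1) > 1$, and by the extension of $\chi_1'$ by zero off of $(\Z/N_1\Z)^*$ we again have $\chi_1'(c/M) = 0$, killing the RHS.

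These two subcases exhaust the possibilities for $\min(\ord_p(c), N_p) \neq i_p$, so the RHS of \eqref{phi} vanishes whenever the LHS does, and the identity is established for all $\smat abcd \in \SL_2(\Z)$. The main (very mild) obstacle is simply keeping the two vanishing conventions for $\chi_1'$ straight and noticing that $c/M \in \Z$ together with $\gcd(c/M, N_1) = 1$ are exactly the conditions needed to recover $\min(\ord_p(c), N_p) = i_p$ at every $p \mid N$, once one also remembers that for $p \mid N$ with $i_p = N_p$ the divisibility $p^{N_p} \mid c$ is forced by $M \mid c$.
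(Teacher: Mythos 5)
Your proof is correct and takes essentially the same approach as the paper: both arguments reduce to observing that $\chi_1'(c/M)$ is nonzero exactly when $M\mid c$ and $\gcd(c/M,N_1)=1$, and that this pair of conditions is equivalent to $\min(\ord_p(c),N_p)=i_p$ for all $p\mid N$. Your two-subcase breakdown (either $\ord_p(c)<i_p$, killing integrality of $c/M$, or $\ord_p(c)>i_p$ with $i_p<N_p$, killing coprimality with $N_1$) is just a slightly more explicit unpacking of the paper's one-line equivalence.
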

\begin{proof}
When $\min(\ord_p(c),N_p)\neq i_p$ for some $p$,
  the left-hand side of \eqref{phi} is equal to $0$.
  Thus it suffices to show that the same is true of $\chi_1'(\tfrac c M)$.
 By definition, $\chi'_1(\tfrac c M)$ is nonzero if and only if
$M|c$ and $\gcd(\tfrac c M, N_1) = 1$.  This is equivalent to
 $\ord_p(c)\ge i_p$ for all $p$ and $\ord_p(c)=i_p$ when $i_p<N_p$.
These conditions occur precisely when $i_p = \min(\ord_p(c), N_p)$.
%
\end{proof}

\comment{
\subsection{Hecke operators and Eisenstein series}

 This section is not needed in the sequel, but it provides
  a proof of the formula \eqref{lam}
  for $\lambda_{\n}(\chi_1,\chi_2,s)$ in the classical setting.
Let $\phi\in H(\chi_1,\chi_2)^{K_\infty\times K_1(N)}$, and
let $\ff$ be the function on $G(\Af)$ defined in \eqref{ff}.
  For $\Re(s)>1/2$, we set
\[R(\ff)E(\phi,s,g)=\!\IL{\olG(\Af)}\ff\!(x)E(\phi,s,gx)dx=\!\IL{\olG(\Af)}
  \sum_{\g\in B(\Q)\bs G(\Q)}\ff\!(x)\phi_s(\g gx)dx.\]
Since $\ff$ has compact support, the above is absolutely convergent.  By Fubini's
  Theorem we can exchange the sum and the integral. It then follows by
  Proposition \ref{phieigen} that
\[R(\ff)E(\phi,s,g)=E(\pi_s(\ff)\phi_s,g)
  =\sqrt{\n}\,\lambda_{\n}(\chi_1,\chi_2,s)E(\phi,s,g).\]
This shows that the Eisenstein series is an eigenfunction of the adelic Hecke operator
  $R(\ff)$.

On the other hand, we can apply the classical Hecke operator $T_\n$ to 
  $E_\phi(s,z)$.  In view of the above, Lemma \ref{TnA} suggests that 
  we should expect
\[T_\n E_\phi(s,z)=\lambda_{\n}(\chi_1,\chi_2,s) E_\phi(s,z).\]
  Here we prove this directly.
Let
\[S_\n=\{\mat ab0d|\, ad=\n,\, 0\le b<d\}.\]
For $\alpha=\smat ab0d\in S_\n$, define $\w'(\alpha)=\ol{\w'(a)}$.  Fix a tuple $(i_p)$, and let
\[E(s,z)=\sum_{\g\in B(\Q)\bs G(\Q)} |\Im(\g z)|^{s+\frac12}\phi_{\fin}(\g)\]
be the Eisenstein series associated to $\phi_{(i_p)}$.
Then
\begin{align*}
T_\n E(s,z)&=\frac1{\sqrt{\n}}\sum_{\alpha\in S_\n}{\w'(\alpha)}
  E(s,\alpha z)\\
&=\frac1{\sqrt{\n}}\sum_{\alpha\in S_\n}{\w'(\alpha)}
  \sum_{\g\in B(\Q)\bs G(\Q)}|\Im(\g\alpha z)|^{s+\frac12}\phi_{\fin}(\g).
\end{align*}
Since $\alpha\in B(\Q)$, we see that $\alpha\g\alpha^{-1}$
  also ranges over $B(\Q)\bs G(\Q)$ as $\g$ varies in the above summation.
  Replacing $\g$ by $\alpha \g\alpha^{-1}$, we have
\[T_\n E(s,z)=\frac1{\sqrt{\n}}\sum_{\alpha\in S_\n}{\w'(\alpha)}
  \sum_{\g\in B(\Q)\bs G(\Q)}|\Im(\alpha\g z)|^{s+\frac12}\phi_{\fin}(\alpha\g\alpha^{-1}).\]
Recall that for $g=\smat abcd\in G(\R)^+$,
$\Im(gz)=\frac{\det g}{|cz+d|^{2}}\Im(z)$.
Hence for $\alpha=\smat ab0d$,
\[
|\Im(\alpha \g z)|^{s+1/2}=\frac{\n^{s+1/2}}{d^{2s+1}}|\Im(\g z)|^{s+1/2}
  =(a/d)^{s+1/2}|\Im(\g z)|^{s+1/2}.
\]
On the other hand,
\[\alpha\g \alpha^{-1}=\frac 1\n\mat ab0d\mat rstu\mat d{-b}0a
  =\frac1\n\mat **{td^2}{u\n-btd}.\]
This matrix belongs to $K_p$ for all $p|N$, and by Lemma \ref{reps} we can assume
  that it has determinant $1$.
   Therefore we can apply our formula \eqref{phi} for
   $\phi_{\fin}$.  (This requires more justification- seems quite difficult!)
  The above matrix belongs to the support of $\phi_{\fin}$
   if and only if for all $p|N$,
$\min(\ord_p(td^2),N_p)=i_p$, or equivalently, since $(d,N)=1$,
\[\min(\ord_p(t),N_p)=i_p.\]
  In other words, $\alpha\g\alpha^{-1}$ belongs to the support if and
  only if $\g$ does.
Under the above condition, which implies $M|t$, we have
\[\phi_{\fin}(\alpha\g\alpha^{-1}) = C_{(i_p)}\ol{\chi_1'(\tfrac{td^2}M)}\chi_1'(\n)\chi_2'(u)
  =\ol{\chi_1'(d)}\chi_1'(\tfrac \n d)\phi_{\fin}(\g).\]
We have used the fact that $M$ is a modulus for $\chi_2'$.
Altogether, we get
\[T_\n E(s,z)=\frac1{\sqrt{\n}}\sum_{ad=\n}\sum_{b=0}^{d-1}\ol{\w'(a)}(\tfrac ad)^{s+1/2}
  \ol{\chi_1'(d)}\,\chi_1'(\tfrac \n d)
  \hskip -.2cm\sum_{\g\in B(\Q)\bs G(\Q)}\hskip -.2cm|\Im(\g z)|^{s+\frac12}\phi_{\fin}(\g)\]
\[=\frac1{\sqrt{\n}}\sum_{d|\n}\ol{\w'(\tfrac \n d)}\,d\,(\tfrac \n{d^2})^{s+1/2}
  \ol{\chi_1'(d)}\chi_1'(\tfrac \n d) E(s,z)\]
\[=\n^s\sum_{d|\n}\frac{\ol{\chi_1'(d)\chi_2'(\tfrac{\n}d)}}{d^{2s}}
  E(s,z).\]
Here we used $\w'(\n/d)=\chi_1'(\n/d)\chi_2'(\n/d)$, which is valid since $(\n,N)=1$.
By \eqref{x'}, the above eigenvalue coincides with the formula for
  $\lambda_{\n}(\chi_1,\chi_2,s)$ given in \eqref{lam}.
}

\subsection{Fourier expansion of Eisenstein series}\label{Fourier}

For any $\phi\in H(\chi_1,\chi_2)^{K_\infty\times K_1(N)}$, the Eisenstein series $E_\phi(s,z)$ has
  period one as a function of $z\in\mathbf H$.  Indeed, writing
   $z=g_\infty(i)$,
\[E_\phi(s,z+1)=E(\phi,s,\smat 1101g_\infty\times 1_{\fin})
  =E(\phi,s,g_\infty\times \smat 1{-1}01_{\fin})=E_\phi(s,z),\]
  the second equality holding by the left $G(\Q)$-invariance of $E(\phi,s,g)$,
  and the third equality holding by the right $K_1(N)$-invariance of $\phi$.
It follows that $E_\phi(s,z)$ has a Fourier expansion \index{notations}{ams@$a_m(s,y)=a_m^{\phi}(s,y)$, $m$-th Fourier coefficient of $E_\phi(s,z)$} \index{keywords}{Fourier expansion!of Eisenstein series}
\begin{equation}\label{ams}
 E_\phi(s,z) = \sum_{m \in \Z} a_m(s,y) e(mx), 
\end{equation}
valid when $\Re(s)>1/2$ by Proposition \ref{convergence}.
  It turns out that the right-hand side also converges
  for other $s$, to a meromorphic function continuing $E_\phi(s,z)$.
  This will be described in the next section.
  Here we will compute the Fourier coefficients when
  $\phi=\phi_{(i_p)}$.
%

Henceforth we fix the tuple $(i_p)_{p|N}$, setting $\phi=\phi_{(i_p)}$ and
   $M=\prod_{p|N}p^{i_p}$ as before.  Assuming $\Re(s)>1/2$,
by \eqref{Eisen} and \eqref{phi} we have
  \[ E_\phi(s,z)= y^{1/2+s}C_{(i_p)} \chi'_1(0) +  y^{1/2+s}C_{(i_p)}\sum_{c>0}
  \sum_{d\in\Z\atop{(d,c)=1}}
     \frac{\ol{\chi'_1(\frac{c}{M})} \chi'_2(d)}{|cz+d|^{1+2s}}.\]
Recall that\index{notations}{chip@$\chi_1'(0)$}
\[\chi'_1(0)=\left\{\begin{array}{ll} 1& \text{if }N_1=1, \text{ i.e. }i_p=N_p\text{ for all }
   p|N,\\
  0& \text{if }N_1>1,\text{ i.e. } i_p<N_p\text{ for some }p|N,\end{array}\right.\]
and $\ol{\chi_1'(c/M)}=0$ unless $M|c$.

It will be convenient to sum over all $d\in\Z$ rather than the
  restricted set $(d,c)=1$.  We need the following lemma.

\begin{lemma}
Suppose $\gcd(c,d)=n$. Write $c = nc'$ and  $d=nd'$ for
   integers $c',d'$.
Then
\begin{equation} \label{chieq} 
\ol{\chi'_1(c/M)} \chi'_2(d) =
\ol{\chi'_1(n)} \chi'_2(n) \ol{\chi'_1({c'}/M)} \chi'_2(d').
\end{equation}
\end{lemma}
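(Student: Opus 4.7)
The plan is to reduce \eqref{chieq} to the complete multiplicativity of the Dirichlet characters $\chi_1'$ and $\chi_2'$, handling separately the cases where various factors vanish. Recall that $\chi_1'$ is a Dirichlet character mod $N_1$, additionally declared to vanish on non-integers, and $\chi_2'$ is a Dirichlet character mod $N_2$. The $\chi_2'$ factor is immediate: since $\gcd(c',d')=1$ is irrelevant here, $\chi_2'(d)=\chi_2'(nd')=\chi_2'(n)\chi_2'(d')$ by ordinary Dirichlet multiplicativity. So everything reduces to verifying
\[\chi_1'(c/M) = \chi_1'(n)\,\chi_1'(c'/M).\]

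The real work will be a case split on whether $M\mid c'$. If $M\mid c'$, then $c/M = n\cdot(c'/M)$ is a product of two integers, and the displayed identity is just multiplicativity of $\chi_1'$. If $M\nmid c'$, then $\chi_1'(c'/M)=0$ by the non-integer convention, so the right-hand side of \eqref{chieq} already vanishes, and I would then have to show the left-hand side vanishes as well. When $M\nmid c$ this is also immediate, since $\chi_1'(c/M)=0$ by the same convention.

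The main obstacle will be the remaining sub-case, namely $M\mid c$ but $M\nmid c'$, where the vanishing on the left has to come from the $\chi_2'$ factor rather than from $\chi_1'$. The strategy is to locate a prime $p\mid M$ with $\ord_p(c')<i_p\le\ord_p(c)$; since $c=nc'$ this forces $\ord_p(n)>0$, whence $p\mid n\mid d$. Now $p\mid M$ forces $i_p>0$, which places $p$ among the prime divisors of $N_2$ (since $N_2$ and $M$ share the same set of prime divisors), and combined with $p\mid d$ this gives $\chi_2'(d)=0$, killing the left-hand side of \eqref{chieq}. Once this local bookkeeping is in place, the lemma follows.
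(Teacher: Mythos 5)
Your proof is correct, but it follows a different case split from the paper's. You split first on whether $M \mid c'$, and this leaves you to handle separately the sub-case $M \mid c$ but $M \nmid c'$, which you do by chasing a prime $p \mid M$ with $\ord_p(c') < i_p \le \ord_p(c)$, forcing $p \mid n \mid d$ and hence $\chi_2'(d) = 0$. The paper instead pivots first on whether $\gcd(n, N_2) > 1$: if so, $n \mid d$ gives $\chi_2'(d) = 0 = \chi_2'(n)$ at once and both sides of \eqref{chieq} vanish; if $\gcd(n, N_2) = 1$, then $M \mid N_2$ yields $\gcd(n, M) = 1$, which makes $M \mid c$ equivalent to $M \mid c'$, so your ``main obstacle'' case simply never arises. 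The paper's choice of pivot thus shortens the argument by collapsing your hardest sub-case into a one-line gcd observation; your version substitutes an explicit local computation that is, in effect, a reproof of the implication $p \mid \gcd(n, N_2) \Rightarrow \chi_2'(d) = 0$. One small remark on framing: the opening claim that the lemma ``reduces to'' $\chi_1'(c/M) = \chi_1'(n)\,\chi_1'(c'/M)$ overstates the reduction, since that equality can genuinely fail when $M \mid c$ but $M \nmid c'$ --- the left side can be a nonzero character value while the right side is zero. You correctly do not rely on it there, instead supplying the vanishing of $\chi_2'(d)$, but it would be cleaner to phrase the reduction as: after using $\chi_2'(d) = \chi_2'(n)\chi_2'(d')$, it suffices to show $\bigl[\ol{\chi_1'(c/M)} - \ol{\chi_1'(n)}\,\ol{\chi_1'(c'/M)}\bigr]\chi_2'(d) = 0$.
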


\begin{proof}
If $\gcd (n,N_2)>1$, then $\chi'_2(d) = 0 = \chi'_2(n)$.
So equation \eqref{chieq} is valid in this case.
On the other hand, suppose $(n,N_2)=1$. Then $(n,M)=1$ because $M|N_2$.
If $M\nmid c$, then both sides of \eqref{chieq} vanish.
If $M|c$, then $M|c'$,  and \eqref{chieq} follows
  by the multiplicativity of Dirichlet characters.
\end{proof}

\noindent Using the above lemma, we have
\[\sum_{c>0}\sum_{d\in\Z} \frac{\ol{\chi'_1(\frac{c}{M})} \chi'_2(d)}{|cz+d|^{1+2s}}
=\sum_{n>0}\sum_{c\in n\Z^+}\sum_{d\in\Z\atop{(d,c)=n}}
  \frac{\ol{\chi_1'(\frac cM)}\chi_2'(d)}{|cz+d|^{1+2s}}\]
\[=\sum_{n>0}\frac{\ol{\chi_1'(n)}\chi_2'(n)}{n^{1+2s}}
  \sum_{c>0}\sum_{d\in\Z\atop{(d,c)=1}}
  \frac{\ol{\chi_1'(\frac cM)}\chi_2'(d)}{|cz+d|^{1+2s}}\]
\[=L_N(1+2s,{\chi_1}\ol{\chi_2}) \sum_{c>0}\sum_{d\in\Z\atop{(d,c)=1}}
  \frac{\ol{\chi_1'(\frac cM)}\chi_2'(d)}{|cz+d|^{1+2s}}.\]
Here we have applied \eqref{L}, using the fact that $\ol{\chi_1'}\chi_2'$
  has modulus $\lcm(N_1,N_2)=N$.
The above has period one as a function of $z$.
   This can be seen from the the fact that the Eisenstein series has period one,
  or it can be seen directly as follows:
   \[ \sum_{c > 0}\sum_{d\in \Z} \frac{\ol{\chi'_1(\frac{c}{M})} \chi'_2(d)}{|cz+c+d|^{1+2s}}
   = \sum_{c > 0} \sum_{d \in \Z} \frac{\ol{\chi'_1(\frac{c}{M})} \chi'_2(d-c)}{|cz+d|^{1+2s}}. \]
The summand vanishes unless $M|c$.
   Therefore by \eqref{modM}, $\chi_2'(d-c)=\chi_2'(d)$ in all nonzero terms,
   as needed.
  By this periodicity, the double sum has a Fourier expansion
\[ \sum_{c > 0} \sum_{d\in \Z} \frac{\ol{\chi'_1(\frac{c}{M})} \chi'_2(d)}{|cz+d|^{1+2s}}
   =  \sum_{m \in \Z} b_m(s,y) e(mx). \]

The coefficient $b_m(s,y)$ is related to $a_m(s,y)$ of \eqref{ams}
 since\index{notations}{bms@$b_m(s,y)$}
  \begin{equation}\label{ab}
 E_\phi(s,z)= y^{1/2+s}C_{(i_p)} \chi'_1(0) +  \frac{y^{1/2+s}C_{(i_p)}}
  {L_N(1+2s,\chi_1\ol{\chi_2})} \sum_{c>0} \sum_{d\in\Z}
     \frac{\ol{\chi'_1(\frac{c}{M})} \chi'_2(d)}{|cz+d|^{1+2s}}.
\end{equation}
Explicitly, for $\Re(s)>1/2$ we have \index{notations}{ams@$a_m(s,y)=a_m^{\phi}(s,y)$, $m$-th Fourier coefficient of $E_\phi(s,z)$}
\[\hskip -.1cm a_m(s,y) =
  \begin{cases}
     y^{1/2+s}C_{(i_p)} \chi'_1(0) + y^{1/2+s}C_{(i_p)}
   L_N(1+2s,\chi_1 \ol{\chi_2})^{-1} b_0(s,y)
   & \text{if $m = 0$,}
 \\
y^{1/2+s} C_{(i_p)} L_N(1+2s,{\chi_1} \ol{\chi_2})^{-1} b_m(s,y) &
     \text{if $m \neq 0$.}
  \end{cases}
  \]

We now compute the coefficients $b_m(s,y)$.
We have
   \[ b_m(s,y) =  \sum_{c >  0} \sum_{d\in\Z} \int_0^{1}
  \frac{\ol{\chi_1'(\frac cM)}\chi_2'(d)}{|cz+d|^{1+2s}} \,e(-mx) dx \]
   \[ = \sum_{c > 0} \sum_{d \in \Z/c\Z} \int_0^{1}
    \sum_{t \in \Z} \frac{\ol{\chi_1'(\frac cM)}\chi_2'(d+ct)}{|cz+ d + c t|^{1+2s}} \,e(-mx) dx.\]
As before, the integrand is nonzero only if $M|c$, and under this
  assumption
  $\chi'_2(d+ct) = \chi'_2(d)$ by \eqref{modM}.  Therefore the above is
   \[   = \sum_{c > 0} \sum_{d \in \Z/c\Z} \int_{-\infty}^{\infty}
   \frac{\ol{\chi_1'(\frac cM)}\chi_2'(d)}{|cz+ d|^{1+2s}} \,e(-mx) dx\]
    \[ = \sum_{c > 0} \frac{ \ol{\chi_1'(\frac cM)}\sum_{d \in \Z/c\Z}\chi_2'(d)}{c^{1+2s}}
     \int_{-\infty}^{\infty} \frac{e(-mx)}{|z+\frac dc|^{1+2s}} dx \]
    \[ = \sum_{c > 0} \frac{\ol{\chi_1'(\frac cM)} \sum_{d \in \Z/c\Z}\chi_2'(d)}{c^{1+2s}}
     \int_{-\infty}^{\infty} \frac{e(-m(x-\frac dc))}{(x^2+y^2)^{1/2+s}} dx \]
    \begin{equation}\label{Sdef}
 = \sum_{c \in M\Z^+} \frac{\ol{\chi'_1(\frac{c}{M})}}{c^{1+2s}}
          \Biggl(\sum_{d \in \Z/c\Z} \chi'_2(d) e(\frac{dm}{c})\Biggr)
          \int_{-\infty}^{\infty} \frac{e(-mx)}{(x^2+y^2)^{1/2+s}} dx.
\end{equation}
Now apply the well-known formula:
 \[ \int_{-\infty}^\infty \frac{e(-mx) }{(x^2+y^2)^{1/2+s}}dx =
\begin{cases}
    \frac{\ds 2 \pi^{1/2+s} |y|^{-s} |m|^s}{\ds \Gamma({\scriptstyle \frac{1}{2}}+s)}
   K_{s}(2\pi |m||y|) &  m\neq 0,\\\\
    \frac{\ds \sqrt{\pi} y^{-2s} \Gamma(s)}{\ds \Gamma({\scriptstyle \frac{1}{2}}+s)} &  m = 0
    \end{cases}\]
(\cite{Bu}, p. 67).
   By \eqref{modM}, the character sum $S$ in parentheses in \eqref{Sdef} satisfies
\begin{equation}\label{S0}\hskip -.4cm
 S = \sum_{d \in \Z/c\Z} \chi_2'(d-M) e(\frac{dm}{c}) =
   \sum_{d \in \Z/c\Z} \chi_2'(d)\, e\Bigl(\frac{(d+M)m}{c}\Bigr)
   = e\bigl(\frac{mM}{c}\bigr) S.
 \end{equation}
Hence if $e(\frac{mM}{c})\neq 1$
 (or equivalently $c\nmid mM$), then $S=0$.
  Therefore if $\Re(s)>1/2$, the Fourier coefficient is given by \index{notations}{bms@$b_m(s,y)$}
\[ b_m(s,y) = \begin{cases}
  \ds \frac{2 \pi^{1/2+s} y^{-s} |m|^s}{\Gamma(\frac{1}{2}+s)}
  \sigma_s(\chi_1',\chi_2',m) K_{s}(2\pi |m|y)
    & m \neq 0, \\\\
  \ds \frac{\sqrt{\pi} y^{-2s} \Gamma(s)}{\Gamma(\frac{1}{2}+s)}
\sigma_s(\chi_1',\chi_2',0)
 & m= 0, \end{cases} \]
for the sum (see also \S\ref{charsum})
\begin{equation}\label{sigma}
\sigma_s(\chi_1',\chi_2',m)=
      \sum\limits_{c\in M\Z^+\atop{c|mM}}
   \frac{\ol{\chi'_1(\frac{c}{M})}}{{c}^{1+2s}}
  \sum_{d \in \Z/c\Z} \chi'_2(d)\, e(\frac{dm}{c})
\end{equation}
\[ =\sum_{c|m}\frac{\ol{\chi_1'(c)}}{(Mc)^{1+2s}}
  \sum_{d\in\Z/Mc\Z}\chi_2'(d)e(\frac{dm}{Mc}).\]
In the second sum, each summand is defined for $d\mod M\Z$, since $M$ is a modulus for
  $\chi_2'$ and $e(\tfrac{(d+M)m}{Mc})=e(\tfrac{dm}{Mc})$ 
  since $c|m$.  Thus
\begin{equation}\label{sigma2}\index{notations}{sigma@$\sigma_s(\chi_1',\chi_2',m)$}
\sigma_{s}(\chi_1',\chi_2',m)=\frac1{M^{1+2s}}\sum_{c|m}\frac{\ol{\chi_1'(c)}}{c^{2s}}
  \sum_{d\in\Z/M\Z}\chi_2'(d)\,e(\frac{dm}{Mc}).
\end{equation}
  We emphasize that even though $m<0$ is allowed, the sum is extended only
  over the {\em positive} divisors $c$ of $m$.
  When $m\neq 0$, the sum is finite.
  However, when $m=0$, the sum is extended over all $c\in \Z^+$,
  and only converges absolutely for $\Re(s)>1/2$. Indeed we have
  the following.

\begin{proposition}\label{sigma0}
When $m=0$,
\[\sigma_s(\chi_1',\chi_2',0)=\begin{cases} \frac{\varphi(M)}{M^{1+2s}}
  L_{N_1}(2s,\w)&\text{if $\chi_2$ is trivial},\\
  0&\text{otherwise,}\end{cases}\]
where $\varphi$ is the Euler $\varphi$-function.
\end{proposition}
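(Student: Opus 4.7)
The plan is to work from the definition \eqref{sigma} at $m=0$, where the exponential $e(dm/c)$ collapses to $1$, yielding
\[
\sigma_s(\chi_1',\chi_2',0)=\sum_{c\in M\Z^+}\frac{\ol{\chi_1'(c/M)}}{c^{1+2s}}\sum_{d\in\Z/c\Z}\chi_2'(d).
\]
The series converges absolutely for $\Re(s)>1/2$ since $\chi_2'$ is bounded and the outer series is majorized by a shifted Dirichlet series for $\zeta(2s)$. So first I would write this down, then focus on the inner character sum.

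Next I would evaluate $S_c=\sum_{d\in\Z/c\Z}\chi_2'(d)$ using the periodicity $\chi_2'(d+Mx)=\chi_2'(d)$ from \eqref{modM}. Since $M\mid c$ (because $c\in M\Z^+$), breaking the residues mod $c$ into blocks of length $M$ gives
\[
S_c=\frac{c}{M}\sum_{d\in\Z/M\Z}\chi_2'(d).
\]
At this point I would split into cases. If $\chi_2$ is trivial, then $\chi_2'(d)=1$ precisely when $\gcd(d,N_2)=1$; since $M$ and $N_2$ share the same set of prime divisors, this is equivalent to $\gcd(d,M)=1$, so $\sum_{d\in\Z/M\Z}\chi_2'(d)=\varphi(M)$. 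If $\chi_2$ is nontrivial, I would argue that $\chi_2'$ restricts to a nontrivial character on $(\Z/M\Z)^*$: since $\c_{\chi_2}\mid M$, the function $\chi_2'$ descends to a character mod $M$, and its conductor is still $\c_{\chi_2}>1$, so orthogonality yields $\sum_{d\in\Z/M\Z}\chi_2'(d)=0$.

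In the trivial case, substituting $c=Mc'$ gives
\[
\sigma_s(\chi_1',1,0)=\frac{\varphi(M)}{M}\sum_{c'=1}^{\infty}\frac{\ol{\chi_1'(c')}}{(Mc')^{2s}}=\frac{\varphi(M)}{M^{1+2s}}L(2s,\ol{\chi_1'}).
\]
Since $\chi_1'$ is the Dirichlet character mod $N_1$ attached to $\chi_1$, and $\w=\chi_1\chi_2=\chi_1$ when $\chi_2$ is trivial, the $L$-function identity \eqref{L} applied at level $N_1$ identifies $L(2s,\ol{\chi_1'})$ with $L_{N_1}(2s,\w)$, completing the proof.

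The main thing to be careful about is justifying that $\chi_2'$, viewed as a function mod $M$ rather than its ``native'' modulus $N_2$, is genuinely nontrivial when $\chi_2$ is nontrivial; this relies on the containment $\c_{\chi_2}\mid M$ and on the fact that $M$ and $N_2$ have the same prime support, both of which were established in the paragraphs preceding the proposition. Everything else is elementary manipulation of the Dirichlet series and character sums.
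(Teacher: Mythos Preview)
Your proof is correct and follows essentially the same route as the paper. The only cosmetic difference is that the paper starts from the already-simplified form \eqref{sigma2} (where the inner sum is over $\Z/M\Z$), whereas you start from \eqref{sigma} and rederive that reduction via the periodicity \eqref{modM}; from that point on the two arguments are identical, including the identification $L(2s,\ol{\chi_1'})=L_{N_1}(2s,\w)$ via \eqref{L}.
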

\begin{proof}
By \eqref{sigma2},
\[\sigma_s(\chi_1',\chi_2',0)=\frac1{M^{1+2s}}
      \sum\limits_{c=1}^\infty
   \frac{\ol{\chi'_1(c)}}{{c}^{2s}}
  \sum_{d \in \Z/M\Z} \chi'_2(d).\]
The sum over $d$ vanishes unless $\chi_2'$ is
  the principal character modulo $M$.
Indeed,
\[\sum_{d=1}^M\chi_2'(d) =\sum_{d\in(\Z/M\Z)^*}\chi_2'(d)
=\begin{cases} {\varphi(M)}&\text{if $\chi_2'$ is principal}\\
  0&\text{otherwise.}\end{cases}\]
Therefore if $\chi_2'$ is principal (in which case
  $\chi_2$ is trivial by \eqref{chi'} with $N_2$ in place of $N$), we find
\[\sigma_s(\chi_1',\chi_2',0)=
  \frac{\varphi(M)}{M^{2s+1}}L(2s,\ol{\chi_1'}).\]
Applying \eqref{L}, the proposition follows since $\chi_1=\w$ in this case.
\end{proof}

\subsection{Meromorphic continuation}

To summarize the previous section, for the scaled basis element
\[\phi=\tfrac1{C_{(i_p)}}\phi_{(i_p)}\in
  H(\chi_1,\chi_2)^{K_\infty\times K_1(N)},\]
  we have, for $\Re(s)>1/2$,\index{notations}{E@$E_\phi(s,x+iy)=E(\phi,s, \smat yx01\times 1_{\fin})$}
\begin{align} 
\notag E_\phi(s,z)&=y^{1/2+s}\chi_1'(0) + y^{1/2+s}\sum_{c>0}
  \sum_{(d,c)=1}\frac{\ol{\chi_1'(\frac cM)}\chi_2'(d)}{|cz+d|^{1+2s}}\\
\label{EN}
&=y^{1/2+s}\chi_1'(0) +y^{1/2-s}\delta_{\chi_2}
  \frac{\varphi(M)\sqrt{\pi}\,\Gamma(s)\,L_{N_1}(2s,\w)}
  {M^{1+2s}\, \Gamma(\frac12+s) L_N(1+2s,\w)}
\end{align}
\begin{equation}\label{four}
+\frac{2y^{1/2}\pi^{1/2+s}}{\Gamma(\frac12+s)L_N(1+2s,{\chi_1}\ol{\chi_2})}
  \sum_{m\neq 0}|m|^s\sigma_s(\chi_1',\chi_2',m)K_s(2\pi|m|y)e(mx).
\end{equation}
Here $\delta_{\chi_2}\in\{0,1\}$ is nonzero if and only if $\chi_2$
  is the trivial character.

\begin{theorem}\label{merom} The Fourier expansion \eqref{EN}-\eqref{four}
defines a meromorphic function on $\C$
  which continues $E_\phi(s,z)$.  It is holomorphic in the half-plane
  $\Re(s)\ge 0$,
  except possibly for a simple pole at $s=1/2$ which occurs precisely when
  $\chi_1$ and $\chi_2$ are both trivial.  In the event of a pole, its
  residue is
\[\frac{3\varphi(M)}{\pi M^2}\prod_{p|N\atop{i_p=N_p}}(1-p^{-2})^{-1}
  \prod_{p|N\atop{i_p<N_p}}(1+p^{-1})^{-1}\]
for the Euler $\varphi$-function.
\end{theorem}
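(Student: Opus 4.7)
The plan is to analyze the right-hand side of \eqref{EN}--\eqref{four} and verify that it extends $E_\phi(s,z)$ meromorphically to $\C$, with controlled pole behavior on $\Re(s)\ge 0$. Since we have already derived \eqref{EN}--\eqref{four} as an identity on $\Re(s)>1/2$, meromorphic continuation follows once each piece of the right-hand side is shown to be meromorphic, and the statements about holomorphy and residue on $\Re(s)\ge 0$ will drop out from standard properties of Dirichlet $L$-functions together with the bounds of Corollary \ref{Lcor}.

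First I would address the non-constant part \eqref{four}. The coefficients $\sigma_s(\chi_1',\chi_2',m)$ of \eqref{sigma2} are finite sums over the positive divisors $c\mid m$, with an inner character sum of length $M$; one obtains a trivial bound $\sigma_s(\chi_1',\chi_2',m)\ll_M \tau(m)|m|^{\max(0,-2\Re s)}$ locally uniformly in $s$. Combined with the exponential decay of $K_s(2\pi|m|y)$ in $|m|$, which is uniform for $s$ in compact sets and $y$ bounded away from $0$, the sum $\sum_{m\neq 0}$ in \eqref{four} converges locally uniformly on $\C\times\mathbf{H}$ and defines an entire function of $s$. The prefactor $L_N(1+2s,\chi_1\ol{\chi_2})^{-1}$ is meromorphic on $\C$; on $\Re(s)\ge 0$ we have $\Re(1+2s)\ge 1$, and the classical nonvanishing of Dirichlet $L$-functions on $\Re=1$ (cf.\ Corollary \ref{Lcor}) together with the Euler product on $\Re>1$ guarantees $L_N(1+2s,\chi_1\ol{\chi_2})\neq 0$ throughout this half-plane. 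When $\chi_1\ol{\chi_2}$ is principal the $L$-function has a simple pole at $s=0$, but this only produces a removable zero for $1/L_N$, not a pole. Thus \eqref{four} contributes no poles to $\Re(s)\ge 0$.

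Turning to the constant-term contributions: the summand $y^{1/2+s}\chi_1'(0)$ is entire. For the middle term, meromorphy on $\C$ is immediate from the meromorphic continuation of $\Gamma$ and the Dirichlet $L$-functions. On $\Re(s)\ge 0$, the quotient $\Gamma(s)/\Gamma(\tfrac12+s)$ has a unique simple pole, located at $s=0$; and $L_{N_1}(2s,\w)$ has at most one simple pole on this half-plane, at $s=1/2$, which occurs iff $\w$ is trivial. The factor $\delta_{\chi_2}$ vanishes unless $\chi_2=1$, which forces $\chi_1=\w$. If $\w$ is trivial then $L_N(1+2s,1)$ has its own simple pole at $s=0$ (from $\zeta$), cancelling that of $\Gamma(s)$; if $\w$ is nontrivial, then by \eqref{winf} the attached Dirichlet character $\w'$ is nontrivial and even, so $L(0,\w')=0$, which forces $L_{N_1}(0,\w)=0$ and again cancels. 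Hence the only potential pole in $\Re(s)\ge 0$ is a simple pole at $s=1/2$, and it is present precisely when both $\chi_1$ and $\chi_2$ are trivial.

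The residue is then a direct computation: at $s=1/2$ with $\chi_1=\chi_2=1$ we have $\delta_{\chi_2}=1$, $y^{1/2-s}=1$, $\Gamma(\tfrac12)/\Gamma(1)=\sqrt{\pi}$, $\mathrm{Res}_{s=1/2}L_{N_1}(2s,1)=\tfrac12\prod_{p\mid N_1}(1-p^{-1})$, and $L_N(2,1)=\tfrac{\pi^2}{6}\prod_{p\mid N}(1-p^{-2})$. Assembling and using that $\{p\mid N_1\}=\{p\mid N:\,i_p<N_p\}$ along with $(1-p^{-1})/(1-p^{-2})=(1+p^{-1})^{-1}$ yields exactly the stated residue. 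The main obstacle is the pole/zero bookkeeping for the middle constant term, especially the cancellation of the $\Gamma(s)$-pole at $s=0$ when $\chi_2=1$ and $\w\ne 1$; this rests on the evenness of $\w'$ dictated by \eqref{winf}.
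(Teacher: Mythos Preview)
Your proof is correct and follows essentially the same approach as the paper: bound the non-constant Fourier terms to show \eqref{four} is entire, then analyze the constant term \eqref{EN} using the pole/zero structure of $\Gamma$ and the Dirichlet $L$-functions. The only cosmetic difference is in the treatment of the potential pole at $s=0$ when $\chi_2=1$: the paper packages $\Gamma(s)L_{N_1}(2s,\w)$ as $\pi^s\Lambda(2s,\w)\prod_{p\mid N_1,\,p\nmid\c_\w}(1-\w_p(p)p^{-2s})$ and invokes the entirety of the completed $L$-function for nontrivial $\w$, whereas you argue directly that $L(0,\w')=0$ for even nontrivial $\w'$---but these are of course equivalent, the vanishing being exactly what entirety of $\Lambda$ together with the $\Gamma$-pole encodes.
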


\begin{proof}
From the meromorphic continuation of Dirichlet $L$-functions, 
  we see that the constant term\index{keywords}{constant term!of Eisenstein series}
   \eqref{EN} is meromorphic.  Since $\w'(-1)=1$, the
  completed $L$-function of $\w$ has the form
\[\Lambda(2s,\w)=\pi^{-s}\,\Gamma(s)L(2s,\w)\]
  and is entire unless $\w=triv$ is the trivial character, in which case
  it has simple poles at $s=0,\frac12$ (\cite{Bu} Theorem 1.1.1).
  Therefore
\[\Gamma(s)L_{N_1}(2s,\w)=\pi^{s}
  \Lambda(2s,\w)\prod_{p|N_1\atop p\nmid \c_\w} (1-\w_p(p)p^{-2s})\]
is entire unless $\w=triv$, in which case it has a simple pole
  at $s=1/2$ and possibly (if $N_1=1$) a simple pole at $s=0$.
  This possible pole at $s=0$ is cancelled by the simple pole
  of $L_N(1+2s,triv)$ at $s=0$ occurring in the denominator when $\w$ is trivial.
  Recall also that in general $\Gamma(\tfrac12+s)L_N(1+2s,\w)$ is nonzero when $\Re(s)\ge 0$.
  This shows that \eqref{EN} has the desired properties, as does the
  first factor of \eqref{four}.

  It remains to consider the sum in \eqref{four}.  From \eqref{sigma2},
  for $m\neq0$ we have
\[|\sigma_s(\chi_1',\chi_2',m)| \le \frac1{M^{1+2\Re(s)}}
   \sum_{c|m}\frac1{c^{2\Re(s)}}
  \Bigl(\sum_{d\in \Z/M\Z}1\Bigr)=\frac1{M^{2\Re(s)}}\sum_{c|m}\frac 1{c^{2\Re(s)}}.\]
When $\Re(s)\ge 0$, this is
\[
\le M^{-2\Re(s)}\tau(m)  \ll |m|^{\e},
\]
while if $\Re(s)\le 0$ it is
\[
\le (|m|M)^{-2\Re(s)}\tau(m)  \ll |m|^{2|\!\Re(s)|+\e}.
\]
Here as usual $\tau(d)$ denotes the number of positive divisors of $d$,
  and is well known to be $\ll |d|^\e$.
  Furthermore, the Bessel function decays exponentially.  In fact,
  for real $x>1+|s|^2$,\index{notations}{Ks@$K_s(z)$, Bessel function}
\index{keywords}{Bessel function!$K$-}
\[K_s(x) =\sqrt{\frac{\pi}{2x}}e^{-x}(1+O\left(\frac{|s|^2+1}{x}\right))\]
for an absolute implied constant (\cite{Wa}, p. 219, \cite{Iw}, p. 204).
Now suppose $s$ and $y$ are restricted to fixed compact subsets of $\C$ and
  $\R^+$ respectively.  Then by the above, there exists a constant $C$, depending
  only on the two compact sets, such that
  $|K_s(2\pi|m|y)|\le Ce^{-2\pi|m|y}$ for all $m$.
  It follows that the sum in
   \eqref{four} converges uniformly on compact sets, so
  the sum is entire.

In the event of a pole at $s=1/2$, the singular part of the
   Eisenstein series is the term
\[y^{1/2-s}\frac{\varphi(M)}{M^{1+2s}}\frac{\Lambda(2s,triv)\prod_{p|N_1}(1-p^{-2s})}
  {\Lambda(1+2s,triv)\prod_{p|N}(1-p^{-1-2s})}.\]
The formula for the residue follows since
 $\Lambda(2s,triv)$ has residue $\tfrac12$ at $s=1/2$,
  while in the denominator
  $\Lambda(2,triv)=\pi^{-1}\zeta(2)=\tfrac{\pi}6$.
\end{proof}

\subsection{Character sums}\label{charsum}

In order to prove Theorem \ref{dist}, we will need good
  bounds for the Fourier coefficients of normalized Eisenstein series.
  For this purpose we now examine more closely the character sums occurring there.

Let $\chi$ be a Dirichlet character mod $M$ of conductor $\c_{\chi}|M$.
  For a prime $p|M$, we define a Dirichlet character $\chi_p$
  modulo $p^{M_p}$ by\index{notations}{chip@$\chi_p$, local component of a Dirichlet character}
\begin{equation}\label{chi_p}
\hskip -.2cm \chi_p(d) = \begin{cases}0&\text{if }p|d\\
  \chi(x)& \text{if } p\nmid d,\text{ where }x\equiv d\mod p^{M_p},
  x\equiv 1\mod q^{M_q}\, (q\neq p).
  \end{cases}
\end{equation}
The value $\chi_p(d)$ is independent of both the choice of $x$ and the choice of
  modulus $M\in \c_{\chi}\Z^+\cap p\Z$.  With the above definition, we have
   $\chi=\prod_{p|M}\chi_p$.  If we take 
  $\chi_p=\1$\index{notations}{1@$\mathbf 1$ (constant function $1$)}
  to be the constant function $1$ on $\Z$ when $p\nmid M$, then the product can be extended over
  all primes $p$.

We review some well-known facts about Gauss sums.  For $\chi$ as above, 
  define\index{keywords}{Gauss sum}\index{notations}{G1@$G_\chi(m)$, Gauss sum}
\[G_\chi(m)=\sum_{d\in \Z/M\Z}\chi(d)e(\tfrac{dm}M).\]
 Assuming that either $(m,M)=1$ or $\chi$ is primitive, we have
\begin{equation}\label{Gtau}
G_\chi(m)=\tau(\chi)\ol{\chi(m)},
\end{equation}
where $\tau(\chi)=G_\chi(1)$ 
  (\cite{IK}, \S 3.4).\index{notations}{tau@$\tau(\chi)$, Gauss sum}
In general, suppose $\chi^0$ is the primitive character inducing $\chi$,
  and write $M=\ell\c_\chi$.  Then (\cite{Mi}, Lemma 3.1.3)
\begin{equation}\label{Gchi}
G_{\chi}(m)=\tau(\chi^0)\sum_{a|(\ell,m)}a\,\mu(\ell/a)
  \chi^0(\ell/a)\ol{\chi^0(m/a)}
\end{equation}
for the M\"obius function\index{notations}{mu@$\mu$, M\"obius function}\index{keywords}{Mob@M\"obius function}
\[\mu(n)=\begin{cases}1&\text{if }n=1\\
  (-1)^r&\text{if $n=p_1\cdots p_r$ for distinct primes
  $p_1,\ldots,p_r$,}\\
0&\text{if $n$ has a square factor $>1$.}\end{cases}\]
It is well-known that 
\begin{equation}\label{abstau}
|\tau(\chi^0)|= \c_\chi^{1/2}.
\end{equation}
Therefore \eqref{Gchi} gives
\begin{equation}\label{Gbound}
|G_\chi(m)|\le \c_\chi^{1/2}\sigma(|m|),
\end{equation}
where, for $k>0$, $\sigma(k)=\sum_{d|k,d>0}d$.\index{notations}{sigma@$\sigma(m)=\sum_{d"|m}d$}

%


\begin{proposition}\label{Eisbound}
Let $m_1,m_2$ be nonzero integers.  Then
\[\|\phi_{(i_p)}\|^{-2}\left|\sigma_{it}(\chi_1',\chi_2',m_1)
  \ol{\sigma_{it}(\chi_1',\chi_2',m_2)}\right|= O(N^{\e}),\]
where the implied constant depends only on $m_1, m_2$ and $\e$.
\end{proposition}

\begin{proof}
Write $m=m_1$ or $m_2$.  From \eqref{sigma2}, 
\[\sigma_{s}(\chi_1',\chi_2',m)=\frac1{M^{1+2s}}\sum_{c|m}\frac{\ol{\chi_1'(c)}}{c^{2s}}
  G_{\chi_2'}(m/c),\]
where $M=\prod p^{i_p}$.
Applying \eqref{Gbound}, this gives
\begin{equation}\label{sigbound}
|\sigma_{it}(\chi_1',\chi_2',m)|\le \frac{\c_{\chi_2}^{1/2}}M\sum_{c|m}\sigma(c)
\le \frac{\c_{\chi_2}^{1/2}}M \tau(|m|)\sigma(|m|).
\end{equation}
By \eqref{norm}, we have
\[ \|\phi_{(i_p)}\|^{-2}=\prod_{p|N\atop {i_p=0}}(1+\frac1p)
  \prod_{p|N\atop{0<i_p<N_p}}p^{i_p}(1+\frac2{p-1})
  \prod_{p|N\atop{i_p=N_p}}p^{i_p}(1+\frac1p).
\]
Therefore
\begin{equation}\label{phibound} \|\phi_{(i_p)}\|^{-2}\le M
  \prod_{p|N}(1+\frac2{p-1}).\end{equation}
Together, these bounds give
\[\frac{\left|\sigma_{it}(\chi_1',\chi_2',m)\right|}
{\|\phi_{(i_{p})}\|}\le
 \frac{\c_{\chi_2}^{1/2}M^{1/2}}{M}\tau(|m|)\sigma(|m|) 
 \prod_{p|N}(1+\frac{2}{p-1})^{1/2}\]
\begin{equation}
\ll_\e \tau(|m|)\sigma(|m|)N^{\e/2}.
\end{equation}
The proposition follows.
\end{proof}

\comment{
Returning to the general case, suppose $M=qr$ with $(q,r)=1$.
  Writing $d=qt+rx$ for unique $t\mod r$ and $x\mod q$, we see that
\[G_\chi(m)= \sum_{x\in\Z/q\Z}\, \sum_{t\in\Z/r\Z}
\chi(qt+rx) e(\frac{mqt+mrx}{qr})\]
\[= \sum_{x\in\Z/q\Z}\chi_q(rx)e(\tfrac {mx}q)
  \sum_{t\in\Z/r\Z}\chi_r(qt)e(\tfrac {mt}r)
  =\chi_q(r)\chi_r(q)G_{\chi_q}(m)G_{\chi_r}(m),\]
where $\chi_q=\prod_{p|q}\chi_p$ and $\chi_r=\prod_{p|r}\chi_p$.
  Applying this repeatedly, we have
\[G_\chi(m)= \prod_{p|M}\chi_p(\tfrac{M}{p^{M_p}})\,G_{\chi_p}(m),\]
where $M_p=\ord_p(M)$.
   There are two cases to consider.\\

\noindent{\bf Case 1:} $p|M$ but $p\nmid \c_{\chi}$.
  In this case, $\chi_p$ is principal.  Thus
\[G_{\chi_p}(m) =\sum_{d\in (\Z/p^{M_p}\Z)^*} e(\frac{dm}{p^{M_p}})
 = \sum_{d=1}^{p^{M_p}}e(\frac{dm}{p^{M_p}})-\sum_{d=1}^{p^{M_p-1}}
  e(\frac{dpm}{p^{M_p}}).\]
Writing $m_p=\ord_p(m)$, it follows that
\begin{equation}\label{nu0}
G_{\chi_p}(m)=
\begin{cases} p^{M_p}-p^{M_p-1}&\text{if }0<M_p\le m_p\\
  -p^{M_p-1}&\text{if }M_p=m_p+1\\
0&\text{if }M_p>m_p+1.\end{cases}
\end{equation}

\noindent {\bf Case 2:} $p|\c_{\chi}$.  Let 
 $\nu_p=\ord_p(\c_{\chi})>0$.\index{notations}{nup@$\nu_p=\ord_p(\c_{\chi})$}
Then because $\chi_p$ is $p^{\nu_p}$-periodic,
\[G_{\chi_p}(m)=\sum_{d\in\Z/p^{M_p}\Z} \chi_p(d-p^{\nu_p})e(\frac{dm}{p^{M_p}})
  = \sum_{d\in\Z/p^{M_p}\Z} \chi_p(d)e(\frac{(d+p^{\nu_p})m}{p^{M_p}})\]
\[  =e(\frac{p^{\nu_p}m}{p^{M_p}})G_{\chi_p}(m).\]
  Therefore $G_{\chi_p}(m)$ vanishes unless $M_p\le \nu_p+m_p$.
  Suppose this condition is met.  Then
\[G_{\chi_p}(m)=p^{M_p-\nu_p}\sum_{d\in\Z/p^{\nu_p}\Z}\chi_p(d)e(\frac{dmp^{\nu_p-M_p}}
  {p^{\nu_p}})=p^{M_p-\nu_p}\tau(\chi_p)\ol{\chi_p(mp^{\nu_p-M_p})}\]
by \eqref{Gtau}.  Note that $\ol{\chi_p(mp^{\nu_p-M_p})}=0$ unless $m_p+\nu_p-M_p=0$,
  i.e. unless
\[M_p=m_p+\nu_p.\]
   Thus by \eqref{abstau}, when $p|\c_\chi$,
\begin{equation}\label{nu>0}
|G_{\chi_p}(m)|=\begin{cases} p^{M_p-\nu_p/2}&\text{if }M_p=\nu_p+m_p\\
  0&\text{otherwise.}\end{cases}
\end{equation}

\begin{proposition}\label{Eisbound}
Let $m_1,m_2$ be nonzero integers.  Then
\[\|\phi_{(i_p)}\|^{-2}\left|\sigma_{it}(\chi_1',\chi_2',m_1)
  \ol{\sigma_{it}(\chi_1',\chi_2',m_2)}\right|= O(N^{\e}),\]
where the implied constant depends only on $m_1, m_2$ and $\e$.
\end{proposition}

\begin{proof}
Write $m=m_1$ or $m_2$.  From \eqref{sigma2}, we have
\begin{equation}\label{sigsum}
\sigma_{it}(\chi_1',\chi_2',m)=\frac{1}{M^{1+2it}}
  \sum_{c|m} \frac{\ol{\chi_1'(c)}}{c^{2it}}G_{\chi_2'}(\tfrac mc),
\end{equation}
where $M=\prod p^{i_p}$.
We will apply the previous discussion with $\chi=\chi_2'$,
  $M_p=i_p$, and $\nu_{2p}=\ord_p(\c_{\chi_2})$.
By \eqref{nu0} and \eqref{nu>0}, $G_{\chi_2'}(m/c)$ vanishes unless:
\begin{enumerate}
\item $i_p\le m_p-c_p+1$ whenever $p|M$ and $p\nmid \c_{\chi_2}$,
\item $i_p= m_p-c_p+\nu_{2p}$ if $p|\c_{\chi_2}$.
\end{enumerate}
  In particular, letting
\begin{equation}\label{Sm}
S_c=\{\text{primes }p:\, p|M,\, p\nmid\c_{\chi_2},\text{ and }i_p=m_p-c_p+1\},
\end{equation}
we can assume that
\begin{equation}\label{Mbound}
M\le (m/c)\c_{\chi_2}\prod_{p\in S_c}p.
\end{equation}
Likewise, by \eqref{nu0} and \eqref{nu>0},
\begin{align}\label{Sbound}
\Bigl|\frac{G_{\chi_2'}(m/c)}{M}\Bigr|\le
  \prod_{p|M\atop{p\nmid \c_{\chi_2},p\notin S_c}} (1-\frac1p)\,\prod_{p\in S_c}\frac1p\,
  \prod_{p|\c_{\chi_2}} p^{-\nu_{2p}/2}
\le \c_{\chi_2}^{-1/2}\prod_{p\in S_c}\frac1p.
\end{align}
It follows immediately from \eqref{sigsum} and \eqref{Sbound} that
\begin{equation}\label{sigbound}
|\sigma_{it}(\chi_1',\chi_2',m)|\le \c_{\chi_2}^{-1/2}\sum_{c|m}\prod_{p\in S_c}\frac1p.
\end{equation}
By \eqref{norm}, we have
\[ \|\phi_{(i_p)}\|^{-2}=\prod_{p|N\atop {i_p=0}}(1+\frac1p)
  \prod_{p|N\atop{0<i_p<N_p}}p^{i_p}(1+\frac2{p-1})
  \prod_{p|N\atop{i_p=N_p}}p^{i_p}(1+\frac1p).
\]
Thus
\begin{equation}\label{phibound} \|\phi_{(i_p)}\|^{-2}\le M
  \prod_{p|N}(1+\frac2{p-1}).\end{equation}
  First using \eqref{sigbound} and \eqref{phibound},
and then applying \eqref{Mbound}, we have
\[\frac{\left|\sigma_{it}(\chi_1',\chi_2',m)\right|}
{\|\phi_{(i_{p})}\|}\le
 \Bigl(\prod_{p|N}(1+\frac{2}{p-1})^{1/2}\Bigr)
  \sum_{c|m}\Bigl(\frac{M^{1/2}}{\c_{\chi_2}^{1/2}}
  \prod_{p\in S_c}\frac1p\Bigr)\]
\begin{equation}
\le \Bigl(\prod_{p|N}(1+\frac{2}{p-1})^{1/2}\Bigr)
\sum_{c|m}(m/c)^{1/2}\ll_\e \tau(m)m^{1/2}N^\e.
\end{equation}
The proposition follows.
\end{proof}
}

\comment{
\subsection{Character sums}\label{charsum}

In order to prove Theorem \ref{dist}, we need good
  bounds for the Fourier coefficients of normalized Eisenstein series.
  For this purpose we now compute the character sums occurring there.
  We follow \S 7.4 of \cite{Hua}.

Let $\chi$ be a Dirichlet character mod $M$ of conductor $\c_{\chi}|M$.
  For a prime $p|M$ and an integer $d$ we define a Dirichlet character $\chi_p$
  modulo $p^{M_p}$ by\index{notations}{chip@$\chi_p$, local component of a Dirichlet character}
\begin{equation}\label{chi_p}
\hskip -.1cm \chi_p(d) = \begin{cases}0&\text{if }p|d\\
  \chi(x)& \text{if } p\nmid d,\text{ where }x\equiv d\mod p^{M_p},
  x\equiv 1\mod q^{M_q}\, (q\neq p).
  \end{cases}
\end{equation}
The value $\chi_p(d)$ does not depend on the choice of
  modulus $M\in \c_{\chi}\Z^+\cap p\Z$.  With the above definition, we have
   $\chi=\prod_{p|M}\chi_p$.  If for $p\nmid M$ we take $\chi_p=\1$
  to be the constant function $1$, then the product can be extended over
  all primes $p$.

For integers $c\in M\Z^+$ and $m\neq 0$, we define the character 
sum\index{notations}{G chi@$G(\chi,m,c)=\sum_{d\in \Z/c\Z}\chi(d)e(\frac{dm}c)$}
\[ G(\chi,m,c)= \sum_{d\in \Z/c\Z}\chi(d)e(\frac{dm}c).\]
If $c=M$, then this is a Gauss sum.  However, in the general case \index{keywords}{Gauss sum}
  we may have $\chi(d)\neq 0$ for certain $(d,c)>1$.
As shown in \eqref{S0}, the sum vanishes unless $c|mM$.  Therefore we will
  assume in what follows that
\begin{equation}\label{chyp}
M\,|\,c\,|\,mM.
\end{equation}
  Suppose $c=qr$ with $(q,r)=1$.
  Writing $d=qt+rx$ for unique $t\mod r$ and $x\mod q$, we see that
\[G(\chi,m,c)=\sum_{t\in\Z/r\Z}\,\sum_{x\in\Z/q\Z}\chi(qt+rx)e(\frac{mqt+mrx}{qr})\]
  \[=\chi_q(r)\chi_r(q)G(\chi_q,m,q)G(\chi_r,m,r),\]
where $\chi_q=\prod_{p|q}\chi_p$ and $\chi_r=\prod_{p|r}\chi_p$.
  Applying this repeatedly, we have
\[G(\chi,m,c)= \prod_{p|M}\chi_p(\frac{c}{p^{c_p}})G(\chi_p,m,p^{c_p})
  \prod_{p|c\atop{p\nmid M}}G(\1,m,p^{c_p}),\]
where $c_p=\ord_p(c)$ and $\1$ denotes the constant function as above.
   In evaluating each factor, there are three cases to consider.\\

\noindent{\bf Case 1:} $p|c$ but $p\nmid M$.  Here, $c|mM$ means that
  $p^{c_p}|m$.  Therefore
\begin{equation}\label{pnotN}
G(\1,m,p^{c_p}) = \sum_{d\in\Z/p^{c_p}\Z}e(\frac{dm}{p^{c_p}})=p^{c_p}.
\end{equation}

\noindent{\bf Case 2:} $p|M$ but $p\nmid \c_{\chi}$.
  In this case, $\chi_p$ is principal.  Thus
\[G(\chi_p,m,p^{c_p})
=\sum_{d\in (\Z/p^{c_p}\Z)^*}
   e(\frac{dm}{p^{c_p}}).\]
If $c_p=0$ (though this is ruled out since $p|c$ here), then the above
   is equal to $1$.  Otherwise,
\[G(\chi_p,m,p^{c_p}) =
  \sum_{d=1}^{p^{c_p}}e(\frac{dm}{p^{c_p}})-\sum_{d=1}^{p^{c_p-1}}
  e(\frac{dpm}{p^{c_p}}).\]
Writing $m_p=\ord_p(m)$, it follows that
under assumption \eqref{chyp},
\begin{equation}\label{nu0}
G(\chi_p,m,p^{c_p})=
\begin{cases} 1&\text{if }c_p=0\\
p^{c_p}-p^{c_p-1}&\text{if }0<c_p\le m_p\\
-p^{c_p-1}&\text{if }c_p=m_p+1\\
0&\text{if }c_p>m_p+1.\end{cases}
\end{equation}

\noindent {\bf Case 3:} $p|\c_{\chi}$.  Let $\nu_p=\ord_p(\c_{\chi})>0$. \index{notations}{nup@$\nu_p=\ord_p(\c_{\chi})$}
Then because $\chi_p$ is $p^{\nu_p}$-periodic,
\[G(\chi_p,m,p^{c_p})=\sum_{d\in\Z/p^{c_p}\Z} \chi_p(d-p^{\nu_p})e(\frac{dm}{p^{c_p}})
  = \sum_{d\in\Z/p^{c_p}\Z} \chi_p(d)e(\frac{(d+p^{\nu_p})m}{p^{c_p}})\]
\[  =e(\frac{p^{\nu_p}m}{p^{c_p}})G(\chi_p,m,p^{c_p}).\]
  Therefore the sum vanishes unless $c_p\le \nu_p+m_p$.
  Suppose this condition is met.  Then
\[G(\chi_p,m,p^{c_p})=p^{c_p-\nu_p}
\sum_{d\in (\Z/p^{\nu_p}\Z)^*}
  \chi_p(d)e(\frac{dmp^{\nu_p-c_p}}{p^{\nu_p}})\]
\[=p^{c_p-\nu_p}G(\chi_p,mp^{\nu_p-c_p},p^{\nu_p}).\]
The character $\chi_p$ is primitive modulo $p^{\nu_p}$, so e.g.
  by \cite{Hua} Theorem 7.4.2,
  the above vanishes unless $p\nmid mp^{\nu_p-c_p}$, i.e.
  unless
\begin{equation}\label{cond3}
m_p+\nu_p=c_p.
\end{equation}
 If this relationship holds, then writing $m=p^{m_p}m'$,
\[G(\chi_p,m,p^{c_p})=p^{c_p-\nu_p}
\sum_{d\in(\Z/p^{\nu_p}\Z)^*}\chi_p(d)
e(\frac{dm'}{p^{\nu_p}})
=p^{c_p-\nu_p}\ol{\chi_p(m')}\,G(\chi_p)\]
  for the Gauss sum
\[G(\chi_p) =G(\chi_p,1,p^{\nu_p})=
  \sum_{d\in (\Z/p^{\nu_p}\Z)^*}\chi_p(d)
  e(\frac{d}{p^{\nu_p}}).\]
It is well-known that
  \[|G(\chi_p)|=p^{\nu_p/2}\]
(\cite{Hua} Theorem 7.4.4).
  Therefore when $p|\c_{\chi}$, under assumption \eqref{chyp} we find
\begin{equation}\label{nu>0}
|G(\chi_p,m,p^{c_p})| = \begin{cases} p^{c_p-\nu_p/2}&
  \text{if }m_p+\nu_p=c_p\\
  0&\text{otherwise.}\end{cases}
\end{equation}

\comment{
\begin{proposition}
Suppose $(m_1,N)=(m_2,N)=1$.  Then
\[\|\phi_{(i_p)}\|^{-2}\left|\sigma_{it}(\chi_1',\chi_2',m_1)
  \ol{\sigma_{it}(\chi_1',\chi_2',m_2)}\right|=
 \begin{cases} O(N^\e)&\text{if }M=c_{\chi_2}\\
  0&\text{otherwise.}\end{cases}
\]
\end{proposition}

\begin{proof}
Write $m=m_1$ or $m_2$.  Recall
\[\sigma_{it}(\chi_1',\chi_2',m)=\sum_{c\in M\Z^+\atop{c|mc_{\chi_2}}}
  \frac{\ol{\chi_1'(c/M)}S(c,m,\chi_{2}')}{c^{1+2it}}.\]
  The conditions on $c$ imply that
\begin{equation}\label{ccond}
\nu_{2p}\le i_p\le c_p\le m_p+\nu_{2p}
\end{equation}
  for all $p$.
   Suppose $p|N$.  Then since $(m,N)=1$, we have $c_p=i_p=\nu_{2p}$,
 which gives $M=c_{\chi_2}$ as claimed.

If $p\nmid N$, then $S(p^{c_p},m,\chi_{2p})=p^{c_p}$ by \eqref{pnotN}.

  If $p|N$ but $p\nmid c_{\chi_2}$, then \eqref{nu0} gives
  $S(p^{c_p},m,\chi_{2p})=1=p^{c_p}$ since $c_p=\nu_{2p}=0$.

  Therefore by \eqref{nu>0} we have
\[\left|{S(c,m,\chi_2')}\right|=
 \prod_{p|c_{\chi_2}}p^{\nu_{2p}/2}\prod_{p\nmid N}p^{c_p}.\]
So writing $c=c_{\chi_2}k$,
\begin{equation}\label{sigb}
|\sigma_{it}(\chi_1',\chi_2',m)|\le
  \sum_{k|m}\Bigl|\frac{S(c_{\chi_2}k,m,\chi_2')}{c}\Bigr|
  =\tau(m)\,c_{\chi_2}^{-1/2}
\end{equation}
for the divisor function $\tau(m)\ll m^{\e}$.
By \eqref{norm}, we have
\[\|\phi_{(\nu_{2p})}\|^{-2}=\prod_{p|c_{\chi_2}\atop{\nu_{2p}=N_p}}
  p^{\nu_{2p}}(1+\frac 1p)\prod_{p|c_{\chi_2}\atop{\nu_{2p}<N_p}}
  p^{\nu_{2p}}(\frac{p+1}{p-1}).\]
The $p^{\nu_{2p}}$ factors give $c_{\chi_2}$.  Together with \eqref{sigb}
  this gives
\[ \frac{\left|\sigma_{it}(\chi_1',\chi_2',m_1)
  \ol{\sigma_{it}(\chi_1',\chi_2',m_2)}\right|}
{\|\phi_{(\nu_{2p})}\|^2}
\le \tau(m_1)\tau(m_2)\prod_{\nu_{2p}=N_p}
  (1+\frac1p)^2\prod_{0<\nu_{2p}<N_p}(1+\frac2p)^2\]
\[\ll N^\e.\qedhere\]
\end{proof}
}

\begin{proposition}\label{Eisbound}
Let $m_1,m_2$ be nonzero integers.  Then
\[\|\phi_{(i_p)}\|^{-2}\left|\sigma_{it}(\chi_1',\chi_2',m_1)
  \ol{\sigma_{it}(\chi_1',\chi_2',m_2)}\right|= O(N^{\e}),\]
where the implied constant depends only on $\e,m_1$ and $m_2$.
\end{proposition}

\begin{proof}
Write $m=m_1$ or $m_2$.  Recall
\begin{equation}\label{sigsum}
\sigma_{it}(\chi_1',\chi_2',m)=\sum_{c\in M\Z^+\atop{c|mM}}
  \frac{\ol{\chi_1'(c/M)}G(\chi_2',m,c)}{c^{1+2it}},
\end{equation}
where $M=\prod p^{i_p}$.
  Let $\nu_{2p}=\ord_p(\c_{\chi_2})$.  The conditions on $c$ in the summation
 and on $(i_p)$ imply that
\begin{equation}\label{ccond}
\nu_{2p}\le i_p\le c_p\le m_p+i_{p}
\end{equation}
  for all $p$.
We will apply the previous discussion with $\chi=\chi_2'$,
  noting \eqref{modM}.
By \eqref{nu0} and \eqref{nu>0}, $G(\chi_2',m,c)$ vanishes unless:
\begin{enumerate}
\item $i_p\le c_p\le m_p+1$ whenever $p|M$ and $p\nmid \c_{\chi_2}$,
\item $i_p\le m_p+\nu_{2p}=c_p$ if $p|\c_{\chi_2}$.
\end{enumerate}
  In particular,
letting
\begin{equation}\label{Sm}
S=\{\text{primes }p:\, p|M,\, p\nmid\c_{\chi_2},\text{ and }i_p=m_p+1\},
\end{equation}
we can assume that
\begin{equation}\label{Mbound}
M\le m\c_{\chi_2}\prod_{p\in S}p.
\end{equation}
Likewise by \eqref{pnotN}, \eqref{nu0} and \eqref{nu>0},
\begin{align}\label{Sbound}
\notag \Bigl|\frac{G(\chi_2',m,c)}{c}\Bigr|\le   \prod_{p|c\atop{p\nmid M}}1
  \prod_{p|M\atop{p\nmid \c_{\chi_2},p\notin S}} (1-\frac1p)\,\prod_{p\in S}\frac1p\,
  \prod_{p|\c_{\chi_2}} p^{-\nu_{2p}/2}\\
\le \c_{\chi_2}^{-1/2}\prod_{p\in S}\frac1p.
\end{align}
The set of $c$ in \eqref{sigsum} corresponds to the set of divisors of $m$, so
by \eqref{Sbound},
\begin{equation}\label{sigbound}
|\sigma_{it}(\chi_1',\chi_2',m)|\le \tau(m)\, \c_{\chi_2}^{-1/2}\prod_{p\in S}\frac1p.
\end{equation}
By \eqref{norm}, we have
\[ \|\phi_{(i_p)}\|^{-2}=\prod_{p|N\atop {i_p=0}}(1+\frac1p)
  \prod_{p|N\atop{0<i_p<N_p}}p^{i_p}(1+\frac2{p-1})
  \prod_{p|N\atop{i_p=N_p}}p^{i_p}(1+\frac1p).
\]
Thus
\begin{equation}\label{phibound} \|\phi_{(i_p)}\|^{-2}\le M
  \prod_{p|N}(1+\frac2{p-1}).\end{equation}
Let $S_1$ and $S_2$ be the sets of primes defined by \eqref{Sm} for
  $m_1$ and $m_2$ respectively.  Using \eqref{sigbound} and \eqref{phibound},
and applying \eqref{Mbound}, we have
\[\hskip -6cm \frac{\left|\sigma_{it}(\chi_1',\chi_2',m_1)
  \ol{\sigma_{it}(\chi_1',\chi_2',m_2)}\right|}
{\|\phi_{(\nu_{2p})}\|^2}\]
\[
\le \tau(m_1)\tau(m_2)
\left[\frac{M^{1/2}}{\c_{\chi_2}^{1/2}}\prod_{p\in S_1}\frac1p\right]
\left[\frac{M^{1/2}}{\c_{\chi_2}^{1/2}}\prod_{p\in S_2}\frac1p\right]
\prod_{p|N}(1+\frac2{p-1})\]
\[\le \tau(m_1)\tau(m_2)m_1^{1/2}m_2^{1/2}
\prod_{p|N}(1+\frac2{p-1})
\ll N^\e.\qedhere\]
\end{proof}
}

\pagebreak
\section{The kernel of $R(f)$}\label{6}

In this section we give the spectral formula for the kernel function of $R(f)$.
  We refer to \cite{Ar1}, \cite{GJ} and \cite{Kn} for further discussion
  and theoretical background.
  Our purpose is to show that these spectral terms converge
  absolutely in a strong sense (Theorems \ref{bc1} and \ref{bc2}).
  This provides the justification for their use in the relative trace
  formula.  Our treatment is based on the methods of Arthur
  \cite{Ar1}, \cite{Ar2}, especially Lemma 4.4 of \cite{Ar2}.
  His result holds for any connected reductive algebraic group over $\Q$.
   In the setting of $\GL(2)$ it gives
  e.g. that for an orthonormal basis $\{\phi\}\subset H(0)$
   (cf. \eqref{H(s)}),
\[\int_{-\infty}^\infty\Biggl|\sum_{\phi}E(\pi_{it}(f)\phi_{it},x)
  \ol{E(\phi_{it},y)}\Biggr|dt<\infty.\]
  We give a detailed discussion here partly to avoid referring the
  reader to a paper on general groups just for a result about $\GL(2)$, but also
  because we need to show that the absolute values can be
  brought inside the sum, at least for the class of functions $f$
  considered in this paper.

\subsection{The spectral decomposition}\label{Spec}

The right regular representation of $G(\A)$ on $L^2(\w)$
  decomposes in terms of cuspidal representations on $\GL(m)$ for $m\le 2$.
  The continuous part of $L^2(\w)$ is indexed by certain cuspidal representations of $\GL(1)$,
  i.e. Hecke characters, and the discrete part consists of irreducible cuspidal
  representations and, in some situations, one-dimensional representations.

Suppose $\chi$ is a Hecke character satisfying $\chi^2=\w$.
Then defining \index{notations}{phichi@$\phi_\chi=\chi\circ\det$}
\[\phi_\chi(g)=\chi(\det(g))\qquad(g\in G(\A)),\]
  we see that $\phi_\chi$ is square integrable modulo $Z(\A)$, with
\[\|\phi_\chi\|^2 = \meas(\olG(\Q)\bs\olG(\A))={\pi}/3.\]
  Note that
$\phi_\chi(zg)=\w(z)\phi_\chi(g)$ for all $z\in Z(\A)$.
  Therefore $\phi_\chi$ spans a one-dimensional
  subrepresentation of $L^2(\w)$, which we denote by $\C_\chi$.
  Conversely, any one-dimensional subrepresentation of
  $L^2(\w)$ arises in this way from a character satisfying $\chi^2=\w$.

\begin{proposition}
  The spaces $\C_\chi$ are mutually orthogonal, and also orthogonal
  to $L^2_0(\w)$.
\end{proposition}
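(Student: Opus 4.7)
The plan is to treat the two orthogonality claims separately, in each case exploiting the fact that $\phi_\chi$ factors through the determinant so is both left $N(\A)$-invariant and right $\SL_2(\A)$-invariant.

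For the mutual orthogonality of $\C_{\chi_1}$ and $\C_{\chi_2}$ with $\chi_1\ne \chi_2$, I would start from
\[
\sg{\phi_{\chi_1},\phi_{\chi_2}} = \int_{\olG(\Q)\bs\olG(\A)} \eta(\det g)\,dg,\qquad \eta \eqdef \chi_1\ol{\chi_2}.
\]
Since $\chi_j^2=\w$, the Hecke character $\eta$ satisfies $\eta^2=1$ and is nontrivial by hypothesis. The integrand is right $\SL_2(\A)$-invariant, so the plan is to apply Fubini to the fibration $Z(\A)G(\Q)\bs G(\A)\to \SL_2(\A)Z(\A)G(\Q)\bs G(\A)$ coming from the exact sequence $1\to \SL_2\to G\xrightarrow{\det}\GL_1\to 1$. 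The fiber over a point is (a translate of) $\SL_2(\Q)\bs\SL_2(\A)$, which has finite volume, and $\det$ identifies the base with $\Q^*(\A^*)^2\bs\A^*$. Since $(\A^*)^2\supseteq\R^+$, the latter is a quotient of the compact group $\Q^*\R^+\bs\A^*\cong \Zhat^*$, so is itself compact. A nontrivial character of a compact abelian group integrates to zero, and the claim follows.

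For orthogonality with $\L(\w)$, let $\psi\in\L(\w)$, so that
\[
\sg{\psi,\phi_\chi} = \int_{Z(\A)G(\Q)\bs G(\A)} \psi(g)\,\ol{\chi(\det g)}\,dg.
\]
Because $\det$ is trivial on $N$, $\phi_\chi$ is left $N(\A)$-invariant. The plan is to apply Fubini along the tower $Z(\A)G(\Q)\subset Z(\A)N(\A)G(\Q)\subset G(\A)$. A short check shows $N(\A)\cap Z(\A)G(\Q)=N(\Q)$, so the intermediate fiber is $N(\Q)\bs N(\A)$, of volume $1$. Pulling the $N(\A)$-invariant factor $\ol{\phi_\chi(g)}$ outside the inner integral gives
\[
\sg{\psi,\phi_\chi}=\int_{Z(\A)N(\A)G(\Q)\bs G(\A)} \ol{\phi_\chi(g)}\int_{N(\Q)\bs N(\A)}\psi(ng)\,dn\,dg,
\]
and the inner integral is the constant term $\psi_N(g)$, which vanishes almost everywhere by the cuspidality of $\psi$.

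The main technical obstacle will be the measure-theoretic bookkeeping for the two Fubini arguments, in particular verifying that $\det$ induces a measure-compatible bijection $\SL_2(\A)Z(\A)G(\Q)\bs G(\A)\xrightarrow{\sim}\Q^*(\A^*)^2\bs\A^*$ in the first claim, and that the tower of closed subgroups in the second claim is correctly set up. Absolute convergence in both integrals is automatic from Cauchy–Schwarz together with the finite measure of $\olG(\Q)\bs\olG(\A)$, so Fubini is always applicable.
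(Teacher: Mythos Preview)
Your approach is computational and quite different from the paper's. The paper argues abstractly: in a unitary representation $V$, any closed $G(\A)$-invariant subspace $S$ and any closed invariant subspace $W$ satisfy $W=(W\cap S)\oplus(W\cap S^\perp)$, so if $W$ is one-dimensional then $W\subset S$ or $W\subset S^\perp$. Taking $S=\C_{\chi_1}$, $W=\C_{\chi_2}$ handles the first claim (distinct characters give distinct lines); taking $S=L^2_0(\w)$, $W=\C_\chi$ handles the second once one checks that $\phi_\chi$ is not cuspidal, which is immediate since its constant term is $\phi_\chi$ itself.

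Your argument for mutual orthogonality, reducing to character orthogonality on the compact abelian group $\Q^*(\A^*)^2\bs\A^*$, is correct and will go through with the bookkeeping you anticipate. But the second argument has a genuine gap, not merely bookkeeping: the set $Z(\A)N(\A)G(\Q)$ is \emph{not} a subgroup of $G(\A)$, since $N(\A)$ does not normalize $G(\Q)$. There is therefore no tower of closed subgroups along which to run Fubini. Concretely, the constant-term function $g\mapsto\psi_N(g)=\int_{N(\Q)\bs N(\A)}\psi(ng)\,dn$ is not left $G(\Q)$-invariant: for $\gamma\in G(\Q)$ one finds $\psi_N(\gamma g)=\int_{N(\Q)\bs N(\A)}\psi(\gamma^{-1}n\gamma\cdot g)\,dn$, an integral over the conjugate unipotent $\gamma^{-1}N\gamma$ rather than $N$. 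Hence the integrand in your outer integral does not descend to the quotient you wrote, and the displayed identity has no meaning. (Your check $N(\A)\cap Z(\A)G(\Q)=N(\Q)$ is correct but moot once the middle term of the tower fails to be a group.) The intuition that cuspidality should kill the pairing because $\phi_\chi$ equals its own constant term is sound, but this particular route does not work; the paper's representation-theoretic argument sidesteps the difficulty entirely.
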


\begin{proof}  Suppose $V$ is a unitary representation of a group $G$.
  Then for any closed $G$-stable
  subspace $S$, the action of $G$ preserves the decomposition
  $V=S\oplus S^\perp$.  If $W$ is any other closed $G$-stable subspace of $V$,
  then it is easy to show that $W=(W\cap S)\oplus (W\cap S^\perp)$.
  In particular if $W$ is one-dimensional,
  then $W\subset S$ or $W\subset S^\perp$.
  Applying this with $S=\C_{\chi_1}$ and $W=\C_{\chi_2}$ shows the first claim,
  and taking $S=L^2_0(\w)$ gives the second.  Note that $\phi_\chi$ is not cuspidal
  because its constant term is
\[\int_{N(\Q)\bs N(\A)}\phi_\chi(ng)dn=\phi_\chi(g)\neq 0.\qedhere\]
\end{proof}

  We denote by $L^2_{\res}(\w)$ the Hilbert direct sum \index{notations}{L2res@$L^2_{\res}(\w)$}
\[L^2_{\res}(\w)=\bigoplus_{\chi^2=\w}\C_{\chi}.\]
  (These characters arise from the
  residues of certain Eisenstein series at $s=1/2$).
  If $L^2_{\res}(\w)$ is nonzero, then it is infinite dimensional.
  To see this, note that if there exists $\chi$ with $\chi^2=\w$, then
  $L^2_{\res}(\w) =\bigoplus_{\eta^2=1} \C_{\chi\eta}$.  There
  are infinitely many quadratic Hecke characters $\eta$.

  The direct sum \index{notations}{L2disc@$L^2_{\disc}(\w)$}
\[L^2_{\disc}(\w)\eqdef L^2_0(\w)\oplus L^2_{\res}(\w)\]
  is the discrete part of the spectrum of $L^2(\w)$.
We next describe its orthogonal complement $L^2_{\cont}(\w)$.\index{notations}{L2cont@$L^2_{\cont}(\w)$}
%
\comment{  for a pair $(\chi_1,\chi_2)$
  of finite order Hecke characters, we let
  $\mathcal{L}(\chi_1,\chi_2)$ denote the direct integral of the representations
  $H(\chi_1,\chi_2,it)$ for $t\in \R$.  This Hilbert space consists of the
  measurable functions
\[A:i\R\longrightarrow H(\chi_1,\chi_2,0)\]
  such that
\[\|A\|^2 = \frac1{\pi}\int_{-\infty}^\infty \|A(it)\|^2 dt<\infty.\]
  The associated inner product is
\begin{equation}\label{ipcont0}
\sg{A,B}=\frac1\pi\int_{-\infty}^\infty \sg{A(it),B(it)}dt.
\end{equation}
  The representation of $G(\A)$ on $\mathcal L(\chi_1,\chi_2)$ is defined by
\[ (gA)(it) = \pi_{it}(g)(A(it)),\]
  where we identify $A(it)$ with its correspondent in
  $H(\chi_1,\chi_2,it)$.
  The above representation of $G(\A)$ is unitary because
\[\|gA\|^2 = \frac1\pi\int_{-\infty}^\infty\|\pi_{it}(g)A(it)\|^2 dt = \|A\|^2\]
  since each $\pi_{it}$ is unitary.
  There is a $G(\A)$-equivariant isometry
\[L^2_{\cont}(\w)\longrightarrow \bigoplus_{\chi_1\chi_2=\w}
  \mathcal{L}(\chi_1,\chi_2),\]
where this Hilbert space direct sum is taken over unordered
   pairs $\{\chi_1,\chi_2\}$ of unitary Hecke characters
  satisfying $\chi_1\chi_2=\w$.
}
%
For $s\in\C$ define
\begin{equation}\label{H(s)} \index{notations}{Hs@$H(s)=\bigoplus_{\chi_1\chi_2=\w}H(\chi_1,\chi_2,s)$}
H(s)=\bigoplus_{\chi_1\chi_2=\w}H(\chi_1,\chi_2,s),
\end{equation}
where $H(\chi_1,\chi_2,s)$ is defined in \S \ref{Ind}, and
this Hilbert space direct sum is taken over all ordered pairs of finite order Hecke
  characters whose product is $\w$.\\

\noindent{\em Remark:}
 Define a character $(\w,s)$ of $B'\eqdef Z(\A)N(\A)M(\Q)M(\R^+)$
  by
\[(\w,s):\smat z{}{}z\smat yx01\mapsto \w(z)|y|^{s+1/2}.\]
The right regular representation $\pi_s$ of $G(\A)$ on $H(s)$
  is equivalent to the induced representation $\Ind_{B'}^{G(\A)}(\w,s)$.
  This latter viewpoint is the one taken in \cite{GJ}.
  To see the equivalence, note by transitivity of induction that
\begin{equation}\label{B'}
\Ind_{B'}^{G(\A)}(\w,s)=\Ind_{B(\A)}^{G(\A)}\Ind_{B'}^{B(\A)}(\w,s).
\end{equation}
By restriction, $\Ind_{B'}^{B(\A)}(\w,s)$ can be identified with
  $\Ind_{Z(\A)M(\Q)M(\R^+)}^{M(\A)}(\w,s)$.  The quotient $(Z(\A)M(\Q)M(\R^+))\bs M(\A)$
  is compact, so by the Peter-Weyl theorem,
 the functions $(\chi_1,\chi_2,s)=\chi_1(a)\chi_2(d)|\tfrac ad|^{s+\frac12}$
  with $\chi_1\chi_2=\w$ form a basis for the induced space.
  Thus the right-hand side of \eqref{B'} is equal to
  $\Ind_{B(\A)}^{G(\A)}\bigoplus(\chi_1,\chi_2,s)=H(s)$.
  \\

Let
\[H=\int_{-\infty}^\infty H(it) dt \]
 be the direct integral.
  This Hilbert space consists of all functions
\[A:i\R\longrightarrow \bigcup_{t\in \R}H(it)\qquad(\text{disjoint union})\]
(identifying functions that are equal a.e.)
satisfying:
\begin{itemize}
\item $A(it)\in H(it)$ for all $t$,
\item  the composition
  $i\R{A\atop{\xrightarrow{\hspace*{.5cm}}\atop{}}} \bigcup H(it)\longrightarrow H(0)$,
   obtained by identifying $H(it)$ with $H(0)$ as in \S\ref{Ind}, is measurable,
\item $\ds\|A\|^2 \eqdef \frac1{\pi}\int_{-\infty}^\infty \|A(it)\|^2 dt<\infty$.
\end{itemize}
The associated inner product is given by
\begin{equation}\label{ipcont}
\sg{A,B} = \frac1\pi\int_{-\infty}^\infty
  \sg{A(it),B(it)}dt.
\end{equation}
   Define an action of $G(\A)$ on $H$ by
\[(gA)(it) =\pi_{it}(g)A(it).\]
  This representation is unitary since each $\pi_{it}$ is unitary:
\[\|gA\|^2 = \frac1\pi\int_{-\infty}^\infty\|\pi_{it}(g)A(it)\|^2 dt = \|A\|^2.\]
 By \S 4 of \cite{GJ}, there is a $G(\A)$-equivariant Hilbert space 
  isomorphism\index{keywords}{Hilbert space isomorphism}\footnote{By an
  isomorphism of Hilbert spaces, we mean a bijective linear isometry.}
\[M(it): H(it)\longrightarrow H(-it).\]
Thus the subspace
\begin{equation} \label{mathcalL} \index{notations}{L@$\mathcal{L}$}
\mathcal{L}=\{A\in H|\, A(-it)=M(it)A(it)\text{ for all }t\in \R\}
\end{equation}
 is stable under the action of $G(\A)$.  It is this representation which is
  isomorphic to $L^2_{\cont}(\w)$:

\begin{theorem}\label{L2}
  Consider the orthogonal decomposition
\[L^2(\w) = L^2_{\disc}(\w)\oplus L^2_{\cont}(\w).\]
  There is a $G(\A)$-equivariant isomorphism of Hilbert spaces \index{notations}{S@$S$}
\[S: L^2_{\cont}(\w)\longrightarrow \mathcal{L},\]
which we extend to the full space $L^2(\w)$ by taking $S=0$ on $L^2_{\disc}(\w)$,
characterized by the property that for any $\psi\in L^2(\w)$ and $\phi\in H(0)$,
\begin{equation}\label{516}
 \sg{S\psi(it),\phi_{it}} = \frac 12 \sg{\psi,E(\phi,it, \cdot)}
=\frac 12 \int_{\olG(\Q)\bs \olG(\A)} \psi(g)\ol{E(\phi,it, g)} \, dg
 \end{equation}
for almost all $t$.
The following Parseval identity\index{keywords}{Parseval's identity}
   holds for $\psi,\eta\in L^2(\w)$:
\begin{align}\label{parseval}
\notag \sg{\psi,\eta}=&\sum_{\varphi}\sg{\psi,\varphi}\ol{\sg{\eta,\varphi}}+
  \frac{3}{\pi}\sum_{\chi^2=\w}\sg{\psi,\phi_\chi}\ol{\sg{\eta,\phi_\chi}}\\
  &+\frac{1}{4\pi}\sum_{\phi}\int_{-\infty}^\infty \sg{\psi,E(\phi,it,\cdot)}
  \ol{\sg{\eta,E(\phi,it,\cdot)}}dt.
\end{align}
Here, $\varphi$ (resp. $\phi$) runs through an orthonormal basis
 for $L^2_0(\w)$ (resp. $H(0)$).
\end{theorem}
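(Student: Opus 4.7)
The plan is to deduce this from Langlands' spectral decomposition of $L^2(\w)$, specialized to $\GL(2)$. The ingredients at hand are the Eisenstein series $E(\phi,s,g)$ of Section~5, holomorphic on $\Re(s)=0$ by Theorem~\ref{merom}, together with the unitary intertwining operators $M(it):H(it)\to H(-it)$ supplied by the standard intertwining integral in the region of convergence and its meromorphic continuation (cf.\ \cite{GJ}, \S4). The functional equation $E(\phi,-it,g)=E(M(it)\phi_{it},-it,g)$ is what forces the constraint $A(-it)=M(it)A(it)$ appearing in the definition \eqref{mathcalL} of $\mathcal L$; any candidate for $S$ must land in this subspace.

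To construct $S$, I would first work on the dense subspace $L^2(\w)^{\mathrm{rd}}$ of rapidly decreasing functions (for instance, smooth vectors of compact support modulo the center, or functions of Schwartz--Harish-Chandra type). For such $\psi$, the integral $\langle\psi,E(\phi,it,\cdot)\rangle$ on the right of \eqref{516} converges absolutely because $\psi$ decays and $E(\phi,it,\cdot)$ grows only polynomially on a Siegel set. Fixing an orthonormal basis of $H(0)$, this defines $S\psi(it)\in H(it)$ pointwise. I would verify the compatibility $S\psi(-it)=M(it)S\psi(it)$ using the functional equation of $E$, so $S\psi\in\mathcal L$. Equivariance under $G(\A)$ is immediate from $R(g)E(\phi,it,\cdot)=E(\pi_{it}(g)\phi_{it},it,\cdot)$.

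The main obstacle, and the technical heart of the proof, is establishing that $S$ is an isometry from the orthogonal complement of $L^2_{\disc}(\w)$ onto $\mathcal L$. For this I would follow the Arthur-style argument using truncation. Let $\Lambda^T\psi$ denote Arthur's truncation, so that $\Lambda^T E(\phi,it,\cdot)\in L^2(\w)$ and one can compute inner products of two such truncated Eisenstein series via the Maass--Selberg relations: for $\phi,\phi'\in H(0)$ and real parameters $s,s'$ close to $0$,
\[
\langle \Lambda^T E(\phi_s,\cdot),\,\Lambda^T E(\phi'_{s'},\cdot)\rangle
=\frac{e^{(s-\bar{s'})T}}{s-\bar{s'}}\langle\phi_s,\phi'_{s'}\rangle
-\frac{e^{-(s-\bar{s'})T}}{s-\bar{s'}}\langle M(s)\phi_s,M(s')\phi'_{s'}\rangle+\cdots,
\]
with analogous cross-terms. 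Setting $s=it$, $s'=it'$ and letting $T\to\infty$, contour shifting together with the Plancherel-type identity for $\frac{\sin((t-t')T)}{t-t'}$ produces a Dirac mass at $t=t'$, and the two surviving contributions merge using the unitarity $M(it)^*M(it)=1$ to yield the factor $\frac12$ in \eqref{516} and the measure $\frac{1}{4\pi}dt$ appearing in \eqref{parseval}. Care is needed to handle the residual poles of $M$ on $(0,\tfrac12]$: their residues are precisely the projections onto the one-dimensional subspaces $\C_\chi$ with $\chi^2=\w$, yielding the residual term $\frac{3}{\pi}\sum_{\chi^2=\w}\cdots$ in \eqref{parseval} (the factor $\tfrac3\pi=\|\phi_\chi\|^{-2}$).

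Having established the isometry on a dense subspace, I would extend $S$ to all of $L^2(\w)$ by continuity, declaring $S$ to vanish on $L^2_{\disc}(\w)$ since Eisenstein series are orthogonal to cusp forms and to residual forms (the latter because distinct spectral components of $L^2(\w)$ are orthogonal). Surjectivity onto $\mathcal L$ follows because the image contains enough sections $t\mapsto\phi_{it}$ to separate points; alternatively, any $A\in\mathcal L$ orthogonal to $\Im(S)$ must satisfy $\langle A(it),S\psi(it)\rangle=0$ for a.e.\ $t$ and all $\psi$, forcing $A=0$ by Fubini and completeness of Eisenstein series in the continuous spectrum. Finally, Parseval's identity \eqref{parseval} is obtained by expanding $\langle\psi,\eta\rangle=\langle\psi_{\disc},\eta_{\disc}\rangle+\langle S\psi,S\eta\rangle$ using the inner product \eqref{ipcont}, orthogonally splitting the discrete piece into cuspidal and residual contributions, and substituting the defining relation \eqref{516}.
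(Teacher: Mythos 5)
Your proposal and the paper's proof diverge fundamentally in scope. The paper does not reprove the existence of $S$ or the identity \eqref{516} at all: it simply cites \S4 and formula (5.16) of Gelbart--Jacquet, and the entire body of its proof is the derivation of Parseval's identity from these, with particular attention to justifying the interchange of the sum over $\phi$ with the $dt$-integral (via Cauchy--Schwarz and Parseval in each $H(it)$). You instead sketch a self-contained construction of $S$ in the style of Langlands/Arthur --- dense subspace, Arthur truncation, Maass--Selberg relations, limit $T\to\infty$, residual poles of $M(s)$ --- which is indeed the standard method and is exactly what the cited reference does behind the scenes, so your route would work. What the paper's approach buys is brevity and precision on the one step it does carry out (the Fubini justification, which you elide when you write ``substituting the defining relation \eqref{516}''); what your approach buys is self-containedness, at the cost of leaving the analytic heart of the argument (the Maass--Selberg computation, the $T\to\infty$ limit, and the pole analysis) at the level of a schematic outline. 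One small inaccuracy: for $\GL(2)$ the intertwining operator $M(s)$ has a pole only at $s=\tfrac12$ (in the paper's normalization), not at a range of points on $(0,\tfrac12]$; the residue there is what produces the one-dimensional subspaces $\C_\chi$ with $\chi^2=\w$.
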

\noindent{\em Remarks:}
 The fact that $S$ is an intertwining operator can be seen from \eqref{516}.  Indeed,
  for any $\phi\in H(0)$,
\[\sg{SR(g)\psi(it),\phi_{it}}=\frac12\sg{R(g)\psi,E(\phi_{it},\cdot)}
  =\frac12\sg{\psi,R(g^{-1})E(\phi_{it},\cdot)}\]
\[=\frac12\sg{\psi,E(\pi_{it}(g^{-1})\phi_{it},\cdot)}=\sg{S\psi,\pi_{it}(g^{-1})\phi_{it}}
  =\sg{\pi_{it}(g)S\psi(it),\phi_{it}}\]
as claimed.  Passing to the second line, we used $R(g)E(\phi_s,x)=E(\pi_s(g)\phi_s,x)$,
  which is clear when $\Re(s)>1/2$ and holds for $\Re(s)=0$ by analytic continuation.
  In a similar fashion,
     we can derive the useful identity
\begin{equation}\label{move}
\sg{\psi,E(\pi_{it}(f)\phi_{it},\cdot)}
  =\sg{R(f)^*\psi,E(\phi_{it},\cdot)}
\end{equation}
for $\psi\in L^2(\w)$ and $f\in L^1(\ol{\w})$.


\begin{proof}  See \S4 of \cite{GJ} for an explicit construction of $S$.  The identity
  \eqref{516} is their (5.16).  We just explain how to
  derive the Parseval identity from their discussion.
  Let $P_{\operatorname{disc}}$\index{notations}{P@$P_{\disc}$, projection onto
  $L^2_{\disc}(\w)$}
   (resp. $P_{\cont}$)\index{notations}{P@$P_{\cont}$, projection onto $L^2_{\cont}(\w)$}
   be the orthogonal projection of $L^2(\w)$ onto $L^2_{\disc}(\w)$
  (resp. $L^2_{\cont}(\w)$).  Then
\[\sg{\psi,\eta} = \sg{P_{\operatorname{disc}}\psi, P_{\operatorname{disc}}\eta}
   + \sg{P_{\cont}\psi, P_{\cont}\eta}.\]
We apply the usual Parseval identity in $L^2_{\disc}(\w)$ to obtain the discrete part of
  \eqref{parseval}.  For the continuous part, by \eqref{ipcont} we have
\[
 \sg{P_{\cont}\psi, P_{\cont}\eta} = \sg{S\psi, S\eta}
=\frac1\pi \int_{-\infty}^\infty
  \sg{S\psi(it),S\eta(it)}dt
\]
\[ =\frac1{\pi}\int_{-\infty}^\infty \sum_{\phi}
   \sg{S\psi(it), \phi_{it}} \ol{\sg{S\eta(it),\phi_{it}}}\, dt.\]
Here $\phi$ runs through an orthonormal basis for $H(0)$,
 and we have applied Parseval's identity in $H(it)$.
 We pull the sum out (justification given below) and apply \eqref{516} to get
\begin{equation}\label{Pcont}
 \sg{P_{\cont}\psi, P_{\cont}\eta} = \frac 1{4\pi} \sum_{\phi}
\int_{-\infty}^\infty \sg{\psi, E(\phi,it, \cdot)}
  \ol{\sg{\eta,E(\phi,it, \cdot)}}\, dt,
\end{equation}
which gives \eqref{parseval}.  To justify pulling out the sum, we need to show
  convergence of
\[\int_{-\infty}^\infty \sum_\phi|\sg{S\psi(it), \phi_{it}}
  \ol{\sg{S\eta(it),\phi_{it}}}|\, dt.\]
Applying Cauchy-Schwarz to the sum, the above is
\[\le \int_{-\infty}^\infty \Bigl(\sum_\phi|\sg{S\psi(it), \phi_{it}}|^2\Bigr)^{1/2}
  \Bigl(\sum_\phi|\sg{S\eta(it),\phi_{it}}|^2\Bigr)^{1/2}\, dt\]
\[= \int_{-\infty}^\infty \|S\psi(it)\|\,\|S\eta(it)\|\,dt\quad(\text{Parseval's})\]
\[\le \left[\int_{-\infty}^\infty \|S\psi(it)\|^2dt\right]^{1/2}
 \left[\int_{-\infty}^\infty \|S\eta(it)\|^2dt\right]^{1/2}<\infty
  \quad(\text{Cauchy-Schwarz}).\qedhere\]
\end{proof}

\subsection{Kernel functions}

Suppose $X$ is a Radon measure space, and $T$ is a bounded \index{keywords}{kernel function}
  linear operator on $L^2(X)$.  We say that a measurable function $K(x,y)$
  on $X\times X$ is a {\bf kernel function} for $T$ if $T=T_K$, where \index{notations}{T K@$T_K$}
\[T_K\psi(x)\eqdef\int_{X} K(x,y)\,\psi(y)\,dy.\]
  If the equality $T\psi=T_K\psi$ is only known to hold for all
  $\psi$ which are bounded and of compact support,
  then we say that $K(x,y)$ is a
  \label{weakker}{\bf weak kernel} for $T$. \index{keywords}{weak kernel function}
  We shall repeatedly use the fact that $K$ is a weak kernel for $T$ if and
  only if $\sg{T_K\psi_1,\psi_2}=\sg{T\psi_1,\psi_2}$ for all
  $\psi_1,\psi_2$ which are bounded and compactly supported.

\begin{lemma}\label{weak} If $K(x,y)$ and $K'(x,y)$ are weak kernel functions for
  $T$, then $K(x,y)=K'(x,y)$ for almost all $(x,y)\in X\times X$.
\end{lemma}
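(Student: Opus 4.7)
The plan is to set $D = K - K'$, show that for almost every $x \in X$ the function $D(x,\cdot)$ induces the zero signed measure on each compact subset of $X$, and then invoke Fubini to conclude $D=0$ almost everywhere on $X \times X$.

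First, specializing the equivalent bilinear form of the weak-kernel hypothesis $\langle T_K \psi_1, \psi_2\rangle = \langle T_{K'}\psi_1, \psi_2\rangle$ to $\psi_1 = \mathbf{1}_B$, $\psi_2 = \mathbf{1}_A$ with $A, B \subset X$ Borel of finite measure contained in compact sets, I obtain
\[
\int_A \int_B D(x,y)\, dy\, dx = 0.
\]
Here the inner integral is absolutely convergent for almost every $x$, since $T_K\mathbf{1}_B$ and $T_{K'}\mathbf{1}_B$ are defined (and lie in $L^2(X)$); equivalently $K(x,\cdot)\mathbf{1}_B, K'(x,\cdot)\mathbf{1}_B \in L^1(X)$ for a.e.\ $x$. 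Varying $A$ over all finite-measure Borel sets shows that for each such Borel $B$,
\[
\int_B D(x,y)\, dy = 0 \quad\text{for almost every } x\in X,
\]
with the exceptional null set depending on $B$.

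Next, I fix a compact set $C \subset X$ of finite measure and show $D = 0$ almost everywhere on $X \times C$. Taking $B = C$ in the previous step shows that $D(x,\cdot) \in L^1(C)$ for a.e.\ $x$. Choose a countable Boolean algebra $\mathcal{A}$ of Borel subsets of $C$ generating the Borel $\sigma$-algebra (available since $C$ is second countable). A countable-union argument produces a full-measure set $E\subset X$ on which simultaneously $D(x,\cdot)\in L^1(C)$ and $\int_B D(x,y)\,dy = 0$ for every $B \in \mathcal{A}$. For each $x \in E$ the assignment $B \mapsto \int_B D(x,y)\, dy$ is then a finite signed measure on the Borel $\sigma$-algebra of $C$ vanishing on the generating algebra $\mathcal{A}$; by Carath\'eodory uniqueness for finite signed measures, it vanishes identically, so $D(x,\cdot) = 0$ a.e.\ on $C$ for every $x \in E$. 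Fubini then yields $D = 0$ a.e.\ on $X \times C$.

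Finally, exhausting $X$ by a countable increasing family of compact sets of finite measure---possible since $X$ is a Radon measure space---gives $D = 0$ a.e.\ on $X \times X$. The only potentially delicate point is the Carath\'eodory uniqueness invocation: two signed measures agreeing on a generating algebra need not coincide in general, but they do when both are finite, and the condition $D(x,\cdot)\in L^1(C)$ supplies precisely this finiteness for each $x\in E$. Everything else is a direct unfolding of the definitions.
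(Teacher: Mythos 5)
Your proof is correct. The paper gives no argument of its own here, deferring to Proposition 15.1 of \cite{KL}, so a line-by-line comparison is not possible; but the route you take --- test against indicators of relatively compact Borel sets, fix a compact $C$ in the second variable, run a countable generating algebra through the finite signed measure $B\mapsto\int_B D(x,y)\,dy$, then Fubini --- is the standard uniqueness argument for kernels, and the cited proof is a close relative. (A small streamlining: you can skip the step of ``varying $A$,'' since the weak-kernel definition already gives $T_K\mathbf{1}_B=T\mathbf{1}_B=T_{K'}\mathbf{1}_B$ in $L^2$, hence $\int_B D(x,y)\,dy=0$ for a.e.\ $x$ directly.)

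Two remarks on scope, neither a defect in the paper's context. Exhausting $X$ by a countable increasing family of compacts uses that $X$ is $\sigma$-compact; a Radon measure space need not be, but $\olG(\Q)\bs\olG(\A)$, where the lemma is applied, is. Likewise, the existence of a countable Boolean algebra generating the Borel $\sigma$-algebra of a compact $C$ requires $C$ to be second countable (equivalently metrizable), which is not automatic for a compact Hausdorff space but holds here. If the lemma were asserted for an arbitrary Radon measure space these hypotheses would need to be made explicit. Your closing parenthetical about why Carath\'eodory/$\pi$-$\lambda$ uniqueness applies --- the signed measures in question are \emph{finite} because $D(x,\cdot)\in L^1(C)$ --- is exactly the right thing to flag.
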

\begin{proof}
This is straightforward; see the proof of Proposition 15.1 of \cite{KL}.
\end{proof}

Given an operator $T$ on a Hilbert space,
  its {\bf Hilbert-Schmidt norm}\index{keywords}{Hilbert-Schmidt norm} is
  defined by
\[\|T\|_{HS}^2=\sum \|Te_i\|^2,\]
  where $\{e_i\}$ is an orthonormal basis for the space.
  If the norm is finite, then it is independent of the choice of basis,
  and we say that $T$ 
   is a {\bf Hilbert-Schmidt operator}\index{keywords}{Hilbert-Schmidt operator}.
It is well-known that an operator $T$ on $L^2(X)$
  is Hilbert-Schmidt if and only if
  it has a square integrable kernel $K(x,y)\in L^2(X\times X)$
  (\cite{RS}, Theorem VI.23).  In this situation,
  if we let $\{\psi\}$ and $\{\phi\}$ be orthonormal bases for $L^2(X)$,
  then $\{\psi\otimes \ol{\phi}\}$ is an orthonormal basis for $L^2(X\times X)$
  (\cite{RS}, p. 51) and for almost all $(x,y)$ we have\index{notations}{Ker@$K(x,y)$}
\[K(x,y)=\sum_{\psi,\phi}\sg{K,\psi\otimes\ol{\phi}}\psi(x)\ol{\phi(y)}
  =\sum_{\phi}\Bigl(\sum_{\psi}\sg{T\phi,\psi}\psi(x)\Bigr)\ol{\phi(y)}\]
\begin{equation}\label{HS}
=\sum_{\varphi}T\phi(x)\ol{\phi(y)}.
\end{equation}

For an integer $m\ge 0$, let $C^m_c(G(\A),\ol{\w})$ denote the 
  space\index{notations}{C cm@$C^m_c(G(\A),\ol{\w})$}
  of factorizable functions $f=f_\infty \prod_p f_p$ on $G(\A)$ 
  with the following properties:
\begin{itemize}
\item $f$ has compact support mod $Z(\A)$
\item $f$ transforms under $Z(\A)$ by $\ol{\w}$
\item $f_\infty$ is $m$-times continuously differentiable on $G(\R)$
\item Each $f_p$ is locally constant, and for almost all $p$, $f_p$ is 
  the function supported on $Z(\Q_p)K_p$ defined by $f_p(zk)=\ol{\w_p(z)}$.
\end{itemize}

\begin{theorem}\label{tc}
  Suppose $m\ge 3$.  Then for any $f\in C_c^m(G(\A),\ol{\w})$,
  the operator $R_0(f)$ on $L^2_0(\w)$ is Hilbert-Schmidt.
  When $f_\infty$ is bi-$K_\infty$-invariant, $m\ge 2$ suffices.
\end{theorem}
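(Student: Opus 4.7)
My plan is to express $\|R_0(f)\|_{HS}^2$ as a spectral sum over the irreducible cuspidal representations $\pi$ occurring in $L^2_0(\w)$, and bound it using smoothness of $f_\infty$ together with Weyl's law. By the Gelfand--Piatetski-Shapiro decomposition, $L^2_0(\w)=\bigoplus_\pi \pi$ (with multiplicity one for $\GL(2)$ by strong multiplicity one), and for a factorizable $f=f_\infty\times f_{\fin}$ we have $\pi(f)=\pi_\infty(f_\infty)\otimes \pi_{\fin}(f_{\fin})$, so that
\[ \|R_0(f)\|_{HS}^2 = \sum_\pi \|\pi_\infty(f_\infty)\|_{HS}^2 \cdot \|\pi_{\fin}(f_{\fin})\|_{HS}^2. \]
The finite factor is treated first: since $f_{\fin}$ is bi-invariant under some compact open $K_0\subset K_{\fin}$ with compact support mod center, $\pi_{\fin}(f_{\fin})$ has range in the finite-dimensional subspace $\pi_{\fin}^{K_0}$, with $\|\pi_{\fin}(f_{\fin})\|_{HS}^2$ uniformly bounded and vanishing unless $\pi_{\fin}^{K_0}\neq 0$. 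By Atkin--Lehner / newform theory only finitely many isomorphism classes of $\pi_{\fin}$ (with conductor dividing the level of $K_0$) contribute, so this reduces the problem to bounding $\sum_{\pi_\infty}\|\pi_\infty(f_\infty)\|_{HS}^2$ over the archimedean types compatible with some admissible $\pi_{\fin}$.

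For the archimedean sum I decompose each $\pi_\infty = \pi(\e_1,\e_2,s,-s)$ (with spectral parameter $t=-is$) into its one-dimensional $K_\infty$-weight spaces $\C\phi_\k$, writing $\|\pi_\infty(f_\infty)\|_{HS}^2 = \sum_\k \|\pi_\infty(f_\infty)\phi_\k\|^2$. Using $\pi_\infty(X_{K_\infty})\phi_\k = i\k\phi_\k$ for the infinitesimal generator $X_{K_\infty}$ of $K_\infty$, the standard identity $\pi_\infty(f)\pi_\infty(X) = -\pi_\infty(R_X f)$, and the fact from equation \eqref{nu} that the Casimir $\Delta$ acts by the scalar $\tfrac14+t^2$ on $K_\infty$-finite vectors, I obtain, for any nonnegative integers $a_1,a_3$ with $a_1+2a_3 \leq m$,
\[ \|\pi_\infty(f_\infty)\phi_\k\|^2 \ll |\k|^{-2a_1}(1+|t|)^{-4a_3} \bigl\|R_{X_{K_\infty}}^{a_1} R_\Delta^{a_3} f_\infty\bigr\|_{L^1(\olG(\R))}^2. \]
Since $f_\infty$ is compactly supported modulo center, the $L^1$-norm is dominated by $\|f_\infty\|_{C^m}^2$ for a total derivative count at most $m$ (noting that $R_\Delta$ is a second-order operator).

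For $m\geq 3$ I take $a_1=a_3=1$; then $\sum_{\k\neq 0}|\k|^{-2}<\infty$ gives $\|\pi_\infty(f_\infty)\|_{HS}^2 \ll (1+|t_\pi|)^{-4}\|f_\infty\|_{C^3}^2$. By Weyl's law for cuspidal Maass forms of bounded level, the number of cuspidal $\pi$ with $|t_\pi|\leq T$ is $O(T^2)$, so $\sum_\pi(1+|t_\pi|)^{-4}\ll \int_0^\infty t(1+t)^{-4}\,dt<\infty$ by partial summation. When $f_\infty$ is bi-$K_\infty$-invariant, Lemma \ref{basic} shows that $\pi_\infty(f_\infty)$ vanishes on every nonzero-weight subspace and acts on $\C\phi_0$ (when $\e_1+\e_2$ is even) by the Selberg transform scalar $h(t_\pi)$ (Proposition \ref{finf}); Proposition \ref{ST} gives $|h(t)|\ll(1+|t|)^{-m}$, so the same Weyl argument yields convergence of $\sum_\pi |h(t_\pi)|^2$ as soon as $\int t(1+t)^{-2m}\,dt<\infty$, i.e., $m\geq 2$.

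The main obstacle will be the derivative-counting in the archimedean bound: one must verify carefully that the right-invariant operators $R_{X_{K_\infty}}$ and $R_\Delta$ use exactly the stated number of $C^m$-derivatives of $f_\infty$, and that $\|R_{X_{K_\infty}}^{a_1}R_\Delta^{a_3}f_\infty\|_{L^1(\olG(\R))}$ is controlled by $\|f_\infty\|_{C^m}$ uniformly (using that the support is compact modulo center). A secondary subtlety is making the Step 2 finite-prime estimate uniform: the bound on $\|\pi_{\fin}(f_{\fin})\|_{HS}^2$ and the counting of admissible $\pi_{\fin}$ must depend only on the support and bi-invariance data of $f_{\fin}$, not on $\pi_\infty$.
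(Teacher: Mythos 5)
Your approach is genuinely different from the paper's. The paper establishes Theorem \ref{tc} by a \emph{kernel-function} argument: it works with Arthur's truncated kernel $K^T(g_1,g_2)$, proves directly that $K^T$ is square-integrable over a fundamental domain (Propositions \ref{KTabscon} and \ref{KTsi}, using Poisson summation and explicit bounds on $V$, $V'$, $V''$), and then identifies the Hilbert-Schmidt operator attached to $K^T$ with $R_0(f)$ on the cuspidal subspace (Proposition \ref{Tcusp}, Corollary \ref{tcpf}); for the general $m\geq 3$ case it cites Gelbart--Jacquet and Knapp. You propose instead a \emph{spectral} computation, writing $\|R_0(f)\|_{HS}^2=\sum_\pi\|\pi_\infty(f_\infty)\|_{HS}^2\cdot\|\pi_{\fin}(f_{\fin})\|_{HS}^2$ and bounding the archimedean factor by differentiating $f_\infty$ against the Casimir and the $K_\infty$-generator. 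The kernel argument is self-contained and needs no a priori knowledge of the spectrum; the spectral argument is conceptually slicker but requires exactly such knowledge as input.

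That input is where the gap lies. You invoke Weyl's law to conclude that the number of cuspidal $\pi$ with $|t_\pi|\leq T$ is $O(T^2)$, but Weyl's law (and even discreteness of the spectral parameters) is itself established via trace-formula machinery that requires $R_0(f)$ to be Hilbert-Schmidt for some class of test functions --- indeed, in this paper Theorem \ref{tc} is the input to Lemma \ref{disc}, whose remark is where Weyl's law first appears. As written, your proof is therefore circular; to rescue it you would have to supply an independent, elementary polynomial upper bound $N(T)\ll T^C$ (a Bessel-inequality argument of the type used in \cite{Iw} can give this) and make the logical dependence explicit. Two smaller issues: (i) for the general case $m\ge 3$ you parametrize every $\pi_\infty$ as a principal series $\pi(\e_1,\e_2,s,-s)$, but cuspidal $\pi$ can have $\pi_\infty$ a discrete series (holomorphic cusp forms); these are not covered by that parametrization, and the Casimir eigenvalue even vanishes in weight $2$, so the decay in the $K_\infty$-weight must do the work and the count over discrete-series $\pi$ must be estimated separately. (ii) The claim that ``only finitely many isomorphism classes of $\pi_{\fin}$ contribute'' is false --- there is a distinct $\pi_{\fin}$ for each cuspidal $\pi$; what you actually need, and what is true, is that $\|\pi_{\fin}(f_{\fin})\|_{HS}^2\le\dim(\pi_{\fin}^{K_0})\|f_{\fin}\|_{L^1}^2$ with $\dim(\pi_{\fin}^{K_0})$ uniformly bounded over the relevant $\pi$ by Casselman's oldform theory.
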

\begin{proof}
  In the case of interest to us here, where $f_\infty$ is bi-$K_\infty$-invariant,
  we will prove that $m\ge 2$ suffices
   in Corollary \ref{tcpf} later on, as a consequence of a more general
  result where we allow $f_\infty$ to have noncompact support.
  For the general case of $f\in C^m_c(G(\A),\ol\w)$, see Theorem 2.1 of
  \cite{GJ} for a sketch over the adeles, and \cite{Bu} or \S 3 of \cite{Kn}
  for proofs over $G(\R)^+$.  As can be seen from the proof in \cite{Kn}, $m=3$
  suffices.
\end{proof}

Let $f\in L^1(\ol\w)$.
  Then for all $\psi\in L^2(\w)$, we have
\[R(f)\psi(x)=\int_{\olG(\A)}f(y)\psi(xy)dy=\int_{\olG(\A)}f(x^{-1}y)\psi(y)dy\]
 \[ =\int_{\olG(\Q)\bs\olG(\A)}K(x,y)\psi(y)dy\]
    for the kernel function
    \begin{equation}\label{K} \index{notations}{Kerf@$K_f(x,y)$, kernel function of $R(f)$}
 K(x,y)=K_f(x, y) = \sum_{\gamma \in \olG(\Q)} f(x^{-1} \gamma y).
 \end{equation}
   If $f$ is continuous and compactly supported modulo the center, then because
   $\olG(\Q)$ is a discrete subset of $\olG(\A)$, the sum is locally finite, so
  $K(x, y)$ is a continuous function on $G(\A)\times G(\A)$.

The expression \eqref{K} is the geometric form of the kernel.
When $f\in C^m_c(G(\A),\ol\w)$ for $m$ sufficiently large (we will prove in Corollary
 \ref{c} that $m=8$ suffices), the kernel
  also has a spectral expansion of the following form,
  valid almost everywhere in $G(\A)\times 
G(\A)$:\index{notations}{Kercont@$ K_{\cont}(x, y)$}
  \index{notations}{Kercusp@$K_{\cusp}(x,y)$}\index{notations}{Kerres@$K_{\res}(x,y)$}
\begin{equation}\label{Kdec}
 K(x, y) = K_{\cont}(x, y) + K_{\cusp}(x,y) + K_{\res}(x,y).
\end{equation}
Here
\[K_{\cont}(x,y)=\frac1{4\pi}\sum_\phi\int_{-\infty}^\infty E(\pi_{it}(f)\phi_{it},x)
  \ol{E(\phi_{it},y)}dt\]
for an orthonormal basis $\{\phi\}$ for $H(0)$,
\begin{equation}\label{Kcusp}
K_{\cusp}(x,y) =\sum_{\varphi}R(f)\varphi(x)\ol{\varphi(y)}
\end{equation}
 as in \eqref{HS}
for an orthonormal basis $\{\varphi\}$ for $L^2_0(\w)$, and
\begin{align}\notag K_{\res}(x,y)&
=\frac3\pi\sum_{\chi^2=\w} R(f)\phi_\chi(x)\ol{\phi_\chi(y)}
\\
\notag  &=\frac{3}{\pi}\sum_{\chi^2=\w}\chi(\det x)
  \ol{\chi(\det y)}\int_{\olG(\A)}f(g)\chi(\det g)dg\\
  \label{Pres}
  &=\begin{cases}\frac{3}{\pi}\int_{\scriptscriptstyle \olG(\A)}f(g)dg
  &\text{if }\w\text{ is trivial,}\\
  0&\text{otherwise.}\end{cases}
\end{align}
To see \eqref{Pres}, notice that in the integral on the previous line,
  if we replace $g$ by $gk$ for $k\in K_1(N)$, a factor of $\chi(\det k)$ comes out.
  So the integral vanishes unless $\chi$ is trivial on $\Zhat^*$.
  Since $\chi$ has finite order, this is possible only if $\chi$ is trivial
  (since $\Q$ has class number $1$),
  which means that $\w=\chi^2$ is also trivial.

  If $f=f_\infty\ff$ is a weight $0$ Hecke operator as in \eqref{Hop},
  then using \eqref{Sel2} and \eqref{Mp}, \eqref{Pres} becomes
\begin{equation}\label{residue}
\frac3\pi\int_{\olG(\R)}f_\infty(g)dg
  \int_{\ol{M_1(\n,N)}}\ff(m)dm=\frac3\pi h(\tfrac i2)\prod_{p|\n}\sum_{j=0}^{\n_p}p^j
=
  \frac3 \pi h(\tfrac i2) \sum_{d|\n}d
\end{equation}
 for the Selberg transform $h$ of $f_\infty$.

For such a Hecke operator,
  we will derive \eqref{Kdec} from the spectral
  decomposition in Theorem \ref{L2} using a nice choice of basis,
  and show that for this choice it is in fact valid for {\em all} $(x,y)$.
  This is a special case of a result of Arthur (\cite{Ar2} \S4, culminating on p. 935).
  We need this fact because our principal objective is to derive a
  relative trace formula by integrating $K(x,y)$ over
\[(x,y)\in (N(\Q)\bs N(\A))^2,\]
  a space which has {\em measure zero} in $(\olG(\Q)\bs\olG(\A))^2$, so an
  almost-everywhere spectral expression 
  for $K(x,y)$ is not adequate.


\comment{  For any $\phi\in L^2(\w)$, we have
\[R(f)\phi(x) = \int_{\olG(\Q)\bs \olG(\A)}K(x,y)\phi(y) dy =\sg{K(x,\cdot),\ol{\phi}},\]
  as follows easily by combining the sum and integral.
  The inner product notation is justified by the following lemma:

\begin{lemma} For almost all $x$, $K(x,y)\in L^2(\ol{\w})$ as a function of $y$.
\end{lemma}
\noindent{\em Remark:} In fact $K(x,y)\in L^2(\olG(\A)\times\olG(\A))$.
\begin{proof}
Insert Charles' proof here.
\end{proof}

The expression \eqref{K} is the geometric form of the kernel.  Because it belongs
  to $L^2(\w^{-1})$, by Theorem \ref{L2} it also has a spectral expansion
  of the form
\begin{equation}\label{Kdec}
 K(x, y) = K_{\cusp}(x, y) + K_{\res}(x,y) + K_{\cont}(x,y).
\end{equation}
  Note that if $\phi\in L^2(\w)$, then $\ol{\phi}\in L^2(\w^{-1})$.
  So the cuspidal term is given by
\[K_{\cusp}(x,y)=\sum_{{\varphi}}\sg{K(x,\cdot),\ol{\varphi}}\ol{\varphi(y)}
  =\sum_{\varphi}R(f)\varphi(x)\ol{\varphi(y)}.\]
For the residue term, note that if $\chi^2=\w$ and $\phi_\chi
  \in L^2_{\res}(\w)$ is the associated vector, then
\[R(f)\phi_\chi(x)=\int_{\olG(\A)}f(g)\phi_\chi(xg)dg
  =\phi_\chi(x)\int_{\olG(\A)}f(g)\chi(\det(g))dg.\]
So
\begin{align}\notag K_{\res}(x,y)&
=\frac3{\pi}\sum_{\chi^2=\w} {\sg{K(x,\cdot),\ol{\phi_\chi}}
  \ol{\phi_\chi(y)}}
=\frac3\pi\sum_{\chi^2=\w} R(f)\phi_\chi(x)\ol{\phi_\chi(y)}
\\
  \label{Pres}
  &=\frac{3}{\pi}\sum_{\chi^2=\w}\chi(\det x)
  \ol{\chi(\det y)}\int_{\olG(\A)}f(g)\chi(\det g)dg,
\end{align}
Note that while $L^2_{\res}(\w)$ may be infinite dimensional, the above
  sum is finite because $R(f)\phi_\chi=0$ unless $\c_\chi|N$.
  Indeed, the finite part of the integral in \eqref{Pres} factors as
\[\int_{\olG(\Af)/\ol{K_1(N)}}\ff(g)\int_{\ol{K_1(N)}}\chi(\det gk)dk\,dg,\]
  which vanishes unless $\chi$ is trivial on $U_N=\det \ol{K_1(N)}$.  Only finitely many
  $\chi$ have this property.

The continuous term of $K(x,y)$ is
$\frac1{4\pi}\sum_\phi\int_{-\infty}^\infty\sg{K(x,\cdot),E(\ol{\phi_{it}},\cdot)}
  E(\ol{\phi_{it}},y)dt$, which gives
\begin{equation}\label{Kcont1}
K_{\cont}(x,y)=\frac1{4\pi}\sum_\phi\int_{-\infty}^\infty E(\pi_{it}(f)\phi_{it},x)
  \ol{E(\phi_{it},y)}dt.
\end{equation}
Indeed on a purely formal level,
\[\sg{K(x,\cdot),E(\ol{\phi_s},\cdot)}=\int_{\olG(\Q)\bs\olG(\A)}\sum_{\g\in\olG(\Q)}
  f(x^{-1}\g y)E(\phi_s,y)dy\]
\[=\int_{\olG(\A)}f(y)E(\phi_s,xy)dy=\sum_{\g\in \ol{B}(\Q)\bs\olG(\Q)}
  \int_{\olG(\A)}f(y)\phi_s(xy)dy=E(\pi_s(f)\phi_s,x).\]
This is only valid when $\Re(s)\gg 0$.  For a legitimate proof of \eqref{Kcont1},
 see \S5D of \cite{GJ}.  Our expression looks slightly
  different from their (5.20), but it follows easily in the
  same way by substituting $\sg{SF_1(iy),\pi_{iy}(\varphi)^*SF_2(iy)}$
  in their eq. (5.19).
}


\comment{
The expression \eqref{K} is the geometric form of the kernel.  It also has
  a spectral expansion.
Let $P_0$ (resp. $P_{\res}$, $P_{\cont}$) denote the orthogonal projection
   of $L^2(\w)$ onto $L^2_0(\w)$ (resp. $L^2_{\res}(\w)$, $L^2_{\cont}(\w)$).
    Let $K_0$ (resp. $K_{\res}$, $K_{\cont}$) be the kernel function of $P_0 R(f) P_0$
   (resp. $P_{\res} R(f) P_{\res}$, $P_{\cont} R(f) P_{\cont}$).
    Then
\begin{equation}\label{Kdec}
 K(g_1, g_2) = K_0(g_1, g_2) + K_{\cont}(g_1,g_2) + K_{\res}(g_1,g_2),
\end{equation}
  and our goal in the rest of this section is to recall the spectral expansion
  of each of the functions on the right-hand side.
  We emphasize that the kernel of an operator on an $L^2$-space is only determined
  up to sets of measure $0$.  Of particular importance
  is the fact that the spectral expressions we give for $K_0, K_{\res},$ and $K_{\cont}$
  are defined everywhere and turn out to be continuous separately in each variable.
  We give details in the next section.

The residue kernel is easily computed.  If $\chi^2=\w$ and $\phi_\chi
  \in L^2_{\res}(\w)$ is the associated vector, then
\[R(f)\phi_\chi(x)=\int_{\olG(\A)}f(g)\phi_\chi(xg)dg
  =\phi_\chi(x)\int_{\olG(\A)}f(g)\chi(\det(g))dg.\]
Therefore it follows easily that
\begin{align}\notag K_{\res}(x,y)&=\sum_{\chi^2=\w}
  \frac{ R(f)\phi_\chi(x)\ol{\phi_\chi(y)}}{\|\phi_\chi\|^2}\\
  \label{Pres}
  &=\frac{3}{\pi}\sum_{\chi^2=\w}\chi(\det x)
  \ol{\chi(\det y)}\int_{\olG(\A)}f(g)\chi(\det g)dg,
\end{align}
since $\|\phi_\chi\|^2 = \meas(\olG(\Q)\bs\olG(\A))={\pi}/3$.
Note that while $L^2_{\res}(\w)$ may be infinite dimensional, the above
  sum is finite because $R(f)\phi_\chi=0$ unless $\c_\chi|N$.
  Indeed, the finite part of the integral in \eqref{Pres} factors as
\[\int_{\olG(\Af)/\ol{K_1(N)}}\ff(g)\int_{\ol{K_1(N)}}\chi(\det gk)dk\,dg,\]
  which vanishes unless $\chi$ is trivial on $U_N=\det \ol{K_1(N)}$.  Only finitely many
  $\chi$ have this property.

\begin{theorem}\label{spectralK}
 Define
    \begin{equation} \label{P0}
    \Psi_{\cusp}(g_1,g_2) = \sum_{\varphi}R(f) \varphi(g_1) \ol{\varphi(g_2)},
    \end{equation}
  where $\varphi$ runs through an orthonormal basis of $L^2_0(\w)$,
  chosen such that each $\varphi\in \pi$ for some irreducible
  cuspidal representation $\pi$.
Also define
    \begin{equation} \label{Pcont3}
\Psi_{\cont}(g_1,g_2)= \frac{1}{4\pi} \sum_{\phi}
   \int_{-\infty}^\infty E(\pi_{it}(f)\phi_{it}, g_1) \ol{E(\phi_{it}, g_2)} dt,
    \end{equation}
where $\phi$ runs through an orthonormal basis for
 $H(0)$.
  Then $K_{\cusp}=\Psi_{\cusp}$ and $K_{\cont}=\Psi_{\cont}$.
\end{theorem}
\begin{proof}
 See \S5D of \cite{GJ}.  Our expression looks slightly
  different from their (5.20), but it follows easily in the
  same way by substituting $\sg{SF_1(iy),\pi_{iy}(\varphi)^*SF_2(iy)}$
  in their eq. (5.19).
\end{proof}

}

\comment{
\begin{theorem}\label{Kmain}  (Extra hypotheses possibly needed:
  $f_\infty\in C^\infty_c(G^+//K)$,
the bases below are eigenbases.)
  The functions $K_{\cusp}(x,y)$
  and $K_{\cont}(x,y)$ are absolutely convergent in the sense that
\[\sum_{\varphi}\left|R(f) \varphi(x) \ol{\varphi(y)}\right|<\infty,\]
\[\frac{1}{4\pi} \sum_{\phi}  \int_{-\infty}^\infty
  \left|E(\pi_{it}(f)\phi_{it}, x) \ol{E(\phi_{it}, y)}\right| dt <\infty.\]
Further, $K_{\cusp}(x,y)$ and $K_{\cont}(x,y)$ are separately continuous in $x$ and $y$.
  As a result, the equality
\[K(x,y)=\sum_{\g\in\olG(\Q)}f(x^{-1}\g y)=K_{\cusp}(x,y)+K_{\cont}(x,y)+K_{\res}(x,y)\]
is valid everywhere.
\end{theorem}

\noindent{\em Remark:}
  The assertion about continuity is a special case of a general result of
  Arthur (\cite{Ar2} \S4, culminating on p. 935).  His result holds for any
  reductive group.  In the present setting it gives, e.g., that
\[\int_{-\infty}^\infty\Biggl|\sum_{\phi}E(\pi_{it}(f)\phi_{it},x)
  \ol{E(\phi_{it},y)}\Biggr|dt<\infty.\]
  For our purposes, we need the stronger statement with absolute
  values inside the sum.
  We prove the theorem in the next section below.
}

\subsection{A spectral lower bound for $K_{h*h^*}(x,x)$}

In this section we will take $f=h*h^*$ for a suitable function
 $h$,    
  and give a spectral lower bound for $K_f(x,x)$ in Proposition \ref{hhcase}.
We begin with the following lemma.

\begin{lemma}[\cite{GGK}, Lemma 5.2.1]\label{ggk}
Let $X$ be a Radon measure space,  and let $T$ be an operator
  on $L^2(X)$.  Suppose there is a continuous weak kernel function
  $K(x,y)$ for $T$.  Suppose further that
  for all bounded compactly supported $\psi$,
  \[\sg{T\psi,\psi}\ge 0.\]
  Then $K(x,x)\ge 0$ for all $x\in X$.
\end{lemma}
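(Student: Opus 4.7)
The plan is to argue by contradiction: assuming $K(x_0,x_0)$ fails to be a non-negative real at some $x_0 \in X$, I will construct a bounded compactly supported test function $\psi$ that witnesses $\sg{T\psi,\psi}<0$ (or non-real), contradicting the hypothesis. The natural test functions are indicators of small open neighborhoods of $x_0$, and the weak kernel property of Lemma \ref{weak}'s setup lets me compute $\sg{T\psi,\psi}$ directly as a double integral of $K$.

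Concretely, I would proceed as follows. Fix $x_0\in X$ and suppose for contradiction that $K(x_0,x_0)$ is not a non-negative real number. Since $K$ is continuous, for any prescribed $\varepsilon>0$ there is an open neighborhood $U$ of $x_0$ with $|K(x,y)-K(x_0,x_0)|<\varepsilon$ for every $(x,y)\in U\times U$. Take $\psi=\mathbf 1_U$, which is bounded and compactly supported. Because $K$ is a weak kernel for $T$, applied with $\psi_1=\psi_2=\psi$, we obtain
\[
\sg{T\psi,\psi}=\sg{T_K\psi,\psi}=\int_U\int_U K(x,y)\,dy\,dx.
\]
Dividing by $\mu(U)^2$ and letting $U$ shrink to $\{x_0\}$, the right-hand side converges to $K(x_0,x_0)$, while the hypothesis $\sg{T\psi,\psi}\ge 0$ forces each term in the sequence to be a non-negative real. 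Hence $K(x_0,x_0)\ge 0$, the desired contradiction.

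A small technical point to address: the step $\mu(U)^{-2}\int_U\int_U K(x,y)\,dy\,dx\to K(x_0,x_0)$ uses that $\mu(U)>0$ for every open neighborhood $U$ of $x_0$, which holds when $x_0$ lies in the support of the Radon measure; in the intended application $X=\olG(\Q)\bs\olG(\A)$ carries a Haar measure, so every non-empty open set has positive measure and the conclusion holds for all $x$.

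I do not anticipate a serious obstacle here: the only real content is that the weak kernel property allows the substitution $\sg{T\psi,\psi}=\int\int K(x,y)\psi(y)\overline{\psi(x)}\,dy\,dx$ for indicator functions, and continuity of $K$ together with a Lebesgue differentiation–style averaging does the rest. The mildly delicate point is arranging matters so that both the real and imaginary parts of $K(x_0,x_0)$ are controlled, but this is handled automatically because $\sg{T\psi,\psi}\ge 0$ is assumed to be a non-negative real for every bounded compactly supported $\psi$, forcing the limit of the averages to be a non-negative real as well.
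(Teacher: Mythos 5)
Your argument is correct and is essentially the paper's own proof: both use the indicator of a small neighborhood of the point as a test function, then invoke the weak kernel identity $\sg{T\psi,\psi}=\int_U\int_U K$ together with continuity. The only cosmetic difference is the final step — the paper picks $U$ so that $\Re K<0$ (and then separately $\Im K\neq 0$) uniformly on $U\times U$ and reads off a sign contradiction directly, whereas you normalize by $\mu(U)^2$ and take a limit; both work, and your version handles the real and imaginary parts in one stroke, at the mild cost of having to observe (as you do) that $\mu(U)>0$.
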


\begin{proof}
Suppose for some $x$ that $\Re K(x,x)<0$.  By continuity there
  exists a compact neighborhood $U\subset X$ of $x$
  such that $\Re K(a,b)<0$ for all
  $(a,b)\in U\times U$.  Let $\psi$ be the characteristic function of $U$.
  Then
\[0\le \sg{T\psi,\psi}=\int_X T\psi(a)\ol{\psi(a)}da
  =\int_U\int_UK(a,b)db\,da.\]
The right-hand side has negative real part, which is a contradiction.
  Therefore $\Re K(x,x)\ge 0$.
  By a similar argument, we find also that $\Im K(x,x)=0$.
\end{proof}

\comment{
Suppose $f=f_\infty\times f_{\fin} \in L^1(G(\A),\w)$
 has compact support modulo $Z(\A)$, with $f_\infty\in C^\infty_c(G^+//K_\infty)$ and
  $f_{\fin}$ a $K_1(N)$-invariant function.  The
  adjoint of the operator $R(f)$ is $R(f^*)$, where $f^*(g)=\ol{f(g^{-1})}$.
  For now, suppose that
\begin{enumerate}
\item $f$ is self-adjoint: $f(g)=\ol{f(g^{-1})}$.
\item $R_0(f)$ and $\pi_{it}(f)$ are diagonalizable operators.
\item All eigenvalues of $R(f)$ and $\pi_{it}(f)$
  are nonnegative.
\end{enumerate}
  By the first condition, the associated kernel function is
\[K_f(x,y)=\sum_{\g\in\olG(\Q)}f(x^{-1}\g y)=\sum_\g \ol{f(y^{-1}\g^{-1} x)}=\ol{K_f(y,x)}.\]
  It follows that $K_f(x,x)$ is real-valued.
  The second and third conditions imply that $K_f(x,x)$ $\ge 0$ for all $x$.
  In fact we have the following.

\begin{proposition}\label{selfadjointcase}
 Let $f$ be as above.
  Let $\{\phi_j\}_{j=1}^\infty$ be an orthonormal basis for
  $L^2_0(\w)\oplus L^2_{\res}(\w)$, consisting of eigenvectors of $R(f)$.
  Write $R(f)\phi_j=\lambda_j\phi_j$ for $\lambda_j\ge 0$.  For
  an eigenvector $\phi\in H(\chi_1,\chi_2)$, write
 $\pi_{it}(f)\phi_{it}=\lambda_{\phi}(t)\phi_{it}$ for $\lambda_\phi(t)\ge 0$.
  Then for all $x\in G(\A)$,
\begin{equation}\label{Kxx}
\sum_{j=1}^\infty \lambda_j|\phi_j(x)|^2 +\frac1{4\pi}
  \sum_{\chi_1,\chi_2}\sum_{\phi}\int_{-\infty}^\infty \lambda_\phi(t)|E(\phi,it,x)|^2dt
  \le K_f(x,x).
\end{equation}
\end{proposition}
\noindent{\em Remark:} The left-hand side of \eqref{Kxx} is equal to the
  spectral expression for the kernel on the diagonal.
   We know that the spectral and geometric expressions are equal
  for almost all $(x,y)$, but this does not imply anything here since the diagonal has measure
  0 in $G(\A)\times G(\A)$.\\\\
\begin{proof}
For any $\psi\in L^2(\w)$, we have
\[\sg{R(f)\psi,E(\phi_{it},\cdot)}=
  \sg{\psi, E(\pi_{it}(f)\phi_{it},\cdot)}
  =\lambda_\phi(t) \sg{\psi,E(\phi_{it},\cdot)}\]
since $f$ is self-adjoint.  Thus by the Parseval identity,
$\sg{R(f)\psi,\psi}$ is
\[=\sum_{j\ge 1}\sg{R(f)\psi,\phi_j} \ol{\sg{\psi,\phi_j}}
  +\frac1{4\pi}\sum_{\chi_1,\chi_2}
  \sum_\phi \int_{-\infty}^\infty\sg{R(f)\psi,E(\phi_{it},\cdot)}
  \ol{\sg{\psi,E(\phi_{it},\cdot)}}dt\]
\[=\sum_{j\ge 1}\sg{\psi,R(f)\phi_j}
  \ol{\sg{\psi,\phi_j}}
  +\frac1{4\pi}\sum_{\chi_1,\chi_2}
  \sum_\phi \int_{-\infty}^\infty\lambda_\phi(t)\,
  |\!\sg{\psi,E(\phi_{it},\cdot)}\!|^2dt\]
\begin{equation}\label{Kpp}
=\sum_{j\ge 1}\lambda_j|\!\sg{\psi,\phi_j}\!|^2
  +\frac1{4\pi}\sum_{\chi_1,\chi_2}
  \sum_\phi \int_{-\infty}^\infty\lambda_\phi(t)\,
  |\!\sg{\psi,E(\phi_{it},\cdot)}\!|^2dt.
\end{equation}
For each integer $r\ge 1$, define the partial spectral
  kernel
\[\Phi_r(x,y) = \lambda_r\phi_r(x)\ol{\phi_r(y)} +
  \frac1{4\pi}\sum_{\chi_1,\chi_2}\sum_\phi\int_{R_r}\lambda_\phi(t)
  \,E(\phi_{it},x)\ol{E(\phi_{it},y)}dt,\]
where $R_r=[-r,-r+1]\cup[r-1,r]\subset\R$.  Note that the double sum
  is actually finite since
  $\lambda_{\phi}(t)$ is identically zero for all but finitely many $\phi$.
  This is because $\pi_{it}(f)\phi_{it}\in H(\chi_1,\chi_2)^{K_\infty\times K_1(N)}$
  is zero for all but finitely many pairs $(\chi_1,\chi_2)$ as explained earlier.
  Therefore $\Phi_r(x,y)$ is continuous.  Furthermore, the sum of all
  $\Phi_r$ is equal to the full spectral expression for $K_f(x,y)$.

Let $T_r$ be the operator on $L^2(\w)$ whose kernel is $\Phi_r$.
  For any $j\ge 1$, we have
\[T_r\phi_j(x)=\int_{\olG(\Q)\bs\olG(\A)}\Phi_r(x,y)\phi_j(y)dy
  =\lambda_r\phi_r(x)\sg{\phi_j,\phi_r}\]
since $\phi_j$ is orthogonal to the Eisenstein series.
From this it follows easily that for any $\psi\in L^2_0(\w)$,
\begin{equation}\label{Trj}
\sg{T_r\psi,\psi}=\lambda_r\,|\!\sg{\psi,\phi_r}\!|^2.
\end{equation}
Similarly, for any $\psi\in L^2_{\cont}(\w)$, we have
\begin{align*}
  T_r\psi(x)&=\int_{\olG(\Q)\bs\olG(\A)} \Phi_r(x,y) \psi(y)dy\\
  &= \frac1{4\pi}\sum_{\chi_1,\chi_2}\sum_\phi\int_{R_j}
  \lambda_\phi(t)E(\phi_{it},x)\sg{\psi,E(\phi_{it},\cdot)}dt.
\end{align*}
Hence
\begin{equation}\label{Trc}
\sg{T_r\psi,\psi}=\frac1{4\pi}\sum_{\chi_1,\chi_2}\sum_{\phi}
  \int_{R_j}\lambda_\phi(t)\,|\!\sg{\psi,E(\phi_{it},\cdot)}\!|^2dt.
\end{equation}
Finally, for any integer $n\ge 1$, the kernel
\[K_n(x,y)=K_f(x,y)-\sum_{r=1}^n\Phi_r(x,y)\]
is continuous.  If $S_n$ denotes the associated operator, then
  by \eqref{Kpp}, \eqref{Trj} and \eqref{Trc}, we have
\[\sg{S_n\psi,\psi} =\sum_{j>n}\lambda_j|\!\sg{\psi,\phi_j}\!|^2
  +\frac1{4\pi}\sum_{\chi_1,\chi_2}\sum_\phi\int_{|t|>n}
  \lambda_\phi(t)\,|\!\sg{\psi,E(\phi_{it},\cdot)}\!|^2dt\ge 0\]
for all $\psi\in L^2(\w)$
 since $\lambda_j,\lambda_\phi(t)\ge 0$ by hypothesis.
  By Lemma 6.4, $K_n(x,x)\ge 0$ for all $n$.
  The proposition follows by taking $n\to\infty$.
%
\end{proof}
}


\begin{proposition}\label{hhcase}
Let $h\in C_c^m(G(\A),\ol{\w})$ be a bi-$K_\infty\times K_1(N)$-invariant
  function for $m\ge 2$.  Choose orthonormal bases $\{\varphi\}$ and
  $\{\phi\}$ for $L^2_{\disc}(\w)$ and $H(0)^{K_\infty\times K_1(N)}$
   respectively, consisting of
  continuous functions.  Then for all $x\in \olG(\A)$,
\begin{equation}\label{Kxx}
\sum_{\varphi} |R(h)\varphi(x)|^2+\frac1{4\pi}\sum_{\phi}\int_{-\infty}^\infty
  |E(\pi_{it}(h)\phi_{it},x)|^2dt\le K_{h*h^*}(x,x).
\end{equation}
Here $K_{h*h^*}(x,y)$ is the geometric kernel defined in \eqref{K}.
\end{proposition}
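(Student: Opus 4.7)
The plan is to exploit the positivity of $R(h*h^*) = R(h)R(h)^*$ and combine it with Lemma \ref{ggk} applied to a continuous partial spectral kernel. The key point is that the inequality would be an \emph{equality} if the spectral-geometric identity for $K_{h*h^*}$ held pointwise rather than almost everywhere; positivity lets us recover the pointwise bound on the diagonal $\{(x,x)\}$.

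First I would observe that for any $\psi \in L^2(\w)$,
\[
\sg{R(h*h^*)\psi,\psi} = \sg{R(h)R(h)^*\psi,\psi} = \|R(h)^*\psi\|^2 \geq 0.
\]
Applying Parseval's identity \eqref{parseval} to $R(h)^*\psi \in L^2(\w)$, and using $\sg{R(h)^*\psi,\varphi}=\sg{\psi,R(h)\varphi}$ together with the identity \eqref{move}, I get
\[
\sg{R(h*h^*)\psi,\psi}
= \sum_{\varphi} |\sg{\psi,R(h)\varphi}|^2 + \frac{1}{4\pi}\sum_{\phi}\int_{-\infty}^\infty |\sg{\psi,E(\pi_{it}(h)\phi_{it},\cdot)}|^2 dt.
\]
By Lemma \ref{basic}, since $h$ is bi-$K_\infty\times K_1(N)$-invariant, $\pi_{it}(h)$ annihilates the orthogonal complement of $H(it)^{K_\infty\times K_1(N)}$, so only the $K_\infty\times K_1(N)$-fixed basis vectors $\phi$ contribute.

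Next I would introduce, for each $n\geq 1$, the truncated kernel
\[
\Phi_n(x,y) = \sum_{j=1}^n R(h)\varphi_j(x)\,\ol{R(h)\varphi_j(y)}
 + \frac{1}{4\pi}\sum_{j=1}^n \int_{-n}^n E(\pi_{it}(h)\phi_{j,it},x)\,\ol{E(\pi_{it}(h)\phi_{j,it},y)}\,dt.
\]
This is a finite sum of products of continuous functions plus a compactly supported integral of continuous functions in $(x,y,t)$, so it is jointly continuous on $\olG(\A)\times\olG(\A)$. A direct computation shows that the operator $T_n$ with kernel $\Phi_n$ satisfies
\[
\sg{T_n\psi,\psi} = \sum_{j=1}^n|\sg{\psi,R(h)\varphi_j}|^2 + \frac{1}{4\pi}\sum_{j=1}^n\int_{-n}^n|\sg{\psi,E(\pi_{it}(h)\phi_{j,it},\cdot)}|^2 dt,
\]
which is term-by-term $\leq \sg{R(h*h^*)\psi,\psi}$. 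Hence $R(h*h^*)-T_n$ is a positive operator on $L^2(\w)$.

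Then I would apply Lemma \ref{ggk}: the geometric form \eqref{K} shows that $K_{h*h^*}(x,y)$ is continuous (locally finite sum because $h$ is compactly supported modulo $Z(\A)$ and $\olG(\Q)$ is discrete in $\olG(\A)$), so $K_{h*h^*} - \Phi_n$ is a continuous weak kernel for the positive operator $R(h*h^*)-T_n$. By Lemma \ref{ggk}, $(K_{h*h^*}-\Phi_n)(x,x)\geq 0$ for \emph{every} $x$. Finally, since $\Phi_n(x,x)$ is a sum of nonnegative quantities that is monotone increasing in $n$, the monotone convergence theorem yields
\[
\sum_{\varphi}|R(h)\varphi(x)|^2 + \frac{1}{4\pi}\sum_{\phi}\int_{-\infty}^\infty|E(\pi_{it}(h)\phi_{it},x)|^2\,dt = \lim_{n\to\infty}\Phi_n(x,x)\leq K_{h*h^*}(x,x),
\]
which is the desired inequality. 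The main technical obstacle I expect is justifying the pointwise continuity of each $R(h)\varphi_j$ and of $(x,t)\mapsto E(\pi_{it}(h)\phi_{j,it},x)$, which ultimately relies on the smoothing effect of $h\in C^m$ with $m\geq 2$ and on standard facts about the continuous dependence of Eisenstein series on their parameters.
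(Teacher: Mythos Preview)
Your approach is essentially the same as the paper's: introduce a continuous partial spectral kernel attached to a finite piece of the discrete spectrum and a compact $t$-range, show via Parseval that the associated operator is dominated by $R(h*h^*)$, apply Lemma~\ref{ggk} to the continuous difference kernel, and pass to the limit. The paper organizes the argument slightly differently---it parametrizes the truncation by an arbitrary finite set $Q$ and symmetric compact $J$ and establishes the weak-kernel property of the continuous piece more carefully by first constructing the operator as $P_J R(h*h^*)P_J$ (Lemma~\ref{T'})---but the substance is identical.
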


\noindent{\em Remark:}
  The set $\{\phi\}$ can be extended to an orthonormal basis for all of $H(0)$
  in \eqref{Kxx}.  Indeed, because $h$ is $K_\infty\times K_1(N)$-invariant,
  by Lemma \ref{basic} $\pi_{it}(h)\phi_{it}$ vanishes when $\phi$ belongs to
  the orthogonal complement of the finite dimensional subspace
  \[H(0)^{K_\infty\times K_1(N)}=\bigoplus_{\chi_1\chi_2=\w\atop{\c_{\chi_1}\c_{\chi_2}|N}}
  H(\chi_1,\chi_2)^{K_\infty\times K_1(N)}.\]


We will prove the proposition in stages.  It is an application
  of Lemma \ref{ggk}, but complicated by the fact that we do not
  know {\em a priori} that the left-hand side of \eqref{Kxx} is
  continuous.  Thus we will approximate it by a
  partial sum, defined as follows.
  Fix an orthonormal subset $Q\subset L^2_{\disc}(\w)$,
 and let $J$ be a symmetric compact subset of $\R$.
  To these we attach the following function
\begin{equation}\label{K'}
K'(x,y)= K'_{\disc}(x,y)+K'_{\cont}(x,y),
\end{equation}
where
\[K'_{\disc}(x,y) = \sum_{\varphi\in Q}R(h)\varphi(x)
  \ol{R(h)\varphi(y)}\]
and
\[K'_{\cont}(x,y)= \frac1{4\pi}\sum_{\phi}\int_J
  E(\pi_{it}(h)\phi_{it},x)\ol{E(\pi_{it}(h)\phi_{it},y)}dt.\]
Here $\phi$ runs through an orthonormal basis
 for $H(0)^{K_\infty\times K_1(N)}$.

%

\begin{lemma}\label{T'}
There exists a bounded linear operator $T'_{\cont}$ on $L^2(\w)$ for
  which $K_{\cont}'$ is a weak kernel:   for all bounded $\psi\in L^2(\w)$
  with compact support modulo $Z(\A)G(\Q)$,
\begin{equation}\label{TK}
T_{\cont}'\psi(x)=\int_{\olG(\Q)\bs\olG(\A)}
  K_{\cont}'(x,y)\psi(y)dy
\end{equation}
for almost all $x$.  The analogous statement for $K_{\disc}'$ also holds.
\end{lemma}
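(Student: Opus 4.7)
The plan is to realize $T'_{\cont}$ as the norm-bounded composition $R(h)\tilde P R(h)^*$, where $\tilde P$ is an orthogonal projection on $L^2(\w)$ constructed via Theorem \ref{L2}. Explicitly, $\tilde P$ is the projection onto the subspace of $L^2_{\cont}(\w)$ corresponding, under the isometric isomorphism $S:L^2_{\cont}(\w)\to\mathcal L$, to those sections $A\in\mathcal L$ with $A(it)=0$ for $t\notin J$ and, for $t\in J$, $A(it)$ lying in the copy of $H(0)^{K_\infty\times K_1(N)}$ inside $H(it)$. Since $S$ is an isometry and the fiberwise projection onto a closed subspace has norm at most one, $\tilde P$ is a genuine orthogonal projection, so $T'_{\cont}$ is bounded with $\|T'_{\cont}\|\le\|R(h)\|^2$.

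Unwinding the action of $\tilde P$ through the characterization \eqref{516} of $S$ yields the partial Parseval identity
\[
\langle\tilde P f_1,f_2\rangle=\frac{1}{4\pi}\sum_\phi\int_J\langle f_1,E(\phi_{it},\cdot)\rangle\,\overline{\langle f_2,E(\phi_{it},\cdot)\rangle}\,dt,
\]
where $\phi$ runs through an orthonormal basis of $H(0)^{K_\infty\times K_1(N)}$. Substituting $f_1=R(h)^*\psi$, $f_2=R(h)^*\eta$ and applying \eqref{move} in each slot replaces $\langle R(h)^*\psi,E(\phi_{it},\cdot)\rangle$ by $\langle\psi,E(\pi_{it}(h)\phi_{it},\cdot)\rangle$ (and similarly for $\eta$), giving
\[
\langle T'_{\cont}\psi,\eta\rangle=\frac{1}{4\pi}\sum_\phi\int_J\langle\psi,E(\pi_{it}(h)\phi_{it},\cdot)\rangle\,\overline{\langle\eta,E(\pi_{it}(h)\phi_{it},\cdot)\rangle}\,dt.
\]
On the other hand, for bounded $\psi,\eta$ of compact support modulo $Z(\A)G(\Q)$, applying Fubini to $\iint K'_{\cont}(x,y)\psi(y)\overline{\eta(x)}\,dy\,dx$ produces exactly the same expression. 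Lemma \ref{weak} then yields the identity $T'_{\cont}\psi(x)=\int K'_{\cont}(x,y)\psi(y)\,dy$ for almost every $x$.

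The Fubini exchange is legitimate because the sum over $\phi$ is finite (Corollary \ref{nonzero}), the set $J$ is compact, and the map $(t,x)\mapsto E(\pi_{it}(h)\phi_{it},x)$ is jointly continuous---hence bounded on $J\times\Supp\psi$ and $J\times\Supp\eta$---by the Fourier expansion of Theorem \ref{merom}, the potential pole at $s=1/2$ never touching the imaginary axis. The discrete analogue is handled by setting $T'_{\disc}=R(h)P_Q R(h)^*$, where $P_Q$ is the orthogonal projection of $L^2(\w)$ onto $\overline{\operatorname{span}(Q)}$, and repeating the argument using the discrete contribution to Parseval. When $Q$ is infinite, the interchange of summation and integration is justified by the Hilbert--Schmidt property of $R(h)$ on $L^2_{\disc}(\w)$ (Theorem \ref{tc} on the cuspidal part, together with the essentially finite-rank action on $L^2_{\res}(\w)$), which gives $\sum_{\varphi\in Q}\|R(h)\varphi\|^2<\infty$, and hence absolute convergence after applying Cauchy--Schwarz to $\langle\psi,R(h)\varphi\rangle$ and $\langle\eta,R(h)\varphi\rangle$.

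I expect the main obstacle to be the rigorous formulation of the partial Parseval identity for $\tilde P$, i.e.\ tracing the fiberwise projection through the direct integral $H=\int H(it)\,dt$ and the isomorphism $S$. Conceptually this is straightforward, but it requires some care with measurable sections---in particular to verify that $\chi_J$ times the fiberwise projection onto $H(0)^{K_\infty\times K_1(N)}$ genuinely defines a measurable element of $\mathcal L$---and with the fact that $\{\phi_{it}\}$ is canonically extended from a basis at $t=0$. Once this is in place, the remainder of the argument is a direct computation from the defining property \eqref{516} of $S$.
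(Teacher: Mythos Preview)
Your proposal is correct and follows essentially the same route as the paper. The paper defines $T'_{\cont}=P_J\,R(h*h^*)\,P_J$ where $P_J$ is only the cutoff to $J$-supported sections (without the additional fiberwise projection onto $K_\infty\times K_1(N)$-invariants), but since $P_J$ commutes with $R(h^*)$ and $\pi_{it}(h^*)$ already has image in the $K$-invariant subspace (Lemma~\ref{basic}), this operator coincides with your $R(h)\tilde P R(h)^*$; the subsequent chain of identities (via $S$, Parseval in each fiber, and \eqref{516}) is the same as yours. For the discrete part the paper takes a slightly different shortcut: rather than defining $T'_{\disc}=R(h)P_QR(h)^*$, it observes directly that $K'_{\disc}\in L^2$ (by Cauchy--Schwarz and the Hilbert--Schmidt bound you also invoke) and lets the kernel itself define the operator, but this is a cosmetic difference.
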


\begin{proof}
For any measurable symmetric subset $J\subset \R$, define
\[\mathcal{L}_J = \int_J H(it)dt \cap \mathcal{L},\]
  where $\mathcal{L}$ was defined in \eqref{mathcalL}.
  Here we regard each element of the direct integral
  as a function on all of $\R$, taking the value $0$ at points outside $J$.
  It is easy to see that $\mathcal{L}_J$ is a closed $G(\A)$-invariant
  subspace of $\mathcal{L}$, and we have the orthogonal decomposition
\[\mathcal{L}=\mathcal{L}_J\oplus \mathcal{L}_{\R-J}.\]
  We denote the analogous decomposition in $L^2_{\cont}(\w)\cong\mathcal{L}$ by
\[L^2_{\cont}(\w)=L_J\oplus L_{\R-J}.\]
  Define a $G(\A)$-equivariant map $S_J:L^2(\w)\longrightarrow \mathcal{L}_J$ by
\[S_J\psi =\begin{cases} S\psi& \text{if }\psi\in L_J
  \\0&\text{if }\psi\in (L_J)^\perp,\end{cases}\qquad\text{i.e.}\qquad
S_J\psi(it) =\begin{cases} S\psi(it)& \text{for a.e. }t\in J
  \\0&\text{for a.e. } t\notin J.\end{cases}\]
  Its restriction to $L_J$ is an isomorphism of Hilbert spaces.  The map
  \[P_J\eqdef (S_J)^*\,S_J\]
  is the orthogonal projection of $L^2(\w)$ onto $L_J$, so $S_J=S\circ P_J$.

Now let $J$ be the given compact set.
  Define $T_{\cont}'= P_J\, R(h*h^*)\,P_J$.  
  It is a bounded operator because $\|T_{\cont}'\|\le \|R(h*h^*)\|\le \|h*h^*\|_{L^1}$
  (cf. \cite{KL}, p. 140).
  For bounded compactly supported $\psi_1,\psi_2\in L^2(\w)$,
\[\sg{T'_{\cont} \psi_1,\psi_2} = \sg{P_JR(h*h^*)P_J\psi_1,\psi_2}
  =\sg{R(h^*)P_J\psi_1,R(h^*)P_J\psi_2}\]
\[ =\sg{P_JR(h^*)\psi_1,P_JR(h^*)\psi_2} 
 =\sg{SP_JR(h^*)\psi_1,SP_JR(h^*)\psi_2} \]
\[=\sg{S_JR(h^*)\psi_1,S_JR(h^*)\psi_2}
=\frac1\pi\int_{J}\sg{SR(h^*)\psi_1(it),SR(h^*)\psi_2(it)}dt\]
\[=\frac1\pi\int_J\sg{\pi_{it}(h^*)S\psi_1(it),\pi_{it}(h^*)S\psi_2(it)}dt
\]
\[=\frac1\pi\int_J\sum_{\phi}\sg{S\psi_1(it),\pi_{it}(h)\phi_{it}}
 \ol{\sg{S\psi_2(it),\pi_{it}(h)\phi_{it}}}\,dt\]
\begin{equation}\label{Tpsipsi}
=\frac1{4\pi}\int_J\sum_{\phi}\sg{\psi_1,E(\pi_{it}(h)\phi_{it},\cdot)}
  \ol{\sg{\psi_2,E(\pi_{it}(h)\phi_{it},\cdot)}}\,dt
\end{equation}
%
\[\hskip -.2cm =\hskip -.3cm\IL{\olG(\Q)\bs\olG(\A)}\hskip -.1cm
  \left[\,\,\IL{\olG(\Q)\bs\olG(\A)}
  \hskip -.2cm\left\{\frac1{4\pi}\sum_{\phi}
  \int_{J} E(\pi_{it}(h)\phi_{it},x)\ol{E(\pi_{it}(h)\phi_{it},y)}dt
  \right\}\psi_1(y)dy\right]\ol{\psi_2(x)}dx.\]
  The interchange of the sum and integrals is justified by Fubini's theorem,
  since the Eisenstein series are continuous, $J$ is compact,
  the sum over $\phi$ is finite, and since $\psi_1,\psi_2$ are bounded with compact
  support modulo $Z(\A)G(\Q)$.
  This proves \eqref{TK}.

For $K'_{\disc}$, the statement is much easier because
$K'_{\disc}(x,y)$ is square integrable over $(\olG(\Q)\bs\olG(\A))^2$,
  so for almost all $x$ the expression $\int K'_{\disc}(x,y)\psi(y)dy$
 is meaningful for {\em all} $\psi\in L^2(\w)$,
  and serves to define $T'_{\disc}\psi(x)$.
To see the square integrability, note that by the Cauchy-Schwarz inequality,
\[|K'_{\operatorname{disc}}(x,y)|^2 \leq (\sum_{\varphi \in Q}
 |R(h)\varphi(x)|^2) (\sum_{\varphi \in Q}
|R(h)\varphi(y)|^2).\]
Therefore
\[ \iint_{(\olG(\Q) \bs \olG(\A))^2}
 |K'_{\disc}(x,y)|^2 dx \,dy \leq
   (\sum_{\varphi \in Q} \|R(h)\varphi\|^2) (\sum_{\varphi \in Q} \|
   R(h)\varphi\|^2),\]
which is finite since $R(h)$ is a Hilbert-Schmidt operator on $L^2_{\disc}(\w)$
  by Theorem \ref{tc}.
(On $L^2_{\res}(\w)$ it actually has finite rank as shown in \eqref{Pres}.)
\end{proof}

\begin{proposition} Let $T'=T'_{\disc}+T'_{\cont}$ with notation as
  in the above lemma.   Suppose that the orthonormal set
  $Q\subset L^2_{\disc}(\w)$ is finite,
  and that $J\subset \R$ is compact and symmetric.  Then
   \[ \sg{T' \psi, \psi} \leq \sg{R(h*h^*) \psi, \psi} \]
for all bounded $\psi\in L^2(\w)$ of compact support modulo $Z(\A)G(\Q)$.
\end{proposition}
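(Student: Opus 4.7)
The plan is to split the inequality into the discrete and continuous pieces separately, show each is bounded by a corresponding projection of $R(h^*)\psi$, and then add the two bounds to recover $\|R(h^*)\psi\|^2 = \langle R(h*h^*)\psi,\psi\rangle$.

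First I would handle the continuous part. The computation already carried out in the proof of Lemma \ref{T'}, culminating in \eqref{Tpsipsi} with $\psi_1=\psi_2=\psi$, gives
\[
\sg{T'_{\cont}\psi,\psi}=\frac1{4\pi}\int_J\sum_{\phi}\bigl|\sg{\psi,E(\pi_{it}(h)\phi_{it},\cdot)}\bigr|^2\,dt,
\]
so via the adjoint identity \eqref{move} I can rewrite the integrand as $|\sg{R(h^*)\psi,E(\phi_{it},\cdot)}|^2$. The continuous part of the Parseval identity \eqref{parseval}, applied to $R(h^*)\psi$, yields
\[
\|P_{\cont}R(h^*)\psi\|^2=\frac1{4\pi}\sum_\phi\int_{-\infty}^\infty\bigl|\sg{R(h^*)\psi,E(\phi_{it},\cdot)}\bigr|^2\,dt,
\]
and since $J\subset\R$ is symmetric the previous display is dominated by $\|P_{\cont}R(h^*)\psi\|^2$.

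Next I would tackle the discrete part. Using the weak-kernel characterization of $T'_{\disc}$ established in Lemma \ref{T'}, for bounded compactly supported $\psi$ I can interchange sum and integral to get $T'_{\disc}\psi(x)=\sum_{\varphi\in Q}R(h)\varphi(x)\,\sg{\psi,R(h)\varphi}$, hence
\[
\sg{T'_{\disc}\psi,\psi}=\sum_{\varphi\in Q}\bigl|\sg{\psi,R(h)\varphi}\bigr|^2=\sum_{\varphi\in Q}\bigl|\sg{R(h^*)\psi,\varphi}\bigr|^2.
\]
Because $R(h^*)$ is a Bochner integral of right translations, it preserves the $G(\A)$-stable closed subspace $L^2_{\disc}(\w)$ and thus commutes with $P_{\disc}$. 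Since $Q$ is an orthonormal subset of $L^2_{\disc}(\w)$, Bessel's inequality gives
\[
\sg{T'_{\disc}\psi,\psi}\le\|P_{\disc}R(h^*)\psi\|^2.
\]

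Finally I would combine the two bounds. Using the orthogonal decomposition $L^2(\w)=L^2_{\disc}(\w)\oplus L^2_{\cont}(\w)$ and the convolution relation $R(h*h^*)=R(h)R(h^*)$ with $R(h)^*=R(h^*)$, we have
\[
\|R(h^*)\psi\|^2=\|P_{\disc}R(h^*)\psi\|^2+\|P_{\cont}R(h^*)\psi\|^2=\sg{R(h*h^*)\psi,\psi},
\]
so adding the two bounds completes the argument. The main thing to be careful about is the interchange of sum and integral needed to rewrite the quadratic form on the discrete side (so that the orthonormality of $Q$ can be exploited via Bessel); this is legitimate precisely because $Q$ is finite, and this is the reason finiteness of $Q$ is imposed in the proposition. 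The continuous side requires no such finiteness hypothesis because absolute convergence on the compact set $J$ is already part of the proof of Lemma \ref{T'}.
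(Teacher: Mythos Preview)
Your proof is correct and follows essentially the same route as the paper: both treat the discrete and continuous pieces separately, using \eqref{Tpsipsi} and \eqref{move} together with Parseval \eqref{Pcont} on the continuous side, and the orthonormality of $Q$ (your Bessel inequality is the paper's extension of $Q$ to a full basis $\widetilde Q$ followed by Parseval) on the discrete side, then add to reach $\|R(h^*)\psi\|^2=\sg{R(h*h^*)\psi,\psi}$. Your remark that symmetry of $J$ is what allows the domination is superfluous---nonnegativity of the integrand and $J\subset\R$ suffice---but otherwise the argument matches.
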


\begin{proof}
Extend $Q$ to an orthonormal basis $\widetilde{Q}$ of $L^2_{\disc}(\w)$.
  We have
\[ \sg{T'_{\disc} \psi, \psi} =
 \IL{\olG(\Q) \bs \olG(\A)}\Bigl( \IL{\,\olG(\Q) \bs \olG(\A)}
\sum_{\varphi\in Q}R(h)\varphi(x)
  \ol{R(h)\varphi(y)}\psi(y)dy\Bigr)\ol{\psi(x)}dx.\]
The sum can be pulled out because of the conditions placed on $\psi$ and
  since $Q$ is finite.  So the above is
\[= \sum_{\varphi \in Q}  |\sg{R(h)\varphi,\psi}|^2 \leq
  \sum_{\varphi\in \widetilde{Q}} |\sg{R(h)\varphi,\psi}|^2
  =\sum_{\varphi\in\widetilde{Q}} |\sg{\varphi,R(h)^*\psi}|^2 \]
\[   =\sg{P_{\disc}R(h)^*\psi,P_{\disc}R(h)^*\psi}
  =\sg{P_{\disc}R(h*h^*)\psi,\psi}.
\]
Passing to the last line, we applied Parseval's identity \eqref{parseval},
  while the last equality follows easily by the fact that $R(h)$ commutes with the
  orthogonal projection $P_{\disc}$.

  Likewise,
  by \eqref{Tpsipsi},
\[  \sg{T'_{\cont} \psi, \psi}
=\frac{1}{4\pi}  \sum_{\phi}
\int_{J} |\sg{\psi, E(\pi_{it}(h) \phi_{it}, \cdot)}|^2 dt \]
\[ \leq \frac{1}{4\pi}  \sum_{\phi} \int_{-\infty}^\infty
   |\sg{\psi, E(\pi_{it}(h) \phi_{it}, \cdot)}|^2 dt
= \frac{1}{4\pi}  \sum_\phi \int_{-\infty}^\infty |\sg{R(h)^*\psi,
  E(\phi_{it}, \cdot)}|^2 dt \]
\[= \sg{P_{\cont}R(h)^*\psi,P_{\cont}R(h)^*\psi}=\sg{P_{\cont}R(h*h^*)\psi,\psi}.\]
Again we used Parseval's identity \eqref{Pcont} in passing to the
  last line.
We have also used \eqref{move}.
\end{proof}

\begin{proof}[Proof of Proposition \ref{hhcase}]
Let $Q$ be a finite subset of
  the given orthonormal basis $\{\varphi\}$ of $L^2_{\disc}(\w)$, and let $J$
  be a symmetric compact subset of $\R$.
Let $K'(x,y)$ be the associated partial kernel function as above, and set $f=h*h^*$.
  Then $K'(x,y)$ is continuous since all $\varphi$ and $\phi$ are continuous
  by hypothesis.  On the other hand,
  we saw in \eqref{K} that $K_f(x,y)$ is also continuous.
  By the above proposition,
  $\sg{(R(f)-T')\psi,\psi}\ge 0$ for all bounded
  $\psi\in L^2(\w)$ of compact support modulo the center.
  Hence by Lemma \ref{ggk}, $K_f(x,x)-K'(x,x)\ge 0$
  for all $x\in G(\A)$.  It follows that
\[\sup_{Q,J} K'(x,x)\le K_f(x,x).\]
The proposition now follows, since the supremum
  is precisely the left-hand side of \eqref{Kxx}.
\end{proof}


\subsection{The spectral form of the kernel of $R(f)$}\label{specsec}

The following lemma, which follows from a result of Duflo and Labesse,
  will enable us to reduce to the special situation $f=h*h^*$ discussed above.

\begin{lemma} \label{con} 
   Let $r\ge 1$, and suppose $f \in C_c^{4r}(G(\A),\ol{\w})$ is
  bi-invariant under $K_\infty\times K_1(N)$.
  Then there exist functions $h_1, h_2, k_1, k_2 \in C_c^{2r-2}(G(\A),\ol{\w})$
  which are also bi-invariant under $K_\infty\times K_1(N)$ such that
   \[ f = h_1*h_2 + k_1*k_2. \]
   \end{lemma}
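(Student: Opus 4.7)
The plan is to split $f = f_\infty \cdot f_{\fin}$ and handle the two places separately. The non-archimedean factor is immediate: set $e_{K_1(N)} = \meas(\ol{K_1(N)})^{-1}\mathbf{1}_{\ol{K_1(N)}}$; since $f_{\fin}$ is bi-$K_1(N)$-invariant, $f_{\fin} = f_{\fin} * e_{K_1(N)}$. So it suffices to find a decomposition $f_\infty = h_{1,\infty} * h_{2,\infty} + k_{1,\infty} * k_{2,\infty}$ of the bi-$K_\infty$-invariant function $f_\infty \in C^{4r}_c(\olG(\R)^+)$ with all four archimedean factors bi-$K_\infty$-invariant and in $C^{2r-2}_c$; then tensor against $f_{\fin}$ on the first factors and against $e_{K_1(N)}$ on the second factors to recover the desired global $h_i, k_i$.

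For the archimedean decomposition I would use a parametrix of Duflo--Labesse type. Pick a $K_\infty$-bi-invariant elliptic differential operator $\Omega$ of order two on $\olG(\R)^+$, for instance the Laplace--Beltrami operator of a $K_\infty$-bi-invariant Riemannian metric on $\olG(\R)^+$ obtained from the Cartan decomposition $\mathfrak{g} = \mathfrak{k} \oplus \mathfrak{p}$ by reversing the sign of the Killing form on $\mathfrak{k}$. By the classical Hadamard parametrix construction, there exist a compactly supported distribution $E$ and a function $\psi \in C^\infty_c(\olG(\R)^+)$ with
\[ \Omega^{r+1} E = \delta_e + \psi, \]
where $\delta_e$ is the Dirac mass at the identity. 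The leading singularity of $E$ at the identity is of order $d(e,\cdot)^{2(r+1)-3} = d(e,\cdot)^{2r-1}$ in the geodesic distance, and since $|x|^{2r-1}$ is of class $C^{2r-2}$ on $\R^3$, the distribution $E$ can be chosen in $C^{2r-2}_c$. Averaging $E$ and $\psi$ against $K_\infty \times K_\infty$ on left and right yields bi-$K_\infty$-invariant representatives while preserving the identity (both $\delta_e$ and $\Omega^{r+1}$ being $K_\infty$-bi-invariant).

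Granted the commutation
\[ (\Omega^{r+1} f_\infty) * E = f_\infty * (\Omega^{r+1} E) = f_\infty + f_\infty * \psi, \]
one obtains the honest decomposition
\[ f_\infty = (\Omega^{r+1} f_\infty) * E + (-f_\infty) * \psi. \]
The regularities match the requirement: $\Omega^{r+1} f_\infty \in C^{4r - 2(r+1)}_c = C^{2r-2}_c$, $E \in C^{2r-2}_c$, $-f_\infty \in C^{4r}_c$, and $\psi \in C^\infty_c$; bi-$K_\infty$-invariance of the four archimedean factors is inherited from that of $f_\infty$, $\Omega$, $E$, and $\psi$.

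The main obstacle is the commutation step $(\Omega^{r+1} f_\infty) * E = f_\infty * (\Omega^{r+1} E)$. Strict bi-invariance of $\Omega$ as a differential operator on $\olG(\R)^+$ would make this formal, but the only truly bi-invariant operators on $\olG(\R)^+$ are polynomials in the Casimir, and the Casimir is not elliptic because the Killing form on $\mathfrak{sl}_2(\R)$ has signature $(2,1)$. Thus $\Omega$ can be made only $K_\infty$-bi-invariant, and the commutation has to be deduced from the bi-$K_\infty$-invariance of $f_\infty$, $E$ and $\psi$ -- effectively by passing to the commutative spherical Hecke algebra of bi-$K_\infty$-invariant functions, where convolution is commutative and the radial part of $\Omega^{r+1}$ becomes a genuine elliptic ODE operator in the variable $u = y+y^{-1}-2$ of \eqref{u}. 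This spherical reduction, together with the regularity analysis of the Hadamard parametrix, is the technical heart of the Duflo--Labesse argument.
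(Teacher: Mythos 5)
Your strategy---an elliptic parametrix at the archimedean place, an idempotent at the finite places---is the same strategy the paper uses, but the paper takes the crucial analytic input as a black box (Duflo--Labesse, I.1.11) whereas you try to reconstruct it, and that is where the gap sits. Duflo--Labesse's lemma hands the paper the identity $f = a*(D^{r+1}*f)+b*f$ with $a\in C^{2r-2}_c$, $b\in C^\infty_c$ already in hand; the paper's entire job is then to average $a$, $b$, and $D^{r+1}*f$ over $K'=K_\infty\times K_1(N)$ to pass from ``$K'$-central'' to ``bi-$K'$-invariant'' while preserving the identity. Your place-by-place split and idempotent $e_{K_1(N)}$ are a clean alternative to that averaging dance and are correct; this is a legitimate simplification of the bookkeeping.

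The substantive worry you raise---the commutation $(\Omega^{r+1}f_\infty)*E = f_\infty*(\Omega^{r+1}E)$---is actually the less serious one, and your own sketch of the resolution is sound: a left-$G$-invariant, right-$K_\infty$-invariant metric gives a left-$G$-invariant $\Omega$, so $\Omega^{r+1}(u*v)=u*(\Omega^{r+1}v)$; combined with commutativity of the bi-$K_\infty$-invariant convolution algebra on $\olG(\R)^+$ (which you already need elsewhere), the chain $f_\infty*(\Omega^{r+1}E)=\Omega^{r+1}(f_\infty*E)=\Omega^{r+1}(E*f_\infty)=E*(\Omega^{r+1}f_\infty)=(\Omega^{r+1}f_\infty)*E$ closes, granted that $E$ has been made bi-$K_\infty$-invariant by averaging (which is harmless because $\Omega$ and $\delta_e$ are already $K_\infty$-bi-invariant on bi-$K_\infty$-invariant test functions). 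What is \emph{not} closed is the Hadamard-parametrix regularity claim: you assert that a compactly supported parametrix $E$ for $\Omega^{r+1}$ can be chosen in $C^{2r-2}_c$ with $C^\infty_c$ error $\psi$ by reading off the leading singularity $d(e,\cdot)^{2r-1}$, but you do not carry out the parametrix expansion, truncate it, estimate the subleading terms, check that the cutoff can be taken $K_\infty$-bi-invariant, or verify that the resulting error is honestly $C^\infty_c$. That regularity statement is precisely the content of Duflo--Labesse (I.1.11); as written, your proof is a plausible sketch of how one might prove that lemma rather than a complete argument. If you are content to cite it (as the paper does), the rest of your write-up assembles into a correct proof, arguably a slightly cleaner one because of the explicit archimedean/non-archimedean factorization.
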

\begin{proof}
   Write $K'=K_\infty\times K_1(N)$.  In this proof only, we normalize so that $\meas(K')=1$.
   A function $a(g)$ on $G(\A)$ is said to be $K'$-central if
   $a(kg) = a(gk)$ for all $k\in K'$.
  For any function $a(g)$ we define
   \[\tilde{a}(g) = \int_{K'} a(kg) dk.\]
   Obviously $\tilde a$ is left $K'$-invariant.  If $a$ is $K'$-central, then
   $\tilde{a}(g)$ is also right $K'$-invariant.

   Define an action of $\lie_\R=\text{Lie}(G(\R))$\index{notations}{g@$\lie_\R$ (Lie algebra of $\GL_2(\R)$)}
   on the smooth functions by
\begin{equation}\label{Xg}\index{notations}{15@$X*f=\ddt f(\exp(-tX)\,\cdot\,)$}
 X*f(g) = \ddt f(\exp(-tX)g).
\end{equation}
   This extends naturally to an action of the universal enveloping algebra $U(\lie_\C)$.
   By \cite{DL} (I.1.11),
 there exist
   $K'$-central functions $a\in C^{2r-2}_c(G(\A),\ol{\w})$, $b\in C^\infty_c(G(\A),\ol{\w})$,
   and a differential operator $D\in U(\lie_\C)$ of order 2, such that
   \[ f = a*(D^{r+1}*f)+b*f.\]
   Let $c = D^{r+1}*f$.  Because $f$ is $C^{4r}$, $c$ is $C^{4r-(2r+2)}=C^{2r-2}$.
  It follows from \eqref{Xg} that $c$ is right $K'$-invariant.
  By the left $K'$-invariance of $f$,
   \[  f(x) = \int_{K'} f(kx) dk = \int_{K'} \int_{\olG} a(g) c(g^{-1} k x) dg\,dk +
  \int_{K'}\int_{\olG} b(g) f(g^{-1} k x) dg\,dk \]
   \[=\int_{\olG} \int_{K'} a(kg) c(g^{-1} x)dk\, dg + \int_{\olG} \int_{K'} b(kg) f(g^{-1} x)dk\, dg
    = (\tilde{a}*c)(x) + (\tilde{b}*f)(x). \]
   Because $\tilde{a}$ is bi-$K'$-invariant, it is easy to verify that
  $\tilde{a}*{c} = \tilde{a}*\tilde{c}$.
   Therefore we can take $h_1 = \tilde{a}$, $h_2 = \tilde{c}$, $k_1 = \tilde{b}$ and $k_2=f$.
   \end{proof}

\begin{theorem} \label{bc1} 
 Let $f = f_\infty f^{\n}$, where $f_\infty\in C_c^{m}(G(\R)^+//K_\infty)$ for $m\ge 8$.
  Let $\mathcal{F}_\A$ be an orthonormal basis for $L^2_0(\w)^{K_\infty\times K_1(N)}$,
   chosen as in Proposition \ref{diag}.
  Then  both
  \[\sum_{\varphi \in \mathcal{F}_\A} R(f)\varphi(x) \ol{\varphi(y)}
  \quad\text{ and } \quad
  \sum_{\varphi \in \mathcal{F}_\A} |R(f)\varphi(x) \ol{\varphi(y)}| \]
  are bounded on any compact subset of $G(\A)\times G(\A)$ and continuous in $x$ and $y$ separately.
\end{theorem}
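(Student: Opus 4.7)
The plan is to use Lemma \ref{con} to factor $f$ into a sum of convolutions, exploit the fact that the basis $\mathcal{F}_\A$ simultaneously diagonalizes each convolution factor, and then deploy the quadratic bound from Proposition \ref{hhcase} via a symmetrizing Cauchy-Schwarz.

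First I would apply Lemma \ref{con} (with $r=2$, since $m\ge 8 = 4r$) to obtain a decomposition $f = h_1*h_2 + k_1*k_2$ with each $h_i, k_i \in C^2_c(G(\A),\ol\w)$ and bi-invariant under $K_\infty\times K_1(N)$. Concretely, one applies the lemma to the archimedean factor $f_\infty$ and then absorbs the finite piece $\ff$ into one slot of each convolution; e.g.\ $h_1 = h_{1,\infty}\times \meas(\ol{K_1(N)})^{-1}\1_{K_1(N)}$ and $h_2 = h_{2,\infty}\times \ff$, and similarly for the $k_i$.

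The crux is that each $\varphi\in\mathcal{F}_\A$ is \emph{simultaneously} an eigenvector of $R(h_1)$ and $R(h_2)$: by Proposition \ref{finf} the bi-$K_\infty$-invariant archimedean parts act on the weight-zero vector by their Selberg transforms at $t_\varphi$, while the finite parts ($\ff$ and $\1_{K_1(N)}$) act as a Hecke eigenvalue or as the identity on $K_1(N)$-fixed vectors (Proposition \ref{diag}). Writing $R(h_i)\varphi=\mu_i(\varphi)\varphi$, one has the elementary but crucial identity
\[
|R(h_1)R(h_2)\varphi(x)|\,|\varphi(y)|
 = |\mu_1(\varphi)\mu_2(\varphi)|\,|\varphi(x)|\,|\varphi(y)|
 = |R(h_1)\varphi(x)|\,|R(h_2)\varphi(y)|,
\]
and the same for the $k_i$. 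This symmetrically redistributes the operators between $x$ and $y$, which is the step that a naive Cauchy-Schwarz cannot perform.

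Summing and applying Cauchy-Schwarz gives
\[
\sum_\varphi |R(h_1)R(h_2)\varphi(x)|\,|\varphi(y)|
 \le \Bigl(\sum_\varphi |R(h_1)\varphi(x)|^2\Bigr)^{\!1/2}\Bigl(\sum_\varphi |R(h_2)\varphi(y)|^2\Bigr)^{\!1/2}.
\]
Extending $\mathcal{F}_\A$ to an orthonormal basis of $L^2_{\disc}(\w)$ consisting of continuous functions (noting that the extra vectors lie in $(L^2_{\disc}(\w)^{K_1(N)})^{\perp}$ and are annihilated by $R(h_i)$ by Lemma \ref{basic}), Proposition \ref{hhcase} applies to each $h_i\in C^2_c(G(\A),\ol\w)$ and yields
\[
\sum_\varphi |R(h_i)\varphi(z)|^2 \le K_{h_i*h_i^*}(z,z).
\]
The right-hand sides are continuous in $z$ as locally finite sums of continuous functions (cf.\ \eqref{K}), hence bounded on compact sets. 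Adding the analogous $k_1,k_2$ contribution yields the absolute convergence bound for $\sum_\varphi |R(f)\varphi(x)\ol{\varphi(y)}|$ by a function continuous (in particular locally bounded) on $G(\A)\times G(\A)$.

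For the separate continuity, partial sums over finite subsets of $\mathcal{F}_\A$ are continuous, and the tail estimate is handled by the same inequality: for fixed $y$, the factor $\sum_{\varphi\notin S}|R(h_2)\varphi(y)|^2\to 0$ as $S$ exhausts the basis, while $\sum_{\varphi\notin S}|R(h_1)\varphi(x)|^2 \le K_{h_1*h_1^*}(x,x)$ is uniformly bounded on compacts in $x$. Hence the convergence is uniform in $x$ on compact sets, yielding continuity in $x$; symmetrically for $y$. The main obstacle is the step in the second paragraph: one must arrange the splitting of the finite component of $f$ between $h_1$ and $h_2$ so that \emph{both} half-operators act diagonally on $\mathcal{F}_\A$, which is what converts the asymmetric product $|R(h_1)R(h_2)\varphi(x)|\,|\varphi(y)|$ into the symmetric $|R(h_1)\varphi(x)|\,|R(h_2)\varphi(y)|$ and unlocks the Cauchy-Schwarz argument.
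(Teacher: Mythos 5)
Your proof is correct and follows the same route as the paper: factor $f$ via Lemma~\ref{con}, diagonalize both factors on $\mathcal{F}_\A$, rewrite the summand as $|R(h_1)\varphi(x)|\,|R(h_2)\varphi(y)|$, apply Cauchy–Schwarz, and control the two resulting sums by $K_{h_1*h_1^*}(x,x)$ and $K_{h_2*h_2^*}(y,y)$ via Proposition~\ref{hhcase}, with continuity following from the uniform tail estimate. (The paper uses $\ol{R(h_2^*)\varphi(y)}$ in the pointwise identity and $K_{h_2^**h_2}$ for the second kernel, but since everything is in absolute value this is equivalent to your choice.) One thing you do more explicitly than the paper is spell out \emph{why} each $\varphi\in\mathcal{F}_\A$ is simultaneously an eigenvector of $R(h_1)$ and $R(h_2)$ — namely that the finite parts of the factors produced by Lemma~\ref{con} are again Hecke operators ($\ff$ or the identity operator $\meas(\ol{K_1(N)})^{-1}\1_{K_1(N)}$), so Propositions~\ref{finf} and~\ref{diag} apply; the paper's bare invocation of Proposition~\ref{diag} leaves this to the reader, and a generic bi-$K_\infty\times K_1(N)$-invariant function need not diagonalize on $\pi_{\fin}^{K_1(N)}$ when that space has dimension greater than one.
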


\begin{proof} It suffices to prove the assertion for the expression with
   the absolute values.  Because $m\ge 8$, by Lemma \ref{con} there exist
$h_1, h_2,k_1,k_2\in C^{2}_c(G(\A),\ol{\w})$ such that $f=h_1*h_2+k_1*k_2$.
  By linearity and the triangle inequality, it suffices to prove the
  theorem for $f = h_1*h_2$.

 By Proposition \ref{diag}, for $\varphi \in \mathcal{F}_\A$
   we can write
\[R(f)\varphi=\lambda\varphi,\quad R(h_1)\varphi=\lambda_1\varphi,\quad R(h_2)\varphi=\lambda_2\varphi.\]
    Note that $\varphi$ is also an eigenvector of $R(h_1^*)$.
   The eigenvalue is $\ol{\lambda_1}$ since
    \[\sg{R(h_1^*)\varphi, \varphi} = \sg{\varphi, R(h_1)\varphi} =
  \ol{\lambda_1} \sg{\varphi,\varphi}.\]
Furthermore, $\lambda = \lambda_1 \lambda_2$ since
  \[\lambda\sg{\varphi,\varphi} = \sg{R(f)\varphi, \varphi} = \sg{R(h_2) \varphi, R(h_1^*) \varphi} =
    \lambda_1 \lambda_2 \sg{\varphi,\varphi}.\]
    This implies that
    \[ R(f) \varphi(x) \ol{\varphi(y)} = \lambda_1 \lambda_2 \varphi(x) \ol{\varphi(y)}
  = R(h_1) \varphi(x) \ol{R(h_2^*)\varphi(y)}. \]
By Cauchy-Schwarz, for any subset $S$ of $\mathcal{F}_\A$,
    \[ \sum_{\varphi \in S} |R(f) \varphi (x) \ol{\varphi(y)}|
 = \sum_{\varphi \in S} |R(h_1) \varphi (x) \ol{R(h_2^*)\varphi(y)}| \]
    \begin{equation}\label{Kineq}
 \leq \Bigl( \sum_{\varphi \in S} |R(h_1) \varphi (x)|^2 \Bigr)^{1/2}
   \Bigl( \sum_{\varphi \in S} |R(h_2^*) \varphi (y)|^2 \Bigr)^{1/2}
\end{equation}
    \[ \leq K_{h_1*h_1^*}(x,x)^{1/2} K_{h_2^**h_2}(y,y)^{1/2}. \]
 The last inequality holds by Proposition \ref{hhcase}.
  Because the two kernels are continuous,
  the above is bounded on any compact set.

Now we show that $\sum_\varphi|R(f)\varphi(x)\ol{\varphi(y)}|$
   is continuous in $y$ for fixed $x$.
  Let $U$ be any compact subset of $G(\A)$.
  Fix $x\in G(\A)$.  It suffices to show that the series converges
  uniformly as a function of $y\in U$.
   Let $C$ be an upper bound for $K_{h_2^**h_2}(y,y)^{1/2}$ on $U$.
  Fix $\e>0$.
  We know that
  $\sum_\varphi |R(h_1)\varphi(x)|^2<\infty$.
Hence for any ordering $\varphi_1,\varphi_2,\ldots$ of $\{\varphi\}$,
  there exists $N>0$ such that
\[\sum_{n\ge N}|R(h_1)\varphi_n(x)|^2 <\frac {\e^2}{C^2}.\]
  Therefore by \eqref{Kineq},
\[\sum_{n\ge N} |R(f)\varphi(x)\ol{\varphi(y)}|\le
  C\Bigl(\sum_{n\ge N}|R(h_1)\varphi_n(x)|^2\Bigr)^{1/2}<\e.\]
  Hence the series converges uniformly for $y\in U$, as needed.
  Similarly for fixed $y$, the sum is continuous in $x$.
\end{proof}

\begin{theorem} \label{bc2}
Let $f = f_\infty f^{\n}$ be as in the previous theorem.
  Then  both
  \begin{equation}\label{Kcont1}
 K_{\cont}(x,y)=\frac{1}{4\pi} \sum_{\phi}  \int_{-\infty}^\infty
         E(\pi_{it}(f)\phi_{it},x) \ol{E(\phi_{it},y)} dt
 \end{equation}
  and
  \begin{equation}\label{abscont}
      \frac{1}{4\pi} \sum_{\phi}  \int_{-\infty}^\infty
         |E(\pi_{it}(f)\phi_{it},x) \ol{E(\phi_{it},y)}| dt
\end{equation}
  are bounded on any compact subset of $G(\A)\times G(\A)$ and continuous in $x$ and $y$ separately.
  Here $\phi$ runs through an orthonormal basis for
   $H(0)^{K_\infty \times K_1(N)}$.
\end{theorem}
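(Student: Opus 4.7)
The plan is to mirror the strategy used in the proof of Theorem \ref{bc1}, with Proposition \ref{hhcase} again playing the role of the crucial pointwise bound. As before, it suffices to treat \eqref{abscont}, since boundedness plus pointwise absolute convergence forces \eqref{Kcont1} to be well defined and gives term-by-term control.

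First I would apply Lemma \ref{con} to write $f = h_1*h_2 + k_1*k_2$ with $h_i,k_i\in C^2_c(G(\A),\ol\w)$ bi-invariant under $K_\infty\times K_1(N)$, so that by linearity and the triangle inequality it suffices to treat the case $f = h_1*h_2$. For any $\phi$ in the basis of $H(0)^{K_\infty\times K_1(N)}$, Proposition \ref{phieigen} says that $\pi_{it}(h_j)\phi_{it} = \mu_j(t)\phi_{it}$ for scalars $\mu_j(t)$, and hence $\pi_{it}(f)\phi_{it} = \mu_1(t)\mu_2(t)\phi_{it}$. Since $\pi_{it}$ is unitary, $\pi_{it}(h_2^*) = \pi_{it}(h_2)^*$ acts on $\phi_{it}$ by $\ol{\mu_2(t)}$, so the identity
\[
E(\pi_{it}(f)\phi_{it},x)\,\ol{E(\phi_{it},y)} = E(\pi_{it}(h_1)\phi_{it},x)\,\ol{E(\pi_{it}(h_2^*)\phi_{it},y)}
\]
holds term by term.

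Next, applying the Cauchy--Schwarz inequality jointly in the sum over $\phi$ and the integral over $t$, the absolute-value sum \eqref{abscont} is bounded by
\[
\Bigl(\tfrac1{4\pi}\sum_{\phi}\!\int_{-\infty}^{\infty}\!|E(\pi_{it}(h_1)\phi_{it},x)|^2dt\Bigr)^{1/2}\Bigl(\tfrac1{4\pi}\sum_{\phi}\!\int_{-\infty}^{\infty}\!|E(\pi_{it}(h_2^*)\phi_{it},y)|^2dt\Bigr)^{1/2}.
\]
Now Proposition \ref{hhcase}, applied with $h=h_1$ and $h=h_2^*$ respectively (noting the remark immediately after that proposition, which allows us to extend the basis on $H(0)$ to one on all of $H(0)$ since $\pi_{it}(h)$ annihilates the orthogonal complement of $H(0)^{K_\infty\times K_1(N)}$), bounds these two factors by $K_{h_1*h_1^*}(x,x)^{1/2}$ and $K_{h_2^**h_2}(y,y)^{1/2}$ respectively. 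Since both these geometric kernels are continuous by \eqref{K}, they are bounded on any compact set, giving the boundedness claim.

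For separate continuity, I would imitate the uniform-convergence argument from the end of Theorem \ref{bc1}. Fix $x$, and let $U\subset G(\A)$ be compact. The integrand $E(\pi_{it}(h_1)\phi_{it},x)\,\ol{E(\pi_{it}(h_2^*)\phi_{it},y)}$ is jointly continuous in $(t,y)$ for each fixed $\phi$. For each fixed $\phi$ one needs an integrable majorant in $t$ uniform in $y\in U$, which follows by the same Cauchy--Schwarz splitting applied to a single $\phi$ (using that $\pi_{it}(h_2^*)$ vanishes off finitely many principal series $H(\chi_1,\chi_2,it)$, so the relevant $\phi$ come from a finite-dimensional space); this gives continuity in $y$ of each summand. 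Then using Cauchy--Schwarz again to tail-estimate the sum over $\phi$ by $C\cdot(\sum_{\phi\ge N}\int|E(\pi_{it}(h_1)\phi_{it},x)|^2dt)^{1/2}$, with $C$ a uniform bound for the second factor on $U$, one gets uniform convergence in $y\in U$ and hence continuity; the argument in $x$ is symmetric. The main obstacle is verifying that the sum-integral Cauchy--Schwarz step genuinely reduces to a convergent quantity, and here the key input is the \emph{pointwise} bound of Proposition \ref{hhcase}, which is exactly why that proposition is established first.
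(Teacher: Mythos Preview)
Your boundedness argument is essentially the paper's: reduce to $f=h_1*h_2$ via Lemma \ref{con}, use the eigenvalue decomposition, apply Cauchy--Schwarz, and invoke Proposition \ref{hhcase} to bound by $K_{h_1*h_1^*}(x,x)^{1/2}K_{h_2^**h_2}(y,y)^{1/2}$. That part is fine.

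The continuity argument, however, has a genuine gap. You propose to tail-estimate the sum over $\phi$, but that sum is \emph{finite}: $H(0)^{K_\infty\times K_1(N)}$ is finite-dimensional (see the remark after Proposition \ref{hhcase}, or Corollary \ref{nonzero}). So there is no tail in $\phi$ to control, and the mechanism you borrowed from Theorem \ref{bc1} does not apply directly. The actual obstruction to continuity is the tail of the $t$-integral for each fixed $\phi$. Your claim that ``the same Cauchy--Schwarz splitting applied to a single $\phi$'' yields an integrable majorant in $t$ uniform in $y\in U$ is not justified: Cauchy--Schwarz only bounds the \emph{total} integral $\int_\R|\cdots|\,dt$, not the integrand pointwise in $t$, so it does not furnish a dominating function for the dominated convergence theorem.

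The paper's fix is to partition $\R=\bigcup_{j\ge1}R_j$ with $R_j=[-j,-j+1]\cup[j-1,j]$, set $G_j(x,y)=\sum_\phi\tfrac1{4\pi}\int_{R_j}|E(\pi_{it}(f)\phi_{it},x)\ol{E(\phi_{it},y)}|\,dt$, and observe that each $G_j$ is continuous (finite sum of integrals of continuous functions over compact sets). Then one tail-estimates $\sum_{j\in S}G_j$ by applying Cauchy--Schwarz first to each $\int_{R_j}$ and then to the sum over $(j,\phi)$, obtaining the same bound $K_{h_1*h_1^*}(x,x)^{1/2}K_{h_2^**h_2}(y,y)^{1/2}$ but now with the index set $S$ appearing inside both factors. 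This yields uniform convergence of $\sum_jG_j$ in $y\in U$ (for fixed $x$) exactly as in Theorem \ref{bc1}, with $j$ playing the role that the infinite $\varphi$-sum played there.
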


\begin{proof}
  The proof is similar to that of the previous theorem.  We can assume
  $f=h_1*h_2$.  For $j\ge 1$, let $R_j=[-j,-j+1]\cup [j-1,j]$, and define
\[G_j(x,y)= \sum_{\phi} \frac1{4\pi}\int_{R_j}
  \left|E(\pi_{it}(f)\phi_{it},x)\ol{E(\phi_{it},y)}\right|dt.\]
  It is a continuous function of $x$ and $y$.
  Note that \eqref{abscont} is equal to $\sum_{j}G_j(x,y)$.
  It suffices to show that for fixed $x$ this series converges uniformly for
  $y$ in a compact set.
  Write
\[\pi_{it}(h_1)\phi_{it}=\lambda^{\phi}_1(t)\phi_{it},\qquad
   \pi_{it}(h_2)\phi_{it}=\lambda^{\phi}_2(t)\phi_{it}.\]
For any set $S$ of natural numbers,
$\sum_{j\in S}G_j(x,y)$ is
\[\le \frac1{4\pi}\sum_{j\in S}\sum_\phi
  \Bigl(\int_{R_j}
  |\lambda^\phi_1(t)E(\phi,it,x)|^2dt\Bigr)^{1/2}
\Bigl(\int_{R_j}
  |\lambda^\phi_2(t)E(\phi,it,y)|^2dt\Bigr)^{1/2}\]
\[\le\Bigl(\frac1{4\pi}\sum_{j\in S}
  \sum_\phi \int_{R_j}|\lambda_1^\phi(t) E(\phi,it,x)|^2dt\Bigr)^{1/2}
 \Bigl(\frac1{4\pi}\sum_{j\in S}
  \sum_\phi \int_{R_j}|\lambda_2^\phi(t)
  E(\phi,it,y)|^2dt\Bigr)^{1/2}\]
\[\le K_{h_1*h_1^*}(x,x)^{1/2}K_{h_2^**h_2}(y,y)^{1/2}\]
  by Proposition \ref{hhcase}.  The proof now proceeds as before.
\end{proof}

Now we derive the spectral formula for the kernel $K(x,y)$ of $R(f)$.
Because
\[R(f) = R(f)P_{\disc} + R(f)P_{\cont},\]
it suffices to give kernel functions for each of the operators on the
  right-hand side.
  The operator $R(f)P_{\disc}$ is Hilbert-Schmidt, so its kernel is
  given by \index{notations}{Kerdisc@$K_{\disc}(x,y)$}
\[K_{\disc}(x,y)=\sum_{\{\varphi\}\subset L^2(\w)\atop{\text{o.n.b.}}}
  R(f)P_{\disc}\varphi(x)\ol{\varphi(y)}.\]
Because $R(f)P_{\disc}$ annihilates all $\varphi\in L^2_{\cont}(\w)$,
  the above is equal to the sum $K_{\cusp}(x,y)+K_{\res}(x,y)$
  as in \eqref{Kcusp} and \eqref{Pres}.
For $R(f)P_{\cont}$, suppose $\psi_1,\psi_2\in L^2(\w)$ are
   bounded and compactly supported modulo $Z(\A)G(\Q)$.   Then
\begin{equation}\label{cont}
\sg{R(f)P_{\cont}\psi_1,\psi_2}=\sg{P_{\cont}\psi_1,P_{\cont}R(f^*)\psi_2}
\end{equation}
\[  =\frac1{4\pi}\sum_\phi\int_{-\infty}^\infty \sg{\psi_1,E(\phi_{it},\cdot)}
  \sg{E(\phi_{it},\cdot),R(f^*)\psi_2}dt\]
\[=\int_{\olG}\left[\int_{\olG}\left\{\frac1{4\pi}\sum_\phi
  \int_{-\infty}^\infty E(\pi_{it}(f)\phi_{it},x)\ol{E(\phi_{it},y)}dt\right\}
  \psi_1(y)dy\right]\ol{\psi_2(x)}dx.\]
We used Parseval's identity \eqref{Pcont} when passing to the second line,
  and \eqref{move} when passing to the third line.
  The convergence is absolute
  by Theorem \ref{bc2} and the conditions on $\psi_1,\psi_2$, so the
  rearrangement of the sum and integrals is justified.
  It follows that the expression in the braces, which coincides with
  \eqref{Kcont1}, is a weak kernel function for $R(f)P_{\cont}$.

\begin{corollary} \label{c}  Suppose $f_\infty\in C^m_c(G(\R)^+//K_\infty)$
  for $m\ge 8$, and let $f=f_\infty\ff$.  Then for all $x,y\in G(\A)$,
\[ K(x,y) = K_{\cusp}(x,y) + K_{\res}(x,y) + K_{\cont}(x,y), \]
where we choose bases as in Theorems \ref{bc1} and \ref{bc2}.
  Each function on the right is separately continuous in each
 variable.
\end{corollary}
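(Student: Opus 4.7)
The plan is to combine two facts already in hand: the geometric kernel $K(x,y)$ is genuinely continuous everywhere, and the three spectral pieces are each (separately) continuous and represent the kernel of $R(f)$ on the appropriate invariant subspace. The only real work is to upgrade the obvious a.e.\ equality to a pointwise equality.

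First I would observe that $K(x,y)=\sum_{\gamma\in\olG(\Q)}f(x^{-1}\gamma y)$ is jointly continuous on $G(\A)\times G(\A)$: the support of $f$ is compact modulo $Z(\A)$ and $\olG(\Q)$ is discrete in $\olG(\A)$, so the sum is locally finite. Next, the discrete part $K_{\disc}:=K_{\cusp}+K_{\res}$ is the $L^2$-kernel of the Hilbert--Schmidt operator $R(f)P_{\disc}$ (Theorem~\ref{tc} applies since $m\ge 8\ge 2$ and $f_\infty$ is bi-$K_\infty$-invariant), with $K_{\cusp}$ separately continuous by Theorem~\ref{bc1} and $K_{\res}$ a finite sum of continuous functions by \eqref{Pres}. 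Finally, the computation \eqref{cont}, which is valid because the absolute convergence in Theorem~\ref{bc2} lets us interchange the sum, integral, and outer integrals via Fubini, identifies $K_{\cont}$ as a weak kernel for $R(f)P_{\cont}$, and Theorem~\ref{bc2} gives its separate continuity.

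Thus both $K(x,y)$ and $K_{\cusp}(x,y)+K_{\res}(x,y)+K_{\cont}(x,y)$ are weak kernels for the same operator $R(f)=R(f)P_{\disc}+R(f)P_{\cont}$. By Lemma~\ref{weak} they coincide almost everywhere on $G(\A)\times G(\A)$. Let $F(x,y)$ denote their difference; then $F$ is separately continuous in each variable and vanishes a.e. By Fubini there is a set $E\subset G(\A)$ of full Haar measure such that for every $x\in E$, $F(x,\cdot)$ vanishes a.e.\ in $y$; since $F(x,\cdot)$ is continuous and Haar measure on $G(\A)$ has full support, $F(x,y)=0$ for all $y$ whenever $x\in E$. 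Now fix any $y_0\in G(\A)$: the function $F(\cdot,y_0)$ is continuous on $G(\A)$ and vanishes on the dense set $E$, hence vanishes identically. This yields the claimed equality for all $(x,y)$.

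The anticipated main obstacle is not any single computation but rather making sure the bookkeeping at Step three is legitimate, specifically that the two absolutely-convergent expressions produced by Theorems~\ref{bc1} and \ref{bc2}, together with the finite residual contribution, really do add up to a weak kernel for $R(f)$ on bounded, compactly-supported test functions. Once that is verified by the Fubini argument sketched above, the passage from a.e.\ to pointwise equality is automatic from separate continuity and the full support of Haar measure, and no further analytic input is required.
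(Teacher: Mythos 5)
Your proposal is correct and follows essentially the same route as the paper: both $K$ and the spectral sum are shown to be weak kernels for $R(f)$, hence equal a.e.\ by Lemma~\ref{weak}, and the passage from a.e.\ equality to pointwise equality via separate continuity of the difference (Fubini plus the fact that a continuous function vanishing a.e.\ vanishes everywhere, in one variable at a time) is exactly the content of the paper's Lemma~\ref{equal}, which your argument simply inlines. Your explicit appeal to the full support of Haar measure is a careful touch that the paper's more general Lemma~\ref{equal} leaves implicit.
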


\begin{proof}
Denote the right-hand side by $\Psi(x,y)$.  As we have just shown, $\Psi$ is a weak
  kernel function for $R(f)$.  By Lemma \ref{weak} we conclude that
  $K(x,y)=\Psi(x,y)$ almost everywhere in $G(\A) \times G(\A)$.
  We know that $K(x,y)$ is continuous.
By the above theorems, $\Psi(x,y)$ is continuous in $x$
  and $y$ separately.  By Lemma \ref{equal} below, it follows that
  $\Psi(x,y)=K(x,y)$ for all $x$ and $y$.
\end{proof}

\begin{lemma}\label{equal}
 Let $X$ and $Y$ be two positive Borel measure spaces.
  Let $D$ be a measurable function on $X\times Y$ such that
  $D(x,y) = 0$ almost everywhere and $D(x,y)$ is a continuous function of x and y separately.
  Then $D(x,y) = 0$ for all $x$ and $y$.
\end{lemma}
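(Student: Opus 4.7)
The plan is to combine Fubini's theorem with separate continuity in the classical two-step fashion. I will tacitly use that in the intended application the measures are Haar measures, in particular have full topological support, so that a continuous function which vanishes almost everywhere must vanish identically.

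First I would apply Fubini's theorem to the null set $Z = \{(x,y) \in X\times Y : D(x,y)\neq 0\}$. This yields a set $X_0 \subset X$ of full measure such that for every $x \in X_0$ the slice $Z_x = \{y \in Y : D(x,y)\neq 0\}$ has measure zero in $Y$. Fix such an $x \in X_0$. By hypothesis $y\mapsto D(x,y)$ is continuous, and it vanishes on the complement $Y\setminus Z_x$, which is a set of full measure and hence dense in $Y$ (since a nonempty open subset of $Y$ has positive measure). Continuity then forces $D(x,y) = 0$ for every $y\in Y$.

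Next, fix an arbitrary $y\in Y$. The previous step shows $D(x,y)=0$ for every $x\in X_0$, and $X_0$ has full measure in $X$, so it is dense. Since $x\mapsto D(x,y)$ is continuous on $X$ and vanishes on the dense subset $X_0$, it vanishes identically. As $y$ was arbitrary, $D\equiv 0$ on $X\times Y$.

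The only subtle point, and the main (very mild) obstacle, is the passage from ``a.e.\ zero'' to ``identically zero'' for a separately continuous function: this requires that every nonempty open set have positive measure, i.e.\ that the Borel measures in question have full support. This is automatic for Haar measure on $\olG(\Q)\bs\olG(\A)$, which is the only setting in which the lemma is invoked, so no additional hypothesis needs to be added. Everything else is a direct application of Fubini plus the definition of continuity.
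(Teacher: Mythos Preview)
Your proof is correct and follows essentially the same two-step argument as the paper: Fubini gives a full-measure set $X_0$ on which each slice $D(x,\cdot)$ vanishes a.e., continuity in $y$ upgrades this to identically zero on those slices, and then continuity in $x$ finishes. You are in fact slightly more explicit than the paper in flagging that the step ``a.e.\ zero $\Rightarrow$ identically zero for a continuous function'' requires the measures to have full support; the paper uses this implicitly under the phrase ``positive Borel measure spaces.''
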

\begin{proof}
 Because $\int_X \int_Y |D(x,y)| dy dx = 0$,
 the set $\{x\in X |\, \int_Y |D(x,y)|dy > 0\}$ has measure zero.
 Let $S\subset X$ denote its complement.
 For fixed $x' \in S$,  $D(x',y)=0$ for almost all $y \in Y$.
By the continuity of $y \mapsto D(x',y)$,  $D(x',y) = 0$ for all $y \in Y$.
Therefore $S\times Y\subset \{(x,y) | D(x,y) = 0\}$.
This means that for any $y \in Y$, $D(x,y)=0$ for all $x \in S$,
  i.e. for almost all $x\in X$.
Now by the continuity of $x \mapsto D(x,y)$, it follows that $D(x,y)=0$
   for all $x \in X$ and all $y\in Y$.
\end{proof}

%

\pagebreak
\section{A Fourier trace formula for $\GL(2)$}\label{ftf}

For integers $m_1,m_2, \n>0$, we will compute a variant of
  the Kuznetsov/Brug\-geman trace formula, involving
  Fourier coefficients at $m_1,m_2$,
  the eigenvalues of $T_\n$, and Kloosterman sums.

Let $f=f_\infty\times \ff$,  with $f_\infty\in C_c^m(G^+//K_\infty)$ for $m\ge 8$
  to allow for use of Corollary \ref{c}
  (though for the convergence of the cuspidal term we will take $m\ge 10$).
  For real numbers $y_1,y_2>0$ and $K(x,y)$ as in \eqref{K},
  consider the expression
\begin{align}\label{I} \index{notations}{I@$I$}
I=
\frac{1}{\sqrt{y_1y_2}}\hskip -.1cm\IIL{(N(\Q)\bs N(\A))^2}\hskip -.3cm
  K(n_1\smat {y_1}{}{}1, n_2\smat{y_2}{}{}1)\,
  \ol{\theta_{m_1}(n_1)}\,\theta_{m_2}(n_2)\, dn_1 dn_2,
\end{align}
where
\[\theta_m(\smat{1}x01)=\theta_m(x)=\theta(-mx)\]
for the standard character $\theta$ defined by \eqref{theta},
  and $dn_j$ is the Haar measure of total volume $1$.
  We will compute the relative trace formula obtained by evaluating
  the above in two ways, using the geometric and spectral
  expressions for the kernel.
The result is a primitive Kuznetsov formula
   given as Theorem \ref{main1}.
  The variables $y_1,y_2$ give us some extra flexibility.
To obtain a more refined formula, we will set
\begin{equation}\label{w}
y_1m_1=y_2m_2=w
\end{equation}
in the primitive formula, and then integrate $w$ from $0$ to $\infty$.
  The result is Theorem \ref{main}, which is a generalized
  Kuznetsov formula.

\subsection{Convergence of the spectral side}\label{conv}

According to Corollary \ref{c},
\[K(x,y)= K_{\cusp}(x,y) + K_{\cont}(x,y)+K_{\res}(x,y),\]
  where each term on the right is separately continuous in each
  variable.  Each term is also bounded
  on the compact set $(N(\Q)\bs N(\A))^2$
  by Theorems \ref{bc1} and \ref{bc2}, and hence integrable there.
  Furthermore, the sums defining $K_{\cusp}$ and $K_{\cont}$ can
  be pulled out of the double integral for the same reason.

  The justification for integrating over $w$ will be handled later.

\subsection{Cuspidal contribution}

Here we will compute the cuspidal term \index{notations}{Icusp@$I_{\cusp}$}
\[I_{\cusp}=
\frac1{\sqrt{y_1y_2}}\IIL{(N(\Q)\bs N(\A))^2}
  K_{\cusp}(n_1\mat {y_1}{}{}1, n_2 \mat{y_2}{}{}1)\,
  \ol{\theta_{m_1}(n_1)}\theta_{m_2}(n_2) dn_1 dn_2.\]
  By Lemma \ref{basic}, $R_0(f)$ annihilates the orthogonal complement
  of $L^2_0(\w)^{K_\infty\times K_1(N)}$.
  Let $\mathcal{F}_\A$ be the eigenbasis of
  $L^2_0(\w)^{K_\infty\times K_1(N)}$ defined in Proposition \ref{diag},
  so that for $\varphi\in\mathcal{F}_\A$
  we have $R(f)\varphi(x)=\sqrt{\n}\,h(t)\lambda_\n(\varphi)\phi(x)$.
  Then $K_{\cusp}(x,y)$ equals
\[\sqrt{\n}\sum_{\varphi_j\in \mathcal{F}_\A}
 \frac{h(t_j)\,\lambda_\n(\varphi_j)\,\varphi_j(x)\ol{\varphi_j(y)}}
  {\|\varphi_j\|^2}\index{notations}{Kercusp@$K_{\cusp}(x,y)$}
=\sqrt{\n}\sum_{u_j\in\mathcal F} \frac{h(t_j)
   \lambda_\n(u_j)\, \varphi_{u_j}(x)\ol{\varphi_{u_j}(y)}}{\|u_j\|^2}\]
for $\mathcal{F}$ as in \eqref{Fbasis}.
%
As explained in Section \ref{conv} above, $I_{\cusp}$ is absolutely convergent, and
  by Fubini's Theorem
\[\hskip -.3cm I_{\cusp} =\sqrt{\frac{\n}{y_1y_2}}\sum_j \frac{h(t_j)
   \lambda_\n(u_j)}{\|u_j\|^2} \IL{\Q\bs\A}\varphi_{u_j}\left(\smat {y_1}x01
  \right)
  \ol{\theta_{m_1}(x)}\,dx
\IL{\Q\bs\A}\ol{\varphi_{u_j}\left(\smat {y_2}x01\right)}
  {\theta_{m_2}(x)}dx.\]

\begin{lemma}\label{flem} Let $u$ be a Maass cusp form with Fourier expansion
  as in \eqref{fourier}.  Then for $r\in\Q$ and $y\in \R^*$, \index{notations}{am@$a_m(u)$, $m$-th Fourier coefficient of $u(z)$}
\[ \int_{\Q\bs\A}\varphi_u(\smat yx01)
  \ol{\theta_r(x)}dx =
  \begin{cases} a_r(u)y^{1/2}K_{it}(2\pi |r|y) &\text{if } r\in \Z-\{0\}
  \\0&\text{otherwise.}\end{cases}\]
\end{lemma}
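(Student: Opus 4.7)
\medskip

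My plan is to reduce the adelic integral to a classical Fourier coefficient computation by exploiting strong approximation and the right $K_1(N)$-invariance of $\varphi_u$. The key decomposition is $\A = \Q + (\R\times\Zhat)$ with $\Q\cap(\R\times\Zhat) = \Z$, so a fundamental domain for $\Q\bs\A$ is $[0,1)\times\Zhat$, and the integral factors as $\int_0^1\int_{\Zhat}$.

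First I would handle the case $y > 0$. For $x = (x_\infty, x_{\fin})$ in the fundamental domain, the finite part $\smat{1}{x_{\fin}}{0}{1}\in N(\Zhat)\subset K_1(N)$, so by the right $K_1(N)$-invariance of $\varphi_u$ (Proposition \ref{utophi}) together with $\varphi_u(g_\infty\times 1_{\fin}) = u(g_\infty(i))$, we get $\varphi_u(\smat{y}{x}{0}{1}) = u(x_\infty + iy)$. For $y<0$, I would use $\smat yx01 = \smat{-1}{0}{0}{1}\smat{-y}{-x}{0}{1}$ with $\smat{-1}{0}{0}{1}\in G(\Q)$ to reduce to the positive case, picking up the substitution $x_\infty\mapsto -x_\infty$.

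Next I would analyze the character. Writing $r = a/c\in\Q$ in lowest terms, for $r\in\Z$ we have $\theta_p(-rx_p) = 1$ for all $x_p\in\Z_p$ (since $-rx_p\in\Z_p$), hence $\theta_r(x) = e^{2\pi i r x_\infty}$ on the fundamental domain. For $r\notin\Z$, the map $x_{\fin}\mapsto\theta_{\fin}(-rx_{\fin})$ is a nontrivial character of the compact group $\Zhat$, because $\theta_{\fin}$ is trivial precisely on $\Zhat$ and $r\notin\Z = \Q\cap\Zhat$ means $-r\Zhat\not\subset\Zhat$; hence $\int_{\Zhat}\theta_{\fin}(-rx_{\fin})\,dx_{\fin} = 0$. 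This disposes of the $r\notin\Z$ case.

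For $r\in\Z$, the $\Zhat$-integral evaluates to $1$ (with our normalization $\meas(\Zhat)=1$), and the remaining archimedean integral is
\[\int_0^1 u(x_\infty + iy)\,e^{-2\pi i r x_\infty}\,dx_\infty,\]
which by the Fourier expansion \eqref{fourier} and orthogonality equals $a_r(u)\,y^{1/2}K_{it}(2\pi|r|y)$ when $r\neq 0$ and vanishes when $r=0$ by cuspidality \eqref{a0}. No part of the argument is genuinely difficult; the main thing to be careful about is that one must verify the measure normalizations match and that the fundamental-domain unfolding is valid, which is routine from the choices recorded in Section \ref{notation}.
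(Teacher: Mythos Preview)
Your proof is correct and follows essentially the same route as the paper: use the fundamental domain $[0,1)\times\Zhat$ for $\Q\bs\A$, invoke right $K_1(N)$-invariance of $\varphi_u$ to identify the integrand with $u(x_\infty+iy)$, factor the character, observe that the $\Zhat$-integral vanishes unless $r\in\Z$, and then read off the Fourier coefficient from \eqref{fourier}. Your write-up is slightly more detailed than the paper's (you spell out why $\theta_{\fin}(-r\,\cdot)$ is nontrivial on $\Zhat$ when $r\notin\Z$, you invoke cuspidality explicitly for $r=0$, and you sketch a reduction for $y<0$), but the argument is the same.
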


\begin{proof}
Using the fundamental domain $[0,1]\times \Zhat$ for $\Q\bs\A$ and \eqref{phiu},
  we have
\[ \int_{\Q\bs\A}\varphi_u(\smat yx01)
  \ol{\theta_r(x)}dx =
  \int_0^1 u(x+iy) \theta_\infty(rx)dx\int_{\Zhat}\theta_{\fin}(rx)dx.\]
The second integral on the right vanishes unless $r\in \Z$, in which case
  it is equal to $1$.  Assuming $r\in\Z$,
  this becomes $\int_0^1 u(x+iy)e^{-2\pi i rx}dx$,
 and the assertion then follows by substituting the Fourier
  expansion \eqref{fourier} of $u$.
\end{proof}

\begin{lemma}\label{disc} Let $\{u_j\}$ be an orthogonal basis for $L^2_0(N,\w')$
  consisting of cusp forms.  Let $t_j$ be the spectral parameter of $u_j$.
  Then for any $M>0$,
\begin{equation}\label{discrete}
\Bigl|\{j:\,|t_j|\le M\}\Bigr|<\infty.
\end{equation}
\end{lemma}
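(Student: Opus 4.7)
The plan is to bound $\#\{j:|t_j|\le M\}$ by the Hilbert--Schmidt norm of the operator $R_0(f)$ for a carefully chosen test function. The key observation is that by Propositions \ref{LK} and \ref{specprop}, every spectral parameter $t_j$ of a cuspidal Maass form lies in $\R \cup i(-\tfrac12,\tfrac12)$, so the set of allowable $t_j$ with $|t_j|\le M$ sits inside a fixed compact subset of $\C$.

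First I would construct an even entire function $h$, Paley--Wiener of every order, with $|h(t)|^2\ge 1$ on this compact region. An explicit choice: take $\phi\in C^\infty_c(\R)$ nonnegative and even, supported in a small interval $[-\varepsilon,\varepsilon]$, and set $g(t)=\int_\R \phi(x)e^{-itx}dx$. Then $g$ is even, entire, real-valued on $\R$, satisfies $g(\bar z)=\overline{g(z)}$, is Paley--Wiener, and has $g(0)=\int \phi>0$. By continuity on the compact region, $|g(t)|\ge c_0>0$ there, so $h(t)=Cg(t)^2$ satisfies $|h(t)|^2\ge 1$ for $C$ sufficiently large. By Proposition \ref{ST}, $h$ is the Selberg transform of a unique $f_\infty\in C^\infty_c(G^+//K_\infty)$.

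Now set $f=f_\infty\cdot \ff$ with $\n=1$, so the Hecke factor is trivial and $\lambda_\n(\varphi)\equiv 1$. By Proposition \ref{diag}, each adelic basis element $\varphi_{u_j}\in\mathcal{F}_\A$ is an eigenvector of $R(f)$ with eigenvalue $h(t_j)$, while by Lemma \ref{basic} the operator $R(f)$ annihilates the orthogonal complement of $L^2_0(\w)^{K_\infty\times K_1(N)}$ inside $L^2_0(\w)$. By Theorem \ref{tc}, applied in the bi-$K_\infty$-invariant case (so $m=2$ suffices, which our $f_\infty$ certainly satisfies), $R_0(f)$ is Hilbert--Schmidt. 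Computing the Hilbert--Schmidt norm against an orthonormal basis of $L^2_0(\w)$ obtained by normalizing the $\varphi_{u_j}$ and appending an orthonormal basis of the complement, we get
\[\|R_0(f)\|_{HS}^2 = \sum_j |h(t_j)|^2 < \infty,\]
and since each summand with $|t_j|\le M$ is at least 1, the set $\{j:|t_j|\le M\}$ must be finite.

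The main subtlety lies in the first step: arranging $|h|\ge 1$ on a complex neighborhood rather than merely on the real segment $[-M,M]$, since the exceptional (complementary series) $t_j$ are purely imaginary. A crude real-only lower bound would leave the possibility of exceptional eigenvalues accumulating, so the use of an entire function with a uniform complex lower bound on a compact set is essential; the bump-function construction above supplies this without difficulty.
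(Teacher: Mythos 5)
Your proof is correct and uses the same core mechanism as the paper's: take $f=f_\infty\times f^1$ with $f_\infty$ the inverse Selberg transform of a suitable even Paley--Wiener $h$, diagonalize $R_0(f)$ via Proposition \ref{diag} with eigenvalues $h(t_j)$, and invoke Theorem \ref{tc} to conclude $\sum_j|h(t_j)|^2=\|R_0(f)\|^2_{HS}<\infty$. Where you differ is in how you close. The paper argues by contradiction: if infinitely many $j$ had $|t_j|\le M$, the values $t_j$ would have a limit point $P$, and one then constructs $h$ with $h(P)\ne 0$ (treating $P=0$ and $P\ne 0$ separately via the $w$-invariance requirement). You instead invoke Propositions \ref{LK} and \ref{specprop} up front to localize all $t_j$ into $[-M,M]\cup i[-\tfrac12,\tfrac12]$, and then build a single $h$ satisfying $|h|\ge 1$ there, which directly yields the explicit counting bound $\#\{j:|t_j|\le M\}\le\|R_0(f)\|^2_{HS}$. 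Your version is more constructive, avoids the limit-point case analysis, and gives an effective estimate; the paper's is marginally more self-contained since it does not lean on the a priori shape of the complementary series. One cosmetic repair: the phrase ``by continuity, $|g(t)|\ge c_0>0$'' is not literally a continuity argument --- a continuous function on a compact set can vanish --- but the lower bound does follow from choosing the support parameter $\varepsilon$ small, since then $|g(t)-g(0)|\le\int\phi(x)|e^{-itx}-1|\,dx\ll\varepsilon\,g(0)$ uniformly on $[-M,M]\cup i[-\tfrac12,\tfrac12]$, so that $g$ stays close to $g(0)>0$. This is a one-line fix and does not affect the correctness of your argument.
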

\noindent{\em Remark:} Much more is known.  According to Weyl's 
  Law\index{keywords}{Weyl's Law} (which in this context
  follows from the Selberg trace formula),
\begin{equation}\label{Weyls}
\Bigl|\{j:\,|t_j|\le M\}\Bigr|=
\frac{\operatorname{vol}(\Gamma_0(N)\bs\mathbf{H})}{4\pi}M^2+O(N^{1/2}M\log(NM))
\end{equation}
  (\cite{IK} p. 391, \cite{Sel3} p. 668).
\begin{proof}
  Let $h(iz)\in PW^4(\C)^{\text{even}}$.
  By Proposition \ref{ST}, there exists a function
  $f_\infty\in C_c^{2}(G^+//K_\infty)$ whose Selberg transform is $h(t)$.
  Let $f'=f_\infty\times f^1$, where $f^1$ is the
  Hecke operator on $G(\Af)$ with $\n=1$.  By Proposition \ref{diag},
  the operator $R_0(f')$ is diagonalizable with eigenvalues $h(t_j)$.
  By Theorem \ref{tc}, this operator is Hilbert-Schmidt.
  Therefore
  \[\sum_{j}|h(t_j)|^2<\infty.\]
  On the other hand, if \eqref{discrete} fails to hold,  the
  set $\{t_j:\, |t_j|\le M\}$ has a limit point $P\in\C$.
  Choosing $h$ so that $h(P)\neq 0$ would then contradict the above summability.
  It remains to show that such $h$ exists.  If $P=0$, we can let $h$ be the 
  Mellin transform of a nonzero element $\Phi\in C^\infty_c(\R^+)^w$ that assumes
  only nonnegative real values.  Then $h(0)=\int_0^\infty\Phi(y)\frac{dy}y>0$,
  as needed.  Now suppose $P\neq 0$.  Let $h_1\in PW(\C)^{\text{even}}$ be nonzero,
   with $h_1(Q)\neq 0$, say.  By continuity, we may assume
  that $Q\neq 0$.  Then we can take $h(z)={h_1(\frac{Q}{P}z)}$.
\end{proof}

\begin{corollary}\label{disccor}
The set of exceptional\index{keywords}{exceptional parameter}\index{keywords}{spectral parameter!exceptional}
 spectral parameters $t_j\notin \R$ is finite.
\end{corollary}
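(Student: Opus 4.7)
The plan is to combine the positivity of the Laplace eigenvalues (Theorem \ref{Delta}) with the local finiteness result of Lemma \ref{disc}. First I would observe that each exceptional spectral parameter must in fact be purely imaginary of small absolute value: if $t_j = \alpha + i\beta$ with $\beta \neq 0$, then the Laplace eigenvalue
\[
\tfrac{1}{4} + t_j^2 = \tfrac{1}{4} + \alpha^2 - \beta^2 + 2i\alpha\beta
\]
must be a positive real number. Reality forces $\alpha\beta = 0$, hence $\alpha = 0$, and positivity forces $\tfrac{1}{4} - \beta^2 > 0$, i.e. $|\beta| < \tfrac{1}{2}$. Thus every exceptional $t_j$ lies in the bounded set $\{i\beta : 0 < |\beta| < \tfrac{1}{2}\}$.

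In particular every exceptional parameter satisfies $|t_j| < \tfrac{1}{2}$, so the entire exceptional set is contained in $\{j : |t_j| \le \tfrac{1}{2}\}$. By Lemma \ref{disc} applied with $M = \tfrac{1}{2}$, this latter set is finite, and the corollary follows immediately.

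There is no real obstacle here; the argument is just a two-line combination of the two preceding results. The only subtlety worth flagging is that one must invoke the strict positivity of $\lambda_1$ asserted at the end of the proof of Theorem \ref{Delta} (which rules out $t = \tfrac{i}{2}$), not merely non-negativity; but since that strict positivity has already been established via the maximum-principle argument, the proof reduces to the elementary algebraic observation above.
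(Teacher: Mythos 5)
Your proof is correct and reaches the same conclusion by essentially the same strategy: establish that exceptional parameters lie in $i(-\tfrac12,\tfrac12)$, then invoke Lemma~\ref{disc}. The one difference is the source of the first step. The paper cites Proposition~\ref{specprop}, which classifies the archimedean components $\pi_\infty\cong\pi(\e,\e,s,-s)$ and records that for non-unitary-principal-series (complementary series) one has $s\in(-\tfrac12,\tfrac12)\setminus\{0\}$, yielding $t_j=-is\in i(-\tfrac12,\tfrac12)$. You instead deduce the same bound directly from the positivity of the Laplace eigenvalues $\tfrac14+t_j^2>0$ asserted in Theorem~\ref{Delta}. These two facts are essentially equivalent (indeed the proof of Proposition~\ref{specprop} itself relies on the positivity of $R_0(\Delta)$), so this is a cosmetic rather than a structural difference; your version has the small advantage of being fully explicit about where the inequality $|\Im t_j|<\tfrac12$ comes from.

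One small remark on the caveat you flag: ruling out $t_j=\pm\tfrac{i}{2}$ via strict positivity is not actually needed for the finiteness conclusion, since Lemma~\ref{disc} asserts finiteness of $\{j:|t_j|\le M\}$ with a weak inequality, so even boundary parameters would be absorbed. The strict inequality matters for other statements in the paper (e.g.\ Proposition~\ref{LK}), but not here.
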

\begin{proof} If $t_j$ is exceptional, then by Proposition \ref{specprop},
  $t_j=-is$ for some real $s\in (-\tfrac12,\tfrac12)$.  In particular,
  $|t_j|<\tfrac12$, and the above lemma shows that the set of such $t_j$ is
  finite.
\end{proof}

\begin{lemma}\label{htc}
With $t_j$ as in Lemma \ref{disc}, we have
\[  \sum_j|h(t_j)|<\infty\]
 for any function $h(iz)\in PW^m(\C)^{\text{even}}$ with $m\ge 10$.
\end{lemma}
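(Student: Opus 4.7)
My plan is to split the sum over $t_j$ into two pieces, according to whether $t_j$ lies in $\R$ or not, and to treat each using the Paley-Wiener decay of $h$ together with a counting estimate on the spectral parameters.

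The exceptional parameters are easily handled. By Corollary \ref{disccor}, only finitely many $t_j$ fail to be real, and each such parameter has the form $t_j=-is_j$ with $s_j\in(-\tfrac12,\tfrac12)$. Writing $\eta(z)=h(iz)$, the hypothesis $h(iz)\in PW^m(\C)^{\mathrm{even}}$ means $\eta\in PW^m(\C)$. Since $h(t_j)=\eta(-s_j)$, with $\sigma=-s_j$ and $t=0$, the Paley-Wiener estimate \eqref{PWm} yields $|h(t_j)|\ll C^{|s_j|}\le C^{1/2}$ uniformly over exceptional $t_j$, so the exceptional contribution is finite.

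For the real $t_j$, substituting $z=-it_j$ (so that $\sigma=0$, $t=-t_j$) into \eqref{PWm} for $\eta$ gives the polynomial decay $|h(t_j)|=|\eta(-it_j)|\ll (1+|t_j|)^{-m}$. The lemma therefore reduces to showing $\sum_{t_j\in\R}(1+|t_j|)^{-m}<\infty$. For this I would invoke Weyl's law \eqref{Weyls}, cited in the remark following Theorem \ref{Delta}, which furnishes $\#\{j:|t_j|\le T\}\ll T^2$ as $T\to\infty$. A dyadic decomposition then produces
\[\sum_{t_j\in\R}|h(t_j)|\ll\sum_{k=0}^\infty 2^{-km}\,\#\{j:2^k\le 1+|t_j|<2^{k+1}\}\ll\sum_{k=0}^\infty 2^{k(2-m)},\]
a convergent geometric series for $m>2$ and in particular for $m\ge 10$.

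A self-contained alternative, avoiding Weyl's law, would bootstrap from the Hilbert-Schmidt argument of Lemma \ref{disc}. For any nonnegative integer $a$ with $m-2a\ge 4$, the function $g(t)=h(t)(1+t^2)^a$ satisfies $g(iz)=h(iz)(1-z^2)^a$; a direct majorization using $(1+\sigma^2+t^2)\le(1+\sigma^2)(1+t^2)$ and $(1+\sigma^2)^a\ll\tilde C^{|\sigma|}$ shows that $g(iz)\in PW^{m-2a}\subset PW^4$. Proposition \ref{ST} then produces $f_0\in C^2_c(G^+//K_\infty)$ with $\mathcal{S}f_0=g$, and Theorem \ref{tc} forces $\sum_j|h(t_j)|^2(1+t_j^2)^{2a}<\infty$. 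By Cauchy-Schwarz the lemma reduces to $\sum_j(1+t_j^2)^{-2a}<\infty$, which one would control by a second Hilbert-Schmidt input applied to a nonnegative auxiliary Selberg transform arising as $|\mathcal{M}\Psi(it)|^2$ for a suitable $\Psi\in C_c^\infty(\R^+)^w$. The main technical obstacle along this route is producing an auxiliary function bounded below by $c(1+t^2)^{-a}$ on $\R$, since Mellin transforms of compactly supported bumps typically have infinitely many real zeros; the Weyl's-law approach sidesteps this entirely, which is why I would adopt it.
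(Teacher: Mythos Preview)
Your main argument via Weyl's law is correct, and in fact the paper's remark immediately following Lemma~\ref{htc} sketches exactly this route, noting that it yields convergence already for $m>2$. However, the paper's formal proof deliberately avoids Weyl's law in order to stay self-contained: it invokes Proposition~\ref{ST} to get $f_\infty\in C_c^8(G^+//K_\infty)$ (this is where $m\ge 10$ enters), then applies the Duflo--Labesse decomposition (Lemma~\ref{con}) to write the associated global test function as $f'=a*b+c*d$ with $a,b,c,d\in C_c^2$. Two applications of Cauchy--Schwarz then bound $\sum_j|h(t_j)|=\sum_j|\sg{R_0(f')\varphi_j,\varphi_j}|$ by a product of Hilbert--Schmidt norms of $R_0(a),R_0(b),R_0(c),R_0(d)$, each finite by Theorem~\ref{tc}. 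Your ``self-contained alternative'' gestures in this direction but, as you note, runs into the problem of manufacturing an auxiliary transform with the right lower bound; the paper sidesteps this via the convolution factorization, which is the key idea you are missing there. In summary: your Weyl's-law proof is valid and sharper in $m$, while the paper's proof trades sharpness for independence from the Selberg trace formula.
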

\noindent{\em Remark:} More is known.
Using Weyl's Law \eqref{Weyls}, it is straightforward to show that $1+|t_j|\gg {j}^{1/2}$.
Therefore $\sum |h(t_j)|\ll \sum (1+|t_j|)^{-m}\ll\sum j^{-m/2}<\infty$ if $m>2$.

\begin{proof}
Let $f'=f_\infty\times f^1$ be the global function attached to $h$ as
  in the proof of Lemma \ref{disc}.
Let $\varphi_j=\frac{\varphi_{u_j}}{\|\varphi_{u_j}\|}\in L^2_0(\w)$ be the unit
  vector attached to $u_j$.  Noting that $f_\infty\in C_c^{8}(G^+//K_\infty)$
  by Proposition \ref{ST}, we can write $f'=a*b+c*d$ for
  bi-$K_\infty$-invariant functions $a,b,c,d\in C^{2}_c(G(\A),\ol\w)$ by Lemma \ref{con}.
  Then by \eqref{Rfeigen},
\[\sum_j|h(t_j)|=\sum_j|\sg{R_0(f')\varphi_j,\varphi_j}|=\sum_j|\sg{R_0(a)R_0(b)\varphi_j,\varphi_j}
  +\sg{R_0(c)R_0(d)\varphi_j,\varphi_j}|\]
\[ \le \sum_j|\sg{R_0(b)\varphi_j,R_0(a^*)\varphi_j}|+\sum_j|\sg{R_0(d)\varphi_j,R_0(c^*)\varphi_j}|\]
\[ \le \sum_j\|R_0(b)\varphi_j\|\|R_0(a^*)\varphi_j\|+\sum_j\|R_0(d)\varphi_j\|\|R_0(c^*)\varphi_j\|\]
\vskip -.3cm
\begin{align*}
\le
 \Bigl(\sum_j\|R_0(b)\varphi_j\|^2\Bigr)^{1/2}&\Bigl(\sum_j\|R_0(a^*)\varphi_j\|^2\Bigr)^{1/2}\\
 &+\Bigl(\sum_j\|R_0(d)\varphi_j\|^2\Bigr)^{1/2}\Bigl(\sum_j\|R_0(c^*)\varphi_j\|^2\Bigr)^{1/2}.
\end{align*}
The above is finite since all four operators are Hilbert-Schmidt by Theorem \ref{tc}.
\end{proof}

\begin{proposition}\label{ktfcusp}
Given $h(iz)\in PW^{m}(\C)^{\text{even}}$ with $m\ge 10$,
let $f_\infty$ be its inverse spherical transform in $C^{m-2}(G^+//K_\infty)$
  as in Proposition \ref{ST}, and let $f=f_\infty\times \ff$.
Then the integral $I_{\cusp}$
 is absolutely convergent.
  It vanishes unless $m_1,m_2\in\Z-\{0\}$.  Let $\mathcal{F}$ be
  an orthogonal eigenbasis for $L^2_0(N,\w')$ as in \eqref{Fbasis}.
    Then for $m_1,m_2\in\Z^+$, we have
\begin{equation}\label{cuspys} \index{notations}{Icusp@$I_{\cusp}$}
\hskip -.2cm I_{\cusp}=\sqrt{\n} \sum_{u_j\in \mathcal{F}}
 \frac{\lambda_\n(u_j)\, a_{m_1}(u_j) \ol{a_{m_2}(u_j)}}
 {\|u_j\|^2}
  {h(t_j) K_{it_j}(2\pi m_1y_1)\,{K_{it_j}(2\pi m_2y_2)}},
\end{equation}
the sum converging absolutely.  Now suppose $m\ge 12$.  Then 
 letting $I_{\cusp}(w)$ denote the above when $w=y_1m_1=y_2m_2$,
we have
\begin{equation}\label{Icusp}
\int_0^\infty I_{\cusp}(w)dw=
  \frac{\pi\sqrt{\n}}8 \sum_{u_j\in \mathcal{F}}
 \frac{\lambda_\n(u_j)\, a_{m_1}(u_j) \ol{a_{m_2}(u_j)}}
 {\|u_j\|^2}
  \frac{h(t_j)}{\cosh(\pi t_j)},
\end{equation}
the sum and integral converging absolutely.
\end{proposition}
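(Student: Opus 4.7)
The plan is to compute $I_{\cusp}$ and $\int_0^\infty I_{\cusp}(w)\,dw$ by substituting the spectral expansion of $K_{\cusp}$ and interchanging the order of summation and integration. First I would recall that by Proposition \ref{diag}, for $f = f_\infty \times \ff$ with $f_\infty$ the inverse spherical transform of $h$, each $\varphi_{u_j}$ is an eigenvector of $R(f)$ with eigenvalue $\sqrt{\n}\,h(t_j)\lambda_\n(u_j)$, so that
\[ K_{\cusp}(x,y) = \sqrt{\n}\sum_{u_j\in\mathcal F}\frac{h(t_j)\lambda_\n(u_j)\varphi_{u_j}(x)\overline{\varphi_{u_j}(y)}}{\|u_j\|^2}. \]
By Theorem \ref{bc1}, this sum (even with absolute values on the summand) is bounded on the compact set $(N(\Q)\bs N(\A))^2\subset G(\A)\times G(\A)$, hence dominated uniformly in $n_1,n_2$. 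This is what justifies pulling the sum out of the double integral in $I_{\cusp}$ by Fubini's theorem.

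Once the sum is outside, each term factors as a product of two integrals of the form treated in Lemma \ref{flem}. That lemma immediately shows both factors vanish unless $m_1,m_2\in\Z-\{0\}$, and that for $m_1,m_2\in\Z^+$ the $n_j$-integrals produce $a_{m_j}(u_j)\,y_j^{1/2}K_{it_j}(2\pi m_j y_j)$ (and complex-conjugate version). Absorbing the $(y_1 y_2)^{-1/2}$ in front cancels the $y_j^{1/2}$'s and yields formula \eqref{cuspys}. Absolute convergence of that series comes from combining Lemma \ref{htc} (which gives $\sum|h(t_j)|<\infty$ when $m\ge 10$) with polynomial-in-$t_j$ bounds for Fourier coefficients $|a_{m_j}(u_j)|^2/\|u_j\|^2$, polynomial bounds for the Hecke eigenvalues $\lambda_\n(u_j)$, and the standard fact that $K_{it_j}(x)\ll_x e^{\pi|t_j|/2}$, all of which are absorbed by the rapid decay encoded in $h(iz)\in PW^m(\C)^{\text{even}}$.

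For the $w$-integrated formula \eqref{Icusp}, I would substitute $y_1=w/m_1$, $y_2=w/m_2$, so both Bessel factors become $K_{it_j}(2\pi w)$, and the cuspidal contribution becomes
\[ I_{\cusp}(w)=\sqrt{\n}\sum_{u_j\in\mathcal F}\frac{\lambda_\n(u_j)a_{m_1}(u_j)\overline{a_{m_2}(u_j)}}{\|u_j\|^2}h(t_j)\,K_{it_j}(2\pi w)^2. \]
After swapping the integral $\int_0^\infty\cdot\,dw$ with the sum, the key computation is the classical Mellin-Barnes evaluation
\[ \int_0^\infty K_{it}(2\pi w)^2\,dw = \tfrac{1}{8}\,|\Gamma(\tfrac12+it)|^2 = \frac{\pi}{8\cosh(\pi t)}, \]
which I would derive from Gradshteyn-Ryzhik 6.576.4 (or equivalently, the Mellin transform of $K_{it}^2$) together with the reflection formula $|\Gamma(\tfrac12+it)|^2=\pi/\cosh(\pi t)$. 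This produces the asserted factor $h(t_j)/\cosh(\pi t_j)$ and the constant $\pi\sqrt{\n}/8$.

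The main obstacle is the justification of the interchange of summation and $w$-integration. For this I would split the $w$-integral at some cutoff: on a compact interval $w\in[\delta,R]$, the interchange follows from the first part applied uniformly, while near $w=0$ and $w=\infty$ one needs decay of $K_{it_j}(2\pi w)$ uniform enough to dominate. Crucially, after integrating in $w$ the resulting coefficient is $\pi/(8\cosh(\pi t_j))$, which provides exponential decay in $|t_j|$ and absorbs the $e^{\pi|t_j|}$ growth in the Fourier-coefficient bounds. The strengthened hypothesis $m\ge 12$ (versus $m\ge 10$ in the first part) is exactly what is needed to obtain two extra orders of polynomial decay of $h(t)$, sufficient to Fubini the double series $\sum_j\int_0^\infty$ after replacing the summand by its absolute value and using the bound $K_{it}(x)\ll\min(x^{-\sigma},e^{-\pi|t|/2})$ (or analogous) to bound the $w$-tails uniformly in $j$. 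Once absolute convergence of the iterated integral-sum is verified, Fubini yields \eqref{Icusp}.
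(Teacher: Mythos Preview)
Your plan is correct and matches the paper's argument for the first part: use Theorem \ref{bc1} to pull the sum outside the double integral, then Lemma \ref{flem} to evaluate each factor. (Your aside about deducing absolute convergence of \eqref{cuspys} from explicit bounds is unnecessary---it already follows from Theorem \ref{bc1} and Fubini---and contains two slips: the Fourier-coefficient bound is $(1+|t|)e^{\pi|t|}$, not polynomial, and the Bessel bound should read $K_{it}(x)\ll_x e^{-\pi|t|/2}$, not $e^{+\pi|t|/2}$.)

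For \eqref{Icusp}, the paper takes a shorter route than your cutoff-and-tail argument. Since each $t_j$ is either real or purely imaginary (Proposition \ref{LK}), $K_{it_j}(2\pi w)$ is \emph{real} for $w>0$, so $|K_{it_j}(2\pi w)|^2=K_{it_j}(2\pi w)^2$. Hence the same Gradshteyn--Ryzhik identity you quote computes $\int_0^\infty |K_{it_j}(2\pi w)|^2\,dw=\pi/(8\cosh(\pi t_j))$ directly, and the Fubini hypothesis reduces at once to absolute convergence of the right-hand side of \eqref{Icusp}. That in turn follows from the bound $|a_{m_1}(u_j)\overline{a_{m_2}(u_j)}|/\|u_j\|^2\ll (1+|t_j|)e^{\pi|t_j|}$, the cancellation of $e^{\pi|t_j|}$ against $\cosh(\pi t_j)$, and Lemma \ref{htc} applied to $\tilde h(t)=t^2h(t)\in PW^{10}$---which is exactly where $m\ge 12$ enters. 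Your splitting approach would work, but this observation avoids any tail analysis entirely.
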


\noindent{\em Remark:} 
In fact, \eqref{Icusp} converges absolutely for any function $h$ (holomorphic or not)
  satisfying a bound of the form $h(t)\ll\frac1{(1+|t|)^m}$ for $m>3$.
 Indeed, granting Weyl's Law, one finds as
  in the proof below that \eqref{Icusp} is $\ll \sum |h(t_j)|(1+|t_j|)
  \ll \sum (1+|t_j|)^{-m+1}\ll\sum j^{(-m+1)/2}<\infty$ if $m>3$.
  By the fact that Weyl's Law is an exact asymptotic, any improvement allowing smaller
  $m$ (say $m>2$) must come from a strengthening of the estimate \eqref{fcbound}.

\begin{proof}
The absolute convergence of $I_{\cusp}$ (with absolute values inside the sum
  defining $K_{\cusp}$) is a consequence of the continuity result of Theorem
  \ref{bc1} and the compactness of $N(\Q)\bs N(\A)$.  The equality \eqref{cuspys}
  and fact that $m_1,m_2$ must be integers follow immediately from Lemma \ref{flem}
  and the discussion preceding it.  The second
  Bessel factor does not need the complex conjugate because
  $t_j$ is purely imaginary or purely real (cf. Proposition
  \ref{LK}), so that $K_{it_j}(x)$ is real for real $x$ by \eqref{KBessel}
  (if $t$ is real, consider $w\mapsto w^{-1}$ in that equation).

Equation \eqref{Icusp} follows formally from \eqref{cuspys} by the
   identity\index{notations}{Ks@$K_s(z)$, Bessel function}\index{keywords}{Bessel function!$K$-}
\begin{equation}\label{Kint}
\int_0^\infty K_{it}(2\pi w)^2 dw = \frac{\Gamma(\tfrac12+it)\Gamma(\tfrac12-it)}8
  =\frac{\pi}{8\cosh(\pi t)}
\end{equation}
(\cite{GR}, 6.576), which is valid whenever $\Im(t)=\Re(it)<\tfrac12$, which holds
  here by Proposition \ref{LK}.
  As justification, we have to prove that the integral
\[
\int_0^\infty \sqrt{\n} \sum_{u_j\in \mathcal{F}}
 \frac{|\lambda_\n(u_j)\, a_{m_1}(u_j) \ol{a_{m_2}(u_j)}|}
 {\|u_j\|^2}
  {|h(t_j) K_{it_j}(2\pi w)^2|dw}
  \]
converges.  By \eqref{Kint} and the fact that $|K_{it}(2\pi w)|^2=K_{it}(2\pi w)^2$,
this amounts to showing that the right-hand side of \eqref{Icusp} 
  is absolutely convergent.

Recall that $|\lambda_\n(u_j)|$ is bounded by a constant
  depending only on $\n$.
  (For an elementary proof, see \cite{appendix}, Proposition 2.9.
  \label{KS}Currently the best known bound is
  $\tau(\n)\n^{7/64}$ due to Kim and Sarnak \cite{KS}.  According to
   the Ramanujan conjecture,\index{keywords}{Ramanujan conjecture}
 $|\lambda_{\n}(u_j)|\le \tau(\n)$.)

For any Maass cusp form $u$ with spectral parameter $t$, and
   $m\neq 0$, we have the well-known elementary bound
\[\frac{|a_n(u)|^2}{\|u\|^2}\ll \psi(N)(|t|+\frac{|n|}N)\,e^{\pi|t|},\]
where the implied constant is absolute (see Theorem 3.2 of \cite{Iw}).
This gives
\begin{equation}\label{fcbound}
\frac{|a_{m_1}(u)\ol{a_{m_2}(u)}|}{\|u\|^2} \ll_{N,m_1,m_2} (1+|t|)e^{\pi|t|}.
\end{equation}
When $t$ is real, the exponential factor is negated by
  $\cosh(\pi t)\gg e^{\pi|t|}$ in the denominator of \eqref{Icusp}.
  For the finitely many non-real $t_j$, we have $|t_j|\le \tfrac12$.  Hence
\begin{equation}\label{cuspsum}
\sum_{u_j\in\mathcal{F}}  \frac{| \lambda_\n(u_j)\, a_{m_1}(u_j)
  \ol{a_{m_2}(u_j)}h(t_j)|} {\|u_j\|^2\cosh(\pi t_j)}\ll \sum_{j}|h(t_j)|(1+|t_j|).
\end{equation}
Let $\tilde{h}(t)=t^2h(t)$.  Assuming $h(iz)$ is Paley-Wiener of order $m\ge 12$, 
  it is easy to show that
  $\tilde{h}(iz)\in PW^{10}(\C)^{\text{even}}$.  Note that
\[\sum_{|t_j|\ge2} |h(t_j)|(1+|t_j|)< \sum_{|t_j|\ge 2}|t_j|^2|h(t_j)|=
  \sum_{|t_j|\ge 2}|\tilde{h}(t_j)|.\]
By Lemma \ref{htc} above, the latter expression is finite.
In view of \eqref{discrete} with $M=2$,
  this implies that the right-hand side of \eqref{cuspsum} is finite.
\end{proof}

\comment{
Since the number of exceptional $\nu$ is finite, it suffices
  to sum over those $h$ with $\nu(h)\in i\R^*$.
  For a given $\nu$, we denote the set of such $h$ by $\mathcal{F}_\nu$.
   Then we have to show
\begin{equation}\label{cuspsum}
\sum_{\nu}\sum_{h\in\mathcal{F}_\nu}
  \frac{\left|\lambda_\nu(f_\infty) \lambda_\n^h\, a_{m_1}(h) K_{\nu/2}(2\pi|m_1|)
  a_{m_2}(h) K_{\nu/2}(2\pi|m_2|)\right|} {\|h\|^2}<\infty.
\end{equation}
  Recall that $|\lambda_\n^h|$ is bounded by a constant depending only on $\n$.
  (Currently the best known bound is $\tau(\n)\n^{7/64}$ due to Kim and Sarnak;
  we need to double check the normalization.)
Next we use Proposition \ref{hbound}, which says
\[\frac{|a_r|}{\|h\|} \ll \psi(N)^{1/2}(|\nu|+\frac{|r|}N)^{1/2}e^{\pi|\nu|/4}.\]
The exponential factor is cancelled by the decay of the Bessel function
\begin{equation}\label{Kbound}
K_{\nu/2}(2\pi|r|)\ll e^{-\pi |\nu|/4}
\end{equation}
for $\nu\in i\R$ as $|\nu|\to\infty$ (cf. eq. (19) on p. 88 of \cite{EB}).

  By \eqref{lambdasum} we have
\[\sum_\nu \frac{|\mathcal{F}_\nu|}{(1+|\nu|^2)^2}<\infty,\]
which shows that $|\mathcal{F}_\nu|\ll |\nu|^3$.
Hence the left-hand side of \eqref{cuspsum} is
\begin{equation}\label{finalsum}
\ll \sum_\nu |\lambda_{\nu}(f_\infty)|\, |\nu|^{1+3}.
\end{equation}
Because $\lambda_{\nu}(f_\infty)$ is in the Paley-Wiener space and $\nu\in i\R$,
  it satisfies
\begin{equation}\label{PW}
\lambda_\nu(f_\infty)\ll_M |\nu|^{-M}
\end{equation}
 for any constant $M>0$, and hence the sum \eqref{finalsum} is convergent.
\end{proof}
}

\subsection{Residual contribution}

By \eqref{Pres}, $K_{\res}$ vanishes when $\w$ is nontrivial.
  Otherwise by \eqref{residue} we have \index{notations}{Ires@$I_{\res}$}
\[I_{\res}(f,m_1,m_2,w)= \frac1{\sqrt{y_1y_2}}\IIL{(\Q\bs\A)^2}
  K_{\res}(\smat {y_1}{x_1}01,\smat {y_2}{x_2}01)
  \theta_{m_1}(x_1)\,\ol{\theta_{m_2}(x_2)}\, dx_1\,dx_2\]
\[
= \frac{3h(\frac i2)}{\pi\sqrt{y_1y_2}}\Bigl(\sum_{d|\n}d\Bigr)
\int_{\Q\bs\A}\ol{\theta(m_1x)}dx
  \int_{\Q\bs\A}{\theta(m_2 x)}dx.
\]
Both integrals vanish since $m_1,m_2\neq 0$.
Thus the residual spectrum makes no contribution to this trace formula.

\subsection{Continuous contribution}

We continue to assume that $f=f_\infty\times \ff$ satisfies the hypotheses of
  \eqref{I}.
Using the decomposition $H(0)=\bigoplus H(\chi_1,\chi_2)$, the continuous kernel
 is given by\index{notations}{Kercont@$ K_{\cont}(x, y)$}
 \begin{equation}\label{Kcont2}
K_{\cont}(g_1,g_2)=\frac{1}{4\pi} \sum_{\chi_1,\chi_2}\sum_{\phi}
   \int_{-\infty}^\infty
   E(\pi_{it}(f)\phi_{it}, g_1) \ol{E(\phi_{it}, g_2)} dt.
 \end{equation}
Here $\chi_1,\chi_2$ ranges through all (ordered) pairs of finite order Hecke characters
  satisfying $\chi_1\chi_2=\w$, and $\phi$ ranges through an orthonormal basis
  for $H(\chi_1,\chi_2)^{K_\infty\times K_1(N)}$.
By Corollary \ref{nonzero}, the latter space is
  nonzero if and only if $\c_{\chi_1}\c_{\chi_2}|N$.
  In particular, both sums above are finite.

  For $\phi\in H(\chi_1,\chi_2)^{K_\infty\times K_1(N)}$ we know that
  \[\pi_{it}(f)\phi_{it}=\sqrt{\n}\,\lambda_{\n}(\chi_1,\chi_2,it)h(t)\phi_{it}\]
 by Proposition \ref{phieigen}.
  Hence for the orthogonal basis $\mathcal{B}(\chi_1,\chi_2)$
 given in Corollary \ref{nonzero},
\[K_{\cont}(g_1,g_2)=\]
\begin{equation}\label{Kcont}
\frac{\sqrt{\n}}{4\pi} \sum_{\chi_1,\chi_2}
  \sum_{\phi\in \mathcal{B}(\chi_1,\chi_2)}\frac1{\|\phi\|^2}
   \int_{-\infty}^\infty h(t)
\lambda_{\n}(\chi_1,\chi_2,it)
   E(\phi, it, g_1) \ol{E(\phi, it,g_2)} dt.
\end{equation}

We need to integrate the above over $(N(\Q)\bs N(\A))^2$.
   For $m\in\Q$ and real $y>0$, let \index{notations}{ams@$a_m(s,y)=a_m^{\phi}(s,y)$, $m$-th Fourier coefficient of $E_\phi(s,z)$}
\[a_m^\phi(s,y) = \int_{N(\Q)\bs N(\A)} E(\phi,s,n\smat y{}{}1)\,
   \ol{\theta_m(n)}dn\]
 be the adelic Fourier coefficient of $E(\phi,s,n\smat y{}{}1)$.
  The above coincides with
  the $m^{\text{th}}$ Fourier coefficient of $E_\phi(s,z)$,
  which was denoted $a_m(s,y)$ earlier.
  Indeed, using the fundamental domain $[0,1]\times\Zhat$ for $\Q\bs\A$,
\begin{align*} a_m^\phi(s,y)&=
\int_0^1\int_{\Zhat} E(\phi,s,\smat yx01\times \smat 1u01)e(-mx)\theta_{\fin}(mu)\,du\,dx\\
&=
\int_0^1 E(\phi,s,\smat yx01\times 1_{\fin})e(-mx)dx
  \int_{\Zhat}\theta_{\fin}(mu)\,du.
\end{align*}
  Because $\Zhat=\ker\theta_{\fin}$,  this is
\begin{align*}
 &=\left\{\begin{array}{cl}\ds
\int_0^1 E_\phi(s,x+iy)e(-mx)\,dx&m\in\Z\\
0&\text{otherwise}\end{array}\right.
\end{align*}
$= a_m(s,y)$, as claimed.

 Therefore by the formula for Fourier coefficients given
  in \S \ref{Fourier}, the continuous contribution to the trace formula 
  is\index{notations}{Icont@$I_{\cont}$}
\[I_{\cont}=\frac1{\sqrt{y_1y_2}}\IIL{(\Q\bs \A)^2}
  K_{\cont}(\smat{y_1}{x_1}{}{1},\smat{y_2}{x_2}{}1)\ol{\theta_{m_1}(x_1)}
  \theta_{m_2}(x_2) dx_1dx_2\]
  \[= \frac{\sqrt{\n}}{4\pi\sqrt{y_1y_2}} \sum_{\chi_1,\chi_2}
  \sum_{\phi}
  \int_{-\infty}^{\infty}
 \frac{\lambda_{\n}(\chi_1,\chi_2,it)}{\|\phi\|^2}
 h(t) a_{m_1}^\phi(it,y_1)
   \, \ol{a_{m_2}^\phi(it,y_2)}\, dt\]
\begin{equation}\label{contys}
\hskip -.3cm =  {\scri\sqrt{\n}}{\sum\limits_{\chi_1,\chi_2}
  \sum\limits_{(i_p)}}
 \IL{\R} \frac{\scri \lambda_{\n}(\chi_1,\chi_2,it)\sigma_{it}(\chi_1',\chi_2',m_1)
  \ol{\sigma_{it}(\chi_1',\chi_2',m_2)} (\frac{m_1}{m_2})^{it}
 K_{it}(2\pi m_1 y_1) {K_{it}(2\pi m_2 y_2)}h(t)}
   {\scri \|\phi_{(i_p)}\|^2|\Gamma(\frac{1}{2}+it)|^2|L_N(1+2it,\chi_1 \ol{\chi_2})|^2}dt.
\end{equation}
The notation is as in \S \ref{Fourier}, and is also recalled in Theorem
  \ref{main} below.  We have used the fact that $|C_{(i_p)}|=1$
  and that $K_{it}(x)$ is real.

We explained in \S\ref{conv} why the above expression is absolutely convergent.
  This can also be seen directly, using the following lemma.

\begin{lemma}\label{besselbound} For $y>0$ and 
  $s=\sigma+it$ with $\sigma\ge 0$,\index{notations}{Ks@$K_s(z)$, Bessel function}
\index{keywords}{Bessel function!$K$-}
\[\frac{|K_{s}(y)|}{|\Gamma(\tfrac12+s)|}\le 
  \left(\frac2y\right)^{\!\sigma}\frac{(1+2|s|)}{y\sqrt{\pi}}.\]
\end{lemma}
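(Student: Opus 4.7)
The plan is to exploit an integral representation of $K_s$ in which $\Gamma(s+\tfrac12)$ appears as an explicit multiplicative factor, so that the ratio $K_s(y)/\Gamma(s+\tfrac12)$ becomes a pure cosine integral; then one integration by parts will produce the factor $1/y$, and an elementary estimate will take care of the rest. Concretely, I would start from the Basset-type formula (\cite{GR} 8.432.5)
\[
K_s(y) \;=\; \frac{\Gamma(s+\tfrac12)}{\sqrt{\pi}}\left(\frac{2}{y}\right)^{\! s}\int_0^\infty \frac{\cos(yu)\,du}{(u^2+1)^{s+1/2}},
\]
which is valid for $\Re(s)>-\tfrac12$ and hence in particular for $\sigma\ge 0$. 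Dividing through by $\Gamma(s+\tfrac12)$ and taking absolute values (noting $|(2/y)^s|=(2/y)^\sigma$) immediately reduces the lemma to showing
\[
\left|\int_0^\infty \frac{\cos(yu)\,du}{(u^2+1)^{s+1/2}}\right| \;\le\; \frac{1+2|s|}{y}.
\]

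To prove this last estimate I would integrate by parts with $p(u)=(u^2+1)^{-s-1/2}$ and $dq=\cos(yu)\,du$, so that $q(u)=\sin(yu)/y$ and $p'(u)=-(2s+1)\,u\,(u^2+1)^{-s-3/2}$. For $\sigma\ge 0$ the boundary term at $u=\infty$ vanishes because $|p(u)|\le (u^2+1)^{-1/2}\to 0$, and the boundary term at $u=0$ vanishes because $\sin(0)=0$. One is left with
\[
\int_0^\infty \frac{\cos(yu)\,du}{(u^2+1)^{s+1/2}} \;=\; \frac{2s+1}{y}\int_0^\infty \frac{u\sin(yu)\,du}{(u^2+1)^{s+3/2}}.
\]
Bounding $|\sin(yu)|\le 1$ and $|2s+1|\le 2|s|+1$ (triangle inequality), and using the elementary calculation
\[
\int_0^\infty \frac{u\,du}{(u^2+1)^{\sigma+3/2}} \;=\; \frac{1}{2\sigma+1} \;\le\; 1 \qquad(\sigma\ge 0),
\]
finishes the proof.

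There is no real hard part here; the only thing to watch is that one is in the regime $\sigma\ge 0$ (hence $\Re(s)>-\tfrac12$) so that the Basset representation is legal and the boundary terms in the integration by parts genuinely vanish. If one wanted to extend the lemma to $-\tfrac12<\sigma<0$ the same scheme would still go through, but the final factor $1/(2\sigma+1)$ could no longer be absorbed, which is why the stated bound is clean precisely on $\sigma\ge 0$.
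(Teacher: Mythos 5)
Your proof is correct and is essentially identical to the paper's: both start from Basset's integral representation, divide out $\Gamma(\tfrac12+s)$, integrate by parts once to gain the factor $(1+2s)/y$, and bound the resulting integral by $1$. The only cosmetic difference is that the paper majorizes $(x^2+1)^{-\sigma-3/2}$ by $(x^2+1)^{-3/2}$ before integrating, whereas you integrate $(u^2+1)^{-\sigma-3/2}$ exactly and then note $1/(2\sigma+1)\le 1$; both yield the same bound.
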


\begin{proof}
By Basset's formula for $K_\nu(z)$ (eq. (1) on p. 172 of \cite{Wa}),
\[\frac{K_s(y)}{\Gamma(\tfrac12+s)}=
\Bigl(\frac 2y\Bigr)^{\!s}\,\frac1{\sqrt{\pi}}
  \int_0^\infty \frac{\cos(yx)}{(x^2+1)^{\frac12+s}}dx,\]
valid for $\sigma>0$.
Integrating by parts, we have
\[\frac{K_s(y)}{\Gamma(\tfrac12+s)}=
\Bigl(\frac 2y\Bigr)^{\!s}\,\frac{(1+2s)}{y\sqrt{\pi}}
  \int_0^\infty \frac{x\sin(yx)}{(x^2+1)^{\frac32+s}}dx.\]
This equality is valid on $\sigma>-\tfrac12$ since the right-hand side is convergent
  for such $s$.
  Therefore if $\sigma\ge 0$,
\[\frac{|K_{s}(y)|}{|\Gamma(\tfrac12+s)|}\le
\Bigl(\frac 2y\Bigr)^{\!\sigma}\,\frac{(1+2|s|)}{y\sqrt{\pi}}
\int_0^\infty \frac{x}{(x^2+1)^{\frac32}}dx=
\Bigl(\frac 2y\Bigr)^{\!\sigma}\,\frac{(1+2|s|)}{y\sqrt{\pi}}.\qedhere
\]
\end{proof}

%

By the lemma, the combined contribution of the Bessel and Gamma functions in the
  integrand of \eqref{contys} is $\ll (1+2|t|)^2$.
By Corollary \ref{Lcor},
\begin{equation}\label{Lbound}
L_N(1+2it,\chi_1\ol{\chi_2})^{-1}=L(1+2it,\ol{\chi_1'}\chi_2')^{-1}
\ll_\e N^\e(\log(3+2|t|))^7.
\end{equation}
 Thus, the absolute convergence of the integral \eqref{contys} follows from
\begin{equation}\label{PW}
h(t)\ll (1+|t|)^{-4}
\end{equation}
(which holds since $h(iz)$ is Paley-Wiener of order $m\ge 10>4$).

In fact we can prove the following asymptotic bound for $I_{\cont}$.

\begin{proposition}\label{bound1}
For any $\e>0$, the quantity \eqref{contys} is $\ll N^{\frac12+\e}$.
The implied constant is ineffective, but depends only on $\e$, $\n$, $m_1$,
  $m_2$, $f_\infty$, and $y_1,y_2$.
\end{proposition}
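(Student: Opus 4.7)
The plan is to bound each factor of the integrand in \eqref{contys} uniformly in the parameters, then sum over the discrete parameters $(\chi_1,\chi_2,(i_p))$ and integrate in $t$, arranging matters so that the $N^{1/2}$ saving comes from counting the admissible pairs of Hecke characters. For the analytic pieces I would use Lemma \ref{besselbound} with $\sigma=0$ to obtain $|K_{it}(2\pi m_jy_j)|/|\Gamma(\tfrac12+it)|\ll_{m_j,y_j}(1+|t|)$; Corollary \ref{Lcor} to obtain $|L_N(1+2it,\chi_1\ol{\chi_2})|^{-2}\ll_\e N^{2\e}(\log(|t|+3))^{14}$; and the trivial bound $|\lambda_\n(\chi_1,\chi_2,it)|\le\tau(\n)$ from \eqref{lam}. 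The Paley-Wiener decay $h(t)\ll(1+|t|)^{-m}$ with $m\ge 10$ dominates all remaining polynomial and logarithmic growth, so the $t$-integral converges absolutely, with value $\ll_{\e,m_1,m_2,y_1,y_2,h} N^{2\e}$ uniformly in $(\chi_1,\chi_2,(i_p))$.

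Next, for a fixed pair $(\chi_1,\chi_2)$ I would invoke Proposition \ref{Eisbound} to handle the remaining arithmetic factor,
\[
\|\phi_{(i_p)}\|^{-2}\,|\sigma_{it}(\chi_1',\chi_2',m_1)\,\ol{\sigma_{it}(\chi_1',\chi_2',m_2)}|\ll_{\e,m_1,m_2}N^\e,
\]
uniformly in $(i_p)$ and $t$, and sum over $(i_p)$: by Corollary \ref{nonzero} there are $\tau(N/(\c_{\chi_1}\c_{\chi_2}))\ll N^\e$ admissible tuples, giving a total per-pair contribution of $\ll N^{O(\e)}$ after all estimates are combined.

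The main obstacle is the final step: establishing that the number of pairs $(\chi_1,\chi_2)$ of finite order Hecke characters with $\chi_1\chi_2=\w$ and $\c_{\chi_1}\c_{\chi_2}\mid N$ is $\ll N^{1/2+\e}$. I would carry out this count prime by prime over $p\mid N$. At a prime $p\mid N$ with $p\nmid\c_\w$, $\w_p$ is unramified, so $\chi_{2p}=\w_p\chi_{1p}^{-1}$ has the same conductor as $\chi_{1p}$, forcing $2\ord_p(\c_{\chi_1})\le N_p$; the number of admissible local components is
\[
1+\sum_{\nu=1}^{\lfloor N_p/2\rfloor}(p^\nu-p^{\nu-1})=p^{\lfloor N_p/2\rfloor}.
\]
At primes $p\mid\c_\w$, a case analysis splitting by whether $\ord_p(\c_{\chi_1})$ is less than, equal to, or greater than $\ord_p(\c_\w)$ yields an analogous $O(p^{N_p/2})$ local bound. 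Multiplying over primes $p\mid N$ produces the global count $\ll N^{1/2+\e}$, and combining with the preceding two steps gives $I_{\cont}\ll_\e N^{1/2+\e}$; the ineffectivity and the dependence on $\e,\n,m_1,m_2,f_\infty,y_1,y_2$ are inherited from the constants in Corollary \ref{Lcor} (Siegel's theorem), Lemma \ref{besselbound}, and Proposition \ref{Eisbound}.
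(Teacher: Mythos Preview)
Your proposal is correct and follows the same overall structure as the paper's proof: bound the analytic factors via Lemma~\ref{besselbound}, \eqref{Lbound}, and Proposition~\ref{Eisbound} to reduce each integral to $O(N^{O(\e)})$, then extract the $N^{1/2}$ from counting admissible pairs $(\chi_1,\chi_2)$. The only difference is organizational: where you count pairs prime by prime (which does require the case split at $p\mid\c_\w$ you allude to, the delicate case being $\ord_p(\c_{\chi_1})=\ord_p(\c_\w)$, where one must use the constraint on $\c_{\chi_2}$ rather than merely count $\chi_{1p}$), the paper instead parametrizes by the divisor $M=\prod p^{i_p}$ of $N$ and the conductors $\nu_1\mid N/M$, $\nu_2\mid M$, then observes globally that $\nu_1\nu_2\mid N$ forces $\min(\nu_1,\nu_2)\le N^{1/2}$, so the smaller character has at most $N^{1/2}$ choices and the other is determined by $\chi_1\chi_2=\w$.
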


\begin{proof}
By Proposition \ref{Eisbound}, we have
\begin{equation}\label{sigitbound}
\frac{\left|\sigma_{it}(\chi_1',\chi_2',m_1)
  \ol{\sigma_{it}(\chi_1',\chi_2',m_2)}\right|} {\|\phi_{(i_p)}\|^2}
\ll_{m_1,m_2,\e} N^{\e}.
   \end{equation}
  It is clear that
\[|\lambda_{\n}(\chi_1,\chi_2,it)|=\Bigl|\sum_{d|\n}(\tfrac\n{d^2})^{it}
  \ol{\chi_1(d_N)\chi_2((\tfrac\n d)_N)}\Bigr| \le \tau(\n) \ll_{\n} 1.\]
  Therefore by 
  Lemma \ref{besselbound} and \eqref{Lbound}-\eqref{sigitbound},
   the integral occurring in \eqref{contys} is 
\begin{equation}\label{sb}
\ll N^{3\e}\int_{-\infty}^\infty{h(t)}(1+2|t|)^2(\log(3+2|t|))^{14}dt
  =O(N^{3\e}),
\end{equation}
 with the implied constant depending on $f_\infty, \e, y_1, y_2, m_1, m_2,$ and $\n$.
Thus, recalling Corollaries \ref{localnonzero} and \ref{nonzero},  \eqref{contys} is
\[\ll N^{3\e} \sum_{(i_p)\atop{0\le i_p\le N_p}} \sum_{(\nu_{1p},\nu_{2p})
  \atop{\nu_{2p}\le i_p\le N_p-\nu_{1p}}} \sum_{\chi_1\chi_2=\w\atop{\c_{\chi_j}=
  \prod_{p} p^{\nu_{jp}}\atop{(j=1,2)}}} 1.\]
The set of tuples $(i_p)$ is in 1-1 correspondence with the set of positive divisors
  $M=\prod p^{i_p}$ of $N$, and likewise $\{(\nu_{1p})\}\leftrightarrow
   \{\nu_1|\frac NM\}$ and $\{(\nu_{2p})\}\leftrightarrow \{\nu_2|M\}$.
   Hence the above triple sum can be rewritten
\[\sum_{M|N}\,\sum_{\nu_2|M}\,\sum_{\nu_1|\frac{N}M}\,\sum_{\chi_1\chi_2=\w\atop
  {\c_{\chi_1}=\nu_1\atop{\c_{\chi_2}=\nu_2}}}1.\]
The number of terms $M$ is $\ll N^\e$, and the same is true for $\nu_1$ and $\nu_2$.
Thus for fixed $M,\nu_1,\nu_2$, it remains to count the number of pairs $(\chi_1,\chi_2)$.
Because $\nu_1\nu_2|N$, there exists $j\in\{1,2\}$ such that $\nu_j \le N^{1/2}$.
   The number of possibilities for the character $\chi_j$ of conductor $\nu_j$
  is $|(\Z/\nu_j\Z)^*|\le N^{1/2}$.  Once $\chi_j$ is chosen, its
 counterpart is determined by $\chi_1\chi_2=\w$.
  Thus the number of pairs $(\chi_1,\chi_2)$ is $\le N^{1/2}$.
  This proves that \eqref{contys} is $\ll N^{\frac12+6\e}$.
\end{proof}

Lastly, let $I_{\cont}(w)$ denote the quantity in \eqref{contys} with
 $w=m_1y_1=m_2y_2$.
Using \eqref{Kint},
we see that $\ds\int_0^\infty I_{\cont}(w)dw$ is equal to
\begin{equation}\label{Icont}
 \frac{\sqrt{\n}}8 \sum_{\chi_1\chi_2=\w} \sum_{(i_p)}
\int_{-\infty}^{\infty} \frac{\lambda_\n(\chi_1,\chi_2,it)
  \sigma_{it}(\chi_1',\chi_2',m_1)\ol{\sigma_{it}(\chi_1',\chi_2',m_2)}
(\frac{m_1}{m_2})^{it}h(t)}
   {\|\phi_{(i_p)}\|^2|L_N(1+2it,{\chi_1} \ol{\chi_2})|^2} dt.
\end{equation}
The exchange of the $dt$ and $dw$ integrals is justified by
  the absolute convergence of \eqref{Icont}, which is proven 
  in the same way as Proposition \ref{bound1},
  giving the following.
\begin{proposition}\label{bound}
For any $\e>0$, the quantity \eqref{Icont} is $\ll_\e N^{\frac12+\e}$.
\end{proposition}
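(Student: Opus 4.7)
The plan is to adapt the argument of Proposition \ref{bound1} to the expression \eqref{Icont}. Note first that when the Bessel integral identity \eqref{Kint} is applied to \eqref{contys} under $w=m_1y_1=m_2y_2$, the Gamma factor $|\Gamma(\tfrac12+it)|^2=\pi/\cosh(\pi t)$ appearing in the denominator of \eqref{contys} cancels the $\pi/(8\cosh(\pi t))$ coming from the $w$-integration, leaving only the prefactor $\tfrac18$ visible in \eqref{Icont}. Consequently the integrand of \eqref{Icont} has the same structural form as that of \eqref{contys} except that both $K$-Bessel factors and the Gamma factor have disappeared.

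First I would bound the integrand of \eqref{Icont} pointwise in $t$ using the same estimates as in the proof of Proposition \ref{bound1}: the trivial divisor estimate $|\lambda_\n(\chi_1,\chi_2,it)|\le\tau(\n)\ll_\n 1$; the Fourier-coefficient bound $\|\phi_{(i_p)}\|^{-2}|\sigma_{it}(\chi_1',\chi_2',m_1)\ol{\sigma_{it}(\chi_1',\chi_2',m_2)}|\ll_{m_1,m_2,\e} N^\e$ from Proposition \ref{Eisbound}; the trivial identity $|(m_1/m_2)^{it}|=1$; and the lower bound $|L_N(1+2it,\chi_1\ol{\chi_2})|^{-2}\ll_\e N^{2\e}(\log(3+2|t|))^{14}$ from Corollary \ref{Lcor}. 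Combined, the integrand is $\ll_{m_1,m_2,\n,\e} N^{3\e}|h(t)|(\log(3+2|t|))^{14}$.

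Next I would integrate in $t$: since $h(iz)\in PW^m(\C)^{\text{even}}$ for $m\ge 10$, we have $|h(t)|\ll (1+|t|)^{-m}$, so $\int_{-\infty}^\infty|h(t)|(\log(3+2|t|))^{14}dt=O_h(1)$. Hence each inner integral in \eqref{Icont} is $O(N^{3\e})$, with implied constant depending on $h,\n,m_1,m_2,\e$. Finally, the counting of tuples $((i_p),\chi_1,\chi_2)$ performed at the end of the proof of Proposition \ref{bound1} applies verbatim: parametrizing by $M=\prod p^{i_p}\mid N$, $\nu_1=\c_{\chi_1}$, $\nu_2=\c_{\chi_2}$, the constraints $\nu_2\le i_p\le N_p-\nu_1$ from Corollary \ref{nonzero} together with $\chi_1\chi_2=\w$ bound the number of such tuples by $\ll N^{1/2+3\e}$. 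Multiplying through yields $\ll_\e N^{1/2+6\e}$, which is the claimed estimate after relabeling $\e$.

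There is no substantive obstacle here; the argument mirrors that of Proposition \ref{bound1} with the $w$-integral already absorbed by \eqref{Kint}. The same chain of inequalities, read in terms of absolute values throughout, also establishes the absolute convergence of \eqref{Icont} that was invoked a few lines above the statement to justify interchanging the $dt$ and $dw$ integrals.
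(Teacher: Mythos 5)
Your proof is correct and matches the paper's intent exactly: the paper itself states that the absolute convergence (and hence the bound) of \eqref{Icont} is "proven in the same way as Proposition \ref{bound1}," and you have filled in precisely that argument — the same three pointwise estimates (trivial bound on $\lambda_\n$, Proposition \ref{Eisbound}, Corollary \ref{Lcor}), the same integrability of $h(t)(\log(3+2|t|))^{14}$, and the same counting of tuples $(M,\nu_1,\nu_2,\chi_1,\chi_2)$ giving the $N^{1/2}$ from the character count. Your observation that the Bessel/Gamma factors cancel via \eqref{Kint} is also correct, and it is what makes the $t$-integral in \eqref{Icont} slightly easier than in \eqref{contys} (no $(1+2|t|)^2$ factor from Lemma \ref{besselbound} is needed).
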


\subsection{Geometric side}

In this section, we take $f$ as in \eqref{I}, although we can relax the requirement
  on $m$.  In Proposition \ref{cell1}, we will require $m\ge 5$, and in Propositions
  \ref{cell2} and \ref{Klbound} we need $m>2$.

  For positive integers $m_1,m_2$, we need to evaluate the integral \eqref{I}
\[I=I_{f,m_1,m_2,y_1,y_2}=\frac{1}{\sqrt{y_1y_2}}\int_{\Q\bs\A}
\int_{\Q\bs \A} K(\smat{y_1}{t_1}01,
  \smat {y_2}{t_2}01)\theta(m_1t_1-m_2t_2)dt_1\,dt_2,\]
using $K(g_1,g_2)=\sum_{\g\in\olG(\Q)}f(g_1^{-1}\g g_2)$.
 This was carried out in detail with a different choice of $f_\infty$
  in \cite{KL1}, and we follow the same procedure here.

Let $H=N\times N$, and endow it with an action on $\olG$ by
  \[(n_1,n_2)\cdot \g = n_1^{-1}\g n_2.\]
We break the sum over $\g\in\olG(\Q)$ into $H(\Q)$-orbits to get
\[
I=\hskip-.4cm\sum_{\delta\in N(\Q)\bs \olG(\Q)/N(\Q)}
  \int_{H_\delta(\Q)\bs H(\A)}
  \hskip -.2cm f(\smat{y_1}{t_1}{}{1}^{-1}\delta \smat{y_2}{t_2}{}{1})
  \frac{\ol{\theta_{m_1}(t_1)}\theta_{m_2}(t_2)}{\sqrt{y_1y_2}} \,d(t_1,t_2).\]
Here $H_\delta$ denotes the stabilizer of $\delta$, \index{notations}{Hdelta@$H_\delta$}
  and $d(t_1,t_2)$ denotes the quotient measure coming from the
  Haar measure $dt_1dt_2$ on $H(\A)\cong \A\times \A$, the latter
  being normalized as in Section \ref{notation}.
The interchange of the sum and the integral is justified because
the function $\sum_\g |f(x^{-1}\g y)|$ is continuous and hence
  integrable over the compact set $H(\Q)\bs H(\A)$.

We let $I_\delta(f)$ denote the integral attached to $\delta$ as above. \index{notations}{Idelta@$I_\delta$}
The following set of representatives $\delta$ is obtained from the Bruhat decomposition:
   \[ \{ \left.\smat{\gamma}{0}{0}{1} \right|\, \gamma \in \Q^* \}
   \bigcup \{ \left.\smat{0}{-\mu}{1}{0} \right|\, \mu \in \Q^* \}. \]
An orbit $\delta$ is {\bf relevant} if the character
  $\ol{\theta_{m_1}}\theta_{m_2}$ is trivial on $H_\delta(\A)$.
  The orbital integral $I_\delta(f)$ vanishes if $\delta$ is not relevant.
  It is straightforward to show that
  the relevant orbits are
   \[ \{ \left.\smat{m_2/m_1}{0}{0}{1} \right.\}
   \bigcup \{ \left.\smat{0}{-\mu}{1}{0} \right|\, \mu \in \Q^* \}. \]
  See \cite{KL1} for details.

\subsubsection{First cell term}

\begin{proposition}\label{cell1}
  Let $\delta=\smat{m_2/m_1}{}{}1$ for $m_1,m_2>0$.  Then $I_\delta(f)$
  is nonzero only if $m_1m_2= b^2\n$ for some positive integer
  $b|\gcd(m_1,m_2)$.
  If this condition is met, then
\begin{equation}\label{cellys}
 I_\delta(f)= \frac{\psi(N)\ol{\w'(m_1/b)}}{b\,\sqrt{y_1y_2}}
\int_{-\infty}^\infty V\left(\frac{t^2+m_1^2y_1^2 +{m_2^2y_2^2}}{m_1y_1m_2y_2}
-2\right)e^{2\pi it}dt
\end{equation}
for $V$ as in \eqref{V}.  Letting $I_\delta(w)$ denote the above quantity
  for $w=y_1m_1=y_2m_2$, we have
\begin{equation}\label{Icell}
 \int_0^\infty I_\delta(w)dw =
\frac{\psi(N)\sqrt{\n}}{2\,\w'(\sqrt{{m_1\n}/{m_2}})}V(0).
\end{equation}
\end{proposition}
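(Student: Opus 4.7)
The plan is to unfold the orbital integral along its $H(\Q)$-orbit, reduce to a product of local integrals, evaluate them, and finally treat the $w$-integration by Fourier inversion.

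First I would compute the stabilizer $H_\delta(\Q)=\{(\smat{1}{(m_2/m_1)r}{}{1},\smat{1}{r}{}{1}):r\in\Q\}$ of $\delta$ under the action $(n_1,n_2)\cdot\gamma=n_1^{-1}\gamma n_2$. The change of variable $u=m_1t_1-m_2t_2$, $v=t_2$ defines an $\A$-linear isomorphism $H(\A)\to\A\times\A$ sending $H_\delta(\Q)$ onto $\{0\}\times\Q$; the product formula gives $|m_1|_\A=1$, so Haar measures transport cleanly and $H_\delta(\Q)\bs H(\A)\cong\A\times(\Q\bs\A)$. A direct matrix computation shows that $\smat{y_1}{t_1}{}{1}^{-1}\delta\smat{y_2}{t_2}{}{1}$ has archimedean component $\smat{m_2y_2/(m_1y_1)}{-u_\infty/(m_1y_1)}{}{1}$ and $p$-adic component $\smat{m_2/m_1}{-u_p/m_1}{}{1}$, depending only on $u$, and that $\ol{\theta_{m_1}(t_1)}\theta_{m_2}(t_2)=\theta(u)$. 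Hence the $v$-integral over $\Q\bs\A$ is $1$, and $I_\delta(f)=(y_1y_2)^{-1/2}A_\infty A_\fin$ splits as a product.

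For the archimedean piece, using \eqref{Vf} with $f_\infty\leftrightarrow V$ one computes $\frac{a^2+b^2+c^2+d^2}{ad-bc}-2=\frac{u^2+m_1^2y_1^2+m_2^2y_2^2}{m_1y_1m_2y_2}-2$, and a sign flip in the exponent (permitted since $V$ is even in $u$) yields the stated $\int_\R V(\cdots)e^{2\pi it}\,dt$.

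The bulk of the work is the finite integral $A_\fin=\prod_p A_p$. Writing the local matrix as $\lambda I\cdot m$ with $m\in M_1(\n,N)_p$, the determinant condition $\det m\in\n\Z_p^*$ forces $v_p(\lambda)=(v_p(m_2)-v_p(m_1)-v_p(\n))/2$ to be an integer; combined with integrality of the entries of $m$ and (at $p\,|\,N$) the congruence $m\equiv\smat{*}{*}{0}{1}\mod p^{N_p}$, the product over $p$ yields the nonvanishing condition $m_1m_2=b^2\n$ with $b\in\Z^+$, $b\,|\,\gcd(m_1,m_2)$, and $v_p(b)=(v_p(m_1m_2)-v_p(\n))/2$. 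When this holds, at $p\,|\,N$ one has $v_p(m_1)=v_p(m_2)$ and $\lambda\in 1+p^{N_p}\Z_p$ on which $\w_p$ is trivial, so $A_p=\psi_p(N)\,p^{-v_p(b)}$; at $p\nmid N$ the identity $\w_p(p)=\ol{\w'(p)}$ (from $\w|_{\Q^*}=1$) yields $A_p=\w'(p^{v_p(b)-v_p(m_1)})\,p^{-v_p(b)}$. Since $v_p(b)=v_p(m_1)$ at $p\,|\,N$, the character contributions collapse to $\ol{\w'(m_1/b)}$, well-defined because $\gcd(m_1/b,N)=1$, and the volumes multiply to $\psi(N)/b$, giving $A_\fin=\psi(N)\ol{\w'(m_1/b)}/b$ and hence \eqref{cellys}.

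For \eqref{Icell}, setting $w=y_1m_1=y_2m_2$ and using $m_1m_2=b^2\n$ gives $\sqrt{y_1y_2}=w/(b\sqrt{\n})$ and collapses the argument of $V$ to $t^2/w^2$; substituting $t=ws$ produces $I_\delta(w)=\psi(N)\sqrt{\n}\,\ol{\w'(m_1/b)}\int_\R V(s^2)e^{2\pi iws}\,ds$. The inner integral is the Fourier transform of the even, compactly supported function $s\mapsto V(s^2)$, so Fourier inversion at $s=0$ gives $\int_\R\bigl(\int_\R V(s^2)e^{2\pi iws}\,ds\bigr)\,dw=V(0)$; by evenness in $w$ the half-line integral is $V(0)/2$, and the identity $m_1/b=\sqrt{m_1\n/m_2}$ completes \eqref{Icell}. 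The hardest step will be the local analysis at primes $p\,|\,\n$, where one must verify that the $p$-adic phases $\w_p(p)=\ol{\w'(p)}$ conspire with the volume factors $p^{-v_p(b)}$ to produce precisely the Dirichlet character value $\ol{\w'(m_1/b)}$, with the $N$-primes contributing no additional phase because $\lambda$ is forced into $1+p^{N_p}\Z_p$.
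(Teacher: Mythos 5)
Your proposal is correct and follows essentially the same path as the paper's proof: compute $H_\delta(\Q)$, change variables to collapse to a single integral over $\A$ in the off-diagonal entry, factor it into archimedean and finite parts, identify the archimedean piece via the $V$-function, and finally handle the $w$-integration by Fourier inversion of the even compactly supported function $s\mapsto V(s^2)$. The one substantive difference is that the paper dispenses with the finite-part computation by citing Proposition~3.3 of the companion paper [KL1], whereas you carry out the local analysis at each prime directly (the argument via $\lambda\in Z(\Q_p)$, the determinant/integrality constraints forcing $m_1m_2=b^2\n$ with $b\mid\gcd(m_1,m_2)$, and the collapsing of the local character and volume factors to $\psi(N)\ol{\w'(m_1/b)}/b$). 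Your version is self-contained where the paper outsources; both land at exactly the same formula.

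One small point worth tightening: you invoke Fourier inversion at $s=0$ for $r(s)=V(s^2)$ without verifying that $\hat r$ is integrable. The paper explicitly notes that since $f_\infty\in C_c^m(G^+//K_\infty)$ for $m\ge 5$, Proposition~\ref{fV} gives $V\in C^2_c$, hence $\hat r(w)\ll(1+w^2)^{-1}$ by the standard integration-by-parts estimate (Proposition~\ref{1911}), which is what licenses inversion. You do flag compact support and evenness of $r$, but the decay of $\hat r$ — and hence the differentiability hypothesis on $V$ — is the piece that should be stated.
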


\begin{proof}
For this choice of $\delta$, we find as in \cite{KL1} that
\[H_\delta(\Q)=\left\{(\mat{1}{\frac{m_2}{m_1}t}{}1,\mat1t{}1)
  |\, t\in\Q\right\}.\]
Now note that
\[  \mat{y_1}{t_1}{}{1}^{-1}\mat{\frac{m_2}{m_1}}{}{}1 \mat{y_2}{t_2}{}{1}
  = \mat{\frac{m_2y_2}{m_1y_1}}{\frac1{y_1}(\frac{m_2}{m_1}t_2-t_1)}{}1\]
\[ =\mat{m_1y_1}{}{}{m_1y_1}^{-1}\mat{m_2y_2}{m_2t_2-m_1t_1}{}{m_1y_1}.\]
Here we view $m_j\in \Q^*\subset\A^*$ and $y_j\in\R^+$.
Thus because $f$ is invariant under $Z(\R)^+Z(\Q)$,
\[I_\delta(f)=\IIL{\left\{\left(\frac{m_2}{m_1}t_2,t_2\right)\in \Q^2\right\}\bs\A^2}
  f(\mat{m_2y_2}{m_2t_2-m_1t_1}{0}{m_1y_1})\frac{\theta(m_1t_1-m_2t_2)}{\sqrt{y_1y_2}}
  d(t_1,t_2).\]
Here $d(t_1,t_2)$ is the quotient measure coming from $dt_1dt_2$.
In $\A^2$, let $t=m_2t_2-m_1t_1$.  Then the map $\A^2\rightarrow \A^2$ defined
  by $(t_1,t_2)\mapsto (t,t_2)$ induces a homeomorphism between the quotient spaces
\[\left\{\left(\tfrac{m_2}{m_1}t_2,t_2\right)| t_2\in \Q\right\}\bs\A^2\longrightarrow
\{(0,t_2)|t_2\in \Q\}\bs \A^2=\A\times(\Q\bs\A).\]
 Noting that $dt\,dt_2=|m_1|_{\A}dt_1dt_2=dt_1dt_2$, we see that the 
  quotient measure is $d(t_1,t_2)=dt\,dt_2$, where we use $dt_2$ now to 
  represent the quotient measure on $\Q\bs\A$.  Thus
\[I_\delta(f)=\int_{\A}\int_{\Q\bs\A}
 f(\smat{m_2y_2}{t}{}{m_1y_1})
  \frac{\theta(-t)dt_2\,dt}{\sqrt{y_1y_2}}
=\int_\A f(\smat{m_2y_2}{t}{}{m_1y_1}) \frac{\theta(-t)}
 {\sqrt{y_1y_2}}dt.\]
  The integral factors as $I_\delta(f)_{\fin}I_\delta(f)_\infty$.
  As shown in Proposition 3.3 of \cite{KL1}, the finite part vanishes unless
  $m_1m_2=b^2\n$ for some positive integer $b|\gcd(m_1,m_2)$, in which case
\[I_\delta(f)_{\fin} = \frac{\psi(N)}{b\w'(m_1/b)}.\]
The archimedean part is
\[I_\delta(f)_\infty=\frac1{\sqrt{y_1y_2}}\int_{-\infty}^\infty
  f_\infty(\mat{m_2y_2}{t}{}{m_1y_1})e^{2\pi it}dt,\]
and \eqref{cellys} follows upon using \eqref{Vf}.


Set $w=y_1m_1=y_2m_2$ in \eqref{cellys}, so
 $\frac{1}{\sqrt{y_1y_2}}=\frac{\sqrt{m_1m_2}}w$.
  Then
\[I_\delta(w) =\frac{\psi(N)\sqrt{m_1m_2}}{b\w'(m_1/b)}\frac1w\int_{-\infty}^\infty
  V(\frac{t^2}{w^2})e^{2\pi i t}dt.\]
Replacing $t$ by $wt$ and $dt$ by $\frac{dt}w$, and using
   $\frac{\sqrt{m_1m_2}}b=\sqrt{\n}$, we have
\[\int_0^\infty I_\delta(w)dw=
 \frac{\psi(N)\sqrt{\n}}{\w'(\sqrt{m_1\n/m_2})}
\int_0^\infty\int_{-\infty}^\infty V(t^2)e^{2\pi i wt}dt\,dw.\]
Let $r(t)=V(t^2)$, a compactly supported continuous even function.  Note that
  \[\hat{r}(w)=\int_{-\infty}^\infty V(t^2)e^{-2\pi i wt}dt
  =\int_{-\infty}^\infty V(t^2)e^{2\pi i wt}dt
\]
 is also an even function.  By Fourier inversion,
\[\frac12V(0)=\frac12r(0)=\frac12\int_{-\infty}^\infty \hat{r}(w)dw
  =\int_0^\infty \hat{r}(w) dw,\]
which proves \eqref{Icell}.  We recall that Fourier inversion is valid so long as
  $r$ and $\hat{r}$ are both integrable.  Because $f_\infty\in C_c^m(G^+//K_\infty)$
  for $m\ge 5$, $V$ and $r$ are compactly supported and
  twice continuously differentiable 
  by Proposition \ref{fV}.  It follows by a standard argument (see Proposition
  \ref{1911} below) that $\hat{r}(w)
  \ll (1+w^2)^{-1}$ and hence is integrable.
\end{proof}

\subsubsection{Second cell terms}

We will need a few facts and definitions.
For the function $k(z_1,z_2)$
  attached to $f_\infty$ in \eqref{kzz}, \index{keywords}{Zagier transform}
  define its {\bf Zagier transform} by \index{notations}{Z@$\mathcal{Z}$}\index{notations}{Z@$Z(t)=\mathcal{Z}k(1,t)$}
\[\mathcal{Z}k(s,t)=\iint_{\mathbf H} k(z+t,\frac{-1}z) y^s dz,\]
  where $dz=\frac{dx\,dy}{y^2}$.
Using Proposition \ref{kV}, we see that
\[\mathcal{Z}k(s,t)= \iint_{\mathbf H}V\Bigl(\frac{|z^2+tz+1|^2}{y^2}\Bigr)y^sdz
=\iint_{\mathbf H}V\Bigl(\frac{|z^2+1-\frac{t^2}4|^2}{y^2}\Bigr)y^sdz,\]
where the second expression comes from completing the square and
  replacing $z$ by $z-\frac t2$.  We refer to Proposition 4 of \cite{Za} (where the
  above is denoted $V(s,t)$) for the absolute convergence and other information.
  In Section 5 of \cite{Za}, it is computed in terms of the Selberg transform $h(t)$.

We will only be interested in the case $s=1$, so we set
\begin{equation}\label{Zt}Z(t)=\mathcal{Z}k(1,t)
=\iint_{\mathbf H}V\left(\frac{|z^2+1-\frac{t^2}4|^2}{y^2}\right)\frac{dy}ydx.
\end{equation}
  This is expressed in terms of the Selberg transform $h(t)$ in
  (4.12) of \cite{Za}.
Since $V$ is compactly supported, $Z(t)$ is also compactly supported as
   a function of $t\in \R$. Indeed, writing
  $u=x^2$, $v=y^2$ and $w=-(1-\tfrac{t^2}4)$, we have
\[\frac{|z^2+1-\tfrac{t^2}4|^2}{y^2}=\frac{(u-v-w)^2+4uv}v=\frac{(-u-v+w)^2+4vw}v
  \ge 4w=t^2-4.\]
Thus, if $|t|$ is sufficiently large, $t^2-4$ exceeds the supremum of $\Supp(V)$,
  and the integrand of \eqref{Zt} is $0$.

  The orbital integral attached to $\delta =\smat 0{-\mu}{1}{0}$
involves the Fourier transform
\[\widehat{Z}(a)=\int_{-\infty}^\infty Z(t)e^{-2\pi i at}dt
=\int_{-\infty}^\infty Z(t)e^{2\pi i at}dt.\]

\begin{proposition}\label{Z} For $a\neq 0$, we have
\[\widehat{Z}(a) = \frac{i}{4a}\int_{-\infty}^\infty
   J_{2it}(4\pi a)\frac{h(t)\,t} {\cosh(\pi t)}\,dt\]
for the $J$-Bessel function, and the Selberg transform $h(t)$ of
  $f_\infty$.\index{notations}{J@$J_s$, $J$-Bessel function}
\index{keywords}{Bessel function!$J$-}
\end{proposition}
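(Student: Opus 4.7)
The plan is to reduce the evaluation of $\widehat{Z}(a)$ to Zagier's spectral expansion of the Zagier transform (formula (4.12) of \cite{Za}), which already expresses $Z(t)$ in terms of the Selberg transform $h(r)$ of $f_\infty$. First I would substitute the inverse Selberg transform
\[V(u)=\frac{1}{4\pi}\int_{-\infty}^{\infty}P_{-\frac{1}{2}+ir}(1+\tfrac{u}{2})\,h(r)\tanh(\pi r)\,r\,dr\]
into the definition \eqref{Zt} of $Z(t)$. Since $V$ is compactly supported (so the $\iint_{\mathbf H}$ is really over a compact region) and $Z(t)$ is compactly supported in $t$, Fubini lets me rearrange to write
\[Z(t)=\int_{-\infty}^{\infty}K(r,t)\,h(r)\,dr,\qquad K(r,t)=\frac{r\tanh(\pi r)}{4\pi}\iint_{\mathbf H}P_{-\frac{1}{2}+ir}\!\!\left(1+\tfrac{1}{2}\cdot\tfrac{|z^2+1-t^2/4|^2}{y^2}\right)\!\frac{dx\,dy}{y}.\]

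Next I would compute $\widehat{Z}(a)$ by taking the Fourier transform in $t$ and exchanging the $t$- and $r$-integrals, again justified by compact support. The entire problem is then reduced to establishing the single Bessel identity
\[\widehat{K}(r,a)=\int_{-\infty}^{\infty}K(r,t)e^{2\pi iat}\,dt=\frac{i\,r}{4a\cosh(\pi r)}\,J_{2ir}(4\pi a)\qquad(a\neq 0),\]
since plugging this back into the spectral representation of $Z$ gives exactly the claimed formula.

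To prove the Bessel identity I would substitute $t=2s$ to turn $|z^2+1-t^2/4|^2/y^2$ into $|z^2+1-s^2|^2/y^2$, complete the square in the $x$-integration, and then do the $s$-integration against $e^{4\pi ias}$. The resulting inner integral over $\mathbf H$ of $P_{-\frac12+ir}$ against a Gaussian-type kernel can be evaluated using a classical Mellin-Barnes or Kontorovich--Lebedev identity relating the Legendre function $P_{-\frac12+ir}$ to the $J$-Bessel function $J_{2ir}$; the needed identity is essentially the one tabulated as 6.576 / 6.699 of \cite{GR}, and is also contained in the derivation of (4.12) in \S5 of \cite{Za}. Alternatively, one can bypass much of this bookkeeping by invoking Zagier's (4.12) directly to get a spectral formula for $Z(t)$ in which the kernel is already explicit, and then computing its Fourier transform using the standard integral representation $J_{\nu}(x)=\frac{1}{\pi}\int_{0}^{\pi}\cos(x\sin\theta-\nu\theta)\,d\theta$ (extended to complex order).

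The main obstacle is nailing down the Bessel identity for $\widehat K(r,a)$ with the correct constants: the Legendre side and the Bessel side must be matched carefully, and one must check that all interchanges of integration are legitimate in the relevant strips of $r$. Once the constants are confirmed---either by appeal to \cite{Za} (4.12) or by a direct computation via \cite{GR}---the remainder of the argument is a routine application of Fubini's theorem on compactly supported integrands, and the factor of $\frac{i}{4a}$ emerges naturally from the change of variables $t\mapsto 2s$ combined with the normalization of $J_{2ir}(4\pi a)$.
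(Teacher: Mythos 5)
Your proposal is correct in outline and follows essentially the same route as the references the paper cites in lieu of a proof (Joyner \S2.1, X.~Li Lemma 3.4, both tracing back to Zagier \S5). Since the paper's own ``proof'' consists only of these citations plus a remark on convergence, your sketch is in fact more explicit than what the paper provides: the reduction $Z(t)=\int K(r,t)h(r)\,dr$ via the inverse Selberg transform, and the identification of the single Bessel identity
\[\widehat{K}(r,a)=\frac{ir}{4a\cosh(\pi r)}J_{2ir}(4\pi a)\]
as the computational core, is precisely the structure of the cited arguments. Two small inaccuracies worth flagging, neither fatal: (1) the region $\{z\in\mathbf H: V(|z^2+1-t^2/4|^2/y^2)\neq0\}$ is not compact for all $t$ --- for $|t|\geq 2$ it accumulates at $y=0$ --- so the Fubini interchange should be justified by the absolute convergence of the Zagier transform (Zagier's Proposition~4, referenced in the paper just before \eqref{Zt}) rather than by compactness alone; and (2) the substitution $t=2s$ produces a Jacobian factor $dt=2\,ds$ that must be carried through the computation of the constant $\tfrac{i}{4a}$. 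The heavy lifting --- the Legendre-to-Bessel integral --- is appropriately delegated to \cite{Za} (4.12) and the Gradshteyn--Ryzhik tables, exactly as the paper itself does.
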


\begin{proof} This is due to Zagier.  The proof is explained
  in \S2.1 of \cite{Joy}.  Another account is given in \cite{LiX},
  Lemma 3.4.  The absolute convergence of the integral holds by the fact that
  $h\in PW^m(\C)^{\text{even}}$ for $m>2$ (see the proof of Proposition
  \ref{Klbound} below).
\end{proof}

Also, for any $c\in \c_{\w'}\Z^+$, we will need the following
  generalized Kloosterman sum: \index{keywords}{Kloosterman sum!generalized}
\begin{equation}\label{Kloos} \index{notations}{S k@$S_{\chi}(a,b;\n;c)$}
S_{\w'}(a,b;\n;c)=\sum_{\scri d,d'\in \Z/c\Z,\atop{\scri dd'=\n}}
  \ol{\w'(d)}e(\frac{ad+bd'}{c}).
\end{equation}
We will describe some basic properties of these sums in Section \ref{Klsec}.

\begin{proposition}\label{cell2}  Let $\delta=\smat{0}{-\mu}10$
  for $\mu\in\Q^*$.
  Then $I_\delta(f)$ is nonzero only if $\mu=\frac{\n}{c^2}$ for some positive
  integer $c\in N\Z$.  Under this condition,
\begin{equation}\label{cell2ys}
I_\delta(f) = {\psi(N)}\frac{S_{\w'}(m_2,m_1;\n;c)}
  {\sqrt{y_1y_2}} \IIL{\R\times\R}k(z_1,\frac{-\n}{c^2z_2})
  e^{2\pi i(m_2x_2-m_1x_1)}dx_1dx_2,
\end{equation}
where $k(z_1,z_2)=f_\infty(g_1^{-1}g_2)$ as in \eqref{kzz}.
Taking $I_\delta(w)$ to be the above quantity when $w=m_1y_1=m_2y_2$,
the integral
$\int_0^\infty I_\delta(w)dw$ equals
\begin{equation}\label{Icell2}
  \frac{i\sqrt{\n}\psi(N)}{4}\frac{S_{\w'}(m_2,m_1;\n;c)}{c}
  \int_{-\infty}^\infty J_{2it}\left(\frac{4\pi\sqrt{\n m_1m_2}}c\right)
  \frac{h(t)\,t}{\cosh(\pi t)}dt.
\end{equation}
\end{proposition}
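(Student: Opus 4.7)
The plan is to compute $I_\delta(f)$ directly and then evaluate the $w$-integral via the Zagier transform. First I would verify that $H_\delta(\Q)$ is trivial for $\delta=\smat{0}{-\mu}{1}{0}$: conjugating $\delta$ by $(\smat{1}{a}{0}{1},\smat{1}{b}{0}{1})$ produces $\smat{a}{-\mu-ab}{1}{-b}$, which forces $a=b=0$. Hence the orbital integral is simply an unrestricted integral over $N(\A)\times N(\A)$. A direct matrix computation gives
\[\smat{y_1}{t_1}{0}{1}^{\!-1}\delta\smat{y_2}{t_2}{0}{1}=\smat{-t_1y_2/y_1}{-(\mu+t_1t_2)/y_1}{y_2}{t_2},\]
so because $f=f_\infty\times\ff$, the integral factors as $I_\delta(f)=I_\delta(f_\infty)\cdot I_\delta(\ff)$.

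For the archimedean factor, $\delta$ acts on $\mathbf{H}$ by $z\mapsto-\mu/z$, so using bi-$K_\infty$-invariance and $Z(\R)$-invariance of $f_\infty$ together with Proposition \ref{kV} identifies $f_\infty(g_1^{-1}\delta g_2)$ with $k(z_1,-\mu/z_2)$, where $z_j=t_j+iy_j$. Writing the characters archimedeanly as $e^{2\pi i(m_2x_2-m_1x_1)}$ yields the archimedean integral in \eqref{cell2ys}. For the finite factor, the support condition on $\ff$ (compact, concentrated on $Z(\Af)M_1(\n,N)$) forces the lower-left entry $y_{2,{\fin}}=1$ and the determinant $-\mu$ to generate $\n\Zhat^*$ up to the center, which localizes to $\mu=\n/c^2$ with $c\in N\Z^+$; the resulting residue sum over $d\bar d\equiv\n\pmod c$, twisted by $\ol{\w'(d)}$, assembles into $\psi(N)S_{\w'}(m_2,m_1;\n;c)$. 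This computation is the exact analogue of Proposition 3.3 of \cite{KL1}, with the weight-$0$ normalization $\meas(\ol{K_1(N)})^{-1}=\psi(N)$ replacing their weight-$\k$ factor.

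For the $w$-integrated formula \eqref{Icell2}, I would substitute $w=m_1y_1=m_2y_2$ in \eqref{cell2ys} and change variables $u_j=m_jx_j$, which by the scaling relation \eqref{kcz} converts $k(z_1,-\n/(c^2z_2))$ into $k(u_1+iw,-a^2/(u_2+iw))$ with $a=\sqrt{\n m_1m_2}/c$. A further substitution $v_j=u_j/a$, $s=w/a$, again using \eqref{kcz}, reduces the triple integral to
\[a^2\int_0^\infty\int_\R\int_\R k(v_1+is,-1/(v_2+is))\,e^{2\pi ia(v_2-v_1)}\,dv_1\,dv_2\,\frac{ds}{s}.\]
Setting $t=v_1-v_2$ and recognizing the inner integral as $Z(t)=\iint_{\mathbf{H}}k(z+t,-1/z)\,dx\,dy/y$, the whole expression collapses to $a^2\widehat Z(a)$, and Proposition \ref{Z} converts this into the $J$-Bessel integral with kernel $h(t)t/\cosh(\pi t)$. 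Collecting constants via $\sqrt{m_1m_2}/c\cdot a=\sqrt{\n m_1m_2}\cdot\sqrt{m_1m_2}/c^2$ reproduces exactly \eqref{Icell2}.

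The main obstacle is the finite part: establishing the support condition $\mu=\n/c^2$ with $N\mid c$ and identifying the resulting counting sum with $S_{\w'}(m_2,m_1;\n;c)$ requires a delicate local analysis at each prime $p\mid Nn$, tracking the Iwasawa decomposition of the skew matrix $\smat{-t_1y_2/y_1}{-(\mu+t_1t_2)/y_1}{y_2}{t_2}$ against the support of $\ff_p$ on $Z(\Q_p)M_1(\n,N)_p$. A secondary technical point is justifying the interchange of $\int_0^\infty dw$ with the archimedean double integral and then with the Fourier-in-$t$ integration; both rely on the compact support of $V$ (from Proposition \ref{fV}) together with the decay of $Z(t)$ noted after \eqref{Zt}, so that Fubini applies in each exchange.
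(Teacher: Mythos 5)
Your proposal is correct and follows essentially the same path as the paper's proof: trivial stabilizer, factorization into finite and archimedean parts, citation of the [KL1] computation for the finite part, and reduction of the $w$-integral to $\widehat Z(a)$ via the Zagier transform and Proposition \ref{Z}. Two small notes: the paper cites Proposition 3.7 of [KL1] for the finite part of the second cell (not 3.3, which is for the first cell), and the paper organizes the change of variables in one step ($x_j=\sqrt{\mu m_j'/m_j}\,t_j$ with $m_1'=m_2$, $m_2'=m_1$) rather than your two-step $u_j=m_jx_j$, $v_j=u_j/a$, though these yield the same $t_j=v_j$ and the constant $\frac{\n\sqrt{m_1m_2}}{c^2}$ in front of the triple integral either way.
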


\begin{proof}
For $\delta=\smat{}{-\mu}{1}{}$, we find that $H_\delta(\A)=\{(1,1)\}$.
  Given $y_1,y_2>0$, we need to integrate
\begin{equation}\label{fcell2}
f(\smat{y_1}{x_1}{}{1}^{-1}\smat{}{-\mu}{1}{}
  \smat{y_2}{x_2}{}{1})\theta(m_1x_1-m_2x_2).
\end{equation}
 Again the integral factors as $I_\delta(f)_{\fin} I_\delta(f)_\infty$.
Because $f_\infty$ is supported on $G(\R)^+$, the archimedean integral vanishes
  unless $\mu>0$.  Under this assumption, the finite part was shown
  in Proposition 3.7 of \cite{KL1} to vanish unless $\mu=\frac{\n}{c^2}$
  for some $c\in N\Z^+$, in which case
\begin{equation}\label{Jfin}
I_\delta(f)_{\fin} = \frac{\psi(N)}{\w'(-1)}S_{\w'}(m_2,m_1;\n;c)
  =\psi(N)S_{\w'}(m_2,m_1;\n,c).
\end{equation}
From \eqref{kzz},
  $f_\infty(\smat{y_1}{x_1}{}{1}^{-1}\smat{}{-\mu}{1}{}
  \smat{y_2}{x_2}{}{1})=k(z_1,\frac{-\mu}{z_2})$, so
 the archi\-medean part can be written as
\[I_\delta(f)_\infty = \frac{1}{\sqrt{y_1y_2}}\iint_{\R\times\R}
  k(z_1,\frac{-\mu}{z_2}) e(m_2x_2-m_1x_1)dx_1dx_2.\]
Setting $\mu=\frac{\n}{c^2}$, \eqref{cell2ys} follows immediately.

  Now write $x_1=\sqrt{\mu m_2/m_1}\, t_1$ and
  $x_2=\sqrt{\mu m_1/m_2}\, t_2$, so that $dx_1dx_2=\mu dt_1dt_2
  =\frac{\n}{c^2}dt_1dt_2$.
  Then
\[\left(x_1+iy_1,\frac{-\mu}{x_2+iy_2}\right)
  =\sqrt{\frac{\mu m_2}{m_1}}
  \left(t_1+\frac{iy_1m_1}{\sqrt{\mu m_1m_2}},\,\frac{-1}{t_2+\frac{im_2y_2}
  {\sqrt{\mu m_1m_2}}}\right).\]
 The scalar in front does not affect the value of $k$ by \eqref{kcz}.
Set $w=m_1y_1=m_2y_2$, so that
 $\frac1{\sqrt{y_1y_2}}= \frac{\sqrt{m_1m_2}}w$.  The archimedean part of
  the integral of $I_\delta(w)$ is the product of $\frac{\n\sqrt{m_1m_2}}{c^2}$ with
\[
\int_0^\infty \iint_{\R\times\R}
 k\Bigl(t_1+\frac{\scri iwc}{\scri \sqrt{\n m_1m_2}}, \frac{-1}{t_2+\frac{iwc}
 {\sqrt{\n m_1m_2}}}\Bigr)
e^{2\pi i\frac{\scri\sqrt{\n m_1m_2}}{\scri c}
  (t_2-t_1)}dt_1dt_2\frac{dw}w.\]
Arguing formally for the moment, we exchange the order of integration.
Substitute $t=t_1-t_2$ for $t_1$, $x$ for $t_2$, and $y$ for
 $\frac{wc}{\sqrt{\n m_1m_2}}$.  Then
  because $\frac{dw}w$ is a multiplicative Haar measure,
  the above integral is
\begin{equation}\label{triple}
=\int_{-\infty}^\infty\int_{-\infty}^\infty\int_0^\infty
  k\left(x+iy+t,\frac{-1}{x+iy}\right)\frac1ydy\,dx\,
  e^{-2\pi it\frac{\scri\sqrt{\n m_1m_2}}{\scri c}} dt
\end{equation}
\[=  \widehat{Z}(\frac{\sqrt{\n m_1m_2}}c)
=\frac{ic}{4\sqrt{\n m_1m_2}}
  \int_{-\infty}^\infty J_{2it}(\frac{4\pi\sqrt{\n m_1m_2}}c)\frac{h(t)\,t}
  {\cosh(\pi t)}dt\]
by Proposition \ref{Z}.
  Formula \eqref{Icell2} now follows upon multiplying by
  $\frac{\n\sqrt{m_1m_2}}{c^2}$ and the finite part \eqref{Jfin}.
The exchange of the order of integration is justified by Fubini's Theorem.
  Indeed, as explained after \eqref{Zt}, $\mathcal{Z}|k|(1,t)$ is compactly 
  supported as a function of $t$.  Therefore the triple integral \eqref{triple}
  is absolutely convergent.
\end{proof}

The geometric side is equal to the main term from Proposition \ref{cell1} plus
  the sum over $c\in N\Z^+$ of the term in Proposition \ref{cell2}.

\begin{proposition}\label{Klbound}  We have the following bound for the sum of
  the Kloosterman terms on the refined geometric side:
\begin{equation}\label{klas}
\sum_{c\in N\Z^+}
  \frac{i\sqrt{\n}\psi(N)}{4}\frac{S_{\w'}(m_2,m_1;\n;c)}{c}
  \int_{-\infty}^\infty J_{2it}\left(\frac{4\pi\sqrt{\n m_1m_2}}c\right)
  \frac{h(t)\,t}{\cosh(\pi t)}dt
\end{equation}
\[=O(N^{\e}),
\]
where the implied constant depends on $\n$, $h$, $m_1$, $m_2$, and $0<\e<1$.
\end{proposition}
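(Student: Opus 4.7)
The plan is to combine the Weil bound \eqref{Sb} for the generalized twisted Kloosterman sums with a decay estimate for the Bessel integral obtained via a contour shift exploiting the Paley-Wiener nature of $h$.

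First I would apply \eqref{Sb} with $\chi = \w'$, using that $\c_{\w'}$ divides $N$, to get
\[
|S_{\w'}(m_2,m_1;\n;c)| \le \tau(\n)\tau(c)\,(m_1\n,\,m_2\n,\,c)^{1/2}\, c^{1/2}\, N^{1/2}.
\]
Since $\n,m_1,m_2$ are fixed, the gcd factor is $O(1)$, and by \eqref{psiN}, $\psi(N) \ll_\e N^{1+\e}$.

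Next, writing $y=y(c)=\sqrt{\n m_1 m_2}/c$, I would establish the uniform bound
\[
I(c) \eqdef \int_{-\infty}^\infty J_{2it}(4\pi y)\frac{h(t)\,t}{\cosh(\pi t)}\,dt \;\ll\; c^{-1} \qquad (c\ge 1).
\]
For $c \le 4\pi\sqrt{\n m_1 m_2}$ there are only finitely many terms and the estimate is trivial, so the substance is the case $y \to 0$. Since $h(iz)\in PW^m(\C)^{\text{even}}$ is entire with horizontal decay of order $m \ge 10$, I would shift the contour from $\Im t=0$ down to $\Im t=-\tfrac{3}{2}-\delta$ for small $\delta>0$. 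The function $1/\cosh(\pi t)$ has simple poles at $t=-i/2$ and $t=-3i/2$; the residue at $t=-i/2$ contributes $ih(-i/2)\,J_1(4\pi y)=O(y)=O(c^{-1})$ to $I(c)$ since $J_1(x)\sim x/2$ as $x\to 0$, while the residue at $t=-3i/2$ contributes $O(y^3)=O(c^{-3})$. On the shifted contour, parametrize $t=\tau-i(\tfrac{3}{2}+\delta)$ and use the bound $|J_{2\sigma+2i\tau}(x)|\ll x^{2\sigma}/|\Gamma(1+2\sigma+2i\tau)|$ for $\sigma>0$ and $x$ bounded, combined with Stirling and the Paley-Wiener estimate \eqref{PWm} on $h$, to see the shifted integral is $O(y^{3+2\delta})$. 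All together $I(c)\ll c^{-1}$.

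Putting the pieces together,
\[
\sum_{c\in N\Z^+}\frac{|S_{\w'}(m_2,m_1;\n;c)|}{c}\,|I(c)|
\;\ll\; N^{1/2}\!\!\sum_{c\in N\Z^+}\tau(c)\,c^{-3/2}
\;\ll_\e\; N^{1/2}\!\sum_{k\ge 1}\tau(Nk)(Nk)^{-3/2}
\;\ll_\e\; N^{-1+\e},
\]
and multiplying by $\psi(N)\sqrt{\n}/4 \ll_\e N^{1+\e}$ yields $O_\e(N^{2\e})$, as required. The main obstacle is the bound $I(c)\ll c^{-1}$: the contour shift is conceptually simple, but one must carefully justify the vanishing at $|\Re t|\to\infty$, use the correct uniform estimate for $J_{2\sigma+2i\tau}(x)$ on the shifted line, and track the half-integer residues of $1/\cosh(\pi t)$. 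Once this is in hand, the Weil bound and the divisor estimate close the argument cleanly.
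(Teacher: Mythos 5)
Your proposal is correct, and it follows the paper's general framework (Weil bound for $S_{\w'}$, contour shift exploiting the Paley–Wiener decay of $h$, divisor-sum estimate to close), but the implementation of the contour shift is genuinely different. The paper shifts $s$ (where $t=-is$) from $\Re s = 0$ only to $\Re s = \sigma_0 = \tfrac12-\tfrac\e2 < \tfrac12$, deliberately \emph{stopping short} of the first pole of $1/\cos(s\pi)$ at $s=\tfrac12$; no residues are collected and one obtains $I(c) \ll y^{2\sigma_0} = y^{1-\e}$. You instead shift past the poles of $1/\cosh(\pi t)$ at $t=-i/2$ and $t=-3i/2$, collect the residues, and observe that the dominant term is $h(-i/2)J_1(4\pi y)=O(y)$, giving the marginally cleaner $I(c)\ll c^{-1}$ without an $\e$-loss. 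Both bounds close the argument equally well — $O(N^{2\e})$ for you versus $O(N^{3\e})$ for the paper, each of which is $O(N^\e)$ after renaming.

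One thing worth noting about the trade-off: the paper's restraint in not crossing $s=\tfrac12$ is deliberate and pays off later. In Section \ref{Val} (specifically Proposition \ref{absconv}) the same Bessel-integral estimate is reused for test functions $h$ that are only holomorphic in a strip $|\Im t|<A$ with $A$ just above $\tfrac14$, where your contour at $\Im t = -\tfrac32-\delta$ is unavailable. Taking $\sigma_0 = \tfrac14+\e$ there yields the weaker $I(c)\ll y^{1/2+2\e}$, which still gives a power-saving in $N$ but not $O(N^\e)$. So your sharper residue computation is perfectly fine under the Paley–Wiener hypothesis of Proposition \ref{Klbound}, but it does not specialize to the generalized setting the way the paper's version does. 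Also, a sign bookkeeping note: the residue of $F(t)=J_{2it}(4\pi y)\,h(t)t/\cosh(\pi t)$ at $t=-i/2$ contributes $-ih(-i/2)J_1(4\pi y)$ to the integral (you wrote $+i$), but since you only use the magnitude this does not affect the bound.
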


\begin{proof}
(See also Theorem 16.8 on page 414 of \cite{IK} for the case $\n=1$, $\w=1$.)
We will show below that
\begin{equation}\label{Jb}
 \int_{-\infty }^{\infty} J_{2it}\left( \frac{4\pi \sqrt{\n m_1 m_2}}{c}\right) \frac{h(t)\,t}{\cosh(\pi t)} dt
   \ll \left( \frac{2\pi \sqrt{\n m_1 m_2}}{c}\right)^{1-\e}.
\end{equation}
In Theorem \ref{kloos} we will prove the Weil-type bound
\[|S_{\w'}(m_2,m_1;\n;c)|\le \tau(\n)\tau(c)(m_2\n,m_1\n,c)^{1/2}c^{1/2}\c_{\w'}^{1/2}.\]
Together these statements imply that \eqref{klas} is
\begin{equation}\label{KJb}
\ll\psi(N)\c_{\w'}^{1/2}\sum_{c\in N\Z^+}
  \frac{\tau(c)}{c^{3/2-\e}}\le
  \frac{\psi(N)N^{1/2}\tau(N)}{N^{3/2-\e}}\sum_{c\in \Z^+}
  \frac{\tau(c)}{c^{3/2-\e}}.
\end{equation}
Using $\tau(N)\ll N^\e$ and
  $\psi(N)=N\prod_{p|N}(1+\frac1p)\ll N^{1+\e}$,
this gives
\[\eqref{klas}\ll \frac{N^{1+\e}N^\e}{N^{1-\e}}=N^{3\e},\]
as needed.


It remains to establish \eqref{Jb}.
  We let $s=\sigma+it$ be a complex variable.
  Let $\sigma_0 < \tfrac12$ be a fixed positive number.
   The restriction on $\sigma_0$ is to ensure that $\cosh(-i\pi s)$ is nonzero on
   the strip $0\le \sigma \le \sigma_0$.
From the integral representation\index{notations}{J@$J_s$, $J$-Bessel function}
\index{keywords}{Bessel function!$J$-}
\[J_s(x)=\frac{(x/2)^s}{\Gamma(s+1/2)\sqrt{\pi}}\int_0^\pi\cos(x\cos\theta)\sin^{2s}\theta\,d\theta
  \qquad(\Re s >-\tfrac12)\]
(\cite{AAR}, Corollary 4.11.2),  we see that for $\sigma\ge 0$,
   \[ J_{s}(x)\ll \left( \frac x2 \right)^{\sigma} \frac{1}{|\Gamma(s+\frac 12)|} \]
for an absolute implied constant.
By the hypotheses on $h(t)$, there exists a positive constant $C$ such that 
   \[ h(-is) \ll \frac{C^{\sigma}}{(1+|t|)^M}, \]
for any $m\ge M\ge 2$.
By these asymptotics, the integrand of \eqref{Jb} is
\begin{equation} \label{geoJ}
   J_{2s}\left( \frac{4\pi \sqrt{\n m_1 m_2}}{c}\right) \frac{h(-is)(-is)}{\cosh(-is\pi)}
\end{equation}
  \[ \ll\left( \frac{2\pi \sqrt{\n m_1 m_2}}{c}\right)^{2\sigma}
  \frac{|s|C^{\sigma}}{(1+|t|)^M} \frac{1}{|\Gamma(2s+\frac 12)\cosh(-is\pi)|}.\]
By \cite{AAR}, Corollary 1.4.4,
    for $0 \le \sigma \le \tfrac12$ and $|t| \ge 1$,
   \[ |\Gamma(s)| = \sqrt{2\pi} |t|^{\sigma-1/2} e^{-\pi|t|/2} (1+O(1/|t|))\]
for an absolute implied constant.
Thus for $0 \le \sigma \le \sigma_0< \tfrac12$ and $|t| \ge 1$,
    \[ \frac{1}{|\Gamma(2s+\frac 12)\cosh(-is\pi)|} \ll |t|^{-2\sigma}
    \frac{e^{\pi |t|}}{|e^{(t-i\sigma)\pi} + e^{(-t+i\sigma)\pi}|}
  = O_{\sigma_0}(1). \]
    The left hand side is continuous and hence bounded on the compact set
  $0 \le \sigma \le \sigma_0$, $|t| \le 1$.
    Thus the expression is bounded on the whole strip $0 \le \sigma \le \sigma_0$.
  (We have imposed $\sigma_0<\tfrac12$ in order to avoid the zero 
  of $\cosh(-is\pi)=\cos(s\pi)$ at $s=\tfrac12$.)
    Hence for such $\sigma$,
\begin{equation} \label{Jbound}
   J_{2s}\left( \frac{4\pi \sqrt{\n m_1 m_2}}{c}\right) \frac{h(-is)(-is)}{\cosh(-is\pi)}
   \ll_{\sigma_0}\left( \frac{2\pi \sqrt{\n m_1 m_2}}{c}\right)^{2\sigma}
  \frac{|s|C^{\sigma}}{(1+|t|)^M}.
\end{equation}

Let $T$ be an arbitrary large real number, and let
   $\mathcal{R}_T$ be the contour defined by the rectangle with vertices
   $A=-iT$, $B=\sigma_0-iT$, $C=\sigma_0+iT$ and $D=iT$,
   with counter-clockwise orientation.
By the Cauchy residue theorem,
  \[ \int_{\mathcal{R}_T} J_{2s}\left( \frac{4\pi \sqrt{\n m_1 m_2}}{c}\right)
   \frac{h(-is)(-is)}{\cosh(-is\pi)} dt  = 0. \]
  By the estimate \eqref{Jbound} with $M\ge 2$,  we see that
 \[\lim_{T\to \infty} \int_{\ol{AB}} \eqref{geoJ}ds =
  \lim_{T\to \infty}\int_{\ol{CD}} \eqref{geoJ}ds=0,\]
 and that \eqref{geoJ} is absolutely integrable over $i\R$ and $\sigma_0+i\R$.
  Taking $T \rightarrow \infty$,
  \[\hskip -.2cm \int_{-\infty }^{\infty} J_{2it}\left( \frac{4\pi \sqrt{\n m_1 m_2}}{c}\right)
   \frac{h(t)\,t}{\cosh(\pi t)} dt
  = \int_{\Re s = \sigma_0} J_{2s}\left( \frac{4\pi \sqrt{\n m_1 m_2}}{c}\right)
   \frac{h(-is)(-is)}{\cosh(-is\pi)} dt\]
  \[ \ll_{\sigma_0} \left( \frac{2\pi \sqrt{\n m_1 m_2}}{c}\right)^{2\sigma_0} \]
  by \eqref{Jbound} with $M>2$.  Taking $\sigma_0=\tfrac12-\frac{\e}2$, we obtain
  \eqref{Jb}.
\end{proof}

\subsection{Final formulas}\label{final}

The formulas given below follow upon
  equating the geometric side with the spectral side in the two
  cases (primitive and refined) computed above.

\begin{theorem}[Pre-KTF]\label{main1}
Let $\mathcal{F}$ be an orthogonal eigenbasis of $T_\n$ for
  the space
  $L^2_0(N,\w')$ of cusp forms of weight $0$, chosen as in \eqref{Fbasis}.
  Let $h(iz)\in PW^{12}(\C)^{\text{even}}$, and let
  $f_\infty\in C_c^{10}(G^+//K_\infty)$, $V$, and $k$ be the associated
  functions as in \eqref{Selberg}, \eqref{V} and \eqref{kzz}.
  Then for any positive integers $m_1,m_2$, and real $y_1,y_2>0$, we have
\[\sqrt{\n} \sum_{u_j\in \mathcal{F}}
 \frac{\lambda_\n(u_j)\, a_{m_1}(u_j) \ol{a_{m_2}(u_j)}}
 {\|u_j\|^2}
  {h(t_j) K_{it_j}(2\pi m_1y_1)\,{K_{it_j}(2\pi m_2y_2)}}\]
  \[ +  \sqrt{\n}\sum_{\chi_1,\chi_2} \sum_{(i_p)}
  \int_{-\infty}^{\infty} \frac{\lambda_{\n}(\chi_1,\chi_2,it) h(t)
(\frac{m_1}{m_2})^{it} K_{it}(2\pi m_1 y_1) {K_{it}(2\pi m_2 y_2)}}
   {\|\phi_{(i_p)}\|^2\,|\Gamma(\frac{1}{2}+it)L_N(1+2it,{\chi_1}\ol{\chi_2})|^2} \]
\[\hskip 3cm\times\, \sigma_{it}(\chi_1',\chi_2',m_1)
  \ol{\sigma_{it}(\chi_1',\chi_2',m_2)} \,dt
\]
\[ =T(m_1,m_2,\n)
 \frac{\sqrt{\n}\,\psi(N)\ol{\w'(\sqrt{\tfrac{\n m_1}{m_2}})}}{\sqrt{{m_1m_2y_1y_2}}}
\int_{-\infty}^\infty V\left(\frac{t^2+m_1^2y_1^2 +{m_2^2y_2^2}}{m_1y_1m_2y_2}
-2\right)e^{2\pi it}dt
\]
\[
+ {\psi(N)}\sum_{c\in N\Z^+}\frac{S_{\w'}(m_2,m_1;\n;c)}
  {\sqrt{y_1y_2}} \IIL{\R\times\R}k(z_1,\frac{-\n}{c^2z_2})
  e^{2\pi i(m_2x_2-m_1x_1)}dx_1dx_2,
\]
where:
\begin{itemize}
\item $\chi_1,\chi_2$ range through all ordered pairs of finite order Hecke characters
  with $\chi_1\chi_2=\w$ and whose conductors satisfy $\c_{\chi_1}\c_{\chi_2}|N$.
\item $L_N(s,\chi_1\ol{\chi_2})$ is the partial $L$-function defined in \eqref{partialL}.
\item $\ds \lambda_{\n}(\chi_1,\chi_2,it)=\sum_{d|\n}\left(\tfrac{\n}{d^2}\right)^{it}
  \ol{\chi_1(d_N)\chi_2((\tfrac \n d)_N)}$,
  where $d_N$ is the idele which agrees with $d$ at all places $p|N$ and is $1$
  at all other places.
\item $(i_p)$ runs through all sequences $(i_p)_{p|N}$ with\index{notations}{ip@$(i_p), i_p$}
 \[\ord_p(\c_{\chi_2})\le i_p\le N_p-\ord_p(\c_{\chi_1}).\]
\item $\chi_1'$ is the Dirichlet character of modulus
  $N_1=\prod\limits_{p|N\atop{i_p<N_p}}p^{N_p}$ attached to $\chi_1$ as in \eqref{chi'}.
\item $\chi_2'$ is the Dirichlet character of modulus
  $N_2=\prod\limits_{p|N\atop{i_p>0}}p^{N_p}$ attached to $\chi_2$ as in \eqref{chi'}.
\item $z_1=x_1+iy_1$, $z_2=x_2+iy_2$.
\item All other notation is given in Theorem \ref{main} below.
\end{itemize}
\end{theorem}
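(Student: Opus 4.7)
The plan is to compute the integral $I$ defined in \eqref{I} in two different ways—via the spectral form of the kernel and via its geometric form—and equate the results. All the substantive work has already been assembled in the section: what remains is to combine the pieces and check that every interchange of summation and integration is legitimate.

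First I would establish the equality of both sides with $I$ by means of Corollary~\ref{c}. This corollary gives the pointwise identity $K=K_{\cusp}+K_{\cont}+K_{\res}$ on the diagonal region over which we are integrating, and Theorems~\ref{bc1} and~\ref{bc2} guarantee that each of the three summands is bounded on the compact set $(N(\Q)\bs N(\A))^2$ and that the spectral sums/integrals defining $K_{\cusp}$ and $K_{\cont}$ converge absolutely even after inserting absolute values. Hence Fubini applies to each piece separately, and we may split $I=I_{\cusp}+I_{\cont}+I_{\res}$.

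Next, I would identify each of the three spectral pieces with the terms that appear on the left-hand side of the theorem. The cuspidal contribution is computed in Proposition~\ref{ktfcusp}: using the basis $\mathcal{F}_\A$ from Proposition~\ref{diag}, the eigenvalue relation $R(f)\varphi=\sqrt{\n}\,h(t)\lambda_\n(\varphi)\varphi$, and Lemma~\ref{flem} to evaluate the unipotent integrals of $\varphi_u$ in terms of Fourier coefficients $a_r(u)\,y^{1/2}K_{it}(2\pi|r|y)$, this yields the first summand on the spectral side of the theorem (and explains why $m_1,m_2>0$ is enough by evenness/oddness of $u$). The residual term $I_{\res}$ vanishes by \eqref{Pres} and \eqref{residue} because the constant function on $N(\Q)\bs N(\A)$ is orthogonal to the nontrivial character $\theta_{m_j}$ whenever $m_j\neq 0$. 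The continuous term $I_{\cont}$ is obtained from \eqref{Kcont2} by inserting the eigenvalue identity of Proposition~\ref{phieigen}, restricting to the orthogonal basis $\mathcal{B}(\chi_1,\chi_2)$ of Corollary~\ref{nonzero}, and identifying the adelic Fourier integral $a_m^\phi(s,y)$ with the classical coefficient $a_m(s,y)$ worked out in \S\ref{Fourier}; this produces exactly \eqref{contys}, which is the second summand on the left.

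For the geometric side, I would start from $K(x,y)=\sum_{\g\in Z(\Q)\bs G(\Q)}f(x^{-1}\g y)$ and break the sum into $H(\Q)$-orbits for $H=N\times N$ acting by $(n_1,n_2)\cdot\g=n_1^{-1}\g n_2$, interchanging sum and integral by the absolute integrability of $\sum_\g |f(x^{-1}\g y)|$ over the compact quotient $H(\Q)\bs H(\A)$. The Bruhat decomposition supplies representatives of the form $\smat{\gamma}{0}{0}{1}$ and $\smat{0}{-\mu}{1}{0}$, and only the relevant ones—those for which $\ol{\theta_{m_1}}\theta_{m_2}$ is trivial on the stabilizer—contribute. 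This leaves $\delta=\smat{m_2/m_1}{0}{0}{1}$ together with $\delta=\smat{0}{-\mu}{1}{0}$ for $\mu\in\Q^*$. Proposition~\ref{cell1} evaluates the first orbital integral, giving the $T(m_1,m_2,\n)$ main term (the factor $T(m_1,m_2,\n)$ being exactly the condition $m_1m_2=b^2\n$ with $b\mid\gcd(m_1,m_2)$). Proposition~\ref{cell2} evaluates each second-cell integral, showing it vanishes unless $\mu=\n/c^2$ with $c\in N\Z^+$ and producing the Kloosterman-weighted oscillatory integral over $\R\times\R$.

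The main technical hurdle is not any single computation—each has been performed in isolation—but rather the global bookkeeping of convergence, namely that the spectral sums and integrals appearing after Fubini's theorem can be recombined into the clean form displayed in the theorem. For the spectral side this is handled by Theorems~\ref{bc1} and~\ref{bc2} together with the Paley-Wiener hypothesis $h(iz)\in PW^{12}(\C)^{\text{even}}$, which is used in Proposition~\ref{ktfcusp} to control $\sum_j h(t_j)$ via Weyl's law. For the geometric side, the sum over $c\in N\Z^+$ in the Kloosterman term is only conditionally visible at this stage, but absolute convergence is asserted in Proposition~\ref{Klbound} and depends on both the Weil bound for $S_{\w'}$ (Theorem~\ref{kloos}) and the Bessel integral estimate \eqref{Jb}. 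Granting those, equating the two expressions for $I$ yields the identity claimed.
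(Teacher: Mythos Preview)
Your proposal is correct and follows precisely the paper's approach: equate the spectral expression for $I$ (assembled from Proposition~\ref{ktfcusp}, \eqref{contys}, and the vanishing of $I_{\res}$) with the geometric one (Propositions~\ref{cell1} and~\ref{cell2}). One minor point: for this primitive formula the convergence of the Kloosterman sum over $c$ is automatic from the absolute convergence of the orbital decomposition (the interchange justified just before the orbital integrals), and Proposition~\ref{Klbound} together with Weyl's law are only needed for the refined formula of Theorem~\ref{main}, not here.
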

\vskip .2cm
\noindent{\em Remark:} The hypothesis that $h$ be Paley-Wiener of order 12 
  arises from the following places.
  We need the inverse Selberg transform $f_\infty$ to be in $C_c^8$
  in order to apply Corollary \ref{c},
  whose hypothesis stems from the restrictions in Lemma \ref{con}.
  By Proposition \ref{ST}, we are only able to guarantee this if $h\in PW^{10}$.
  Furthermore, we needed $z^2h(iz)\in PW^{10}$ to prove the convergence of the
  cuspidal term.  As remarked there, assuming Weyl's Law would render this step
  unnecessary.  In computing the main geometric term, we required $V$ 
  to be twice differentiable 
  to justify using Fourier inversion.  For this it would be enough for
  $f_\infty$ to be in $C^5_c$ (see Proposition \ref{fV}), or for
  $h$ to be Paley-Wiener of order $m>4$ (see Proposition \ref{Vd} below).
  We will discuss weakening the hypotheses in Section \ref{Val}.
  \\

For the refined version of the KTF given below, we have multiplied
  each term by $\frac{8}{\pi\sqrt{\n}}$, and we have used formula \eqref{V0} for $V(0)$.
We have also expressed everything in purely classical (non-adelic) terms, 
  replacing the sum over pairs $\chi_1,\chi_2$ of Hecke characters
  of conductor dividing $N$ by a sum
  over pairs $\tc_1,\tc_2$ of Dirichlet characters of modulus $N$.
  Indeed, the two correspond bijectively by \eqref{chi'}.
  Furthermore, $L_N(s,\chi_1\ol{\chi_2})=L(s,\tc_1^{-1}\tc_2)$ by \eqref{L}.
  Lastly, we point out that by that fact that $\chi_{1p}$ is unramified
  when $i_p=N_p$, $\tc_1$ is induced (in the sense of \eqref{olddc})
  from the Dirichlet character $\chi_1'$ of modulus $N_1$
  attached to $\chi_1$ in the above theorem, and likewise $\tc_2$ 
  is induced from $\chi_2'$.

\begin{theorem}[KTF]\label{main}
  Let $h(iz)\in PW^{12}(\C)^{\operatorname{even}}$ (see the remark above).
Let $\mathcal{F}$ be an orthogonal eigenbasis of $T_\n$ for the space
  $L^2_0(N,\w')$ of cusp forms of weight $0$, chosen as in \eqref{Fbasis}.
  Then for any positive integers $m_1,m_2$, we have
\[
  \sum_{u_j\in \mathcal{F}}
 \frac{\lambda_\n(u_j)\, a_{m_1}(u_j) \ol{a_{m_2}(u_j)}}
 {\|u_j\|^2} \frac{h(t_j)}{\cosh(\pi t_j)}
\]
\[  +\frac1\pi\sum_{\tc_1,\tc_2} \sum_{(i_p)}
\int_{-\infty}^{\infty} \frac{\lambda_\n(\chi_1',\chi_2',it)
  \sigma_{it}(\chi_1',\chi_2',m_1)
  \ol{\sigma_{it}(\chi_1',\chi_2',m_2)} (\frac{m_1}{m_2})^{it} h(t)}
   {\|\phi_{(i_p)}\|^2\,|L(1+2it,{\tc_1}^{-1} {\tc_2})|^2} dt
\]
\[=T(m_1,m_2,\n)
 {\psi(N)}\ol{\w'\bigl(\sqrt{\tfrac{m_1\n}{m_2}}\bigr)}
 \frac1{\pi^2}\int_{-\infty}^\infty h(t) \tanh(\pi t)\,t\, dt
\]
\[ + \frac{2i\psi(N)}{\pi}\sum_{c\in N\Z^+}\frac{S_{\w'}(m_2,m_1;\n;c)}{c}
  \int_{-\infty}^\infty J_{2it}\left(\frac{4\pi\sqrt{\n m_1m_2}}c\right)
  \frac{h(t)\, t}{\cosh(\pi t)}dt,
\]
where:
\begin{itemize}
\item $\psi(N)=[\SL_2(\Z):\Gamma_0(N)] = N\prod_{p|N}(1+\tfrac1p)$.
\item The Petersson norm is given by
$\ds\|u_j\|^2=\frac1{\psi(N)}\int_{\Gamma_0(N)\bs\mathbf H}|u_j(x+iy)|^2\frac{dx\,dy}{y^2}$.
\item For $u_j\in\mathcal{F}$, $\Delta u_j=(\frac14+t_j^2)u_j$
  and $T_\n u_j=\lambda_\n(u_j)u_j$.
\item $T(m_1,m_2,\n)=\left\{\begin{array}{cl}1&\text{if }m_1m_2=b^2\n
   \text{ for some integer }b|\gcd(m_1,m_2)\\
0&\text{otherwise.}\end{array}\right.$\\
    Equivalently, $T(a_1,a_2,a_3)\in \{0,1\}$ \index{notations}{T a@$T(a_1,a_2,a_3)$}
  is nonzero if and only if $a_ia_j/a_k$ is a perfect square for all distinct
  $i,j,k\in\{1,2,3\}$.
\item $\tc_1,\tc_2$ range through all ordered pairs of Dirichlet characters modulo $N$
  for which $\tc_1\tc_2=\w'$ and whose conductors satisfy $\c_{\tc_1}\c_{\tc_2}|N$.
\item $(i_p)$ runs through all sequences $(i_p)_{p|N}$ with\index{notations}{ip@$(i_p), i_p$}
 \[\ord_p(\c_{\tc_2})\le i_p\le \ord_p(N)-\ord_p(\c_{\tc_1}).\]
\item $\ds\|\phi_{(i_p)}\|^2=
  \prod_{p|N\atop i_p=0}\frac p{(p+1)}
  \prod_{p|N\atop 0<i_p<N_p}\frac{p-1}{p^{i_p}(p+1)}
 \prod_{p|N\atop i_p=N_p}\frac1{p^{N_p-1}(p+1)}.$
\item $\chi_1'$ is the Dirichlet character mod
  $N_1=\prod\limits_{p|N\atop{i_p<N_p}}p^{N_p}$ inducing $\tc_1$ as in \eqref{olddc}.
\item $\chi_2'$ is the Dirichlet character mod
  $N_2=\prod\limits_{p|N\atop{i_p>0}}p^{N_p}$ inducing $\tc_2$ as in \eqref{olddc}.
\item $\ds \lambda_{\n}(\chi_1',\chi_2',it)=\sum_{d|\n}\left(\tfrac{\n}{d^2}\right)^{it}
  \ol{\chi_1'(d)\chi_2'(\tfrac \n d)}$.  (This is the same as $\lambda_{\n}(\tc_1,\tc_2,it)$
  since $\tc_1,\tc_2$ are induced from $\chi_1',\chi_2'$ and $(\n,N)=1$.
  It is also the same as $\lambda_{\n}(\chi_1,\chi_2,it)$ from the previous theorem,
  by \eqref{x'}.)
\item $M=\prod_{p|N}p^{i_p}$ is also a modulus for $\chi_2'$.
\item $\ds \sigma_{it}(\chi_1',\chi_2',m)=\frac1{M^{1+2it}}
  \sum_{c|m}
   \frac{\scri\ol{\chi'_1(c)}}{{c}^{2it}}
 \sum_{d\in\Z/M\Z}\chi_2'(d)e(\frac{dm}{Mc})$.  The sum over $d$ is
  also expressed in terms of the primitive character inducing $\chi_2'$ in 
  \eqref{Gchi}.
\item $\ds S_{\w'}(m_2,m_1;\n;c)=\sum_{d,d'\in\Z/c\Z\atop{dd'=\n}}\ol{\w'(d)}
  e(\frac{m_2d+m_1d'}c).$
\end{itemize}
\end{theorem}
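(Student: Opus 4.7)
The plan is to evaluate the integral $I$ of \eqref{I} in two ways — spectrally using the kernel decomposition from Corollary \ref{c}, and geometrically using the orbit decomposition of $\olG(\Q)$ under the action of $H = N \times N$ given by $(n_1,n_2)\cdot \gamma = n_1^{-1}\gamma n_2$ — then specialize $y_1 m_1 = y_2 m_2 = w$ and integrate over $w \in (0,\infty)$. The equality of the two resulting expressions is the pre-KTF of Theorem \ref{main1}, and Theorem \ref{main} then follows by multiplying through by $\frac{8}{\pi\sqrt{\n}}$, using the Selberg inversion formula \eqref{V0} to convert the ``first cell'' $V$-integral into $\frac{1}{4\pi}\int h(t)\tanh(\pi t)\, t\, dt$, and translating sums indexed by Hecke characters $\chi_1, \chi_2$ of conductor dividing $N$ into sums indexed by Dirichlet characters $\tc_1, \tc_2$ modulo $N$ via \eqref{chi'} and \eqref{L}.

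On the spectral side, Section \ref{conv} establishes that $K = K_{\cusp} + K_{\cont} + K_{\res}$ with each piece separately continuous in each variable and bounded on the compact set $(N(\Q)\bs N(\A))^2$, so $I = I_{\cusp} + I_{\cont} + I_{\res}$. The residual term vanishes identically since $m_1, m_2 \neq 0$ forces the inner unipotent integrals of \eqref{Pres} to vanish. For $I_{\cusp}$ I would invoke Proposition \ref{ktfcusp}: Lemma \ref{flem} extracts the Fourier coefficients $a_{m_j}(u_j)$, producing \eqref{cuspys}, and the identity $\int_0^\infty K_{it}(2\pi w)^2\, dw = \pi/(8\cosh(\pi t))$ from \eqref{Kint} collapses the $w$-integral to yield \eqref{Icusp}. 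For $I_{\cont}$, Proposition \ref{phieigen} gives the eigenvalue $\sqrt{\n}\,h(t)\lambda_\n(\chi_1,\chi_2,it)$ of $\pi_{it}(f)$ on the basis $\{\phi_{(i_p)}\}$ from Corollary \ref{nonzero}, and the Fourier coefficients computed in \S\ref{Fourier} (expressed via $\sigma_{it}(\chi_1',\chi_2',m)$) together with \eqref{Kint} yield \eqref{Icont}.

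On the geometric side, the Bruhat-based representatives of $N(\Q)\bs \olG(\Q)/N(\Q)$ give only two relevant orbit classes: $\delta = \smat{m_2/m_1}{0}{0}{1}$ (Proposition \ref{cell1}) and $\delta = \smat{0}{-\mu}{1}{0}$ with $\mu = \n/c^2$, $c \in N\Z^+$ (Proposition \ref{cell2}). The first-cell term produces \eqref{cellys}; after setting $y_jm_j = w$ and applying Fourier inversion to the even compactly supported function $r(t) = V(t^2)$, one obtains the clean diagonal term \eqref{Icell}. The second-cell terms yield \eqref{cell2ys}, which after the $w$-integration collapses via Zagier's identity (Proposition \ref{Z}) to the $J$-Bessel integral \eqref{Icell2} multiplied by the twisted Kloosterman sum $S_{\w'}(m_2,m_1;\n;c)$. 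The main obstacle will be justifying the various interchanges of $dw$ with spectral and geometric sums/integrals: for the cuspidal term this requires the absolute convergence proved in Proposition \ref{ktfcusp} (using Paley-Wiener order $\geq 12$ for $h(iz)$ to control $\sum |h(t_j)|(1+|t_j|)$); for the Eisenstein term the bound of Proposition \ref{bound}; and for the Kloosterman series the Weil-type bound of Theorem \ref{kloos} combined with the $J$-Bessel decay \eqref{Jb}, assembled in Proposition \ref{Klbound}. With these convergence inputs in place, equating the two evaluations of $\int_0^\infty I(w)\, dw$ and renormalizing yields the claimed identity.
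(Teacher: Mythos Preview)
Your proposal is correct and follows essentially the same route as the paper: it derives the refined KTF from the pre-KTF (Theorem~\ref{main1}) by integrating the $w=y_1m_1=y_2m_2$ specialization, invoking Propositions~\ref{ktfcusp}, \ref{cell1}, \ref{cell2}, \ref{Klbound}, and \ref{bound} for the individual terms and their convergence, then normalizing by $\tfrac{8}{\pi\sqrt{\n}}$, applying \eqref{V0}, and translating to Dirichlet characters via \eqref{chi'} and \eqref{L}. The paper does exactly this in \S\ref{final}, so there is nothing to add.
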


\comment{
\begin{theorem}[KTF]\label{main}
Let $\mathcal{F}$ be an orthogonal eigenbasis of $T_\n$ for the space
  $L^2_0(N,\w')$ of cusp forms of weight $0$, chosen as in \eqref{Fbasis}.
  Let $h(it)\in PW(\C)^{\operatorname{even}}$.
  Then for any positive integers $m_1,m_2$, we have
\[
  \sum_{u_j\in \mathcal{F}}
 \frac{\lambda_\n(u_j)\, a_{m_1}(u_j) \ol{a_{m_2}(u_j)}}
 {\|u_j\|^2} \frac{h(t_j)}{\cosh(\pi t_j)}
\]
\[  +\frac1\pi\sum_{\chi_1,\chi_2} \sum_{(i_p)}
\int_{-\infty}^{\infty} \frac{\lambda_\n(\chi_1,\chi_2,it)
  \sigma_{it}(\chi_1',\chi_2',m_1)
  \ol{\sigma_{it}(\chi_1',\chi_2',m_2)} (\frac{m_1}{m_2})^{it} h(t)}
   {\|\phi_{(i_p)}\|^2\,|L_N(1+2it,{\chi_1} \ol{\chi_2})|^2} dt
\]
\[=T(m_1,m_2,\n)
 {\psi(N)}\ol{\w'\bigl(\sqrt{\tfrac{m_1\n}{m_2}}\bigr)}
 \frac1{\pi^2}\int_{-\infty}^\infty h(t) \tanh(\pi t)\,t\, dt
\]
\[ + \frac{2i\psi(N)}{\pi}\sum_{c\in N\Z^+}\frac{S_{\w'}(m_2,m_1;\n;c)}{c}
  \int_{-\infty}^\infty J_{2it}\left(\frac{4\pi\sqrt{\n m_1m_2}}c\right)
  \frac{h(t)\, t}{\cosh(\pi t)}dt,
\]
where notation is given below.
\begin{itemize}
\item $\psi(N)=[\SL_2(\Z):\Gamma_0(N)] = N\prod_{p|N}(1+\tfrac1p)$.
\item The Petersson norm is given by
$\ds\|u_j\|^2=\frac1{\psi(N)}\int_{\Gamma_0(N)\bs\mathbf H}|u_j(x+iy)|^2\frac{dx\,dy}{y^2}$.
\item For $u_j\in\mathcal{F}$, $\Delta u_j=(\frac14+t_j^2)u_j$
  and $T_\n u_j=\lambda_\n(u_j)u_j$.
\item $T(m_1,m_2,\n)=\left\{\begin{array}{cl}1&\text{if }m_1m_2=b^2\n
   \text{ for some integer }b|\gcd(m_1,m_2)\\\\
0&\text{otherwise.}\end{array}\right.$\\
    Equivalently, $T(a_1,a_2,a_3)\in \{0,1\}$ \index{notations}{T a@$T(a_1,a_2,a_3)$}
  is nonzero if and only if $a_ia_j/a_k$ is a perfect square for all distinct
  $i,j,k\in\{1,2,3\}$.
\item $\chi_1,\chi_2$ ranges through all ordered pairs of finite order Hecke characters
  with $\chi_1\chi_2=\w$ and whose conductors satisfy $\c_{\chi_1}\c_{\chi_2}|N$.
\item $L_N(s,\chi_1\ol{\chi_2})$ is the partial $L$-function defined in \eqref{partialL}.
\item $\ds \lambda_{\n}(\chi_1,\chi_2,it)=\sum_{d|\n}\left(\tfrac{\n}{d^2}\right)^{it}
  \chi_1(d_N)\chi_2((\tfrac \n d)_N)$,
  where $d_N$ is the idele which agrees with $d$ at all places $p|N$ and is $1$
  at all other places.
\item $(i_p)$ runs through all sequences $(i_p)_{p|N}$ with\index{notations}{ip@$(i_p), i_p$}
 \[\ord_p(\c_{\chi_2})\le i_p\le N_p-\ord_p(\c_{\chi_1}).\]
\item $\|\phi_{(i_p)}\|^2$ is given explicitly in \eqref{norm} on page \pageref{norm}.
\item $\chi_1'$ is the Dirichlet character mod
  $N_1=\prod\limits_{p|N\atop{i_p<N_p}}p^{N_p}$ defined by
  $\prod\limits_{p|N_1}\chi_{1p}$.
\item $\chi_2'$ is the Dirichlet character mod
  $N_2=\prod\limits_{p|N\atop{i_p>0}}p^{N_p}$ defined by
  $\prod\limits_{p|N_2}\chi_{2p}$.
\item $M=\prod_{p|N}p^{i_p}$ is also a modulus for $\chi_2'$.
\item $\ds \sigma_{it}(\chi_1',\chi_2',m)=\frac1{M^{1+2it}}
  \sum_{c|m}
   \frac{\scri\ol{\chi'_1(c)}}{{c}^{2it}}
 \sum_{d\in\Z/M\Z}\chi_2'(d)e(\frac{dm}{Mc})$.
\item $\ds S_{\w'}(m_2,m_1;\n;c)=\sum_{d,d'\in\Z/c\Z\atop{dd'=\n}}\ol{\w'(d)}
  e(\frac{m_2d+m_1d'}c).$
\end{itemize}
\end{theorem}
}

\comment{
\[\sqrt{\n} \sum_{h\in \mathcal{F}}
  \frac{\lambda_\nu(f_\infty) \lambda_\n^h\, a_{m_1}(h) K_{\nu/2}(2\pi|m_1|)\,
  \ol{a_{m_2}(h)\, K_{\nu/2}(2\pi|m_2|)}} {\|h\|^2}\]
\[= T(m_1,m_2,\n)\frac{\psi(N)}{b\,\w'(m_1/b)}\int_{\R} f_\infty(\mat{m_2}t0{m_1})
  e^{2\pi it} dt\]
\[+\sum_{c\in N\Z^+}
  \frac{\psi(N)}{\w'(-1)} S_{\w'}(m_2,m_1;\n;c)
   \iint_{\R \times \R}
   f_\infty( \smat{-t_1}{-\frac{\n}{c^2}-t_1t_2}{1}{t_2})
  e(m_2t_2-m_1t_1)dt_1 dt_2\]
\[- \sum_{\chi_1\chi_2=\w}\sum_{(i_p)} \lambda_{\n}(\chi_1,\chi_2)
  \int_{-\infty}^{\infty} \lambda_{it}(f_\infty)
  \frac{|m_1|^{it} |m_2|^{-it} K_{it}(2\pi |m_1|) \ol{K_{it}(2\pi |m_2|)}}
   {|\Gamma(\frac{1}{2}+it)L({\chi_1'}^{-1} \chi_2', 1+2it)|^2}   \]
  \[ \times \sum_{c\in M\Z^+\atop{c|N_2m_1}}
   \frac{\chi'_1(\frac{c}{M})}{{c}^{1+2it}} S(c,m_1, \chi'_2)
   \ol{\sum_{c\in M\Z^+\atop{c|N_2m_2}}
   \frac{\chi'_1(\frac{c}{M})}{{c}^{1+2it}} S(c,m_2, \chi'_2)}\, dt.
\]
}

\subsection{Classical derivation}\label{classder}

When we take $\n=1$ in the above theorem, we obtain
  the ``classical" Kuznetsov formula
\[
  \sum_{u_j\in \mathcal{F}}
 \frac{a_{m_1}(u_j) \ol{a_{m_2}(u_j)}}
 {\|u_j\|^2} \frac{h(t_j)}{\cosh(\pi t_j)}
\]
\[  +\frac1\pi\sum_{\tc_1,\tc_2} \sum_{(i_p)}
\int_{-\infty}^{\infty}
 \frac{ \sigma_{it}(\chi_1',\chi_2',m_1)
  \ol{\sigma_{it}(\chi_1',\chi_2',m_2)} (\frac{m_1}{m_2})^{it} h(t)}
   {\|\phi_{(i_p)}\|^2\,|L(1+2it,{\tc_1}^{-1} {\tc_2})|^2} dt
\]
\begin{equation}\label{CKTF}=\delta(m_1,m_2)
 {\psi(N)}\ol{\w'\bigl(\sqrt{{m_1}/{m_2}}\bigr)}
 \frac1{\pi^2}\int_{-\infty}^\infty h(t) \tanh(\pi t)\,t\, dt
\end{equation}
\[ + \frac{2i\psi(N)}{\pi}\sum_{c\in N\Z^+}\frac{S_{\w'}(m_2,m_1;c)}{c}
  \int_{-\infty}^\infty J_{2it}\left(\frac{4\pi\sqrt{m_1m_2}}c\right)
  \frac{h(t)\, t}{\cosh(\pi t)}dt.\]
  Conversely, Theorem \ref{main} can also be derived from \eqref{CKTF}.
  To see this, start by choosing the orthogonal basis $\mathcal{F}$
  to consist of Hecke eigenvectors with $a_1(u_j)=1$.  (Such a basis is easily constructed
  by a Gram-Schmidt procedure; cf. \cite{KL1}, Lemma 3.10.)
  With this normalization, by \eqref{amTn}, for all $u_j\in \mathcal F$ we have
\begin{equation}\label{nm}
  \lambda_{\n}(u_j)a_{m_1}(u_j)
  =\sum_{\ell|\gcd(\n,m_1)}\ol{\w'(\ell)}\,a_{\tfrac{\n m_1}{\ell^2}}(u_j).
\end{equation}
If we denote the classical formula \eqref{CKTF} by $\CK(m_1,m_2)$, 
  then the sum
\begin{equation}\label{nsum}
\sum_{\ell|\gcd(\n,m_1)}\ol{\w'(\ell)}\CK(\tfrac{\n m_1}{\ell^2},m_2)
\end{equation}
 is precisely Theorem \ref{main}.
 The proof of this assertion involves proving four identities, one for each
  of the four terms (cuspidal, Eisenstein, main, Kloosterman) of \eqref{CKTF}.
 Indeed each term can be summed individually over $\ell$ as in \eqref{nsum}
  to recover the corresponding term in Theorem \ref{main}.
  For the cuspidal term, this is immediate from \eqref{nm}.
For the Kloosterman term, after summing over $\ell$,
  one applies a generalization of Selberg's identity for Kloosterman sums,
  given in \eqref{BKVid} below, to obtain the corresponding term in Theorem \ref{main}.
For the main term, the desired identity follows from
\[\sum_{\ell|(\n,m_1)}\ol{\w'(\ell)}\,\delta(\tfrac{\n m_1}{\ell^2},m_2)
  \ol{\w'(\sqrt{\tfrac{\n m_1}{\ell^2 m_2}})}=\ol{\w'(\sqrt{\tfrac{\n m_1}{m_2}})}
  \sum_{\ell|(\n,m_1)}\delta(\tfrac{\n m_1}{\ell^2},m_2)\]
\[=T(m_1,m_2,\n)
  \ol{\w'(\sqrt{\tfrac{\n m_1}{m_2}})}.\]
The manipulations required for the Eisenstein term are a bit more involved.
  The goal is to prove that for any integers $n,m$ prime to $N$,
\begin{equation}
  \lambda_n(\chi_1',\chi_2',it)\sigma_{it}(\chi_1',\chi_2',m)m^{it}
  = \sum_{\ell|(n,m)}\ol{\w'(\ell)}\sigma_{it}(\chi_1',\chi_2',\tfrac{mn}{\ell^2})
  (\tfrac{nm}{\ell^2})^{it}.
\end{equation}
Dividing both sides by $\tfrac{(nm)^{it}}{M^{1+2it}}$, 
  using $\ol{\w'(\ell)}=\ol{\chi_1'(\ell)\chi_2'(\ell)}$,
  and simplifying what remains,
 one reduces the problem to showing that
\begin{align*}
\sum_{d|n}\sum_{c|m}\frac{\ol{\chi_1'(dc)\chi_2'(\tfrac nd)}}{(dc)^{2it}}
  &\sum_{b\mod M}\chi_2'(b)e(\frac{bm}{Mc})\\
&=
  \sum_{\ell|(n,m)} \sum_{r|\tfrac{nm}{\ell^2}}
  \frac{\ol{ \chi_1'(\ell r)\chi_2'(\ell)}}{(\ell r)^{2it}}
  \sum_{b\mod M}\chi_2'(b)e(\frac{bmn}{M\ell^2r}).
\end{align*}
Setting $dc=\ell r=a$, it then suffices to show that for each divisor $a|nm$, 
\begin{equation}\label{comp}
\hskip -.3cm\sum_{d|n,d|a,\atop{\frac ad|m}}
  \ol{\chi_2'(\tfrac nd)}
  \sum_{b\mod M}\chi_2'(b)e(\frac{bmd}{Ma})
=\hskip -.1cm\sum_{\ell|(m,n),\ell|a,\atop{a|\frac{nm}{\ell}}}\hskip -.1cm
  \ol{\chi_2'(\ell)}
  \sum_{b\mod M}\chi_2'(b)e(\frac{bmn}{M\ell a}).
\end{equation}

\begin{proposition}\label{prop}
Given $a|mn$ as above, define
\[D(a)=\{(d,c)|\, dc=a, d|n, c|m\}\]
and
\[D'(a)=\{(\ell,r)|\, \ell r=a, \ell|(n,m), r|\tfrac{nm}{\ell^2}\}.\]
Then the map
\[(d,c)\mapsto \left(\tfrac{(n,a)c}a,\tfrac{a^2}{(n,a)c}\right)
  =\left(\tfrac{(n,a)}d,\tfrac{ad}{(n,a)}\right)\]
defines a bijection from $D(a)$ to $D'(a)$, 
with inverse
\[(\ell,r)\mapsto \left(\tfrac{(n,a)r}a,\tfrac{a^2}{(n,a)r}\right)
  =\left(\tfrac{(n,a)}\ell,\tfrac{a\ell}{(n,a)}\right).\]
\end{proposition}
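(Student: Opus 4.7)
The plan is to verify directly that the stated maps are well-defined set maps $D(a)\to D'(a)$ and $D'(a)\to D(a)$, and then observe that on the level of formulas they compose to the identity. The bulk of the work is in the well-definedness of each map, i.e.\ in checking the divisibility conditions.

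First I would handle the algebraic consistency. Given $(d,c)\in D(a)$, I note that $d\mid n$ and $d\mid a$, so $d\mid(n,a)$ and therefore $\ell:=(n,a)/d\in\Z^+$; moreover $\ell r = (n,a)/d\cdot ad/(n,a)=a$ as required. The two formulas $\ell=(n,a)c/a$ and $\ell=(n,a)/d$ agree since $dc=a$. Symmetrically, if $(\ell,r)\in D'(a)$, then $\ell\mid n$ (from $\ell\mid(n,m)$) and $\ell\mid a$ (from $\ell r=a$), hence $\ell\mid(n,a)$, so $d:=(n,a)/\ell\in\Z^+$ and $dc=a$. It is then immediate that the two assignments are formal inverses of each other.

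Next, for the forward map I would verify the membership $(\ell,r)\in D'(a)$. Since $d\mid(n,a)\mid n$, the ratio $\ell=(n,a)/d$ divides $n$; since $(n,a)/d\mid a/d=c\mid m$, the same $\ell$ divides $m$, giving $\ell\mid(n,m)$. The remaining condition $r\mid nm/\ell^2$ is the identity $ad/(n,a)\mid nmd^2/(n,a)^2$, which after clearing denominators is $c\,(n,a)\mid nm$; this follows from the elementary fact that $c\mid m$ and $(n,a)\mid n$ imply $c\,(n,a)\mid nm$. For the inverse map, $d\mid n$ is clear from $d\mid(n,a)\mid n$.

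The main obstacle, and the only step that uses the full force of the hypothesis $r\mid nm/\ell^2$, is showing $c=a/d=a\ell/(n,a)$ divides $m$ in the inverse direction. My plan is to argue prime-by-prime: for each prime $p$, writing $n_p,m_p,a_p,\ell_p,r_p$ for the $p$-adic valuations, the inequality to be established is
\[
a_p-\min(n_p,a_p)+\ell_p\le m_p.
\]
When $n_p>a_p$, the left side is $\ell_p$, which is $\le m_p$ since $\ell\mid m$. When $n_p\le a_p$, the inequality becomes $a_p\le n_p+m_p-\ell_p$, and this follows from $a_p=\ell_p+r_p$ together with $r_p\le n_p+m_p-2\ell_p$, which is exactly $r\mid nm/\ell^2$ at $p$. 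This case analysis completes the verification, and with both maps shown to land in the correct target sets, the bijection follows from the formal inverse relation already noted.
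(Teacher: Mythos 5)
Your proof is correct. The paper leaves the verification to the reader, so there is no argument in the text to compare against; your proposal supplies a complete one. Every step checks out: the integrality of $\ell=(n,a)/d$ and $d=(n,a)/\ell$ from the stated divisibility relations, the formal invertibility of the two maps, the memberships $\ell\mid(n,m)$ and $r\mid nm/\ell^2$ in the forward direction, and $d\mid n$ and $c\mid m$ in the inverse direction. You correctly isolate the one nontrivial point, namely $c\mid m$ for the inverse map, and the prime-by-prime split on $n_p$ versus $a_p$ handles it cleanly, invoking $r\mid nm/\ell^2$ exactly where it is needed. A minor streamlining, if you want it: $r\mid nm/\ell^2$ together with $r=a/\ell$ gives $a\ell\mid nm$, i.e.\ $a_p+\ell_p\le n_p+m_p$ at every prime, from which $a_p+\ell_p-\min(n_p,a_p)\le m_p$ follows at once (use $\ell\mid m$ when $n_p>a_p$ and the displayed inequality when $n_p\le a_p$), avoiding the explicit reintroduction of $r_p$; but this is the same case analysis in lighter clothing.
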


\noindent The proof of the proposition is left to the reader.  Using it, we see that the
  left-hand side of \eqref{comp} is equal to
\[\sum_{\ell|(m,n),\ell|a,\atop{a|\frac{nm}{\ell}}}
  \ol{\chi_2'(\frac {\ell n}{(n,a)})}\sum_{b\mod M}\chi_2'(b)
e(\frac{bm(n,a)}{M\ell a}).\]
Replacing $b$ by $b\tfrac n{(n,a)}$ (which is valid since $\tfrac n{(n,a)}$ is prime to $M$),
we obtain the right-hand side of \eqref{comp}, as needed.

\comment{

\subsection{Computation of $\sum_{d \in \Z/c\Z} \chi_2'(d) e(\frac{dm}{c})$}\label{sum}

Let $S(c, n, \chi_2')$ be the sum $\sum_{d \in \Z/c\Z} \chi_2'(d) e(\frac{dn}{c})$.
   Notice $\chi_2'$ is a Dirichlet character mod $N''$, not a character mod $c$.
   Therefore it may occurs that $(c,d)>1$ but $\chi'_2(d) \neq 0$.
   Let $c = \prod p^{c_p}$ be the canonical factorization of $c$ into primes.
   It can be showed that
   \[ S(c,n,\chi_2') = \prod_{p|N''} S(p^{c_p}, n, \chi_{2p}') \prod_{p\nmid N''} S(p^{c_p}, n, \chi_0), \]
   where $\chi_0$ is the principal character [Hua Thm 7.4.1]. \\
\noindent {\bf Case 1} $p\nmid N''$. \\
Then
\[ S(p^{c_p}, n, \chi_0) = \sum_{d=1}^{p^{c_p}} e(\frac{dn}{p^{c_p}}) =
   \begin{cases}
   p^{c_p} & \text{ if $p^{c_p}|n$}, \\
   0 & \text{ otherwise.}
   \end{cases} \]
Therefore
\[ \prod_{p\nmid N''} S(p^{c_p}, n, \chi_0) = \begin{cases}
     \prod_{p\nmid N''} p^{c_p} & \text{ if $\frac{c}{\gcd (c, N'')} | n$},\\
      0 & \text{ otherwise.} \end{cases} \]

\noindent {\bf Case 2} $p|N''$. \\
Recall that we assume $M|c$, therefore $\mathfrak{c}(\chi_2') | c$. Therefore we can assume $\ord_p \mathfrak{c}(\chi_2') \leq c_p$.

\noindent {\bf Case 2a} $\ord_p \mathfrak{c}(\chi_{2p}') = 0$. \\
For this case
\[ S_(p^{c_p}, n, \chi_{2p}') = \sum_{d=1, p\nmid d}^{p^{c_p}} e(\frac{dn}{p^{c_p}}) \]
\[ = \begin{cases}
   p^{c_p}-p^{c_p-1} & \text{ if $c_p > 0$, $p^{c_p} | n$} \\
   p^{c_p} & \text{ if (i) $c_p=0$ or (ii) $p^{c_p-1}|n$ but $p^{c_p} \nmid n$, }\\
   0  & \text{ otherwise. }
   \end{cases} \]
   See [Hua Thm 7.4.3].

\noindent {\bf Case 2b} $\ord_p \mathfrak{c}(\chi_{2p}') > 0$. \\
In this small discussion only, we let $\chi=\chi_{2p}$, $\nu = \ord_p \mathfrak{c}(\chi_2')>0$, $\ell = c_p$ and $n = p^{\eta} n'$ with $p^{\eta}\|n$. \\
   \[ S = S(p^{\ell}, n, \chi_{2p}') = \sum_{d=1, p\nmid d}^{p^{\ell}} \chi(d) e(\frac{dn}{p^{\ell}}) \]
   \[ = \sum_{d=1, p\nmid d}^{p^{\ell}} \chi(d+p^{\nu}) e(\frac{(d+p^{\nu})n}{p^{\ell}}) = e(\frac{p^\nu n}{p^{\ell}}) S. \]
   Therefore $S=0$ unless $p^{\ell} | p^\nu n$  or equivalently $\ell \leq \nu + \eta$.
   Because $c_p \geq i_p \geq \nu$, we therefore have
   \[ \nu \leq \ell \leq \nu + \eta. \]
   \[ S(p^{\ell}, n, \chi_{2p}') = \sum_{d=1, p\nmid d}^{p^{\ell}} \chi(d) e(\frac{dn}{p^{\ell}})
    = \sum_{d=1, p\nmid d}^{p^{\ell}} \chi(d) e(\frac{dn'p^{\eta+\nu-\ell}}{p^{\nu}}) \]
   The summand has period $p^{\nu}$, therefore the above is
   \[ p^{\ell-\nu} \sum_{d=1, p\nmid d}^{p^{\nu}} \chi(d) e(\frac{dn'p^{\eta+\nu-\ell}}{p^{\nu}}) = p^{\ell-\nu} S(p^{\nu}, n, \chi) \]
   By [Hua Thm 7.4.2], the sum is equal to zero unless $\eta+\nu-\ell=0$. \\
   We write
   \[\tau(\chi) = \sum_{d=1}^{\mathfrak{c}(\chi)} \chi(d)e(\frac{d}{c}). \]
   Then for $\ell \geq \nu > 0$,
\[ S(p^{\ell}, n, \chi) =
  \begin{cases} p^{\ell-\nu} \chi(n')^{-1} \tau(\chi) & \text{if $\eta+\nu=\ell$, } \\
    0 & \text{ otherwise.} \end{cases} \]
    or equivalently
 \[ S(p^{c_p}, n, \chi_{2p}') =
  \begin{cases} p^{c_p-\ord_p \mathfrak{c}(\chi_{2p}')} \chi(n')^{-1} \tau(\chi_{2p}') & \text{if $\ord_p n + \ord_p \mathfrak{c}(\chi_{2p}') =c_p$, } \\
    0 & \text{ otherwise.} \end{cases} \]
  Also also have $\tau(\chi_{2p}') \leq p^{\mathfrak{c}(\chi_{2p}')/2}$.

\begin{proposition}
    \begin{itemize}
    \item For $p \nmid N''$, $0 \leq c_p \leq \ord_p n$.
    \item For $p|N''$, $\mathfrak{c}(\chi_{2p}')=0$, $c_p = \ord_p \mathfrak{c}(\chi_{2p})$ or $\ord_p \mathfrak{c}(\chi_{2p}) + 1$.
    \item For $p \nmid N''$, $\mathfrak{c}(\chi_{2p}')>0$, $c_p = \ord_p n + \ord_p \mathfrak{c}(\chi_{2p}')$ and
      if $i_p < N_p$, $c_p \geq i_p$.
    \end{itemize}
\end{proposition}

\subsection{Bound}
A more refined expression for the contribution of the continuous kernel is
  \[  \sum_{(i_p)} \sum_{(\nu_{1p}), (\nu_{2p})} \sum_{\chi_1,\chi_2} \lambda_{\n}(\chi_1,\chi_2)
  \int_{-\infty}^{\infty} \hat{f}(it)
  \frac{|m_1|^{it} |m_2|^{-it} K_{it}(2\pi |m_1|) \ol{K_{it}(2\pi |m_2|)}}
   {|\Gamma(\frac{1}{2}+it)L({\chi_1'}^{-1} \chi_2', 1+2it)|^2}   \]
  \[ \times \sum_c \frac{\chi'_1(\frac{c}{M})}{{c}^{1+2it}} S(c,m_1, \chi'_2)
   \ol{\sum_c \frac{\chi'_1(\frac{c}{M})}{{c}^{1+2it}} S(c,m_2, \chi'_2)}\, dt, \]
   where
   \begin{itemize}
   \item $(i_p)$ is a sequence of numbers $0 \leq i_p \leq N_p$, i.e. $i_p > 0$ only if $p|N_p$.
   \item $\nu_{1p} \leq N_p - i_p$, $\nu_{2p} \leq i_p$ for $p|N$.
   \item $\chi_1, \chi_2$ are characters such that $\chi_{1p} \chi_{2p} = \w_p$, $\mathfrak{c}(\chi_{1p}) = \nu_1$, $\mathfrak{c}(\chi_{2p}) = \nu_2$.
   \end{itemize}
   For $i=1,2$, we have a factorization for the sum
   \begin{equation} \label{factc} \sum_c \frac{\chi'_1(\frac{c}{M})}{{c}^{1+2s}} S(c,m_i, \chi'_2) =
    \prod_{p} \left( \sum_{c_p} \frac{\chi'_{1p}(\frac{p^{c_p}}{p^{M_p}})}{{p}^{c_p(1+2s)}} S(p^{c_p},m_i, \chi'_{2p}) \right).
    \end{equation}
   \begin{itemize}
   \item If $p|N$, $i_p < N_p$, then $c_p=i_p$.
   \item If $i_p = 0$, then $c_p \leq m_{1p}$ and $S(p^{c_p},m_1, \chi'_{2p}) = p^{c_p}$.
   \item If $i_p > 0$, $\nu_{2p}=0$, then $c_p = m_{1p}$ or $m_{1p}+1$, $|S(p^{c_p},m_1, \chi'_{2p})| \leq p^{c_p}$.
          If $p|N$ and $i_p < N_p$, we therefore have $i_p = m_{1p}$ or $m_{1p}+1$.
   \item If $i_p > 0$, $\nu_{2p}>0$, then $c_p = m_{1p} + \nu_{2p}$ and $|S(p^{c_p},m_1, \chi'_{2p})| \leq p^{c_p-\nu_{2p}/2}$.
         If $p|N$ and $i_p < N_p$, we therefore have $i_p - m_{1p} = \nu_{2p} > 0$.
   \end{itemize}

   Reconstruct the above condition
   \begin{itemize}
   \item $(i_p)$ is chosen such that $0 \leq i_p \leq N_p$. It is obvious that $i_p=0$ unless $p|N$.
   \item Once $(i_p)$ is fixed. We can choose $\nu_{1p}$ and $\nu_{2p}$.
        \begin{itemize}
        \item $\nu_{1p} \leq N_p - i_p$. Again $\nu_{1p} = 0$ unless $p|N$.
        \item $\nu_{2p} \leq i_p$. Again $\nu_{2p} = 0$ unless $p|N$.
        \item If $0<i_p<N_p$, then $i_p \geq m_{1p}$. In this case,
        $\nu_{2p} = i_p-m_{1p}$. It can also be $0$ if $i_p-m_{1p}=1$.
        \item If $0=i_p$
        \end{itemize}
   \end{itemize}
The following table displays the possible values of related variables. In below $m=m_1$ or $m_2$. \\
\begin{tabular}{|c|c|c|c|c|c|c|c|}
\hline
\# & $p|N$? & $i_p$ & $\nu_{1p}$ & $\nu_{2p}$ & $c_p$ & $S(p^{c_p},m,\chi_{2p}')$ \\
\hline
\hline
1 & $p|N$ & $i_p=0$&$0\leq \nu_{1p}\leq N_p$&$0$&$0$& $p^{c_p}$ \\
2 &     & $i_p = N_p$ & $0$ & $0$  & $m_{p}$ & $p^{c_p}$ \\
3 &      & & &  & $m_{p}+1$ & $p^{c_p}$ \\
4 &      & & & $i_p > \nu_{2p} > 0$ & $m_{p}+\nu_{2p}$ & $p^{c_p-\nu_{2p}} \tau(\chi_{2p}')$ \\
5 &      & $N_p > i_p \geq \max(m_p, 1)$&$0\leq \nu_{1p} \leq N_p-i_p$ & $i_p-m_p$ & $i_p$ & $p^{c_p-\nu_{2p}} \tau(\chi_{2p}')$\\
6 &       & & & $1$ if $i_p=m_p$&$i_p+1$ & $p^{c_p}$\\
7 & $p\nmid N$ & $0$ & $0$ & $0$ & $0 \leq c_p \leq m_p$ & $p^{c_p}$\\
\hline
\end{tabular}

\noindent Here are some observations.
\begin{itemize}
   \item Case 2 and 3 is possible only when $\w'_p$ is trivial.
   \item Except the case 2, 3, 7 For each choice of $i_p, \nu_{1p}, \nu_{2p}$, there is only one choice of $c_p$.
    For the case 2, 3, there are two choices of $c_p$. For the case 7, there are $m_p+1$ choices of $c_p$.
   \item For case 1, $\chi'_{2p}$ is trivial. This implies that $\chi'_{1p}=\w'_p$.
   \item For case 4, $\chi'_{1p}$ is trivial. This implies that $\chi'_{2p}=\w'_p$ and $i_p > \ord_p N_{\w'} > 0$.
   \item Except for case 5, 6, $i_p$ and $\nu_{1p}$ unique determined the value of $\nu_{2p}$.
         For the case 5, 6, $\nu_{2p}$ has two possible values.
   \item $|S(p^{c_p},m,\chi_{2p}')/p^{c_p}|$ is $1$ for case 1, 2, 3, 6, 7. It is smaller than $p^{-\nu_{2p}/2}$ for case 4, 5.
       The estimations are independent of $\chi'_{1p}$ and $\chi'_{2p}$
   \item For case 5, because $\ord_p N_{\w'} + N_p/2 \geq \nu_{2p}$, we have $m_p + \ord_p N_{\w'} + N_p/2 \geq i_p$.
\end{itemize}

\begin{proposition}
For $i=1,2$,
   \[ \max (N_p/2, \ord_p N_{\w'}) \geq \nu_{ip}. \]
   Let $M_p = \max (N_p/2, \ord_p N_{\w'})$. Obviously $M_p \leq N_p/2 + \ord_p N_{\w'}$.
\end{proposition}

\begin{proof}
    Let $\chi'_{kp}$ be a character such that $\nu_{kp} = \ord_p \mathfrak{c}(\chi'_{kp})$ for $k=1,2$.
    First suppose $\nu_{1p} > \nu_{2p}$.
    In this case, $\ord_p \mathfrak{c}(\chi'_{1p} \chi'_{2p}) = \ord_p \mathfrak{c}(\chi'_{1p})$.
    Therefore $\ord_p N_{\w'} = \nu_{1p} > \nu_{2p}$.
    Similarly for $\nu_{2p} > \nu_{1p}$, $\ord_p N_\w = \nu_{2p} > \nu_{1p}$.

    If $\nu_{1p} = \nu_{2p}$, for $k=1,2$, $2\nu_{kp} = \nu_{1p} + \nu_{2p} \leq N_p - i_p + i_p = N_p$ .
    The proposition follows.
\end{proof}

\begin{conjecture} \label{conj}
There exists sufficiently a small  number $\delta > 0$  and a real number $\eta > 0$,
   such that for any Dirichlet character $\chi$ mod $M$,
   \[ L(1+it, \chi)^{-1} \ll M^{\delta} t^\eta. \]
\end{conjecture}
I don't know how to prove this conjecture but it is a reasonable assumption.
   For any non-real character $\chi$, we can show that the statement is true for $\delta > 4$. However, it is not good enough for giving a good estimation
   of the continuous kernel. By Siegel's theorem, for any non-principal real character $\chi$ and $\delta > 0$,  $L(1,\chi)^{-1} \ll_\delta M^\delta$.

We have $|X_n(x)| \leq n+1$ (see, for example, [Serre JAMS, 1997 , (10)]).  Assuming Conjecture \ref{conj} for $\delta$ and $\e$. The continuous kernel is
\[ \ll \left( \prod_p (\n_p+1) \right) \int_{-\infty}^{\infty}  \frac{ |\hat{f}(it) t^\eta K_{it}(2\pi |m_1|) K_{it}(2\pi |m_2|)|dt}{|\Gamma(\frac{1}{2}+it)|^2} \]
\[ \prod_{m=m_1 \text{ or } m_2}\prod_p  \left(\sum_{i_p} \sum_{\nu_{1p}, \nu_{2p}}
   \sum_{\chi'_{1p}, \chi'_{2p}}  p^{\delta \ord_p \mathfrak{c}({\chi'_{1p}}^{-1} \chi'_{2p})} \sum_{c_p} (1 \text{ or } p^{-\nu_{2p}/2}) \right), \]
   Easily show that $\ord_p \mathfrak{c}({\chi'_{1p}}^{-1} \chi'_{2p}) \leq \max(\nu_{1p}, \nu_{2p}) \leq \ord_p N_{\w'} + N_p/2$.
   For a given $\nu_{ip}$, the number of possible $\chi_{ip}$ is $p^{\nu_{ip}-1}(p-1)$ (or $1$ if $\nu_{ip}=0$).
We will calculate
\begin{equation} \label{p}
\sum_{i_p} \sum_{\nu_{1p}, \nu_{2p}} \sum_{\chi'_{1p}, \chi'_{2p}}  p^{\delta \ord_p \mathfrak{c}({\chi'_{1p}}^{-1} \chi'_{2p})} \sum_{c_p} (1 \text{ or } p^{-\nu_{2p}/2})
 \end{equation}
   for different cases. \\
\noindent {\bf Case a: } $i_p=0$, then $\nu_{1p}=0$ and $\chi'_{2p} = \w'_p$. There is only one item in the over $c_p$. Therefore  the contribution
        to \eqref{p} is $p^{\delta \ord_p N_{\w'}}$. \\
\noindent {\bf Case b: } $i_p=N_p$. Because $\chi'_{1p}$ is trivial, then $\chi'_{2p} = \w_p$. For Case 2, 3 in the table, the summation over $c_p$ is $2$. For case 4,
    the summation over $c_p$ has only one term and is $\leq p^{-\ord_p N_{\w'}/2}$. For this case $\ord_p \mathfrak{c}({\chi'_{1p}}^{-1} \chi'_{2p}) = \ord_p N_{\w'}$
    Thus the contribution of this case to \eqref{p} is
    \[  \leq (2 p^{\delta \ord_p N_{\w'}}+ p^{(\delta-1/2)\ord_p N_{\w'}}). \]
\noindent {\bf Case c: } $N_p > i_p \geq \max(m_p, 1)$ and $i_p \neq m_p$. This is the case 5 of the table.
   For this case $\nu_{2p} = i_p - m_{p}$ is unique. There are less than
    $p^{\nu_{2p}}$ choices of $\chi_{2p}$. Because $\nu_{2p} \leq M_p$, we have $m_p + M_p \geq i_p > m_p$.
    For fixed $i_p$, the contribution of this to $\eqref{p}$ is
    \[ \leq p^{\nu_{2p}} p^{\delta M_p} p^{-\nu_p/2} \leq  p^{(\delta+1/2)M_p}. \]
\noindent {\bf Case d: } $N_p > i_p \geq \max(m_p, 1)$ and $i_p = m_p$. Then $m_p \geq 1$ or equivalently, $p|m$.
     As in case 5, we can take $\nu_{2p}=0$ and therefore there is only one possible choice
     of $\chi_{1p}$, namely $\chi_{2p} = \w'_{2p}$.  The contribution of this case to \eqref{p} is
     $\leq p^{\delta \ord_p N_{\w'}}$.

     In case 6, we can take $\nu_{2p}=1$ and there are $p-1$ choices of $\chi_{2p}$.
     $\ord_p \mathfrak{c}({\chi'_{1p}}^{-1} \chi'_{2p}) = \ord_p \mathfrak{c}({\chi'_{2p}}^2 {\w'_p}^{-1}) \leq \max (\ord_p N_{\w'}, 1)$.
     If $N_p=1$, then $1\geq \nu_{1p} + \nu_{2p}$. Thus $\nu_{1p}=0$. Therefore $\chi'_{2p}=\w'_p$.
     As a result, $\ord_p N_{\w'}=1$ , $\ord_p \mathfrak{c}({\chi'_{1p}}^{-1} \chi'_{2p})= 1$ and $1 = \max(\ord_p N_{\w'},1) = \max{\ord_p N_{\w'}, N_p/2}$
     If $N_p \geq 2$, then $\max{\ord_p N_{\w'}, 1} \leq \max{\ord_p N_{\w'}, N_p/2} = M_p$.
     The contribution to \eqref{p} is
     \[ \leq (p-1) p^{\delta \max(\ord_p N_{\w'},1)} \leq p^{\delta \max(\ord_p N_{\w'},1)+1} \leq p^{\delta M_p} p^{m_p}. \]
Summing up all these cases, for $p|N$, we have $|\eqref{p}|$
\[  \leq 4 p^{\delta \ord_p N_{\w'}} + p^{(\delta-1/2)\ord_p N_{\w'}} + M_p p^{(\delta+1/2)M_p} + p^{\delta M_p} p^{m_p} \]
\[ \leq  (6+M_p) p^{m_p} p^{(\delta+1/2)M_p} \leq (6 + \ord_p N_{\w'} + N_p/2) p^{(\delta+1/2)(\ord_p N_{\w'} + N_p/2)} p^{m_p}. \]
For $p \nmid N$, the contribution of $\eqref{p}$ is
\[ \leq (m_p+1). \]
Multiplying all this together
\[ \prod_p \eqref{p} \ll m_1^{1+\e} m_2^{1+\e} N_{\w'}^{2\delta+1 + \e}  N^{\delta + 1/2 + \e}, \]
here the constant term in $\ll$ depends on $\e$ (may be among other things, except, $N$).

\begin{theorem}\label{bound}
If conjecture \ref{conj} is valid for some $\delta < 1/2$,
  then the contribution of the continuous kernel is $o(N)$.
\end{theorem}

}

\pagebreak
\section{Validity of the KTF for a broader class of $h$}\label{Val}

  We have shown that the Kuznetsov trace formula is valid for 
  the restricted class of
  functions $h$ with $h(iz)\in PW^{12}(\C)^{even}$.
  For certain applications it is useful to allow for a wider
  class of functions.  For example, the Gaussian $h(t)=e^{-t^2/T^2}$ has
  rapid decay for real $t$, but $h(iz)$ is not Paley-Wiener of any order,
  due to faster than exponential growth when $z$ is real.
  Here we consider functions $h$ satisfying:
   \begin{equation} \label{ht}
  \begin{cases}
  h \text{ is even,}\\
    \text{$h(t)$ is holomorphic in the region $|\Im t| < A$,}  \\
   h(t) \ll (1+|t|)^{-B} \text{ in the region }|\Im t|< A,
   \end{cases}
  \end{equation}
  for positive real constants $A$ and $B$.  

  \begin{theorem} \label{hmain}
  If $A,B$ are sufficiently large, then
  the KTF (Theorem \ref{main}) is valid for all functions $h$ satisfying \eqref{ht}.
  \end{theorem}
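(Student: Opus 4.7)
The plan is to establish the theorem by approximating $h$ by a sequence of Paley--Wiener functions to which Theorem \ref{main} already applies, and then passing to the limit in every term. Starting from $h$ satisfying \eqref{ht}, let $f_\infty$ denote its formal inverse Selberg transform; this will no longer be compactly supported modulo the center, but by the correspondences of Section \ref 3 together with the decay hypothesis on $h$, the associated function $V$ (equivalently $Q$) will have polynomial decay of an order controlled by $A$, while the regularity of $f_\infty$ is controlled by $B$. I would choose a smooth, bi-$K_\infty$-invariant cutoff $\eta_R \in C_c^\infty(G(\R)^+\!\!/\!/K_\infty)$ equal to $1$ on a large compact set, set $f_\infty^{(R)} = \eta_R f_\infty$, and let $h_R$ be the Selberg transform of $f_\infty^{(R)}$. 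Then $h_R(iz) \in PW^\infty(\C)^{\mathrm{even}}$, so Theorem \ref{main} applies to each $h_R$, and one checks by dominated convergence in the integral formulas for $\mathcal H$ and $\mathcal M$ (together with Cauchy's theorem to shift contours into the strip $|\Im t|<A$) that $h_R(t) \to h(t)$ pointwise on that strip with uniform majorant of the form $C(1+|t|)^{-B}$.

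The next step is to check term-by-term convergence in the KTF as $R\to\infty$. The main delta term $\int h(t)\tanh(\pi t)\,t\,dt$ converges by dominated convergence since $B>2$ suffices. For the cuspidal term, I would use the bound \eqref{fcbound} on Fourier coefficients together with the Kim--Sarnak bound on $\lambda_\n(u_j)$; exceptional $t_j$ (finitely many by Corollary \ref{disccor}) are safely inside the strip since $A>1/2$, while for real $t_j$ the factor $\cosh(\pi t_j)^{-1}$ kills the $e^{\pi|t_j|}$ from the coefficient bound, leaving $\sum_j (1+|t_j|)^{1-B}$, which converges by Weyl's law for $B>3$. The Eisenstein term is handled similarly using Proposition \ref{Eisbound}, Corollary \ref{Lcor}, and the asymptotics for $\Gamma(\tfrac12+it)L_N(1+2it,\chi_1\bar\chi_2)$, which together yield an integrable majorant once $B$ exceeds a fixed constant; the sum over $(\chi_1,\chi_2,(i_p))$ is finite, so DCT applies. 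The key input that makes all this uniform in $R$ is a Hilbert--Schmidt bound for $R_0(f)$ on the cuspidal subspace under the decay hypotheses on $f$ alone (the statement labeled Corollary \ref{genHS} in the introduction); I would prove this by realizing $R_0(f)$ as a limit of compactly supported operators in operator norm and applying the spectral decomposition.

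The main obstacle, and the most delicate step, is the Kloosterman term on the geometric side, because of the infinite sum over $c$. Here I would reproduce the contour shift argument of Proposition \ref{Klbound}: the integrand $J_{2it}\bigl(\tfrac{4\pi\sqrt{\n m_1 m_2}}{c}\bigr)h(t)t/\cosh(\pi t)$ extends holomorphically into $0\le \Re s \le \sigma_0 < \tfrac12$ provided $\sigma_0 < A$, and the decay estimate \eqref{Jbound} remains valid with $h(-is)\ll C^\sigma(1+|t|)^{-B}$, yielding the inner integral bound $O(c^{-2\sigma_0})$. Combined with the Weil bound \eqref{Sb} from Section \ref{Klsec}, this gives absolute convergence of the sum over $c$ uniformly in $R$ once $B>2$ and $A>\tfrac12$. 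Then DCT again transfers the equality from $h_R$ to $h$. Collecting all the requirements, the theorem holds whenever $A$ exceeds some absolute constant (at least $\tfrac12$, to cover the strip containing the exceptional eigenvalues and to permit the contour shift) and $B$ exceeds a fixed value (essentially $B>3$) sufficient for every convergence argument above; the precise constants can be read off from Propositions \ref{ktfcusp}, \ref{bound}, and \ref{Klbound}.
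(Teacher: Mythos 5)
Your proposal follows essentially the same strategy as the paper: construct a compactly supported approximant by smoothly truncating the inverse Selberg transform of $h$ (the paper multiplies $V(u)$ by $\rho_T(\log(1+u))$, which is the same thing modulo the coordinates used), prove a uniform bound of the shape $|h(t)-h_T(t)|\le \mathcal{E}_T(1+|t|)^{-\ell}$ with $\mathcal{E}_T\to 0$ on a strip containing the exceptional spectrum (the paper's Proposition \ref{hTbound}), establish the Hilbert--Schmidt property of $R_0(f)$ via a truncated-kernel argument to control the cuspidal term (Corollary \ref{genHS}), and pass to the limit term by term. One small inaccuracy: since $f_\infty$ is only $C^m$ for $m$ controlled by $B$, the truncations $h_R$ lie in $PW^m(\C)^{\mathrm{even}}$ rather than $PW^\infty$; this is harmless because Theorem \ref{main} only needs $m\ge 12$, but it is the reason the paper takes $B$ large rather than just $B>3$.
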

\noindent{\em Remarks:}  
  (1) We will not obtain the optimum values for $A,B$.
  Kuznetsov's original paper established the formula in the case $N=1$
  for any $A>\tfrac12$ and $B>2$, \cite{Ku}.
  According to \cite{IK} Theorem 16.3, these parameters work for any level $N$.
  This is the range used by
  Selberg in his original work on the trace formula \cite{S}.
  Since, as proven by Selberg, we have
   $|\Im(t)|< \tfrac14$ for the cuspidal spectral parameters $t$,
  it is plausible that $A>\tfrac14$ would suffice.
  This has been proven to be the case when $N=1$ by Yoshida, \cite{Y}.
  However, allowing $A<\tfrac12$ results in poorer control over the size of the
  Kloosterman term.  See Proposition \ref{absconv} below and the remarks following it.
\vskip .2cm

\noindent (2) Given the above theorem, one can use the following idea of Kuznetsov to show
  that in fact $B>2$ suffices.  Briefly, suppose $h$ satisfies \eqref{ht} for some
  $A$ sufficiently large as in the theorem, and some $B>2$.  Choose $\alpha>0$
  very small, but still large enough that $\frac1{2\alpha}<A$.  Define, for
  $r\in\R$,
\[h_r(t)=-\frac12\left(h\Bigl(\frac{r+\frac i2}\alpha\Bigr)+
  h\Bigl(\frac{r-\frac i2}\alpha\Bigr)\right)
  \frac{\cosh(\pi r)\cosh(\pi\alpha t)}{\cosh(\pi(r-\alpha t))\cosh(\pi(r+\alpha t))}.\]
  Then $h_r(t)$ satisfies \eqref{ht} for any $B$ (it has exponential decay as $|\!\Re(t)|\to
  \infty$), and for $A=\frac1{2\alpha}$. Therefore if $\alpha$ is chosen suitably,
  the KTF is valid for $h_r(t)$ by the theorem.
  In each term of this KTF, we integrate $r$ over $\R$ and use the identity
\[\int_{-\infty}^\infty h_r(t)dr=h(t),\]
  which is valid for $h$ analytic on $|\Im t|\le \frac1{2\alpha}$
  (\cite{IK} Lemma 16.4, \cite{Ku} (6.1)),
  to conclude that the KTF is valid for $h$.  It is not too hard to justify this
  process by showing that everything is absolutely convergent, using the fact that
  $B>2$.\\

We prove Theorem \ref{hmain} at the end of \S\ref{hcomp}.
We will make use of the KTF already established for
  Paley-Wiener functions of sufficiently high order,
  and a limiting procedure.
  Given $h$ as in Theorem \ref{hmain}, let $f$ be the corresponding function on $G(\R)^+$,
  i.e. the inverse Selberg transform of $h$.
  It might not be smooth or compactly supported modulo $Z(\R)$.
  In \S \ref{fT}, we will define a family of compactly supported $C^m$ 
  functions $f_T$ on $G(\R)^+$
   for $T > 1$ and some $m>0$, such that $f_T \rightarrow f$ pointwise as $T\to\infty$.
  We let $h_T\in PW^m(\C)^{\text{even}}$ be the Selberg transform of $f_T$.
The KTF holds for $h_T$ if $m$ is sufficiently large, and we show that
  \[ \lim_{T\rightarrow \infty} (\text{Spec. side of KTF for $h_T$}) =
  \text{Spec. side of KTF for $h$} \]
  and
    \[ \lim_{T\rightarrow \infty} (\text{Geo. side of KTF for $h_T$}) =
  \text{Geo. side of KTF for $h$},\]
  thus establishing the KTF for $h$.

  We note that Finis, Lapid and M\"uller have used a different 
  limiting method for $\GL(2)$, and (in large part) beyond,
 to extend Arthur's trace formula to a space of smooth functions
   allowing non-compact support even at nonarchi\-medean places
  (\cite{FL}, \cite{FLM}).

 In \S \ref{oe}, we extend the basic integral transforms of \S \ref{3}
  to allow for non-compactly supported
   functions.  We then discuss the relationships between the various functions
  $f,V,Q,h$, and establish bounds for certain of their derivatives.
    In \S\ref{fT} - \S \ref{hcomp}, we define $h_T$ as in the above 
  discussion, and apply a limiting process to the KTF for $h_T$.
 In the final two sections, we prove a needed auxiliary result, namely
  that for a test function $f=f_\infty\times f_{\fin}$
  with $f_{\fin}$ Schwartz-Bruhat and $f_\infty$ bi-$K_\infty$-invariant, twice differentiable,
 and of mild polynomial decay, the operator
  $R_0(f)$ is Hilbert-Schmidt.\\  

\noindent{\bf Notation.}
Throughout this section, all the constants implicit in $\ll$ may depend on $A$,
  $B$ and $h$ (and hence $V$, $f$, $Q$ etc.) unless otherwise stated.  
  The notation $C_\ell$ will denote a constant depending on $\ell$, $A$, 
  $B$ and $h$, and may have different values in different places.

\subsection{Preliminaries} \label{oe}

We start by setting out some necessary trivialities.
\begin{proposition} \label{lebc}
     Let $I$ be an interval on the real line.
     Suppose $f$ is a measurable function on $\R \times I$ with $f(t,y)$ continuous
    in $y$ for a.e. $t\in\R$.  Suppose $|f(t,y)| \le F(t)$ for some function $F \in L^1(\R)$.
     Then $\int_\R f(t,y) dt$ is a continuous function of $y \in I$.
\end{proposition}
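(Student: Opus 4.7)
The plan is to invoke the dominated convergence theorem in a sequential fashion. To establish continuity at an arbitrary point $y_0 \in I$, it suffices to show that for every sequence $\{y_n\} \subset I$ with $y_n \to y_0$, we have
\[
\int_\R f(t,y_n)\,dt \longrightarrow \int_\R f(t,y_0)\,dt.
\]

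First I would fix such a sequence and define $g_n(t) = f(t,y_n)$. By the hypothesis that $y \mapsto f(t,y)$ is continuous for almost every $t$, we have $g_n(t) \to f(t,y_0)$ pointwise a.e. Next, the uniform domination $|g_n(t)| = |f(t,y_n)| \le F(t)$ with $F \in L^1(\R)$ supplies the integrable majorant required by the dominated convergence theorem. Applying the theorem yields
\[
\lim_{n\to\infty} \int_\R f(t,y_n)\,dt = \int_\R f(t,y_0)\,dt,
\]
which, since the sequence was arbitrary, is exactly sequential continuity of the function $y \mapsto \int_\R f(t,y)\,dt$ at $y_0$. Because $I \subset \R$ is a metric space, sequential continuity is equivalent to continuity, and $y_0 \in I$ was arbitrary, so the proof is complete.

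There is no real obstacle here; the only small point to be careful about is that the exceptional $t$-set on which continuity of $f(t,\cdot)$ fails is a single null set independent of the chosen sequence $\{y_n\}$, so the pointwise a.e.\ convergence $g_n(t) \to f(t,y_0)$ holds along any sequence. This is built into the statement of the hypothesis ("for a.e.\ $t \in \R$"). Once this is noted, the proof is a direct one-line invocation of dominated convergence.
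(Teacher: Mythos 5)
Your proof is correct and takes essentially the same approach as the paper: both arguments reduce to an application of the dominated convergence theorem with $F$ (or $2F$) as the majorant; the paper phrases the limit as $\e \to 0$ while you phrase it via sequential continuity, but these are interchangeable since $I\subset\R$ is a metric space.
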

\begin{proof} For any $\e\neq 0$ with $y+\e\in I$,
 \[  \int_\R f(t,y+\e)dt - \int_\R f(t,y) dt = \int_\R (f(t,y+\e)-f(t,y)) dt. \]
  The integrand is bounded by $2F(t)$. 
  By \dct, the integral goes to $0$ as $\e \rightarrow 0$.
  Thus $\int_\R f(t,y)dt$ is a continuous function of  $y \in I$.
\end{proof}

\begin{proposition} \label{lebdiff}
     Let $I$ be an interval.
     Suppose $f(t,y)$ is a measurable function on $\R \times I$ such that for a.e. $t\in\R$
  the partial derivative $f_y(t,y)$ exists and is continuous in $y$.  Suppose further that
     $|f(t,y)| \le F_0(t)$ a.e. and $|f_y(t,y)| \le F_1(t)$ a.e. for some $F_0, F_1 \in L^1(\R)$.
   Then $\int_\R f(t,y) dy$ and $\int_\R f_y(t,y) dt$ are continuous functions of $y \in I$ and
    \[ \frac{d}{dy} \int_\R f(t,y) dt = \int_\R f_y(t,y) dt. \]
(Here, we may view $f_y(t,y)$ as a function on $\R$ by prescribing
  arbitrary values on the measure $0$ set of $t$ for which the derivative is undefined.)
\end{proposition}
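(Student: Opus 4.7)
The plan is to apply Proposition \ref{lebc} for the continuity assertions, and then to reduce the differentiation statement to the dominated convergence theorem via the mean value theorem. The continuity of $\int_\R f(t,y)\,dt$ as a function of $y \in I$ follows immediately from Proposition \ref{lebc} applied to $f$ (using the bound $F_0$), since for a.e.\ $t$ the continuity of $f_y(t,\cdot)$ on $I$ combined with the a.e.\ bound $|f_y(t,y)|\le F_1(t)$ implies that $f(t,\cdot)$ is continuous on $I$ (indeed, it is the integral of its derivative plus a constant). Likewise, the continuity of $\int_\R f_y(t,y)\,dt$ follows directly from Proposition \ref{lebc} applied to $f_y$.

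For the differentiation formula, I would fix $y\in I$ and consider a nonzero $\e$ small enough that $y+\e\in I$, and write
\[
\frac{1}{\e}\left(\int_\R f(t,y+\e)\,dt - \int_\R f(t,y)\,dt\right) = \int_\R \frac{f(t,y+\e)-f(t,y)}{\e}\,dt,
\]
which is legitimate since both integrals on the left are absolutely convergent by the bound $F_0$. For each $t$ in the (full-measure) set where $f_y(t,\cdot)$ exists and is continuous on $I$ with $|f_y(t,\cdot)|\le F_1(t)$, the mean value theorem produces $\theta=\theta(t,\e)\in(0,1)$ with
\[
\frac{f(t,y+\e)-f(t,y)}{\e}=f_y(t,y+\theta\e).
\]
As $\e\to 0$, continuity of $f_y(t,\cdot)$ gives $f_y(t,y+\theta\e)\to f_y(t,y)$ pointwise a.e., while the pointwise bound $|f_y(t,y+\theta\e)|\le F_1(t)$ supplies an integrable dominating function independent of $\e$. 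Applying the dominated convergence theorem then yields
\[
\lim_{\e\to 0}\int_\R \frac{f(t,y+\e)-f(t,y)}{\e}\,dt = \int_\R f_y(t,y)\,dt,
\]
which is the desired identity.

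The only real care required is in handling the measure-zero exceptional sets: the derivative $f_y(t,y)$ is hypothesized to exist only for a.e.\ $t$, and the dominations hold only a.e., so one must check that the union over a countable sequence $\e_n\to 0$ of the exceptional sets still has measure zero, which is automatic. I do not anticipate a substantive obstacle; this is a routine application of dominated convergence, and the statement is essentially the standard Leibniz rule under integrable-derivative hypotheses, recorded here for later reference in Section \ref{Val}.
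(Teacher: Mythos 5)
Your proof is correct, but it takes a genuinely different route from the paper's. The paper avoids the mean value theorem and dominated convergence entirely: it fixes $y_0\in I$, notes that the double integral $\int_{y_0}^y \int_\R |f_y(t,x)|\,dt\,dx$ is finite (being at most $|y-y_0|\int_\R F_1$), applies Fubini to get $\int_{y_0}^y\left(\int_\R f_y(t,x)\,dt\right)dx = \int_\R f(t,y)\,dt - \int_\R f(t,y_0)\,dt$, and then differentiates in $y$ using the fundamental theorem of calculus (which applies because the inner integral has already been shown continuous in $x$ via Proposition \ref{lebc}). Your MVT-plus-DCT argument is the more classical textbook route; it works, but the Fubini argument sidesteps a small wrinkle that your write-up glosses over — namely that the intermediate point $\theta(t,\e)$ produced by the MVT need not be measurable in $t$. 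That turns out to be harmless in your proof, since the difference quotient itself is manifestly measurable and the MVT is used only to bound it pointwise by $F_1(t)$, but a careful reader will notice that $t\mapsto f_y(t,y+\theta(t,\e)\e)$ is never actually integrated. The paper's Fubini-and-FTC proof never raises this question. Your parenthetical concern about unions of exceptional null sets over a countable sequence $\e_n\to0$ is a non-issue: the hypotheses already produce a single null set of $t$'s outside of which $f_y(t,\cdot)$ exists and is continuous on all of $I$, so no union over $\e$ is needed.
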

\begin{proof} The continuity of the integrals follows from the previous proposition.
  Let $y_0\in I$ be fixed.
   Because the following double integral is absolutely convergent, we can apply
   Fubini's theorem:
  \[ \int_{y_0}^y \int_\R f_y(t,x) dt\,dx = \int_\R \int_{y_0}^y f_y(t,x) dx\, dt\]
  \[ = \int_\R (f(t,y)-f(t,y_0)) dt = \int_\R f(t,y) dt - \int_\R f(t,y_0) dt. \]
  Differentiating with respect to $y$, the assertion follows by the 
  fundamental theorem of calculus.
\end{proof}

By induction, we have the following.
\begin{corollary} \label{lebdiff2}
    Let $I$ be an interval.
     Suppose $f(t,y)$ is a measurable function on $\R \times I$ such that
     for $k=0, 1, \ldots, \ell$:\\
     (i)  $\frac{\partial^k f(t,y)}{\partial^k y} $ exists and is 
  continuous in $y$ for a.e. $t\in\R$,\\
     (ii) there exists $F_k \in L^1(\R)$ such that 
   $|\frac{\partial^k f(t,y)}{\partial y^k}| \le F_k(t)$ a.e.\\
     Then
     $\frac{d^\ell}{d y^\ell} \int_\R f(t,y) dt$ is a continuous function of $y \in I$, and
    \[ \frac{d^\ell}{d y^\ell} \int_\R f(t,y) dt = \int_\R \frac{\partial^\ell f(t,y)}
  {\partial y^\ell} dt. \]
(We may view the integrand $\frac{\partial^\ell f(t,y)}{\partial y^\ell}$ 
  as a function on $\R$ by assigning artibarary values
  on the measure $0$ set for which the derivative is undefined.)
\end{corollary}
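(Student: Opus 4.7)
The plan is to prove the corollary by induction on $\ell$, using Propositions \ref{lebc} and \ref{lebdiff} as the base cases and the inductive engine. The statement for $\ell=0$ is essentially Proposition \ref{lebc} (continuity only), and the statement for $\ell=1$ is exactly Proposition \ref{lebdiff}, so these serve as the base of the induction.

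For the inductive step, suppose the conclusion holds for $\ell-1$. Given $f$ satisfying the hypotheses for $\ell$, apply the inductive hypothesis to conclude
\[
\frac{d^{\ell-1}}{dy^{\ell-1}} \int_\R f(t,y)\,dt = \int_\R \frac{\partial^{\ell-1} f(t,y)}{\partial y^{\ell-1}}\,dt,
\]
and that this function of $y$ is continuous on $I$. Then apply Proposition \ref{lebdiff} to the function $g(t,y) \eqdef \frac{\partial^{\ell-1} f(t,y)}{\partial y^{\ell-1}}$. The hypotheses of Proposition \ref{lebdiff} for $g$ are met: $g_y = \frac{\partial^\ell f}{\partial y^\ell}$ exists and is continuous in $y$ for a.e.\ $t$, and the dominations $|g(t,y)|\le F_{\ell-1}(t)$ and $|g_y(t,y)|\le F_\ell(t)$ are precisely assumption (ii) at levels $\ell-1$ and $\ell$. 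Hence
\[
\frac{d}{dy}\int_\R g(t,y)\,dt = \int_\R g_y(t,y)\,dt = \int_\R \frac{\partial^\ell f(t,y)}{\partial y^\ell}\,dt,
\]
and by Proposition \ref{lebc} applied with $F=F_\ell$, this last integral is continuous in $y\in I$. Combining with the display above yields the desired formula for $\ell$.

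The only subtlety, and it is a minor one, is the measurability of the integrand in the intermediate step. Since $g_y(t,y)$ is assumed to exist only for a.e.\ $t$, we set it equal to zero (or any fixed value) on the exceptional null set, as the corollary itself indicates; with this convention $g_y$ is measurable on $\R\times I$ (as a pointwise limit of difference quotients of measurable functions off a null set) and the domination by $F_\ell$ holds a.e. There is no genuine obstacle here; the proof is essentially bookkeeping once Propositions \ref{lebc} and \ref{lebdiff} are in hand.
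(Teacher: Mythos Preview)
Your proof is correct and is exactly the approach the paper takes: the paper simply states ``By induction, we have the following'' before the corollary, relying on Proposition~\ref{lebdiff} (and implicitly Proposition~\ref{lebc}) for the inductive step. You have just written out the details that the paper omits.
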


\begin{proposition} \label{ll}
   Let $a, b$ and $c$ be positive real numbers.
   Suppose $f$ is a continuous function on $\R$ satisfying 
   $f(x) \ll_{a,b} |x|^{-a}$ for $|x| > b$.
   Then $f(x) \ll_{a,b,c} (c+|x|)^{-a}$ for all $x$.
\end{proposition}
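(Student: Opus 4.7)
The plan is to split the real line into the two regions $|x|\le b$ and $|x|>b$, bound $f$ separately on each, and then combine the estimates. On the compact interval $[-b,b]$, continuity of $f$ gives an absolute bound $|f(x)|\le M$ for some constant $M=M_{a,b,f}$. On this region, $(c+|x|)^{-a}\ge (c+b)^{-a}$, so
\[
|f(x)|\le M = M(c+b)^a \cdot (c+b)^{-a} \le M(c+b)^a \cdot (c+|x|)^{-a},
\]
which gives the desired bound with an implied constant depending on $a,b,c$.

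On the region $|x|>b$, the hypothesis supplies $|f(x)|\le C|x|^{-a}$ for some $C=C_{a,b}$. To convert the bound to one in $(c+|x|)^{-a}$, I would write
\[
(c+|x|)^{-a} = |x|^{-a}\bigl(1+c/|x|\bigr)^{-a},
\]
and observe that for $|x|>b$ one has $1+c/|x|<1+c/b$, so $(1+c/|x|)^{-a}>(1+c/b)^{-a}$. Rearranging,
\[
|x|^{-a}\le (1+c/b)^{a}\,(c+|x|)^{-a},
\]
and therefore $|f(x)|\le C(1+c/b)^{a}(c+|x|)^{-a}$ on $|x|>b$.

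Taking the maximum of the two implied constants yields $|f(x)|\ll_{a,b,c}(c+|x|)^{-a}$ for all $x\in\R$. There is no real obstacle here; the only thing to watch is that the implied constant depends on $c$ (as well as on $a$, $b$, and the original implied constant for $f$), which is already built into the statement.
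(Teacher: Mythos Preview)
Your proof is correct and follows essentially the same approach as the paper: split into $|x|\le b$ and $|x|>b$, use continuity of $f$ on the compact interval to get a uniform bound there, and on $|x|>b$ compare $|x|^{-a}$ with $(c+|x|)^{-a}$ via the bound $(1+c/|x|)^a\le(1+c/b)^a$. The paper's proof is simply a terser version of the same argument.
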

\begin{proof}
   It is easy to show that $|x|^{-a}\ll (c+|x|)^{-a}$ for $|x|>b$.
   By the continuity of $f$,  $f(x) \ll 1 \le (b+c)^a (c+|x|)^{-a}$ for $|x| \le b$.
   The proposition follows.
\end{proof}

\begin{proposition} \label{da}
    Suppose $f$ is a continuous function on an interval $[a,b)$
    with a continuous derivative on $(a,b)$.
    Suppose $\lim_{x \rightarrow a^+} f'(x) = A$.
    Then $f$ has a continuous derivative on $[a,b)$ with $f'(a)=A$.
\end{proposition}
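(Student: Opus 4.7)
The plan is to use the mean value theorem to transfer the limit of $f'$ near $a$ into a statement about the difference quotient of $f$ at $a$. Since $f$ is continuous on $[a,b)$ and differentiable on $(a,b)$, for any $x \in (a,b)$ the mean value theorem produces some $c_x \in (a,x)$ with
\[
\frac{f(x)-f(a)}{x-a} = f'(c_x).
\]
Because $a < c_x < x$, we have $c_x \to a^+$ as $x \to a^+$, and the hypothesis $\lim_{x\to a^+} f'(x) = A$ then forces
\[
\lim_{x\to a^+} \frac{f(x)-f(a)}{x-a} = \lim_{x\to a^+} f'(c_x) = A.
\]
This is exactly the definition of the (one-sided) derivative $f'(a)$, so $f'(a) = A$ exists.

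To conclude continuity of $f'$ on $[a,b)$, I just observe that $f'$ is continuous on $(a,b)$ by hypothesis, and at the endpoint we have $\lim_{x\to a^+} f'(x) = A = f'(a)$, which is precisely continuity at $a$.

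There is no real obstacle here; the only subtlety is that differentiability of $f$ at $a$ is not assumed a priori, so the argument must \emph{derive} the existence of $f'(a)$ rather than simply evaluate a known limit. The MVT step handles this cleanly because continuity of $f$ on $[a,b)$ (up to and including $a$) is enough to apply MVT on $[a,x]$ for $x$ close to $a$, even without any differentiability information at $a$ itself.
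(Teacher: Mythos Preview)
Your proof is correct. The paper's argument is essentially the same in spirit but phrased via L'Hospital's rule: since $f$ is continuous at $a$, the difference quotient $\frac{f(x)-f(a)}{x-a}$ is a $0/0$ form, and L'Hospital gives $\lim_{x\to a^+}\frac{f(x)-f(a)}{x-a}=\lim_{x\to a^+}f'(x)=A$. Your direct use of the Mean Value Theorem is the more elementary route (indeed L'Hospital's rule is itself proved via the Cauchy Mean Value Theorem), and it makes the mechanism more transparent: the point $c_x$ witnessing the equality of the difference quotient with a value of $f'$ is exactly what L'Hospital hides.
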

\begin{proof} By definition, $f'(a)=\lim\limits_{x\to a^+}\frac{f(x)-f(a)}{x-a}$.
  Since $f$ is continuous at $a$, we can apply L'Hospital's rule, giving
  $f'(a)=\lim\limits_{x\to a^+}f'(x)$, as needed.
  \end{proof}

\begin{proposition} \label{diffcomp} 
   Let $\ell \ge 1$ be an integer, let $I$ be an interval, and let $g \in C^{\ell}(I)$
  be real-valued.
   Let $J$ be an interval containing the image of $g$.
   Let $f \in C^{\ell}(J)$.
   Then
   \[ \frac{d^{\ell}}{dt^{\ell}} f(g(t)) =
     \sum_{r=1}^\ell \sum_{a_1 + a_2 + \cdots + a_r = \ell \atop \ell \ge a_1 \ge \cdots \ge a_r \ge 1}
      A_{a_1, \ldots, a_r} f^{(r)}(g(t)) g^{(a_1)}(t) \cdots g^{(a_r)}(t), \]
    where $A_{a_1, \ldots, a_r}$ are nonnegative integers independent of $f,g$.
   \end{proposition}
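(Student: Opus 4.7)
The plan is to prove this by induction on $\ell$, essentially reproducing the classical Fa\`a di Bruno formula, except that we only need to track that the coefficients are nonnegative integers rather than compute them precisely. This relaxation makes the combinatorial bookkeeping much easier.

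For the base case $\ell = 1$, the chain rule gives $\frac{d}{dt} f(g(t)) = f'(g(t))\, g'(t)$, which matches the claimed form with $r=1$, $a_1 = 1$, and $A_1 = 1$. For the inductive step, assuming the formula holds for $\ell$, I differentiate each summand $f^{(r)}(g(t))\, g^{(a_1)}(t) \cdots g^{(a_r)}(t)$ once more using the product rule. This produces two kinds of terms:
\begin{itemize}
\item Differentiating the $f^{(r)}(g(t))$ factor (via the chain rule) yields $f^{(r+1)}(g(t))\, g'(t)\, g^{(a_1)}(t) \cdots g^{(a_r)}(t)$, with exponent tuple $(a_1,\ldots,a_r,1)$ summing to $\ell+1$.
\item Differentiating the $g^{(a_i)}(t)$ factor for some $i$ yields $f^{(r)}(g(t))\, g^{(a_1)}(t) \cdots g^{(a_i + 1)}(t) \cdots g^{(a_r)}(t)$, with exponent tuple summing to $\ell + 1$.
\end{itemize}
In both cases the requisite smoothness of $f \circ g$ is available, since $g \in C^\ell(I)$ and the highest order of $g$ appearing after differentiation is $\le \ell + 1$, and $f \in C^{\ell+1}(J)$ when we pass to the case $\ell+1$ (which is the hypothesis at the next induction level). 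After reordering the exponents to be weakly decreasing, each such term fits the form claimed at level $\ell+1$. Summing contributions from all the summands at level $\ell$ and collecting like terms, we obtain the formula at level $\ell+1$, with new coefficients $A_{a_1,\ldots,a_r}$ that are nonnegative integer sums of the old coefficients (each contribution being either the old coefficient or an integer multiple thereof from the $i$-choice in the second bullet).

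The main obstacle — if one insists on an explicit formula — would be verifying the standard closed-form expression for $A_{a_1,\ldots,a_r}$ in terms of multinomial coefficients, but this is unnecessary here: since the inductive step only requires that nonnegative integer coefficients be inherited under differentiation, and since the base case has coefficient $1$, the nonnegativity and integrality propagate automatically. One minor point to be careful about is that multiple pre-reorder exponent tuples may collapse onto the same post-reorder tuple $(a_1 \ge \cdots \ge a_r)$, but these contributions simply add, preserving the nonnegative integer property. This completes the induction.
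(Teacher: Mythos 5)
Your proof is correct and takes the same approach as the paper, which simply states ``Induction'' without elaboration; you have filled in the details of that induction faithfully, including the key observation that precise Fa\`a di Bruno coefficients are unnecessary because nonnegativity and integrality propagate automatically through the product rule and the chain rule.
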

\begin{proof} Induction. \end{proof}

\begin{proposition} \label{1911} 
    Suppose $\phi$ is a function on $\R$ which is $\ell$-times continuously 
   differentiable, with $\phi^{(k)}(\pm \infty)=0$ and
    $\phi^{(k)} \in L^1(\R)$ for $k=1, \ldots, \ell$.
    Then for such $k$ and real $t \neq 0$,
    \[ |\hat{\phi}(t)| \le \frac{1}{|2\pi t|^{k}} \int_\R |\phi^{(k)}(x)| dx, \]
    where $\hat{\phi}(t)=\int_{\R}\phi(y)e^{-2\pi i ty}dy$ 
  is the Fourier transform\index{keywords}{Fourier transform} of $\phi$.
   \end{proposition}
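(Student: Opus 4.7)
The proof is a routine $k$-fold integration by parts, and I do not anticipate any real obstacle. Here is the plan.

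The key intermediate claim I would establish, by induction on $j$ for $1 \le j \le k$, is the identity
\[
\hat{\phi}(t) = \frac{1}{(2\pi i t)^{j}} \int_{\R} \phi^{(j)}(y)\, e^{-2\pi i t y}\, dy,
\]
valid for $t \neq 0$. Given this for $j = k$, the bound in the statement follows at once by moving the absolute value inside the integral and using $|e^{-2\pi i t y}| = 1$.

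For the inductive step, I would integrate by parts once: take $u = \phi^{(j-1)}(y)$ and $dv = e^{-2\pi i t y}\,dy$, so $v = e^{-2\pi i t y}/(-2\pi i t)$. This yields
\[
\frac{1}{(2\pi i t)^{j-1}} \int_{\R} \phi^{(j-1)}(y)\, e^{-2\pi i t y}\, dy
 = \left[\frac{\phi^{(j-1)}(y)\, e^{-2\pi i t y}}{(2\pi i t)^{j-1} \cdot (-2\pi i t)}\right]_{-\infty}^{\infty}
 + \frac{1}{(2\pi i t)^{j}} \int_{\R} \phi^{(j)}(y)\, e^{-2\pi i t y}\, dy.
\]
The boundary term vanishes: for $j \ge 2$ this is exactly the hypothesis $\phi^{(j-1)}(\pm\infty) = 0$, and for $j = 1$ the same vanishing follows from the setup, since $\phi' \in L^1(\R)$ together with $\phi'(\pm\infty) = 0$ forces $\phi(\pm\infty) = \phi(0) + \int_0^{\pm\infty}\phi'$ to exist as finite limits, and the existence of $\hat{\phi}$ in the usual sense rules out nonzero limits. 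The integrability hypotheses ensure that each of the integrals above converges absolutely, so no interchange issues arise.

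The only step requiring mild care is the $j=1$ case of the boundary vanishing, which I handle as above; everything else is automatic from the stated regularity and integrability. Having obtained the identity for $j = k$, the estimate
\[
|\hat{\phi}(t)| \le \frac{1}{|2\pi t|^{k}} \int_{\R} |\phi^{(k)}(y)|\, dy
\]
is immediate.
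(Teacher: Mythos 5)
Your proof is correct and is the standard integration-by-parts argument; the paper itself gives no independent proof but simply cites Lemma 19.11 and Proposition 8.15 of \cite{KL}, which use exactly this approach. The one spot you rightly flag — the boundary vanishing at the first step, since $\phi(\pm\infty)=0$ is not among the explicit hypotheses — is handled adequately: $\phi'\in L^1$ gives the existence of the limits $\phi(\pm\infty)$, and the implicit assumption that $\hat\phi$ is defined (equivalently, that $\phi$ is integrable) forces them to be zero. Each integration by parts on $[-R,R]$ then passes to the limit $R\to\infty$ using $\phi^{(j-1)}(\pm\infty)=0$ for the boundary term and $\phi^{(j)}\in L^1$ for the remaining integral, so the inductive identity and the resulting bound are fully justified.
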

\begin{proof}
    See Lemma 19.11 and Proposition 8.15 of \cite{KL}.
\end{proof}

\subsubsection*{$V$ revisited} \label{hV}

We now re-examine the integral transforms of Section \ref{3}, without the 
  hypothesis of compact support.
Let $C^{m}(G^+//K_\infty)$\index{notations}{C cinfty@$C^m(G^+//K_\infty)$}
   denote the set of bi-$K_{\infty}$-invariant complex-valued functions 
  with continuous $m$-th derivative.
   Let $C^{m}(\R^+)^w$ be the set of $a:\R^+\longrightarrow\C$ with continuous $m$-th 
  derivative, satisfying $a(y)=a(y^{-1})$.

For $f \in C^{m}(G^+//K_\infty)$ and $u\ge 0$, we define\index{notations}{V@$V$}
   \begin{equation} \label{Vf1}
    V(u) = V(y+y^{-1}-2) = f( \mat{y^{1/2}}{}{}{y^{-1/2}}).
    \end{equation}
In the other direction,
   \begin{equation} \label{Vf2}
    f(\mat abcd) = V(\frac{a^2+b^2+c^2+d^2}{ad-bc}-2).
    \end{equation}

\begin{proposition} \label{uydiff}
   For $y \in \R^+$, the substitution
   \[ u = y+y^{-1}-2 \]
   defines a linear injection: $C^m(\R^+)^w \longrightarrow C^{m'}([0,\infty))$ when
  $3m'\le m+1$.  Any function in the image of the map is $C^m$ on $(0,\infty)$.
\end{proposition}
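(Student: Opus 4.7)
The plan is to mirror the proof of Proposition \ref{uprop} essentially verbatim, observing that the argument there was \emph{local} at $u=0$ and nowhere required compact support. Linearity is immediate, and injectivity follows because the map $\R^+ \to [0,\infty)$, $y \mapsto y+y^{-1}-2$, is surjective and two-to-one with fibers $\{y,y^{-1}\}$; since $a \in C^m(\R^+)^w$ is invariant under $y \mapsto y^{-1}$, the value $A(u)$ is well-defined and determines $a$ on all of $\R^+$.

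For smoothness on the open half-line $(0,\infty)$, I would note that on $y>1$ the inverse $u \mapsto y = 1 + \tfrac{u}{2} + \sqrt{u + \tfrac{u^2}{4}}$ is $C^\infty$, so $A(u) = a(y(u))$ inherits $C^m$ regularity there from $a$ by the chain rule (Proposition \ref{diffcomp}). This gives the claim that the image is $C^m$ on $(0,\infty)$.

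The substantive step is continuity of $A^{(n)}$ at $u = 0$ for $0 \le n \le m'$ with $3m' \le m+1$. Here I would copy the computation from the proof of Proposition \ref{uprop}: set $y = e^x$, $h(x) = a(e^x)$, so $h \in C^m(\R)$ is even, and $u = 2(\cosh x - 1)$. Writing
\[ A^{(n)}(u) = \frac{p_n(x)}{2^n (\sinh x)^{2n-1}} \qquad (u>0), \]
one verifies by induction, using the recursion
\[ p_{n+1}(x) = \sinh(x) p_n'(x) - (2n-1)\cosh(x) p_n(x), \]
that $p_{n+1}$ is odd and vanishes to order at least $2n+1$ at $0$. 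The degree-of-differentiability constraint appears exactly as before: to carry the induction through $n = m'-1$, one needs $p_{m'}$ to be $(2m'-1)$-times continuously differentiable, which requires $h \in C^{3m'-1}$, i.e.\ $m \ge 3m'-1$. Then $2m'+1$ applications of L'H\^opital's rule establish continuity of $A^{(m')}$ at $0$ and compute its value. Combining with the $(0,\infty)$ regularity and Proposition \ref{da} to glue at the endpoint yields $A \in C^{m'}([0,\infty))$.

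The main obstacle --- and really the only one --- is showing the derivatives extend continuously at $u=0$, and this is handled by precisely the same induction as in Proposition \ref{uprop}; the absence of compact support plays no role in that local analysis. The converse direction (surjectivity onto $C^m([0,\infty))$) asserted in Proposition \ref{uprop} is not claimed here, so there is nothing further to prove.
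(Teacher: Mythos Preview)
Your proposal is correct and takes essentially the same approach as the paper: the paper's proof simply refers back to Proposition \ref{uprop} with the observation that compact support was never used there, and you have faithfully expanded that reference by recapitulating the local analysis at $u=0$ and noting where the constraint $3m'\le m+1$ enters. If anything, your write-up is more explicit than the paper's one-line deferral.
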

   \begin{proof}
   See Proposition \ref{uprop}.  The proof given there does not actually use
  the hypothesis of compact support.
   \end{proof}


\subsubsection*{The Harish-Chandra transform revisited} \label{hHS}

Given $f\in C^m(G^+//K_\infty)$, its Harish-Chandra transform\index{keywords}{Harish-Chandra transform}
 is the function on $\R^+$ defined by
\[ (\mathcal{H} f)(y) = y^{-1/2} \int_\R f( \mat 1x{}1 \mat{y^{1/2}}{}{}{y^{-1/2}} ) dx, \]
   provided the integral is absolutely convergent.  If $V\in C^{m'}([0,\infty))$ 
  is the function associated to $f$ as above, then
\[ \mathcal{H} f(y) = \int_\R V(y+y^{-1}-2+x^2) dx.  \]

\subsubsection*{The Mellin transform revisited} \label{Mellin}
  Let $\Phi$ be a measurable complex-valued function on $\R^+$.
  Its Mellin transform is the function of $\C$ defined\index{keywords}{Mellin transform} 
by\index{notations}{Mphis@$\mathcal{M},\mathcal{M}_s$ (Mellin transform)}
\[ (\mathcal{M}\Phi)(s) =\mathcal{M}_s\Phi= \int_0^\infty \Phi(y) y^s \tfrac{dy}y, \]
    provided the integral is absolutely convergent.
For example, starting with $f\in C^m(G^+//K_\infty)$ with compact support or just
  sufficient decay, one can define
  $\Phi=\mathcal{H}f$ and $h(t)=\mathcal{M}_{it}\Phi$.  However, our interest here
  is to go in the other direction, starting from $h$.
  Thus we shall need to consider conditions under which 
  the inverse Mellin transform exists.
Throughout this section, $\eta$ denotes a complex-valued function satisfying:
  \begin{equation} \label{eta}
  \begin{cases}
   \text{$\eta(s)$ is a holomorphic function in $A_1 < \Re s <  A_2$,}  \\
   \eta(s) \ll (1+|s|)^{-B} \text{ in the same strip},
   \end{cases}
   \end{equation}
     for some real numbers $A_1<A_2$ and $B>0$.

\begin{proposition} \label{indepsigma}
   Suppose $B>1$ and $\sigma$ is a real number satisfying $A_1 < \sigma < A_2$.
   For $y > 0$, define
  \begin{equation} \label{phisigma}
   \Phi_{\sigma}(y) = \frac{1}{2\pi i} \int_{\Re s = \sigma} \eta(s) y^{-s} ds.
   \end{equation}
  The integral is absolutely convergent and independent of $\sigma$. Therefore we can define
   \begin{equation} \label{Mphi}
    \Phi(y) = \frac{1}{2\pi i} \int_{\Re s = \sigma} \eta(s) y^{-s} ds.
    \end{equation}
  Furthermore if $A_1=-A_2$ and $\eta$ is an even function, then $\Phi(y)=\Phi(y^{-1})$.
\end{proposition}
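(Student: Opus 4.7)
The plan is to establish the three claims (absolute convergence, independence of $\sigma$, and the symmetry $\Phi(y)=\Phi(y^{-1})$) in turn, each being a routine consequence of the hypotheses \eqref{eta}.

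First I would verify absolute convergence. Parametrizing the contour by $s=\sigma+it$ with $t\in\R$, the integrand satisfies
\[ |\eta(\sigma+it)\,y^{-s}| \;\ll\; y^{-\sigma}(1+|t|)^{-B}, \]
by \eqref{eta}, and the right-hand side is integrable over $t\in\R$ because $B>1$. Hence $\Phi_\sigma(y)$ is well-defined by an absolutely convergent integral for each $\sigma\in(A_1,A_2)$.

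Next I would prove $\Phi_{\sigma_1}(y)=\Phi_{\sigma_2}(y)$ for $A_1<\sigma_1<\sigma_2<A_2$ by a standard contour shift. Consider the closed rectangular contour $\mathcal{R}_T$ with vertices $\sigma_j\pm iT$ ($j=1,2$). Since $\eta(s)$ is holomorphic in the strip containing $\mathcal{R}_T$ and its interior, Cauchy's theorem gives $\oint_{\mathcal{R}_T}\eta(s)y^{-s}\,ds=0$. On the horizontal segments the integrand is bounded by $(1+T)^{-B}\max(y^{-\sigma_1},y^{-\sigma_2})$, and the segments have fixed length $\sigma_2-\sigma_1$; hence these contributions tend to $0$ as $T\to\infty$. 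Letting $T\to\infty$ yields $\Phi_{\sigma_1}(y)=\Phi_{\sigma_2}(y)$, so the common value $\Phi(y)$ in \eqref{Mphi} is well-defined.

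Finally, under the assumption $A_1=-A_2$ and $\eta$ even, I would choose the convenient contour $\sigma=0$ (which lies in $(A_1,A_2)$) and parametrize $s=it$, $ds=i\,dt$, to obtain
\[ \Phi(y) \;=\; \frac{1}{2\pi}\int_{-\infty}^{\infty}\eta(it)\,y^{-it}\,dt,\qquad
   \Phi(y^{-1}) \;=\; \frac{1}{2\pi}\int_{-\infty}^{\infty}\eta(it)\,y^{it}\,dt. \]
Making the substitution $t\mapsto -t$ in the second integral and using the evenness of $\eta$, which gives $\eta(-it)=\eta(it)$, converts the second expression into the first, proving $\Phi(y)=\Phi(y^{-1})$. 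No step here is expected to be an obstacle; the only care required is in verifying the vanishing of the horizontal contributions in the contour shift, which is straightforward given the polynomial decay in \eqref{eta}.
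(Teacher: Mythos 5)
Your proof is correct and takes essentially the same route as the paper: absolute convergence follows directly from the bound in \eqref{eta} and $B>1$, independence of $\sigma$ is by a rectangular contour shift with horizontal edges vanishing as the height grows, and the symmetry under $y\mapsto y^{-1}$ is obtained by choosing $\sigma=0$ and using the evenness of $\eta$ after the substitution $s\mapsto -s$. There is nothing to add.
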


\begin{proof} The absolute convergence of $\eqref{phisigma}$ follows from  $B > 1$ and \eqref{eta}.
  Let $\sigma_0<\sigma_1$ be two real numbers in the open interval $(A_1, A_2)$.
  For $\alpha>0$, let $\Gamma_\alpha$ be the rectangle with vertices
   $\sigma_0 \pm \alpha i$, $\sigma_1 \pm \alpha i$, and counterclockwise orientation.
  By Cauchy's theorem, $\int_{\sss \Gamma_\alpha} \eta(s) y^{-s} ds=0$.
  By \eqref{eta}, $\int_{\sigma_0}^{\sigma_1} \eta(\sigma\pm i \alpha ) y^{-(\sigma\pm i \alpha)} 
  d\sigma \rightarrow 0$ as $\alpha \rightarrow \infty$.  It follows that $\Phi_\sigma$ is
  independent of $\sigma$.

Finally, suppose $\eta$ is an even function.  Letting $\sigma=0$ in \eqref{Mphi},
 \[ \Phi(y^{-1}) =  \frac{1}{2\pi i} \int_{i\R} \eta(s) y^{s} ds
 = \frac{1}{2\pi i} \int_{i\R} \eta(-s) y^{-s} ds
  = \frac{1}{2\pi i} \int_{i\R } \eta(s) y^{-s} ds = \Phi(y).\qedhere\]
\end{proof}

\begin{proposition}   \label{Minv}
   Suppose $B>1$ and fix $s=\sigma+i\tau$ with $A_1 < \sigma < A_2$.
   Let $\Phi$ be the function defined by \eqref{Mphi}.
   Then $\mathcal{M}\Phi(s)$ is absolutely convergent and equal to $\eta(s)$.
\end{proposition}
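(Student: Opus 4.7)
The plan is to compute $\mathcal{M}\Phi(s)=\int_0^\infty\Phi(y)y^{s-1}\,dy$ by splitting the domain at $y=1$, exploiting the freedom in Proposition \ref{indepsigma} to choose the integration line for $\Phi$ differently on each piece, and then reassembling via a contour shift. Concretely, fix reals $\sigma_1,\sigma_2$ with $A_1<\sigma_1<\sigma<\sigma_2<A_2$. On $(0,1)$ I would represent $\Phi$ using the line $\Re s'=\sigma_1$, so that $|\Phi(y)|\ll y^{-\sigma_1}$ yields $|\Phi(y)y^{s-1}|\ll y^{\sigma-\sigma_1-1}$, integrable near $0$; on $(1,\infty)$ I would use $\Re s'=\sigma_2$, so that $|\Phi(y)y^{s-1}|\ll y^{\sigma-\sigma_2-1}$, integrable near $\infty$. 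This simultaneously establishes the absolute convergence of $\mathcal{M}\Phi(s)$.

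Next, I would apply Fubini on each of the two integrals, justified by the product bound $|\eta(s')y^{-s'}y^{s-1}|\ll(1+|\tau'|)^{-B}y^{\sigma-\sigma_j-1}$ together with $B>1$, which makes the relevant double integrals absolutely convergent. The elementary inner integrals evaluate as
\begin{equation*}
\int_0^1 y^{s-s'-1}\,dy=\frac{1}{s-s'}\qquad(\Re(s-s')>0),
\end{equation*}
\begin{equation*}
\int_1^\infty y^{s-s'-1}\,dy=-\frac{1}{s-s'}\qquad(\Re(s-s')<0),
\end{equation*}
valid exactly in the strips determined by the choices of $\sigma_1,\sigma_2$. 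Thus
\begin{equation*}
\mathcal{M}\Phi(s)=\frac{1}{2\pi i}\int_{\Re s'=\sigma_1}\frac{\eta(s')}{s-s'}\,ds'-\frac{1}{2\pi i}\int_{\Re s'=\sigma_2}\frac{\eta(s')}{s-s'}\,ds'.
\end{equation*}

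Finally I would evaluate this difference by a contour shift. Consider the rectangle with vertices $\sigma_1\pm iT$, $\sigma_2\pm iT$, traversed counterclockwise; it encloses the simple pole at $s'=s$, where $\eta(s')/(s-s')$ has residue $-\eta(s)$. The horizontal segments contribute at most $O(T^{-B})\cdot(\sigma_2-\sigma_1)$ by the decay hypothesis in \eqref{eta}, so they vanish as $T\to\infty$. The residue theorem then gives
\begin{equation*}
\int_{\Re s'=\sigma_2}\frac{\eta(s')}{s-s'}\,ds'-\int_{\Re s'=\sigma_1}\frac{\eta(s')}{s-s'}\,ds'=-2\pi i\,\eta(s),
\end{equation*}
whence $\mathcal{M}\Phi(s)=\eta(s)$. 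The main technical point is not any single step but keeping the bookkeeping straight: making sure one uses the \emph{same} $\sigma_j$ for both the absolute-convergence estimate and the Fubini application on each subinterval, and that these choices bracket the pole $s'=s$ so that the final residue computation returns $\eta(s)$ with the correct sign.
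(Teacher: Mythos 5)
Your proof is correct, but it takes a genuinely different route from the paper. The paper changes variables to $y = e^{2\pi v}$ and recognizes $\Phi(y)$ as $(2\pi)^{-1}e^{-2\pi v\sigma}\widehat{\eta_\sigma}(v)$, where $\eta_\sigma(t)=\eta(\sigma+it)$; after showing $\widehat{\eta_\sigma}\in L^1$ (via Cauchy's formula for $\eta''$ and Proposition \ref{1911}, giving $\widehat{\eta_\sigma}(v)\ll v^{-2}$), it applies Fourier inversion. You instead split $\int_0^\infty$ at $y=1$, represent $\Phi$ with two different contours $\sigma_1<\sigma<\sigma_2$ to guarantee integrability at $0$ and at $\infty$ respectively, apply Fubini on each piece, and evaluate the resulting contour integrals by a rectangle shift that picks up the simple pole at $s'=s$ with residue $-\eta(s)$. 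The bookkeeping all checks out: the inner integrals converge exactly because the contours straddle $\sigma$, Fubini is justified by the joint bound $(1+|\tau'|)^{-B}y^{\sigma-\sigma_j-1}$ with $B>1$, and the horizontal segments of the rectangle are $O(T^{-B})$ so vanish. What your approach buys is that it is entirely contour-theoretic and self-contained: it avoids the Fourier-inversion step and the auxiliary work of bounding $\widehat{\eta_\sigma}$ via derivatives of $\eta$, trading those for the cheap two-contour trick. The paper's version, on the other hand, delivers as a byproduct the quantitative decay $\widehat{\eta_\sigma}(v)\ll v^{-2}$, which is reused elsewhere (e.g. in establishing compact support in the Paley--Wiener setting).
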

\begin{proof}
   Write $y=e^{2 \pi v}$.  Then  by \eqref{Mphi},
   \begin{equation}\label{etaPhi}
 \Phi(y) =  \frac 1{2\pi} \int_{\R} \eta(\sigma+it) e^{-2\pi v \sigma} 
  e^{-2\pi i v t} dt=\frac1{2\pi e^{2\pi v\sigma}}\widehat{\eta_\sigma}(v), 
\end{equation}
where $\eta_\sigma(t)=\eta(\sigma+it)$.
   Because $B>1$, \eqref{eta} shows that $\eta_\sigma \in L^1(\R)$.
   Let $0<r < \min(A_2 - \sigma, \sigma-A_1)$.  Then by Cauchy's integral formula,
  \[ \eta^{(k)}(s)
    = \frac{k!}{2\pi i}\int_{|z-s|=r} \frac{\eta(z)dz}{(z-s)^{k+1}}
   \ll \int_{|z-s|=r}  \frac{|dz|}{{r^{k+1}(1+|z|)^B}}
   \ll_r \frac  1{(1+|s|)^B}. \]
  In order to remove the dependence on $r$ in the estimates that follow, we take
   $r=\tfrac12\min(A_2 - \sigma, \sigma-A_1)$.

   From the above, we see that $\eta_\sigma^{(k)}(t) = \eta^{(k)}(\sigma+i t) \in L^1(\R)$ and
   $\eta_\sigma^{(k)}(\pm \infty)=0$.  By Proposition \ref{1911},
 \begin{equation} \label{Phisigma}
  \widehat{\eta_\sigma}(v) \ll |v|^{-2} \int_\R |\eta''_\sigma(t)| dt \ll_\sigma v^{-2} 
  \quad(v\neq 0).
  \end{equation}
 Thus $\widehat{\eta_\sigma}\in L^1(\R)$, so given $s=\sigma+i\tau$ with $\sigma\in (A_1,A_2)$,
  \eqref{etaPhi} gives
   \[ \int_0^\infty |\Phi(y) y^s| \frac {dy}y
    = \int_{\R} |\widehat{\eta_\sigma}(v)| dv < \infty. \]
   This shows that $\mathcal M\Phi(s)$ is absolutely convergent.

   Because $\eta_\sigma$ is continuous and integrable, and 
  $\widehat{\eta_\sigma} \in L^1(\R)$, we may apply Fourier inversion, giving:
   \[ \eta(s)=\eta_\sigma(\tau) = \int_\R \widehat{\eta_\sigma}(v) e^{2\pi i v \tau} dv 
   = 2 \pi \int_\R \Phi(e^{2 \pi v}) e^{2\pi  v \sigma} e^{2\pi i v \tau} dv\]
   \[ = 2 \pi \int_\R \Phi(e^{2 \pi v}) e^{2\pi v (\sigma+i\tau)} dv
   = \int_0^\infty \Phi(y) y^{s} \tfrac{dy}y. \qedhere\]
\end{proof}

\subsubsection*{Relationship between $h$ and $V$} \label{handV}

   Throughout this section we assume that $h$ satisfies $\eqref{ht}$.
  We take 
\[\eta(s)=h(-is),\]
  which satisfies \eqref{eta} with $A_1=-A$ and $A_2=A$.

\begin{proposition} \label{Sy}
   Suppose $B>1$, and $\sigma$ is a real number with $|\sigma| <A$.
   For $y > 0$, define
   \begin{equation} \label{Sye}
      \Phi(y) = \frac 1{2\pi i} \int_{\Re s = \sigma} h(-is) y^{-s} ds.
    \end{equation}
  Then $\Phi$ belongs to $C(\R^+)^w$, is independent of $\sigma$, and
     \[ \mathcal{M}_{it}\Phi=h(t) \]
for all complex numbers $t$ with $|\Im(t)|<A$.
    If we also define $\Phi(0)=0$, then $\Phi$ is continuous on $[0,\infty)$.
\end{proposition}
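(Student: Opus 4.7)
The plan is to assemble this proposition directly from the two general Mellin-inversion results already established in the section, namely Propositions \ref{indepsigma} and \ref{Minv}, applied to $\eta(s)=h(-is)$. Under hypothesis \eqref{ht}, this $\eta$ is holomorphic on $-A<\Re s<A$ with $\eta(s)\ll(1+|s|)^{-B}$, so it satisfies \eqref{eta} with $A_1=-A$, $A_2=A$. Since $h$ is assumed even, $\eta$ is also even. The absolute convergence of the integral defining $\Phi(y)$ is then immediate from $B>1$, and Proposition \ref{indepsigma} yields both the independence of $\sigma\in(-A,A)$ and the identity $\Phi(y^{-1})=\Phi(y)$, so $\Phi$ is a candidate element of $C(\R^+)^w$ once continuity is verified.

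Next I would verify continuity on $(0,\infty)$ by Proposition \ref{lebc}: on a compact subinterval $[y_0,y_1]\subset\R^+$, the integrand $h(-i(\sigma+it))y^{-(\sigma+it)}$ is continuous in $y$ and dominated by $|h(-i(\sigma+it))|\cdot\max(y_0^{-\sigma},y_1^{-\sigma})$, which lies in $L^1(dt)$ by the decay assumption. For the Mellin identity, I would apply Proposition \ref{Minv} to $\eta(s)=h(-is)$; this yields $\mathcal{M}\Phi(s)=h(-is)$ for $-A<\Re s<A$. Setting $s=it$ with $|\Im t|<A$ gives $\mathcal{M}_{it}\Phi = h(-i\cdot it)=h(t)$, as required.

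The remaining point is continuity at $y=0$, and this is the only step using genuine freedom in the contour. For $0<y<1$, I would choose $\sigma\in(-A,0)$ and use the independence of $\sigma$ to write
\[
|\Phi(y)|\le \frac{y^{-\sigma}}{2\pi}\int_{\R}|h(-i(\sigma+it))|\,dt\;=\;C_\sigma\, y^{|\sigma|},
\]
where the integral is finite by $B>1$. Since $|\sigma|>0$, this bound tends to $0$ as $y\to 0^+$, which combined with the defined value $\Phi(0)=0$ and the already-established continuity on $(0,\infty)$ gives continuity on $[0,\infty)$.

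No step looks genuinely hard: the whole proof is a routine packaging of the earlier two Mellin propositions, together with the standard dominated-convergence argument for continuity. If there is any mild obstacle, it is remembering to use the freedom in $\sigma$ twice — once (with $\sigma\in(-A,0)$) to control $\Phi(y)$ as $y\to 0^+$, and, should one want the symmetric statement $\Phi(y)\to 0$ as $y\to\infty$, once more with $\sigma\in(0,A)$; but the latter is not required here since the proposition only asserts continuity at $0$.
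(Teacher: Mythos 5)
Your proof is correct and follows essentially the same route as the paper's: invoke Propositions \ref{indepsigma} and \ref{Minv} with $\eta(s)=h(-is)$ (for which \eqref{ht} gives \eqref{eta} with $A_1=-A$, $A_2=A$, and evenness). The one place where you differ is continuity at $y=0$: the paper simply defers this to the more general Proposition \ref{Sd} (which shows $\Phi^{(\ell)}(0)=0$ for $\ell<\min(B-1,A)$, the $\ell=0$ case being what's needed here), whereas you give a self-contained inline estimate $|\Phi(y)|\le C_\sigma\,y^{|\sigma|}$ with $\sigma\in(-A,0)$; both exploit the same freedom to shift the contour to $\Re s<0$, so this is a presentational rather than a substantive difference.
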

\begin{proof} 
    In view of Proposition \ref{Minv}, it only remains to verify the continuity of $\Phi$
  at $y=0$.  This will be done in greater generality in the next proposition.
\end{proof}

\begin{proposition} \label{Sd}
    Suppose $0\le \ell < \min(B-1,A)$ is an integer, and $\sigma$ is a real number with 
   $|\sigma|<A$.  Then the function $\Phi$ defined in \eqref{Sye} has a continuous  
  $\ell$-th derivative on $[0,\infty)$.
    In fact, for $y>0$,
   \begin{equation} \label{Sdiff}
    \Phi^{(\ell)}(y)
    =  \frac 1{2\pi i} \int_{\Re s = \sigma}  \left( \prod_{k=0}^{\ell-1}(-s-k) \right) 
  h(-is) y^{-s-\ell} ds.
    \end{equation}
   The above integral is absolutely convergent and independent of $\sigma$. 
   For $y=0$, $\Phi^{(\ell)}(0)=0$. Lastly,
  \begin{equation} \label{Sdbound}
  \Phi^{(\ell)}(y) \ll_\ell (1+y)^{-A-\ell}.
  \end{equation}
\end{proposition}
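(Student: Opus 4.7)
The plan is to establish the integral formula \eqref{Sdiff} on $(0,\infty)$ by iterated differentiation under the integral sign, then handle the boundary behavior at $y=0$ by a contour shift, and finally obtain the decay \eqref{Sdbound} by a contour shift in the opposite direction.

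For \eqref{Sdiff} and continuity of $\Phi^{(\ell)}$ on $(0,\infty)$, I would fix $\sigma$ with $|\sigma|<A$, work on an arbitrary compact subinterval $[a,b]\subset (0,\infty)$, and apply Corollary \ref{lebdiff2} inductively $\ell$ times. At the $k$-th stage the integrand becomes $(\prod_{j=0}^{k-1}(-s-j))h(-is)y^{-s-k}$, which is dominated on $[a,b]$ by $C_k(1+|t|)^{k-B}\max(a^{-\sigma-k},b^{-\sigma-k})$; this is in $L^1(d\tau)$ precisely because $\ell<B-1$. Independence of $\sigma$ is then standard: apply Cauchy's theorem to the rectangle with vertices $\sigma_0\pm iT$, $\sigma_1\pm iT$ and let $T\to\infty$; the integrand on the horizontal sides is $\ll T^{\ell-B}\to 0$.

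The continuity at $y=0$ together with $\Phi^{(\ell)}(0)=0$ is the main subtle point and will be proved inductively. Granting that $\Phi^{(\ell-1)}$ has already been extended to a continuous function on $[0,\infty)$, I would shift the contour in \eqref{Sdiff} to a line $\Re s = \sigma$ with $-A<\sigma<-\ell$; this is possible because the hypothesis $\ell<A$ creates exactly enough room. On this shifted contour,
\[ |\Phi^{(\ell)}(y)|\le\frac{y^{-\sigma-\ell}}{2\pi}\int_{\R}\Bigl|\prod_{k=0}^{\ell-1}(-s-k)\Bigr||h(-is)|\,d\tau\ll_\ell y^{-\sigma-\ell}, \]
and $-\sigma-\ell>0$, so $\Phi^{(\ell)}(y)\to 0$ as $y\to 0^+$. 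Proposition \ref{da} then promotes this limit into the statement that $\Phi^{(\ell-1)}$ is differentiable at $0$ with $\Phi^{(\ell)}(0)=0$, completing the inductive step (the base case $\ell=0$ was handled by Proposition \ref{Sy} modulo this same contour-shift argument, which I would include here for completeness).

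For the decay bound \eqref{Sdbound}, for $y$ in a bounded set the continuity of $\Phi^{(\ell)}$ on $[0,\infty)$ (just established) already gives boundedness, and $(1+y)^{-A-\ell}$ is bounded below there, so the bound is immediate. For large $y$, I would shift the contour to $\Re s=\sigma$ with $\sigma$ close to $A$, specifically $\sigma = A - 1/\log y$ once $y$ is large enough. The key observation is that the hypothesis $h(t)\ll(1+|t|)^{-B}$ holds uniformly throughout the open strip $|\Im t|<A$ with an absolute constant, so the integral
\[ \int_{\R}\Bigl|\prod_{k=0}^{\ell-1}(-s-k)\Bigr||h(-is)|\,d\tau\ll\int_{\R}(1+|\tau|)^{\ell-B}\,d\tau<\infty \]
is bounded independently of $\sigma\in(-A,A)$. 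Then $y^{-\sigma-\ell}=y^{-A-\ell}\cdot y^{1/\log y}=e\,y^{-A-\ell}$, giving $|\Phi^{(\ell)}(y)|\ll y^{-A-\ell}$ for large $y$. Combining the two regimes yields \eqref{Sdbound}. The main obstacle is the borderline nature of this decay bound: one cannot simply put the contour on $\Re s=A$ because $h$ is only assumed holomorphic in the \emph{open} strip, and the trick of letting $\sigma$ depend logarithmically on $y$ is what recovers the sharp exponent.
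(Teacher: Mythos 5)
Your proof is correct and follows essentially the same route as the paper: differentiation under the integral sign via Corollary \ref{lebdiff2} (with the same domination argument exploiting $\ell < B-1$), a leftward contour shift to $\sigma \in (-A,-\ell)$ combined with Proposition \ref{da} to extend $\Phi^{(\ell)}$ continuously to $y=0$ with value $0$, and a rightward contour shift for the decay. The only cosmetic difference is in the final decay step: you choose a $y$-dependent contour $\sigma = A - 1/\log y$ to recover the exponent $A$, whereas the paper simply notes that the bound $|\Phi^{(\ell)}(y)| \ll_\ell y^{-\sigma-\ell}$ has an implied constant uniform in $\sigma$, so one may let $\sigma \to A^-$ on the right-hand side (the left side being $\sigma$-independent) to get $y^{-A-\ell}$ directly, then invoke Proposition \ref{ll} for the $(1+y)^{-A-\ell}$ form. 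Both versions rest on the same key observation that the bound \eqref{ht} (and hence the integral of $|\xi_\ell|$) is uniform over $|\sigma| < A$; the paper's limit argument is a hair simpler, but yours is equally valid.
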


\begin{proof} Suppose $0 \le j \le \ell < B-1$,  and write $s = \sigma+it$. We have
\begin{equation}\label{xi}
 \left| \left( \prod_{k=0}^{j-1}(-s-k) \right) h(-is)\right|
\le C_j \frac{\prod_{k=0}^{j-1}(|s|+k)}{(1+|s|)^B}
  \le C_j \frac{\prod_{k=0}^{j-1}(A+|t|+k)}{(1+|t|)^B}.
\end{equation}
Letting  $\xi_j(s)=\left( \prod_{k=0}^{j-1}(-s-k) \right) h(-is)$, we see from
 the first inequality in \eqref{xi} that $\xi_j(s) \ll_j \frac 1{(1+|s|)^{B-j}}$,
  so it satisfies \eqref{ht} 
  with $B$ replaced by $B-j>1$.
   By Proposition \ref{indepsigma}, the right-hand side of \eqref{Sdiff} is 
  absolutely convergent and independent of $\sigma$.

Given $y>0$, let $y_0=y/2$, $y_1=2y$. Then $y \in I=[y_0,y_1]$.
  Define $Y=y_0$ if $\sigma+j > 0$, $Y=y_1$ if $\sigma + j \le 0$.
  Then by \eqref{xi}, $|\xi_j(s)y^{s-j}|\le F_j(t)$, where $F_j(t) = 
  C_j \frac{\prod_{k=0}^{j-1}(A+t+k)}{(1+|t|)^B}{Y}^{-\sigma-j} \ll (1+|t|)^{-(B-j)}$ is
  integrable since $B-j>1$.
  Thus by Corollary \ref{lebdiff2},
  \[ \frac{d^{\ell}}{dy^{\ell}} \Phi(y) = \frac 1{2\pi i} \int_{\Re s = \sigma} 
  h(-is) \frac{d^{\ell}y^{-s}}{dy^{\ell}} ds, \]
  where the integral is continuous in $y>0$.
  This proves \eqref{Sdiff}.
  Furthermore, taking $y=e^{2\pi v}$,
\[  \frac 1{2\pi i} \int_{\Re s = \sigma}  \xi_\ell(s) y^{-s-\ell} ds
   = \frac{e^{-2\pi(\sigma+\ell) v}}{2\pi} \int_\R \xi_\ell(\sigma+it) e^{-2\pi i tv} dt. \]
Choose $\sigma$ such that $-A < \sigma < -\ell$.  Then
  \[ \lim_{y\rightarrow 0^+} \frac{d^{\ell}}{dy^{\ell}} \Phi(y)
    = \lim_{v \rightarrow -\infty} \frac{e^{-2\pi(\sigma+\ell) v}}{2\pi} 
  \int_\R \xi_\ell(\sigma+it) e^{-2\pi i tv} dt = 0. \]
The $\ell$-differentiability of $\Phi(y)$ and the continuity of the $\ell$-th derivative 
  at $y=0$ now follow by Proposition \ref{da}.


 To obtain the bound \eqref{Sdbound}, first suppose $y>0$.  Then
  \[ \int_{\Re s = \sigma} \left| \left( \prod_{k=0}^{\ell-1}(-s-k) \right) h(-is) y^{-s-\ell} \right| |ds|
     \ll_\ell y^{-\sigma-\ell} \int_\R \frac {dt}{(1+|t|)^{B-\ell}} \ll_\ell y^{-\sigma-\ell},\]
 since $B-\ell >1$. The implied constant is independent of $\sigma$, so we can let $\sigma \to A^-$ 
  to obtain $\Phi^{(\ell)}(y) \ll y^{-A-\ell}$.
  The desired bound then follows by Proposition \ref{ll}.
\end{proof}

 For $u\ge 0$, define 
\begin{equation}\label{Qdef}
Q(u) = \Phi(y),
\end{equation}
 where $y=y(u)= \frac{2+u+ \sqrt{4u+u^2}}{2}>0$.
  Note that in the other direction, $u = y+y^{-1}-2$.

\begin{proposition} \label{ydiff}
For $y(u)$ as above, and any nonnegative integer $\ell$,
    \[ y^{(\ell)}(u) \ll u^{-\ell+1}\quad \text{for $u > 1$}.\]
\end{proposition}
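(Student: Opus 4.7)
The plan is to split $y(u)$ into its polynomial and irrational parts, observe that the polynomial part is trivial to bound, and handle the remaining square root via Fa\`a di Bruno (Proposition \ref{diffcomp}). Explicitly, writing
\[
y(u) = 1 + \tfrac{u}{2} + \tfrac{1}{2} v(u), \qquad v(u) = \sqrt{u^2 + 4u},
\]
we have $(1 + u/2)^{(\ell)} = 0$ for $\ell \ge 2$, $= 1/2$ for $\ell = 1$, and $= 1 + u/2$ for $\ell = 0$. In each case this part is $O(u^{1-\ell})$ on $u > 1$, so the problem reduces to proving $v^{(\ell)}(u) \ll u^{1-\ell}$ for $u > 1$ and every $\ell \ge 0$. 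The case $\ell = 0$ is immediate from $v(u) \le u + 2 \ll u$.

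For $\ell \ge 1$ I would apply Proposition \ref{diffcomp} with outer function $\phi(x) = x^{1/2}$ and inner function $g(u) = u^2 + 4u$. Since $g$ is a polynomial of degree $2$, $g^{(k)}(u) = 0$ for $k \ge 3$, so only tuples $(a_1, \ldots, a_r)$ with $a_i \in \{1, 2\}$ and $a_1 + \cdots + a_r = \ell$ contribute. A typical nonvanishing term of $v^{(\ell)}(u)$ has the form
\[
A_{a_1,\ldots,a_r}\, \phi^{(r)}\!\bigl(g(u)\bigr)\, g^{(a_1)}(u) \cdots g^{(a_r)}(u).
\]
If $r_1$ of the $a_i$ equal $1$ and $r_2$ equal $2$, then $r_1 + r_2 = r$ and $r_1 + 2r_2 = \ell$, so $r_1 = 2r - \ell$ and $r_2 = \ell - r$.

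Now $\phi^{(r)}(x) = c_r\, x^{1/2 - r}$ for an explicit constant $c_r$, and for $u > 1$ we have $g(u) = u^2 + 4u \asymp u^2$, so $\phi^{(r)}(g(u)) \ll_r u^{1 - 2r}$. Similarly $g^{(1)}(u) = 2u + 4 \ll u$ and $g^{(2)}(u) = 2$, so
\[
g^{(a_1)}(u) \cdots g^{(a_r)}(u) \ll u^{r_1} = u^{2r - \ell}.
\]
Multiplying, each term is $\ll u^{(1 - 2r) + (2r - \ell)} = u^{1 - \ell}$. Summing the finitely many terms (their number depending only on $\ell$) yields $v^{(\ell)}(u) \ll_\ell u^{1 - \ell}$ on $u > 1$, which gives the claim.

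There is no substantive obstacle: the combinatorial organization is already provided by Proposition \ref{diffcomp}, and the only point requiring care is matching the exponents $1 - 2r$ and $2r - \ell$ via the constraint $r_1 + 2r_2 = \ell$, which cleanly cancels the dependence on $r$ and leaves the uniform bound $u^{1-\ell}$.
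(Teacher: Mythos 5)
Your argument is correct, and it takes a genuinely different route from the paper's. The paper first computes $y'$ and $y''$ in closed form, notes that $y''(u) = -2\,u^{-3/2}(u+4)^{-3/2}$, and then for $\ell \ge 3$ differentiates this product $\ell-2$ more times by the Leibniz rule, landing on the sharper estimate $y^{(\ell)}(u) \ll u^{-\ell-1}$ for $\ell \ge 3$ before relaxing to $u^{-\ell+1}$. You instead split off the affine part and apply Fa\`a di Bruno (Proposition~\ref{diffcomp}) directly to $v(u) = \sqrt{u^2+4u}$; because $g(u)=u^2+4u$ has only nonzero first and second derivatives, the constraint $r_1 + 2r_2 = \ell$ forces the exponent bookkeeping to cancel to $u^{1-\ell}$ uniformly in $r$, which is exactly what is needed. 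The paper's route is slightly more elementary (just the product rule on an explicit formula) and yields a sharper exponent for $\ell \ge 3$; yours is more systematic and generalizes more readily if one were to replace $\sqrt{u^2+4u}$ by a different algebraic function, at the mild cost of losing a factor of $u^{-2}$ in the final decay rate. Since the proposition only claims $u^{-\ell+1}$, both are fully adequate, and your exponent matching via $r_1 = 2r-\ell$, $r_2 = \ell-r$ is the right way to see the $r$-dependence disappear.
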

\begin{proof} We have
    \[ y'(u) = \frac{1}{2} \left(1+\frac{2+u}{\sqrt{u (4+u)}}\right) \ll 1, \]
    \[ y''(u) = -\frac{2}{(u (4+u))^{3/2}} \ll u^{-3} \ll u^{-1}. \]
    For $\ell \ge 3$,
    \[ y^{(\ell)}(u) = -2 \sum_{i=0}^{\ell-2} {{\ell-2}\choose i}\frac{d^iu^{-3/2}}{du^i}  
   \frac{d^{\ell-2-i}(u+4)^{-3/2}}{du^{\ell-2-i}}
      \ll u^{-\ell-1} \ll u^{-\ell+1},\]
      where $n\choose i$ is the binomial coefficient.
\end{proof}

\begin{proposition}  \label{Qub}
  The function $Q(u)$ is continuous on $[0,\infty)$.
     Suppose $0\le \ell < \min(B-1,A)$.
   Then $Q(u)$ is
   $\ell$-times continuously differentiable on the open interval $(0,\infty)$, where
  it satisfies
    \begin{equation} \label{Qubound}
    Q^{(\ell)}(u) \ll_{\ell} (1+u)^{-A-\ell}.
    \end{equation}
\end{proposition}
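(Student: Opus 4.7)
First, the continuity of $Q$ on $[0,\infty)$ is immediate: $y(u) = (2+u+\sqrt{4u+u^2})/2$ is continuous on $[0,\infty)$ with $y(0)=1$, and $\Phi$ is continuous on $[0,\infty)$ by Proposition \ref{Sy}, so $Q = \Phi \circ y$ is continuous.

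For $\ell < \min(B-1, A)$, Proposition \ref{Sd} gives $\Phi \in C^{\ell}([0,\infty))$, while $y$ is $C^{\infty}$ on $(0,\infty)$ from its explicit form. Applying the Fa\`a di Bruno formula (Proposition \ref{diffcomp}) yields
\[
Q^{(\ell)}(u) = \sum_{r=1}^{\ell}\ \sum_{\substack{a_1+\cdots+a_r=\ell \\ \ell\ge a_1\ge \cdots\ge a_r\ge 1}} A_{a_1,\ldots,a_r}\, \Phi^{(r)}(y(u)) \prod_{k=1}^{r} y^{(a_k)}(u),
\]
which is continuous on $(0,\infty)$, establishing $\ell$-differentiability there.

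For the bound in the regime $u > 1$, I would combine the estimate $\Phi^{(r)}(y) \ll (1+y)^{-A-r}$ from Proposition \ref{Sd} with Proposition \ref{ydiff}, which gives $y^{(a_k)}(u) \ll u^{1-a_k}$. A direct check yields $y(u) \ge u+1$ for $u \ge 0$, so that $(1+y(u))^{-A-r} \ll u^{-A-r}$ on $u>1$. Each summand in the Fa\`a di Bruno expansion is then bounded by $u^{-A-r}\cdot \prod_k u^{1-a_k} = u^{-A-r}\cdot u^{r-\ell} = u^{-A-\ell}$, and since the number of terms depends only on $\ell$, we conclude $Q^{(\ell)}(u) \ll u^{-A-\ell} \asymp (1+u)^{-A-\ell}$ for $u > 1$.

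The main obstacle is the bound for $u \in (0,1]$, where the target $(1+u)^{-A-\ell}$ is merely bounded while the individual factors $y^{(a_k)}(u)$ have genuine singularities as $u\to 0^+$ (behaving like $u^{1/2-a_k}$, since $y(u)-1 \sim \sqrt{u}$). One cannot invoke Proposition \ref{ll} directly, since $Q^{(\ell)}$ is only defined on $(0,\infty)$. Instead, I would exploit the invariance $\Phi(y) = \Phi(y^{-1})$ from Proposition \ref{Sy}: substituting $y = e^v$, one has $u = 4\sinh^2(v/2)$, and $v \mapsto \Phi(e^v)$ is an even $C^{\ell}$ function of $v$, so its odd-order derivatives at $v=0$ vanish. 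Reorganizing the Fa\`a di Bruno sum in the $v$-variable (essentially a quantitative refinement of Proposition \ref{uprop}), the singularities of the $y^{(a_k)}(u)$ as $u\to 0^+$ cancel against these vanishing derivatives, yielding boundedness of $Q^{(\ell)}$ on $(0,1]$. Combined with the previous paragraph, this completes the desired bound on all of $(0,\infty)$.
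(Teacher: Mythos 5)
Your $u>1$ analysis matches the paper's proof, and you have correctly put your finger on the weak step in the paper: the appeal to Proposition~\ref{ll} does not literally apply, since that proposition requires a continuous function on $\R$ (so that a compactness argument gives boundedness near $0$), whereas $Q^{(\ell)}$ is only established to be continuous on the open interval $(0,\infty)$, and boundedness on the non-compact interval $(0,1]$ is not automatic. This is a genuine soft spot in the written proof.

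However, your proposed remedy does not close the gap. The cancellation you invoke — reorganizing Fa\`a di Bruno in the $v$-variable so the singular factors cancel against vanishing odd-order derivatives of the even function $g(v)=\Phi(e^v)$ — is exactly the mechanism of the proof of Proposition~\ref{uprop}. There, to show that $p_\ell(v)$ vanishes to order $2\ell-1$ at $v=0$ (which cancels the factor $(\sinh v)^{1-2\ell}$), one needs $g$ to be of class roughly $C^{3\ell}$; this is the origin of the constraint $3m'\le m+1$ throughout \S 3. Under the hypothesis $\ell<\min(B-1,A)$, Proposition~\ref{Sd} gives only $g\in C^{\ell}$, which for $\ell\ge 1$ is insufficient to carry out the cancellation. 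And it is not just a matter of proof technique: if $g(v)\sim|v|^{\ell+\alpha}$ near $v=0$ for small $\alpha>0$ — which corresponds to $h=\hat g$ with $B=\ell+1+\alpha$ and is consistent with the hypothesis for any $A$ — then $Q(u)\sim u^{(\ell+\alpha)/2}$ and hence $Q^{(\ell)}(u)\sim u^{(\alpha-\ell)/2}$ is unbounded as $u\to 0^{+}$. So the bound \eqref{Qubound} near $u=0$ really does seem to require either the stronger hypothesis appearing in Corollary~\ref{Q0} or a restriction to $u$ bounded away from $0$; neither the paper's citation of Proposition~\ref{ll} nor your Fa\`a di Bruno reorganization supplies the missing ingredient.
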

\noindent{\em Remark:} In Corollary \ref{Q0} below, we will show that
  if $\ell<\min(B-2,A-1)$, then the above assertions also hold at the endpoint 
  $u=0$.

   \begin{proof}
The continuity of $Q(u)=\Phi(y)$ is immediate from that of $\Phi$ and $y(u)$.
Because $\ell < \min(B-1 ,A)$,  $\Phi$ has a continuous 
  $\ell$-th derivative by Proposition \ref{Sd}.
   By Proposition \ref{uydiff}, $Q$ has a continuous $\ell$-th derivative
  on $(0,\infty)$.

When $\ell=0$, the bound \eqref{Qubound} is immediate from \eqref{Sdbound} and the fact
  that $y(u)\sim u$.  Suppose $\ell>0$.
  By Proposition \ref{diffcomp}, \eqref{Sdbound}, and Proposition \ref{ydiff},
   for $u > 1$ we have
   \[ \frac{d^\ell}{du^{\ell}} Q(u)
   = \frac{d^\ell}{du^{\ell}} \Phi(y(u))
   \ll_\ell  \sum_{r=1}^\ell \sum_{a_1 + a_2 + \cdots + a_r = \ell \atop \ell \ge a_1 \ge \cdots \ge a_r \ge 1}
       \Phi^{(r)}(y(u)) y^{(a_1)}(u) \cdots y^{(a_r)}(u)  \]
\[  \ll_\ell \sum_{r=1}^\ell \sum_{a_1 + a_2 + \cdots + a_r = \ell \atop \ell \ge a_1 \ge \cdots \ge a_r \ge 1}
    (1+y(u))^{-A-r} u^{-a_1+1} \cdots u^{-a_r+1}
    \ll_\ell u^{-A-\ell}\]
   since $y(u)\sim u$.
  The bound \eqref{Qubound} follows for all $u> 0$
  by Proposition \ref{ll}.
   \end{proof}

\begin{proposition} \label{Vd}
 Suppose $B>2$ and $A>1$.  Then the function
\begin{equation} \label{Vu}
    V(u) =  -\frac 1{\pi} \int_\R Q'(u+w^2) dw
\end{equation}
is absolutely convergent and continuous for $u \ge 0$.
In fact, for any nonnegative integer $\ell < \min(B-2,A-1)$,
   $V(u)$ has a continuous $\ell$-th derivative given by
  \begin{equation} \label{Vdiff}
   V^{(\ell)}(u) =  -\frac 1{\pi} \int_\R Q^{(\ell+1)}(u+w^2) dw,
  \end{equation}
  the integral converging absolutely.   Furthermore, for all $u\ge 0$,
 \begin{equation} \label{Vdbound}
  V^{(\ell)} (u) \ll_\ell (1+u)^{-A-\ell-\frac 12}.
  \end{equation}
\end{proposition}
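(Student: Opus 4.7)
The plan is to induct on $\ell$, using Proposition \ref{Qub} to dominate $Q^{(\ell+1)}(u+w^2)$ uniformly in $u$ by an integrable function of $w$, and then to apply Propositions \ref{lebc} and \ref{lebdiff2} (Lebesgue's continuity and differentiation theorems) to pass derivatives under the integral sign.

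First I handle the base case $\ell=0$. Since $B>2$ and $A>1$, the inequality $1<\min(B-1,A)$ holds, so Proposition \ref{Qub} applies to $Q'$ on $(0,\infty)$, giving $|Q'(v)|\ll (1+v)^{-A-1}$. For $u\ge 0$ and $w\in\R$ with $(u,w)\neq(0,0)$ we have $u+w^2>0$, so the integrand $Q'(u+w^2)$ is defined almost everywhere. The substitution $w=\sqrt{1+u}\,t$ gives
\[
\int_\R |Q'(u+w^2)|\,dw \ll \int_\R (1+u+w^2)^{-A-1}dw = (1+u)^{-A-\frac12}\int_\R (1+t^2)^{-A-1}dt,
\]
where the $t$-integral is finite because $A+1>1$. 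This proves absolute convergence of \eqref{Vu} and the bound \eqref{Vdbound} for $\ell=0$. Continuity in $u$ on $[0,\infty)$ then follows from Proposition \ref{lebc} applied on any compact interval $u\in[0,M]$ with dominating function $C_M(1+w^2)^{-A-1}$ (obtained from the same estimate at $u=0$, which is integrable).

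Next I induct on $\ell$. Suppose $0\le\ell<\min(B-2,A-1)$ and that the formula \eqref{Vdiff} has been established for $\ell-1$ (when $\ell\ge 1$). Since $\ell+1<\min(B-1,A)$, Proposition \ref{Qub} yields that $Q^{(\ell+1)}$ is continuous on $(0,\infty)$ and $|Q^{(\ell+1)}(v)|\ll_\ell (1+v)^{-A-\ell-1}$; in particular, for $(u,w)\neq(0,0)$ the partial derivative $\partial_u Q^{(\ell)}(u+w^2)=Q^{(\ell+1)}(u+w^2)$ exists and is continuous in $u$. On a compact interval $u\in[0,M]$ both $|Q^{(\ell)}(u+w^2)|$ and $|Q^{(\ell+1)}(u+w^2)|$ are bounded by $C_{\ell,M}(1+w^2)^{-A-\ell}$, which lies in $L^1(\R)$ because $A+\ell>1$. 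Corollary \ref{lebdiff2} then justifies differentiating under the integral sign, yielding \eqref{Vdiff} for the current $\ell$, with the resulting integral continuous in $u\in[0,M)$ by Proposition \ref{lebc}; since $M$ was arbitrary, $V^{(\ell)}$ is continuous on $[0,\infty)$.

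The hardest bookkeeping point is verifying that the dominating functions are integrable uniformly near $u=0$, where $w=0$ causes $Q^{(\ell+1)}$ to be evaluated at $0$ (a point where Proposition \ref{Qub} does not directly guarantee a bound). This is handled by the observation that on $u\in[0,M]$ we need the bound only for almost every $w$, and $\{w:u+w^2=0\}$ has measure zero; the estimate from Proposition \ref{Qub} on $(0,\infty)$ then supplies an $L^1$-majorant $C_{\ell,M}(1+w^2)^{-A-\ell-1}$. Finally, repeating the substitution $w=\sqrt{1+u}\,t$ in $-\frac{1}{\pi}\int_\R Q^{(\ell+1)}(u+w^2)dw$ gives
\[
|V^{(\ell)}(u)| \ll_\ell (1+u)^{-A-\ell-\frac12}\int_\R (1+t^2)^{-A-\ell-1}dt \ll_\ell (1+u)^{-A-\ell-\frac12},
\]
establishing \eqref{Vdbound} and completing the induction.
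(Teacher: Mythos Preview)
Your proof is correct and follows essentially the same approach as the paper: both use Proposition~\ref{Qub} to bound $Q^{(k+1)}(u+w^2)$ by $C_k(1+w^2)^{-A-k-1}$, apply Corollary~\ref{lebdiff2} to differentiate under the integral sign, and obtain the final bound via the substitution $w=\sqrt{1+u}\,t$. The only cosmetic difference is that you run the induction on $\ell$ explicitly while the paper invokes Corollary~\ref{lebdiff2} once for all $k\le\ell$; your explicit discussion of the measure-zero issue at $u+w^2=0$ corresponds to the remark preceding the paper's proof.
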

\noindent{\em Remark:} When $u=0$, the integrands of \eqref{Vu} and \eqref{Vdiff}
  may be undefined at $w=0$, but the integrals still make sense.

\begin{proof}
%
  Suppose $0\le k< \min(B-2,A-1)$.  Then
 $k+1<\min(B-1,A)$, so by Proposition \ref{Qub},
  $Q$ is $(k+1)$-times continuously differentiable on $(0,\infty)$, and
   \begin{equation} \label{VdQ}
    |Q^{(k+1)}(u+w^2)| \le C_k (1+u+w^2)^{-A-k-1} \le C_k(1+w^2)^{-A-k-1}
    \end{equation}
for $u>0$, where $C_k$ is a positive constant.
  Now apply Corollary \ref{lebdiff2} with $y=u$, $t=w$, $f(t,y)=Q'(u+w^2)$, and
  $F_k(w)$ equal to the
  right-hand side of \eqref{VdQ}.  The equality \eqref{Vdiff} and its continuity and 
  absolute convergence follow.

To obtain the bound \eqref{Vdbound}, we observe that
  \[ |V^{(\ell)}(u)|\le \frac1\pi\int_\R |Q^{(\ell+1)}(u+w^2)| dw  \ll_\ell \int_\R (1+u+w^2)^{-A-\ell-1} dw \]
  \[ =  (1+u)^{-A-\ell-1} \int_\R (1+((1+u)^{-\frac 12}w)^2)^{-A-\ell-1} dw\]
\[ =   (1+u)^{-A-\ell-\frac 12}\int_\R (1+w^2)^{-A-\ell-1} dw\ll (1+u)^{-A-\ell-\frac12}.\qedhere \]
   \end{proof}

\begin{proposition} \label{VQ}
   Suppose $B>2$ and $A>1$. Then for all $u \ge 0$,
  \begin{equation}\label{VQeq}
 \int_\R V(u+x^2) dx = Q(u), 
\end{equation}
the integral converging absolutely.
\end{proposition}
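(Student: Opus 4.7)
The plan is to substitute the definition \eqref{Vu} of $V$ directly, interchange the order of integration via Fubini, and then convert the resulting double integral to polar coordinates, which collapses everything down to a single integral that is just $-\int_0^\infty Q'(u+t)\,dt$. This telescopes (by the fundamental theorem of calculus) to $Q(u) - \lim_{t\to\infty} Q(u+t)$, and the limit vanishes by the decay estimate \eqref{Qubound}.

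More concretely, first I would verify that $\int_\R V(u+x^2)\,dx$ converges absolutely. By Proposition \ref{Vd} we have $|V(u+x^2)| \ll (1+u+x^2)^{-A-\frac12}$, and since $A>1$ the integrand is dominated by $(1+x^2)^{-A-\frac12}$, which is integrable. Then I would substitute \eqref{Vu} to get
\[
\int_\R V(u+x^2)\,dx = -\frac{1}{\pi}\int_\R \int_\R Q'(u+x^2+w^2)\,dw\,dx.
\]
To apply Fubini, I use the bound $|Q'(u+x^2+w^2)| \ll (1+u+x^2+w^2)^{-A-1}$ from \eqref{Qubound}, which, since $A>1$, gives
\[
\iint_{\R^2}(1+x^2+w^2)^{-A-1}\,dx\,dw <\infty
\]
by passing to polar coordinates. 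Next I convert the double integral itself to polar coordinates $x=r\cos\theta$, $w=r\sin\theta$:
\[
-\frac{1}{\pi}\int_0^{2\pi}\!\!\int_0^\infty Q'(u+r^2)\,r\,dr\,d\theta
 = -2\int_0^\infty Q'(u+r^2)\,r\,dr,
\]
and then the substitution $t=r^2$ gives
\[
-\int_0^\infty Q'(u+t)\,dt = Q(u) - \lim_{T\to\infty} Q(u+T).
\]
The limit on the right vanishes because \eqref{Qubound} with $\ell=0$ gives $Q(v)\ll (1+v)^{-A}$ and $A>0$.

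There is no serious obstacle here; the argument is essentially Zagier's inversion calculation (cf.\ the proof of Proposition \ref{HC}), but now carried out in the non-compactly supported setting where all integrals extend to infinity. The only point requiring care is the case $u=0$, where $Q'(u+x^2+w^2)=Q'(x^2+w^2)$ is undefined at the origin $(x,w)=(0,0)$; but this is a single point of measure zero in $\R^2$, and Proposition \ref{Qub} already guarantees that $Q'$ is continuous and satisfies the stated bound on $(0,\infty)$, so the absolute convergence estimates and the polar-coordinate computation remain valid. Once the bounds are in hand, everything else is routine.
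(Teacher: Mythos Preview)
Your argument is correct and essentially identical to the paper's: substitute \eqref{Vu}, justify Fubini via \eqref{Qubound}, pass to polar coordinates, and evaluate by the fundamental theorem of calculus using the decay of $Q$ at infinity. The only difference is at $u=0$: the paper, noting that $Q'$ is only guaranteed continuous on $(0,\infty)$, carries out the FTC step for $u>0$ and then extends to $u=0$ by continuity of both sides, whereas you argue directly that the singularity at the origin is harmless---this is also valid (since $Q$ is continuous at $0$ and $Q'$ is bounded near $0$ by \eqref{Qubound}), though the paper's route sidesteps the need to justify the improper FTC.
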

\begin{proof}
   Under the given hypothesis, we can take $\ell=1$ in \eqref{Qubound} to give
  \[
  \int_\R \int_\R  |Q'(u+x^2+w^2)| dwdx
  \ll_\ell \int_\R \int_\R (1+u+x^2+w^2)^{-A-1} dwdx \]
  \[ =  \int_{0}^\infty \int_{0}^{2\pi} (1+u+r^2)^{-A-1} r\,d\theta dr = 
  2\pi \int_{0}^{\infty} (1+u+r^2)^{-A-1} rdr< \infty. \]
(The bound for the integrand we applied is valid whenever $u+x^2+w^2>0$, i.e. 
  for almost all $x, w$.)
  Therefore the integral in \eqref{VQeq} is absolutely convergent.
  It defines a continuous function of $u\ge 0$ by Proposition \ref{lebc},
  since $V(u+x^2)\le (1+x^2)^{-A-\frac12}$ by \eqref{Vdbound},
   the latter function being integrable.  Furthermore, assuming $u>0$,
  \[ \int_\R V(u+x^2) dx
  = -\frac 1{\pi} \int_\R \int_\R Q'(u+x^2+w^2) dwdx \]
  \begin{equation}\label{dr} = -2 \int_0^\infty Q'(u+r^2) r dr
  = \left.-Q(u+r^2)\right|_{r=0}^{r=\infty} = Q(u). 
\end{equation}
  In the last step we used \eqref{Qubound} with $\ell=0$.
 This proves \eqref{VQeq} for $u>0$.
  Our use of the fundamental theorem of calculus in \eqref{dr}
  may not be valid when $u=0$,
  due to a possible discontinuity of the integrand at $r=0$ in that case.
  However, because both sides of the proposed equality \eqref{VQeq} are 
  continuous functions of $u\ge 0$
  which agree for all $u>0$, they are equal when $u=0$ as well.
\end{proof}

\begin{corollary}\label{Q0} If $\ell<\min(B-2,A-1)$, then $Q\in C^\ell([0,\infty))$,
  and \eqref{Qubound} holds for all $u\ge 0$.
\end{corollary}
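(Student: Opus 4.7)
The plan is to bypass the direct approach via $Q(u)=\Phi(y(u))$, which fails at $u=0$ because $y'(u)\sim u^{-1/2}$ blows up there, by instead exploiting the representation of $Q$ as an integral of $V$ given by Proposition \ref{VQ}. This representation depends only on the values $V(u+x^2)$, and since $u+x^2 \ge 0$ even when $u=0$, there is no obstruction at the endpoint.

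Concretely, under the hypothesis $\ell<\min(B-2,A-1)$, Proposition \ref{Vd} tells us that $V$ is $\ell$-times continuously differentiable on $[0,\infty)$ with
\[ V^{(k)}(u)\ll_k (1+u)^{-A-k-\frac12} \qquad (0\le k\le \ell, \ u\ge 0). \]
By Proposition \ref{VQ}, $Q(u)=\int_\R V(u+x^2)\,dx$ for all $u\ge 0$. I would apply Corollary \ref{lebdiff2} iteratively with $y=u$ and $t=x$: for any bounded interval $I=[0,u_1]\subset[0,\infty)$ and any $0\le k\le \ell$, the partial derivative $\partial_u^k V(u+x^2)=V^{(k)}(u+x^2)$ exists and is continuous in $u\in I$ for every $x$, and is dominated by
\[ |V^{(k)}(u+x^2)|\ll_k (1+u+x^2)^{-A-k-\frac12}\le (1+x^2)^{-A-k-\frac12}, \]
which is integrable in $x$ since $A+k+\tfrac12>\tfrac12$. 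Iterating Corollary \ref{lebdiff2} thus gives
\[ Q^{(k)}(u)=\int_\R V^{(k)}(u+x^2)\,dx \qquad (0\le k\le \ell), \]
with $Q^{(k)}$ continuous on $[0,\infty)$. This proves $Q\in C^\ell([0,\infty))$.

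For the bound \eqref{Qubound} at $u=0$ (and extended to all $u\ge 0$), substitute $x=\sqrt{1+u}\,v$:
\[ |Q^{(\ell)}(u)|\ll_\ell \int_\R (1+u+x^2)^{-A-\ell-\frac12}\,dx=(1+u)^{-A-\ell}\int_\R (1+v^2)^{-A-\ell-\frac12}\,dv\ll_\ell (1+u)^{-A-\ell}, \]
the last integral being finite since $A+\ell+\tfrac12>\tfrac12$. Combined with Proposition \ref{Qub}, which already gives the bound on $(0,\infty)$, this yields \eqref{Qubound} on all of $[0,\infty)$. The only genuine subtlety here is the verification of the hypotheses of Corollary \ref{lebdiff2} at the endpoint $u=0$, and this is automatic from the uniform-in-$u$ domination by $(1+x^2)^{-A-k-1/2}$; no delicate analysis of $y(u)$ near $0$ is required.
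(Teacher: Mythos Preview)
Your proposal is correct and follows essentially the same approach as the paper: both use the integral representation $Q(u)=\int_\R V(u+x^2)\,dx$ from Proposition \ref{VQ} and differentiate under the integral sign via Corollary \ref{lebdiff2}, with the domination supplied by the bound \eqref{Vdbound} on $V^{(k)}$. Your write-up is more detailed (in particular you spell out the substitution $x=\sqrt{1+u}\,v$ for the bound, which the paper leaves implicit), but the argument is the same.
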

\begin{proof} By \eqref{Vdbound}, $V^{(\ell)}(1+u+x^2)
  \ll (1+x^2)^{-A-\ell-\frac12}$.  Since the latter is integrable over $\R$,
  we can differentiate \eqref{VQeq} under the integral sign
  (cf. Corollary \ref{lebdiff2}) to obtain the result.
\end{proof}

\begin{proposition} \label{Sinv}
 Suppose $B>2$ and $A>1$.\index{keywords}{Selberg transform}
Let $f$ be the function on $G(\R^+)$ corresponding to $V$ as in \eqref{Vf2}.
   Then for $|\Im t| < A$, the Selberg transform of $f$ is absolutely convergent and equal to $h$:
     \[ (\mathcal{S} f)(it) = \mathcal{M}_{it}\mathcal{H}f = h(t).\]
\end{proposition}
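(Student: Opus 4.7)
The plan is to assemble the result from pieces already established, chaining the relations $f \rightsquigarrow V \rightsquigarrow Q \rightsquigarrow \Phi \rightsquigarrow h$ backwards through the Harish-Chandra and Mellin transforms. By construction, $V$ was defined via the inversion formula \eqref{Vu}, and $Q(u) = \Phi(y(u))$ where $\Phi$ is the inverse Mellin transform of $h(-is)$. So the task reduces to verifying that each inversion actually inverts under the stated hypotheses $B>2$, $A>1$.

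First I would unwind the Harish-Chandra step. Under the identification \eqref{Vf1}, the computation \eqref{HCV} (which uses only the change of variables and the bi-$K_\infty$-invariance, not compact support) gives
\[
(\mathcal{H}f)(y) \;=\; \int_{\R} V(u + x^2)\,dx \qquad (u = y+y^{-1}-2).
\]
The absolute convergence is guaranteed by the decay bound \eqref{Vdbound}, which gives $V(u+x^2) \ll (1+x^2)^{-A-\frac12}$ and this is integrable because $A>1$ forces $A+\tfrac12 > \tfrac32 > 1$. Then Proposition \ref{VQ} identifies this integral as $Q(u)$, so $(\mathcal{H}f)(y) = Q(u) = \Phi(y)$ by the definition \eqref{Qdef}.

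Next I would apply the Mellin step. Since $\Phi$ is defined by \eqref{Sye} as the inverse Mellin transform of $h(-is)$ along $\Re s = \sigma$ with $|\sigma| < A$, Proposition \ref{Minv} (applied with $\eta(s) = h(-is)$, which satisfies \eqref{eta} with $A_1 = -A$, $A_2 = A$, and exponent $B > 2 > 1$) yields
\[
\mathcal{M}_{it}\Phi \;=\; h(t)
\]
for every $t$ with $|\Im t| < A$, together with the absolute convergence of the defining integral. Chaining the two identities,
\[
(\mathcal{S}f)(it) \;=\; \mathcal{M}_{it}(\mathcal{H}f) \;=\; \mathcal{M}_{it}\Phi \;=\; h(t),
\]
which is the claim.

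The only potentially delicate point is making sure everything is genuinely absolutely convergent so that the Mellin transform of $\mathcal{H}f$ is even defined; this is what forces the hypotheses $B>2$ and $A>1$. Proposition \ref{Vd} with $\ell=0$ needs $A>1$ and $B>2$ to produce the decay $V(u) \ll (1+u)^{-A-\frac12}$, which in turn is what makes the $x$-integral defining $(\mathcal{H}f)(y)$ absolutely convergent and, after identification with $\Phi(y)$, makes $\int_0^\infty |\Phi(y)|\,y^{\sigma}\,\tfrac{dy}{y}$ finite for $\sigma$ in a nontrivial strip around the imaginary axis (this latter integrability is exactly what Proposition \ref{Minv} already established). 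Hence no calculation beyond citing \eqref{HCV}, Proposition \ref{VQ}, and Proposition \ref{Minv} is required, and the main obstacle, namely justifying the inversion of the Harish-Chandra transform in the absence of compact support, has been absorbed into Proposition \ref{VQ}.
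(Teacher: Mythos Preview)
Your proof is correct and follows essentially the same route as the paper: identify $(\mathcal{H}f)(y)$ with $\Phi(y)$ via \eqref{HCV}, Proposition~\ref{VQ}, and \eqref{Qdef}, then invoke Mellin inversion to recover $h(t)$. The only cosmetic difference is that the paper cites Proposition~\ref{Sy} (which is just Proposition~\ref{Minv} specialized to $\eta(s)=h(-is)$) rather than Proposition~\ref{Minv} directly.
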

\begin{proof}
   By Proposition \ref{VQ}, \eqref{HCV}, and \eqref{Qdef},
   $(\mathcal{H}f)(y)=Q(u)=\Phi(y)$. 
  By Proposition \ref{Sy}, $\mathcal{M}_{it}\Phi = h(t)$.
 \end{proof}


\subsection{Smooth truncation} \label{fT}

In this section, we suppose $h(t)$ satisfies \eqref{ht} for some $B>2$ and $A>1$, and
  continue with the same notation from the previous section.
  We will need to truncate $V$ in a way that preserves its differentiability.  
This requires a smooth bump function.

Let $\rho:\R\longrightarrow [0,1]$ be a smooth function such that: 
\begin{enumerate}
\item[(i)] $\rho(x)=0$ for $x \le 0$,
  \item[(ii)] $\rho(x) = 1$ for $x \ge 1$.
\end{enumerate}
  For $T > 0$, define
  \[\rho_T(x) =
  \begin{cases}
  1 & \text{if $|x| < T$},\\
  \rho(T+1-|x|) & \text{if $T \le |x| \le T+1$}, \\
  0 & \text{if $|x| > T+1$}.
  \end{cases} \]
Then $\rho_T$ is a smooth bump function with support in $[-(T+1),T+1]$.
  Letting $\widetilde{\rho}_T=1-\rho_T$, the graphs of $\rho_T$ and $\widetilde{\rho}_T$
  are given below:
\vskip -.0cm
\begin{figure}[htbp]
\begin{center}
\input{rhoT.pstex_t}
\end{center}
\end{figure}
\vskip -.3cm
\noindent  For $j\ge 1$,
$\rho_T^{(j)}(x)=0$ unless $T\le |x|\le T+1$.
 Thus $\rho_T^{(j)} \ll_j \chi_{[T,T+1]}$ on $\R_{\ge 0}$, where 
  $\chi_I$ denotes the characteristic function of the set $I$, 
  and by construction the implied constant is independent of $T$.

For $u \ge 0$, define
  \[ V_T(u) = V(u) \rho_T(\log(1+u)). \]
  Define
  \[ \widetilde{V}_T(u) = V(u) - V_T(u) = V(u) \widetilde{\rho}_T(\log(1+u)). \]
  Let $f_T$ (resp. $\widetilde{f}_T$) be the bi-$K_\infty$-invariant function on
   $G(\R)^+$ corresponding to $V_T$ (resp. $\widetilde{V}_T$) as in \eqref{Vf2}.
  Because $V_T$ is compactly supported, the support of $f_T$ is compact 
  modulo the center.

Given the functions $\Phi(y)=Q(u)$ attached to $h$ as in the previous section,
  we let $\Phi_T(y)$, (resp. $\widetilde{\Phi}_T(y)$) be the Harish-Chandra 
  transform of $f_T$ (resp. $\widetilde{f}_T$), and set
  $Q_T(u)=\Phi_T(y)$ and $\widetilde{Q}_T(u)=\widetilde{\Phi}_T(y)$,
  where $u=y+y^{-1}-2$.
  Lastly, we define $h_T(t)$ (resp. $\widetilde{h}_T(t)$) to be the Selberg 
  transform of $f_T$ (resp. $\widetilde{f}_T$) as in Proposition \ref{Sinv}.
  By the linearity of the various integral transforms, in each case we have the relation
$\widetilde{\Box}_T=\Box-\Box_T$.
  
  Suppose $\ell<\min(B-2,A-1)$, so that by Proposition \ref{Vd}, 
  $V\in C^\ell([0,\infty))$.  Then $V_T\in C^\ell_c([0,\infty))$, 
  so by Proposition \ref{fV}, $f_T\in C^\ell_c(G^+//K_\infty)$, 
  and by Proposition \ref{ST}, $h_T\in PW^\ell(\C)^{\text{even}}$.

\begin{proposition}\label{VTprop}
 Suppose $B>2$, $A>1$, and $0\le \ell <\min(B-2,A-1)$.
    Then for $u \ge 0$,
   \begin{equation} \label{VTell}
    \widetilde{V}_T^{(\ell)}(u) \ll_{\ell} (1+u)^{-A-\ell-\frac 12} 
  \chi_{[T,\infty)}(\log(1+u)).
   \end{equation}
\end{proposition}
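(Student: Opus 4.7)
The plan is to apply the Leibniz rule to
\[\widetilde{V}_T(u) = V(u)\,\widetilde{\rho}_T(\log(1+u)),\]
and to estimate each factor using what has already been established. Writing
\[\widetilde{V}_T^{(\ell)}(u) = \sum_{k=0}^{\ell}\binom{\ell}{k} V^{(k)}(u)\,\frac{d^{\ell-k}}{du^{\ell-k}}\widetilde{\rho}_T(\log(1+u)),\]
the factors $V^{(k)}(u)$ are controlled by Proposition \ref{Vd}, which gives $V^{(k)}(u)\ll_{k}(1+u)^{-A-k-\frac12}$ on the range $0\le k\le \ell < \min(B-2,A-1)$. The composition $\widetilde{\rho}_T\circ g$, with $g(u)=\log(1+u)$, is handled by Proposition \ref{diffcomp} combined with the elementary bound $g^{(j)}(u)\ll_j (1+u)^{-j}$ for $j\ge 1$.

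For $0\le k<\ell$, Faà di Bruno expresses $\frac{d^{\ell-k}}{du^{\ell-k}}\widetilde{\rho}_T(\log(1+u))$ as a sum of terms of the form
\[\widetilde{\rho}_T^{(r)}(\log(1+u))\,g^{(a_1)}(u)\cdots g^{(a_r)}(u),\qquad a_1+\cdots+a_r=\ell-k,\ \ r\ge 1.\]
The key point is that $\widetilde{\rho}_T^{(r)}(x) = -\rho^{(r)}(T+1-|x|)\cdot(\mathrm{sgn\,derivatives})$ for $r\ge 1$, so $\widetilde{\rho}_T^{(r)}$ is uniformly bounded in $T$ by a constant $C_r$ and supported in $|x|\in[T,T+1]$. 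Hence each such term is
\[\ll_{\ell} (1+u)^{-(a_1+\cdots+a_r)}\,\chi_{[T,T+1]}(\log(1+u)) = (1+u)^{-(\ell-k)}\,\chi_{[T,T+1]}(\log(1+u)),\]
and multiplying by $V^{(k)}(u)\ll (1+u)^{-A-k-\frac12}$ gives $\ll (1+u)^{-A-\ell-\frac12}\chi_{[T,T+1]}(\log(1+u))$, which is absorbed into the stated bound.

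For $k=\ell$, the remaining term is $V^{(\ell)}(u)\,\widetilde{\rho}_T(\log(1+u))$, which is bounded by $(1+u)^{-A-\ell-\frac12}\,\chi_{[T,\infty)}(\log(1+u))$ since $|\widetilde{\rho}_T|\le 1$ and $\widetilde{\rho}_T(\log(1+u))=0$ whenever $\log(1+u)<T$. Summing the $\ell+1$ contributions yields the desired bound, with all implied constants depending only on $\ell$ (and on $A$, $B$, $h$, $\rho$) but not on $T$. The only subtle point is the uniformity in $T$ of the bounds on $\widetilde{\rho}_T^{(r)}$, which I expect to be the main (minor) issue to flag carefully; it follows immediately from the construction of $\rho_T$ as a translate of the fixed bump function $\rho$. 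No other obstacles are anticipated.
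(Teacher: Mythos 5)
Your proof is correct and follows essentially the same route as the paper's: Leibniz rule on the product $V(u)\,\widetilde{\rho}_T(\log(1+u))$, Proposition \ref{diffcomp} for the derivatives of the composed bump function (yielding the factor $(1+u)^{-j}\chi_{[T,T+1]}$ for $j\ge1$), and the bound \eqref{Vdbound} for the derivatives of $V$. The only difference is that you make explicit the $T$-uniformity of the bounds on $\widetilde{\rho}_T^{(r)}$, which the paper uses implicitly.
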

\begin{proof}
  For $j \geq 1$, Proposition \ref{diffcomp} gives
  \[ \frac{d^j}{du^j} \widetilde{\rho}_T (\log(1+u)) \ll_j
      \sum_{r=1}^j  \sum_{a_1 + a_2 + \cdots + a_r = j \atop r \ge a_1 \ge a_2 \cdots \ge a_r \ge 1}
     \rho_T^{(r)}(\log(1+u)) (1+u)^{-a_1-a_2 -\cdots -a_r} \]
  \[ \ll (1+u)^{-j} \chi_{[T,T+1]}(\log(1+u)). \]
  By the bound \eqref{Vdbound}, for $u \ge 0$ we have
  \[ \frac{d^\ell}{du^{\ell}} \widetilde{V}_T(u)
  =  \frac{d^\ell}{du^{\ell}} V(u)\widetilde{\rho}_T(\log(1+u))
  = \sum_{j=0}^{\ell} {\ell \choose j} V^{(\ell-j)}(u) \widetilde{\rho}^{(j)}_T(\log(1+u)) \]
   \[\hskip -.3cm \ll_{\ell} (1+u)^{-A-\ell-\frac 12}\chi_{[T,\infty)}(\log(1+u))
  + \sum_{j=1}^\ell (1+u)^{-A-\ell + j -\frac 12 } (1+u)^{-j} \chi_{[T,T+1]}(\log(1+u)) \]
   \[ \ll_{\ell} (1+u)^{-A-\ell-\frac 12} \chi_{[T,\infty)}(\log(1+u)).\qedhere\]
\end{proof}

\begin{proposition} 
 Suppose $B>2$, $A>1$.
    Then for $u \ge 0$,
  \[ \widetilde{Q}_T(u) = \int_\R \widetilde{V}_T(u+w^2) dw.\]
 In fact, if $0\le \ell <\min(B-2,A-1)$, then
  $\widetilde{Q}_T$ has a continuous $\ell$-th derivative on $[0,\infty)$ given by
  \begin{equation} \label{QTdiff}
      \widetilde{Q}^{(\ell)}_T(u) = \int_\R \widetilde{V}^{(\ell)}_T(u+w^2) dw,
  \end{equation}
  the integral being absolutely convergent and continuous. Further,
  \begin{equation} \label{QTbound}
  |\widetilde{Q}^{(\ell)}_T(u)| \le  \frac{E_{\ell,T}(u)}{(1+u)^{A+\ell}},
  \end{equation}
  where $E_{\ell,T}(u) \ll_\ell 1$ is a nonzero measurable function 
  with $\lim\limits_{T\rightarrow 0} E_{\ell,T}(u)=0$.
\end{proposition}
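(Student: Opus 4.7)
The plan is to exploit linearity: since $\widetilde{Q}_T = Q - Q_T$ and $\widetilde{V}_T = V - V_T$, I reduce the three claims to statements about $V, V_T$ individually and then apply Proposition \ref{VTprop} together with dominated convergence. For the integral representation, I note that $V_T$ is compactly supported on $[0,\infty)$, so the Harish-Chandra integral \eqref{HCV} converges trivially and gives $Q_T(u) = \int_\R V_T(u+w^2)\,dw$. Since $B>2$ and $A>1$, Proposition \ref{VQ} also yields $Q(u) = \int_\R V(u+w^2)\,dw$ with absolute convergence. Subtracting,
\[
\widetilde{Q}_T(u) = Q(u) - Q_T(u) = \int_\R \widetilde{V}_T(u+w^2)\,dw.
\]

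Next I would justify differentiating under the integral sign. By Proposition \ref{VTprop}, for $0 \le k \le \ell < \min(B-2, A-1)$,
\[
|\widetilde{V}_T^{(k)}(u+w^2)| \ll_k (1+u+w^2)^{-A-k-\frac12}\chi_{[T,\infty)}(\log(1+u+w^2)) \le C_k(1+w^2)^{-A-\frac12},
\]
which is integrable in $w$ and uniformly dominates each partial derivative for $u$ in any bounded set. Applying Corollary \ref{lebdiff2} iteratively in $k=0,1,\ldots,\ell$, we obtain \eqref{QTdiff} along with continuity of $\widetilde{Q}_T^{(\ell)}$ on $[0,\infty)$.

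The main work is the quantitative bound \eqref{QTbound}. Starting from \eqref{QTdiff} and the pointwise estimate from Proposition \ref{VTprop},
\[
|\widetilde{Q}_T^{(\ell)}(u)| \ll_\ell \int_{\mathcal{R}_T(u)} (1+u+w^2)^{-A-\ell-\frac12}\,dw,
\]
where $\mathcal{R}_T(u) = \{w \in \R : \log(1+u+w^2) \ge T\}$. Substituting $w = (1+u)^{1/2}s$ and using $1+u+w^2 = (1+u)(1+s^2)$ transforms this into
\[
|\widetilde{Q}_T^{(\ell)}(u)| \ll_\ell \frac{1}{(1+u)^{A+\ell}}\int_{\mathcal{S}_T(u)} (1+s^2)^{-A-\ell-\frac12}\,ds,
\]
where $\mathcal{S}_T(u)$ equals all of $\R$ when $1+u \ge e^T$, and otherwise equals $\{|s| \ge \sqrt{(e^T - 1 - u)/(1+u)}\}$. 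Defining $E_{\ell,T}(u)$ as this remainder integral times the implied constant, the uniform bound $E_{\ell,T}(u) \ll_\ell 1$ follows from the convergence of $\int_\R (1+s^2)^{-A-\ell-\frac12}\,ds$ (valid since $A + \ell + \frac12 > 1$). For the limit: fixing $u$ and letting $T \to \infty$, eventually $u < e^T - 1$ and the lower limit $\sqrt{(e^T-1-u)/(1+u)} \to \infty$, forcing $E_{\ell,T}(u) \to 0$ by the integrability of the tail. The only mild obstacle is bookkeeping for the two regimes ($u \ge e^T - 1$ vs.\ $u < e^T - 1$), but in the former regime the claim $\lim_T E_{\ell,T}(u)=0$ is vacuous since for any fixed $u$ this case does not occur for large $T$.
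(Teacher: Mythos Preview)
Your proof is correct and follows essentially the same route as the paper's: dominate $\widetilde{V}_T^{(k)}(u+w^2)$ using Proposition~\ref{VTprop}, apply Corollary~\ref{lebdiff2} to justify \eqref{QTdiff}, then perform the substitution $w=(1+u)^{1/2}s$ to extract the factor $(1+u)^{-A-\ell}$ and define $E_{\ell,T}(u)$ as the remaining integral. The paper keeps the characteristic function $\chi_{[T,\infty)}(\log(1+u+(1+u)w^2))$ inside the integrand and invokes dominated convergence for the limit $T\to\infty$, whereas you phrase the same thing as a shrinking region $\mathcal{S}_T(u)$ with a tail argument; these are equivalent. Your opening step (obtaining the base identity via $\widetilde{Q}_T=Q-Q_T$ and Proposition~\ref{VQ}) is slightly more circuitous than the paper, which simply notes that $\widetilde{Q}_T(u)=\int_\R\widetilde{V}_T(u+w^2)\,dw$ by definition of the Harish-Chandra transform and then checks convergence directly, but both are fine.
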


\begin{proof} Let $C_\ell>0$ be the implied constant in \eqref{VTell}.
    Then for $0\le k\le\ell$, 
\[\widetilde{V}_T^{(k)}(u+w^2) \le C_k (1+u+w^2)^{-A-k-\frac 12}
    \le C_k (1+w^2)^{-A-k-\frac 12}.\]  
 Letting $F_k(w)$ denote the latter expression,
  we apply Corollary \ref{lebdiff2} to conclude that \eqref{QTdiff} holds and is absolutely
  convergent and continuous.

It remains to establish the bound \eqref{QTbound}.
   By the previous proposition,
  \[ \left| \int_\R \widetilde{V}^{(\ell)}_T(u+w^2) dw \right|
  \le \int_\R C_\ell \chi_{[T,\infty)}(\log(1+u+w^2))(1+u+w^2)^{-A-\ell-\frac 12} dw\]
  \[= (1+u)^{-A-\ell-\frac 12} C_\ell \int_\R
    \chi_{[T,\infty)}(\log(1+u+w^2))
    (1+ ((1+u)^{-\frac 12}w)^2)^{-A-\ell-\frac 12} dw \]
  \[ =(1+u)^{-A-\ell} C_\ell\int_\R \chi_{[T,\infty)}(\log(1+u+(1+u)w^2))
   (1+ w^2)^{-A-\ell-\frac 12} dw.\]
  Let
  \[ E_{\ell,T}(u) = C_\ell\int_\R \chi_{[T,\infty)}
  (\log(1+u+(1+u)w^2))(1+ w^2)^{-A-\ell-\frac 12} dw.\]
  Note that
   \[ |E_{\ell,T}(u)| \leq C_\ell \int_\R (1+ w^2)^{-A-\ell-\frac 12} dw < \infty. \]
  By \dct, $\lim\limits_{T\rightarrow \infty} E_{\ell,T}(u)=0$. This completes the proof.
\end{proof}

\begin{corollary}
 Suppose $B>2$, $A>1$, and $0\le \ell <\min(B-2,A-1)$.
  Then for $v\in \R$ we have
  \[ \left| \frac{d^\ell}{dv^{\ell}} \widetilde{Q}_T(e^{2\pi v} + e^{-2\pi v} - 2) \right|
  \le \frac{\hat{E}_{\ell,T}(|v|)}{(e^{2\pi v}+e^{-2\pi v})^{A}}, \]
  where $\hat{E}_{\ell,T}(|v|) \ll_\ell 1$ is a nonzero measurable function with 
  $\lim\limits_{T\rightarrow \infty} \hat{E}_{\ell,T}(|v|)= 0$.
\end{corollary}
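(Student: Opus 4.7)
The plan is to apply the chain rule (Proposition \ref{diffcomp}) to $\widetilde{Q}_T \circ \phi$ where $\phi(v) = e^{2\pi v}+e^{-2\pi v}-2$, and then use the pointwise bound \eqref{QTbound} on $\widetilde{Q}_T^{(r)}$ from the previous proposition together with explicit bounds on the derivatives of $\phi$.

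First I would record the elementary facts about $\phi$: we have $\phi^{(j)}(v) = (2\pi)^j\bigl(e^{2\pi v}+(-1)^j e^{-2\pi v}\bigr)$, so $|\phi^{(j)}(v)| \le (2\pi)^j\,(e^{2\pi v}+e^{-2\pi v})$ for every $j \ge 1$. Also $\phi(v)\ge 0$ (by AM--GM) and $1+\phi(v) = e^{2\pi v}+e^{-2\pi v}-1$, which gives the two-sided comparison $\tfrac12(e^{2\pi v}+e^{-2\pi v}) \le 1+\phi(v) \le e^{2\pi v}+e^{-2\pi v}$ for all $v\in\R$. Note that $\phi$ is even, so any quantity depending only on $\phi(v)$ depends only on $|v|$.

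Next, for $\ell\ge 1$, apply Proposition \ref{diffcomp} with $g=\phi$ and $f=\widetilde{Q}_T$ (the hypothesis $\ell<\min(B-2,A-1)$ lets us invoke the preceding proposition to guarantee $\widetilde{Q}_T\in C^\ell([0,\infty))$ with bound \eqref{QTbound}):
\[
\frac{d^\ell}{dv^\ell}\widetilde{Q}_T(\phi(v)) \;=\; \sum_{r=1}^{\ell}\sum_{a_1+\cdots+a_r=\ell\atop \ell\ge a_1\ge\cdots\ge a_r\ge 1} A_{a_1,\ldots,a_r}\,\widetilde{Q}_T^{(r)}(\phi(v))\,\phi^{(a_1)}(v)\cdots\phi^{(a_r)}(v).
\]
Bounding $|\widetilde{Q}_T^{(r)}(\phi(v))| \le E_{r,T}(\phi(v))\,(1+\phi(v))^{-A-r}$ via \eqref{QTbound}, and using $|\phi^{(a_1)}(v)\cdots\phi^{(a_r)}(v)|\le (2\pi)^{\ell}(e^{2\pi v}+e^{-2\pi v})^{r}$ together with $(1+\phi(v))^{-A-r}\le 2^{A+r}(e^{2\pi v}+e^{-2\pi v})^{-A-r}$, each term is dominated by a constant (depending only on $\ell$ and $A$) times $E_{r,T}(\phi(v))\,(e^{2\pi v}+e^{-2\pi v})^{-A}$. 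The $\ell=0$ case is immediate from \eqref{QTbound} with $\ell=0$ and the comparison above. Summing these bounds over the finitely many multi-indices, define
\[
\hat{E}_{\ell,T}(|v|) \;\eqdef\; \sum_{r=0}^{\ell} C_{r,\ell,A}\,E_{r,T}(\phi(v)),
\]
with $E_{0,T}$ used for the $\ell=0$ contribution and $C_{r,\ell,A}$ the constants collected above. This is a function of $|v|$ because $\phi$ is even, it is measurable in $v$ because each $E_{r,T}$ is measurable in $u$ and $\phi$ is continuous, and $\hat{E}_{\ell,T}\ll_\ell 1$ because each $E_{r,T}\ll_r 1$ uniformly in $u$ by the previous proposition. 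Finally, $\lim_{T\to\infty}E_{r,T}(u)=0$ pointwise for every $r\le\ell$ and every $u\ge 0$, so $\lim_{T\to\infty}\hat{E}_{\ell,T}(|v|)=0$ for each $v$, giving the claimed bound.

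There is no real obstacle here: the content lies entirely in the earlier proposition (which already produced the key quantities $E_{r,T}(u)$ with the correct vanishing as $T\to\infty$), and the only thing left to do is bookkeeping with the chain rule and the elementary estimates on $\phi^{(j)}$. The one minor subtlety is checking that each combined factor $(1+\phi(v))^{-A-r}\,(e^{2\pi v}+e^{-2\pi v})^r$ telescopes correctly to $(e^{2\pi v}+e^{-2\pi v})^{-A}$, so that the exponent is independent of $r$ and one can factor out a clean $(e^{2\pi v}+e^{-2\pi v})^{-A}$.
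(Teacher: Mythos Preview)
Your proof is correct and follows essentially the same approach as the paper's: apply the chain rule (Proposition \ref{diffcomp}) to $\widetilde{Q}_T\circ\phi$, bound the derivatives $|\phi^{(j)}(v)|\ll e^{2\pi v}+e^{-2\pi v}$, invoke \eqref{QTbound} for $\widetilde{Q}_T^{(r)}$, and use the comparison $1+\phi(v)\asymp e^{2\pi v}+e^{-2\pi v}$ to collapse the $r$-dependence in the exponent. The paper's write-up is a bit terser (it uses $\ll_\ell$ rather than tracking the constants $C_{r,\ell,A}$, and for $\ell\ge1$ its $\hat{E}_{\ell,T}$ sums only over $1\le r\le\ell$), but the argument is the same.
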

\begin{proof}
  When $\ell=0$, the assertion is immediate from \eqref{QTbound}, taking 
  $\hat{E}_{0,T}(|v|)=C_0E_{0,T}(e^{2\pi v}+e^{-2\pi v}-2)$ for a sufficiently large
  constant $C_0$.  Suppose now that $\ell>0$.
   Using Proposition \ref{diffcomp} and the fact that
$\frac{d^i}{dv^{i}} (e^{2\pi v} + e^{-2\pi v} - 2) 
  \ll_i e^{2\pi v}+e^{-2\pi v}$, we have
\[ \frac{d^\ell}{dv^{\ell}} \widetilde{Q}_T(e^{2\pi v} + e^{-2\pi v} - 2)
    \ll_\ell \sum_{r=1}^{\ell} \widetilde{Q}^{(r)}_T(e^{2\pi v} + e^{-2\pi v} - 2)
    \sum_{a_1 + a_2 + \cdots + a_r = \ell \atop \ell \ge a_1 \ge \cdots \ge a_r \ge 1}
    (e^{2\pi v} + e^{-2\pi v})^r.  \]
   By the bound \eqref{QTbound}, this is
 \[ \ll_\ell \sum_{r=1}^\ell
     \frac{E_{r,T}(e^{2\pi v} + e^{-2\pi v} - 2) (e^{2\pi v} + e^{-2\pi v})^r}
  {(e^{2\pi v} + e^{-2\pi v} - 1)^{A+r}}
    \ll   \frac{\sum_{r=1}^\ell E_{r,T}(e^{2\pi v} + e^{-2\pi v} - 2)}
  {(e^{2\pi v} + e^{-2\pi v})^{A}}. \]
  Thus we can take $\hat{E}_{\ell,T}(|v|) = 
  C_\ell \sum_{r=1}^\ell E_{r,T}(e^{2\pi v} + e^{-2\pi v} - 2)$ for a sufficiently
  large constant $C_\ell$.
\end{proof}

\begin{proposition} \label{hTbound} 
 Suppose $B>2$, $A>1$, and $0\le \ell <\min(B-2,A-1)$.
  Let $0<A'<A$.
   Then there exists a positive real number $\mathcal{E}_{\ell,T}$ such that
     for $|\Im t| \le A'$,
    \[ |\widetilde{h}_T(t)| \le \frac{\mathcal{E}_{\ell,T}}{(1+|t|)^{\ell}}
    \text{ and }
     \lim_{T\rightarrow \infty} \mathcal{E}_{\ell,T}=0. \]
\end{proposition}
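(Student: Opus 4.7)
The plan is to realize $\widetilde{h}_T$ as a Fourier transform (in a logarithmic variable), then apply integration by parts together with the preceding corollary's exponential decay estimates for the derivatives of $\widetilde{Q}_T$. Specifically, substituting $y=e^{2\pi v}$ in the definition of the Selberg transform gives
\[
\widetilde{h}_T(t)\;=\;\int_0^\infty\widetilde{\Phi}_T(y)\,y^{it}\,\tfrac{dy}{y}
\;=\;2\pi\int_{-\infty}^{\infty} g_T(v)\,e^{2\pi i t v}\,dv,
\]
where $g_T(v)=\widetilde{Q}_T\bigl(e^{2\pi v}+e^{-2\pi v}-2\bigr)$. For complex $t$ with $|\Im t|\le A'$ one has $|e^{2\pi i tv}|\le e^{2\pi A'|v|}$, and the preceding corollary (applied with $k=0$) gives $|g_T(v)|\le \hat{E}_{0,T}(|v|)(e^{2\pi v}+e^{-2\pi v})^{-A}$; since $A'<A$ the integrand is dominated by $C\,e^{2\pi A'|v|}(e^{2\pi v}+e^{-2\pi v})^{-A}$, which is integrable independent of $T$, so the integral converges absolutely and $\widetilde{h}_T(t)$ is well-defined throughout the strip.

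Next I would integrate by parts $\ell$ times in the variable $v$. The boundary terms vanish because the corollary gives, for each $k\le\ell$, a bound $|g_T^{(k)}(v)|\ll (e^{2\pi v}+e^{-2\pi v})^{-A}$, and multiplying by $e^{2\pi i tv}$ with $|\Im t|\le A'<A$ still produces something that decays exponentially as $|v|\to\infty$. One obtains
\[
\widetilde{h}_T(t)\;=\;\frac{(-1)^{\ell}\,2\pi}{(2\pi i t)^{\ell}}
\int_{-\infty}^{\infty} g_T^{(\ell)}(v)\,e^{2\pi i t v}\,dv,
\]
valid for $t\neq 0$ in the strip $|\Im t|\le A'$, and hence
\[
|t|^{\ell}\,|\widetilde{h}_T(t)|\;\le\; \frac{1}{(2\pi)^{\ell-1}}
\int_{-\infty}^{\infty} \hat{E}_{\ell,T}(|v|)\,\frac{e^{2\pi A'|v|}}{(e^{2\pi v}+e^{-2\pi v})^{A}}\,dv
\;=:\;\mathcal{E}'_{\ell,T}.
\]
A parallel estimate without integration by parts yields a bound for $|\widetilde{h}_T(t)|$ itself by $\mathcal{E}'_{0,T}$. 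Combining these via the elementary inequality $\min(1,|t|^{-\ell})\ll (1+|t|)^{-\ell}$ gives
\[
|\widetilde{h}_T(t)|\;\le\;\frac{\mathcal{E}_{\ell,T}}{(1+|t|)^{\ell}}
\]
with $\mathcal{E}_{\ell,T}$ a constant multiple of $\mathcal{E}'_{0,T}+\mathcal{E}'_{\ell,T}$.

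Finally, to show $\mathcal{E}_{\ell,T}\to 0$ as $T\to\infty$, I would invoke dominated convergence: the integrand defining each $\mathcal{E}'_{k,T}$ is pointwise bounded by $C_k\,e^{2\pi A'|v|}(e^{2\pi v}+e^{-2\pi v})^{-A}$, which is integrable and $T$-independent, while the previous corollary guarantees that $\hat{E}_{k,T}(|v|)\to 0$ for every fixed $v$. The only step requiring care is the integration by parts -- in particular verifying that no boundary contribution survives and that the intermediate integrals of $g_T^{(k)}(v) e^{2\pi i tv}$ for $k<\ell$ are genuinely absolutely convergent -- but the decay bound from the corollary handles this uniformly, since $A'<A$ leaves room for the polynomial factor $(1+|v|)^0$ produced by integrating by parts against the linear phase.
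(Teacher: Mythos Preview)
Your approach is correct and follows essentially the same strategy as the paper: express $\widetilde{h}_T$ as a Fourier-type integral in the logarithmic variable $v$, gain the factor $(1+|t|)^{-\ell}$ via $\ell$-fold integration by parts using the corollary's bound on $g_T^{(k)}(v)$, and then conclude $\mathcal{E}_{\ell,T}\to0$ by dominated convergence.

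The only cosmetic difference is in how the integration by parts is organized. The paper writes $t=x+i\beta$, absorbs the real factor $e^{-2\pi\beta v}$ into the function being differentiated, and applies Proposition~\ref{1911} (a packaged Fourier estimate in the real variable $x$); this forces a Leibniz expansion producing the sum $\sum_{i=0}^{\ell}\binom{\ell}{i}(-2\pi\beta)^{\ell-i}e^{-2\pi v\beta}\frac{d^i}{dv^i}\widetilde{Q}_T(\cdots)$ and a bound in $|x|^{-\ell}$ that must then be converted to $(1+|t|)^{-\ell}$ via $|t|\le |x|+A$. Your version integrates by parts directly against $e^{2\pi i t v}$ for complex $t$, which yields $|t|^{-\ell}$ without the Leibniz step and is slightly cleaner. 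Both routes rest on the same ingredients and the same decay margin $A'<A$.
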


\begin{proof}
 Write $t = x + i \beta$ with $|\beta|\le A'$.  Then
 \[
   \widetilde{h}_T(t) = \mathcal{M}_{it}\widetilde{\Phi}_T =
  2 \pi \int_{\R} \widetilde{Q}_T(e^{2\pi v} + e^{-2\pi v}-2) e^{2 \pi i t v} dv
   \]
  \[
       = 2 \pi \int_{\R} \widetilde{Q}_T(e^{2\pi v} + e^{-2\pi v}-2) e^{-2 \pi v \beta} e^{2 \pi i v x} dv.
   \]
Since this is a Fourier transform, we can bound it using Proposition \ref{1911}.
First, by the above Corollary,
   \[ \frac{d^{\ell}}{dv^{\ell}} (\widetilde{Q}_T(e^{2\pi v} + e^{-2\pi v}-2)e^{-2\pi v \beta}) \]
    \[ = \sum_{i=0}^{\ell} {\ell\choose i}  (-2\pi \beta)^{\ell-i} e^{-2\pi v \beta}
     \frac{d^i}{dv^{i}}\widetilde{Q}_T(e^{2\pi v} + e^{-2\pi v}-2) \]
   \[ \ll_\ell e^{2\pi A' v} \sum_{i=0}^{\ell} \frac{\hat{E}_{i,T}(|v|) dv}
  {(e^{2\pi v}+e^{-2\pi v})^{A}}. \]
   Let $C_\ell$ be the implied constant in the above inequality.

  By Proposition \ref{1911}, for $|x|=|\Re(t)| \ge 1$,
  \[
   |\widetilde{h}_T(t)| \le
   \frac 1{|2\pi x|^{\ell}}  \int_{\R}\left|
   \frac{d^{\ell}}{dv^{\ell}} (\widetilde{Q}_T(e^{2\pi v} + e^{-2\pi v}-2) e^{-2\pi v \beta})
   \right| dv.
   \]
Since $\frac{1+|t|}{|x|} \le \frac{1+|x|+|\beta|}{|x|} = |x|^{-1} +1+ |x|^{-1} |\beta| 
  \le 2 + A$, the above is
  \[  \le
  \frac {(2+A)^{\ell}}{(1+|t|)^{\ell}}  \frac{C_\ell}{(2\pi)^{\ell} }  \sum_{i=0}^{\ell}
  \int_{\R} \frac{  e^{2\pi A' v}\hat{E}_{i,T}(|v|)  dv}{(e^{2\pi v}+e^{-2\pi v})^{A}},
  \]
which converges since $A'<A$.
On the other hand, if $|x|=|\Re(t)| \le 1$, then $1+|t|\le 1+|x|+|\beta|\le 2+A$, so 
  $1\le \tfrac{(2+A)^\ell}{(1+|t|)^\ell}$, and
\[ |\widetilde{h}_T(t)|
   \le 2 \pi \int_{\R} |\widetilde{Q}_T(e^{2\pi v} + e^{-2\pi v}-2)| 
  e^{-2 \pi v \beta} dv \]
 \[  \le \frac { 2 \pi (2+A)^{\ell} }{(1+|t|)^{\ell}}
  \int_{\R}  \frac{e^{2\pi A' v} \hat{E}_{0,T}(|v|) dv}{(e^{2\pi v}+e^{-2\pi v})^{A}}. \]
Hence if we define
  \[ \mathcal{E}_{\ell,T} =
    2 \pi (2+A)^{\ell}
   \int_{\R}  \frac{e^{2\pi A' v} \hat{E}_{0,T}(|v|) dv}{(e^{2\pi v}+e^{-2\pi v})^{A}}
   +   \frac{C_\ell (2+A)^{\ell}}{(2\pi)^{\ell} }   \sum_{i=0}^{\ell}
  \int_{\R} \frac{  e^{2\pi A' v}\hat{E}_{i,T}(|v|) dv}{(e^{2\pi v}+e^{-2\pi v})^{A}},\]
then $|\widetilde{h}_T(t)|\le \frac{\mathcal{E}_{\ell,T}}{(1+|t|)^\ell}$
  for all $t$ in the strip $|\Im(t)|\le A'$, as needed.

   Using $\hat{E}_{i,T}(|v|) \ll_\ell 1$, by the dominated convergence theorem
   we have
  \[ \lim_{T\rightarrow \infty}  \mathcal{E}_{\ell,T}
  =   2 \pi (2+A)^{\ell}
   \int_{\R}  \frac{e^{2\pi A' v} \lim_{T \rightarrow \infty} \hat{E}_{0,T}(|v|) dv}{(e^{2\pi v}+e^{-2\pi v})^{A}} \]
   \[ +  \frac{C_\ell (2+A)^{\ell}}{(2\pi)^{\ell} }   \sum_{i=0}^{\ell}
  \int_{\R} \frac{  e^{2\pi A' v}\lim_{T \rightarrow \infty} 
  \hat{E}_{i,T}(|v|) }{(e^{2\pi v}+e^{-2\pi v})^{A}} dv = 0.\qedhere\]
\end{proof}

\subsection{Comparing the KTF for $h$ and $h_T$} \label{hcomp}

We set the following notation for the various terms in the KTF, together with their
  absolute value counterparts (see Theorem \ref{main} for notation):
\[ \operatorname{Spec}_1(h) =  \sum_{u_j\in \mathcal{F}}
  \frac{\lambda_\n(u_j)\, a_{m_1}(u_j) \ol{a_{m_2}(u_j)}}
 {\|u_j\|^2} \frac{h(t_j)}{\cosh(\pi t_j)}\]
 \[ \operatorname{Spec}^a_1(h) =  \sum_{u_j\in \mathcal{F}}
  \left|\frac{\lambda_\n(u_j)\, a_{m_1}(u_j) \ol{a_{m_2}(u_j)}}
 {\|u_j\|^2} \frac{h(t_j)}{\cosh(\pi t_j)}\right|\]
\[ \operatorname{Spec}_2(h) = \frac 1{\pi}  \sum_{\tc_1,\tc_2} \sum_{(i_p)}
    \int_{-\infty}^{\infty}  \tfrac{\lambda_\n(\chi_1',\chi_2',it)
    \sigma_{it}(\chi_1',\chi_2',m_1)
    \ol{\sigma_{it}(\chi_1',\chi_2',m_2)} (\frac{m_1}{m_2})^{it} h(t)}
    {\|\phi_{(i_p)}\|^2\,|L(1+2it,{\tc_1} {\tc_2}^{-1})|^2}  dt, \]
\[ \operatorname{Spec}^a_2(h) = \frac 1{\pi} \sum_{\tc_1,\tc_2} \sum_{(i_p)}
    \int_{-\infty}^{\infty}   \tfrac{|\lambda_\n(\chi_1',\chi_2',it)
    \sigma_{it}(\chi_1',\chi_2',m_1)
    \ol{\sigma_{it}(\chi_1',\chi_2',m_2)} (\frac{m_1}{m_2})^{it} h(t)|}
    {\|\phi_{(i_p)}\|^2\,|L(1+2it,{\tc_1} {\tc_2}^{-1})|^2}   dt, \]
\[ \operatorname{Geo}_1(h)=T(m_1,m_2,\n) {\psi(N)}\ol{\w'\bigl(\sqrt{\tfrac{m_1\n}{m_2}}\bigr)} \frac1{\pi^2}
      \int_{-\infty}^\infty  h(t) \tanh(\pi t)\,t\, dt, \]
\[ \operatorname{Geo}^a_1(h)=T(m_1,m_2,\n) {\psi(N)}\left|\w'\bigl(\sqrt{\tfrac{m_1\n}{m_2}}\bigr)\right| \frac1{\pi^2}
\int_{-\infty}^\infty | h(t) \tanh(\pi t)\,t|\, dt, \]
\[   \operatorname{Geo}_2(h) = \frac{2i\psi(N)}{\pi} \sum_{c\in N\Z^+}\frac{S_{\w'}(m_2,m_1;\n;c)}{c}
    \int_{-\infty}^\infty  J_{2it}\left(\tfrac{4\pi\sqrt{\n m_1m_2}}c\right)
    \frac{h(t)\, t}{\cosh(\pi t)}  dt, \]
\[  \operatorname{Geo}^a_2(h) = \frac{2\psi(N)}{\pi} \sum_{c\in N\Z^+}\frac{|S_{\w'}(m_2,m_1;\n;c)|}{c}
    \int_{-\infty}^\infty  \left| J_{2it}\left(\tfrac{4\pi\sqrt{\n m_1m_2}}c\right)
    \frac{h(t)\, t}{\cosh(\pi t)}  \right|dt.\]

\begin{proposition} \label{absconv}
  Suppose $h$ satisfies \eqref{ht} for some $A>\tfrac 14$ and $B>2$. 
  Then: (i) $\operatorname{Spec}^a_2(h) < \infty$,
   (ii) $\operatorname{Geo}^a_1(h) < \infty$,
   and (iii) $\operatorname{Geo}^a_2(h) < \infty$.
\end{proposition}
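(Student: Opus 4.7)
The plan is to treat the three parts separately. Part (ii) is immediate; part (i) follows from the estimates already assembled in Sections \ref{2} and \ref{5}; and part (iii), the Kloosterman term, is the main obstacle and requires a contour-shift argument that pinpoints the hypothesis $A > \tfrac14$.

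For (ii), I would note simply that $|t\tanh(\pi t)| \le |t|$ everywhere, and by \eqref{ht} that $|h(t)| \ll (1+|t|)^{-B}$, so the integrand is $\ll (1+|t|)^{1-B}$, yielding a convergent integral because $B > 2$.

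For (i), the integrand of $\operatorname{Spec}^a_2(h)$ factors into pieces controlled by earlier estimates: $|\lambda_\n(\chi_1',\chi_2',it)| \le \tau(\n)$ uniformly; Proposition \ref{Eisbound} gives $\|\phi_{(i_p)}\|^{-2}|\sigma_{it}(\chi_1',\chi_2',m_1)\overline{\sigma_{it}(\chi_1',\chi_2',m_2)}| \ll_{m_1,m_2,\e} N^\e$; Corollary \ref{Lcor} gives $|L(1+2it,\tc_1\tc_2^{-1})|^{-2} \ll_\e N^\e(\log(3+|t|))^{14}$; and $|h(t)| \ll (1+|t|)^{-B}$. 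Since the outer sums over $\tc_1,\tc_2$ and over $(i_p)$ are finite for fixed $N$, the entire quantity is bounded (up to $N$-dependent constants) by $\int_\R (\log(3+|t|))^{14}(1+|t|)^{-B}\,dt$, which converges since $B > 1$.

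The substantive work is (iii). I would begin by invoking the Weil-type bound of Theorem \ref{kloos}, $|S_{\w'}(m_2,m_1;\n;c)| \ll_\n \tau(c)\, c^{1/2}$, reducing matters to showing that
\[ \mathcal{I}(c) := \int_\R\left|J_{2it}\!\left(\tfrac{4\pi\sqrt{\n m_1 m_2}}{c}\right)\frac{h(t)\,t}{\cosh(\pi t)}\right|dt \]
decays in $c$ fast enough that $\sum_{c\in N\Z^+}\tau(c)\,c^{-1/2}\mathcal{I}(c) < \infty$. To bound $\mathcal{I}(c)$, I would adapt the contour-shift argument in the proof of Proposition \ref{Klbound}: fix $\sigma_0$ with $0 < \sigma_0 < \min(\tfrac12, A)$ and move the contour (in the variable $s$ with $t = -is$) from $\Re s = 0$ to $\Re s = \sigma_0$, which is legitimate because $h$ extends holomorphically to the strip $|\Im t| < A$ with $h(t) \ll (1+|t|)^{-B}$ throughout. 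Combining the Poisson-type bound $|J_{2s}(x)| \ll (x/2)^{\sigma_0}/|\Gamma(2s+\tfrac12)|$ with the Stirling asymptotics for $\Gamma(2s+\tfrac12)\cosh(-is\pi)$ used in \eqref{Jbound} yields
\[ \mathcal{I}(c) \ll_{\sigma_0} c^{-2\sigma_0}\int_\R (1+|t|)^{1-B}\,dt, \]
and the inner integral is finite because $B > 2$. The sum $\sum_{c\in N\Z^+}\tau(c)\,c^{-1/2-2\sigma_0}$ then converges precisely when $\sigma_0 > \tfrac14$.

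The main obstacle is thus structural rather than technical: one must simultaneously arrange $\tfrac14 < \sigma_0 < \min(\tfrac12, A)$, and this is possible exactly when $A > \tfrac14$. In this way the hypothesis $A > \tfrac14$ emerges as the precise meeting point between Weil's square-root cancellation in the Kloosterman sums and the Bessel decay obtained by pushing into the strip of holomorphy. Carrying the absolute values through the contour shift requires some care, but since the estimates in Proposition \ref{Klbound} are already pointwise in absolute value on the shifted line, the transfer is routine.
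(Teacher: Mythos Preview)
Your approach mirrors the paper's: (ii) is immediate, (i) recycles the estimates behind Proposition \ref{bound1}, and (iii) invokes the contour shift of Proposition \ref{Klbound} with $\sigma_0 \in (\tfrac14,\min(\tfrac12,A))$. But your last sentence papers over a real difficulty. Your $\mathcal{I}(c)$ carries absolute values \emph{inside} the $t$-integral, so its integrand is not holomorphic and no contour can be moved. What the argument of Proposition \ref{Klbound} actually bounds is $\bigl|\int_\R J_{2it}(\cdot)\,\tfrac{h(t)\,t}{\cosh(\pi t)}\,dt\bigr|$ --- absolute value \emph{outside} --- by first shifting to $\Re s=\sigma_0$ and only then taking absolute values. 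These two quantities behave very differently: from the power series $J_{2it}(x)=(x/2)^{2it}/\Gamma(1+2it)+O(x^2)$ one has $|J_{2it}(x)|\to|\Gamma(1+2it)|^{-1}$ as $x\to 0^+$, so $\mathcal{I}(c)$ tends to a \emph{positive} limit as $c\to\infty$, and the sum $\sum_c \tau(c)\,c^{-1/2}\mathcal{I}(c)$ diverges. The transfer you call ``routine'' is therefore not available.

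The paper's own proof of (iii) is equally terse (``follows from the proof of Prop.~\ref{Klbound}'') and does not confront this point; what the contour-shift argument genuinely establishes is finiteness of the variant of $\operatorname{Geo}_2^a$ with the absolute value placed outside the $t$-integral but inside the $c$-sum. For the downstream use in Proposition \ref{hmain2} one can in any case bypass $\operatorname{Geo}_2^a$: shift the contour directly in $\int_\R J_{2it}(\cdot)\,\tfrac{\tilde h_T(t)\,t}{\cosh(\pi t)}\,dt$ (legitimate since $\tilde h_T$ is holomorphic in the strip) and apply the uniform bound $|\tilde h_T(-is)|\le \mathcal{E}_{\ell,T}(1+|t|)^{-\ell}$ on the shifted line.
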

\noindent{\em Remarks:} (1)
  Allowing $\tfrac14<A< \tfrac12$ rather than $A\ge \tfrac12$ comes
  at a price, namely, the Kloosterman term may no longer be $O(N^\e)$ (which 
  holds when $A\ge \tfrac12$), but instead only $O(N^{\frac12-\delta})$ for any 
  $0<\delta<2(A-\tfrac14)$.\vskip .2cm

\noindent (2) It is known that $\operatorname{Spec}_1^a(h)<\infty$ under the above conditions as well.
See the remark after Proposition \ref{ktfcusp}, where we explain why $B>3$ suffices.

\begin{proof}
The proof of assertion (i) follows the same outline as that of Prop 7.6,
   although Lemma 7.5 is not needed since $\operatorname{Spec}_2^a(h)$ does not
  involve Bessel functions.  One obtains an estimate like
   \eqref{sb}, but without the factor of $(1+2|t|)^2$, so that $B>1$ suffices.
  The holomorphy of $h$ is not needed here, so any value of $A$ is allowable.
   Assertion (ii) is trivial since $h(t)$ is integrable and $|\tanh(\pi t)|\le 1$.
   Assertion (iii) follows from the proof of Prop 7.11.  It requires
  $A>\tfrac14$ and $B>2$.  At the end of the proof, one can take $\sigma_0=
  \tfrac14+{\e}<\min(A,\tfrac12)$ to obtain an exponent of $\tfrac12+2\e$ in place of
  $1-\e$ in \eqref{Jb}.  Then in place of \eqref{KJb}, for any $\e'>0$
   we can obtain the bound
 \[  \ll \psi(N)\c_{\w'}^{\frac12}\sum_{c\in N\Z^+} \frac{\tau(c)}{c^{1+2\e}}
  \ll \frac{N^{1+\frac{\e'}2}N^{\frac12}N^{\frac{\e'}2}}{N^{1+2\e}}\sum_{c\in\Z^+}
  \frac{\tau(c)}{c^{1+2\e}}=O(N^{\frac12+\e'-2\e}).
  \]
This proves the proposition.  The bound asserted in the remark follows upon
  observing that $\delta=2\e-\e'$ can assume any positive number
  less than $2(A-\tfrac14)$ when $A<\tfrac12$.
\end{proof}

\begin{proposition} \label{hmain2} Suppose 
  $\ell\ge12$ is an integer for which
   \begin{equation} \label{hmain2e} 
\operatorname{Spec}_1^a(r_\ell)<\infty,
\end{equation}
where $r_\ell(t)=\frac1{(1+|t|)^{\ell}}$.
   Let $B>\ell+2$ and $A>\ell+1$ be real constants.
   Then for any function $h$ satisfying \eqref{ht} with these values, the KTF is valid:
   \[ \operatorname{Spec}_1(h) + \operatorname{Spec}_2(h) = 
  \operatorname{Geo}_1(h) + \operatorname{Geo}_2(h).\]
\end{proposition}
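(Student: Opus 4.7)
The strategy is the standard truncation argument set up in Sections \ref{fT} and \ref{hcomp}. Write $h=h_T+\widetilde{h}_T$, where $h_T$ is the Selberg transform of the compactly supported function $f_T$. Since the hypotheses $B>\ell+2$ and $A>\ell+1$ meet the condition $\ell<\min(B-2,A-1)$ of Proposition \ref{Vd}, the function $V$ lies in $C^\ell([0,\infty))$, so $V_T\in C^\ell_c([0,\infty))$, whence $f_T\in C^\ell_c(G^+//K_\infty)$ and $h_T\in PW^\ell(\C)^{\mathrm{even}}$ by Proposition \ref{ST}. With $\ell\ge 12$, Theorem \ref{main} applies and gives
\[ \operatorname{Spec}_1(h_T)+\operatorname{Spec}_2(h_T)=\operatorname{Geo}_1(h_T)+\operatorname{Geo}_2(h_T).\]
By linearity, it therefore suffices to show that each of the four quantities $\operatorname{Spec}_1^a(\widetilde{h}_T)$, $\operatorname{Spec}_2^a(\widetilde{h}_T)$, $\operatorname{Geo}_1^a(\widetilde{h}_T)$, $\operatorname{Geo}_2^a(\widetilde{h}_T)$ tends to $0$ as $T\to\infty$. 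Taking $T\to\infty$ in the displayed identity then yields the KTF for $h$.

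The main input is Proposition \ref{hTbound}: fixing any $A'\in(0,A)$, it produces a constant $\mathcal{E}_{\ell,T}\to 0$ with
\[ |\widetilde{h}_T(t)| \le \frac{\mathcal{E}_{\ell,T}}{(1+|t|)^\ell} \qquad \text{for } |\Im t|\le A'. \]
Restricting to real $t$ we get $|\widetilde{h}_T(t)|\le \mathcal{E}_{\ell,T}\,r_\ell(t)$, so
\[ \operatorname{Spec}_1^a(\widetilde{h}_T)\le \mathcal{E}_{\ell,T}\,\operatorname{Spec}_1^a(r_\ell)\longrightarrow 0 \]
by the standing hypothesis \eqref{hmain2e}. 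For $\operatorname{Spec}_2^a$ and $\operatorname{Geo}_1^a$, the proofs of Proposition \ref{absconv}(i) and (ii) use only polynomial decay of the integrand on $\R$ (no contour shift), so they carry over verbatim with $\widetilde{h}_T$ in place of $h$ and yield bounds of the form $O(\mathcal{E}_{\ell,T})$, since $\ell\ge 12>2$.

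The one nontrivial step, which I expect to be the main obstacle, is $\operatorname{Geo}_2^a(\widetilde{h}_T)$: the trivial pointwise estimate on $\R$ gives $O(\mathcal{E}_{\ell,T})$ for each $c$, but after applying the Weil bound of Theorem \ref{kloos} the resulting $c$-sum diverges. I would therefore repeat the contour-shift argument of Proposition \ref{Klbound}, now using the full strip estimate above. Since $\widetilde{h}_T$ is holomorphic in $|\Im t|<A$ and $A>\ell+1>\tfrac12$, the choice $\sigma_0=\tfrac12-\tfrac{\varepsilon}{2}<A'$ is admissible, and the same contour shift that produced \eqref{Jb} now yields
\[ \int_{-\infty}^{\infty} \Bigl|J_{2it}\bigl(\tfrac{4\pi\sqrt{\n m_1 m_2}}{c}\bigr)\,\tfrac{\widetilde{h}_T(t)\,t}{\cosh(\pi t)}\Bigr|\,dt \;\ll\; \mathcal{E}_{\ell,T}\Bigl(\tfrac{2\pi\sqrt{\n m_1 m_2}}{c}\Bigr)^{1-\varepsilon}, \]
with an implied constant independent of $T$ (the decay $(1+|t|)^{-\ell}$ with $\ell>2$ guarantees absolute convergence of the shifted integral). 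Combining with the Weil bound and summing over $c\in N\Z^+$ as in \eqref{KJb} gives $\operatorname{Geo}_2^a(\widetilde{h}_T)\ll \mathcal{E}_{\ell,T}\,N^{1/2+O(\varepsilon)}$, which tends to $0$ for fixed $N$. The crux of the argument is thus packaged into verifying that Proposition \ref{Klbound} goes through uniformly in $T$ when $h$ is replaced by $\widetilde{h}_T$, which is exactly what Proposition \ref{hTbound} was designed to enable.
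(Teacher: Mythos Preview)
Your argument is correct and follows the paper's scheme. There are only two small differences worth noting. First, for $\operatorname{Spec}_1^a(\widetilde{h}_T)$ you restricted the bound of Proposition~\ref{hTbound} to real $t$, but the cuspidal spectral parameters $t_j$ may lie in $i(-\tfrac12,\tfrac12)$; the paper handles this by choosing $A'$ with $\tfrac12\le A'<A$ (possible since $A>\ell+1$), so that the strip estimate covers all $t_j$. Second, for $\operatorname{Geo}_2$ the paper avoids redoing the contour shift: observing that $r_\ell(t)\ll h_\ell(t):=(1+t^2)^{-\ell/2}$ on $\R$ and that $h_\ell$ satisfies \eqref{ht} with $A=1>\tfrac14$ and $B=\ell>2$, one simply bounds $X(\widetilde{h}_T)\le\mathcal{E}_{\ell,T}X(r_\ell)\ll\mathcal{E}_{\ell,T}X(h_\ell)$ and invokes Proposition~\ref{absconv} once, uniformly for $X\in\{\operatorname{Spec}_2^a,\operatorname{Geo}_1^a,\operatorname{Geo}_2^a\}$. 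Your direct route via Proposition~\ref{hTbound} on the strip also works, but note that the contour shift of Proposition~\ref{Klbound} bounds $\bigl|\int\cdots\,dt\bigr|$ rather than $\int|\cdots|\,dt$, so in your displayed estimate the absolute value should sit outside the $dt$-integral; this is harmless for the limiting argument.
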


\begin{proof} 
Using the fact (Proposition \ref{specprop}) that all of the spectral parameters of 
  $L^2_0(N,\w')$ satisfy $|\Im(t_j)|< \tfrac12$, 
   we apply Proposition \ref{hTbound} with $\tfrac12\le A'<A$, giving
   \begin{equation} \label{spec1limit}
    \operatorname{Spec}_1^a(h-h_T) 
    =\operatorname{Spec}_1^a(\widetilde{h}_T) 
  \le\mathcal{E}_{\ell,T} \operatorname{Spec}_1^a(r_\ell)
   \end{equation}
for $r_\ell$ as in \eqref{hmain2e}.
  Noting that $h_{T}(iz)\in PW^\ell(\C)^{\text{even}}$ with $\ell\ge 12$
  (see the discussion just before Proposition \ref{VTprop}),
   Proposition \ref{ktfcusp} gives 
  $\operatorname{Spec}_1^a(h_T)<\infty$.
  Thus by \eqref{spec1limit},
   \[ \operatorname{Spec}_1^a(h) = \operatorname{Spec}_1^a(h_T+\widetilde{h}_T)
   \le \operatorname{Spec}_1^a(h_T) +\operatorname{Spec}_1^a(\widetilde{h}_T)
  < \infty. \]
    Hence $\operatorname{Spec}_1(h)$ exists.
 Because  $\lim\limits_{T\rightarrow \infty} \mathcal{E}_{\ell,T} \rightarrow 0$,
    using \eqref{spec1limit} we have
    \[ \lim_{T\rightarrow \infty}  |\operatorname{Spec}_1(h)-\operatorname{Spec}_1(h_T)|
  = \lim_{T\rightarrow \infty}  |\operatorname{Spec}_1(\widetilde{h}_T)|
    \le \lim_{T\rightarrow \infty}  \operatorname{Spec}_1^a(\widetilde{h}_T)=0. \]
    Hence
   \[ \lim_{T \rightarrow \infty} \operatorname{Spec}_1(h_T) = \operatorname{Spec}_1(h). \]

Now let $X$ denote $\operatorname{Spec}_2^a$, $\operatorname{Geo}_1^a$, or 
      $\operatorname{Geo}_2^a$.  Then by Proposition \ref{hTbound},
\[X(\widetilde{h}_T) \le \mathcal{E}_{\ell,T}X(r_\ell).\]
  Noting that $r_\ell\ll h_\ell$, where the function 
  $h_\ell=\frac{1}{(1+t^2)^{\ell/2}}$ satisfies \eqref{ht} with 
  $A=1$ and $B = \ell>2$, we conclude from Proposition \ref{absconv}
  that the above expression is finite.
   By the same reasoning as for $\operatorname{Spec}_1$, we see that 
  $X(h)$ exists, and is equal to $\lim\limits_{T\to\infty}X(h_T)$.
    Because the KTF is valid for $h_T$, it follows that
 \[ \operatorname{Geo}_1(h) + \operatorname{Geo}_2(h)
 = \lim_{T \rightarrow \infty} (\operatorname{Geo}_1(h_T) + \operatorname{Geo}_2(h_T))\]
 \[  = \lim_{T \rightarrow \infty} (\operatorname{Spec}_1(h_T) + \operatorname{Spec}_2(h_T))
   = \operatorname{Spec}_1(h) + \operatorname{Spec}_2(h).\]
      This completes the proof.
\end{proof}
\begin{proof}[Proof of Theorem \ref{hmain}]
The theorem follows immediately by the fact that one can take $\ell=12$ in 
  Proposition \ref{hmain2}.  
  See the remark after Proposition \ref{ktfcusp}, or for a self-contained proof
  of this fact, see Proposition \ref{ell=12} below.
\end{proof}

\subsection{$R_0(f)$ for $f$ not smooth or compactly supported}

In this section and the next, $f$ will denote a function on $G(\A)$, rather than
  on $G(\R)^+$.  
The purpose of these sections is to prove that
%
%
  $R_0(f)$ is a Hilbert-Schmidt operator
  under certain mild assumptions on $f$.  
  See Proposition \ref{Tcusp}.
  The discussion that follows is independent of the material in the previous
  sections.  In particular,
  we do not assume \eqref{ht} or equivalent bounds unless explicitly 
  stated.

Throughout this section let $f=f_\infty\times f_{\fin}$ be a complex-valued 
  function on $G(\A)$,
  with $f_\infty$ bi-$K_\infty$-invariant, supported on $G(\R)^+$,
  and satisfying
  \begin{equation} \label{fbound}
   f_\infty (\mat abcd)
     \ll \frac{(ad-bc)^{\alpha/2}}{(a^2+b^2+c^2+d^2+2(ad-bc))^{\alpha/2}}
     \end{equation}
 for some $\alpha> 2$.  Here $\alpha$ plays the role of the weight $\k$ in \S18-19 of \cite{KL}.
The above is equivalent to
  \begin{equation}\label{Vbound}
 V(u) \ll \frac1{(u+4)^{\alpha/2}},
 \end{equation}
where $V$ is the function attached to $f_\infty$ in \eqref{Vf1}.
  We do not assume that $f_\infty$ is smooth, although eventually we will require it 
  to be twice differentiable.
  We assume that $f_{\fin}$ is locally constant and compactly supported
  modulo $Z(\Af)$, and that $f(zg)=\ol{\w(z)}f(g)$ for all $z\in Z(\A)$ and $g\in G(\A)$.
  In fact, we shall assume that
\begin{equation}\label{ffsup}
\Supp(f_{\fin}) = Z(\Af)K'\delta K'
\end{equation}
  for some $\delta\in M_2(\Zhat)$
  and some open compact subgroup $K'\subset K_{\fin}$ under which $f_{\fin}$ is bi-invariant.
This entails no loss of generality, since any function $f_{\fin}$ as described
  above \eqref{ffsup} is a finite
  linear combination of functions as in \eqref{ffsup}.

\begin{proposition} \label{fL2}
  For $f$ as above, $f \in L^{q}(G(\A),\ol{\w})$ for all $q\ge 1$.
\end{proposition}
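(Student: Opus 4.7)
The plan is to factor the integral $\int_{\olG(\A)}|f|^q\,dg = \int_{\olG(\R)}|f_\infty|^q\,dg_\infty \cdot \int_{\olG(\Af)}|f_{\fin}|^q\,dg_{\fin}$ and bound each factor separately. The finite factor is easy: since $f_{\fin}$ is locally constant with central character $\ol{\w}_{\fin}$ and support contained in $Z(\Af)K'\delta K'$, the function $|f_{\fin}|^q$ descends to a function on $\olG(\Af)$ whose support is the image of the compact set $K'\delta K'$. A locally constant function on a compact totally disconnected space takes only finitely many values, so $|f_{\fin}|^q$ is bounded and compactly supported on $\olG(\Af)$, hence integrable.

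For the archimedean factor, the bound \eqref{fbound} is equivalent (via \eqref{tt} and \eqref{Vf2}) to the statement $V(u)\ll (u+4)^{-\alpha/2}$ on $[0,\infty)$, where $V$ is attached to $f_\infty$ as in \eqref{Vf1}. Since $f_\infty$ is bi-$K_\infty$-invariant, supported on $G(\R)^+$, and has central character $\ol{\w}_\infty$ with $|\ol{\w}_\infty|=1$, I will use the Cartan decomposition \eqref{Cartan} to write, up to an absolute multiplicative constant coming from our measure normalizations,
\[
\int_{\olG(\R)}|f_\infty(g)|^q\,dg \;\ll\; \int_1^\infty \bigl|V(y+y^{-1}-2)\bigr|^q\,(y-y^{-1})\,\frac{dy}{y}.
\]
The substitution $u=y+y^{-1}-2$, for which $du=(1-y^{-2})\,dy=(y-y^{-1})\frac{dy}{y}$ on $y\ge 1$, converts this to $\int_0^\infty |V(u)|^q\,du$, which by the bound on $V$ is majorized by $\int_0^\infty (u+4)^{-q\alpha/2}\,du$.

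This integral converges iff $q\alpha/2>1$. Since $\alpha>2$ by hypothesis and $q\ge 1$, we have $q\alpha/2\ge \alpha/2>1$, so convergence holds for every $q\ge 1$, completing the proof. There is no real obstacle here; the only delicate point is bookkeeping the measures so that the Cartan change of variable yields exactly $du$ (and not an additional factor blowing up at $y=1$, i.e.\ $u=0$), which is precisely what the identity $(y-y^{-1})\frac{dy}{y}=du$ supplies.
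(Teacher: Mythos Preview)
Your proof is correct and takes essentially the same approach as the paper: both use the Cartan decomposition and the substitution $u=y+y^{-1}\pm 2$ to reduce to a convergent integral on $[0,\infty)$ (or $[4,\infty)$), with convergence following from $q\alpha/2>1$. The paper handles the finite part implicitly via the first $\ll$, while you spell it out, but the argument is the same.
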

\begin{proof}
Let $f_\alpha(\smat abcd)$ denote the right-hand side of \eqref{fbound}. 
  As a function of $G(\R)$, it is bi-$K_\infty$-invariant.
  Indeed, the right or left action of $K_\infty$ on the space of matrices
  $M_2(\R)\cong \R^4$ is unitary, so it preserves the norm $a^2+b^2+c^2+d^2$.
  Therefore it is convenient to integrate using the Cartan decomposition 
  \eqref{Cartan}.
  For 
  $p=\tfrac q2\ge \tfrac12$, we have
\[  \int_{\olG(\A)} |f(g)|^{2p} dg\ll\int_{\olG(\R)}|f_\infty(g)|^{2p}dg
\ll\int_1^\infty f_\alpha(\smat{t^{1/2}}{}{}{t^{-1/2}})^{2p}(1-t^{-2})dt\]
(see e.g. the integration formulas (7.27) and (7.23) of \cite{KL}).  The latter
  integral is
\[=\int_1^\infty\frac{(1-t^{-2})}{(t+t^{-1}+2)^{p\alpha}}dt
  =\int_4^\infty\frac1{u^{p\alpha}}du <\infty,\]
since $p\ge \tfrac12$ and $\alpha>2$.
\end{proof}

By the above proposition, $f\in L^1(G(\A),\ol{\w})$.  Therefore it defines an operator $R(f)$ 
  on $L^2(\w)$,
  given by the kernel
    \[ K(g_1, g_2) = \sum_{\gamma \in \olG(\Q)} f(g_1^{-1} \gamma g_2). \]
  We will work with Arthur's truncated\index{keywords}{truncation}
  kernel $K^T(g_1,g_2)$, defined as follows.
 For $T>0$, let $\tT:G(\A)\longrightarrow \{0,1\}$ be the characteristic function of
  the set of $g\in G(\A)$ with height $H(g)>T$.  Then\index{notations}{KT@$K^T(x,y)$}\index{keywords}{kernel function!truncated}
\begin{align} \label{KT}
K^T(g_1,g_2)
= K(g_1,g_2) - \sum_{\sss \delta \in B(\Q) \bs G(\Q)} \sum_{\sss \mu \in \ol{M}(\Q)}
      \left( \int_{\sss N(\A)} f(g_1^{-1} \mu n \delta g_2) dn \right) \tT(\delta g_2)\\
\notag = \sum_{\sss \gamma \in \olG(\Q)} f(g_1^{-1} \gamma g_2) - \sum_{\sss \delta \in B(\Q) \bs G(\Q)} \sum_{\sss \mu \in \ol{M}(\Q)}
      \left( \int_{\sss N(\A)} f(g_1^{-1} \mu n \delta g_2) dn \right) \tT(\delta g_2).
\end{align}
This is a function on $G(\A)\times G(\A)$, but it is not hard to see that
  it is well-defined on $(B(\Q)\bs G(\A)) \times (G(\Q) \bs G(\A))$.


\begin{proposition} \label{KTabscon}
  For all $g_1,g_2\in G(\A)$, $K^T(g_1,g_2)$ is absolutely convergent, i.e.
\[  \sum_{\sss \gamma \in \olG(\Q)} |f(g_1^{-1} \gamma g_2)| + \sum_{\sss \delta \in B(\Q) \bs G(\Q)} \sum_{\sss \mu \in \ol{M}(\Q)}
      \int_{\sss N(\A)} |f(g_1^{-1} \mu n \delta g_2)| dn \tT(\delta g_2) < \infty. \]
Furthermore, the above is bounded on compact subsets of $\olG(\A)\times\olG(\A)$.
\end{proposition}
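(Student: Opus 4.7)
The plan is to decompose the quantity into the two contributions visible in \eqref{KT}, namely the full geometric sum $\Sigma_1 = \sum_{\gamma \in \olG(\Q)} |f(g_1^{-1}\gamma g_2)|$ and the Eisenstein correction $\Sigma_2 = \sum_{\delta, \mu} \int_{N(\A)} |f(g_1^{-1}\mu n \delta g_2)|\,dn \cdot \tT(\delta g_2)$, and to bound each separately with estimates uniform on compact subsets of $G(\A)\times G(\A)$.

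For $\Sigma_1$, I would exploit the hypothesis \eqref{ffsup} that $\Supp(f_{\fin}) = Z(\Af)K'\delta K'$ is compact modulo center. A term $f(g_1^{-1}\gamma g_2)$ is nonzero only when $g_{1,\fin}^{-1}\gamma g_{2,\fin}$ lies in this compact set, so by the discreteness of $\olG(\Q)$ in $\olG(\A)$ the set $\Lambda(g_1,g_2) \subset \olG(\R)$ of admissible $\gamma$ is discrete, and stays inside a fixed translate of a fixed discrete set as $(g_1,g_2)$ varies in a compact set. The archimedean bound \eqref{fbound}, rewritten through the Cartan decomposition as $|V(u_\gamma)| \ll (u_\gamma + 4)^{-\alpha/2}$ with $u_\gamma = \tr({}^t\gamma\gamma)/\det\gamma - 2$ as in \eqref{tt}, combined with the fact that the Cartan measure $(1-t^{-2})\,dt\,dk_1\,dk_2$ is comparable to $du$ at infinity, yields convergence of $\sum_{\gamma\in\Lambda} (u_\gamma+4)^{-\alpha/2}$ by integral comparison, since $\alpha > 2$ makes $\int^\infty (u+4)^{-\alpha/2}du$ finite. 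Uniformity on compact sets follows because $\Lambda(g_1,g_2)$ can be absorbed into a fixed enlargement.

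For $\Sigma_2$, I would exploit $\GL_2$ reduction theory. Parametrizing $B(\Q)\bs G(\Q)$ by coprime pairs $(c,d)$ as in Lemma \ref{reps}, and writing $g_2 = \gamma\, g_{2,\infty}\times k_{\fin}$ with $g_{2,\infty}(i)=z$ in a Siegel region, the height $H(\delta g_2)$ is essentially $\log(\Im(\delta z))=\log(y/|cz+d|^2)$, so for $(g_1,g_2)$ in any fixed compact set the number of $\delta$ with $H(\delta g_2)>T$ is $O(1)$, and can be taken to be at most one once $T$ exceeds an explicit threshold depending on the compact set. For each such $\delta$ the $N(\A)$-integral factors as a product of archimedean and finite pieces. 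The finite piece $\int_{N(\Af)} |f_{\fin}(g_{1,\fin}^{-1}\mu n\delta g_{2,\fin})|\,dn$ is supported in a compact subset of $\mu\in\Q^*$ by \eqref{ffsup}, so only finitely many $\mu$ contribute, uniformly on compact sets. For each such $\mu$ the archimedean piece is a Harish--Chandra-type integral $\int_{N(\R)} |f_\infty(g_{1,\infty}^{-1}\mu n\delta g_{2,\infty})|\,dn$, which by \eqref{Vbound} and \eqref{HCV} is majorized by $\int_\R (u+x^2+4)^{-\alpha/2}\,dx \ll (u+4)^{-(\alpha-1)/2}$ for an explicit $u = u(\mu,\delta,g_1,g_2)$; this is finite because $\alpha>2$, and bounded uniformly on compact sets.

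The main obstacle is the uniform control of $\Sigma_2$ across a compact family of $(g_1,g_2)$: one must produce a single reduction-theoretic bound on the number of $\delta$ with $H(\delta g_2)>T$ that works throughout the compact set, which requires choosing the Siegel fundamental domain large enough to contain all relevant $g_2$ and keeping track of how the threshold for ``at most one $\delta$'' depends on the compact set. A secondary technical point is that the condition $\alpha>2$ is used twice in $\Sigma_2$: one unit of decay is consumed by the unipotent integration, and the remaining decay $(\alpha-1)/2 > 1/2$ must still combine with the finite-part and height estimates to give overall convergence; this is where the hypothesis $\alpha>2$ rather than a weaker bound becomes essential.
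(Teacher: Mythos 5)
Your proposal is correct in outline but takes a genuinely different route from the paper's, and is considerably more involved. The paper dispatches both the $\gamma$-sum $\Sigma_1$ and the inner double sum in $\Sigma_2$ in one stroke, by citing Proposition 18.4 of \cite{KL} for the absolute, locally uniform convergence of $\sum_{\gamma\in\olG(\Q)}|f(g_1^{-1}\gamma g_2)|$. Since $\ol{B}(\Q)\subset\olG(\Q)$, the double sum $\sum_{\mu}\sum_{n'\in N(\Q)}|f(g_1^{-1}\mu n' n\delta g_2)|$ is a partial sum of that $\gamma$-sum at the point $(g_1,n\delta g_2)$, hence a continuous function of $n$, and integrating over the compact group $N(\Q)\bs N(\A)$ unfolds directly to $\sum_\mu\int_{N(\A)}|f(g_1^{-1}\mu n\delta g_2)|\,dn<\infty$, with no separate analysis of the unipotent integral needed. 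You instead estimate that integral directly by a Harish-Chandra-type bound $\int_\R V(u+x^2)\,dx\ll(u+4)^{-(\alpha-1)/2}$ and control the $\mu$-sum via compactness of $\Supp(f_{\fin})$ (essentially Lemma \ref{mu}). This is more self-contained and more quantitative; the explicit bounds of this kind do reappear later in Lemma \ref{Fhat}, where they are indispensable, but for the present proposition your approach duplicates work that the reduction to $\Sigma_1$ elegantly avoids. Both arguments then finish by controlling the $\delta$-sum and invoking boundedness on compacts; the paper does this by noting the final expression is a finite sum of products of continuous functions and characteristic functions of $g_2$.

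Two smaller points should be tightened. First, your remark that the number of contributing $\delta$ ``can be taken to be at most one once $T$ exceeds an explicit threshold depending on the compact set'' is not available: $T$ is a fixed parameter of $K^T$, so you cannot enlarge it. What is actually used (via Lemma 17.1 and Proposition 17.2 of \cite{KL}) is that for any fixed $T$ the set of $\delta$ with $\tT(\delta g_2)\neq 0$ is uniformly finite as $g_2$ ranges over a fixed fundamental domain, and hence over compacts after translating by the left $G(\Q)$-invariance. Second, the closing claim that $\alpha>2$ ``is used twice in $\Sigma_2$'' is a mild misconception: once the $\mu$- and $\delta$-sums are known to be finite, there is no further convergence to establish, so $\Sigma_2$ really only needs $\alpha>1$; the full strength of $\alpha>2$ is consumed by $\Sigma_1$, and again later in the $L^2$ estimates of Proposition \ref{K1si}. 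Neither point undermines your argument, but both are worth correcting.
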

\begin{proof} 
  By Proposition 18.4 of \cite{KL} and 
  the discussion following its proof, the sum over $\g$ is convergent, and in
  fact continuous as a function of $(g_1,g_2)$, so the assertions hold
  for this piece of the function.  
  For the same reason, the sum 
  $\sum_{\mu}\sum_{n'\in N(\Q)}|f(g_1^{-1}\mu n' n \delta g_2)|$ converges to a 
  continuous function of $n\in N(\Q)\bs N(\A)$.  Therefore it is integrable over
  the compact set $N(\Q)\bs N(\A)$, i.e.
\[\int_{\sss N(\Q)\bs N(\A)} \sum_{\mu}\sum_{n'\in N(\Q)}|f(g_1^{-1}\mu n' n \delta g_2)|dn=
  \sum_{\mu}\int_{\sss N(\A)}|f(g_1^{-1}\mu n \delta g_2)|dn<\infty.\]
By Lemma 17.1 of \cite{KL}, $\tT(\delta g_2)\neq 0$ for at most 
  one $\delta\in B(\Q)\bs G(\Q)$.  In fact, since $K^T$ is left $G(\Q)$-invariant
  as a function of $g_2$, we can assume that $g_2$ lies in a fixed
  fundamental domain for $\olG(\Q)\bs\olG(\A)$,  so the set of $\delta$ that
  contribute to the sum is finite and independent of $g_2$
  (\cite{KL}, Proposition 17.2).  The first assertion of the proposition
  now follows immediately.  From the fact that the expression is a finite
  sum of functions of $(g_1,g_2)\in \olG(\Q)\times\olG(\A)$, 
  each of which is a product of a 
  continuous function with a characteristic function, we see that it
  is bounded on compact subsets.
 \end{proof}


Let $K_{[0,\pi)}$ denote the set of matrices $\smat{\cos\theta}{\sin\theta}{-\sin\theta}
  {\cos\theta}$ with $\theta\in [0,\pi)$.  Then 
\[F\eqdef\label{Fdef}\left\{\mat1x01\mat{y^{1/2}}{}{}{y^{-1/2}}k|\, x\in[-\tfrac12,\tfrac12],
  y>0, x^2+y^2\ge 1, k\in K_{[0,\pi)}\right\}\]
 is a fundamental domain in $\SL_2(\R)$ for the quotient $\SL_2(\Z) \bs \SL_2(\R)$.
  This means that the projection $F\rightarrow \SL_2(\Z)\bs\SL_2(\R)$ is 
 surjective, and injective except on a set of measure zero.
    The set $Z(\R)^+F$ is then a fundamental domain for $\SL_2(\Z)\bs G(\R)^+$,
  and it follows (from strong approximation for $G(\A)$ and the ``divorce theorem" 
  on page 101 of \cite{KL})
 that the set
\[\mathfrak{F}\index{notations}{F@$\mathfrak{F},\ol{\mathfrak{F}}$} = Z(\R)^+ {F}\times K_{\fin}\] 
 is a fundamental domain in $G(\A)$ for $G(\Q)\bs G(\A)$.
The subset \[\ol{\mathfrak F}=F\times K_{\fin}\] 
contains a fundamental domain for $Z(\A)G(\Q)\bs G(\A)$, and can be used as a
  domain of integration for the latter quotient (\cite{KL}, Corollary 7.44).
    Let $L^2(\mathfrak{F}, \w)$ be the Hilbert space of measurable functions 
  $\phi: \mathfrak{F} \longrightarrow \C$ such that
\begin{itemize}
\item $\phi(z g) = \w(z) \phi(g)$  for all $g\in\mathfrak{F}$ and
   $z \in Z(\A)\cap \mathfrak{F}=Z(\R)^+\times \Zhat^*$, 
    \item $\ds \|\phi\|_{\mathfrak{F}}^2=\int_{\sss \ol{\mathfrak{F}}} 
  |\phi(g)|^2 dg < \infty$.
\end{itemize}


\begin{lemma}\label{mu}  Let $\delta\in M_2(\Zhat)$ be as in \eqref{ffsup}, and define 
  $D\in\Z^+$ by $D\Zhat = (\det\delta)\Zhat$.  Suppose 
\[f(g_1^{-1}\mu\smat 1t01g_2)\neq 0\]
  for some $g_1,g_2\in \mathfrak{F}$, $\mu\in M(\Q)$, and $t\in\A$.
  Then $t_{\fin}\in \frac1D\Zhat$ and $\mu\in Z(\Q)\smat a{}{}d$ for integers $a,d>0$
  with $ad=D$.
\end{lemma}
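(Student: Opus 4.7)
The plan is to separate the archimedean and finite parts of the hypothesis $f(g_1^{-1}\mu\smat 1t01 g_2)\neq 0$ and exploit each part for a different purpose: the finite component will control the denominators of $t_{\fin}$ and the $p$-adic valuations of the diagonal entries of $\mu$, while the archimedean component pins down a sign.

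At the finite places, since $\Supp(f_{\fin}) = Z(\Af)K'\delta K'$ with $K' \subset K_{\fin}$, and since $g_{1,{\fin}}, g_{2,{\fin}} \in K_{\fin}$, the non-vanishing of $f_{\fin}$ forces $\mu\smat 1{t_{\fin}}01 \in Z(\Af)\,K_{\fin}\delta K_{\fin}$, which localizes prime-by-prime to $\mu\smat 1{t_p}01 \in Z(\Q_p)\,K_p\delta_p K_p$. Writing $\mu = \smat{m_1}{}{}{m_2}$ and choosing $z_p \in \Q_p^*$ with $z_p^{-1}\mu\smat 1{t_p}01 \in K_p\delta_p K_p \subset M_2(\Z_p)$, I would read off three local constraints: integrality of the nonzero entries gives $v_p(m_j) \ge \sigma_p$ and $v_p(m_1 t_p) \ge \sigma_p$, where $\sigma_p \eqdef v_p(z_p)$, while comparing determinants yields $v_p(m_1) + v_p(m_2) = 2\sigma_p + D_p$, with $D_p = v_p(\det\delta_p) = v_p(D)$. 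Setting $a_p = v_p(m_1) - \sigma_p$ and $d_p = v_p(m_2) - \sigma_p$, I obtain $a_p, d_p \ge 0$, $a_p + d_p = D_p$, and $v_p(t_p) \ge -a_p \ge -D_p$. Running this bound over all $p$ gives $t_{\fin} \in \frac{1}{D}\Zhat$, which is the first claim.

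For the second claim, I would globalize. Since $a_p = d_p = \sigma_p = 0$ for all but finitely many $p$, the product $\lambda_0 \eqdef \prod_p p^{\sigma_p}$ is a well-defined positive rational, and the positive integers $a \eqdef \prod_p p^{a_p}$, $d \eqdef \prod_p p^{d_p}$ satisfy $ad = D$. Comparing $p$-adic valuations at every prime yields $m_1 = \epsilon_1 \lambda_0 a$ and $m_2 = \epsilon_2 \lambda_0 d$ with signs $\epsilon_j \in \{\pm 1\}$. The desired statement $\mu \in Z(\Q)\smat a{}{}d$ is then exactly the equality $\epsilon_1 = \epsilon_2$, i.e.\ $m_1 m_2 > 0$.

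This sign is forced by the archimedean hypothesis. Since $\Supp(f_\infty) \subset G(\R)^+$ and $g_{1,\infty}, g_{2,\infty} \in Z(\R)^+F \subset G(\R)^+$ (because $F \subset \SL_2(\R)$), the element $g_{1,\infty}^{-1}\mu\smat 1{t_\infty}01 g_{2,\infty}$ must have positive determinant; but this determinant equals $m_1 m_2 \det(g_{2,\infty})/\det(g_{1,\infty})$, a positive scalar times $m_1m_2$. Hence $\epsilon_1 = \epsilon_2$, and the proof is complete. No single step is a real obstacle; the only one requiring any care is the local extraction of the exponents $(a_p, d_p)$ from $K_p\delta_p K_p$, which is essentially a Smith-normal-form computation, and the verification that both the integrality of $t_{\fin}$ and the archimedean sign condition follow uniformly.
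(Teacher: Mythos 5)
Your proof is correct, but your route is genuinely different from the paper's, and the difference is worth noting. The paper works global-first: from the non-vanishing of $f_{\fin}$ it deduces $\det\mu \in \beta^2 D\Zhat^*$ for some $\beta \in \Af^*$, then applies strong approximation for ideles ($\Af^* = \Q^*\Zhat^*$) in a single stroke to write $\beta = r\beta'$ with $r\in\Q^*$; setting $\smat a{}{}d = r^{-1}\mu$ immediately gives $ad=\pm D$, after which $\smat a{at_{\fin}}0d \in M_2(\Zhat)$ is extracted from the support condition and yields $a,d\in\Z$ and $t_{\fin}\in\frac1a\Zhat$ directly. You instead localize prime-by-prime, reading off the valuations $\sigma_p, a_p, d_p$ at each $p$, and then glue via the fact that a nonzero rational is determined up to sign by its $p$-adic valuations. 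Both approaches rest on the same idelic fact (your globalization is implicitly the same $\Af^*=\Q^*\Zhat^*$), and both finish identically by using $\Supp(f_\infty)\subset G(\R)^+$ to pin down the sign. The paper's version is shorter because the global scalar $r$ is produced at once rather than reassembled from local data; yours is perhaps more transparent about exactly which local constraints are being used (it makes explicit that $a_p + d_p = D_p$ and $v_p(t_p)\ge -a_p$), and it avoids the slightly fussy step of checking that the $Z(\Af)$-component of a prescribed element lies in $\Zhat^*$. Either is a complete proof.
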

\begin{proof}
Consider the finite part $f_{\fin}(k_1^{-1}\mu\smat 1{t_{\fin}}01 k_2)\neq 0$.
  By \eqref{ffsup}, there exists $\beta\in \Af^*$ such that 
\[\det\mu\in \beta^2D\Zhat^*.\]
By strong approximation for the ideles (\cite{KL}, Prop. 
  5.10),\index{keywords}{strong approximation!for $\Af^*$}
   $\Af^*=\Q^*\Zhat^*$, so $\beta=r\beta'$ for some
  $r\in\Q^*$ and $\beta'\in\Zhat^*$.  Therefore 
\[r^{-2}\det\mu \in D\Zhat^*\cap \Q^*=\{D,-D\}.\]
  Writing $r^{-1}\mu=\smat a{}{}d\in M(\Q)$, we have
   $ad=\pm D$.  Replacing $r$ by $-r$ if necessary,
  we can assume that $a>0$.
  Now $k_1^{-1}\smat a{}{}d \smat1{t_{\fin}}01k_2\in \Supp(f_{\fin})$, and since its determinant
  belongs to $D\Zhat^*$, we see that its $Z(\Af)$ component as in \eqref{ffsup} 
  must belong to $\Zhat^*$. It follows that $k_1^{-1}\smat a{at_{\fin}}0d k_2\in M_2(\Zhat)$, and 
  hence $\smat a{at_{\fin}}0d\in M_2(\Zhat)$.
  This means that $a,d\in\Z$ and $t_{\fin}\in \tfrac1a\Zhat\subset\tfrac1D\Zhat$.
  Finally, the fact that $f_\infty$ is supported in $G(\R)^+$ 
  implies $ad>0$, so $ad=D$.
\end{proof}

\begin{proposition} \label{KTsi} 
  Let $f$ be as described at the beginning of this section, and
  suppose in addition that $f_\infty$ is twice continuously 
  differentiable.  
  Let $V$ be the function on $[0,\infty)$
  attached to $f_\infty$ as in \eqref{Vf1}.
  Suppose there exists $\e>0$ such that for all $u>0$,
  \begin{equation} \label{Vsi}
  \begin{cases}
   V(u), V'(u) \ll (1+u)^{-1-\e} \\
   V''(u) \ll (1+u)^{-3/2-\e}.
   \end{cases}
  \end{equation}
(The bound on $V(u)$ is already a consequence of \eqref{Vbound}.) Then
  \begin{equation} \label{KTHS}
  \|K^T\|^2_{\sss {\mathfrak{F}} \times G(\Q) \bs G(\A)} = \int_{\sss \ol{\mathfrak{F}}} 
  \int_{\sss \olG(\Q) \bs \olG(\A)} |K^T(g_1, g_2)|^2dg_2 dg_1 < \infty,
  \end{equation}
  or equivalently,
  \[
  \|K^T\|^2_{\sss {\mathfrak{F}} \times \mathfrak{F}} =  \int_{\sss \ol{\mathfrak{F}}} 
 \int_{\sss \ol{\mathfrak{F}}} |K^T(g_1, g_2)|^2dg_2 dg_1 < \infty.
  \]
\end{proposition}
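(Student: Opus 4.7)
The plan is to split $\ol{\mathfrak F}$ into a ``compact" piece $\ol{\mathfrak F}_{\mathrm{cpt}}$ where the height $y(g)=\Im(g_\infty(i))$ is bounded (say $y\le Y_0$ for some $Y_0\gg e^T$) and a ``cuspidal" piece $\ol{\mathfrak F}_{\mathrm{cusp}}$ where $y>Y_0$.  On $\ol{\mathfrak F}_{\mathrm{cpt}}\times \ol{\mathfrak F}_{\mathrm{cpt}}$ the function $K^T$ is pointwise bounded by Proposition \ref{KTabscon}, so that piece contributes a finite amount to the $L^2$ norm.  The key work is handling the cuspidal piece(s), where we use the defining feature of the truncation: for $g_2\in \ol{\mathfrak F}_{\mathrm{cusp}}$ with $Y_0$ chosen large enough that only $\delta=1$ has $\tT(\delta g_2)=1$, the truncated kernel equals $K(g_1,g_2)$ minus its constant term along $N$, so Poisson summation turns it into a sum of nonzero Fourier modes plus a ``small cell" remainder.

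Concretely, by Lemma \ref{mu} the sum over $\mu\in \ol M(\Q)$ and $n\in N(\Q)$ in $K(g_1,g_2)$ is supported on finitely many $\mu=\smat a00d$ with $ad=D$ and on $t\in \tfrac1D\Z$ at the archimedean place.  Poisson summation on $\R$ (against the dual lattice $D\Z$) then gives, for $g_1=\smat{y_1}{x_1}01$, $g_2=\smat{y_2}{x_2}01$ both in $\ol{\mathfrak F}_{\mathrm{cusp}}$,
\[K^T(g_1,g_2)=R(g_1,g_2)+\sum_{ad=D}\sum_{m\in D\Z\setminus\{0\}}\hat f_m(g_1,g_2,a,d),\]
where $R$ collects the $\gamma\in\olG(\Q)\setminus B(\Q)$ terms and $\hat f_m$ is a one-dimensional Fourier integral.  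A direct computation using \eqref{Vf2} (completing the square in $t$) shows
\[\hat f_m(g_1,g_2,a,d)=e^{2\pi i m t_0}\sqrt{\tfrac{dy_1y_2}{a}}\,\hat W\!\bigl(m\sqrt{\tfrac{dy_1y_2}{a}}\bigr),\]
with $W(r)=V(u_0+r^2)$, $u_0=y'+y'^{-1}-2$, $y'=ay_2/(dy_1)$, and $t_0\in\R$ a phase.

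Applying the chain rule gives $W''(r)=2V'(u_0+r^2)+4r^2V''(u_0+r^2)$, and the hypotheses \eqref{Vsi} yield $\|W''\|_{L^1(\R)}\ll 1$ uniformly in $u_0\ge 0$ (the $r^2V''$ piece is controlled by $\int_{u_0}^\infty\sqrt{u-u_0}(1+u)^{-3/2-\e}du\ll 1$ after $u=u_0+r^2$).  Hence integration by parts twice yields $|\hat W(\xi)|\ll (1+|\xi|)^{-2}$ with an absolute implied constant, so after summing over $m\ne 0$,
\[\Bigl|\sum_{m\in D\Z\setminus\{0\}}\hat f_m\Bigr|^2\ll y_1y_2\,(1+\sqrt{y_1y_2})^{-4}\ll (y_1y_2)^{-1}.\]
Meanwhile, for $\gamma\in\olG(\Q)\setminus B(\Q)$ (so $c\ne 0$ in $\gamma=\smat abcd$), the direct bound \eqref{fbound} with $\alpha>2$ gives $|R(g_1,g_2)|\ll (y_1y_2)^{-\alpha/2}$ via standard reduction theory on $g_1^{-1}\gamma g_2$.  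Integrating over the cuspidal measure $y_1^{-2}y_2^{-2}\,dx_1dy_1dx_2dy_2$ on $(0,\tfrac12]^2\times[Y_0,\infty)^2$ gives $\int\int y_1^{-3}y_2^{-3}dy_1dy_2<\infty$, and the mixed compact-cuspidal regions are controlled similarly (and more easily), proving \eqref{KTHS}.  The main obstacle is establishing that $\|W''\|_{L^1}$ is bounded \emph{uniformly} in $u_0$: for small $u_0$ (i.e.\ $y'$ near $1$) the function $W$ has large second derivative near $r=0$, and the $(1+u)^{-3/2-\e}$ decay of $V''$ (stronger than the $(1+u)^{-1-\e}$ bound on $V,V'$) is precisely what prevents the $\int r^2|V''(u_0+r^2)|dr$ integral from blowing up as $u_0\to 0$.
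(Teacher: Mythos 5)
Your proposal is structurally the same as the paper's proof — decompose into compact and cuspidal regions, use Poisson summation with the truncation cancelling the constant term when $g_2$ is high in the cusp, and bound the nonzero Fourier modes via $\|W''\|_{L^1}\ll 1$ (which is exactly Lemma \ref{Fhat}, eq.\ \eqref{Fhat1} in the paper). But there is a genuine gap where you wave off ``the mixed compact-cuspidal regions are controlled similarly (and more easily).'' The region where $g_1$ goes to the cusp and $g_2$ lies in the compact part $\widetilde{\mathfrak F}_T$ is \emph{not} controlled similarly, because $\tT(g_2)=0$ there: the $\delta=1$ truncation term does not appear, so Poisson summation leaves the zero Fourier modes $\sum_\mu \widehat F_{\mu,g_1,g_2}(0)$ uncancelled. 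These decay only like $y_1^{-\e}$ (via \eqref{Fhat0}, which rests on $V(u)\ll(1+u)^{-1-\e}$ — not on the $V''$ hypothesis your argument focuses on), so one must prove their square-integrability over $\mathfrak F\times\widetilde{\mathfrak F}_T$ separately; the paper does this in Lemma \ref{Fhat0int}. Moreover, for $g_2$ in the compact part there may be truncation terms $\widehat F_{\mu,g_1,\delta g_2}(0)\,\tT(\delta g_2)$ with $\delta\neq 1$, and bounding these requires the finiteness results of Lemma \ref{mudel} together with Lemma \ref{Fhat0int}; you never mention them at all. The paper's Proposition \ref{K2sic} is devoted entirely to this case, and it is the more delicate half of the argument, not the easier one.

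A secondary (conceptual) issue: you write that the $(1+u)^{-3/2-\e}$ decay of $V''$ ``prevents the $\int r^2|V''(u_0+r^2)|\,dr$ integral from blowing up as $u_0\to 0$.'' The actual threat there is convergence as $r\to\infty$, not $r\to 0$: for $u_0$ near $0$, $r^2 V''(u_0+r^2)\to 0$ as $r\to 0$ and $V'(u_0)$ is bounded by hypothesis, so there is no blowup near $r=0$. The subtlety at $u_0=0$ is merely that $V$ need not be differentiable at the endpoint, which the paper sidesteps by noting that the set $\{ay_2=dy_1\}$ has measure zero. What the $-3/2-\e$ exponent actually buys is that $\int_0^\infty r^2(1+u_0+r^2)^{-3/2-\e}\,dr$ is finite uniformly in $u_0\ge 0$, which is needed for $\|W''\|_{L^1}\ll 1$.
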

\noindent{\em Remark:} We do not assume that $V$ is differentiable
  at the endpoint $u=0$.

\begin{proof}
 The proof is somewhat involved and will be given in the next subsection.
 It basically follows \S 19 of \cite{KL}.
\end{proof}

Under the hypotheses of the above proposition, we can define a map 
  $T_{K^T}: L^2(\w) \rightarrow L^2(\mathfrak{F},\w)$ by
\[T_{K^T}\phi(g_1) = \int_{\sss \olG(\Q) \bs \olG(\A)} K^T(g_1, g_2) \phi(g_2) dg_2.\]
Let $r: L^2(\mathfrak F,\w) \rightarrow L^2(\w)$ be the map defined by 
  $r\phi(G(\Q)g)=\phi(g)$ for a.e. $g\in \mathfrak{F}$.  (The set of points $g\in \mathfrak{F}$
  for which $\phi(g)$ is not uniquely determined by $G(\Q)g$ has measure $0$.)
  Because $\mathfrak{F}$ is a fundamental domain for 
  $G(\Q)\bs G(\A)$, the map $r$ is an isomorphism.
  By identifying the two spaces in this way, we can abuse terminology and refer
  to $T_{K^T}$ as an operator on $L^2(\w)$.
  For future reference, we let $L^2_0(\mathfrak{F}, \w)$ be the 
  preimage of $L^2_0(\w)$ under $r$.

\begin{corollary}\label{HScor}
     The map $T_{K^T}: L^2(\w) \longrightarrow L^2(\mathfrak{F}, \w)\cong L^2(\w)$
   is a Hilbert-Schmidt operator. 
 The Hilbert-Schmidt norm $\|T_{K^T}\|^2_{HS}$ is equal to \eqref{KTHS}.
\end{corollary}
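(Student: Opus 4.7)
The plan is to deduce the corollary directly from Proposition \ref{KTsi}, which tells us that $K^T$ lies in $L^2(\ol{\mathfrak{F}} \times (\olG(\Q) \bs \olG(\A)))$. The argument is the standard one identifying Hilbert--Schmidt operators with square-integrable kernels, applied here with the mild subtlety that the domain and codomain of $T_{K^T}$ are realized on different spaces (namely $L^2(\w)$ and $L^2(\mathfrak{F},\w)$), linked by the isomorphism $r$ coming from the fact that $\mathfrak{F}$ is a fundamental domain.

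First, I would fix an orthonormal basis $\{\phi_n\}$ of $L^2(\w)$, which via $r$ pulls back to an orthonormal basis of $L^2(\mathfrak{F},\w)$. By definition of the Hilbert--Schmidt norm,
\[
\|T_{K^T}\|^2_{HS} \;=\; \sum_n \|T_{K^T}\phi_n\|^2_{\mathfrak{F}}
\;=\; \sum_n \int_{\ol{\mathfrak{F}}} \Bigl|\int_{\olG(\Q)\bs\olG(\A)} K^T(g_1,g_2)\,\phi_n(g_2)\,dg_2\Bigr|^2 dg_1.
\]
For each fixed $g_1 \in \ol{\mathfrak F}$, the function $g_2 \mapsto K^T(g_1,g_2)$ lies in $L^2(\olG(\Q)\bs\olG(\A),\ol{\w})$ by Proposition \ref{KTsi} combined with Fubini (applied to the nonnegative integrand $|K^T|^2$): the iterated integral in \eqref{KTHS} is finite, so for almost every $g_1$ the inner integral is finite. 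Thus the inner integral against $\phi_n$ is genuinely $\sg{K^T(g_1,\cdot),\ol{\phi_n}}$, and Parseval in $L^2(\olG(\Q)\bs\olG(\A),\ol\w)$ gives
\[
\sum_n \Bigl|\int_{\olG(\Q)\bs\olG(\A)} K^T(g_1,g_2)\phi_n(g_2)\,dg_2\Bigr|^2
\;=\; \int_{\olG(\Q)\bs\olG(\A)} |K^T(g_1,g_2)|^2\,dg_2
\]
for almost every $g_1$.

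Integrating over $g_1\in\ol{\mathfrak F}$ and exchanging the (nonnegative) sum and integral by monotone convergence, I would conclude
\[
\|T_{K^T}\|^2_{HS} \;=\; \int_{\ol{\mathfrak F}} \int_{\olG(\Q)\bs\olG(\A)} |K^T(g_1,g_2)|^2\,dg_2\,dg_1,
\]
which is precisely \eqref{KTHS}. Finiteness of the right-hand side is Proposition \ref{KTsi}, so $T_{K^T}$ is Hilbert--Schmidt with the stated norm. The only real point requiring care is checking that the basis $\{\phi_n\}$ transferred through $r$ really is orthonormal in $L^2(\mathfrak{F},\w)$ with the norm $\|\cdot\|_{\mathfrak{F}}$ used in \eqref{KTHS}; this is immediate from the fact that $\ol{\mathfrak{F}}$ serves as a domain of integration for $Z(\A)G(\Q)\bs G(\A)$, so $\|r\phi\|_{\mathfrak{F}} = \|\phi\|_{L^2(\w)}$. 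I do not anticipate any substantive obstacle beyond this bookkeeping, since the heavy analytic lifting has already been done in Proposition \ref{KTsi}.
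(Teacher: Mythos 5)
Your argument is correct. The paper itself disposes of this corollary by a one-line citation to Reed--Simon Theorem VI.23 (the standard fact that $T$ is Hilbert--Schmidt iff it has an $L^2$ kernel, with $\|T\|_{HS} = \|K\|_{L^2}$); what you have written is essentially the textbook proof of that theorem, specialized to the present situation where the two variables of the kernel live on slightly different spaces. Concretely: picking an ONB $\{\phi_n\}$ of $L^2(\w)$, noting that $g_2\mapsto K^T(g_1,g_2)$ lies in $L^2(\ol\w)$ for a.e.\ $g_1$ (by Fubini--Tonelli applied to the finite double integral of Proposition \ref{KTsi}), applying Parseval with the conjugate basis $\{\ol{\phi_n}\}\subset L^2(\ol\w)$, and then Tonelli once more to integrate over $g_1$ --- all of that is sound. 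The closing remark about the isometry $r$ is the right thing to flag, though what is really being used is just that the codomain norm $\|\cdot\|_{\mathfrak F}$ is by definition the integral over $\ol{\mathfrak F}$ appearing in \eqref{KTHS}, so that $\|T_{K^T}\phi_n\|_{\mathfrak F}^2$ is exactly the quantity you computed. The only net difference from the paper is that you reprove the cited lemma inline rather than invoking it; both are valid, and your version has the small advantage of making visible exactly why the two-sided $L^2$ structure on $\ol{\mathfrak F}\times(\olG(\Q)\bs\olG(\A))$ is the right one here.
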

\begin{proof} This is a consequence of \eqref{KTHS}. (See \cite{RS} Theorem VI.23).
 \end{proof}

The next proposition shows that $T_{K^T}$ coincides with $R(f)$ on the cuspidal
  subspace, and it then follows from Corollary \ref{HScor} that $R_0(f)$ is
  Hilbert-Schmidt.

\begin{proposition}\label{Tcusp}\index{notations}{R0@$R_0$}
  Suppose the hypotheses of Proposition \ref{KTsi} are satisfied. Then
  $T_{K^T}|_{L_0^2(\w)} = R(f)|_{L_0^2(\w)} = R_0(f)$.
As a result, the operator $R_0(f)$ is Hilbert-Schmidt.
\end{proposition}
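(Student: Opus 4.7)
The plan is to establish the identity $T_{K^T}\phi = R(f)\phi$ for every cuspidal $\phi \in L^2_0(\w)$ by showing that the kernel difference $K-K^T$, which by \eqref{KT} consists of the degenerate subtraction
\[
  J(g_1,g_2) = \sum_{\delta \in B(\Q)\bs G(\Q)}\sum_{\mu \in \ol M(\Q)}
   \Bigl(\int_{N(\A)} f(g_1^{-1}\mu n \delta g_2)\,dn\Bigr)\tT(\delta g_2),
\]
integrates to zero against any cuspidal $\phi$. Once this is done, $R_0(f)$ being Hilbert--Schmidt will follow from Corollary \ref{HScor}: $T_{K^T}$ is Hilbert--Schmidt, its restriction to $L^2_0(\w)$ coincides with $R_0(f)$, and the restriction of a Hilbert--Schmidt operator to a closed invariant subspace remains Hilbert--Schmidt.

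First I would simplify the inner $N(\A)$-integral via the substitution $n\mapsto \mu^{-1}n'\mu$. Writing $\mu=\smat a{}{}d$ with $a,d\in\Q^*$, conjugation scales the $N$-coordinate by $a/d$, and by the product formula $|a/d|_\A = 1$, so the substitution is measure-preserving and yields
\[
  \int_{N(\A)} f(g_1^{-1}\mu n \delta g_2)\,dn = \int_{N(\A)} f(g_1^{-1} n \mu\delta g_2)\,dn.
\]
Setting $\gamma=\mu\delta$, one checks directly, using $B = MN$ together with the uniqueness of representatives in $B(\Q)\bs G(\Q)$, that $(\mu,\delta)\mapsto \mu\delta$ is a bijection from $\ol M(\Q)\times (B(\Q)\bs G(\Q))$ onto $Z(\Q)N(\Q)\bs G(\Q)$, and that both $\int_{N(\A)}f(g_1^{-1}n\gamma g_2)\,dn$ and $\tT(\gamma g_2)$ are well-defined as functions of $\gamma$ modulo left multiplication by $Z(\Q)N(\Q)$ (the $Z(\Q)$-factor combines harmlessly with the $\ol{\w}$-equivariance of $f$ once we pass to $\olG$). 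Hence
\[
 J(g_1,g_2) = \sum_{\gamma \in N(\Q)\bs \olG(\Q)} \Bigl(\int_{N(\A)} f(g_1^{-1} n \gamma g_2)\,dn\Bigr)\tT(\gamma g_2).
\]

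Next, for $\phi\in L^2_0(\w)$ and $\psi\in L^2(\w)$ bounded and compactly supported modulo $Z(\A)G(\Q)$, I would compute $\langle T_J\phi,\psi\rangle$ and show it vanishes. Unfolding the $\gamma$-sum against the left $G(\Q)$-invariance of $\phi$ (and of $\ol{\psi}$) produces
\[
  \int_{\olG(\Q)\bs\olG(\A)}\hskip -0.5cm \ol{\psi(g_1)} \int_{N(\Q)\bs\olG(\A)} \hskip -0.7cm\Bigl(\int_{N(\A)}f(g_1^{-1}ng_2)\,dn\Bigr) \tT(g_2)\phi(g_2)\,dg_2\,dg_1.
\]
Decomposing the inner integration as an iterated integral over $N(\A)\bs\olG(\A)$ and $N(\Q)\bs N(\A)$, and exploiting the left $N(\A)$-invariance of both $\tT(g_2)$ and the inner $N(\A)$-integral on $f$, Fubini transfers the $N(\Q)\bs N(\A)$-integration onto $\phi$; it produces exactly the constant term $\phi_N(g_2)$, which vanishes because $\phi$ is cuspidal. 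Hence $\langle T_J\phi,\psi\rangle = 0$ on a dense set of $\psi$, so $T_{K^T}\phi = R(f)\phi$ as elements of $L^2(\w)$.

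The main obstacle will be justifying the interchanges of sums and integrals in the unfolding and the final Fubini. For this I would combine Proposition \ref{KTabscon} (absolute convergence of $K^T$, and hence of $J$), the fact that $f\in L^1(\ol{\w})\cap L^2(\ol{\w})$ from Proposition \ref{fL2}, and the standing assumption that $\psi$ is bounded with compact support modulo $Z(\A)G(\Q)$; these together make the integrand $|J(g_1,g_2)\phi(g_2)\psi(g_1)|$ genuinely integrable over the relevant product domain, so the manipulations above are legitimate. The density of such $\psi$ in $L^2(\w)$, combined with the boundedness of $T_{K^T}$ (from Corollary \ref{HScor}) and of $R(f)$ (from $f\in L^1(\ol{\w})$), then extends the identity from the dense subspace to all of $L^2_0(\w)$, completing the proof.
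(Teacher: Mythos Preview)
Your formal unfolding is correct and essentially equivalent to the paper's (the paper keeps the $(\mu,\delta)$-parametrization and works pointwise in $g_1$ rather than pairing against a test function $\psi$, but the mechanism---unfold, push the $N(\Q)\bs N(\A)$-integral onto $\phi$, obtain the vanishing constant term---is the same). The gap is in the convergence justification, which is in fact the crux of the argument.

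The three ingredients you cite do not suffice to make $|J(g_1,g_2)\phi(g_2)\psi(g_1)|$ integrable. Compact support of $\psi$ localizes $g_1$, and Proposition~\ref{KTabscon} gives pointwise absolute convergence of $J$ together with boundedness on compacta of $\olG(\A)\times\olG(\A)$; but the $g_2$-domain $\olG(\Q)\bs\olG(\A)$ is noncompact and $\phi\in L^2_0(\w)$ need not be bounded. The fact $f\in L^1$ would give $\int_{N(\Q)\bs\olG(\A)}\int_{N(\A)}|f(g_1^{-1}ng_2)|\,dn\,dg_2=\|f\|_{L^1}$, but once the factor $|\phi(g_2)|$ is reinserted this no longer closes. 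What one actually needs is that the absolute-value kernel $\Phi_{g_1}(g_2)=\sum_{\delta,\mu}\int_{N(\A)}|f(g_1^{-1}\mu n\delta g_2)|\,dn\,\tT(\delta g_2)$ lies in $L^2(\olG(\Q)\bs\olG(\A))$ for each $g_1$, after which Cauchy--Schwarz against $\phi$ finishes. The paper obtains this by splitting the fundamental domain: on the compact part $\ol{\widetilde{\mathfrak F}}_T$ Proposition~\ref{KTabscon} suffices, but on the cusp $\mathfrak F_T$ only $\delta=1$ survives, Lemma~\ref{mu} reduces the $\mu$-sum to finitely many terms, and the archimedean estimate \eqref{Fhat0} (which is where the decay hypotheses \eqref{Vsi} enter) yields $\Phi_{g_1}(g_2)\ll_{g_1}1$. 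Your proposal never invokes \eqref{Vsi} for the convergence, which signals that something is missing. An alternative repair in the spirit of your approach: first prove the identity for $\phi$ a $K$-finite cusp form, which \emph{is} bounded, so that $\|f\|_{L^1}\|\phi\|_\infty$ controls everything after unfolding; then extend to all of $L^2_0(\w)$ by density and boundedness of both $T_{K^T}$ and $R(f)$.
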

\begin{proof}
    Let $\phi \in L_0^2(\w)$ and $g_1\in \mathfrak{F}$. 
Then 
\[R(f)\phi(g_1)=\int_{\olG(\Q)\bs\olG(\A)}K(g_1,g_2)\phi(g_2)dg_2,\]
 where the integral converges absolutely since $f\in L^1(\ol\w)$
  (cf. (10.7) of \cite{KL}).  Thus
  by the linearity of integration, $T_{K^T} \phi(g_1)$ is equal to
    \[ 
  R(f) \phi (g_1) - \IL{\sss \olG(\Q) \bs \olG(\A)} \sum_{\sss \delta \in B(\Q) \bs G(\Q)} 
\sum_{\sss \mu \in \ol{M}(\Q)}\left( \int_{\sss N(\A)} f(g_1^{-1} \mu n \delta g_2) 
  \phi(g_2) dn \right) \tT(\delta g_2) dg_2.\]
  It suffices to show that the second term vanishes.
  At the end of the proof, we will verify that it is absolutely convergent,
  so we can rearrange the sums and integrals.  Granting this for the moment,
  by the left $G(\Q)$-invariance of $\phi$, the second term is 
\begin{equation}\label{2abs}  \int_{\sss \olG(\Q) \bs \olG(\A)}
    \sum_{\sss \delta \in B(\Q) \bs G(\Q)}
    \sum_{\sss \mu \in \ol{M}(\Q)}
   \int_{\sss N(\A)} f(g_1^{-1} \mu n \delta g_2) dn\,\phi(\delta g_2) \tT(\delta g_2) dg_2
   \end{equation}
\[ =\int_{\sss \ol{B}(\Q) \bs \olG(\A)}
    \sum_{\sss \mu \in \ol{M}(\Q)}
   \int_{\sss N(\A)}  f(g_1^{-1} \mu n g_2)dn\,  \phi(g_2) \tT(g_2) dg_2 \]
\[ =\int_{\sss \ol{B}(\Q) N(\A) \bs \olG(\A)}
\int_{\sss N(\Q) \bs N(\A)}
    \sum_{\sss \mu \in \ol{M}(\Q)}
   \int_{\sss N(\A)}  f(g_1^{-1} \mu n n' g_2)dn\,  \phi(n' g_2) \tT(n' g_2) dn' dg_2 \]
\[ =\int_{\sss \ol{B}(\Q) N(\A) \bs \olG(\A)}
\int_{\sss N(\Q) \bs N(\A)}
    \sum_{\sss \mu \in \ol{M}(\Q)}
   \int_{\sss N(\A)}  f(g_1^{-1} \mu n  g_2)dn\,  \phi(n' g_2) \tT(g_2) dn' dg_2 \]
\[=\int_{\sss \ol{B}(\Q) N(\A) \bs \olG(\A)}
    \sum_{\sss \mu \in \ol{M}(\Q)}
   \int_{\sss N(\A)} f(g_1^{-1} \mu n  g_2)dn\,
   \left(\int_{\sss N(\Q) \bs N(\A)} \phi(n' g_2)dn' \right)\tT(g_2)  dg_2.\]
\comment{
 \[ \int_{\sss \olG(\Q) \bs \olG(\A)}
   \sum_{\sss \delta \in B(\Q) \bs G(\Q)}
   \sum_{\sss \mu \in \ol{M}(\Q)}
   \left( \int_{\sss N(\A)} f(g_1^{-1} \mu n \delta g_2) \phi(g_2) dn \right)
   \tT(\delta g_2) dg_2 \]
\[  = \int_{\sss \olG(\Q) \bs \olG(\A)}
    \sum_{\sss \delta \in B(\Q) \bs G(\Q)}
    \sum_{\sss \mu \in \ol{M}(\Q)}
    \left( \int_{\sss N(\A)} f(g_1^{-1} \mu n \delta g_2) \phi(\delta g_2) dn \right)
   \tT(\delta g_2) dg_2 \]
\[ = \int_{\sss \ol{B}(\Q) \bs \olG(\A)}
  \sum_{\sss \mu \in \ol{M}(\Q)}
   \left( \int_{\sss N(\A)} f(g_1^{-1} \mu n g_2) \phi(g_2) dn \right)
   \tT(g_2) dg_2 \]
\[  =  \int_{\sss \ol{B}(\Q) \bs \olG(\A)}
   \sum_{\sss \mu \in \ol{M}(\Q)}
   \left( \int_{\sss N(\A)} f(g_1^{-1} n \mu g_2) \phi(g_2) dn \right)
   \tT(g_2) dg_2 \]
\[  = \int_{\sss \ol{B}(\Q) \bs \olG(\A)}
   \left( \int_{\sss N(\Q) \bs N(\A)}
   \sum_{\sss \mu \in \ol{M}(\Q)}
   \sum_{\sss \eta \in N(\Q)} f(g_1^{-1} n \eta \mu g_2) \phi(g_2) dn \right)
   \tT(g_2) dg_2 \]
\begin{equation} \label{2abs}
   =  \int_{\sss \ol{B}(\Q) \bs \olG(\A)}
   \left( \int_{\sss N(\Q) \bs N(\A)}
   \sum_{\sss \gamma \in \ol{B}(\Q)} f(g_1^{-1} n \gamma g_2) \phi(g_2) dn \right)
   \tT(g_2) dg_2 
   \end{equation}
\[  = \int_{\sss \ol{B}(\Q) \bs \olG(\A)}
    \int_{\sss N(\Q) \bs N(\A)} \sum_{\sss \gamma \in \ol{B}(\Q)}
    f(g_1^{-1} n \gamma g_2) \phi(\gamma g_2)
   \tT(\g g_2) dn dg_2 \]
\[   = \int_{\sss N(\Q) \bs N(\A)}
   \int_{\sss \ol{B}(\Q) \bs \olG(\A)}
   \sum_{\sss \gamma \in \ol{B}(\Q)} f(g_1^{-1} n \gamma g_2) \phi(\gamma g_2)
   \tT(\g g_2) dg_2 dn\]
\[  = \int_{\sss N(\Q) \bs N(\A)}\int_{\sss \olG(\A)}
   f(g_1^{-1} n g_2) \phi(g_2)\tT(g_2) dg_2  dn
\]
\[= \int_{\sss N(\Q) \bs N(\A)}\int_{\sss  \olG(\A)}
     f(g^{-1} g_2) \phi(n^{-1} g_2) \tT(n^{-1} g_2) dg_2 dn \]
\[= \int_{\sss  \olG(\A)} \int_{\sss N(\Q) \bs N(\A)}
   f(g_1^{-1} g_2) \phi(n^{-1} g_2) \tT(g_2) dn dg_2 \]
\[= \int_{\sss  \olG(\A)} \left( \int_{\sss N(\Q) \bs N(\A)}  \phi(n^{-1} g_2)
    dn \right) f(g_1^{-1} g_2)\tT(g_2) dg_2 = 0.\]
}
    This vanishes because $\phi$ is cuspidal, and hence $T_{K^T} \phi = R(f) \phi$
  as needed.

It remains to prove the absolute convergence.
By Proposition \ref{KTabscon},
\[  \Phi_{g_1}(g_2)\eqdef \sum_{\sss \delta \in B(\Q) \bs G(\Q)} \sum_{\sss \mu \in \ol{M}(\Q)}
      \int_{\sss N(\A)} | f(g_1^{-1} \mu n \delta g_2)| dn \tT(\delta g_2) \]
  is convergent, and bounded on compact sets.   We will show that it is
  square-integrable (and in fact bounded) as a function of $g_2\in \olG(\Q)\bs\olG(\A)$.
  Partition the fundamental domain $\mathfrak F$ as
\begin{equation}\label{FT}
\mathfrak{F}=\mathfrak{F}_T\cup \widetilde{\mathfrak{F}}_T,
\end{equation}
where $\mathfrak{F}_T\index{notations}{F@$\mathfrak{F}_T,\ol{\mathfrak{F}}_T$} = \{g \in \mathfrak{F} \,|\, H(g) > T\}$, and 
$\widetilde{\mathfrak{F}}_T $ is its complement.
  Correspondingly, we set $\ol{\mathfrak{F}}=\ol{\mathfrak{F}}_T\cup
  \ol{\widetilde{\mathfrak{F}}}_T$.
  Clearly $\ol{\widetilde{\mathfrak{F}}}_T$ is compact.
  In particular, $\Phi_{g_1}$ is square integrable over $\ol{\widetilde{\mathfrak F}}_T$.

For $g_2 \in \mathfrak{F}_T$,
 $\tT(\delta g_2)\neq 0$ only for $\delta=1$ (see e.g. \cite{KL}, Lemma 17.1), so
\[ \Phi_{g_1}(g_2) = \sum_{\sss \mu \in \ol{M}(\Q)}
      \int_{\sss N(\A)} |f(g_1^{-1} \mu n  g_2)| dn\qquad(g_2\in \mathfrak{F}_T).\]
For $i=1,2$, write $g_i = \smat{1}{x_i}{}{1} \smat{y_i^{1/2}}{}{}{y_i^{-1/2}} r_{i}
  \times k_{i}$
  for $x_i \in \R, y_i > 0, r_{i} \in K_\infty$, and  $k_{i} \in K_{\fin}$. 
By Lemma \ref{mu},
\[\Phi_{g_1}(g_2) = \sum_{a|D, a>0,\atop{ad=D}} \int_{\sss N(\R)}|f_\infty(g_{1\infty}^{-1}
  \smat a{}{}dng_{2\infty})|dn
\int_{\sss N(\frac1D\Zhat)} |f_{\fin}(k_1^{-1} \smat a{}{}d n  k_2)| dn,\]
where $N(\frac1D\Zhat)=\{\smat 1t01|\, t\in \tfrac1D\Zhat\}$.
  The finite part is obviously bounded by $\meas(\tfrac1D\Zhat)=D$.
  For the infinite part, we refer ahead to the bound \eqref{Fhat0} 
  in the next section (the proof there for $f$ works just as well for 
  $|f|$), by which for any given $\e>0$,
\[ \Phi_{g_1}(g_2) \ll_{\e} \sum_{ad=D} \frac{(\frac{dy_1 y_2}a)^{\frac12}}
  {(\frac{ay_2}{dy_1}+\frac{dy_1}{ay_2}-1)^{\frac12+\e}}\ll\frac{(y_1y_2)^{\frac12}}
  {(\frac{y_2}{y_1}+\frac{y_1}{y_2}-1)^{\frac12+\e}} \ll_{g_1} 1
   \qquad(y_2>T).\]
It follows that $\Phi_{g_1}$ is square-integrable on the finite measure space
  $\ol{\mathfrak{F}}_T$, and hence
\[ \Phi_{g_1} \in L^2(\ol{\mathfrak{F}}),\text{ or equivalently, }\,\Phi_{g_1} \in L^2(\olG(\Q) \bs \olG(\A)). \]
 Therefore by Cauchy-Schwarz, for any $\phi \in L^2(\w)$,
\[ \IL{\sss\olG(\Q) \bs \olG(\A)} \Phi_{g_1}(g)|\phi(g)| dg
  \le \Bigl(\IL{\sss\olG(\Q) \bs \olG(\A)} \Phi_{g_1}(g)^2  dg\Bigr)^{1/2}
 \Bigl(\IL{\sss\olG(\Q) \bs \olG(\A)} |\phi(g)|^2 dg\Bigr)^{1/2} < \infty. \]
This proves that \eqref{2abs} is absolutely convergent.
\end{proof}

\comment{
 of the second term,
  or equivalently the absolute convergence of \eqref{2abs}.
\[\int_{\sss \ol{B}(\Q) \bs \olG(\A)}
   \left( \int_{\sss N(\Q) \bs N(\A)}
   \sum_{\sss \gamma \in \ol{B}(\Q)} |f(g_1^{-1} n \gamma g_2) \phi(g_2)| dn \right)
   \tT(g_2) dg_2. \]
   We can replace $N(\Q) \bs N(\A)$ by its fundamental domain $\{\mat 1t01\,|\, t \in [0,1) \times \Zhat\}$.
   Use the notation \eqref{gi} and the estimation of $\sum_{\sss \gamma \in \ol{B}(\Q)} |f(g_1^{-1} \gamma g_2)|$
   given in the proof of \ref{K2sic}. The above
\[ \ll
   \int_{\sss \ol{B}(\Q) \bs \olG(\A)}
   y_1^{-1/2} y_2^{-1/2} |\phi(g_2)| \tT(g_2) dg_2.     \]
 \[ \leq  \left( \int_{\sss \ol{B}(\Q) \bs \olG(\A)} y_1^{-1}y_2^{-1} \tT(g_2) dg_2 \right)
      \left( \int_{\sss \ol{B}(\Q) \bs \olG(\A)}  |\phi(g_2)|^2 \tT(g_2) dg_2 \right). \]
  A fundamental domain of $\ol{B}(\Q) \bs \olG(\A)$ can be given as
  \[ \mathcal{B}= \{ \mat{1}{x}{}{1} \mat{y^{1/2}}{}{}{y^{-1/2}} k \,|\, |x| \le 1/2, y > 0, k \in K \}. \]
  Thus
  \[ \int_{\sss \ol{B}(\Q) \bs \olG(\A)} y_1^{-1}y_2^{-1} \tau(H(g_2)-T) dg_2
    \leq \int_{T}^\infty \int_{-1/2}^{1/2} y^{-3} dx_2 dy_2 < \infty. \]
  Next, the set
  \[ \mathcal{B}' = \{ \mat{1}{x}{}{1} \mat{y^{1/2}}{}{}{y^{-1/2}} k \,|\, |x| < 1, y > 1, k \in K \} \]
  can be covered by a finite translation of $\mathfrak{F}$.
  And
   \[ \{ \mat{1}{x}{}{1} \mat{y^{1/2}}{}{}{y^{-1/2}} k \,|\, |x| \le 1, T \le  y \le 1, k \in K \} \]
   is a compact set (in fact it is empty if $T>1$, so we can assume $T>1$ as well)
   It can be covered by finite translation of $\mathcal{B}'$ and hence finite translation of $\mathfrak{F}$.
  Suppose
  \[ \mathcal{B}_T = \{ \mat{1}{x}{}{1} \mat{y^{1/2}}{}{}{y^{-1/2}} k \,|\, |x| \le 1/2, y > T, k \in K \} \]
  can be covered by $\gamma_1\mathfrak{F}, \ldots, \gamma_r \mathfrak{F}$, then
  \[  \int_{\sss \ol{B}(\Q) \bs \olG(\A)}  |\phi(g_2)|^2 \tT(g_2) dg_2
    \le \sum_{i=1}^r \int_{\gamma_i \mathfrak{F}} |\phi(g_2)^2|dg_2 = r\|\phi\|_{\sss \mathfrak{F}} < \infty. \]
  ({\bf Remark to Andy}: May be we can simply say that elements in $\mathcal{B}$ with $y > T$ can be covered by finite translation of $\mathfrak{F}$, without given the proof? )
\end{proof}
Let $T:V \rightarrow W$. Suppose $V'$ is $T$-invariant, then
   \[ \text{HS norm of $T|_{V'}$} \le \text{HS norm of $T$}. \]
    By Proposition \ref{KTsi} and the isomorphism of  $r: L^2(\w) \rightarrow L^2(\mathfrak{F},\w)$, we have

\begin{corollary} Suppose $V$ satisfies \eqref{Vsi}. 
  Then $R_0(f)$ is a Hilbert-Schmidt operator.
\end{corollary}
    \begin{proof} Because $L_0^2(\w)$ is a $R(f)$-invariant subspace of $L^2(\w)$,
    \[ \text{HS norm of $R_0(f)$}=\text{HS norm of $T_{K^T}|_{L_0^2(\w)}$} \le \text{HS norm of $T_{K_T}$}
    = \|K^T\|_{\sss \mathfrak{F} \times \mathfrak{F}},\]
and the latter is finite by \eqref{KTHS}.
    \end{proof}
}

\begin{corollary}[Theorem \ref{tc}]\label{tcpf}
Suppose $f=f_\infty\times f_{\fin}\in C_c^m(G(\A),\ol\w)$ for $m\ge 2$
  and $f_\infty$ bi-$K_\infty$-invariant.
    Then $R_0(f)$ is a Hilbert-Schmidt operator.
\end{corollary}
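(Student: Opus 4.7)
My plan is to deduce this corollary from Proposition \ref{Tcusp} by verifying that the latter's hypotheses are met by any $f$ satisfying those of the corollary. Three preparatory reductions simplify the task. First, since $G(\R)=G(\R)^+\sqcup G(\R)^-$ has both components clopen, decompose $f_\infty=f_\infty^++f_\infty^-$ by support; both summands preserve $C^m$-regularity and bi-$K_\infty$-invariance. By linearity of $R_0$ we may treat them separately. For $f_\infty^-$, note that $\sigma=\smat{1}{}{}{-1}$ normalizes $K_\infty$ (since $\sigma k_\theta\sigma^{-1}=k_{-\theta}$), so $\tilde f(g)=f_\infty^-(g\sigma)$ is a bi-$K_\infty$-invariant $C^m$ function supported on $G(\R)^+$. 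A direct substitution $y_\infty=y'_\infty\sigma$ in the defining integral, together with the fact that $\sigma$ (lying only at the archimedean place) commutes with $f_{\fin}$ (which lives at the finite places), yields $R_0(f_\infty^-\times f_{\fin})=R_0(\tilde f\times f_{\fin})\circ R_0(\sigma)$. Since $R_0(\sigma)$ is unitary, Hilbert-Schmidtness transfers. Second, since $f_{\fin}$ is locally constant with compact support mod $Z(\Af)$, it is a finite $\C$-linear combination of characteristic functions of double cosets $Z(\Af)K'\delta K'$ for some sufficiently small open compact $K'$, so by linearity we may assume $f_{\fin}$ has the form \eqref{ffsup}.

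The remaining hypotheses of Proposition \ref{Tcusp} can then be checked directly. The decay estimate \eqref{fbound} is immediate since $f_\infty$ has compact support modulo $Z(\R)$. Twice-continuous differentiability of $f_\infty$ is just $m\ge2$. By Propositions \ref{fV} and \ref{uydiff}, the associated function $V$ lies in $C^1_c([0,\infty))$ and is $C^2$ on $(0,\infty)$; hence $V$ and $V'$ are continuous and compactly supported, giving the first line of \eqref{Vsi}, while $V''$ is continuous on $(0,\infty)$ and vanishes outside a compact set, so the $V''$ bound in \eqref{Vsi} holds uniformly on every region $[\delta,\infty)$ and reduces to the behavior of $V''(u)$ as $u\to 0^+$.

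The main obstacle will be this pointwise bound on $V''$ near $u=0$. Parametrizing by $h(x)=f_\infty(\smat{e^{x/2}}{}{}{e^{-x/2}})=V(4\sinh^2(x/2))$, which is even and $C^2$ in $x$, one finds $h'(x)=V'(u)\sinh x$ and consequently $V''(u)=(h''(x)-V'(u)\cosh x)/\sinh^2 x$. Evenness of $h$ forces the numerator to vanish at $x=0$ (via the identity $V'(0)=h''(0)$), but with only $h\in C^2$ the vanishing is merely $o(1)$, giving the non-uniform estimate $V''(u)=o(u^{-1})$, which is not pointwise bounded. I expect to close this gap either by a finer analysis of the numerator using the modulus of continuity of $h''$, which yields $V''(u)\ll u^{-1/2}$ and hence $V''\in L^1_{\mathrm{loc}}$, combined with a careful reading of the proof of Proposition \ref{KTsi} to confirm that the $V''$-hypothesis is used only through integration-by-parts estimates where local integrability suffices, or else by a smoothing/averaging argument that replaces $f_\infty$ with an approximating sequence of smoother bi-$K_\infty$-invariant functions for which \eqref{Vsi} holds on the nose. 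Either way, once the hypotheses of Proposition \ref{Tcusp} are verified, the Hilbert-Schmidt conclusion follows immediately.
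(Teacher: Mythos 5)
You have correctly identified the genuine obstacle—the bound \eqref{Vsi} does not follow from $f_\infty\in C^2_c$, since $V''$ may blow up as $u\to 0^+$—and your preliminary reductions (the split by support into $G(\R)^{\pm}$ using the normalizer $\sigma=\smat{1}{}{}{-1}$, and the reduction of $f_{\fin}$ to the form \eqref{ffsup}) are sound. It is worth noting that the paper's own proof of this corollary simply asserts that $V$ ``trivially satisfies'' \eqref{Vsi}; as your analysis shows, this is not literally correct when $m=2$, since $C^1_c([0,\infty))\cap C^2((0,\infty))$ does not force a bounded second derivative near $0$. However, the remedy you sketch is flawed: a modulus-of-continuity argument does \emph{not} give $V''(u)\ll u^{-1/2}$. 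Writing $\omega$ for the modulus of continuity of $h''$, the numerator $h''(x)\sinh x - h'(x)\cosh x$ equals $\int_0^x\bigl(h''(x)-h''(t)\bigr)dt + O(x^3) = O(x\,\omega(x))$, so $V''(u)=O(\omega(\sqrt u)/u)$; for $h''$ merely continuous, $\omega$ can tend to zero arbitrarily slowly, so the sharp general estimate is $V''(u)=o(u^{-1})$, and your $u^{-1/2}$ bound would require $h''$ Lipschitz. The smoothing alternative, meanwhile, begs the question of Hilbert--Schmidt convergence along the approximating sequence.

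The correct repair is to observe that the proof of Proposition \ref{KTsi} uses $V''$ only through the bound on $\int_\R\bigl|2V'(A+t^2)+4t^2V''(A+t^2)\bigr|dt$, uniformly over $A>0$, in Lemma \ref{Fhat}. The inequality $4t^2|V''(A+t^2)|\le 4(A+t^2)|V''(A+t^2)|\le M:=\sup_{s>0}4s|V''(s)|$ shows the integrand is uniformly bounded, and the compact support of $V$ bounds the domain of integration uniformly, so the whole integral is $\le 4\sqrt{R}\,\|V'\|_\infty + 2\sqrt{R}\,M$ where $\Supp V\subset[0,R]$. The point is that $M<\infty$ \emph{does} follow from $m\ge 2$: the computation above gives $u|V''(u)|=O(\omega(\sqrt u))=o(1)$ as $u\to 0^+$, while on $[\delta,R]$ the quantity is continuous hence bounded, and it vanishes for $u>R$. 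Thus Proposition \ref{KTsi} (and hence Proposition \ref{Tcusp}) holds, with the same proof, under the weaker hypothesis ``$V\in C^1_c([0,\infty))\cap C^2((0,\infty))$ with $\sup_{u>0}u|V''(u)|<\infty$'' in place of the $V''$-clause of \eqref{Vsi}, and this weaker hypothesis is what $m\ge 2$ actually delivers.
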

\begin{proof} 
Because $m\ge 2$, $V$ is twice differentiable on the open interval $(0,\infty)$
(Proposition \ref{uydiff}).  Since it also has compact support, 
  it trivially satisfies \eqref{Vsi}.  Hence the result follows from
   Proposition \ref{Tcusp}.
\end{proof}

\begin{corollary}\label{genHS} Suppose $h$ satisfies \eqref{ht} with 
  $A>3$ and $B>4$. Let $f=f_\infty\times f_{\fin}$ for $f_\infty$ corresponding
  to $h$, and $f_{\fin}$ as described below \eqref{Vbound}.
  Then $R_0(f)$ is a Hilbert-Schmidt operator. 
\end{corollary}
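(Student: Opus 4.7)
The plan is straightforward: reduce to Proposition \ref{Tcusp} by verifying that $V$ (the function on $[0,\infty)$ attached to $f_\infty$) is twice continuously differentiable and satisfies the decay hypotheses \eqref{Vsi}, together with the preliminary bound \eqref{Vbound}. Since we have assumed $h$ satisfies \eqref{ht} with $A > 3$ and $B > 4$, we have $\min(B-2, A-1) > 2$, so Proposition \ref{Vd} applies with $\ell = 0, 1, 2$: it gives that $V$ has a continuous $\ell$-th derivative on $[0,\infty)$ for $\ell \le 2$, with
\[
V^{(\ell)}(u) \ll_\ell (1+u)^{-A - \ell - \frac12}.
\]

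From these bounds the verification is immediate. For $\ell = 0$: $V(u) \ll (1+u)^{-A - 1/2}$, and since $A + \tfrac12 > 1$, this gives $V(u) \ll (1+u)^{-1-\e}$ for some $\e > 0$ (e.g.\ $\e = A - \tfrac32 > 0$). Similarly $V'(u) \ll (1+u)^{-A-3/2}$ satisfies the $(1+u)^{-1-\e}$ bound, and $V''(u) \ll (1+u)^{-A-5/2}$ trivially satisfies the $(1+u)^{-3/2-\e}$ bound; a common $\e > 0$ can be chosen. Moreover, the same $\ell = 0$ bound gives \eqref{Vbound} with $\alpha = 2A + 1 > 2$, so the preliminary hypothesis at the beginning of \S 7.5 also holds. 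Because $V \in C^2([0,\infty))$, the associated bi-$K_\infty$-invariant function $f_\infty$ on $G(\R)^+$ defined via \eqref{Vf2} is twice continuously differentiable.

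With these ingredients in hand, the hypotheses of Proposition \ref{Tcusp} are met (the assumptions on $f_{\fin}$ being exactly those described at the start of \S 7.5, including \eqref{ffsup}, to which one reduces by linearity). Proposition \ref{Tcusp} then asserts that $R_0(f) = T_{K^T}|_{L^2_0(\w)}$, and Corollary \ref{HScor} gives that $T_{K^T}$ is Hilbert-Schmidt as an operator $L^2(\w) \to L^2(\mathfrak{F},\w) \cong L^2(\w)$. The Hilbert-Schmidt property is inherited by the restriction to any closed invariant subspace (since the Hilbert-Schmidt norm of a restriction is bounded by that of the whole operator), so $R_0(f)$ is Hilbert-Schmidt.

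There is no real obstacle here; the entire content of the corollary is that the decay rates produced by Proposition \ref{Vd} under the hypotheses $A > 3$, $B > 4$ are more than sufficient to feed into Proposition \ref{Tcusp}. The only minor point worth noting is that even though \eqref{ht} only assumes holomorphy and polynomial decay of $h$ in a strip (not compact support), the inverse Selberg transform $f_\infty$ inherits enough regularity and decay for the truncated kernel argument of \S 7.5 to go through; this is exactly the content of Proposition \ref{Vd}, which is why the corollary is stated as an immediate consequence.
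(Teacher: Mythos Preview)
Your proof is correct and follows essentially the same approach as the paper: invoke Proposition \ref{Vd} (with $\ell=0,1,2$, available since $\min(B-2,A-1)>2$) to verify that $V$ satisfies \eqref{Vsi} and \eqref{Vbound}, then apply Proposition \ref{Tcusp}. The paper's own proof is the one-line version of exactly this; your additional verification of the differentiability of $f_\infty$ and the explicit checking of the decay exponents are accurate elaborations of what the paper leaves implicit.
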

    \begin{proof} By Proposition \ref{Vd}, $V$ satisfies \eqref{Vsi}.  Therefore
  the result follows by Proposition \ref{Tcusp}.
 \end{proof}

\begin{proposition} \label{ell=12}
  Let $r_\ell=\frac1{(1+|t|)^\ell}$.  Then in the notation of 
  Proposition \ref{hmain2}, $\operatorname{Spec}_1^a(r_\ell)<\infty$ if $\ell>9$.
 \end{proposition}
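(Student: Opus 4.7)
The plan is to reduce the bound to an estimate of the form $\sum_j (1+|t_j|)^{-s} < \infty$ for $s > 8$, and then to produce such a bound via the Hilbert--Schmidt criterion in Corollary~\ref{genHS} applied to a carefully chosen test function.

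First I would reduce the problem just as in the proof of Proposition~\ref{ktfcusp}. Writing out $\operatorname{Spec}_1^a(r_\ell)$ explicitly, the Deligne/Kim--Sarnak-type bound on $|\lambda_\n(u_j)|$ (depending only on $\n$), the bound \eqref{fcbound}
\[
\frac{|a_{m_1}(u_j)\,\ol{a_{m_2}(u_j)}|}{\|u_j\|^2}\ll_{N,m_1,m_2}(1+|t_j|)\,e^{\pi|t_j|},
\]
together with $\cosh(\pi t_j)\gg e^{\pi|t_j|}$ for real $t_j$ (the finitely many exceptional $t_j$ with $|t_j|<\tfrac12$ from Corollary~\ref{disccor} being harmless), give
\[
\operatorname{Spec}_1^a(r_\ell)\;\ll\;\sum_j \frac{1+|t_j|}{(1+|t_j|)^{\ell}}\;=\;\sum_j (1+|t_j|)^{1-\ell}.
\]
It therefore suffices to prove $\sum_j (1+|t_j|)^{-s}<\infty$ for every $s>8$.

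Next I would construct a suitable test function. Fix $C>3$ and $m>4$, and set $h_m(t)=(C^2+t^2)^{-m/2}$. This $h_m$ is even, holomorphic in the strip $|\Im t|<C$ (its only poles are at $t=\pm iC$), and on any substrip $|\Im t|\le A$ with $A<C$ a routine calculation using $|C^2+t^2|^2=(C^2+x^2-y^2)^2+4x^2y^2$ with $y^2\le A^2<C^2$ shows $|C^2+t^2|\gg_{C,A} 1+|t|^2$, hence $|h_m(t)|\ll (1+|t|)^{-m}$. Thus $h_m$ satisfies \eqref{ht} with $A=C-\epsilon>3$ and $B=m>4$, so Corollary~\ref{genHS} applies to $f=f_\infty\times f^1$, where $f_\infty$ is the inverse Selberg transform of $h_m$ and $f^1$ is the adelic Hecke operator at $\n=1$. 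Consequently $R_0(f)$ is Hilbert--Schmidt.

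The conclusion then follows by computing the Hilbert--Schmidt norm spectrally. Choosing the cuspidal basis $\mathcal{F}_\A$ (normalized to be orthonormal) as in Proposition~\ref{diag}, each $\varphi_j\in\mathcal{F}_\A$ is an eigenvector of $R_0(f)$ with eigenvalue $h_m(t_j)\sqrt{1}\,\lambda_1(\varphi_j)=h_m(t_j)$, and $R_0(f)$ annihilates the orthogonal complement of $L^2_0(\w)^{K_\infty\times K_1(N)}$ by Lemma~\ref{basic}. Hence
\[
\|R_0(f)\|_{HS}^{2}\;=\;\sum_j |h_m(t_j)|^{2}\;=\;\sum_j (C^2+t_j^2)^{-m}\;\gg\;\sum_j (1+|t_j|)^{-2m}<\infty,
\]
using again that only finitely many $t_j$ are non-real. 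Taking $m=4+\delta$ with $0<\delta<(\ell-9)/2$ gives $\sum_j (1+|t_j|)^{-(8+2\delta)}<\infty$, which dominates $\sum_j(1+|t_j|)^{1-\ell}$ since $\ell-1\ge 8+2\delta$. This closes the argument. The only mildly delicate step is verifying the decay of $h_m$ uniformly on the strip $|\Im t|\le A$; everything else is a direct combination of results already established.
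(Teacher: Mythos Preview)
Your proof is correct and follows essentially the same approach as the paper: reduce via \eqref{cuspsum} to bounding $\sum_j(1+|t_j|)^{1-\ell}$, then choose a test function of the form $(C^2+t^2)^{-m/2}$ satisfying \eqref{ht} with $A>3$, $B>4$ so that Corollary~\ref{genHS} makes the associated $R_0(f)$ Hilbert--Schmidt, and read off finiteness of $\sum_j|h_m(t_j)|^2$. The paper simply takes $m=(\ell-1)/2$ directly (writing $h_0(t)=(4A^2+t^2)^{-(\ell-1)/4}$) rather than introducing $m$ as a free parameter and matching at the end, but the substance is identical.
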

\begin{proof}
(See also the remark after Proposition \ref{ktfcusp}.)
Given $\ell>9$, fix any $A> 3$, and let $h(t) = \tfrac1{(4A^2+t^2)^{\ell/2}}$.
  (The purpose of $4A^2$ is to ensure that $h$ is holomorphic on $|\Im(t)|<A$.)
  Let ${f}=f_\infty\times f^1$ for $f_\infty$ corresponding
   to ${h}_0(t) = (4A^2+t^2)^{-(\ell-1)/4}$ and $f^1$ the identity Hecke operator on 
  $G(\Af)$ (corresponding to $\n=1$).  Then $R_0(f)\varphi_{u_j}=h_0(t_j)\varphi_{u_j}$
  for all Maass cusp forms $u_j$.  It is not hard to show that ${h}_0$ 
  satisfies \eqref{ht} with $B=\tfrac{\ell-1}2>4$.
   By equation \eqref{cuspsum},
   \[ \operatorname{Spec}_1^a(r_\ell) \ll
   \operatorname{Spec}_1^a(h) \ll \sum_{u_j\in\mathcal F}
   \frac{(1+|t_j|)}{|4A^2+t_j^2|^{\ell/2}}
   \ll \sum_{u_j}  \frac{1}{|4A^2+t_j^2|^{(\ell-1)/2}} \]
   \[ = \sum_{u_j} |{h}_0(t_j)|^2
   = \|R_0(f)\|_{HS}^2 < \infty. \]
   The last step follows from Corollary \ref{genHS}.
\end{proof}


\subsection{Proof of Proposition \ref{KTsi}}

Here we assume that \eqref{Vsi} holds.  Set
  \begin{equation} \label{K1}
      K_1(g_1, g_2) = \sum_{\sss \gamma \not \in \ol{B}(\Q)} f(g_1^{-1} \gamma g_2),
  \end{equation}
and
   \begin{equation} \label{KT2}
\hskip -.2cm    K^T_2(g_1, g_2) =
   \sum_{\sss \gamma  \in \ol{B}(\Q)} f(g_1^{-1} \gamma g_2)
   -  \hskip -.2cm\sum_{\sss \delta \in B(\Q) \bs G(\Q)} \sum_{\sss \mu \in \ol{M}(\Q)}
      \left( \int_{N(\A)} f(g_1^{-1} \mu n \delta g_2) dn \right) \tT(\delta g_2).
   \end{equation}
   Then
   \[ K^T(g_1, g_2) = K_1(g_1,g_2) + K_2^T(g_1, g_2). \]
We will show that each of these terms is square integrable over $\ol{\mathfrak F}
  \times \ol{\mathfrak F}$.

For $g_i\in\SL_2(\R)\times K_{\fin}\subset G(\A)$, we write
  \begin{equation} \label{gi}
  g_i = \mat{1}{x_i}{}{1} \mat{y_i^{1/2}}{}{}{y_i^{-1/2}} r_i\times k_i,
  \end{equation}
  where $x_i \in \R, y_i > 0, r_{i} \in K_\infty, k_{i} \in K_{\fin}$. 
  Note that if $g_i \in \ol{\mathfrak{F}}$, then
   $x_i \in [-\tfrac12,\tfrac12]$ and $y_i \geq \frac{\sqrt{3}}{2}$.

\begin{lemma} Given $\alpha>2$ as in \eqref{fbound}, then with notation as above, 
  for $g_1, g_2 \in \mathfrak{F}$ we have
   \[ \sum_{\sss \gamma \not \in \ol{B}(\Q)} |f(g_1^{-1} \gamma g_2)| \ll_{\alpha}
    \frac{1}{y_1^{\alpha/2-1} y_2^{\alpha/2-1}} + \frac{1}{y_1^{\alpha/2} y_2^{\alpha/2-1}}. \]
   \end{lemma}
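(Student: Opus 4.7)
The plan is to adapt the arguments of \cite{KL}, Propositions 18.4 and 19.3, to track decay in both $y_1$ and $y_2$ simultaneously. First I would use the bi-$K_\infty$-invariance of $f_\infty$ to drop the factors $r_i \in K_\infty$ from \eqref{gi}, reducing to the case $g_{i\infty} = \smat{1}{x_i}{}{1}\smat{y_i^{1/2}}{}{}{y_i^{-1/2}}$. Because $|f_{\fin}(k_1^{-1}\gamma k_2)|$ is uniformly bounded and, by the same reasoning as in Lemma \ref{mu}, is nonzero only when $\gamma$ lies in finitely many cosets of a lattice in $\GL_2(\Q)$ whose determinants lie in a fixed $\Q^{*2}$-coset, it suffices to bound the archimedean sum over integer-entry $\gamma = \smat{a}{b}{c}{d}$ with $c \neq 0$ and $\det\gamma = D$ for a fixed positive integer $D$ determined by $\delta$.

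Next I would apply Proposition \ref{kV} and the bound \eqref{Vbound}, together with $\Im(\gamma z_2) = Dy_2/|cz_2+d|^2$, to obtain
\[
|f_\infty(g_1^{-1}\gamma g_2)| \ll \frac{(Dy_1y_2)^{\alpha/2}}{\bigl(|z_1(cz_2+d) - (az_2+b)|^2 + 4Dy_1y_2\bigr)^{\alpha/2}},
\]
where $z_i = x_i + iy_i$. Taking $c \geq 1$ as the canonical coset representative (the bound is central-invariant), I split the sum by $c$ and $d \in \Z$, and for fixed $(c, d)$ sum over $(a, b) \in \Z^2$ subject to $ad - bc = D$, so that $b$ is determined by $a$ provided $c \mid ad - D$. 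Writing $z_1(cz_2+d)-(az_2+b) = \xi + i\eta$, as $(a, b)$ range over $\Z^2$ the pair $(\xi, \eta)$ traces a shifted lattice $\Lambda_{c,d} \subset \R^2$ of covolume $y_2$, so a standard lattice-to-integral comparison yields, for $\alpha > 2$,
\[
\sum_{(\xi, \eta) \in \Lambda_{c,d}} \frac{1}{(\xi^2 + \eta^2 + 4Dy_1y_2)^{\alpha/2}} \ll_\alpha \frac{(Dy_1y_2)^{1-\alpha/2}}{y_2},
\]
plus a boundary term arising from the lattice point nearest the origin.

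Summing this bound first over $d \in \Z$ (which introduces additional $y_1$-scaled shifts in $\eta$) and then over $c \geq 1$ (both convergent for $\alpha > 2$), the main term produces the first piece $y_1^{-\alpha/2+1}y_2^{-\alpha/2+1}$ of the claimed estimate, while the boundary term, reflecting the regime where the lattice shift places a point close to the origin of $\R^2$, produces the secondary contribution $y_1^{-\alpha/2}y_2^{-\alpha/2+1}$. The hardest part will be making the lattice-to-integral comparisons rigorous and uniform in the shifts $x_1, x_2 \in [-\tfrac12, \tfrac12]$, and correctly identifying the two distinct regimes that produce the two terms of the bound. The asymmetry in $y_1$ versus $y_2$ in the secondary term stems from the ordering of the summations: the $(a,b)$-sum sees the $y_2$-scaled lattice, whereas the subsequent summation over $d$ introduces the $y_1$-scaled shifts, so the boundary correction picks up the sharper $y_1$ exponent.
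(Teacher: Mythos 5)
Your archimedean estimate (via Proposition \ref{kV} and \eqref{Vbound}) and the reduction to integer matrices of fixed positive determinant $D$ are sound, but the lattice count is wrong in a way that breaks the subsequent summations. For fixed $(c,d)$ with $c\neq 0$, the set of $(a,b)\in\Z^2$ satisfying $ad-bc=D$ is a \emph{one-dimensional} arithmetic progression (nonempty only if $\gcd(c,d)\mid D$), not a rank-two lattice, so the corresponding values of $z_1(cz_2+d)-(az_2+b)$ trace a one-dimensional shifted lattice in $\C$, not a two-dimensional one of covolume $y_2$. Your phrase ``as $(a,b)$ range over $\Z^2$'' quietly drops the determinant constraint. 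Beyond being dimensionally incorrect, the resulting estimate $\ll_\alpha (Dy_1y_2)^{1-\alpha/2}/y_2$ is independent of $d$ (the leading term of a shifted lattice-to-integral comparison is shift-invariant), so the proposed ``sum over $d\in\Z$'' of this main term diverges, and the argument cannot close.

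The correct structure keeps two genuinely independent one-dimensional lattice directions. Conjugating out the $K_\infty$-factors, the entries of $g_{1\infty}^{-1}\gamma g_{2\infty}$ are proportional to $A\sqrt{y_2/y_1}$, $B/\sqrt{y_1y_2}$, $c\sqrt{y_1y_2}$, and $\tilde{D}\sqrt{y_1/y_2}$, where $A=a-cx_1$, $\tilde{D}=cx_2+d$, and $B$ is then determined. Discarding the $B^2$ contribution in \eqref{fbound}, one sums the one-dimensional lattice in $A$ (spacing $1$) for each $\tilde{D}$, then the one-dimensional lattice in $\tilde{D}$ (spacing $1$), then over $c\ge 1$. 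The integral-by-integral contribution gives the first term $y_1^{1-\alpha/2}y_2^{1-\alpha/2}$, while the boundary term of the $A$-sum (the nearest lattice point to zero) followed by the integral estimate in $\tilde{D}$ and the $c$-sum gives the second term $y_1^{-\alpha/2}y_2^{1-\alpha/2}$; the $y_1$/$y_2$ asymmetry you intuited is indeed the asymmetry between the inverted $g_1$ on the left and $g_2$ on the right. For comparison, the paper's own proof is a one-line citation of Lemma 18.3 of \cite{KL} with $L_1=(y_2/y_1)^{1/2}$, $L_2=(y_1/y_2)^{1/2}$, $L_3=(y_1y_2)^{1/2}$, which carries out precisely this calculation; your attempt to make it self-contained is reasonable, but the one- versus two-dimensional count needs to be repaired before it can succeed.
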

    \begin{proof} In view of (18.7) of \cite{KL}, the result holds by
  (18.3) and (18.4) of \cite{KL} Lemma 18.3 with
  $C_1=\{g_1\}$, $C_2=\{g_2\}$, $L_1=(y_2/y_1)^{1/2}$, $L_2=(y_1/y_2)^{1/2}$,
  and $L_3=(y_1y_2)^{1/2}$.
   \end{proof}

\begin{proposition} \label{K1si} 
  \[\|K_1\|_{\sss {\mathfrak{F}} \times {\mathfrak{F}}}<\infty.\]
\end{proposition}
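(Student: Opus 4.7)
The plan is to integrate the preceding pointwise bound over $\ol{\mathfrak{F}}\times\ol{\mathfrak{F}}$ and check convergence. First I would square the estimate from the lemma: if we write $g_i$ in the form \eqref{gi}, then the lemma gives
\[ |K_1(g_1,g_2)|^2 \ll_{\alpha} \frac{1}{y_1^{\alpha-2}y_2^{\alpha-2}} + \frac{1}{y_1^{\alpha}y_2^{\alpha-2}}+\frac{1}{y_1^{\alpha-2}y_2^{\alpha}}+\frac{1}{y_1^{\alpha}y_2^{\alpha}}, \]
using $(A+B)^2 \le 2A^2+2B^2$ (including the symmetric contribution obtained by swapping $g_1$ and $g_2$, which is justified since the left-hand side can be bounded either by $(g_1,g_2)$ or $(g_2,g_1)$ in the lemma). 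Crucially, this bound depends only on $y_1,y_2$; the compact $K_\infty$ and $K_{\fin}$ factors contribute nothing.

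Next I would recall that on $G(\R)^+$ in Iwasawa coordinates $g=\smat 1x01\smat{y^{1/2}}{}{}{y^{-1/2}}k_\infty$, the Haar measure is a constant times $y^{-2}dx\,dy\,dk_\infty$, and the fundamental domain $F$ imposes $|x|\le \tfrac12$ and $y\ge \tfrac{\sqrt 3}{2}$. Since $f_{\fin}$ is bi-invariant under some open compact subgroup $K'$, the integral over $K_{\fin}$ contributes only a finite factor $\meas(K_{\fin})=1$, and likewise for $K_\infty$. Thus the double integral is majorized by
\[ \int_{\sqrt 3/2}^\infty\!\!\!\int_{\sqrt 3/2}^\infty\left(\frac{1}{y_1^{\alpha-2}y_2^{\alpha-2}} +\frac{1}{y_1^{\alpha}y_2^{\alpha-2}}+\frac{1}{y_1^{\alpha-2}y_2^{\alpha}}+\frac{1}{y_1^{\alpha}y_2^{\alpha}}\right)\frac{dy_1}{y_1^2}\frac{dy_2}{y_2^2}, \]
which separates into a product of one-variable integrals of the form $\int_{\sqrt 3/2}^\infty y^{-\beta}dy$ with $\beta\in\{\alpha,\alpha+2\}$. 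Each converges because $\alpha>2>1$.

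The main step really is just the preceding lemma (which is the GL(2) analogue of Arthur's geometric majorization); once that pointwise estimate is in hand, finiteness of the $L^2$-norm is a routine computation using the shape of the fundamental domain and the Iwasawa form of Haar measure. The only subtlety worth flagging is that the lemma is stated for $g_i\in\mathfrak F$ rather than for pairs in a generic orbit; I would either appeal to it as is (since $\mathfrak F$ is precisely our integration range) or invoke Lemma 18.3 of \cite{KL} to produce the symmetric estimate needed for squaring. No further technical obstacle is expected.
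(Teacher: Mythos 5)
Your proof is correct and takes essentially the same route as the paper: square the pointwise majorization from the preceding lemma and integrate over $\ol{\mathfrak F}\times\ol{\mathfrak F}$ in Iwasawa coordinates, reducing to convergent one-variable integrals $\int_{\sqrt 3/2}^\infty y^{-\beta}\,dy$ with $\beta>1$. The symmetrization you add is harmless but unnecessary, and the "subtlety" you flag at the end is not one — the lemma is stated precisely for $g_1,g_2\in\mathfrak F$, which is exactly the integration range.
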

\begin{proof}  The square 
  $\|K_1\|^2_{\sss {\mathfrak{F}} \times {\mathfrak{F}}}$
  of the $L^2$-norm is
\[   \int_{\sss \ol{\mathfrak{F}}} \int_{\sss \ol{\mathfrak{F}}}
  \left| \sum_{\sss \gamma \not \in \ol{B}(\Q)} f(g_1^{-1} \gamma g_2) \right|^2 dg_1 dg_2 
  \le \int_{\sss \ol{\mathfrak{F}}} \int_{\sss \ol{\mathfrak{F}}}
  \left( \sum_{\sss \gamma \not \in \ol{B}(\Q)} |f(g_1^{-1} \gamma g_2)| \right)^2 dg_1 dg_2.\]
   By the above lemma, the latter expression is
\[   \ll \int_{\frac{\sqrt{3}}2}^\infty \int_{-1/2}^{1/2}
   \int_{\frac{\sqrt{3}}2}^\infty  \int_{-1/2}^{1/2}  
    \left(\frac{1}{y_1^{\alpha/2-1} y_2^{\alpha/2-1}} + 
  \frac{1}{y_1^{\alpha/2} y_2^{\alpha/2-1}}\right)^2
\frac{dx_1 dy_1 dx_2 dy_2}{y_1^2\, y_2^2} \]
\[   \ll \int_{\frac{\sqrt{3}}2}^\infty \int_{\frac{\sqrt{3}}2}^\infty  
  \frac{dy_1 dy_2}{y_1^{\alpha}\, {y_2^{\alpha}}} < \infty.
\qedhere \]
\end{proof}

It remains to treat $K^T_2(g_1,g_2)$, for which $\g\in B(\Q)$.
  When $g_1,g_2\in \mathfrak{F}$, we can assume that $\det \g>0$, since otherwise
  $f_\infty$ vanishes.  Thus, for $\mu \in M(\Q)^+$ we 
define\index{notations}{Fm@$F_{\mu,g_1,g_2}$}
    \[ F_{\mu, g_1, g_2}(t) = f(g_1^{-1} \mu \mat 1t{}1 g_2) \qquad(t\in\A).\]
Given $g_1,g_2\in\mathfrak F$, we will require bounds for the Fourier transform
\begin{equation}\label{Fhatbound}
\widehat{F}_{\mu,g_1,g_2}(r) = \int_{\A}F_{\mu,g_1,g_2}(t)\theta(rt)dt
 \le D \widehat{F}_{\mu,g_1,g_2,\infty}(r_\infty).
\end{equation}
Here we have bounded the finite part by $D$
as in the proof of Proposition \ref{Tcusp}, and 
 $\widehat{F}_{\mu,g_1,g_2,\infty}(r_\infty)
=\int_{\R}f_\infty(g_{1\infty}^{-1}\mu\smat1t01g_{2\infty})e(-r_\infty t)dt$
 is the archimedean part.


\begin{lemma} \label{Fhat}
   Let $g_1, g_2\in \SL_2(\R)$ be of the form of \eqref{gi} (but of course
  with no $G(\Af)$ component),
    and let $\mu = \smat a{}{}d\in M(\Q)^+$. Suppose $V$ satisfies \eqref{Vsi} for 
  $u>0$.  Then
   \begin{equation} \label{Fhat0}
   \widehat{F}_{\mu,g_1, g_2, \infty}(0) \ll \sqrt{\frac{dy_1 y_2}a}
  \left(\frac{ay_2}{dy_1}+\frac{dy_1}{ay_2}-1\right)^{-\frac12-\e}. 
   \end{equation}
  If, in addition, $ay_2\neq dy_1$, then for real $r\neq 0$ we have
   \begin{equation} \label{Fhat1}
    \widehat{F}_{\mu, g_1,  g_2, \infty}(r) \ll r^{-2}\sqrt{\frac{a}{dy_1 y_2}}.
    \end{equation}
If $V$ satisfies \eqref{Vsi} also at the endpoint $u=0$, then \eqref{Fhat1} holds
  even when $ay_2=dy_1$.
\end{lemma}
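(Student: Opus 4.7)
The plan is to carry out a direct computation of $F_{\mu,g_1,g_2,\infty}(t)$ using the bi-$K_\infty$-invariance of $f_\infty$ and the identification $f_\infty(g)=V(\tfrac{\tr({}^tg\,g)}{\det g}-2)$ from \eqref{tt} and \eqref{Vf1}. Dropping the $K_\infty$ factors $r_1,r_2$ and multiplying out
\[
\mat{y_1^{-1/2}}{-x_1 y_1^{-1/2}}{0}{y_1^{1/2}}\mat{a}{at}{0}{d}\mat{y_2^{1/2}}{x_2 y_2^{-1/2}}{0}{y_2^{-1/2}}=\mat{a\sqrt{y_2/y_1}}{(ax_2+at-dx_1)/\sqrt{y_1y_2}}{0}{d\sqrt{y_1/y_2}},
\]
I obtain $F_{\mu,g_1,g_2,\infty}(t)=V\!\left(A_0+\tfrac{(at+ax_2-dx_1)^2}{ady_1y_2}\right)$, where
\[
A_0=\frac{ay_2}{dy_1}+\frac{dy_1}{ay_2}-2\ge 0,\qquad A_0=0\iff ay_2=dy_1.
\]
The substitution $s=at+ax_2-dx_1$ then yields
\[
\widehat{F}_{\mu,g_1,g_2,\infty}(r)=\frac{e^{2\pi i r(dx_1-ax_2)/a}}{a}\int_{\R}V\!\Bigl(A_0+\tfrac{s^2}{ady_1y_2}\Bigr)e(-rs/a)\,ds.
\]

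For $r=0$, a further substitution $s=\sqrt{ady_1y_2}\,v$ gives the prefactor $\sqrt{dy_1y_2/a}$ and reduces the problem to bounding $\int_{\R}V(A_0+v^2)\,dv$. Using the decay $V(u)\ll(1+u)^{-1-\e}$ from \eqref{Vsi} and the substitution $v=\sqrt{1+A_0}\,w$, this integral is $\ll(1+A_0)^{-1/2-\e}\int_{\R}(1+w^2)^{-1-\e}dw\ll(1+A_0)^{-1/2-\e}$. Since $1+A_0=\tfrac{ay_2}{dy_1}+\tfrac{dy_1}{ay_2}-1$, this gives \eqref{Fhat0}.

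For $r\ne 0$, the plan is to integrate by parts twice in $s$, picking up the factor $(a/r)^2$ responsible for the $r^{-2}$ decay. Setting $G(s)=V(A_0+s^2/E)$ with $E=ady_1y_2$, so that
\[
G''(s)=\frac{4s^2}{E^2}V''\!\Bigl(A_0+\tfrac{s^2}{E}\Bigr)+\frac{2}{E}V'\!\Bigl(A_0+\tfrac{s^2}{E}\Bigr),
\]
and using the bounds $V'(u)\ll(1+u)^{-1-\e}$ and $V''(u)\ll(1+u)^{-3/2-\e}$ from \eqref{Vsi}, the same change of variable $s=\sqrt{E(1+A_0)}\,v$ shows that each of the two pieces of $\int_{\R}|G''(s)|\,ds$ is $\ll E^{-1/2}(1+A_0)^{-\e}\ll E^{-1/2}$. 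The resulting estimate $|\widehat{F}_{\mu,g_1,g_2,\infty}(r)|\ll\frac{a}{r^2}\cdot\frac{1}{\sqrt{ady_1y_2}}=\frac{1}{r^2}\sqrt{\frac{a}{dy_1y_2}}$ is exactly \eqref{Fhat1}. The vanishing of boundary terms in the two integrations by parts is immediate from the decay of $V$ and $V'$ and the fact that $G$ and $G'$ are integrable.

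The only delicate point I foresee will be justifying the integration by parts when $ay_2=dy_1$, i.e. when $A_0=0$: in that case $G''$ is a pointwise expression involving $V''$ at values arbitrarily close to $u=0$, where $V$ is not assumed to be twice differentiable under the hypotheses of Proposition \ref{KTsi}. As long as $A_0>0$, however, the argument of $V$, $V'$, $V''$ stays bounded below by $A_0>0$, so the computation is unambiguously valid; this is exactly the content of the exception $ay_2\ne dy_1$ in the statement. If one further assumes that \eqref{Vsi} holds at $u=0$ as well, then $V\in C^2([0,\infty))$, and a standard approximation argument (or the dominated convergence theorem applied to integration by parts on $|s|\ge\delta$ and letting $\delta\to 0^+$) removes the exception, giving \eqref{Fhat1} without restriction on the relation between $ay_2$ and $dy_1$.
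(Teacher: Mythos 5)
Your proof is correct and follows essentially the same route as the paper: compute $F_{\mu,g_1,g_2,\infty}(t)=V\bigl(A_0+\tfrac{(at+ax_2-dx_1)^2}{ady_1y_2}\bigr)$ via \eqref{Vf2}, reduce the Fourier transform to $\int_\R V(A_0+\cdot)$ after a linear change of variable, estimate the $r=0$ case by a further rescaling, and for $r\neq 0$ use two integrations by parts (which is exactly the content of the paper's Proposition \ref{1911}) together with the bounds in \eqref{Vsi}. The only cosmetic differences are that you keep the un-rescaled variable $s$ and write out $G''$ explicitly rather than citing Proposition \ref{1911}, and there is an immaterial sign slip in the exponential prefactor $e^{\pm 2\pi i r(dx_1-ax_2)/a}$; your handling of the $A_0=0$ endpoint issue matches the paper's remark.
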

\begin{proof} We have
   \begin{align}\label{Vt2}
  \notag f_\infty(g_{1\infty}^{-1} \mu \mat 1t{}1 g_{2\infty})
&= f_\infty (\mat{y_1^{-1/2} y_2^{1/2}a}{y_1^{-1/2} y_2^{-1/2}(-dx_1 + ax_2 + at)}
  {0}{y_1^{1/2}y_2^{-1/2} d}) \\
   &= V(\frac{ay_2}{dy_1}  + \frac{dy_1}{ay_2} + 
  \frac{(-dx_1+ax_2+at)^2}{ady_1 y_2}-2).
\end{align}
    The Fourier transform is thus given by
  \[ \widehat{F}_{\mu, g_1, g_2,\infty}(r) =
     \int_\R V(\frac{ay_2}{dy_1}  + \frac{dy_1}{ay_2} + \frac{(-dx_1+ax_2+at)^2}{ady_1 y_2}-2) e(-r t) dt\]
  \[ = \int_\R V(\frac{ay_2}{dy_1}  + \frac{dy_1}{ay_2} + \frac{at^2}{dy_1 y_2}-2) e(-r (t-x_2+ \frac{dx_1}a)) dt\]
  \[ = e(r (x_2 -\frac{dx_1}a)) \int_\R V(\frac{ay_2}{dy_1}  + \frac{dy_1}{ay_2} + \frac{at^2}{dy_1 y_2}-2) e(-r t) dt\]
  \begin{equation}\label{Fhatcomp}
 = e(r (x_2 -\frac{dx_1}a)) \sqrt{\frac{dy_1 y_2}{a}}\int_\R V(A_{y_1,y_2} + t^2) 
  e(-r \sqrt{\tfrac{dy_1 y_2}{a}}t) dt,
\end{equation}
  where $A_{y_1,y_2} = \frac{ay_2}{dy_1}  + \frac{dy_1}{ay_2} - 2 \ge 0$.

If $r=0$, the estimate $V(u)\ll (1+u)^{-1-\e}$ (of \eqref{Vsi}) implies that
  $\widehat{F}_{\mu, g_1, g_2,\infty}(0)$ is
\[\ll  \sqrt{\frac{dy_1 y_2}{a}}\int_\R \frac{dt}{(1+A_{y_1,y_2} + t^2)^{1+\e}}
  =\frac{\sqrt{\frac{dy_1 y_2}{a}}}{(1+A_{y_1,y_2})^{1+\e}}
  \int_\R \frac{dt}{(1 + \frac{t^2}{1+A_{y_1,y_2}})^{1+\e}}\]
\[  ={\sqrt{\frac{dy_1 y_2}{a}}}\frac{(1+A_{y_1,y_2})^{1/2}}{(1+A_{y_1,y_2})^{1+\e}}
  \int_\R \frac{dt}{(1 + t^2)^{1+\e}}.\]
The estimate \eqref{Fhat0} follows.
If $r\neq0$, then by Proposition \ref{1911}, \eqref{Fhatcomp} is
\[ \ll  \sqrt{\frac{dy_1 y_2}{a}}\left(r\sqrt{\frac{dy_1y_2}a}\right)^{-2}
  \int_\R \left|\frac{d^2}{dt^2}V(A_{y_1,y_2} + t^2)\right|dt.\]
In order to prove \eqref{Fhat1}, it suffices to show that the above integral
  is bounded independently of $a,d,y_1,y_2$.  Using the bounds \eqref{Vsi}, we have
%
 \[ \int_\R \left| \frac{d^2}{dt^2} V(A_{y_1,y_2}+t^2) \right| dt
    = \int_\R |2V'(A_{y_1,y_2}+t^2) + 4t^2 V''(A_{y_1,y_2}+t^2)| dt \]
 \[ \ll \int_\R 2 (1+ A_{y_1,y_2}+t^2)^{-1-\e}dt + \int_\R 4t^2 (1+A_{y_1,y_2}+t^2)^{-3/2-\e}dt\]
 \[ \leq \int_\R 2 (1+t^2)^{-1-\e}dt + \int_\R 4t^2 (1+t^2)^{-3/2-\e}dt < \infty,\]
as needed.  Notice that when $ay_2\neq dy_1$, $A_{y_1,y_2}>0$ and the above
  involves only derivatives of $V$ on the open interval $(0,\infty)$.
\end{proof}

Recall the partition $\mathfrak{F}=\mathfrak{F}_T\cup \widetilde{\mathfrak{F}}_T$
  from \eqref{FT}.
It will be convenient to
  decompose the norm of $K_2^T$ accordingly as
\[ \|K_2^T\|_{\sss \mathfrak{F}\times\mathfrak{F}}=
  (\|K_2^T\|^2_{\sss \mathfrak{F}\times\mathfrak{F}_T}+
  \|K_2^T\|^2_{\sss \mathfrak{F}\times\widetilde{\mathfrak{F}}_T})^{1/2},\]
  and consider each piece separately.

\begin{proposition} \label{K2si} Under the hypotheses of Proposition \ref{KTsi},
\[ \|K_2^T\|_{\sss {\mathfrak{F}} \times {\mathfrak{F}}_T} < \infty. \]
\end{proposition}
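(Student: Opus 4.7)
The plan is to exploit Poisson summation to express $K_2^T(g_1,g_2)$ on $\mathfrak{F}\times\mathfrak{F}_T$ as a sum over nonzero Fourier coefficients, and then apply the bounds of Lemma \ref{Fhat}.

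First I would simplify $K_2^T$ on the region of interest. Since $g_2\in\mathfrak{F}_T$ means $H(g_2)>T$, Lemma 17.1 of \cite{KL} forces $\tT(\delta g_2)=0$ for every $\delta\in B(\Q)\bs G(\Q)$ except $\delta=1$ (for which $\tT(g_2)=1$). The first sum in \eqref{KT2} decomposes using $\ol{B}(\Q)=\ol{M}(\Q)N(\Q)$, so for $(g_1,g_2)\in\mathfrak{F}\times\mathfrak{F}_T$ we obtain
\[
K_2^T(g_1,g_2)=\sum_{\mu\in\ol{M}(\Q)}\Bigl[\sum_{n\in N(\Q)}f(g_1^{-1}\mu n g_2)-\int_{N(\A)}f(g_1^{-1}\mu n g_2)\,dn\Bigr].
\]
Identifying $N(\A)$ with $\A$ via $\smat 1t01\mapsto t$, the bracketed expression is $\sum_{t\in\Q}F_{\mu,g_1,g_2}(t)-\widehat{F}_{\mu,g_1,g_2}(0)$. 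By Poisson summation on $\Q\bs\A$, this equals $\sum_{r\in\Q,r\ne 0}\widehat{F}_{\mu,g_1,g_2}(r)$, giving
\[
K_2^T(g_1,g_2)=\sum_{\mu\in\ol{M}(\Q)}\sum_{r\in\Q,\,r\ne 0}\widehat{F}_{\mu,g_1,g_2}(r).
\]

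Next I would prune both sums. Lemma \ref{mu} restricts the $\mu$-sum to the finite set of $\smat a{}{}d$ with $a,d\in\Z^+$, $ad=D$. For the $r$-sum, factor $\widehat{F}_{\mu,g_1,g_2}=\widehat{F}_{\mu,g_1,g_2,\infty}\cdot\widehat{F}_{\mu,g_1,g_2,\fin}$; the finite piece is $\int_{\frac1D\Zhat}f_{\fin}(k_1^{-1}\mu\smat 1t01 k_2)\theta_{\fin}(rt)\,dt$, which vanishes unless $\theta_{\fin}$ is trivial on $r\cdot\frac1D\Zhat$, i.e.\ unless $r\in D\Zhat\cap\Q=D\Z$. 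In that case the finite piece is bounded by $D$. Applying \eqref{Fhat1} from Lemma \ref{Fhat} to the archimedean piece (the degenerate locus $ay_2=dy_1$ has measure zero and does not affect what follows), we get, for $(g_1,g_2)\in\mathfrak{F}\times\mathfrak{F}_T$ with $ay_2\ne dy_1$,
\[
|\widehat{F}_{\mu,g_1,g_2}(r)|\ll_D r^{-2}\sqrt{\tfrac{a}{dy_1y_2}}.
\]
Summing over $r\in D\Z\setminus\{0\}$ (which gives $O(1)$) and the finitely many $\mu$, I obtain the pointwise bound
\[
|K_2^T(g_1,g_2)|\ll (y_1y_2)^{-1/2}\qquad\text{for a.e. }(g_1,g_2)\in\mathfrak{F}\times\mathfrak{F}_T.
\]

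Finally, squaring and integrating against the Iwasawa measure $\frac{dx_1\,dy_1}{y_1^2}\frac{dx_2\,dy_2}{y_2^2}$ (with $x_i\in[-\tfrac12,\tfrac12]$, $y_1\ge\tfrac{\sqrt 3}{2}$, and $y_2\ge e^{T}$),
\[
\|K_2^T\|^2_{\mathfrak{F}\times\mathfrak{F}_T}\ll \int_{\sqrt3/2}^{\infty}\int_{e^T}^{\infty}\frac{dy_1\,dy_2}{y_1^{3}y_2^{3}}<\infty,
\]
as required. The main obstacle is the rigorous application of Poisson summation: $F_{\mu,g_1,g_2}$ is only $C^2$ (not Schwartz) on the archimedean factor, so I would justify it by first verifying, using \eqref{Fhat0} for $r=0$ and the quadratic decay \eqref{Fhat1} for $r\ne 0$, that both $\sum_{t\in\Q}|F_{\mu,g_1,g_2}(t)|$ and $\sum_{r\in\Q}|\widehat{F}_{\mu,g_1,g_2}(r)|$ converge, after which Poisson summation follows by the standard argument. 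The secondary technicality is bounding the $\mu$-sum uniformly on the measure-zero locus $ay_2=dy_1$, which one avoids by simply excluding it from the integration.
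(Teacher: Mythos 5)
Your proposal follows essentially the same route as the paper: use Lemma 17.1 of [KL] to kill all $\delta\neq 1$ on $\mathfrak{F}_T$, apply Poisson summation in the $N(\Q)\bs N(\A)$ variable to collapse $K_2^T$ to a sum over nonzero Fourier coefficients $\widehat{F}_{\mu,g_1,g_2}(r)$, restrict the $\mu$-sum via Lemma \ref{mu}, bound each $\widehat{F}$ via \eqref{Fhat1}, and integrate the resulting $(y_1y_2)^{-1/2}$ over $\ol{\mathfrak{F}}\times\ol{\mathfrak{F}}_T$.

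One step is reasoned incorrectly, though the damage is cosmetic. You claim the finite-part Fourier transform $\int_{\frac1D\Zhat}f_{\fin}(k_1^{-1}\mu\smat 1t01 k_2)\theta_{\fin}(rt)\,dt$ vanishes unless $\theta_{\fin}(r\,\cdot)$ is trivial on $\frac1D\Zhat$, i.e.\ unless $r\in D\Z$. That deduction would be valid only if the function $t\mapsto f_{\fin}(k_1^{-1}\mu\smat 1t01 k_2)$ were \emph{constant} on $\frac1D\Zhat$; we only know it is \emph{supported} there. Pontryagin duality runs the other way: support of $g$ in $\frac1D\Zhat$ yields invariance of $\widehat{g}$ under $D\Zhat$, while the \emph{support} of $\widehat{g}$ is controlled by the local constancy of $g$ (the level of the open compact $K'$ fixing $f_{\fin}$), not by the support of $g$. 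The paper's version of this step is the correct one: $F_{\mu,g_1,g_2,\fin}$ is Schwartz--Bruhat, hence so is its Fourier transform, hence the latter is supported in $\tfrac1M\Zhat$ for \emph{some} integer $M>0$. The nonzero $r$'s that survive then lie in $\tfrac1M\Z\setminus\{0\}$, and $\sum_{r\in\frac1M\Z\setminus\{0\}}r^{-2}=O(1)$ exactly as you wanted, so your final bound $|K_2^T(g_1,g_2)|\ll(y_1y_2)^{-1/2}$ a.e.\ on $\mathfrak{F}\times\mathfrak{F}_T$ and the ensuing convergence of $\int\int\frac{dy_1\,dy_2}{y_1^3y_2^3}$ stand.
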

\begin{proof}
    Suppose $g_2 \in \mathfrak{F}_T$.  Then
  $\tT(\delta g_2) \neq 0$ only if $\delta = 1$ (cf. Lemma 17.1 of \cite{KL}).
  Hence
    \[ K_2^T(g_1, g_2) = \sum_{\sss \gamma  \in \ol{B}(\Q)} f(g_1^{-1} \gamma g_2)
       -   \sum_{\sss \mu \in \ol{M}(\Q)}
       \int_{\sss N(\A)} f(g_1^{-1} \mu n g_2) dn  \]
    \[ =  \sum_{\sss \mu \in \ol{M}(\Q)}
      \left( \sum_{\sss \eta \in N(\Q)} f(g_1^{-1} \mu \eta  g_2) - \int_{\sss N(\A)} f(g_1^{-1} \mu n  g_2) dn \right) . \]
    The rearrangement is justified by Proposition \ref{KTabscon}.

We would like to apply the Poisson summation formula to the sum over $\eta$.
  To justify this, note that by \eqref{Vt2} and \eqref{Vsi},
   $F_{\mu,g_1,g_2,\infty}(t)\ll t^{-2}$, while
  $F_{\mu,g_1,g_2,\fin}$ is a Schwartz-Bruhat function on $\Af$ (see the proof of
  Proposition 19.10 of \cite{KL}).
  On the other hand, write $\mu=\smat a{}{}d$, take $g_1, g_2$ in the form of \eqref{gi},
  and suppose $ay_2\neq dy_1$.  Then by the above lemma,
  $\widehat{F}_{\mu,g_1, g_2,\infty}(t) \ll t^{-2}$ for $t \neq 0$.
  Hence by \cite{KL} Theorem 8.17, the adelic Poisson summation formula can be applied 
  to the global function $F_{\mu, g_1, g_2}$.

  Therefore for fixed $g_1$, using Lemma \ref{mu}
  we see that for {almost all} $g_2\in \mathfrak{F}_T$,
  \[K_2^T(g_1,g_2)=\sum_{\mu=\operatorname{diag}(a,d)\atop{ad=D,a>0}}
  \sum_{t\in \Q^*}\widehat{F}_{\mu,g_1,g_2}(t).\]
  (Poisson sumation may fail to hold
  on the set of $g_2$ with $y_2\in \{\tfrac{dy_1}a|ad=D\}$, but this is of measure $0$.)
  Because
   ${F}_{\mu,g_1,g_2,\fin}$ is a Schwartz-Bruhat function,
   $\widehat{F}_{\mu,g_1,g_2,\fin}$ is as well (cf. \cite{KL}, Proposition
  8.13).  Therefore its support is contained in $\tfrac1M\Zhat$ for some
  integer $M>0$.
 By \eqref{Fhatbound} and \eqref{Fhat1},
\begin{equation}\label{tb} 
\sum_{\mu=\operatorname{diag}(a,d)\atop{ad=D,a>0}}
  \sum_{t\in \Q^*}\widehat{F}_{\mu,g_1,g_2}(t)
  \ll\sum_{\mu=\operatorname{diag}(a,d)\atop{ad=D,a>0}}
  \sum_{t\in \frac1M\Z-\{0\}}t^{-2}\sqrt{\frac a{dy_1y_2}} \ll 
  \frac1{\sqrt{y_1y_2}}
\end{equation}
 for a.e. $g_2\in\mathfrak{F}_T$.
    Therefore
   \[ \|K_2^T\|_{\sss {\mathfrak{F}} \times {\mathfrak{F}}_T}^2
    = \int_{\ol{\mathfrak{F}}}  \int_{\ol{\mathfrak{F}}_T} |K^T_2(g_1, g_2)|^2dg_2 dg_1 
     \ll  \int_{\frac{\sqrt3}2}^{\infty} \int_{-\frac 12}^{\frac 12}
   \int_{T}^\infty \int_{-\frac 12}^{\frac 12}
    \frac{dx_2 dy_2 dx_1 dy_1}{y_1^3 y_2^3},\]
which is clearly finite.
\end{proof}

\begin{lemma}\label{Fhat0int} Let $C=C_\infty\times C_{\fin}$ be a compact 
  subset of $\olG(\A)$.
  Then for any $\mu\in M(\Q)$,
\[\int_C\int_{\ol{\mathfrak{F}}}|\widehat{F}_{\mu,g_1,g_2}(0)|^2
  dg_1dg_2<\infty.\]
\end{lemma}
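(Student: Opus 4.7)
The plan is to factor the Fourier transform into its archimedean and nonarchimedean components, bound each uniformly on the given domain, and then integrate. First, observe that if $\mu=\smat a{}{}d$ has $ad<0$, then since $f_\infty$ is supported on $G(\R)^+$ while $g_{1\infty}\in F\subset \SL_2(\R)$ has positive determinant (and a suitable lift of $g_{2\infty}$ also does), the integrand $f_\infty(g_{1\infty}^{-1}\mu\smat 1t01 g_{2\infty})$ vanishes identically in $t$, so $\widehat{F}_{\mu,g_1,g_2,\infty}(0)=0$ and the conclusion is trivial. Using the transformation property of $f$ under $Z(\A)$, we may therefore assume $a,d>0$.

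Writing $g_1=g_{1\infty}\times k_1$ with $k_1\in K_{\fin}$ and $g_2=g_{2\infty}\times g_{2,\fin}$ with $g_{2,\fin}\in C_{\fin}$, the integral factors as
\[\widehat{F}_{\mu,g_1,g_2}(0)=\widehat{F}_{\mu,g_{1\infty},g_{2\infty},\infty}(0)\cdot\widehat{F}_{\mu,k_1,g_{2,\fin},\fin}(0).\]
For the finite part, since $f_{\fin}$ is bounded with support in $Z(\Af)K'\delta K'$ (compact modulo $Z(\Af)$), and $k_1\in K_{\fin}$, $g_{2,\fin}\in C_{\fin}$ range over compact sets, the locus of $t_{\fin}\in\Af$ where $f_{\fin}(k_1^{-1}\mu\smat 1{t_{\fin}}01 g_{2,\fin})\neq 0$ is confined to a bounded subset of $\Af$ whose measure is controlled uniformly. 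Hence there is a constant $M_1=M_1(\mu,f_{\fin},C_{\fin})$ with $|\widehat{F}_{\mu,k_1,g_{2,\fin},\fin}(0)|\le M_1$ for all admissible $k_1,g_{2,\fin}$.

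For the archimedean part, write $g_{1\infty}=\smat 1{x_1}01\smat{y_1^{1/2}}{}{}{y_1^{-1/2}}r_1$ with $|x_1|\le\tfrac12$, $y_1\ge\tfrac{\sqrt3}2$, $r_1\in K_\infty$, and similarly $g_{2\infty}$; compactness of $C_\infty\subset\olG(\R)$ gives constants $0<c_0\le y_2\le M$ and $|x_2|\le M$ uniformly in $g_{2\infty}\in C_\infty$. Applying the bound \eqref{Fhat0} of Lemma \ref{Fhat},
\[|\widehat{F}_{\mu,g_{1\infty},g_{2\infty},\infty}(0)|\ll\sqrt{\tfrac{dy_1y_2}{a}}\left(\tfrac{ay_2}{dy_1}+\tfrac{dy_1}{ay_2}-1\right)^{-\frac12-\e}.\]
Setting $u=ay_2/(dy_1)$, the elementary inequality $u+u^{-1}-1\ge 1$ for $u>0$ makes the second factor bounded above by $1$; more precisely, as $y_1\to\infty$ we have $u\to 0$ and the second factor behaves like $u^{1/2+\e}$, so that the whole expression is asymptotic to $d^{-\e}a^{\e}y_2^{1+\e}y_1^{-\e}$. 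Combined with uniform boundedness for $y_1$ in any compact subinterval of $[\tfrac{\sqrt3}2,\infty)$, this yields
\[|\widehat{F}_{\mu,g_1,g_2}(0)|^2\ll_{\mu,C,\e}y_1^{-2\e}\]
uniformly in the remaining variables.

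Integrating this bound over $\ol{\mathfrak F}=\{g_{1\infty}\in F\}\times K_{\fin}$ with the measure $\frac{dx_1\,dy_1}{y_1^2}\,dr_1\,dk_1$, and over the compact set $C$ of finite measure, we obtain
\[\int_C\int_{\ol{\mathfrak F}}|\widehat{F}_{\mu,g_1,g_2}(0)|^2\,dg_1\,dg_2\ll_{\mu,C,\e}\meas(C)\meas(K_\infty)\meas(K_{\fin})\int_{\sqrt3/2}^\infty y_1^{-2-2\e}\,dy_1<\infty.\]
The main technical hurdle is verifying the archimedean bound uniformly as $g_{2\infty}$ varies in the compact set $C_\infty$; the key point is that $u+u^{-1}-1\ge 1$ prevents any singularity at the locus $ay_2=dy_1$, so the only nontrivial behavior comes from the decay as $y_1\to\infty$, which supplies the crucial factor $y_1^{-\e}$ needed to beat the measure $y_1^{-2}\,dy_1$ on the fundamental domain.
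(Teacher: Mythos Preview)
Your proof is correct and follows essentially the same approach as the paper: factor into archimedean and finite parts, bound the finite part uniformly via the compact support of $f_{\fin}$, apply the bound \eqref{Fhat0} for the archimedean part, and integrate. The only cosmetic difference is that you first simplify the archimedean estimate to $|\widehat F|^2\ll y_1^{-2\e}$ before integrating, whereas the paper keeps the full expression $\frac{y_1y_2}{(v+v^{-1})^{1+\e}}$ and makes the substitution $y=dy_1/(ay_2)$ to reduce to $\int(y+y^{-1})^{-1-\e}\tfrac{dy}{y}<\infty$; either route works. One small remark: your closing sentence slightly overstates the role of the $y_1^{-\e}$ decay, since the measure $y_1^{-2}\,dy_1$ is already integrable on $[\sqrt3/2,\infty)$ --- what is genuinely needed is that the naive factor $\sqrt{y_1}$ in \eqref{Fhat0} be cancelled (so that $|\widehat F|^2$ does not grow), and the factor $(u+u^{-1}-1)^{-1/2-\e}$ does exactly this.
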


\begin{proof}
The above integral factorizes as
\begin{equation}\label{2di} 
\int_{C_\infty}\int_{F}|\widehat{{F}}_{\mu,g_1,g_2,\infty}(0)|^2dg_{1\infty}dg_{2\infty}
\int_{C_{\fin}}\int_{K_{\fin}}|\widehat{{F}}_{\mu,k,g_2,\fin}(0)|^2dk\,dg_{2\!\fin},
\end{equation}
where $F$ is the archimedean part of $\ol{\mathfrak{F}}$, defined on page \pageref{Fdef}.
Observe that by \eqref{ffsup}, $f_{\fin}(k_1^{-1}\mu\smat1t01g_{2\!\fin})\neq 0$ only if
\[ \mat 1t01\in
  Z(\Af)\mu^{-1}K_{\fin}\delta K_{\fin}C_{\fin}^{-1}.\]
Since $\mu^{-1}K_{\fin}\delta K_{\fin}C_{\fin}^{-1}$ is compact, taking 
  the determinant of both sides we see that the $Z(\Af)$-part of $\smat 1t01$
  is also restricted to a compact set, i.e.
\[ \mat 1t01\in
  Z_0\,\mu^{-1}K_{\fin}\delta K_{\fin}C_{\fin}^{-1}.\]
for some compact subset $Z_0\subset Z(\Af)$.  The above set is compact, so
  it follows that
  $t$ is restricted to some compact subset $B\subset \Af$, and hence
\[|\widehat{F}_{\mu,k_1,g_2,\fin}(0)|\le \int_{B}|f_{\fin}(k_1^{-1}\mu\smat1t01g_2)|dt
  \le \meas(B).\]
Therefore, the non-archimedean double integral in \eqref{2di} is finite.

For the infinite part, without loss of generality we can assume that $C_\infty\subset
  \SL_2(\R)$, so it consists of elements $\smat1x01\smat{\sqrt y}{}{}{\sqrt{y}^{-1}}k_\infty$
  with $-L\le x\le L$ and $0<T_1\le y\le T_2$ for some constants $L,T_1,T_2$.
  By \eqref{Fhat0},
\[|\widehat{F}_{\mu,g_1, g_2,\infty}(0)|^2 \ll_{\e} \frac{dy_1 y_2}
  {a(\frac{dy_1}{ay_2}+\frac{ay_2}{dy_1}-1)^{1+\e}}
\ll \frac{y_1 y_2}
  {(\frac{dy_1}{ay_2}+\frac{ay_2}{dy_1})^{1+\e}},\]
 where the latter bound holds by the fact that $\frac{dy_1}{ay_2}+\frac{ay_2}{dy_1}\ge 2$.
Hence 
\[\int_{C_\infty}\int_{F}|\widehat{F}_{g_1,g_2,\mu,\infty}(0)|^2
  dg_{1\infty}dg_{2\infty}\]
\[\ll \int_{-L}^L \int_{-1/2}^{1/2} \int_{T_1}^{T_2}
\int_{\frac{\sqrt3}2}^\infty \frac{y_1 y_2}
  {(\frac{dy_1}{ay_2}+\frac{ay_2}{dy_1})^{1+\e}}\frac{dy_1dy_2}{y_1^2y_2^2}dx_1dx_2\]
\[\ll\int_{T_1}^{T_2} \left(\int_{\frac{\sqrt3}2}^\infty 
  \frac1{(\frac{dy_1}{ay_2}+\frac{ay_2}{dy_1})^{1+\e}}\frac{dy_1}{y_1}\right)\frac{dy_2}{y_2}
=\int_{T_1}^{T_2} \left(\int_{\frac{d\sqrt3}{2ay_2}}^\infty 
  \frac1{(y+y^{-1})^{1+\e}}\frac{dy}{y}\right)\frac{dy_2}{y_2}.\]
The inner integral is absolutely convergent, and defines a continuous function
  of $y_2$.  Therefore the outer integral converges as well.
\end{proof}

\begin{lemma}\label{mudel}
Given $\delta\in B(\Q)\bs G(\Q)$, there exists a finite subset $A_\delta\subset
  \ol{M}(\Q)$ such that
$\widehat{F}_{\mu,g_1,\delta g_2}(0)$
  is identically $0$ as a function of $(g_1,g_2)\in \mathfrak F\times
  \mathfrak F$ for all $\mu\in \ol{M}(\Q)$ which are not in $A_\delta$.
\end{lemma}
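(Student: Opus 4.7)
The plan is to extract the required finiteness purely from the support condition on $f_{\fin}$. Since $\widehat{F}_{\mu,g_1,\delta g_2}(0)$ factors as a product of archimedean and non-archimedean integrals, if it is not identically zero for some $(g_1,g_2)\in\mathfrak{F}\times\mathfrak{F}$, then in particular its finite part must be nonzero for some $g_{1,\fin},g_{2,\fin}\in K_{\fin}$ and some $t_{\fin}\in\Af$. Writing $\delta_0$ for the element of $M_2(\Zhat)$ appearing in \eqref{ffsup}, this forces
\[
\mu\,\smat{1}{t_{\fin}}{0}{1}\,\delta \;\in\; K_{\fin}\,Z(\Af)\,K'\delta_0 K'\, K_{\fin} \;=\; Z(\Af)\,S,
\]
where $S=K_{\fin}\delta_0 K_{\fin}$ is compact in $G(\Af)$. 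Hence $\mu\,N(\Af)\,\delta$ meets the set $Z(\Af)S$.

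Passing to $\olG(\Af)$, the condition becomes $\ol{\mu}\in\pi(\,\ol{S\delta^{-1}}\cap \ol{B(\Af)}\,)$, where $\pi:\ol{B(\Af)}\to\ol{M(\Af)}$ is the natural projection (quotient by $\ol{N(\Af)}$). First I would observe that $\ol{B(\Af)}$ is closed in $\olG(\Af)$, so $\ol{S\delta^{-1}}\cap\ol{B(\Af)}$ is compact, and its image $\mathcal{C}_\delta\subset\ol{M(\Af)}$ under the continuous map $\pi$ is compact as well. Therefore every admissible $\ol{\mu}$ lies in the compact subset $\mathcal{C}_\delta$ of $\ol{M(\Af)}\cong \Af^*$.

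The key remaining step, and the substantive one, is to show that $\ol{M(\Q)}\cap \mathcal{C}_\delta$ is finite. This is where I would use the fact that $\Q$ has class number one: since $\Af^*/\Zhat^*\cong\bigoplus_p\Z$ is discrete, the compact set $\mathcal{C}_\delta$ has finite image in $\Af^*/\Zhat^*$, say represented by rationals $r_1,\ldots,r_n$, so $\mathcal{C}_\delta\subset\bigcup_i r_i\Zhat^*$. Intersecting with $\Q^*$ and using $\Q^*\cap\Zhat^*=\{\pm1\}$ produces at most $2n$ rational values. Taking $A_\delta$ to be the (finite) resulting set of classes in $\ol{M(\Q)}$ completes the proof.

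The main obstacle is strictly conceptual rather than technical: one must be careful that $\ol{M(\Q)}$ is \emph{not} discrete in $\ol{M(\Af)}$, so compactness of $\mathcal{C}_\delta$ alone does not give finiteness. The class-number-one argument of the previous paragraph is what makes the intersection with $\ol{M(\Q)}$ genuinely finite. The archimedean behavior of $f_\infty$ and the non-compactness of $\mathfrak{F}$ play no role in the argument, since the required finiteness comes entirely from the compactness of $\Supp(f_{\fin})$ modulo $Z(\Af)$.
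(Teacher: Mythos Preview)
Your argument is correct and takes a genuinely different route from the paper's. The paper proceeds concretely: it fixes representatives for $\delta$ via the Bruhat decomposition, namely $\delta=1$ (handled by Lemma~\ref{mu}) or $\delta=\smat011r$, and then does an explicit matrix computation. Writing $\mu=\smat a{}{}d$, the support condition \eqref{ffsup} forces $ad=\pm D$ (after scaling) and $\smat a{}{}d\smat1t01\smat011r\in M_2(\Zhat)$; reading off the entries yields $d\in\Z$ and $a\in(r\Zhat+\Zhat)\cap\Q=\tfrac1\beta\Z$ where $r=\alpha/\beta$ in lowest terms, whence $d\mid\beta D$, a finite condition.

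Your approach trades this calculation for a compactness argument: the admissible $\ol\mu$ lie in the image under $\ol B(\Af)\to\ol M(\Af)\cong\Af^*$ of a compact set, and you then invoke class number one via the discreteness of $\Af^*/\Zhat^*$ and $\Q^*\cap\Zhat^*=\{\pm1\}$ to intersect with $\ol M(\Q)$. This is more conceptual and would port directly to other settings (e.g.\ more general $f_{\fin}$, or other rank-one groups), whereas the paper's version gives an explicit description of $A_\delta$ in terms of the divisors of $\beta D$. Both arguments ultimately rest on the same two inputs---compactness of $\Supp(f_{\fin})$ modulo the center, and the structure of $\Q^*$ inside $\Af^*$---but the paper unpacks them by hand while you package them topologically.
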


\begin{proof}
Write $g_{i\fin}=k_i\in K_{\fin}$.  The lemma follows by looking at the finite part
\[\widehat{F}_{\mu,g_1,\delta g_2,\fin}(0)=\int_{\Af}
  f_{\fin}(k_1^{-1}\smat a{}{}d\smat1t01\delta k_2)dt.\]
By the Bruhat decomposition $G(\Q)=B(\Q)\cup B(\Q)\smat{}11{}N(\Q)$, we can take
\[\delta\in \{1\}\cup\{\smat 011r|r\in\Q\}.\]
When $\delta=1$, the assertion follows from 
  Lemma \ref{mu}.  Hence, we may suppose that $\delta=\smat011r$.
  Suppose 
\[  f_{\fin}(k_1^{-1}\smat a{}{}d\smat1t01\smat 011rk_2)\neq 0.\]
  Taking the determinant and arguing as in the
 proof of Proposition \ref{mu}, we can assume that $a>0$, $ad=\pm D$, and
\[ \mat a{}{}d\mat1t01\mat 011r=\mat{at}{a+(at)r}{d}{dr}
  \in M_2(\Zhat).\]
In particular, $d\in\Z$ and $at\in \Zhat$.
   From the fact that the upper right-hand entry also belongs to $\Zhat$,
   it then follows that
\[\frac Dd=\pm a\in 
   (r\Zhat+\Zhat)\cap \Q = \frac1\beta\Z\]
if $r=\tfrac\alpha\beta$ for $\alpha,\beta\in\Z$ relatively prime.
   It follows that $d|{\beta}D$.   In particular, the set of such $d$ is finite.
\end{proof}

\begin{proposition} \label{K2sic} 
  \[ \|K_2^T\|_{\sss {\mathfrak{F}} \times {\widetilde{\mathfrak{F}}}_T} < \infty. \]
\end{proposition}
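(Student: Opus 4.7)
The plan is to rewrite $K_2^T(g_1,g_2)$ using Poisson summation in the $N$-variable, exploiting the fact that $\ol{\widetilde{\mathfrak{F}}}_T$ is compact. To see compactness, the height restriction $H(g_2)\le T$ combined with $g_{2,\infty}\in F$ bounds $y_2\in[\sqrt3/2, e^T]$ and $x_2\in[-\tfrac12,\tfrac12]$, with $g_{2,\fin}\in K_{\fin}$. By Proposition 17.2 of \cite{KL}, there is a finite set $S_T\subset B(\Q)\bs G(\Q)$ containing every $\delta$ for which $\tT(\delta g_2)\ne 0$ as $g_2$ varies over $\widetilde{\mathfrak{F}}_T$. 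Lemma \ref{mu} restricts the relevant $\mu$ in the Borel sum to finitely many $\operatorname{diag}(a,d)$ with $a,d>0$ and $ad=D$. Applying Poisson summation to the sum over $N(\Q)$ for each such $\mu$ yields, for a.e.\ $(g_1,g_2)\in \mathfrak{F}\times\widetilde{\mathfrak{F}}_T$,
\[
K_2^T(g_1,g_2)=\sum_{\mu}\sum_{t\ne 0}\widehat{F}_{\mu,g_1,g_2}(t)+\sum_{\mu}\widehat{F}_{\mu,g_1,g_2}(0)-\sum_{\delta\in S_T}\tT(\delta g_2)\sum_{\mu'}\widehat{F}_{\mu',g_1,\delta g_2}(0).
\]

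I would then estimate the $L^2$-norm over $\ol{\mathfrak{F}}\times\ol{\widetilde{\mathfrak{F}}}_T$ of each of these three pieces. For the first, the bound \eqref{Fhat1} combined with the Schwartz-Bruhat property of the finite part yields (exactly as in \eqref{tb} from the proof of Proposition \ref{K2si}) the pointwise estimate $\ll 1/\sqrt{y_1y_2}$; since $y_2$ is bounded on $\widetilde{\mathfrak{F}}_T$, the resulting squared $L^2$-norm is dominated by $\int_{\sqrt3/2}^\infty y_1^{-3}\,dy_1<\infty$. For the second piece, Lemma \ref{Fhat0int} applied with $C=\ol{\widetilde{\mathfrak{F}}}_T$ gives directly that $\int_{C}\int_{\ol{\mathfrak{F}}}|\widehat{F}_{\mu,g_1,g_2}(0)|^2\,dg_1\,dg_2<\infty$ for each of the finitely many $\mu$. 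For the third piece, Lemma \ref{mudel} restricts $\mu'$ to a finite set $A_\delta$ for each $\delta\in S_T$, so the expression reduces to finitely many terms of the same form with $g_2$ replaced by $\delta g_2$; a second application of Lemma \ref{Fhat0int}, this time with $C=\delta\cdot\ol{\widetilde{\mathfrak{F}}}_T$ (still compact, being a translate of a compact set), handles each such term.

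The main subtlety will be justifying Poisson summation in the decomposition above for almost every $(g_1,g_2)$. The hypothesis of Theorem 8.17 of \cite{KL} requires simultaneous quadratic decay of $F_{\mu,g_1,g_2,\infty}$ and $\widehat{F}_{\mu,g_1,g_2,\infty}$; the first follows from \eqref{Vt2} together with \eqref{Vsi}, and the second from Lemma \ref{Fhat}, both valid under the non-degeneracy condition $ay_2\ne dy_1$, which excludes only a measure-zero subset of $\mathfrak{F}\times\widetilde{\mathfrak{F}}_T$, exactly as in the proof of Proposition \ref{K2si}. A minor point to verify is that the $\delta\in S_T$ appearing in the subtraction match those arising from the Poisson expansion of the Borel sum in the sense that pairing them gives the displayed identity; this follows from the uniqueness statement in Lemma 17.1 of \cite{KL}, which ensures that at most one $\delta\in S_T$ contributes for each $g_2$.
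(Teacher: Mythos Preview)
Your proposal is correct and follows essentially the same approach as the paper: Poisson summation in the $N$-variable splits $K_2^T$ into the same three pieces, the $t\neq 0$ piece is bounded via \eqref{tb}, and both $t=0$ pieces are handled by Lemma~\ref{Fhat0int} applied to the compact sets $\ol{\widetilde{\mathfrak{F}}}_T$ and $\delta\cdot\ol{\widetilde{\mathfrak{F}}}_T$, with Lemma~\ref{mudel} and Proposition~17.2 of \cite{KL} reducing to finitely many $\mu$ and $\delta$. The only minor remark is that your final paragraph about ``pairing'' the $\delta$'s is unnecessary: the displayed identity requires no matching or cancellation between the second and third pieces, since the first two pieces are simply the Poisson transform of the Borel sum and the third is the untouched subtraction term; also note that Lemma~\ref{Fhat0int} requires $C$ to be factorizable as $C_\infty\times C_{\fin}$, which holds for $\delta\cdot\ol{\widetilde{\mathfrak{F}}}_T$ since $\delta$ embeds diagonally.
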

\begin{proof} By definition, for $g_1,g_2\in\mathfrak{F}$, we have
\[K_2^T(g_1,g_2)=\sum_{\mu\in\ol{M}(\Q)}\sum_{t\in\Q}F_{\mu,g_1,g_2}(t)
  - \sum_{\delta\in B(\Q)\bs G(\Q)}\sum_{\mu\in\ol{M}(\Q)}
  \widehat{F}_{\mu,g_1,\delta g_2}(0)\tT(\delta g_2).\]
  As in the proof of Proposition \ref{K2si}, for fixed $g_1$, we can apply
  Poisson summation to the sum over $t$ for a.e. $g_2$, so
   $K_2^T(g_1,g_2)$ is equal almost everywhere to the
  sum of the following three functions:
\begin{enumerate}
\item $\ds \sum_{\mu=\operatorname{diag}(a,d),\atop{ad=D,a>0}}\sum_{t\in \frac1M\Z-\{0\}}\widehat{F}_{\mu,g_1,g_2}(t)$
\item $\ds \sum_{\mu=\operatorname{diag}(a,d),\atop{ad=D,a>0}}
  \widehat{F}_{\mu,g_1,g_2}(0)$
\item $\ds  - \sum_{\delta\in B(\Q)\bs G(\Q)} \sum_{\mu\in A_\delta}
       \widehat{F}_{\mu,g_1,\delta g_2}(0)\tT(\delta g_2)$,
\end{enumerate}
where $A_\delta$ is the finite subset of $\ol M(\Q)$ given by Lemma \ref{mudel}.
  By Minkowski's inequality,
  it suffices to show that each of these three functions is square integrable
  over $(g_1,g_2)\in \ol{\mathfrak{F}}\times \ol{\widetilde{\mathfrak F}}_T$.

By \eqref{tb}, the integral of the square of first function over $\ol{\mathfrak{F}}
  \times \ol{\widetilde{\mathfrak{F}}}_T$ is
\[\ll\int_{\frac{\sqrt{3}}2}^T\int_{\frac{\sqrt{3}}2}^\infty
  \frac{dy_1}{y_1^3} \frac{dy_2}{y_2^3}<\infty.\]
The square integrability of each summand of the second function 
  was proven in Lemma \ref{Fhat0int} above, and it follows by Minkowski's
  inequality that the second function itself is square integrable over the given set.
For the third function, by \cite{KL} Proposition 17.2, there are only finitely many 
  $\delta \in B(\Q) \bs G(\Q)$ such that $\tT(\delta g) \neq 0$
for some $g \in \widetilde{\mathfrak{F}}_T$.
  Therefore  it suffices to show that 
  $\|\widehat{F}_{\mu,g_1,\delta g_2}(0)\tT(\delta g_2)\|_{\mathfrak{F}\times
  \widetilde{\mathfrak{F}}_T}$ is finite for fixed $\delta$ and $\mu$.
We have
\[ \left\|
    \widehat{F}_{\mu, g_1,\delta g_2}(0) \tT(\delta g_2)
   \right\|_{{\mathfrak{F}} \times {\widetilde{\mathfrak{F}}}_T}^2
   \le \int_{\ol{\mathfrak{F}}} \int_{\ol{\widetilde{\mathfrak{F}}}_T}
      |\widehat{F}_{\mu, g_1,\delta g_2}(0)|^2 dg_2 dg_1 \]
  \[= \int_{\ol{\mathfrak{F}}} \int_{\delta \ol{\widetilde{\mathfrak{F}}}_T}
      |\widehat{F}_{\mu, g_1, g_2}(0)|^2 dg_2 dg_1,\]
which is finite by Lemma \ref{Fhat0int}, since $\delta\ol{\widetilde{\mathfrak F}}_T$ is 
  compact and factorizable.
 \end{proof}

\noindent \emph{Proof of Proposition \ref{KTsi}.}
Since $K^T=K_1+K_2^T$, it suffices by Minkowski's inequality to show that the
  latter two functions are square integrable over 
$\ol{\mathfrak{F}} \times \ol{\mathfrak{F}}$.
 By Proposition \ref{K1si},
   $\|K_1\|_{\sss {\mathfrak{F}} \times {\mathfrak{F}}}<\infty$.
  By  Proposition \ref{K2si} and Proposition \ref{K2sic},
\[ \|K_2^T\|_{\sss {\mathfrak{F}} \times {\mathfrak{F}}}^2
   =  \|K_2^T\|_{\sss {\mathfrak{F}} \times {\mathfrak{F}}_T}^2
   + \|K_2^T\|_{\sss {\mathfrak{F}} \times {\widetilde{\mathfrak{F}}}_T}^2 < \infty. \]
   This completes the proof.  \hfill $\square$

\pagebreak
\section{Kloosterman sums}\label{Klsec}


Fix a modulus $N\in\Z^+$, and let $\chi$ be a Dirichlet character
  modulo $N$ of conductor $\c_{\chi}$.
We have defined the following generalized Kloosterman
  sum for any $c\in N\Z^+$ and nonzero 
  $\n\in\Z$:\index{keywords}{Kloosterman sum!generalized}
\begin{equation}\label{kloos} \index{notations}{S k@$S_{\chi}(a,b;\n;c)$}
S_{\chi}(a,b;\n;c)=\sum_{\scri x,x'\in \Z/c\Z,\atop{\scri xx'=\n}}
  \ol{\chi(x)}e(\frac{ax+bx'}{c}).
\end{equation}
Although $\gcd(\n,N)=1$ elsewhere in this paper, we make no such
  restriction in this section.
Note that when $\n>1$, $x$ need not be invertible in $\Z/c\Z$.
   Furthermore, $\chi$ is {\em not} generally
  a Dirichlet character modulo $c$, and should be viewed simply
  as a multiplicative function on $\Z/c\Z$.  In particular it
  can happen that $\chi(x)\neq 0$ when $(x,c)>1$.

In the special case where $\n=1$, we obtain
  the usual twisted Kloosterman sum with character $\chi$
  defined by
\begin{equation}\label{non} \index{notations}{S k@$S_{\chi}(a,b;c)=S_{\chi}(a,b;1;c)$}
S_{\chi}(a,b;c)=\sum_{x\in (\Z/c\Z)^*}\ol{\chi(x)}e(\frac{ax+b\ol{x}}{c}),
\end{equation}
where $x\ol{x}\equiv 1\mod c$.  
  If $\chi$ is the
  principal character modulo $N$, then we simply write $S(a,b;c)$,
  which is the classical Kloosterman sum.\index{keywords}{Kloosterman sum!classical}

Suppose $\n=\n_1\n_2$ where $(\n_1,c)=1$.
  Then replacing $x'$ by $\ol{\n_1}x'$, we have
 \begin{equation}\label{n1n2}
S_{\chi}(a,b;\n;c)=S_{\chi}(a,b\n_1;\n_2;c).
\end{equation}
  In particular, if $(\n,c)=1$ we have
\[S_{\chi}(a,b;\n;c)=S_{\chi}(a,b\n;c).\]
This holds in other situations as well; see \eqref{lockloo} below.
In his Ph.D. thesis, J. Andersson discusses the generalized
  Kloosterman sums \eqref{kloos}, which were apparently first defined by
  Bykovsky, Kuznetsov and Vinogradov (\cite{A}, \cite{BKV}).
  He gives elementary
  proofs of the following identities, special cases of which were
  given by \cite{BKV} and Selberg \cite{Sel}.

\begin{proposition}
If either $(N,\n)=1$ or $(N,b)=1$, then
\begin{equation}\label{BKVid}
  S_\chi(a,b;\n;c)=\sum_{d|(\n,b,c)}\ol{\chi(d)}\,d\,
  S_\chi(a,\tfrac{b\n}{d^2};\tfrac cd).
\end{equation}
The identity also holds if $\chi$ is taken to be the principal 
  character modulo $c$ (resp. $c/d$) on the left (resp. right).
In the case where $\chi$ is principal, we have
\begin{equation}
S(a_1,a_2;a_3;c)=S(a_{\sigma(1)},a_{\sigma(2)};a_{\sigma(3)};c)
\end{equation}
for any permutation $\sigma\in S_3$.
\end{proposition}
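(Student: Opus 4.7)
For the identity \eqref{BKVid}, my plan is to parameterize the outer sum by $d = \gcd(x, c)$, writing $x = dy$ with $y \in (\Z/(c/d)\Z)^*$. The congruence $xx' \equiv \n \pmod{c}$ then forces $d \mid \n$ (else no $x'$ exists), reduces to $yx' \equiv \n/d \pmod{c/d}$, and determines $x'$ modulo $c$ as $x' = (\n/d)\ol{y} + k(c/d)$ for $k = 0, \dots, d-1$. Summing the phase $e(bk(c/d)/c) = e(bk/d)$ over $k$ collapses to $d$ if $d \mid b$ and to $0$ otherwise, so only divisors $d \mid (\n, b, c)$ survive. Using complete multiplicativity of $\chi$ on $\Z$, I would factor $\ol{\chi(dy)} = \ol{\chi(d)}\,\ol{\chi(y)}$ and recognize the remaining inner sum as $S_\chi(a, b\n/d^2; c/d)$.

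The coprimality hypothesis enters precisely at this last step: for $\chi$ (of modulus $N$) to descend to a well-defined function on $\Z/(c/d)\Z$, we need $N \mid c/d$, equivalently $\gcd(d, N) = 1$. Under $(N,\n) = 1$ this holds since $d \mid \n$, and under $(N,b) = 1$ it holds since $d \mid b$. Either hypothesis also guarantees $\ol{\chi(d)} \ne 0$. The variant with principal characters is a degenerate case: when $\chi$ is principal modulo $c$, $\ol{\chi(d)}$ vanishes whenever $d > 1$ divides $c$, so only $d=1$ contributes, and the identity reduces to the tautology $S(a,b;\n;c) = S(a, b\n; c)$ obtained from $x' = \n\ol{x}$.

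For the full $S_3$-symmetry in the principal (trivial) case, the plan is to detect the congruence $xx' \equiv a_3 \pmod{c}$ by Fourier inversion, obtaining
\[
c \cdot S(a_1, a_2; a_3; c) = \sum_{x, y, t \in \Z/c\Z} e\!\left(\frac{a_1 x + a_2 y + txy - a_3 t}{c}\right),
\]
where I have renamed $x' \mapsto y$ and $t$ is the dual variable. The key observation is that the $\Z$-linear involution $(x,y,t) \mapsto (-t, y, -x)$ is a bijection of $(\Z/c\Z)^3$ under which the cubic term $txy$ is preserved and the phase $a_1 x + a_2 y + txy - a_3 t$ transforms into $a_3 x + a_2 y + txy - a_1 t$, thereby interchanging $a_1$ and $a_3$. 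This gives the transposition $(1\,3)$; combining it with the transposition $(1\,2)$, which is immediate from the symmetry of the defining sum under $x \leftrightarrow x'$, generates all of $S_3$.

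The main obstacle is simply the bookkeeping in the first part: separating the three distinct constraints $d \mid \n$ (solvability of the congruence), $d \mid b$ (non-vanishing of the sum over lifts), and $\gcd(d,N)=1$ (descent and non-vanishing of $\chi$), and tracking carefully that these conditions are forced in independent steps of the argument. The symmetry proof is by contrast very short once one writes down the Fourier-inverted expression, the involution $(x,y,t)\mapsto(-t,y,-x)$ being motivated by the observation that the trilinear form $txy$ has a natural $\mathbf{Z}/3$-type symmetry up to sign.
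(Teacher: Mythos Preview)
The paper does not give its own proof here; it simply cites Andersson's thesis [A] for both identities, remarking only that the coprimality hypothesis is what makes $\chi$ well-defined modulo $c/d$ on the right. Your direct argument for \eqref{BKVid} --- grouping terms by $d=\gcd(x,c)$, summing the $d$ lifts of $x'$ via the geometric series $\sum_{k=0}^{d-1}e(bk/d)$, and recognizing the remaining sum over $y\in(\Z/(c/d)\Z)^*$ as $S_\chi(a,b\n/d^2;c/d)$ --- is correct and is the standard elementary approach. Your identification of where the hypothesis enters (forcing $\gcd(d,N)=1$, hence $N\mid c/d$, so that $\chi$ descends and $\ol{\chi(d)}\neq 0$) matches the paper's remark exactly.

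One small caveat on the ``principal character'' variant: your reading (principal modulo $c$ on the left, so $\ol{\chi(d)}=0$ for $d>1$ and only $d=1$ survives) is literally correct but collapses the identity to a tautology, which is unlikely to be the intended content. The $S_3$-symmetry in the next sentence only makes sense for the constant function $\chi=\mathbf{1}$, as you yourself use in the Fourier argument; the natural nontrivial reading of the middle sentence is $S_{\mathbf 1}(a,b;\n;c)=\sum_{d\mid(\n,b,c)}d\,S(a,b\n/d^2;c/d)$ with classical Kloosterman sums on the right and no $\ol{\chi(d)}$ factor. Your $\gcd(x,c)$ decomposition proves this version as well with no modification: just set $\chi=\mathbf{1}$. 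Your Fourier-inversion proof of the $S_3$-symmetry via the involution $(x,y,t)\mapsto(-t,y,-x)$ preserving $txy$ is correct and clean.
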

\noindent
  See \cite{A} for the proofs.  In his proof of \eqref{BKVid} (Theorem 1
  on page 109 of \cite{A}), some hypothesis on $N$ (such as
  $(N,\n)=1$ or $(N,b)=1$) is used implicitly in order for $\chi$
  to be well-defined modulo $c/d$ in $S_\chi(a,\tfrac{b\n}{d^2},\tfrac cd)$.\\

The purpose of this section is to prove the following Weil bound for
  the sum \eqref{kloos}.

\begin{theorem}\label{Weil} \index{keywords}{Kloosterman sum!Weil bound}
For integers $c\in N\Z$ and $a,b,\n\in\Z$ with $c,\n$ nonzero,
  we have the bounds
\[|S_{\chi}(a,b;\n;c)|\le \tau(\n)\,\tau(c)\,(a\n,b\n,c)^{1/2}
  \,c^{1/2}\,\c_\chi^{1/2}\]
and
\[|S_{\chi}(a,b;\n;c)|\le \tau(\n)\,\tau(c)\,(a\n,b\n,c)^{1/2}
  \,c^{1/2}\,\c_\chi^{1/4}\prod_{p|\c_\chi}p^{1/4}\]
for the divisor function $\tau$.
\end{theorem}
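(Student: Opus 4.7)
The plan is to reduce from the generalized Kloosterman sum to the classical twisted Kloosterman sum, then to prime-power moduli, and finally to apply Weil's theorem together with p-adic stationary phase.

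First I would use the identity \eqref{BKVid} to write
\[ S_\chi(a,b;\n;c) = \sum_{d|(\n,b,c)} \ol{\chi(d)}\, d \cdot S_\chi(a,\tfrac{\n b}{d^2};\tfrac{c}{d}), \]
granting for now a Weil-type bound for the non-generalized sum of the shape $|S_\chi(a',b';c')| \le \tau(c')(a',b',c')^{1/2}(c')^{1/2}E(\chi)$, where $E(\chi)$ is either $\c_\chi^{1/2}$ or $\c_\chi^{1/4}\prod_{p|\c_\chi}p^{1/4}$. Applied termwise this gives
\[ |S_\chi(a,b;\n;c)| \le \c_\chi^{1/2}E(\chi) \sum_{d|(\n,b,c)} d^{1/2}c^{1/2}\,\tau(c/d)(a,\tfrac{\n b}{d^2},\tfrac{c}{d})^{1/2}. \]
The key elementary observation is that $d\,(a,\tfrac{\n b}{d^2},\tfrac{c}{d})$ divides $(ad,\tfrac{\n b}{d},c)$, which in turn divides $(a\n,b\n,c)$ because $d\mid\n$ and $d\mid b$; combined with $\sum_{d|(\n,b,c)}\tau(c/d) \le \tau(\n)\tau(c)$, this yields the theorem once the case $\n=1$ is known. (One has to verify that the hypothesis of \eqref{BKVid}, namely $(N,\n)=1$ or $(N,b)=1$, is met or else handle the exceptional case directly.)

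Next I would reduce the classical twisted sum $S_\chi(a,b;c)$ to prime-power modulus via the Chinese Remainder Theorem. Writing $c=\prod_p p^{k_p}$ and $\chi=\prod_p\chi_p$ as in \eqref{chi_p}, the standard substitution $x\equiv \sum_p \ol{c_p'}\, x_p\pmod{c}$ factors the sum as
\[ S_\chi(a,b;c)=\prod_{p|c} S_{\chi_p}(a\ol{c_p'},b\ol{c_p'};p^{k_p}), \qquad c_p' = c/p^{k_p}. \]
The gcd $(a,b,c)$ distributes multiplicatively over primes, so it suffices to prove for each prime $p|c$ a local bound of the form $|S_{\chi_p}(\alpha,\beta;p^k)| \le 2(\alpha,\beta,p^k)^{1/2}p^{k/2}\cdot E_p(\chi_p)$, where $E_p$ accounts for the Gauss-sum contribution of the twist.

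The local analysis splits into two genuinely different regimes. When $k=1$ and $\chi_p$ is primitive, the bound is Weil's theorem (for $\chi_p$ trivial this is the classical Weil bound; for $\chi_p$ primitive of order dividing $p-1$ it is a Sali\'e/Weil bound, deducible from the Riemann hypothesis for curves). When $k\ge 2$ the sum can be evaluated by p-adic stationary phase: parametrizing $x = x_0(1+p^{\lceil k/2\rceil}y)$ and Taylor-expanding $ax+b\ol{x}$ mod $p^k$, the sum over $y$ vanishes unless the critical-point congruence $ax_0^2\equiv b\pmod{p^{k-\lceil k/2\rceil}}$ (twisted by the logarithmic derivative of $\chi_p$) is satisfied, and what remains is essentially $\tau(\chi_p)$ times a count of critical points of size $O((\alpha,\beta,p^k)^{1/2})$. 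Since $|\tau(\chi_p)|=p^{\nu_p/2}$ for primitive $\chi_p$ mod $p^{\nu_p}$, this produces the $\c_\chi^{1/2}$ factor; a sharper stationary-phase estimate — using that the Gauss sum $\tau(\chi_p)$ combines with an $O(p^{1/2})$ oscillation bound at the critical points — produces the sharper $\c_\chi^{1/4}\prod_{p|\c_\chi}p^{1/4}$ factor.

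The main obstacle is the proliferation of subcases in the prime-power analysis — namely whether $k<\nu_p$, $k=\nu_p$, or $k>\nu_p$, and whether $(\alpha,\beta,p^k)=p^k$ (in which case the sum degenerates to a multiple of $\tau(\chi_p)$ or vanishes) or is strictly smaller — each requiring a slightly different calculation, together with careful bookkeeping of the two competing estimates to see which is sharper. The case $k=1$, $\chi_p$ primitive is the only genuinely deep input: it rests on Weil's theorem and cannot be proven by elementary means. Everything else is elementary but, as the counterexample on p.\,\pageref{p3} illustrates, sensitive to whether one tracks $\c_\chi^{1/2}$ or merely $c^{1/2}$ in the final bound.
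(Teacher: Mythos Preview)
Your reduction strategy differs from the paper's in the order of operations. You propose to apply the BKV identity \eqref{BKVid} first, collapsing the generalized sum to a combination of ordinary twisted Kloosterman sums $S_\chi(a,\tfrac{\n b}{d^2};\tfrac cd)$, and only then to factor over primes. The paper instead factors the \emph{generalized} sum itself via the Chinese Remainder Theorem (Corollary \ref{Sfac}), and then at each prime reduces the local generalized sum $S_{\chi_p}(\,\cdot\,;p^{\n_p};p^{c_p})$ to ordinary twisted sums by an elementary calculation (Proposition \ref{klcomp}): when $p\mid N$ one simply has $S_{\chi_p}(a',b';p^k;p^\ell)=S_{\chi_p}(a',b'p^k;p^\ell)$, and when $p\nmid N$ (so $\chi_p=\mathbf 1$) the local generalized sum unfolds as a short sum of classical untwisted Kloosterman sums. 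The payoff of the paper's ordering is that it never invokes \eqref{BKVid}, and so entirely avoids the hypothesis $(N,\n)=1$ or $(N,b)=1$ that you flag but do not resolve. Since the theorem is stated without any such restriction, this is a real gap in your argument; your elementary estimate $d\cdot(a,\tfrac{\n b}{d^2},\tfrac cd)\mid(a\n,b\n,c)$ and the divisor-sum bound are correct where \eqref{BKVid} applies, but that does not cover all cases.

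Your local analysis at prime powers is essentially the paper's Theorem \ref{pcase}, and the outline (Weil/Chowla for $\ell=1$, $p$-adic stationary phase for $\ell\ge 2$) is right. One correction to the mechanism: the factor $\c_\chi^{1/2}$ does not come from a literal Gauss sum $\tau(\chi_p)$ appearing in the evaluation. In the paper's stationary-phase computation (Propositions \ref{kbound}--\ref{kboundo}) the twist enters through a parameter $B$ in the critical-point congruence $ay^2+By-b\equiv 0\pmod{p^\alpha}$; the $\c_\chi$-dependence arises because when the conductor $p^\gamma$ is close to the modulus $p^\ell$, one loses the condition $p\mid B$ and the number of solutions (Lemma \ref{quad}) can be as large as $2p^{\lfloor\alpha/2\rfloor}$ rather than $2$. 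The sharper exponent $\c_\chi^{1/4}p^{1/4}$ is not a refined oscillation estimate but simply the observation that $p^{\alpha/2}\le \c_\chi^{1/4}p^{1/4}$ in the worst case $\gamma=\ell$.
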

\noindent{\em Remark:}
  Bruggeman and Miatello produce a bound when $\n=1$, which
  is valid over any totally real field (cf. Section 2.4 of \cite{BM}).
  They use the trivial bound at primes $p|N$, which results in the estimate
\[|S_\chi(a,b;c)|=O(c^{\frac12+\e}\prod_{p|N}p^{c_p/2}).\]
This is somewhat weaker than the estimates in Theorem \ref{Weil}, whose full
  strength was required in the proof of Proposition \ref{Klbound}.

\subsection{A bound for twisted Kloosterman sums}

The proof of Theorem \ref{Weil} follows three steps: express \eqref{kloos} as
  a product of local factors, relate the local factors to twisted Kloosterman
  sums \eqref{non}, and apply a Weil bound to the latter.  The present section
  establishes the Weil bound needed for the last step.
The classical Kloosterman sums satisfy the Weil/Sali\'e bound
\begin{equation}\label{classicalWeil}
|S(a,b;c)|\le \tau(c)(a,b,c)^{1/2}c^{1/2}
\end{equation}
(cf. \cite{IK}, Corollary 11.12).
  It should be noted that
  the above bound does {\em not} hold for $S_{\chi}(a,b;c)$.
  See Example \ref{p3} below.
  In general, one must account for the conductor of $\chi$ as well.

\begin{theorem}\label{pcase}  Let $p$ be any prime.
Suppose $c=p^\ell$ and $\chi$ is a Dirichlet character of conductor
  $\c_\chi=p^\g$ for $\g\le \ell$.  Then for any integers $a,b$,
\begin{equation}\label{pcase1}
|S_{\chi}(a,b;c)|\le \tau(c)\,(a,b,c)^{1/2}\,c^{1/2}\,\c_\chi^{1/2}
\end{equation}
and
\begin{equation}\label{pcase2}
|S_{\chi}(a,b;c)|\le \tau(c)\,(a,b,c)^{1/2}\,c^{1/2}\,\c_\chi^{1/4}p^{1/4}.
\end{equation}
\end{theorem}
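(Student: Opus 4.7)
The plan is to prove both bounds via the $p$-adic stationary phase method, passing through three reductions. First, to produce the factor $(a,b,c)^{1/2}$ on the right-hand side of \eqref{pcase1}--\eqref{pcase2}, I would reduce to the case $p \nmid \gcd(a, b)$: if $p^s \mid \gcd(a, b)$ with $s \le \ell$, then writing $a = p^s a'$, $b = p^s b'$, and parametrizing $x = x_0 + p^{\ell - s} t$ with $x_0$ a unit mod $p^{\ell-s}$ and $t \in \Z/p^s\Z$, the phase $e((ax + b\ol{x})/p^\ell)$ depends only on $x_0 \pmod{p^{\ell-s}}$. Provided $\c_\chi \le p^{\ell-s}$ the character $\chi$ also factors through, the $t$-sum collapses to a factor of $p^s$, and we reduce to a twisted Kloosterman sum on modulus $p^{\ell-s}$, producing exactly the $(a,b,c)^{1/2}$-contribution upon square-rooting.

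Second, for $\ell \ge 2$ and $\gcd(a,b,p) = 1$, the core step is $p$-adic stationary phase. Setting $j = \lceil \ell/2 \rceil$ and parametrizing $x = y(1 + p^j u)$ with $y \in (\Z/p^j\Z)^*$ and $u \in \Z/p^{\ell-j}\Z$, one has $\ol{x} \equiv \ol{y}(1 - p^j u) \pmod{p^\ell}$ since $2j \ge \ell$, so
\[ax + b\ol{x} \equiv (ay + b\ol{y}) + p^j u(ay - b\ol{y}) \pmod{p^\ell}.\]
If $\gamma \le j$, then $\chi(1 + p^j u) = 1$, and the $u$-sum vanishes except when $ay^2 \equiv b \pmod{p^{\ell - j}}$, a quadratic congruence with at most $2$ solutions mod $p^{\ell-j}$ for $p$ odd (and at most $4$ for $p = 2$), lifting to at most $C\, p^{2j-\ell}$ solutions mod $p^j$. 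This yields $|S_\chi(a,b;p^\ell)| \le C\, p^{\lceil \ell/2 \rceil}$ with an absolute constant $C$, which is stronger than \eqref{pcase1}--\eqref{pcase2} in this regime.

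Third, when $\gamma > \lceil \ell/2 \rceil$ one cannot trivialize $\chi(1 + p^j u)$, and I would invoke the $p$-adic logarithm: for $p$ odd and $j$ in the appropriate range, $\chi(1 + p^j u) = e(\alpha_\chi\, u / p^{\gamma - j})$ for a canonical residue $\alpha_\chi \in (\Z/p^{\gamma-j}\Z)^*$, which is a unit precisely because $\chi$ has exact conductor $p^\gamma$. Absorbing this into the exponential phase, the inner $u$-sum now imposes the condition
\[ay - b\ol{y} - \alpha_\chi p^{\ell - \gamma}\, y \equiv 0 \pmod{p^{\ell - j}},\]
which upon multiplication by $y$ is again a quadratic in $y$, of the form $(a - \alpha_\chi p^{\ell-\gamma})y^2 \equiv b \pmod{p^{\ell-j}}$, with $O(1)$ stationary points. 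Tracking all the phases then gives $|S_\chi| \le C\, p^{(\ell + \gamma)/2}$, matching the main term of \eqref{pcase1}; the second bound \eqref{pcase2} is obtained by the same analysis, with the improvement arising when the quadratic count is optimal.

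The main obstacle will be the dyadic case $p = 2$, where the $p$-adic logarithm is only well-defined on $1 + 4\Z_2$ rather than $1 + 2\Z_2$, so the applicable range of $j$ shifts by one, and the stationary-phase quadratic can acquire up to $4$ roots; this loss of $\sqrt{p}$ is precisely what forces the weaker exponent $p^{1/4}$ in \eqref{pcase2} instead of $\c_\chi^{1/4}$ alone. A careful parity-based case analysis in $\ell$ and $\gamma$, combined with the classical Weil bound \eqref{classicalWeil} to dispose of $\ell = 1$ (and a direct Gauss-sum evaluation for $\ell = \gamma = 1$), completes the proof; the divisor factor $\tau(c) = \ell + 1$ in the stated bound absorbs the absolute constants $C$ arising from each case.
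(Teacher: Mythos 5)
Your plan is the same $p$-adic stationary phase method the paper uses (following \cite{IK}, \S12.3, Lemmas 12.2--12.3), with the same reduction to $(a,b,p)=1$ and the same parameter $B$ (your $\alpha_\chi p^{\ell-\gamma}$) encoding $\ol{\chi}$ on $1+p^j\Z$, so the approaches are essentially identical in outline. However, there are two substantive issues in the execution.

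First, you take $j=\lceil\ell/2\rceil$ uniformly. For odd $\ell=2\alpha+1$ this kills the second-order term in $\ol{x}$ and avoids a Gauss sum, but it costs a factor of $p^{1/2}$: the resulting bound is $Cp^{(\ell+1)/2}$, not $Cp^{\ell/2}$. You assert this is ``stronger than \eqref{pcase1}--\eqref{pcase2} in this regime,'' but that is false when $\gamma=0$. With $\gamma=0$, $(a,b,p)=1$, odd $\ell$, the right side of \eqref{pcase2} is $(\ell+1)p^{\ell/2+1/4}$, and for, say, $\ell=3$ and $p\ge 17$ one has $2p^{(\ell+1)/2}=2p^2>4p^{7/4}$. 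The paper handles this by following \cite{IK} Lemma 12.3 for odd $\ell$: one takes $j=\lfloor\ell/2\rfloor=\alpha$, completes the square in the inner variable, and extracts a genuine Gauss sum $G_p(y)$ (of size $p^{1/2}$ when $B$ is divisible by $p$) that supplies the missing $\sqrt{p}$ savings --- see Propositions \ref{kboundo} and the term $G_p(y)$ in \eqref{Seq}. Alternatively one can dispose of $\gamma=0$ by citing the classical Weil/Sali\'e bound $|S(a,b;p^\ell)|\le 2p^{\ell/2}$, but as written your proposal does neither, so the $\gamma=0$, odd $\ell$, large $p$ case is a genuine gap.

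Second, the stationary-phase condition you write is wrong: from $ax+b\ol{x}\equiv(ay+b\ol{y})+p^ju(ay-b\ol{y})\pmod{p^\ell}$ and $\ol{\chi(1+p^ju)}=e(Bu/p^{\ell-j})$ with $B=\alpha_\chi p^{\ell-\gamma}$, the $u$-sum imposes $ay-b\ol{y}+B\equiv 0\pmod{p^{\ell-j}}$, i.e.\ $ay^2+By-b\equiv 0\pmod{p^{\ell-j}}$. The parameter $B$ appears as the \emph{linear} coefficient, not as a shift of $a$; your version $(a-\alpha_\chi p^{\ell-\gamma})y^2\equiv b$ (with the stray factor of $y$ in the preceding display) loses the discriminant $\Delta=B^2+4ab$, which is exactly what controls the solution count. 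Relatedly, ``$O(1)$ stationary points'' is not correct in general: the congruence $ay^2+By-b\equiv 0\pmod{p^{\ell-j}}$ can have as many as $\sim 2p^{\lfloor(\ell-j)/2\rfloor}$ solutions when $\ord_p\Delta$ is large (compare Lemma \ref{quad}), and this excess is precisely the source of the $\c_\chi^{1/4}$ (resp.\ $\c_\chi^{1/2}$) factor in the theorem and of the exceptional size witnessed in Example \ref{p3}. The bound you claim, $Cp^{(\ell+\gamma)/2}$, is still big enough to imply \eqref{pcase1}, but the route to it needs the full discriminant case analysis of Lemma~\ref{quad}, not an $O(1)$ count.
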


\noindent{\em Remarks:} The proof will occupy the remainder of this section.
  The methods are standard and in large part elementary,  but because there
  seems to be no proof in the literature,
 we will include the details.
 The general case (for $c$ not necessarily a prime power) is contained
  in Theorem \ref{Weil}, whose proof will follow later.\\


The case $\ell =1$ is the most difficult, but it is well-known.

\begin{proposition} \label{Chowla}
  Suppose $c=p$ is prime.
  Then $|S_{\chi}(a,b;p)| \leq 2(a,b,p)^{1/2}{p}^{1/2}$.
\end{proposition}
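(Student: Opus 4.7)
The plan is to split into cases according to the divisibilities of $a$ and $b$ by $p$, reducing the nontrivial case to the sharp Weil bound for twisted Kloosterman sums over $\Z/p\Z$. First, I would dispose of the degenerate cases. If $p\mid\gcd(a,b)$, the exponential in \eqref{non} is identically $1$, so $S_\chi(a,b;p)=\sum_{x\in(\Z/p\Z)^*}\ol{\chi(x)}$, which equals $p-1$ or $0$ according as $\chi$ is trivial or not; either way $|S_\chi(a,b;p)|\le p = (a,b,p)^{1/2}p^{1/2}$, comfortably within the claimed bound. If $p$ divides exactly one of $a,b$, say $p\mid a$ and $p\nmid b$ (the reverse case being symmetric via $x\leftrightarrow \bar x$, which sends $\chi$ to $\ol\chi$), the substitution $y=\bar x$ turns the sum into a Gauss sum mod $p$:
\[
S_\chi(a,b;p) = \sum_{y\in(\Z/p\Z)^*}\chi(y)\,e\!\left(\tfrac{by}{p}\right) = G_\chi(b).
\]
By \eqref{Gtau} and \eqref{abstau} when $\chi$ is primitive mod $p$, or by direct computation of the Ramanujan sum when $\chi$ is trivial, $|G_\chi(b)|\le p^{1/2}$, again within $2(a,b,p)^{1/2}p^{1/2}=2p^{1/2}$.

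The substantive case is $p\nmid ab$, where $(a,b,p)=1$ and the required estimate reduces to $|S_\chi(a,b;p)|\le 2p^{1/2}$. This is the sharp Weil bound for the complete twisted Kloosterman sum
\[
S_\chi(a,b;p) = \sum_{x\in(\Z/p\Z)^*}\ol{\chi(x)}\,e\!\left(\tfrac{ax+b\bar x}{p}\right),
\]
a hybrid multiplicative-additive exponential sum associated to the rational function $f(x)=ax+b/x$ with two simple poles on $\mathbf{P}^1$. I would invoke Weil's theorem for such sums, which arises from the Riemann hypothesis for the associated Artin--Schreier--Kummer cover of $\mathbf{P}^1$ branched at $0$ and $\infty$; the genus bound for this cover yields precisely the factor $2$ in $2p^{1/2}$.

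The main obstacle is that a self-contained proof of this Weil bound is genuinely nontrivial, requiring either the Weil conjectures for curves over finite fields or an equivalent elementary substitute such as Stepanov's polynomial method (construct an auxiliary polynomial with prescribed high-order vanishing at the $\Z/p\Z$-rational points of the cover, then extract a point-count bound by degree considerations). Since Proposition \ref{Chowla} serves as the foundational input for the prime-power case in Theorem \ref{pcase}, my plan would be either to cite Weil's theorem from a standard reference (such as Chapter 11 of Iwaniec--Kowalski, or Schmidt's monograph on equations over finite fields) or to include a complete Stepanov-style derivation. Combining the resulting estimate with the degenerate-case analysis then yields $|S_\chi(a,b;p)|\le 2(a,b,p)^{1/2}p^{1/2}$ uniformly in all cases.
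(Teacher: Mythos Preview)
Your approach is correct and essentially identical to the paper's: both split into the three cases $p\mid(a,b)$, $p$ dividing exactly one of $a,b$ (Gauss sum), and $p\nmid ab$ (Weil bound), and both handle the main case by citing the Weil estimate for twisted Kloosterman sums rather than reproving it. The paper differs only cosmetically, disposing of $p=2$ separately at the outset and crediting Chowla specifically for the extension to non-principal $\chi$.
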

   \begin{proof}
   When $p=2$, the proposition is trivial.
   If $p$ is odd and $p\nmid ab$, this was proven by Weil for
  principal $\chi$, and extended to non-principal $\chi$ by Chowla
  (\cite{We}, \cite{Ch}; see also \cite{Co}).
  These sources deal only with the case $b=1$,
  but the general case follows easily by
  a change of variables.

If $p|a$ and $p\nmid b$ (or vice versa),
 then $S_{\chi}(a,b;p)$ is a character sum precisely of the
  kind discussed in Section \ref{charsum}.  In this case, if $\chi$ is the principal
  character modulo $p$, the value of the sum is $-1$.  If $\chi$ is non-principal,
  then $|S_\chi(a,b;p)|=p^{1/2}$ (\cite{Hua}, Theorem 7.4.4).

Lastly, if $p|a$ and $p|b$, then by the triangle inequality,
 $|S_{\chi}(a,b;p)|\le p= (a,b,p)^{1/2}p^{1/2}$.
   \end{proof}

The case $p=c^\ell$ with $\ell \ge 2$ is elementary, as first shown
  for the case of principal $\chi$ by Sali\'e \cite{Sal}, whose work was later
  refined by Estermann \cite{Es}.  We
  will follow the presentation in Section 12.3 of \cite{IK}.
  It requires a knowledge of the number of solutions to certain quadratic congruences,
  given as Lemma \ref{quad} below.  Although this is standard, we include
  the proof because of its central importance in what follows.

\begin{lemma} \label{landau}
Let $n,D>0$, with $p\nmid D$.  Let $M$ be the number of solutions of
\begin{equation}\label{Dcong}
x^2\equiv D\mod p^n.
\end{equation}
Then
\[M=\begin{cases}
  1& \text{if }p=2, n=1\\
  0& \text{if }p=2, n=2, D\equiv 3\mod 4\\
  2& \text{if }p=2, n=2, D\equiv 1\mod 4\\
  0& \text{if }p=2, n>2, D\not \equiv 1\mod 8\\
  4& \text{if }p=2, n>2, D\equiv 1\mod 8\\
1+(\frac{D}p)&\text{if }p>2.
\end{cases}\]
\end{lemma}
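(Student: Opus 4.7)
The plan is to prove the lemma by reducing it to counting solutions modulo $p$ (or modulo $8$ when $p=2$) and then lifting via a Hensel-type argument. The dichotomy between odd $p$ and $p=2$ arises because the derivative of $x^2-D$ is $2x$, which is a unit mod $p$ when $p$ is odd but identically zero mod $2$, so the lifting step needs to be handled differently in the two cases.

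First I would treat the case of odd $p$. The number of solutions to $x^2 \equiv D \pmod p$ is by definition $1+\left(\tfrac{D}{p}\right)$: it equals $0$ if $D$ is a nonresidue, $2$ if $D$ is a nonzero residue, and the hypothesis $p\nmid D$ rules out the third possibility. To pass from $n$ to $n+1$, given $x_0$ with $x_0^2 \equiv D \pmod{p^n}$, write a candidate lift as $x_0 + p^n t$ and expand: the congruence $(x_0+p^n t)^2 \equiv D \pmod{p^{n+1}}$ becomes $2x_0 t \equiv (D-x_0^2)/p^n \pmod p$. Since $p\nmid 2x_0$, this determines $t\mod p$ uniquely, so each solution mod $p^n$ lifts to exactly one solution mod $p^{n+1}$. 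Hence the count is preserved, giving $M = 1+\left(\tfrac{D}{p}\right)$ for all $n$.

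Next I would handle $p=2$ directly for small $n$. For $n=1$, the only odd residue is $1$, so $M=1$. For $n=2$, any solution must be odd, and an odd square is $\equiv 1\pmod 4$, so $M=0$ unless $D\equiv 1\pmod 4$, in which case $x\equiv \pm 1\pmod 4$ gives $M=2$. For $n=3$, any odd square is $\equiv 1\pmod 8$, so $M=0$ unless $D\equiv 1\pmod 8$, and a direct check of the odd residues mod $8$ gives $M=4$. For the induction step $n\to n+1$ with $n\ge 3$, given $x_0^2\equiv D\pmod{2^n}$ I would try the lift $x_0 + 2^{n-1}t$; expanding yields $(x_0+2^{n-1}t)^2 \equiv x_0^2 + 2^n x_0 t \pmod{2^{n+1}}$ (the $2^{2n-2}t^2$ term vanishes mod $2^{n+1}$ because $2n-2\ge n+1$), so the lifting condition is $x_0 t \equiv (D-x_0^2)/2^n \pmod 2$, which has a unique solution $t\in\{0,1\}$ because $x_0$ is odd. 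Thus each solution mod $2^n$ lifts to exactly one solution mod $2^{n+1}$ of the form $x_0 + 2^{n-1}t$, and so $M=4$ persists for all $n\ge 3$ when $D\equiv 1\pmod 8$, and $M=0$ otherwise.

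The only mildly subtle point is the $p=2$ lifting step: the naive choice $x_0 + 2^n t$ used for odd $p$ fails because the coefficient of $t$ in the expansion becomes $2^{n+1}x_0$, which is $0\pmod{2^{n+1}}$ and gives no information. Shifting to the refined ansatz $x_0 + 2^{n-1}t$ (valid once $n\ge 3$, since then $2n-2\ge n+1$ ensures the quadratic term is harmless) is what makes the induction close cleanly, and it is also why the threshold $n\ge 3$ is necessary and the small cases $n=1,2$ have to be checked by hand.
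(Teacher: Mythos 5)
The paper does not supply a proof here; it simply cites Landau's textbook, so there is no author's argument to compare against. Your treatment of odd $p$ and the $p=2$ base cases $n=1,2,3$ are all correct, but the $p=2$ induction step $n\to n+1$ (for $n\ge 3$) has a genuine gap. You produce, for each solution $x_0$ mod $2^n$, a unique $t\in\{0,1\}$ with $(x_0+2^{n-1}t)^2\equiv D\pmod{2^{n+1}}$ --- that step is fine --- and then conclude that $M=4$ persists. For that conclusion you would need the assignment $x_0\mapsto x_0+2^{n-1}t$ to be a bijection from solutions mod $2^n$ to solutions mod $2^{n+1}$, and it is not: the candidates $x_0$ and $x_0+2^{n-1}$ are not the residues mod $2^{n+1}$ lying above $x_0$ (those are $x_0$ and $x_0+2^n$), and the image of $x_0$ can coincide with the image of the \emph{distinct} solution $x_0+2^{n-1}$, which is also a solution mod $2^n$. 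Concretely, take $n=3$, $D=9$: the solutions mod $8$ are $1,3,5,7$, and your rule sends them to $5,3,5,11$ respectively, so $1$ and $5$ collide and the solution $13$ mod $16$ is never produced, even though $M=4$ holds.

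To close the induction correctly, observe instead that $(x+2^n)^2\equiv x^2\pmod{2^{n+1}}$, so the reduction map from solutions mod $2^{n+1}$ to solutions mod $2^n$ has every nonempty fiber of size exactly $2$; and that in each of the two pairs $\{x_0,\,x_0+2^{n-1}\}$ comprising the four solutions mod $2^n$, exactly one element lifts, because replacing $x_0$ by $x_0+2^{n-1}$ changes the parity of $(D-x_0^2)/2^n$ (using that $x_0$ is odd and that $2^{n-2}$ is even once $n\ge 3$). Thus exactly two of the four solutions mod $2^n$ lift, each contributing a fiber of size $2$, so $M=4$ again.
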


\begin{proof} See e.g. \cite{La}, Theorem 87.
\end{proof}

\begin{lemma}\label{quad}
  Let $a$ be an integer and $p\nmid a$ a prime.  Consider the congruence
\begin{equation}\label{cong}
ax^2+Bx+c\equiv 0\mod p^n
\end{equation}
for $n>0$.  Write $\Delta =B^2-4ac = p^\delta\Delta'$,
  where $p\nmid \Delta'$.  Let $M$ denote the number of
  solutions to \eqref{cong}.  Then if $p\neq 2$,
\[M=\begin{cases} p^{\lfloor \frac n2\rfloor}
  & \text{if } \delta \ge n,\\
  2p^{\delta'}&\text{if }\delta=2\delta' <n\text{ and }
   (\tfrac{\Delta'}p)=1,\\
  0&\text{otherwise, i.e. } \delta < n\text{ and $(\delta$ is odd or }
   (\tfrac{\Delta'}p)=-1).
\end{cases}\]
When $\delta > 0$, all solutions are prime to $p$ if $p\nmid B$,
  and divisible by $p$ otherwise.  
Suppose $\delta=0$ and $(\frac{\Delta}p)=1$. Then
  both solutions are prime to $p$ if $p\nmid c$ (in particular if $p|B$), but
  if $p|c$ then exactly one of the two solutions is divisible by $p$.

If $p=2$ and $B$ is even, then $\delta\ge 2$ and
\[M=\begin{cases} 2^{\lfloor \frac n2\rfloor}
  & \text{if } \delta \ge n+2,\\
2^{\min(n-\delta+1,2)}2^{\delta'-1}&\text{if }
  2\le \delta=2\delta'<n+2\text{ and }\Delta'\equiv 1\mod 2^{\min(n-\delta+2,3)}\\
  0 &\text{otherwise.}
\end{cases}\]
  By \eqref{cong}, all solutions have the same parity as $c$.  Furthermore,
when $\delta >2$, all solutions are odd if $4\nmid B$, and
  even otherwise.  When $\delta=2$, all solutions are even if $4\nmid B$, and odd otherwise.

If $p=2$ and $B$ is odd, then $M=\begin{cases}2&\text{if }
  \Delta\equiv 1\mod 8\\0&\text{otherwise.}
  \end{cases}$\\
Note that $\Delta\equiv 1\mod 8$ if and only if $c$ is even.
  In this case, exactly one of the two solutions is even.
\end{lemma}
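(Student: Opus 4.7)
The plan is to reduce the quadratic congruence to a pure square-root congruence modulo a prime power and then quote Lemma \ref{landau}. The key identity is $4a(ax^2+Bx+c)=(2ax+B)^2-\Delta$, which is exact in $\Z$. For $p$ odd, $4a$ is a unit modulo $p^n$, so multiplying by it gives an equivalent congruence, and the substitution $y=2ax+B$ is a bijection on $\Z/p^n\Z$; hence $M$ equals the number of $y\in\Z/p^n\Z$ with $y^2\equiv\Delta\pmod{p^n}$. I would then split on the valuation $\delta$ of $\Delta=p^\delta\Delta'$.

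The three cases for odd $p$ go as follows. If $\delta\ge n$ the equation reads $y^2\equiv0\pmod{p^n}$, so $p^{\lceil n/2\rceil}\mid y$, giving $p^{\lfloor n/2\rfloor}$ solutions. If $\delta<n$ is odd, then $p^\delta\|y^2$ would force $\delta$ to be even: no solutions. If $\delta=2\delta'<n$, the valuation forces $y=p^{\delta'}z$, and the congruence becomes $z^2\equiv\Delta'\pmod{p^{n-2\delta'}}$; Lemma \ref{landau} gives $1+\bigl(\tfrac{\Delta'}{p}\bigr)$ solutions for $z$ mod $p^{n-2\delta'}$, each lifting to $p^{\delta'}$ values of $z$ mod $p^{n-\delta'}$, hence $M=2p^{\delta'}$ when $(\tfrac{\Delta'}p)=1$ and $M=0$ otherwise. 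The parity/divisibility auxiliary claims (all solutions prime to $p$ if $p\nmid B$, etc.) are read off from $x=(y-B)/(2a)$ by tracking $v_p(y)=\delta'$ against $v_p(B)$.

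For $p=2$ I would split on the parity of $B$. If $B$ is odd, the derivative $2ax+B$ is a unit mod $2$, so Hensel's lemma lifts each solution mod $2$ uniquely to a solution mod $2^n$; a direct check on $x\in\{0,1\}$ shows solvability iff $c$ is even, giving $M=2$, and one verifies $\Delta\equiv1\pmod 8$ iff $c$ is even, while Vi\`ete gives $x_1+x_2\equiv -B/a\pmod{2^n}$ odd, so exactly one root is even. If $B$ is even, I would still use the identity $4a(ax^2+Bx+c)=(2ax+B)^2-\Delta$, but now interpret it as an equivalence between $ax^2+Bx+c\equiv 0\pmod{2^n}$ and $(2ax+B)^2\equiv\Delta\pmod{2^{n+2}}$ (valid because the LHS of the first is scaled by $4$). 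As $x$ runs over $\Z/2^n\Z$, $y=2ax+B$ traces the even residues mod $2^{n+1}$ bijectively, and each such $y$ has two lifts to $\Z/2^{n+2}\Z$, so $M$ equals half the number of even $y$ mod $2^{n+2}$ satisfying $y^2\equiv\Delta\pmod{2^{n+2}}$. Applying Lemma \ref{landau} and splitting again on $\delta$ versus $n+2$, together with the congruence conditions on $\Delta'$ mod $8$, yields the stated piecewise formula.

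The main obstacle is this last $p=2$, $B$ even regime: because $4$ is not a unit, the reduction shifts the modulus from $2^n$ to $2^{n+2}$, and one must scrupulously track the passage from square roots mod $2^{n+2}$ back to solutions $x$ mod $2^n$, including the constraint that only even $y$'s correspond to $x$'s and the sharper condition $\Delta'\equiv 1\pmod{2^{\min(n-\delta+2,3)}}$ coming from Lemma \ref{landau}. The remaining assertions about parity of $x$ (even vs.\ odd according to whether $4\mid B$ or $4\nmid B$, and the role of $\delta=2$) reduce to evaluating $v_2(y-B)=v_2(2ax)$ in each subcase, which is a bookkeeping exercise once the square-root count is pinned down.
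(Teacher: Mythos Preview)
Your proposal is correct and follows essentially the same route as the paper: complete the square via $4a(ax^2+Bx+c)=(2ax+B)^2-\Delta$, reduce to $y^2\equiv\Delta\pmod{p^n}$ (or $\pmod{2^{n+2}}$ when $p=2$), strip off $p^\delta$, and invoke Lemma~\ref{landau}. Two minor differences: for $p=2$, $B$ odd, you use Hensel's lemma plus a Vi\`ete argument, whereas the paper stays with the square-root congruence mod $2^{n+2}$ and reads off the two $x$-values from the four roots $\pm X,\,\pm X+2^{n+1}$; and for $p=2$, $B$ even, the paper simplifies by writing $B=2B'$ and dividing through to get $(ax+B')^2\equiv 2^{\delta-2}\Delta'\pmod{2^n}$, which spares you the lift-counting from $2^{n+2}$ back down to $2^n$ and makes the bookkeeping a bit cleaner.
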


\begin{proof}
First, suppose $p \neq 2$. Then \eqref{cong} is equivalent to
   \begin{equation} \label {quadcong2}
       (2ax+B)^2 \equiv \Delta \mod{p^n}.
   \end{equation}
If $\delta \ge n$, the solutions of \eqref{quadcong2}
  are given by $2ax+B \equiv 0 \mod{p^{\lceil\frac{n}2\rceil}}$.
  There is a unique solution $x$ modulo ${p^{\lceil\frac{n}2\rceil}}$, so there
  are $p^{n-\lceil\frac n2\rceil}=p^{\lfloor\frac n2\rfloor}$ solutions modulo $p^n$.
   The solutions $x$ are coprime to $p$ if and only if $p \nmid B$.

Suppose $\delta < n$.  If $\delta$ is odd, it is easy to see that
   \eqref{quadcong2} has no solution.
  Suppose $\delta$ is even and write $\delta = 2\delta'$.
  Then the solutions of the congruence are given by
  $2ax+B \equiv p^{\delta'}X\mod{p^n}$, where
  \[X^2 \equiv \Delta' \mod{p^{n-\delta}}.\]
  By Lemma \ref{landau},
   this congruence has solutions (necessarily two) if and only
  if $\Delta'$ is a quadratic residue modulo $p$.
  So if $(\frac{\Delta'}p)=-1$, \eqref{quadcong2} has no solution.
  Otherwise, the solutions of \eqref{quadcong2} are given by
\[2ax+B\equiv p^{\delta'}(X+p^{n-\delta}\alpha)\mod p^n,\]
  where $\alpha$ ranges through $(\Z/p^{\delta'}\Z)$.
  Therefore, in this case the number of solutions is $2p^{\delta'}$
  (two choices for $X$, and $p^{\delta'}$ choices for $\alpha$).
  If $\delta>0$, then a solution $x$ is divisible by $p$ if and only if $p|B$.
  Now suppose $\delta=0$.  Then $x\equiv (2a)^{-1}(X-B)\mod p^n$, where
  $X^2\equiv \Delta \mod p^n$.
  If $p\nmid c$ (which is the case if $p|B$),
  then $(X-B)$, and hence $x$, is prime to $p$ since
  \[(X-B)(X+B)\equiv X^2-B^2\equiv -4ac\mod p.\]
  When $p|c$ (so that $p\nmid B$), then exactly one of the solutions is divisible
  by $p$, as can be seen by considering
$ax^2+Bx+c\equiv x(ax+B) \equiv 0\mod p$.

Now consider $p=2$.  The congruence \eqref{cong} is equivalent to
   \begin{equation} \label {quadcong3}
       (2ax+B)^2 \equiv \Delta \mod{2^{n+2}}.
   \end{equation}
Suppose $B=2B'$ is even.  Then \eqref{quadcong3} has a solution only
   if $\delta \ge 2$.
   In that case, the congruence is equivalent to
\begin{equation} \label{quadcong4}
  (ax+B')^2 \equiv 2^{\delta-2} \Delta' \mod{2^n}.
 \end{equation}
   If $\delta-2 \geq n$, then $ax+B' \equiv 0 \mod{2^{\lceil\frac{n}2\rceil}}$.
   So as before, the congruence \eqref{quadcong4} has $2^{\lfloor\frac n2\rfloor}$ solutions.
  These solutions $x$ are prime to $2$ if and only if $2\nmid B'$, or
  equivalently, $4\nmid B$.
   Now suppose $\delta-2<n$.  Then \eqref{quadcong4} is possible only if
   $\delta=2\delta'$ is even, in which case we can write
  $ax+B' \equiv 2^{\delta'-1} X \mod{2^n}$,
   where $X^2 \equiv \Delta' \mod{2^{n-\delta+2}}$.
   By Lemma \ref{landau}, such $X$ exists if and only if
  $\Delta' \equiv 1 \mod{2^{\min(n-\delta+2,3)}}$, and
   the number of solutions $X$ modulo $2^{n-\delta+2}$ is
   $2^{\min(n-\delta+1,2)}$.  In this case we can take
\[ax+B'\equiv 2^{\delta'-1}(X+2^{n-\delta+2}\alpha)\mod 2^{n}\]
  for any $\alpha\in \Z/2^{\delta'-1}\Z$.
   Therefore \eqref{quadcong4} has $2^{\min(n-\delta+1,2)} 2^{\delta'-1}$ solutions.
   If $\delta>2$, then we see that $2|x\iff 2|B'\iff 4|B$.
   If $\delta =2$, then $\Delta'=(B')^2-ac$ is odd, and we see that
\[B'\text{ even }\implies c\text{ odd }\implies x\text{ odd},\]
\[B'\text{ odd }\implies c\text{ even }\implies x\text{ even}.\]
(The fact that $x$ and $c$ have the same parity when $2|B$
  is immediate from \eqref{cong}).


Lastly, suppose $p=2$ and $B$ is odd. Then $\eqref{quadcong3}$ is solvable
   only if $\delta=0$.
   In that case, the congruence $X^2 \equiv \Delta \mod{2^{n+2}}$ is solvable
   if and only if $\Delta \equiv 1 \mod{8}$.
   The solutions to the latter congruence can be denoted $X,\,X+2^{n+1},\,-X,\,-X+2^{n+1}$.
   Therefore $2ax + B \equiv \pm X \mod{2^{n+1}}$.  This means
  $x\equiv (\frac{\pm X-B}2)a^{-1}\mod 2^n$ has exactly two solutions.
  Because $x^2+x\equiv 0\mod 2$, we see that one solution is odd and one is even.
\end{proof}

\begin{proposition} \label{kbound}
  Let $c=p^{2\alpha}$ with $\alpha \geq 1$.  Let $\chi$ be a Dirichlet
  character modulo $c$, of conductor $p^\g$ ($\g\le 2\alpha$).
  Suppose $(a,c)=1$ and $c\nmid b$.  Then:
   \begin{enumerate}
    \item  \label{kbound1} If $p$ is odd,  then
  $|S_{\chi}(a,b;c)| \leq 2p^{3\alpha/2}$.
    \item  \label{kbound2} If $p=2$,  then
  $|S_{\chi}(a,b;c)| \leq 4p^{3\alpha/2}$.
    \item  \label{kbound3} If $p$ is odd and $\g \le 2\alpha -1$,
      then $|S_{\chi}(a,b;c)| \leq 2p^{\alpha}$.
     If further (i) $p|b$ or (ii) $p\nmid b$ and $ab$ is not
     a quadratic residue mod $p$, then $S_{\chi}(a,b;c)=0$.
    \item  \label{kbound4} If $p=2$ and $\g\le 2\alpha-2$,
     then $|S_{\chi}(a,b;c)| \leq 2^{\min(\alpha-1,2)} p^{\alpha} \leq 4p^{\alpha}$.
   \end{enumerate}
\end{proposition}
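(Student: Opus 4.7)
The plan is to adapt the classical Sali\'e-Estermann computation for prime power Kloosterman sums to the twisted setting. The structural input is the fact that for every prime $p$ and every $\alpha \geq 1$, the binomial theorem gives $(1+p^\alpha)^u \equiv 1 + up^\alpha \pmod{p^{2\alpha}}$, so $1 + p^\alpha\Z/1 + p^{2\alpha}\Z$ is cyclic of order $p^\alpha$ generated by $1 + p^\alpha$, and consequently there is a unique $h \in \Z/p^\alpha\Z$ with
\[
\chi(1 + p^\alpha u) = e\!\left(\tfrac{hu}{p^\alpha}\right) \qquad (u \in \Z).
\]

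First I would parameterize every $x \in (\Z/p^{2\alpha}\Z)^*$ uniquely as $x \equiv y + p^\alpha z$, where $y$ runs through a fixed set of $\varphi(p^\alpha)$ representatives of $(\Z/p^\alpha\Z)^*$ and $z \in \Z/p^\alpha\Z$. Using $\ol{x} \equiv \ol{y}(1 - p^\alpha z\,\ol{y}) \pmod{p^{2\alpha}}$ and the multiplicativity of $\chi$, the sum becomes
\[
S_\chi(a,b;p^{2\alpha}) = \sum_{y} \ol{\chi(y)}\, e\!\left(\tfrac{ay + b\ol{y}}{p^{2\alpha}}\right) \sum_{z \in \Z/p^\alpha\Z} e\!\left(\tfrac{z(a - b\ol{y}^2 - h\ol{y})}{p^\alpha}\right).
\]
The inner sum in $z$ vanishes unless $a - b\ol{y}^2 - h\ol{y} \equiv 0 \pmod{p^\alpha}$, which after multiplication by $y^2$ becomes the quadratic congruence
\[
Q(y) \eqdef ay^2 - hy - b \equiv 0 \pmod{p^\alpha}.
\]
Thus $|S_\chi(a,b;p^{2\alpha})| \leq p^\alpha\, |\mathcal{S}|$, where $\mathcal{S}$ is the set of solutions of $Q(y) \equiv 0 \pmod{p^\alpha}$ with $(y,p) = 1$.

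The next step is to bound $|\mathcal{S}|$ by applying Lemma \ref{quad} to $Q$, whose discriminant is $\Delta = h^2 + 4ab$. For the general bounds (1) and (2), the lemma gives $|\mathcal{S}| \leq 2 p^{\alpha/2}$ when $p$ is odd and $|\mathcal{S}| \leq 4 \cdot 2^{\alpha/2}$ when $p=2$, which with the factor $p^\alpha$ produces $2p^{3\alpha/2}$ and $4p^{3\alpha/2}$ respectively. The refined bounds (3) and (4) rest on the dictionary $\ord_p(h) = \max(0,\, 2\alpha - \gamma)$, which follows from the identity $\chi(1 + p^\beta v) = e(hp^{\beta-\alpha}v/p^\alpha)$ for $\alpha \leq \beta \leq 2\alpha$ combined with the definition of conductor. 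Hence $\gamma \leq 2\alpha - 1$ (for $p$ odd) forces $p \mid h$, so $Q \pmod{p}$ reduces to $ay^2 \equiv b$; this has no solutions when $p \mid b$ or when $ab$ is a non-residue mod $p$ (producing the vanishing $S_\chi = 0$), and otherwise Hensel's lemma lifts each of the two simple roots uniquely to $\Z/p^\alpha\Z$ (since $Q'(y) = 2ay - h \equiv 2ay \not\equiv 0 \pmod{p}$), yielding $|\mathcal{S}| \leq 2$. The case $p = 2$, $\gamma \leq 2\alpha - 2$ forces $4 \mid h$, whence $\Delta \equiv 4ab \pmod{16}$, and Lemma \ref{quad} produces at most $2^{\min(\alpha-1,\,2)}$ odd solutions.

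The main obstacle will be the careful bookkeeping in the $p=2$ case: the constraint $(y,2)=1$ has to be extracted from the parity information in Lemma \ref{quad} (solutions share the parity of $-b$ when $B = -h$ is even, and split evenly between even and odd when $B$ is odd), and the cases for $\delta = \ord_2(\Delta)$ interact non-uniformly with $\ord_2(h)$ and $\ord_2(b)$. Assembling these to give the uniform constants $4$ in (2) and $2^{\min(\alpha-1,2)}$ in (4) is routine but requires some case analysis.
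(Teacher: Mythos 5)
Your approach is the same as the paper's: reduce $S_\chi(a,b;p^{2\alpha})$ to counting unit solutions of a quadratic congruence $ay^2 - hy - b \equiv 0 \pmod{p^\alpha}$, bound the count by Lemma \ref{quad}, and use the conductor to control $\ord_p(h)$ in parts (3) and (4). The only difference is cosmetic — you derive the Sali\'e-type identity directly from the parameterization $x = y + p^\alpha z$ (together with $\chi(1+p^\alpha u) = e(hu/p^\alpha)$), whereas the paper cites Lemma 12.2 of \cite{IK} with $f(y)=y$, $g(y)=(ay^2+b)/y$ to get the same formula.
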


\begin{proof}
  We apply Lemma 12.2 of \cite{IK} with $f(y)=y$ and $g(y)=\frac{ay^2+b}y$,
  which gives
   \begin{equation} \label{IK1}
    S_{\chi}(a,b;c) = p^{\alpha} \sum_{y \in (\Z/p^{\alpha} \Z)^*
  \atop{h(y)\equiv 0 \mod p^{\alpha}}}
   \ol{\chi(y)} e(\frac{ay+b\ol y}{c}), \end{equation}
the summand being independent of the choice of representative for $y$, where
   \begin{equation} \label{h(y)}
 h(y) = a-b{y}^{-2} + B{y^{-1}}
  \end{equation}
  for $B$ determined by
   \begin{equation} \label{B1} \ol{\chi(1+zp^{\alpha})} = e(\frac{Bz}{p^{\alpha}}). \end{equation}
   This immediately gives
   $|S_{\chi}(a,b;c)| \leq p^{\alpha}M$, where $M$ is the number of
   solutions to
   \begin{equation} \label{hc}
ay^2+By-b \equiv 0 \mod{p^{\alpha}},\quad (y,p)=1.
  \end{equation}
   By Lemma \ref{quad}, $M\le 2\gcd(2,p)p^{\alpha/2}$.
  This proves \ref{kbound1} and \ref{kbound2}.

   Now suppose $p$ is odd and $\g \le 2\alpha - 1$.
  If $\g\le \alpha$, then $B=0$ by \eqref{B1}.
   If $\alpha<\g\le 2\alpha-1$, then taking $z=p^{\gamma - \alpha}$
   in \eqref{B1} we have $e(\frac{B}{p^{2\alpha-\gamma}})=1$.
   Hence $\frac{B}{p^{2\alpha-\gamma}} \in \Z$.  So we see that $p|B$ whenever
   $\g\le 2\alpha-1$.  Therefore by Lemma \ref{quad}, \eqref{hc} has no
  solutions $y$ which are prime to $p$, unless $p\nmid b$ and $4ab$ (and hence $ab$)
  is a quadratic residue modulo $p$.  In the latter case, there are exactly two
  such solutions, so that $|S_{\chi}(a,b;c)|\le 2p^\alpha$.
    This proves \ref{kbound3}.

   Next, assume $p=2$ and $\g\le 2\alpha -2$.
   If $\g \le \alpha$, then $B=0$ by \eqref{B1}.
   If $\alpha<\g\le 2\alpha - 2$, then taking $z=p^{\gamma - \alpha}$
   in \eqref{B1} gives $e(\frac{B}{p^{2\alpha-\gamma}})=1$.
   Hence $\frac{B}{p^{2\alpha-\gamma}} \in \Z$, so that $4|B$
  whenever $\g\le 2\alpha-2$.
  By Lemma \ref{quad}, \eqref{hc} has solutions $y$ only if $b$ is odd
  (and so $\delta=2$ in the notation of the lemma).
  The number of solutions is at most $2^{\min(\alpha-1,2)}$.  Assertion
  \ref{kbound4} follows.
\end{proof}

\begin{proposition} \label{kboundo}
    Let $c=p^{2\alpha+1}$ with $\alpha \geq 1$.
    Let $\chi$ be a Dirichlet
  character modulo $c$, of conductor $p^\g$ ($\g\le 2\alpha+1$).
   Suppose $(a,c)=1$ and $c\nmid b$.  Then:
    \begin{enumerate}
    \item \label{kboundo1} If $p$ is odd, then
  $|S_{\chi}(a,b;c)| \leq 2p^{3\alpha/2+1}$.
    \item \label{kboundo2} If $p=2$, then
  $|S_{\chi}(a,b;c)| \leq 4p^{3\alpha/2+1}$.
    \item \label{kboundo3} If $p$ is odd and $\g\le 2\alpha$, then
   $|S_{\chi}(a,b;c)| \leq 2p^{\alpha+1/2}$.
    Furthermore, if (i) $p|b$ or (ii) $p\nmid b$ and $ab$ is a quadratic residue modulo $p$,
    then $S_{\chi}(a,b;c)=0$.
    \item \label{kboundo4} If $p=2$ and $\g\le 2\alpha-1$, then
    $|S_{\chi}(a,b;c)| \le 2^{\min(3,\alpha)} p^{\alpha} \leq 8p^{\alpha}$.
    \end{enumerate}
\end{proposition}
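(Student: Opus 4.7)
The proof will parallel that of Proposition \ref{kbound} for the even-exponent case, adjusted to accommodate the odd power $p^{2\alpha+1}$. The plan is to use a splitting argument in the spirit of Lemma 12.2 of \cite{IK} to reduce matters to counting unit solutions of a quadratic congruence modulo $p^{\alpha}$, bound the count via Lemma \ref{quad}, and then extract a further cancellation from an inner quadratic Gauss sum to reach the sharper bounds in (3) and (4).

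To begin, I would write every $x \in (\Z/p^{2\alpha+1}\Z)^*$ uniquely as $x = y(1 + p^{\alpha+1}z)$, with $y$ ranging over lifts of $(\Z/p^{\alpha+1}\Z)^*$ to $\Z/p^{2\alpha+1}\Z$ and $z \in \Z/p^{\alpha}\Z$; this is a bijection because $(p^{\alpha+1})^2 \equiv 0 \mod p^{2\alpha+1}$, and the first-order expansion yields $\ol x \equiv \ol y(1 - p^{\alpha+1}z) \mod p^{2\alpha+1}$. Substituting gives $ax + b\ol x \equiv (ay + b\ol y) + p^{\alpha+1}(ay - b\ol y)z \mod p^{2\alpha+1}$. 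Defining $B \in \Z/p^{\alpha}\Z$ by $\ol{\chi}(1+p^{\alpha+1}z) = e(Bz/p^{\alpha})$, the sum over $z$ detects the condition $ay - b\ol y + B \equiv 0 \mod p^{\alpha}$, equivalently $ay^2 + By - b \equiv 0 \mod p^{\alpha}$, and contributes a factor $p^{\alpha}$. By Lemma \ref{quad}, the number of unit solutions $y$ mod $p^{\alpha}$ is at most $2\gcd(2,p)p^{\alpha/2}$, and each lifts to $p$ choices mod $p^{\alpha+1}$; this yields (1) and (2). For (3) and (4), setting $z = p^{\g-\alpha-1}$ in the definition of $B$ and using $\chi(1+p^{\g})=1$ forces $p \mid B$ whenever $\g \le 2\alpha$ (respectively $4 \mid B$ when $\g \le 2\alpha-1$ and $p=2$). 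With $p \mid B$, Lemma \ref{quad} then shows that if $p \mid b$, every solution of $ay^2+By-b \equiv 0 \mod p^\alpha$ is divisible by $p$ and hence $S_{\chi}(a,b;c)=0$, while if $p \nmid b$ the solvability is controlled by the quadratic-residue character of $ab$ mod $p$, giving the stated vanishing in the exceptional case.

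The hardest step, and where the $p^{1/2}$ improvement is recovered, is the estimation of the inner sum over the $p$ lifts $y = y_0 + jp^{\alpha}$ ($j \in \Z/p\Z$) of a fixed solution $y_0 \mod p^{\alpha}$. Expanding $ay + b\ol y$ modulo $p^{2\alpha+1}$ to second order in $jp^{\alpha}$, and using the relation $a - b\ol{y_0}^2 + B\ol y_0 \equiv 0 \mod p^{\alpha}$, the linear-in-$j$ terms collapse into a single phase while the quadratic-in-$j$ terms produce $e(bj^2\ol{y_0}^3/p)$. Together with the linear phase from $\ol{\chi}(1+jp^{\alpha}\ol y_0)$, which under $\g \le 2\alpha$ (for $p$ odd) is also linear in $j$, the inner sum over $j \in \Z/p\Z$ becomes a classical quadratic Gauss sum whose modulus is $\sqrt{p}$ (since $p \nmid b\ol{y_0}^3$). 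This converts the crude bound $p^{\alpha}\cdot 2p = 2p^{\alpha+1}$ into $2p^{\alpha+1/2}$. The case $p=2$ requires the 2-adic clauses of Lemma \ref{quad} and some additional bookkeeping at the prime 2 (e.g., tracking that $B$ is forced to be divisible by $4$ when $\g \le 2\alpha-1$ rather than merely by $2$, and handling the extra cases in the discriminant analysis), which accounts for the weaker constant $2^{\min(3,\alpha)}$ in (4).
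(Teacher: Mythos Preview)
Your approach is correct and is essentially the same as the paper's. The paper invokes Lemma~12.3 of \cite{IK} as a black box, which immediately yields the expression $S_\chi(a,b;c)=p^\alpha\sum_{y\in(\Z/p^\alpha\Z)^*,\,h(y)\equiv 0}\ol{\chi(y)}e((ay+b\ol y)/c)\,G_p(y)$ with an explicit inner Gauss sum $G_p(y)$, whereas you rederive this by hand with a slightly different split (first $x=y(1+p^{\alpha+1}z)$, then $y=y_0+jp^\alpha$). The two organizations are equivalent: your quadratic-in-$j$ phase $e(bj^2\ol{y_0}^3/p)$ matches the paper's $d(y)\equiv by^{-3}\pmod p$ when $p\mid B$, and both reach parts (1)--(4) by bounding the solution count of $ay^2+By-b\equiv 0\pmod{p^\alpha}$ via Lemma~\ref{quad} and then saving $\sqrt p$ from the Gauss sum in part (3). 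One small remark: in part (4) the paper does \emph{not} use the Gauss-sum saving at all---it simply applies $|G_p(y)|\le p$ together with the sharper solution count $M\le 2^{\min(\alpha-1,2)}$ coming from $4\mid B$, which already gives $2^{\min(\alpha,3)}p^\alpha$.
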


\begin{proof} We apply Lemma 12.3 of [IK] with $f(y)=y$ and $g(y)=\frac{ay^2+b}y$, which gives
\begin{equation}\label{Seq}
 S_{\chi}(a,b;c) = p^{\alpha} \sum_{y \in (\Z/p^{\alpha} \Z)^*,
   \atop{h(y)\equiv 0\mod p^{\alpha}}}
   \ol{\chi(y)} e(\frac{ay+b\ol y}{c}) G_p(y).
 \end{equation}
   Here $h(y)$ is given by \eqref{h(y)} as before, but this time $B$ is
  defined by
   \begin{equation} \label{B2}
    \ol{\chi(1+zp^{\alpha})} = e\left( \frac{Bz}{p^{\alpha+1}} + (p-1) \frac{Bz^2}{2p} \right),
    \end{equation}
   and $G_p(y)$ is the Gauss sum
   \begin{equation} \label{Gp}
    G_p(y) = \sum_{z\mod p}e(\frac{d(y) z^2 + h(y) p^{-\alpha}z}{p})
    \end{equation}
   for
   \begin{equation}\label{d(y)}
    d(y) = by^{-3} + (p-1) \frac B2 y^{-2}.
    \end{equation}
   Because $|G_p(y)| \leq p$, we have
\begin{equation}\label{M}
|S_{\chi}(a,b;c)| \leq p^{\alpha+1}M,
\end{equation}
   where $M$ is the number of solutions to \eqref{hc}.  As before,
   $M\le 2\gcd(2,p)p^{\alpha/2}$, so \ref{kboundo1} and \ref{kboundo2} follow.

   Suppose $p$ is odd and $\g\le 2\alpha$. If $\g\le \alpha$, then $B=0$ by \eqref{B2}.
   If $\alpha<\g\le 2\alpha$, then setting $z=p^{\gamma-\alpha}$ in \eqref{B2} gives
   \[1 = e\left( \frac{B}{p^{2\alpha+1-\gamma}} + \frac{(p-1)}2 Bp^{2(\gamma-\alpha)-1} \right)=
   e\left( \frac{B}{p^{2\alpha+1-\gamma}} \right).\]
   Thus $p|B$ whenever $\g\le 2\alpha$.
   As in the previous proof, the congruence \eqref{hc} has solutions (necessarily two in number)
  only if $p\nmid b$ and $ab$ is a quadratic residue modulo $p$.
   Because $p|B$, $d(y) \equiv by^{-3} \not \equiv 0 \mod{p}$.
   Hence by (12.37) of \cite{IK}, $|G_p(y)| = p^{1/2}$.
   It now follows that $|S_\chi(a,b;c)|\leq 2p^{\alpha}p^{1/2}$, which proves
   \ref{kboundo3}.

   Now suppose $p=2$ and $\g\le 2\alpha-1$.
   If $\g\le \alpha$, then  then $B=0$ by \eqref{B2}.
   If $\alpha<\g\le 2\alpha - 1$, then setting $z=p^{\gamma-\alpha}$ in \eqref{B2} gives
   \[1=e\left(\frac{B}{p^{2\alpha+1-\g}}+\frac{Bp^{2(\g-\alpha)}}{p^2}\right)=
  e\left(\frac{B}{p^{2\alpha+1-\gamma}}\right).\]
   Because $2\alpha+1-\gamma \geq 2$, we see that $4|B$. As in the previous proof,
   the number of solutions to \eqref{hc} is $M\le 2^{\min(\alpha-1,2)}\le 4$.
   Assertion \ref{kboundo4} now follows immediately by \eqref{M}.
\end{proof}

\begin{example}\label{p3}
Let $p$ be an odd prime, and let
 $\chi$ be a primitive Dirichlet character of 
  modulus $p^3$.  Then there exist $a,b\in(\Z/p^3\Z)^*$ such that
\[S_\chi(a,b;p^3)=p^2.\]
  In particular, if $c=p^3$ for $p\ge 17$,
\[|S_\chi(a,b;c)|>\tau(c)(a,b,c)^{1/2}c^{1/2}.\]
\end{example}

\begin{proof}
We apply the above proposition with $\alpha=1$.
  If, in \eqref{B2}, $p|B$, then 
\[\ol{\chi}(1+zp)=e(\frac{Bz}{p^2}),\]
which implies that $\ol{\chi}(1+zp^2)=1$, and hence $\c_{\chi}|p^2$.
 Thus assuming $\chi$ is primitive, $p\nmid B$.
 Take $a=\tfrac{p-1}2B$ and $b=-\tfrac{p-1}2B$, and consider $S_\chi(a,b;p^3)$ for
  $\chi$ primitive.
  In the notation of the previous proof,
\[h(y) = \tfrac{p-1}2B+\tfrac{p-1}2By^{-2}+By^{-1}\equiv 0\mod p\]
\[\iff y^2-2y+1\equiv 0\mod p \iff y\equiv 1\mod p.\]
Therefore since $a+b=0$, \eqref{Seq} gives
\[S_\chi(a,b;p^3)=pG_p(1).\]
In the notation of \eqref{d(y)}, we have
\[d(1)=-\tfrac{p-1}2B+\tfrac{p-1}2B=0.\]
Since $h(1)=0$ as well, we have $G_p(1)=p$.  Thus $S_\chi(a,b;p^3)=p^2$.
%
\end{proof}

\begin{proposition} \label{kboundg}
   Suppose $c=p^\ell$ and $\c_\chi=p^\g$ for $\g\le \ell$. If $(a,c)=1$ and $c|b$, then
   $|S_{\chi}(a,b;c)|= \begin{cases}p^{\ell/2} &\text{if } \g=\ell,\\0&\text{otherwise.}
\end{cases}$
\end{proposition}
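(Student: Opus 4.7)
The plan is to exploit the hypothesis $c\mid b$ to collapse the sum to a pure Gauss sum. Writing $b=cb'$ with $b'\in\Z$ gives $e(b\ol{x}/c)=e(b'\ol{x})=1$ for every $x\in(\Z/c\Z)^*$, so
\[ S_{\chi}(a,b;c)=\sum_{x\in(\Z/c\Z)^*}\ol{\chi(x)}\,e(ax/c)=G_{\ol\chi}(a), \]
which is the Gauss sum studied in \S\ref{charsum}. The proposition then reduces to extracting the value of $|G_{\ol\chi}(a)|$ under the two conductor regimes.

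For the case $\g=\ell$, the character $\ol\chi$ is primitive modulo $c=p^\ell$. Since $(a,c)=1$, formulas \eqref{Gtau} and \eqref{abstau} give $|G_{\ol\chi}(a)|=|\tau(\ol\chi)|\cdot|\chi(a)|=\c_\chi^{1/2}=p^{\ell/2}$ directly.

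For the case $\g<\ell$, I would parametrize $(\Z/p^\ell\Z)^*$ through the surjection onto $(\Z/p^{\g'}\Z)^*$ with $\g'=\max(\g,1)$. Choosing a set-theoretic section $s$, every $x$ admits a unique decomposition $x=s(x_0)(1+tp^{\g'})$ with $x_0\in(\Z/p^{\g'}\Z)^*$ and $t\in\Z/p^{\ell-\g'}\Z$; all such elements are units because $(s(x_0),p)=1$. Since $\c_\chi\mid p^{\g'}$, the character $\ol\chi$ is trivial on the kernel factor, i.e.\ $\ol\chi(1+tp^{\g'})=1$, so
\[ G_{\ol\chi}(a)=\sum_{x_0}\ol\chi(s(x_0))\,e\!\Bigl(\tfrac{as(x_0)}{p^\ell}\Bigr)\sum_{t=0}^{p^{\ell-\g'}-1}e\!\Bigl(\tfrac{as(x_0)\,t}{p^{\ell-\g'}}\Bigr). \]
Provided $\ell>\g'$, the inner complete additive character sum vanishes because $(as(x_0),p)=1$ while the modulus $p^{\ell-\g'}\ge p$ is a nontrivial prime power; hence $G_{\ol\chi}(a)=0$.

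The main (small) obstacle is the borderline case $\g=0,\ell=1$, where $\g'=\ell$ forces the inner sum to collapse to a single term equal to $1$; a direct calculation gives $G_{\ol\chi}(a)=c_p(a)=-1$ rather than $0$, so the statement as literally written fails there. I read the proposition as implicitly excluding this degenerate case, which in any event is subsumed into the bound of Proposition \ref{Chowla} when assembling Theorem \ref{pcase}. An alternative to the parametrization argument is to invoke \eqref{Gchi} directly: the divisor sum collapses to $d=1$ since $(a,p)=1$, and the factor $\ol{\chi^0}(p^{\ell-\g})$ vanishes whenever $\g\ge1$ because $p$ divides the argument while $p\nmid\c_{\chi^0}$.
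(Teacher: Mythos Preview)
Your approach is the same as the paper's: collapse $S_\chi(a,b;c)$ to the Gauss sum $G_{\ol\chi}(a)$ using $c\mid b$, then evaluate. The paper's proof is shorter only because it cites Hua's book for the vanishing when $\g<\ell$, whereas you supply a direct coset-splitting argument and the alternative via \eqref{Gchi}; both of your arguments are correct.

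Your observation about the borderline case $\g=0$, $\ell=1$ is valid and worth flagging: here $\chi$ is the principal character mod $p$, and the sum is the Ramanujan sum $c_p(a)=\mu(p)=-1$, not $0$. The proposition as stated (and the paper's appeal to the cited theorem, which concerns imprimitive non-principal characters) does not literally cover this case. As you note, this does not affect the downstream bounds in Corollary~\ref{kboundfinal} or Theorem~\ref{pcase}, since $|{-1}|\le 2p^{1/2}$ is already subsumed by Proposition~\ref{Chowla}, and in the later factorization the $p\nmid N$ primes are handled separately via $\chi_p=\mathbf{1}$ anyway.
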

\begin{proof} When $c|b$, $S_{\chi}(a,b;c)=\sum_{d \in (\Z/c\Z)^*} \ol{\chi(d)}
   e(\frac {ad}{c})$ is a Gauss sum.
   If $\gamma < \ell$, then the Gauss sum vanishes
  (\cite{Hua}, Theorem 7.4.2).  If
   $\gamma=\ell$, then the absolute value of the Gauss sum is $p^{\ell/2}$.
\end{proof}

\begin{corollary} \label{kboundfinal}
   Suppose $c=p^\ell$ for $\ell\ge1$,
  $\c_\chi=p^\g$ for $\g\le \ell$, and $(a,c)=1$.  Then:
   \begin{itemize}
   \item $|S_{\chi}(a,b;c)| \leq 2\gcd(2,p)^2\, c^{1/2} p^{1/4} \c_\chi^{1/4}$,
   \item $|S_{\chi}(a,b;c)| \leq \tau(c)c^{1/2}p^{1/4} \c_\chi^{1/4}$,
   \item $|S_{\chi}(a,b;c)| \leq 2\gcd(2,p)^2\, c^{1/2} \c_\chi^{1/2}$,
   \item $|S_\chi(a,b;c)|\le \tau(c)c^{1/2}\c_{\chi}^{1/2}$.
   \end{itemize}
\end{corollary}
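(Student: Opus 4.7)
\medskip

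\noindent\textbf{Proof plan for Corollary \ref{kboundfinal}.}

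The corollary is a packaging result: each of its four bounds is a uniform statement that should follow by assembling the sharper, case-by-case estimates already proven in Propositions \ref{Chowla}, \ref{kbound}, \ref{kboundo}, and \ref{kboundg}. My plan is to partition the situation into a small number of cases according to the size of $\ell$, whether $c\mid b$, and the size of $\g=\ord_p(\c_\chi)$ relative to $\ell$; in each case I take the corresponding prior bound and verify all four inequalities of the corollary directly. Since the statements differ only in the exponents of $p$ and $\c_\chi$ and in whether the leading factor is $\tau(c)=\ell+1$ or $2\gcd(2,p)^2$, the verifications reduce to elementary manipulations of powers of $p$.

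First I would dispatch the easy cases. When $\ell=1$, Proposition \ref{Chowla} gives $|S_\chi(a,b;p)|\le 2p^{1/2}$ (using $(a,c)=1$), and each of the four bounds of the corollary majorizes $2p^{1/2}$ because $c^{1/2}p^{1/4}\c_\chi^{1/4}\ge p^{1/2}$ and $c^{1/2}\c_\chi^{1/2}\ge p^{1/2}$, with the leading constants accounting for the factor of $2$. When $c\mid b$, Proposition \ref{kboundg} shows the sum vanishes unless $\g=\ell$, in which case $|S_\chi|=p^{\ell/2}=c^{1/2}$ and $\c_\chi=c$; all four bounds then trivially exceed $c^{1/2}$.

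Next I would handle $\ell\ge 2$ and $c\nmid b$ by splitting on the parity of $\ell$, invoking Proposition \ref{kbound} when $\ell=2\alpha$ and Proposition \ref{kboundo} when $\ell=2\alpha+1$. Within each subcase I further split by $\g$: if $\g$ is \emph{large} (i.e.\ $\g=\ell$ for $p$ odd, or $\g\in\{\ell-1,\ell\}$ for $p=2$), I apply parts (1) and (2) of the relevant proposition to get $|S_\chi|\le 2\gcd(2,p)p^{\lceil 3\ell/4\rceil}$ (or a similar exponent), and then I check that each of the four target bounds dominates this—typically the exponent exactly balances, e.g.\ for $p$ odd, $\g=\ell$, $\ell=2\alpha+1$ the bound (A1) gives precisely $2p^{3\alpha/2+1}$. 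If $\g$ is \emph{small} (parts (3) and (4)), the prior bound drops to $O(p^{\alpha})$ or $O(p^{\alpha+1/2})$, which is then painlessly dominated by all four expressions since $c^{1/2}\ge p^\alpha$ already.

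The main obstacle is the corner case $p=2$ with $\alpha=1$, where parts (2) of Propositions \ref{kbound}, \ref{kboundo} give the crude bound $4p^{3/2}$ (resp.\ $4p^{5/2}$), which turns out to \emph{exceed} the bound $\tau(c)c^{1/2}p^{1/4}\c_\chi^{1/4}$ of (A2) when $\c_\chi$ is near maximal. I would resolve this by falling back on the trivial estimate
\[
|S_\chi(a,b;c)|\le\varphi(c),
\]
valid from the definition \eqref{non} since $|\ol{\chi(x)}|\le 1$ and the sum has $\varphi(c)$ terms. For $\ell\le 3$ with $p=2$ this gives $|S_\chi|\le 4$, which is easily checked to be dominated by every expression on the right of the corollary using $c^{1/2}\ge\sqrt{2}$ and the constants $2\gcd(2,p)^2=8$, $\tau(c)\ge 3$. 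A routine case check shows that for all other $(p,\ell,\g)$ the direct bound from the earlier propositions already suffices, so combining the prior propositions with this trivial estimate in the exceptional small-$\ell$ range yields all four bounds uniformly.
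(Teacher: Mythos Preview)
Your proposal is correct and follows essentially the same approach as the paper's proof: both dispose of $c\mid b$ via Proposition \ref{kboundg} and $\ell=1$ via Proposition \ref{Chowla}, then split the remaining cases by the parity of $\ell$ and the size of $\g$, applying parts (1)--(4) of Propositions \ref{kbound} and \ref{kboundo} accordingly, and finally patch the $p=2$, $\alpha=1$ corner case with the trivial bound $|S_\chi(a,b;c)|\le\varphi(c)$. The paper's writeup is slightly more explicit in tracking which of the four inequalities needs which numerical check, but the structure and inputs are the same.
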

\begin{proof}
  This follows directly from what we have proven above.  We just need to
  examine each case.
  In view of Proposition \ref{kboundg}, we can assume that $c\nmid b$.
   First, suppose $p$ is odd and $\ell=2\alpha$ is even.
   If $\g=2\alpha$, then
   by Proposition \ref{kbound} \eqref{kbound1},
\[|S_{\chi}(a,b;c)| \le 2p^{\alpha} p^{\alpha/2}
  =  2c^{1/2} \c_\chi^{1/4}.\]
   If $\g\le 2\alpha -1$, then by Proposition \ref{kbound} \eqref{kbound3},
   we have $|S_\chi(a,b;c)|\le 2c^{1/2}$.  Now consider $\ell$ odd.
   If $\ell=1$, then the bounds hold by Proposition \ref{Chowla}.
   Suppose $\ell=2\alpha+1$ for $\alpha \ge 1$.
   If $\g=2\alpha + 1$, then by Proposition \ref{kboundo} \eqref{kboundo1},
 \[|S_{\chi}(a,b;c)| \leq 2p^{\alpha+\frac12} p^{\frac\alpha2+\frac12} =2p^{1/4}c^{1/2}\c_\chi^{1/4}
   \leq 2c^{1/2}\c_{\chi}^{1/2}.\]
   The last step holds since $\gamma \geq 1$.
   If $\g\le 2\alpha$, then $|S_\chi(a,b;c)|\le 2p^{\alpha+\frac12}=2c^{1/2}$ by
   Proposition \ref{kboundo} \eqref{kboundo3}.  This establishes the
  bounds when $p$ is odd.

   Now consider the case $p=2$, and suppose $\ell=2\alpha$ is even.
  When $\alpha=1$, the bounds are trivial because $S_\chi(a,b;c)$ is a
  sum over $(\Z/4\Z)^*$ and is hence bounded by $2$.  So we can assume
  that $\alpha>1$.
  If $\g=2\alpha$, then by Proposition \ref{kbound} \eqref{kbound2},
   \[|S_{\chi}(a,b;c)| \le 4p^{\alpha} p^{\alpha/2} =4c^{1/2}\c_\chi^{1/4}
  \le (2\alpha+1)c^{1/2}\c_\chi^{1/4}=\tau(c)c^{1/2}\c_\chi^{1/4}.\]
  If $\g=2\alpha-1$, then
\[   |S_{\chi}(a,b;c)| \le 4p^{\alpha} p^{\alpha/2} =4c^{1/2} p^{(\gamma+1)/4}
  \le \tau(c) p^{1/4}c^{1/2}\c_\chi^{1/4}
 \leq \tau(c)c^{1/2}\c_{\chi}^{1/2}.\]
   The last step holds because $\gamma \geq 1$.
   If $\g\le 2\alpha -2$, then by Proposition \ref{kbound} \eqref{kbound4},
\[|S_\chi(a,b;c)|\le 4p^\alpha=4c^{1/2}\le (2\alpha+1)c^{1/2}=\tau(c)c^{1/2},\]
  since $\alpha >1$.
   Now consider $\ell$ odd.  If $\ell=1$, then the bounds are obvious since
  the summation only has one term.
  Suppose $\ell=2\alpha+1$ with $\alpha \ge 1$.
  If $\g=2\alpha+1$, then by Proposition \ref{kboundo} \eqref{kboundo2},
\[|S_\chi(a,b;c)|\le 4p^{\alpha+\frac12}p^{\frac\alpha2+\frac12}
  =4p^{1/4}c^{1/2}\c_\chi^{1/4}\le 4c^{1/2}\c_{\chi}^{1/2}
   \le \tau(c)c^{1/2}\c_\chi^{1/2},\]
  since $\g\ge 1$ and $\tau(c)=2\alpha+2\ge 4$.
   If $\g=2\alpha$, then
   by Proposition \ref{kboundo} \eqref{kboundo2},
\[|S_{\chi}(a,b;c)| \leq 4p^{\alpha+\frac12} p^{\frac\alpha2+\frac12}
  =4p^{1/4}c^{1/2}p^{1/4}\c_{\chi}^{1/4}.\]
If $\alpha \ge 2$, then $4p^{1/4}\le 2\alpha+2=\tau(c)$, and the first two
  inequalities follow.  That the remaining ones also hold follows from
  $p^{\frac12+\frac{\alpha}2}\le p^{\alpha}=\c_\chi^{1/2}$.
  If $\alpha=1$, then $S_\chi(a,b;c)$ is a sum over
  $(\Z/8\Z)^*$, so it is bounded by 4, and the inequalities clearly hold
  in this case as well.
Finally, if $\g\le 2\alpha-1$, then by Proposition \ref{kboundo}
   \eqref{kboundo4},
  \[|S_\chi(a,b;c)|\le 2^{\min(3,\alpha)}p^\alpha\le (2\alpha+2)p^{\alpha+\frac12}
  =\tau(c)c^{1/2}.\qedhere\]
\end{proof}


\begin{proposition}\label{swap}
  The results of Propositions \ref{kbound}, \ref{kboundo} and Corollary \ref{kboundfinal}
  hold if we exchange the roles of $a$ and $b$.
\end{proposition}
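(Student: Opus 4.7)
The plan is to reduce the swapped statement to the already-proved unswapped versions via the single substitution $x \mapsto \overline{x}$ in the defining sum. By definition
\[
S_\chi(a,b;c) = \sum_{x \in (\Z/c\Z)^*} \ol{\chi(x)}\, e\!\left(\frac{ax + b\ol x}{c}\right),
\]
and replacing $x$ by $\ol x$ (a bijection of $(\Z/c\Z)^*$ sending $\ol x$ back to $x$) gives the identity
\[
S_\chi(a,b;c) = S_{\ol\chi}(b,a;c).
\]
The character $\ol\chi$ is a Dirichlet character modulo $c=p^\ell$ with the same conductor $\c_{\ol\chi}=\c_\chi=p^\g$. Therefore every bound of Propositions \ref{kbound}, \ref{kboundo} and Corollary \ref{kboundfinal} that was established for $|S_\chi(\alpha,\beta;c)|$ under the hypothesis $(\alpha,c)=1$ applies verbatim to $|S_{\ol\chi}(b,a;c)|$ under the hypothesis $(b,c)=1$, yielding the same numerical estimates for $|S_\chi(a,b;c)|$ with the roles of $a$ and $b$ exchanged.

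For the vanishing assertions in Proposition \ref{kbound}(\ref{kbound3}) and Proposition \ref{kboundo}(\ref{kboundo3}), I note that the stated conditions are symmetric in $a$ and $b$: the divisibility ``$p\mid b$'' becomes ``$p\mid a$'' after swapping, and the condition that $ab$ be (or fail to be) a quadratic residue mod $p$ is manifestly symmetric in $a,b$. Hence applying the original propositions to $S_{\ol\chi}(b,a;c)$ transcribes correctly to the desired swapped statement.

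There is no real obstacle here; the only thing to verify is that all the hypotheses in the referenced results depend only on $\chi$ through its conductor (so that replacing $\chi$ by $\ol\chi$ is harmless) and that the vanishing criteria respect the swap, both of which are immediate from inspection. Thus the whole proposition reduces to one line: apply the previous bounds to $S_{\ol\chi}(b,a;c)$ and use the identity above.
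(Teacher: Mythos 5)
Your argument is exactly the paper's proof, which also appeals to the identity $S_{\chi}(a,b;c) = S_{\ol{\chi}}(b,a;c)$ (stated there in a single line). You have simply spelled out the substitution $x\mapsto\ol x$ and the observations that $\ol\chi$ has the same conductor and that the vanishing criteria are symmetric in $a,b$, which the paper leaves implicit.
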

\begin{proof} This follows from the fact that $S_{\chi}(a,b;c) = S_{\ol{\chi}}(b,a;c)$.
\end{proof}


We now have all of the pieces in place to prove Theorem \ref{pcase}.

\begin{proof}[Proof of Theorem \ref{pcase}]
 Suppose $c=p^\ell$, and $\c_\chi=p^\g$ for $\g\le \ell$.
  We need to show first that for any $a,b$,
\[|S_{\chi}(a,b;c)| \leq \tau(c)\,(a,b,c)^{1/2}c^{1/2}\c_\chi^{1/2}.\]
  If $c|a$ and $c|b$, this is trivial.
  Suppose $(a,b,c)=p^{a_p}$ for $a_p=\ord_p(a)$, and write
   $a'=p^{-a_p} a$ and $b'= p^{-a_p} b$.
  Then by Corollary \ref{kboundfinal},
\[|S_{\chi}(a,b,c)| = p^{a_p} |S_{\chi}(a',b',p^{\ell-a_p})|
  \leq \tau(c) p^{a_p} p^{(\ell-a_p)/2}\c_\chi^{1/2}\]
\[= \tau(c)(a,b,c)^{1/2}c^{1/2}\c_\chi^{1/2}.\]
  If $(a,b,c)=p^{\ord_p(b)}$, the inequality can be proven in the
  same way after applying Proposition \ref{swap}.

The second assertion, that
\[|S_{\chi}(a,b;c)| \leq \tau(c)\,(a,b,c)^{1/2}c^{1/2}\c_\chi^{1/4}p^{1/4},\]
 follows in the same manner.
\end{proof}

\subsection{Factorization}

Now we turn our attention back to the generalized Kloosterman
  sum $S_\chi(a,b;\n;c)$ \eqref{kloos}, expressing it as a product 
  of local factors.  These factors will in turn be expressed in terms
  of the sums $S_\chi(a,b;c)$ studied in the previous section.

  Let $\chi$ be any multiplicative function $\Z/c\Z\longrightarrow \C$.
  Suppose $c=qr$ with $(q,r)=1$. Then using
  \[\Z/c\Z=(\Z/q\Z)\times(\Z/r\Z),\]
  we see that $\chi$ has a canonical factorization
  $\chi(x)=\chi_q(x)\chi_r(x)$,
  where $\chi_q$ and $\chi_r$ are multiplicative functions on $\Z/q\Z$ and
  $\Z/r\Z$ respectively.
  If $\chi$ is a Dirichlet character modulo $N$, viewed as a function
  on $\Z/c\Z$, and if $(r,N)=1$,
  then $\chi_r =\mathbf{1}$ is the constant function $1$ on $\Z/r\Z$
  (not to be confused with the principal character modulo $r$).

\begin{proposition}  Suppose $\chi$ is a multiplicative function modulo $N$, and
  $q,r\in \Z^+$ with $(q,r)=1$ and $qr\in N\Z$.
Write $\chi(x) = \chi_{q}(x)\chi_r(x)$ as above.
Then
\[S_{\chi}(a,b;\n;qr) = S_{\chi_q}(a\ol{r},b\ol{r};\n;q)
  S_{\chi_r}(a\ol{q},b\ol{q};\n;r),\]
where $\ol{r}r\equiv 1\mod q$ and $\ol qq\equiv 1\mod r$.
\end{proposition}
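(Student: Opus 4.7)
The plan is to prove this factorization by a direct Chinese Remainder Theorem computation; the content is purely combinatorial once the right parametrization is set up.

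First I would parametrize $\Z/qr\Z$ via CRT using the specific lifts that appear in the statement. For each $x \in \Z/qr\Z$, write uniquely
\[x = \ol{r}r\alpha + \ol{q}q\beta, \qquad \alpha \in \Z/q\Z,\ \beta \in \Z/r\Z,\]
so that $x \equiv \alpha \bmod q$ and $x \equiv \beta \bmod r$. Write $x' = \ol{r}r\alpha' + \ol{q}q\beta'$ similarly. Then the condition $xx' \equiv \n \bmod qr$ decomposes as the pair of independent conditions $\alpha\alpha' \equiv \n \bmod q$ and $\beta\beta' \equiv \n \bmod r$, so that the summation set in $S_\chi(a,b;\n;qr)$ becomes a Cartesian product of summation sets for $S_{\chi_q}(\cdots;\n;q)$ and $S_{\chi_r}(\cdots;\n;r)$.

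Next I would factor the two ingredients of the summand. The multiplicativity of $\chi$ together with the canonical decomposition gives $\chi(x) = \chi_q(x)\chi_r(x) = \chi_q(\alpha)\chi_r(\beta)$, since $\chi_q$ only depends on $x \bmod q$. For the additive character, using $\ol{r}r + \ol{q}q \equiv 1 \bmod qr$ (or more directly $\frac{\ol{r}}{q} + \frac{\ol{q}}{r} \equiv \frac{1}{qr} \bmod 1$ after clearing denominators against $\ol{r}r\alpha + \ol{q}q\beta$), a short computation yields
\[\frac{ax+bx'}{qr} \equiv \frac{(a\ol{r})\alpha + (b\ol{r})\alpha'}{q} + \frac{(a\ol{q})\beta + (b\ol{q})\beta'}{r} \pmod 1.\]
Exponentiating via $e(\cdot)$ turns this into a product of two exponentials, one depending only on $(\alpha,\alpha')$ and one only on $(\beta,\beta')$.

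Finally I would combine these two observations: the double sum over $(\alpha,\alpha',\beta,\beta')$ separates as a product of a sum in $(\alpha,\alpha')$ matching the definition of $S_{\chi_q}(a\ol{r},b\ol{r};\n;q)$ and a sum in $(\beta,\beta')$ matching $S_{\chi_r}(a\ol{q},b\ol{q};\n;r)$. There is no real obstacle; the only care needed is to verify that the parametrization $x = \ol{r}r\alpha + \ol{q}q\beta$ genuinely gives a bijection $\Z/qr\Z \leftrightarrow \Z/q\Z \times \Z/r\Z$ (which is CRT with explicit idempotents), and that all the values $\chi_q(\alpha)$, $\chi_r(\beta)$, and the exponentials are well-defined modulo their respective moduli, which is immediate from the constructions.
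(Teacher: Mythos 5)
Your proposal is correct and follows essentially the same route as the paper: the CRT parametrization $x = r\ol{r}\alpha + q\ol{q}\beta$ with idempotent lifts, the decomposition of $xx'\equiv\n\bmod qr$ into two independent congruences, and the simultaneous factorization of $\chi(x)$ and of the exponential. The paper's proof carries out exactly this computation, with $t,d,t',d'$ in place of your $\alpha,\beta,\alpha',\beta'$.
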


\begin{proof}
By the Chinese reminder theorem, $x=r\ol{r} t + q \ol{q} d$ runs
through a complete residue system mod $qr$ when $t$ and $d$ run through complete
residue systems mod $q$ and $r$ respectively.

For fixed $x=r\ol{r} t + q \ol{q} d$, an integer
  $x'$ satisfies $xx'\equiv \n \mod{qr}$
  if and only if
\[tx' \equiv \n \mod{q}\quad\text{ and }\quad
dx'  \equiv \n \mod{r}.\]
Again by the Chinese reminder theorem,
the set of all such $x'$ is parametrized by
$x' =  r \ol{r}t' + q\ol{q}d'$, as $t'$ and $d'$ run
   through all solutions of $t t' \equiv \n \mod{q}$ and
  $dd' \equiv \n \mod{r}$, respectively.

Therefore
\[S_{\chi}(a,b;\n;qr)=\sum_{tt'\equiv \n\atop{\mod q}}
\sum_{dd'\equiv \n\atop{\mod r}}\ol{\chi(r\ol rt+q\ol qd)}
  e\Bigl(\frac{a(r\ol rt+q\ol qd)+b(r\ol rt'+q\ol qd')}{qr}\Bigr)\]
\[=
\Bigl(\sum_{tt'\equiv\n\atop{\mod q}}\ol{\chi_q(t)}
  e(\frac{a\ol rt+b\ol rt'}q)\Bigr)
\Bigl(\sum_{dd'\equiv\n\atop{\mod r}}\ol{\chi_r(d)}
  e(\frac{a\ol qd+b\ol q d'}r)\Bigr).
\qedhere\]
\end{proof}

For $p|c$, write $c=p^{c_p}c^{(p)}$ and $\n=p^{\n_p}\n^{(p)}$,
  \index{notations}{13@$c^{(p)}=p^{-c_p}c$, $\n^{(p)}=p^{-\n_p}\n$}
   where $p\nmid c^{(p)}\n^{(p)}$.
  Then by successive applications of the proposition and \eqref{n1n2},
   we obtain the following.

\begin{corollary}\label{Sfac} With notation as above,
\begin{equation}\label{kloofac}
S_{\chi}(a,b;\n;c)=\prod_{p|c}S_{\chi_{p}}(a\ol{c^{(p)}},
  b\ol{c^{(p)}}\n^{(p)}; p^{\n_p};p^{c_{p}}).
\end{equation}
If $p|N$, then $\chi_p$ is the Dirichlet character mod $p^{c_p}$
  defined as in \eqref{chi_p}, so that $\chi_p(d)=0$
  if $p|d$.  If $p\nmid N$, then 
  $\chi_p=\mathbf 1$\index{notations}{1@$\mathbf 1$ (constant function $1$)}
  is the constant function $1$ on $\Z/p^{c_p}\Z$.
\end{corollary}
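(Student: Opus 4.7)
The plan is to prove the factorization by induction on $\omega(c)$, the number of distinct prime divisors of $c$, peeling off one local factor at each step using the Proposition that immediately precedes the Corollary, and then cleaning up the Kloosterman parameter $\n$ via identity \eqref{n1n2}.

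For the base case $\omega(c)=1$, we have $c=p^{c_p}$, so $c^{(p)}=1$ and the bars are trivial. The desired identity reduces to
\[ S_\chi(a,b;\n;p^{c_p}) = S_{\chi_p}(a, b\n^{(p)}; p^{\n_p}; p^{c_p}), \]
which is exactly \eqref{n1n2} applied to the factorization $\n = \n^{(p)} \cdot p^{\n_p}$ with $\n_1 = \n^{(p)}$ coprime to $c = p^{c_p}$. Note that $\chi$ and $\chi_p$ agree as multiplicative functions on $\Z/p^{c_p}\Z$ by definition of the local factorization.

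For the inductive step, fix a prime $p\mid c$ and write $c = qr$ with $q = p^{c_p}$ and $r = c^{(p)}$, so $(q,r)=1$ and $\omega(r) = \omega(c)-1$. The Proposition gives
\[ S_\chi(a,b;\n;c) = S_{\chi_q}(a\bar r, b\bar r;\n;q)\cdot S_{\chi_r}(a\bar q, b\bar q;\n;r). \]
The first factor is local at $p$: since $\n^{(p)}$ is coprime to $q$, identity \eqref{n1n2} applied with $\n_1 = \n^{(p)}$ and $\n_2 = p^{\n_p}$ converts it into
\[ S_{\chi_p}\bigl(a\overline{c^{(p)}},\, b\overline{c^{(p)}}\,\n^{(p)};\, p^{\n_p};\, p^{c_p}\bigr), \]
which is precisely the $p$-th factor on the right of \eqref{kloofac}. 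To the second factor I apply the inductive hypothesis with modulus $r$ in place of $c$. For each prime $p'\mid r$, the hypothesis produces the local Kloosterman sum
\[ S_{\chi_{p'}}\bigl(a\bar q\cdot \overline{r^{(p')}},\; b\bar q\cdot \overline{r^{(p')}}\,\n^{(p')};\; p'^{\n_{p'}};\; p'^{c_{p'}}\bigr), \]
since $(\chi_r)_{p'} = \chi_{p'}$ by associativity of the local factorization of $\chi$, and $\n^{(p')}$ is the same for $r$ as for $c$ (it is the $p'$-free part of $\n$). It remains to check the congruence $\bar q\cdot \overline{r^{(p')}} \equiv \overline{c^{(p')}} \pmod{p'^{c_{p'}}}$; this holds because $q\cdot r^{(p')} = q\cdot r/p'^{c_{p'}} = c/p'^{c_{p'}} = c^{(p')}$. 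Multiplying this with the $p$-factor above gives the full product \eqref{kloofac}.

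The only non-routine point is the bookkeeping of the inverses and the $p'$-free parts when the induction is applied: one must verify that $\bar q\,\overline{r^{(p')}}\equiv \overline{c^{(p')}}$ modulo $p'^{c_{p'}}$ and that the $p'$-free part $\n^{(p')}$ of $\n$ is intrinsic (independent of whether one views $\n$ as the deficiency for $c$ or for $r$). Both are immediate, so once the bookkeeping is in place the proof is a clean induction. The final assertions about $\chi_p$ being a Dirichlet character vanishing on multiples of $p$ when $p\mid N$, and $\chi_p = \mathbf 1$ when $p\nmid N$, follow directly from the definition \eqref{chi_p} and the fact that a Dirichlet character mod $N$ viewed on $\Z/p^{c_p}\Z$ is trivial when $p\nmid N$.
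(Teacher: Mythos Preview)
Your proof is correct and follows exactly the approach the paper indicates: the paper simply states that the corollary follows ``by successive applications of the proposition and \eqref{n1n2}'', and your induction on $\omega(c)$ is precisely that argument spelled out carefully, including the bookkeeping that $q\cdot r^{(p')}=c^{(p')}$ and $(\chi_r)_{p'}=\chi_{p'}$.
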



Each local factor in \eqref{kloofac} can be expressed in terms of the
  familiar twisted Kloosterman sums \eqref{non}, as the next proposition shows.

\begin{proposition}\label{klcomp}
 Fix integers $k\ge 0$ and $\ell\ge 1$,  and
 let $\chi_p$ be a Dirichlet character modulo $p^\ell$.
  Then    
\begin{equation}\label{lockloo}
S_{\chi_p}(a,b;p^k;p^\ell)= S_{\chi_p}(a,bp^{k};p^\ell),
\end{equation}
If instead of a Dirichlet character, 
  $\chi_p=\mathbf{1}$
   is the constant function $1$ on $\Z/p^\ell\Z$, then when $k<\ell$,
\begin{equation}\label{lockloo3}
S_{\mathbf 1}(a,b;p^k;p^\ell)=\begin{cases}\ds
  p^k\sum_{i=\max(0,k-a_p)}^{\min(b_p,k)}S(\frac{a}{p^{k-i}},\frac{b}{p^i};p^{\ell-k})
  & \text{if }k\le a_p+b_p \\ 0&\text{otherwise,}\end{cases}
\end{equation}
where as usual $a_p=\ord_p(a)$ and $b_p=\ord_p(b)$.
For the $k\ge \ell$ case, the sum is evaluated in \eqref{ngec} below. It
  vanishes unless $\ell\le a_p+b_p+1$.
\end{proposition}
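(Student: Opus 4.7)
The plan is to handle the two assertions separately, both reducing to a direct parametrization of the solution set of $xx' \equiv p^k \pmod{p^\ell}$ by the $p$-adic valuation of $x$.

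For \eqref{lockloo}, I would begin by observing that since $\chi_p$ is a Dirichlet character modulo $p^\ell$, one has $\ol{\chi_p(x)}=0$ unless $x\in(\Z/p^\ell\Z)^*$. For such a unit $x$, the condition $xx'\equiv p^k\pmod{p^\ell}$ has a unique solution $x'\equiv p^k\ol{x}\pmod{p^\ell}$ (where $\ol{x}$ denotes the inverse of $x$ modulo $p^\ell$), even in the degenerate case $k\ge\ell$ (where both sides become $0$). Substituting into the definition \eqref{kloos} gives $S_{\chi_p}(a,b;p^k;p^\ell)=\sum_{x\in(\Z/p^\ell\Z)^*}\ol{\chi_p(x)}\,e(\tfrac{ax+bp^k\ol{x}}{p^\ell})$, which is exactly $S_{\chi_p}(a,bp^k;p^\ell)$.

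For \eqref{lockloo3}, I would parametrize the sum by $i=\ord_p(x)$, writing $x=p^iu$ with $u\in(\Z/p^{\ell-i}\Z)^*$. Since $k<\ell$ and $\chi_p=\mathbf 1$ imposes no units condition, $i$ runs from $0$ up to $k$ (for $i>k$, the equation $p^iux'\equiv p^k\pmod{p^\ell}$ has no solutions). For each such $(i,u)$, the solutions $x'$ form the set $\{p^{k-i}\ol{u}+p^{\ell-i}j : j=0,\dots,p^i-1\}$ in $\Z/p^\ell\Z$. Substituting into \eqref{kloos} and summing over $j$ first yields
\[\sum_{j=0}^{p^i-1}e\Bigl(\tfrac{bj}{p^i}\Bigr)=\begin{cases}p^i & i\le b_p,\\ 0 & \text{otherwise,}\end{cases}\]
which enforces the upper bound $i\le\min(b_p,k)$ in the range of summation. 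Writing $b=p^ib'$ for $i\le b_p$ reduces the inner sum to $\sum_{u\in(\Z/p^{\ell-i}\Z)^*} e\bigl(\tfrac{au}{p^{\ell-i}}+\tfrac{b'\ol{u}}{p^{\ell-k}}\bigr)$.

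The critical change of variables is $u=v+p^{\ell-k}w$ with $v\in(\Z/p^{\ell-k}\Z)^*$ and $w\in\Z/p^{k-i}\Z$, which is a bijection onto $(\Z/p^{\ell-i}\Z)^*$. The second phase depends only on $v$ (since $\ol{u}\equiv\ol{v}\pmod{p^{\ell-k}}$). For the first phase, if $i\ge k-a_p$ then $p^{k-i}\mid a$ and writing $a=p^{k-i}a'$ gives $\tfrac{au}{p^{\ell-i}}=\tfrac{a'v}{p^{\ell-k}}+a'w$, whose $w$-part is an integer; the $w$-sum then contributes a factor $p^{k-i}$, while the $v$-sum is exactly $S(a/p^{k-i},b/p^i;p^{\ell-k})$, giving a net contribution $p^i\cdot p^{k-i}=p^k$ per admissible $i$. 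If instead $i<k-a_p$, writing $a=p^{a_p}a_0$ with $p\nmid a_0$ and isolating the $w$-dependence produces a nontrivial additive character sum over $w$, which vanishes; this enforces the lower bound $i\ge\max(0,k-a_p)$. Summing $p^k S(a/p^{k-i},b/p^i;p^{\ell-k})$ over this range gives the claimed formula, which degenerates to an empty sum (hence $0$) precisely when $k>a_p+b_p$.

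The main obstacle is bookkeeping: correctly verifying the bijectivity of the $u=v+p^{\ell-k}w$ decomposition on units, and ensuring that the vanishing character sum argument for $i<k-a_p$ is carried out with the right modulus $p^{k-a_p-i}$ (keeping in mind that $w$ ranges over the larger group $\Z/p^{k-i}\Z$). Everything else is a routine manipulation of Gauss-type sums.
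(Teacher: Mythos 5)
Your proof is correct and follows essentially the same route as the paper's. For \eqref{lockloo} both arguments are identical: $\chi_p$ kills non-units, so $x'$ is uniquely determined as $p^k\ol{x}$. For \eqref{lockloo3} the paper decomposes both $x=p^i(t+p^{\ell-k}d)$ and $x'=p^{k-i}(\ol t+p^{\ell-k}d')$ simultaneously (with $t\in(\Z/p^{\ell-k}\Z)^*$, $d\in\Z/p^{k-i}\Z$, $d'\in\Z/p^i\Z$) and then factors the phase as a product of three sums, with the $d$- and $d'$-sums producing the two divisibility constraints; you reach the same three-way factorization sequentially, first summing the $x'$-freedom (your $j$, the paper's $d'$) and then splitting $u$ into $v+p^{\ell-k}w$ (your $w$ is the paper's $d$). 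The intermediate bookkeeping you flag — bijectivity of the unit decomposition and the modulus in the vanishing $w$-sum — does indeed check out, and your observation that the index range $\max(0,k-a_p)\le i\le\min(b_p,k)$ is empty exactly when $k>a_p+b_p$ is the correct explanation of the vanishing case.
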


\begin{proof}
The left-hand side of \eqref{lockloo} is a sum over $xx'=p^k$ in
  $(\Z/p^\ell\Z)$.  If $p|x$, then $\chi_p(x)=0$.  Therefore we can take
  $x\in (\Z/p^\ell\Z)^*$, and $x'=\ol{x}p^k$.  Eq. \eqref{lockloo} follows.

For the case $\chi_p=\mathbf{1}$, suppose first that $k < \ell$.
  Group the sum in $S_{\mathbf 1}(a,b;p^k;p^\ell)$ according to $i=\ord_p(x)\le k$.
  Suppose
\begin{equation}\label{xx'}
xx' \equiv p^k \mod{p^{\ell}}.
\end{equation}
  Then $x = p^i t$ and $x' = p^{k-i} \ol{t}$ for some
  $t \ol{t} \equiv 1 \mod{p^{\ell-k}}$.
  For given $t$, $\ol{t}$, we have solutions
  $x = p^{i}(t+p^{\ell-k} d)$ and $x'=p^{k-i}(t'+p^{\ell-k}d')$.
  As $d$, $d'$ and $t$ range through $\Z/p^{k-i}\Z$, $\Z/p^i\Z$ and
   $(\Z/p^{\ell-i}\Z)^*$ respectively,
  $x$ and $x'$ give all incongruent solutions to \eqref{xx'} modulo $p^{\ell}$.  Thus
\[ S_\mathbf{1}(a,b;p^k;p^\ell) =
  \sum_{i=0}^k \sum_{t\in(\Z/p^{\ell-k}\Z)^*}   
  \sum_{d=1}^{p^{k-i}} \sum_{d'=1}^{p^i}
  e(\frac{ap^i(t+p^{\ell-k}d) + bp^{k-i}(\ol{t}+p^{\ell-k}d')}{p^{\ell}}) \]
\[ =\sum_{i=0}^k
\sum_{t \in(\Z/p^{\ell-k}\Z)^*}
  e(\frac{ap^it + bp^{k-i}\ol{t}}{p^{\ell}})
   \sum_{d=1}^{p^{k-i}} e(\frac{ad}{p^{k-i}})
   \sum_{d'=1}^{p^i}e(\frac{bd'}{p^{i}}).\]
The $i^{\text{th}}$ summand is non-zero only if $p^{k-i}|a$ and $p^i|b$.
  In this situation, write $a=p^{k-i}a'$ and $b=p^ib'$.  Then
   the above is
\[ = p^k \sum_{0\le i\le k,\atop{k-a_p \le i \le b_p}}
  \sum_{t \ol{t} \equiv 1 \atop \mod{p^{\ell-k}}}
  e(\frac{a't + b'\ol{t}}{p^{\ell-k}})
  =p^k\sum_{0\le i\le k,\atop{k-a_p\le i\le b_p}}S(a',b';p^{\ell-k}). \]
This proves \eqref{lockloo3}.

Now suppose $k\ge \ell$.  Then $xx'\equiv 0\mod p^\ell$, and we write $x=p^it$, $x'=p^{\ell-i}t'$,
 for $t\in (\Z/p^{\ell-i}\Z)^*$ and $t'\in \Z/p^{i}\Z$.  Thus
\[S_{\mathbf{1}}(a,b;p^k;p^\ell)=\sum_{i=0}^{\ell}
  \sum_{t}\sum_{t'}e(\frac{ap^it+bp^{\ell-i}t'}{p^\ell})\]
\begin{equation}\label{ngec}
=\sum_{i=0}^\ell\sum_{t\in(\Z/p^{\ell-i}\Z)^*}e(\frac{at}{p^{\ell-i}})
  \sum_{t'\in \Z/p^i\Z}e(\frac{bt'}{p^i}).
\end{equation}
The sum over $t$ can be evaluated explicitly using
\[\sum_{t\in(\Z/p^r\Z)^*}e(\frac{at}{p^r})=\sum_{t=1}^{p^r}e(\frac{at}{p^r})
  -\sum_{t=1}^{p^{r-1}}e(\frac{apt}{p^r})=\begin{cases}
  p^r-p^{r-1}&\text{if $0<r\le a_p$}\\
  -p^{r-1}&\text{if $r=a_p+1$}\\
  0&\text{ if $r>a_p+1$,}\end{cases}
\]
  and the sum over $t'$ is $p^i$ or $0$ according to whether or not $i\le b_p$.
In particular, the $i^{\text{th}}$ term of \eqref{ngec}
  vanishes unless $\ell-i\le a_p+1$ and $i\le b_p$,
  i.e. $\ell-a_p-1\le i\le b_p$.  Thus the whole expression vanishes unless
 $\ell\le a_p+b_p+1$.
\end{proof}

\comment{
  Suppose $i\le \tfrac k2$.  Write $x=p^it$ for $t\in (\Z/p^{\ell-i}\Z)^*$.
  Define $\ol{t}\in (\Z/p^{\ell-i}\Z)^*$ by $t\ol{t}\equiv 1\mod p^{\ell-i}$.
  If $xx'\equiv p^k\mod p^\ell$ for some $x'\in\Z/p^\ell\Z$,
  then $x'=p^{k-i}t'$ for $t'\in (\Z/p^{\ell-(k-i)}\Z)^*$.
  Because $tt'\equiv1\mod p^{\ell-k}$, we have
  $t'=\ol{t}+dp^{\ell-k}$ for unique $d\in \Z/p^i\Z$.
  (Note that $\ol t$ is defined modulo $p^{\ell-(k-i)}$ since
  $p^{\ell-(k-i)}|p^{\ell-i}$ when $i\le k/2$.)
  The corresponding piece of $S_{\mathbf 1}(a,b;p^k;p^\ell)$ is then
\[\sum_{t\in (\Z/p^{\ell-i}\Z)^*}\sum_{d\in\Z/p^i\Z}
   e(\frac{ap^it+bp^{k-i}(\ol{t}+dp^{\ell-k})}{p^\ell})\]
\[=S(a,bp^{k-2i};p^{\ell-i})\sum_{d\mod p^i}e(\frac{bd}{p^i})
=\begin{cases}p^iS(a,bp^{k-2i};p^{\ell-i})&\text{if }p^i|b\\
  0&\text{otherwise.}\end{cases}\]
Similarly, if $i\ge \tfrac k2$,
  then we write $x'=p^{k-i}t$ for $t\in (\Z/p^{\ell-k+i}\Z)^*$,
  so that $x=p^i(\ol{t}+dp^{\ell-k})$ for unique $d\in\Z/p^{k-i}\Z$,
  and summing over $t,d$, we obtain $p^{k-i}S(ap^{2i-k},b;p^{\ell-(k-i)})$ if
  $p^{k-i}|a$, and $0$ otherwise.  Writing $j=k-i$, we obtain the second sum
  for the range $\lfloor\tfrac k2\rfloor+1\le i\le k$.
\end{proof}
}

\subsection{Proof of Theorem \ref{Weil}}

We will bound each term of \eqref{kloofac}.
  Suppose $p|N$.  Then by Proposition \ref{klcomp},
\[S_{\chi_{p}}(a\ol{c^{(p)}},b\ol{c^{(p)}}\n^{(p)}; p^{\n_p};p^{c_{p}})=
  S_{\chi_p}(a\ol{c^{(p)}},b\ol{c^{(p)}}\n;p^{c_p}).\]
  Applying Theorem \ref{pcase} to the latter sum,
\begin{align}\label{b1}
\hskip -.5cm\notag|S_{\chi_{p}}(a\ol{c^{(p)}},b\ol{c^{(p)}}\n^{(p)}; p^{\n_p};p^{c_{p}})|
  &\le \tau(p^{c_p})\,(a,b\n,p^{c_p})^{1/2}p^{c_p/2}\c_{\chi_p}^{1/2}\\
 &\le \tau(p^{c_p})\,(a\n,b\n,p^{c_p})^{1/2}p^{c_p/2}\c_{\chi_p}^{1/2}.
\end{align}
Now suppose $p|c$ but $p\nmid N$.  Then $\chi_p=\mathbf 1$, and if $\n_p<c_p$,
  by \eqref{lockloo3} we have
\[S_{\mathbf 1}(a\ol{c^{(p)}},b\ol{c^{(p)}}\n^{(p)}; p^{\n_p};p^{c_{p}})=
\sum_{i=\max(0,\n_p-a_p)}^{\min(\n_p,b_p)} p^{\n_p}S(\frac{a\ol{c^{(p)}}}{p^{\n_p-i}},
  \frac{b\ol{c^{(p)}\n^{(p)}}}{p^i};p^{c_p-\n_p})\]
  Therefore applying the Weil bound $|S(a,b;c)|\le \tau(c)(a,b,c)^{1/2}c^{1/2}$ to each term
  in the sum, we find (still assuming $\n_p<c_p$)
\begin{align}\label{b2}
\notag
|S_{\mathbf 1}(a\ol{c^{(p)}},b\ol{c^{(p)}}\n^{(p)}; p^{\n_p};p^{c_{p}})|
&\le \sum_i\tau(p^{c_p-\n_p})p^{\n_p}(\frac{a}{p^{\n_p-i}},\frac{b}{p^i},p^{c_p-\n_p})^{1/2}
  p^{c_p/2-\n_p/2}\\
  &\le(\n_p+1)(c_p+1)(a\n,b\n,p^{c_p})^{1/2}p^{c_p/2},
\end{align}
since the sum has at most $(\n_p+1)$ terms.
If $\n_p\ge c_p$, the bound \eqref{b2} also holds, since from \eqref{ngec},
\begin{align*}
|S_{\mathbf 1}(a\ol{c^{(p)}},b\ol{c^{(p)}}\n^{(p)}; p^{\n_p};p^{c_{p}})|
&\le \sum_{i=0}^{c_p}p^{c_p-i}p^i\\
&\le (\n_p+1)p^{c_p}=\tau(p^{\n_p})(a\n,b\n,p^{c_p})^{1/2}
  p^{c_p/2}.
\end{align*}
Multiplying the local bounds \eqref{b1} and \eqref{b2} together, by \eqref{kloofac} we have
\[|S_\chi(a,b;\n;c)|\le \tau(\n)\tau(c)(a\n,b\n,c)^{1/2}c^{1/2}
  \c_\chi^{1/2},\]
which proves the first inequality in Theorem \ref{Weil}.
The proof of the second inequality is identical, using the second inequality
 of Theorem \ref{pcase} for \eqref{b1} in the case that $p|\c_\chi$, and using
 the classical Weil bound \eqref{classicalWeil} in place of \eqref{b1} 
  in the case that $p|N$ but $\chi_p$ is principal, i.e. $p\nmid \c_\chi$.

\pagebreak
\section{Equidistribution of Hecke eigenvalues}\label{Equi}

The Hecke eigenvalues attached to cusp forms have many interesting statistical
  properties.  On one hand, there is the ``horizontal"
  Sato-Tate problem of fixing a newform $u(z)$ and determining the distribution
  of the Hecke eigenvalues at all primes away from the level.
  If $u$ is non-dihedral, then conjecturally the
  normalized eigenvalues $\nu_p^u$ are equidistributed relative to the
  Sato-Tate measure\index{keywords}{Sato-Tate measure}
\begin{equation}\label{STmeas}
d\mu_\infty(x) =\begin{cases}
\frac1\pi\sqrt{1-\frac{x^2}4}\,dx & \text{if }-2\le x\le 2\\ 
  0 &\text{otherwise}.\index{notations}{muinfty@$\mu_\infty$}
 \end{cases}
 \end{equation}
  This problem is very deep, being tied to the analytic properties of the
  symmetric power $L$-functions of $u$.   It has now been proven
  if $u$ is holomorphic of weight $\k\ge 2$ by Barnet-Lamb,
  Geraghty, Harris, and Taylor, \cite{BLGHT}.

Another point of view is the ``vertical" problem of fixing the prime $p$
  and determining the distribution of the eigenvalues of $T_p$ on a parametric
  family of cusp forms, as the parameter (level, weight) tends to infinity.
  This question has been addressed independently by several authors:
  for Maass forms by Bruggeman \cite{Brug} and Sarnak \cite{Sa},
  and for holomorphic forms by Serre \cite{Se} and Conrey/Duke/Farmer \cite{CDF}.
  Strikingly, the relevant measure in this case is not the Sato-Tate
  measure, but the $p$-adic measure\index{notations}{mup@$\mu_p$}
\[d\mu_p(x) = \frac{p+1}{(p^{1/2}+p^{-1/2})^2-x^2}\,d\mu_\infty(x).\]
  Serre's article discusses many interesting applications of this result.
  Effective versions have been given by Murty and Sinha (\cite{MS}) and
  Lau and Wang (\cite{LW}).

In the holomorphic case, one obtains a different vertical result
  using Peter\-sson's trace formula in which
  each Hecke eigenvalue has an analytic weight coming from
  Fourier coefficients and the Petersson norm of the cusp form.  When weighted
  in this way, the eigenvalues of $T_p$ become equidistributed relative to the Sato-Tate
  measure itself (independent of $p$), as the level $N\to\infty$ (\cite{Li}, \cite{KL3}).

In this section we treat the case of Maass forms from
  the latter perspective, using the Kuznetsov trace formula.
  We will prove that for a fixed prime $p\nmid N$, the eigenvalues of $T_p$ on
  the Maass eigenforms, when given the weights that arise naturally
  in the Kuznetsov formula, become equidistributed relative to the
  Sato-Tate measure as the level goes to infinity.
  An interesting feature is that the weights depend on the choice of
 $f_\infty$ (or equivalently, its Selberg transform $h(t)$),
  while the measure is independent of this choice.

Fix an integer $m>0$ and a function $h(t)$ as in Theorem \ref{hmain}.
  We will apply the Kuznetsov formula with $m_1=m_2=m$.
 Fix a prime $p\nmid N$ and an exponent $\ell\ge 0$.
  For a Maass eigenform $u\in\mathcal{F}$,
  define the normalized Hecke eigenvalue
\[\nu_{p^\ell}^u=\w'(p)^{\ell/2} \lambda_{p^\ell}(u)\in\R.\]
The value is real because $\w'(p)^{\ell/2}T_{p^\ell}$ is self-adjoint, and it is
  bounded in absolute value by a number depending only on $p^\ell$ (see p. \pageref{KS}).
  For all $\ell\ge 0$,
\[\nu_{p^\ell}^u = X_\ell(\nu_{p}^u),\]
  where \index{notations}{Xl@$X_\ell$, Chebyshev polynomial}\index{keywords}{Chebyshev polynomial}
\[X_\ell(2\cos\theta)=\frac{\sin((\ell+1)\theta)}{\sin\theta}
  =e^{i\ell\theta}+e^{i(\ell-2)\theta}+\cdots+e^{-i\ell\theta}\]
 is the Chebyshev polynomial of degree $\ell$
  (see e.g. Proposition 29.8 of \cite{KL}).

Now for each $u_j\in \mathcal{F}$, define a weight
\begin{equation}\label{wh}
w_{u_j}=
  \frac{\left|a_m(u_j)\right|^2}{\|u_j\|^2}\frac{h(t_j)}{\cosh(\pi t_j)},
\end{equation}
where $t_j$ is the spectral parameter of $u_j$.
Note that at this point, $w_{u_j}$ may be a complex number.
   However, in the equidistribution result below
  (Theorem \ref{dist}), we shall impose an extra hypothesis
  to ensure that $w_{u_j}$ is a nonnegative real number for all $j$.

\begin{proposition}\label{101} With $h(t)$ as in Theorem \ref{hmain}, we have
\[\sum_{u\in \mathcal{F}}X_\ell(\nu_p^u)w_u
  =\begin{cases}J\psi(N)+O(N^{\frac12+\e})&\text{if $\ell=2\ell'$
  with $0\le \ell'\le \ord_p(m)$}\\
  O(N^{\frac12+\e})&\text{otherwise}\end{cases}\]
as $N\to\infty$,
where
\begin{equation}\label{J}
J=\frac{1}{\pi^2}\int_{-\infty}^\infty h(t)\tanh(\pi t)\,t\,dt
  = \frac4\pi V(0)=\frac4\pi f_\infty(1).
\end{equation}
Here, $V$ and $f_\infty$ are the functions attached to $h$ in \eqref{Vu} and
  \eqref{Vf2} respectively, and the equalities in \eqref{J} are from \eqref{V0}.
%
\end{proposition}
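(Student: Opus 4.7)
The plan is to apply the KTF (Theorem \ref{main}, extended via Theorem \ref{hmain} to the class of test functions satisfying \eqref{ht}) with $\n = p^\ell$ and $m_1 = m_2 = m$, and then identify each term with its counterpart in the statement. Since $\w'(p)$ is a root of unity and $\nu_{p^\ell}^u = \w'(p)^{\ell/2}\lambda_{p^\ell}(u) = X_\ell(\nu_p^u)$ by the Chebyshev identity, the cuspidal term of the KTF equals
\[
  \w'(p)^{-\ell/2} \sum_{u \in \mathcal{F}} X_\ell(\nu_p^u)\, w_u.
\]
Multiplying the entire KTF identity through by $\w'(p)^{\ell/2}$ converts its cuspidal side into exactly the sum we wish to estimate, and the plan is simply to show that the other three terms contribute the stated main term plus $O(N^{\frac12+\e})$.

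First I would handle the diagonal/main term. By definition, $T(m,m,p^\ell)$ vanishes unless $m^2 = b^2 p^\ell$ for some $b \mid m$, which forces $\ell = 2\ell'$ with $p^{\ell'} \mid m$, i.e.\ $0 \le \ell' \le \ord_p(m)$; in that case $b = m/p^{\ell'}$ and $\sqrt{m p^\ell/m} = p^{\ell'}$. Thus the geometric main term of the KTF equals $\psi(N)\, \ol{\w'(p^{\ell'})}\, J$. After multiplication by $\w'(p)^{\ell/2} = \w'(p^{\ell'})$, the $\w'$ factors cancel and this contributes exactly $J\psi(N)$, matching \eqref{J}; when $\ell$ is odd or $\ell/2 > \ord_p(m)$ the main term is absent.

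Next I would bound the Eisenstein contribution using Proposition \ref{bound}, which directly yields $O(N^{\frac12+\e})$ (with an implied constant depending on $h$, $p^\ell$, $m$, and $\e$), and bound the Kloosterman term using Proposition \ref{Klbound}, which gives $O(N^\e)$. After moving the Eisenstein piece from the spectral to the geometric side of the identity, these together constitute an error of size $O(N^{\frac12+\e})$, and the factor $|\w'(p)^{\ell/2}| = 1$ does not disturb the estimate. Combining this with the main-term computation yields the proposition.

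The only nontrivial point is verifying that the hypotheses for invoking Propositions \ref{bound} and \ref{Klbound} are satisfied for the present class of test functions $h$ satisfying \eqref{ht}. Those propositions were stated under the Paley--Wiener hypothesis of the unrefined KTF, but the extension argument of \S\ref{Val} (particularly the limiting procedure underlying Theorem \ref{hmain}) shows that the same bounds, with the same dependence on $N$, pass to the limit; this is the step I expect to require the most care. Once that is in hand, everything else is a matter of matching up the notation.
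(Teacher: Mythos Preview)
Your proposal is correct and follows essentially the same approach as the paper's proof: apply the KTF with $m_1=m_2=m$ and $\n=p^\ell$, identify the cuspidal term via $\nu_{p^\ell}^u=X_\ell(\nu_p^u)$, compute $T(m,m,p^\ell)$ to isolate the main term, and bound the Eisenstein and Kloosterman pieces by Propositions~\ref{bound} and~\ref{Klbound} respectively. The paper handles the extension to the broader class of $h$ exactly as you anticipated, noting (in a footnote) that the needed bounds persist by the argument in the proof of Proposition~\ref{absconv}.
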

\noindent{\em Remark:} This demonstrates the existence of cusp forms
  with nonvanishing $m^{\text{th}}$ Fourier coefficient for
  all sufficiently large $N$.

\begin{proof}
Taking $m_1=m_2=m$ and $\n=p^\ell$ in Theorem \ref{main},
the cuspidal term is
\begin{equation}\label{cusp}
\sum_{u\in\mathcal{F}}\lambda_{p^\ell}(u) w_u
  = \ol{\w'(p)}^{\ell/2}\sum_u X_\ell(\nu_p^u) w_u,
\end{equation}
the sum converging absolutely.
This is equal to the first geometric term
\[T(m,m,p^\ell){\psi(N)}\ol{\w'(p^{\ell/2})}\frac1{\pi^2}\int_\R
  h(t) \tanh(\pi t)\, t\,dt
 =T(m,m,p^\ell)\psi(N)\ol{\w'(p^{\ell/2})}J\]
plus the remaining geometric terms and minus the continuous term.
By Proposition \ref{bound} and Proposition \ref{Klbound},
the latter terms are both $O(N^{1/2+\e})$.\footnote{These bounds were proven
  for $h\in PW^{12}(\C)^{\operatorname{even}}$, but they hold as well for $h$ as in
  Theorem \ref{hmain} so long as $A>\tfrac14$ and $B>2$,
   as shown in the proof of Proposition \ref{absconv}.}
It is easy to see that $T(m,m,p^\ell)=1$ if and only if
  $\ell = 2\ell'$ for some $0\le \ell'\le \ord_p(m)$.
Multiplying through by $\w'(p)^{\ell/2}$, the result follows.
\end{proof}

\begin{theorem}\label{dist}
Fix a prime $p$ and let $m>0$ be an integer.
   For each $n=1,2,\ldots$,
\begin{itemize}
\item let $N_n$ be a positive integer
   coprime to $p$, such that $\lim\limits_{n\to\infty}N_n=\infty$
\item let $\w'_n$ be a Dirichlet character modulo $N_n$
\item let $\mathcal{F}_n$ be an orthogonal basis
   for $L^2_0(N_n,\w'_n)$ consisting of Maass eigenforms.
\end{itemize}
 Define weights $w_u$ as in \eqref{wh}.
  Suppose $h(t)$ is chosen as in Theorem \ref{hmain} so that $J$ in \eqref{J} is nonzero,
  and $h(t_j)\ge 0$ for all spectral parameters $t_j$. (The latter condition will
  be discussed afterwards.)
  For each $n$, define the multiset
\[S_n=\{\nu_p^u|\,  u\in \mathcal{F}_n\}.\]
  Then the sequence $\{S_n\}$ is $w_u$-equidistributed with respect
  to the measure
\begin{equation}\label{dmu}
d\mu(x) = \sum_{\ell'=0}^{\ord_p(m)}X_{2\ell'}(x)\,d\mu_\infty(x),
\end{equation}
 where $d\mu_\infty(x)$ is the Sato-Tate measure \eqref{STmeas}.
  This means that for any continuous function $f$ on $\R$, we have
\begin{equation}\label{lim}
\lim_{n\to\infty}\frac{\sum_{u\in \mathcal{F}_n}f(\nu_p^u)w_u}
  {\sum_{u\in\mathcal{F}_n}w_u} = \int_{\R}f(x)d\mu(x).
\end{equation}
\end{theorem}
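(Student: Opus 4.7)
The plan is to verify weak convergence of the normalized weighted empirical measures $\mu_n = W_n^{-1} \sum_{u \in \mathcal{F}_n} w_u \, \delta_{\nu_p^u}$, where $W_n = \sum_u w_u$, to $d\mu$ by testing against the Chebyshev polynomials $X_\ell$. The hypothesis $h(t_j) \ge 0$, combined with $\cosh(\pi t_j) > 0$ (which holds even for the finitely many exceptional $t_j = i\tau_j$ with $|\tau_j| < 1/2$; cf.\ Corollary \ref{disccor}), guarantees $w_u \ge 0$. Proposition \ref{101} applied with $\ell = 0$ (so $\ell' = 0 \le \ord_p(m)$) yields $W_n = J\psi(N_n) + O(N_n^{1/2+\e})$, which tends to infinity since $J \ne 0$; in particular each $\mu_n$ is eventually a probability measure.

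For each fixed $\ell \ge 0$, Proposition \ref{101} gives
\[
\int_\R X_\ell \, d\mu_n \;=\; \frac{1}{W_n}\sum_{u \in \mathcal{F}_n} X_\ell(\nu_p^u)\, w_u \;\longrightarrow\; \begin{cases} 1 & \text{if $\ell = 2\ell'$ with $0 \le \ell' \le \ord_p(m)$,}\\ 0 & \text{otherwise.}\end{cases}
\]
On the other hand, the classical orthogonality relation $\int_{-2}^{2} X_k(x) X_\ell(x)\, d\mu_\infty(x) = \delta_{k\ell}$ (immediate from the substitution $x = 2\cos\theta$) implies
\[
\int_\R X_\ell \, d\mu \;=\; \sum_{\ell'=0}^{\ord_p(m)} \int_{-2}^{2} X_\ell X_{2\ell'}\, d\mu_\infty \;=\; \sum_{\ell'=0}^{\ord_p(m)} \delta_{\ell,\, 2\ell'},
\]
which matches the limit. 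By linearity, $\int P\, d\mu_n \to \int P\, d\mu$ for every polynomial $P$.

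To extend from polynomials to arbitrary $f \in C(\R)$, I would invoke the fact that $|\nu_p^u|$ is bounded by a constant $C_p$ depending only on $p$; this follows from the Kim--Sarnak bound or from the more elementary bound referenced on page \pageref{KS}. Hence every $\mu_n$ and $\mu$ is supported in the compact interval $K = [-C_p, C_p] \supset [-2,2]$. A standard $3\varepsilon$-argument using Weierstrass approximation of $f|_K$ by a polynomial $P$, together with the uniform boundedness of $\mu_n(K) = 1$ (eventually) and the total variation $|\mu|(K) < \infty$, then delivers $\int f\, d\mu_n \to \int f\, d\mu$. The main structural input is Proposition \ref{101} --- a consequence of Theorem \ref{main} (valid here by Theorem \ref{hmain}) together with the Weil bound \eqref{Sb} for the Kloosterman geometric term and Proposition \ref{bound} for the continuous spectrum --- along with the orthogonality of Chebyshev polynomials under $d\mu_\infty$; no step beyond these presents a genuine obstacle, and the proof is essentially a Weyl-type equidistribution criterion tailored to the Chebyshev basis.
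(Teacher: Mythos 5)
Your proposal is correct and follows essentially the same route as the paper's proof: establish nonnegativity of the weights from $h(t_j)\ge 0$ and $\cosh(\pi t_j)\ge 0$, apply Proposition \ref{101} at $\ell=0$ to normalize, test against Chebyshev polynomials via Proposition \ref{101} for general $\ell$ and match with the $d\mu_\infty$-orthogonality relations, extend by linearity to all polynomials, and finish by Weierstrass approximation on a fixed compact interval $I\supseteq[-2,2]$ containing all $\nu_p^u$. The only cosmetic difference is that you mention Kim--Sarnak as an alternative source for the boundedness of $\nu_p^u$, whereas the paper cites the elementary bound; this does not affect the argument.
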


\noindent{\em Remarks:} (1) If we choose $m$ so that $p\nmid m$,
  then $d\mu=d\mu_\infty$ is the Sato-Tate measure itself.  In this case,
  the measure is independent of $p$, $m$ and $h$.\vskip .2cm

\noindent (2) The theorem illustrates in particular the fact that the
  normalized Hecke eigenvalues $\nu_p^u$ are dense in the interval $[-2,2]$.
  Thus the Ramanujan Conjecture, if true, is optimal.  In the other direction,
  the theorem provides evidence for the conjecture, by virtue of the fact 
  that the measure is supported on $[-2,2]$.
  Any counterexamples to the Ramanujan conjecture are sparse enough to be undetectable 
  in \eqref{lim}.

\begin{proof} Setting $\ell=0$ in Proposition \ref{101} gives
\begin{equation}\label{wn}
\sum_{u\in\mathcal{F}}w_u = J\psi(N)+o(N).
\end{equation}
  In particular, the denominator in \eqref{lim} is nonzero when $n$ is sufficiently large.
  We may assume that this is the case for all $n$.
  By \eqref{wn}, for all $\ell\ge 0$ we have
\begin{align*}
\lim_{n\to\infty}\frac{\sum_{u\in\mathcal{F}_n}X_\ell(\nu_p^u)w_u}
{\sum_{u\in\mathcal{F}_n}w_u}&=\begin{cases}1&\text{if }\ell=
2\ell', \text{ with }0\le \ell'\le \ord_p(m)\\
0&\text{otherwise}\end{cases}\\
 &=\int_\R X_\ell(x) d\mu(x).
\end{align*}
The latter equality holds because the polynomials $X_\ell(x)$
  are orthonormal with respect to the Sato-Tate measure (see e.g. \cite{KL}, Proposition 29.7).
  By linearity, \eqref{lim} holds for all polynomials.
Let $I\supseteq[-2,2]$ be a compact interval containing $\nu_p^u$ for all $u$.
  (According to the Ramanujan conjecture, we can take $I=[-2,2]$, but we do not
  assume this here. See \cite{appendix} Proposition 2.9 for an elementary proof of the 
  existence of $I$.)  As one can show, both sides of \eqref{lim} define continuous
  linear functionals on $C(I)$, relative to the sup-norm topology.
  Since the set of polynomials is dense, it follows that \eqref{lim} holds for
  all continuous functions, as required.

  In more detail, suppose $f$ is any continuous function on $I$. Given $\e>0$,
  let $P$ be a polynomial approximating $f$ to within $\e$ on the interval $I$.
  Then for any $n$,
\[\left|\frac{\sum_{u\in \mathcal{F}_n}f(\nu_p^u)w_u}{\sum_{u\in \mathcal{F}_n}w_u}
  -\int_\R f(x)d\mu(x)\right|\le 
\left|\frac{\sum_{u\in \mathcal{F}_n}(f(\nu_p^u)-P(\nu_p^u))w_u}{\sum_{u\in \mathcal{F}_n}w_u}
\right|
\]
\[+\left|\frac{\sum_{u\in \mathcal{F}_n}P(\nu_p^u)w_u}{\sum_{u\in \mathcal{F}_n}w_u}
  -\int_\R P(x) d\mu(x)\right|+\left|\int_\R(P(x)-f(x))d\mu(x)\right|\]
\begin{equation}\label{ein}
\le \e
+\left|\frac{\sum_{u\in \mathcal{F}_n}P(\nu_p^u)w_u}{\sum_{u\in \mathcal{F}_n}w_u}
  -\int_\R P(x) d\mu(x)\right|+\e \int_\R d\mu(x).
\end{equation}
In the first term of \eqref{ein}, we have used the fact that $w_u\ge0$ for all $u$,
  which holds because of the hypotheses imposed on $h$
   and the fact that $\cosh(\pi t_j)\ge 0$ for all $t_j$.
  The latter assertion is clear when $t_j\in \R$ by the definition of $\cosh$.
  The hypothetical exceptional parameters are of the form $t_j=ix_j$ for 
  $x_j\in (-\tfrac12,\tfrac12)$, so that 
  $\cosh(\pi t_j)=\cosh(i\pi x_j)=\cos(\pi x_j)\ge 0$ as well.

As shown in the first part of the proof, the middle
term of \eqref{ein} has the limit $0$ as $n\to\infty$.  Therefore
\[\limsup_{n\to\infty}\left|\frac{\sum_{u\in \mathcal{F}_n}f(\nu_p^u)w_u}{\sum_{u\in \mathcal{F}_n}w_u}
  -\int_\R f(x)d\mu(x)\right|\le \e(1+\int_{-2}^2 d\mu(x)).\]
 Letting $\e\to0$, we obtain \eqref{lim} as needed.
\end{proof}

In the theorem, we assumed that $h(t_j)\ge0$ for all spectral parameters $t_j$.
  Since $t_j\in \R\cup i(-\frac12,\frac12)$, the condition holds if $h$ is nonnegative on the
  real and imaginary axes.
  Examples of allowable $h$ include the Gaussian $h(t)=e^{-t^2}$
  and the function $h_R(t)=e^{-(t^2-R^2)^2}$.

\includegraphics{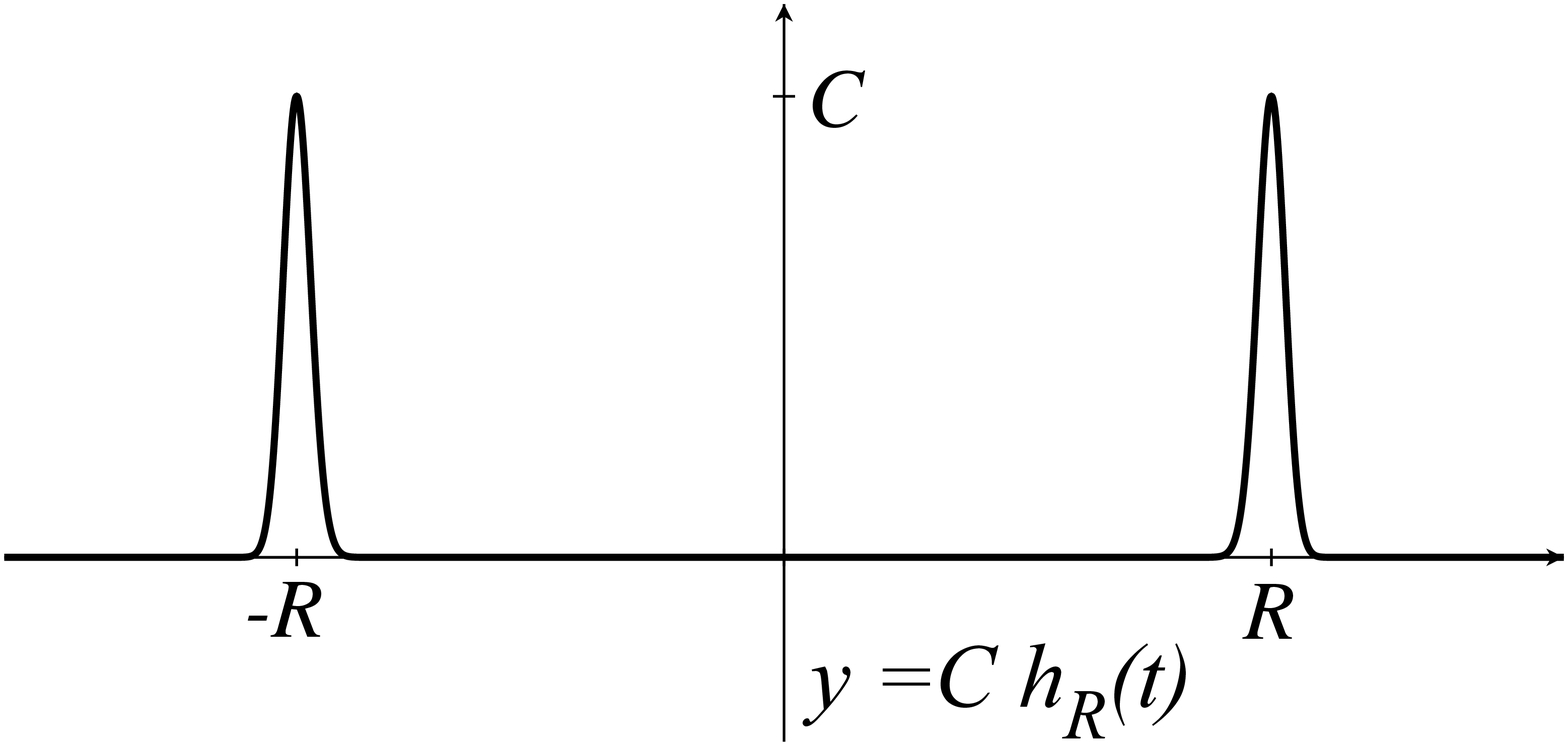}
\vskip 2.4cm
  \noindent The latter detects
  just those Maass forms with spectral parameter close to $\pm R$.
  When we apply the theorem to $h_R$, the fact that the result is independent of $R$
  shows that the equidistribution holds even when we restrict to a small piece of the
  spectrum.

Other functions $h$ satisfying the hypotheses of the theorem may be constructed as follows.
Let $h$ be the Selberg transform of a function $f_\infty=F^**F$,
  where $F\in C_c^m(G^+//K_\infty)$ for $m\ge12$.
  Let $f^1:G(\Af)\rightarrow\C$ be the identity
  Hecke operator, corresponding to $\n=1$.  
  Then if $u$ is a Maass cusp form with spectral parameter $t$, 
  by Proposition \ref{diag} we have
\[h(t) = \frac{\sg{R(f)\varphi_u,\varphi_u}}{\sg{\varphi_u,\varphi_u}}
  =\frac{\sg{R(F\times f^1)\varphi_u,R(F\times f^1)\varphi_u}}{\|u\|^2}\ge 0.\]

\comment{
\section{Appendix: various forms of KTF}
(For our reference only)

The simplest formulas are kind of a pre-KTF, which depend on a real parameter $r$ rather
  than a test function.
  All versions use an orthonormal basis $\{h_j\}$ for $L^2_0(N,\w')$.
  To $h_j$ we associate the data:
\[\lambda_j= \frac{1-\nu_j^2}4= s_j(1-s_j)=\frac14+t_j^2,\]
where $s_j=\frac12+it_j=\frac12+\frac{\nu_j}2$ with $t_j$ real (resp. $\nu_j\in i\R$)
  or else $\frac12<s_j<1$ (resp. $\nu_j$ real and $0<|\nu_j|<1$).

We should also look at Des/Iwan, where the plain test function is in the Kloosterman term.

\begin{ktf}\label{IKKTF}[\cite{IK}, Corollary 16.2, p. 408] For $m,n>0$ and $r\in\R$, we have
\begin{align*}
\sum_j \frac{\ol{a_j(m)}\,a_j(n)}{\cosh \pi(r-t_j) \cosh\pi(r+t_j)}
+\sum_{\mathfrak a} \frac{1}{4\pi} \int_{-\infty}^\infty \frac{\ol{a_{\mathfrak{a}}(m,t)}
  a_{\mathfrak{a}}(n,t)}{\cosh\pi(r-t) \cosh\pi(r+t)}dt\\
 =\frac{\pi^{-2} r}{\sinh(\pi r)}\left[\delta_{m,n}+\sum_{c\in N\Z^+} \frac1c
  S(m,n;c)B_{2ir}(\frac{4\pi\sqrt{mn}}c)\right],
\end{align*}
where $a_{\mathfrak{a}}(n,t)$ is the Fourier coefficient of the Eisenstein series
\[E_\mathfrak{a}(z,\frac12+it)= a_0(x,t)+\sum_{n\neq 0}a_{\mathfrak{a}}(n,t) y^{1/2}
  K_{it}(2\pi|n|y)e(nx)\]
and $\ds B_s(x)=2ix\int_{-i}^i K_s(\zeta x) d^*\zeta$.
\end{ktf}

Another version which is very similar to the above can be found in \cite{DFI}, Proposition 5.2.
  See also Michel, Park City notes, Thm. 2.2.
\begin{ktf}\label{DFIKTF}
  Define Fourier coefficients $\rho_j(n)$ by
   \[h_j(z)=\sum_{n\neq0} \rho_j(n)W_{0,it_j}(4\pi|n|y)e(nx).\]
Then for $m,n>0$ and $r\in\R$ we have
\begin{align*}
\sum_j \frac{\ol{\rho_j(m)}\,\rho_j(n)}{\cosh \pi(r-t_j) \cosh\pi(r+t_j)}
+\sum_{\mathfrak a} \frac{1}{4\pi} \int_{-\infty}^\infty \frac{\ol{\rho_{\mathfrak{a}}(m,t)}
  \rho_{\mathfrak{a}}(n,t)}{\cosh\pi(r-t) \cosh\pi(r+t)}dt\\
 =\frac{|\Gamma(1-ir)|^2}{4\pi^{3} \sqrt{mn}}\left[\delta_{m,n}+\sum_{c\in N\Z^+} \frac1c
S_{\w'}(m,n;c)B_{2ir}(\frac{4\pi\sqrt{mn}}c)\right].
\end{align*}
\end{ktf}
I do not know the relationship between $W_{\alpha,\beta}$ and the
  $K$-Bessel function, so I cannot compare $\rho_j(n)$ with $a_j(n)$.

Integrating the above formulas over $r$ against an appropriate distribution $q(r)$,
   one obtains a KTF with a test function.
KTF \eqref{IKKTF} becomes:

\begin{ktf}[\cite{IK}, Thm. 16.3, p. 409]
Suppose $F(t)$ is a test function satisfying, for some $\delta>0$:
\begin{enumerate}
\item $F(t)=F(-t)$
\item $F$ is holomorphic in $|\Im(t)|\le \frac12+\delta$
\item $F(t)\ll (|t|+1)^{-2-\delta}$.
\end{enumerate}
Then for any $m,n>0$,
\[
\sum_j \ol{a_j(m)}\,a_j(n)\frac{F(t_j)}{\cosh (\pi t_j)}
+\sum_{\mathfrak a} \frac{1}{4\pi} \int_{-\infty}^\infty {\ol{a_{\mathfrak{a}}(m,t)}
  a_{\mathfrak{a}}(n,t)}\frac{F(t)}{\cosh(\pi t)}dt\]
 \[=\frac{\delta_{m,n}}{\pi^2}\int_{\R}t F(t)\tanh(\pi t)dt
 +\frac{2i}{\pi}\sum_{c\in N\Z^+} \frac1c
  S(m,n;c)\int_{-\infty}^\infty J_{2ir}(\frac{4\pi\sqrt{mn}}c)\frac{rF(r)}{\cosh(\pi r)}dr.
\]
\end{ktf}
The analog of this for KTF \ref{DFIKTF} can be found in \cite{DFI}, Proposition 14.12.  See also
  Michel.

The following comes from the Liu/Ye survey on PTF/KTF.
  They state that the sum is over Maass cusp forms on $\Gamma_0(N)$, but
  it seems that they really require $N=1$.  Even so, it does not seem quite
  consistent with the above version.

\begin{ktf} Let $F(x)$ be a nice test function as in the previous version.
  Let $\tau_j(n)=a_j(n)/\sqrt{\cosh\pi t_j}$, so that
\[h_j(z)=(\cosh \pi t_j)^{1/2}\sum_{n\neq 0}\tau_j(n) y^{1/2} K_{it_j}(2\pi|n|y)e(nx).\]
  Then for any $n,m>0$,
\[
\sum_j F(t_j) \tau_j(m)\ol{\tau_j(n)}
+\frac1\pi \int_{-\infty}^\infty d_{ir}(n)d_{ir}(m)F(r)
  \frac{|\zeta_N(1+2ir)|^2}{|\zeta(1+2ir)|^2}dr\]
 \[=\frac{\delta_{m,n}}{\pi^2}\int_\R F(r)\tanh(\pi r)dr
  +\frac{2i}\pi\sum_{c>0} \frac1c
  S(m,n;c)\int_{\R} J_{2ir}(\frac{4\pi\sqrt{mn}}c)\frac{rF(r)}{\cosh(\pi r)}dr,
\]
where $\zeta_N(s)=\prod_{p|N}(1-p^{-s})^{-1}$ and $d_v(n)=\sum_{ab=|n|}(a/b)^v$.
\end{ktf}

}

\pagebreak
\small

\printindex{notations}{Notation index}
\printindex{keywords}{Subject index}

\end{document}